\title{\vspace{-1cm}\semiLARGE
On Euler systems and Nekov\'{a}\v{r}--Selmer complexes}
\date{}
\author{Dominik Bullach \and David Burns}
\begin{document}

\maketitle

\begin{abstract}\vspace{-0.5cm}
    We develop a theory of Euler and Kolyvagin systems relative to the Nekov\'{a}\v{r}--Selmer complexes of $p$-adic representations over local complete Gorenstein rings. This theory is both finer and requires fewer hypotheses than those of Mazur and Rubin \cite{MazurRubin04, MazurRubin2} over discrete valuation rings and of Sakamoto et al.\@ \cite{bss, bss2} over Gorenstein rings. In particular, given appropriate Euler systems, it allows one to study Selmer groups defined relative to Greenberg local conditions. As initial applications, we prove new cases of Kato's generalised Iwasawa main conjecture for both $\ZZ_p(a)$ and the $p$-adic Tate modules of rational elliptic curves, new cases of the Quillen--Lichtenbaum Conjecture, and a  strengthening of existing results on the Birch--Swinnerton-Dyer Conjecture for CM elliptic curves.\end{abstract}

\vspace{-0.1cm}
\tableofcontents

\section{Introduction}

{\sffamily \bfseries Background and main results} The notion of `Euler system' was introduced by Kolyvagin in \cite{kolyvagin90} as a means of axiomatising parallel earlier work of his on the Selmer groups of modular elliptic curves \cite{kolyvagin} and of Thaine on the class groups of real abelian fields \cite{Thaine}. The general theory was then further developed by Rubin \cite{Rubin-euler}, by Kato \cite{Kato-Euler}, by Perrin-Riou \cite{PerrinRiou98} and by Mazur and Rubin \cite{MazurRubin04, MazurRubin2}, and has by now become well-established as an effective method for studying the Selmer groups of $p$-adic Galois representations. In particular, it has led to  spectacular advances concerning the Birch and Swinnerton-Dyer conjecture and other important cases of the `Tamagawa number conjecture' of Bloch and Kato \cite{bloch-kato} concerning motivic $L$-functions  -- for examples, see \cite{Rubin-euler} and \cite{Kato04} and, more recently, \cite{LeiLoefflerZerbes14}, \cite{KingsLoefflerZerbes17}, \cite{LoefflerZerbesSkinner22a}, and \cite{SangiovanniSkinner}.\\   
In this article, we pay special attention to `refined' special value conjectures for motives with extra symmetries. Such conjectures range from very concrete examples such as the `refined conjecture of BSD-type' of Mazur and Tate \cite{mt} for the modular symbols of rational elliptic curves  to the unifying, and highly abstract, `generalised Iwasawa main conjecture' of Kato  \cite{Kato93a, Kato93b} (or, equivalently,  the commutative-coefficient case of the subsequently formulated `equivariant Tamagawa number conjecture' \cite{BurnsFlach01})  that is stated in terms of the Knudsen--Mumford determinants of complexes arising from the $p$-adic realisations of motives. We further recall that, even granting the existence of an appropriate Euler system, evidence for these refined conjectures remains, in general, both rather limited and also often conditional on difficult-to-verify hypotheses such as the vanishing of Iwasawa $\mu$-invariants or order-of-vanishing conditions on $p$-adic $L$-functions.\\
With these things in mind, one of the main aims of the present  article is to render Euler systems directly applicable to the study of refined special value conjectures by systematically incorporating complexes into the foundations of the theory. In the course of doing so, we shall also 
extend the arguments of Kings--Loeffler--Zerbes in \cite[\S\,12]{KingsLoefflerZerbes17} 
that allow one to study Greenberg--Selmer groups if an appropriate Euler system is given.
 This latter aspect is of interest since recently discovered families of Euler systems, ranging from those for Rankin--Selberg convolutions of modular forms \cite{LeiLoefflerZerbes14, KingsLoefflerZerbes17} to those for Asai (twisted tensor) representations of Hilbert modular forms \cite{LeiLoefflerZerbes18}, for $\mathrm{GSp}_4$ \cite{LoefflerZerbesSkinner22a}, for $\mathrm{GSp}_4 \times \mathrm{GL}_2$ \cite{HsuJinSakamoto20} and for $\mathrm{GU}(2, 1)$ \cite{LoefflerZerbesSkinner22b}  satisfy non-trivial Greenberg local conditions and can only be used to bound the associated Greenberg--Selmer  groups if there is a theory encompassing this wider class of local conditions. In this way, the range of examples to which our theory can be applied extends well beyond the classical setting of Euler systems and Selmer groups relative to relaxed local conditions, and this is a theme that we aim to explore further elsewhere. 
\\
Broadly speaking, the main achievement of this article will therefore be the development of a working theory of Euler systems that are valued in (exterior power biduals of) cohomology groups of the Selmer complexes (relative to rather general local conditions)  introduced by Nekov\'{a}\v{r} in \cite{SelmerComplexes} rather than in Galois cohomology groups themselves. 
Unsurprisingly, the idea of incorporating Selmer complexes into the theory of Euler systems originates with Nekov\'a\v{r} himself, with the explicit question of \cite[\S\,0.19.3]{SelmerComplexes} being motivated by the observation that formulating Iwasawa main conjectures in the setting of Selmer complexes, as pioneered in \cite{Kato93a}, can explain the trivial zeros of $p$-adic $L$-functions. There are, however, two key obstacles that need to be overcome in order for this to be properly achieved in the context that we consider.\\
Firstly, previous approaches rely heavily on the assumption that the Kolyvagin systems being considered do not become   trivial  upon reduction to the residual representation. For representations over discrete valuation rings, a conjecture of Kolyvagin suggests that this is indeed a mild restriction for the Kolyvagin systems related (via an Euler system) to $L$-values (cf.\@ \cite{BurungaleCastellaGrossiSkinner23} and the references therein). However, in our more general setting the presence of `trivial zeros mod $p$' will often force residual triviality even for Kolyvagin systems arising from $L$-series, and so we must overcome the associated technical difficulties. At the same time, this possibility of residual  triviality also means that the `core-rank' of Mazur and Rubin of a given Selmer structure can be strictly negative, and in any such case one cannot expect the existence of the `core vertices' that are pivotal to their approach.
To resolve this problem, we combine the Cebotarev density theorem with Artin--Verdier duality  to prove the existence of a weaker, relative, version of core vertex that we refer to as a `relative core vertex'.\\
Secondly, when working over a general Gorenstein ring the relevant Selmer groups may not be free at a relative core vertex or even, under our weaker hypotheses, at a core vertex (should one exist) and this notably complicates the necessary analysis. To deal with this problem, we are therefore forced to keep careful track of certain  `error terms' that occur naturally when one attempts to extend the recent work  of Sakamoto et al.\@ \cite{bss, bss2}, or the related work of Kataoka \cite{Kataoka2} and of Kataoka and Sano \cite{KataokaSano}, to the setting of Nekov\'{a}\v{r}--Selmer complexes, with the ultimate aim of providing `bounds' for these error terms in the limit. \\ 
Whilst the main advances in the general theory that we obtain therefore appear somewhat technical in nature (see, for example, Theorems \ref{new strategy main result}, \ref{kolyvagin derivative thm} and \ref{core vertices and injectivity replacement}), their advantages over previous results extend even beyond the aspect of much wider applicability mentioned earlier. For instance, the results we obtain  are significantly finer since their conclusions directly concern the determinants of Selmer complexes rather than either the Fitting or characteristic ideals of
Selmer groups. This aspect allows us to completely  remove any need to assume  `$\mu=0$'-type hypotheses, and also to weaken any hypotheses related to the existence of `trivial zeroes modulo $p$', both of which have hitherto been key obstacles to obtaining unconditional results on the conjectures formulated by Kato in  \cite{Kato93a, Kato93b}. Moreover, our theory is developed relative to general local complete Gorenstein rings with finite residue fields of characteristic $p$, and so is applicable to the study of Kato's conjectures  in both the `Galois-equivariant case' (instances of which we discuss below) and also relative to deformation rings, as studied, for example, by Fouquet and Wan in \cite{FouquetWan} and by Fouquet in \cite{Fouquet24, fouquet2025}.
\\ 
For all of these reasons, the theory presented here can be combined with known techniques to obtain, even in classical settings, stronger versions of a range of well-known results. \medskip \\
{\sffamily \bfseries Selected arithmetic consequences}
To illustrate the latter point concretely, we shall first apply our techniques to   Kato's Euler system of zeta elements to prove results such as the following. This result is a direct consequence of (the more general) Corollary \ref{etnc result 2} and Remark~\ref{curve evidence} and, before stating it, we recall that Kato's generalised Iwasawa main conjecture asserts an equality of lattices and so is equivalent to the validity of two inclusions.

\begin{thmintro} Let $E$ be a rational elliptic curve and $K$ a finite abelian extension of $\Q$ for which the Hasse--Weil $L$-function $L(E / K, s)$ does not vanish at $s=1$. Fix a prime $p > 3$ such that 
\begin{itemize}[label=$\circ$]
    \item the action of $\gal{\Q^c}{\Q}$ on the $p$-adic Tate module of $E$ contains $\mathrm{SL}_2 (\Z_p)$, and 
    \item either $E$ has potentially good reduction at $p$ or the field 
    $K ( \sqrt{- 2 (A / B)})$ contains no root of unity of order $p$, where $y^2 = x^3  + Ax + B$ is a Weierstra{\ss} equation for $E$ with $A, B \in \Q^\times$.
\end{itemize}
Then the pair $(h^1 (E / K) (1), \Z_p [\gal{K}{\Q}])$ validates
one inclusion in Kato's generalised Iwasawa main conjecture. Further, this case of Kato's conjecture is fully valid if, in addition, $K/\Q$ is a $p$-extension and $E$ has good ordinary reduction at $p$.
\end{thmintro}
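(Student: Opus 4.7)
The plan is to deduce the claim by applying Corollary~\ref{etnc result 2} to Kato's Euler system of Beilinson--Kato zeta elements, taken with coefficients in the local complete Gorenstein ring $R := \Z_p[\Gal(K/\Q)]$ and values in the cohomology of the Nekov\'a\v r--Selmer complex attached to the $p$-adic realisation of $h^1(E/K)(1)$. Since $p > 3$ and $\mathrm{SL}_2(\Z_p)$ lies in the image of $\Gal(\Q^c/\Q)$ acting on $T_p E$, the big-image hypotheses underlying Theorem~\ref{core vertices and injectivity replacement} are satisfied, and so relative core vertices exist in sufficient supply to run the Euler/Kolyvagin derivative formalism of the paper over $R$. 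The listed dichotomy (either no $p$-th root of unity in $K$, or potentially good reduction of $E$ at $p$) is what is needed to arrange the Nekov\'a\v r local conditions at the primes above $p$ so as to avoid the most severe trivial-zero pathologies; the milder residual-triviality effects that persist are precisely those that the error-term analysis developed in the article is designed to absorb.

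The hypothesis $L(E/K, 1) \neq 0$ then supplies the analytic input, playing the role of a leading-term non-vanishing condition: by Kato's explicit reciprocity law, it forces the $p$-localisation of the zeta element to be non-zero and identifies its regulator image with the algebraic part of $L(E/K, 1)$ up to the predicted period factor. Feeding this through the Kolyvagin-derivative machinery at the relative core vertices exhibits Kato's zeta element as a unit multiple of the conjectural Bloch--Kato generator of the determinant of the Nekov\'a\v r--Selmer complex, and Corollary~\ref{etnc result 2} delivers the Euler-system-side inclusion in Kato's generalised Iwasawa main conjecture for the pair $(h^1(E/K)(1), R)$.

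To upgrade this divisibility to an equality under the additional hypotheses, one needs the complementary inclusion, which does not come from Euler systems. When $K/\Q$ is a $p$-extension the ring $R$ is local, and when $E$ has good ordinary reduction at $p$ one may invoke the divisibility in the cyclotomic Iwasawa main conjecture proved by Skinner--Urban; a standard control-theorem descent from $\Q_\infty$ to $K$ then transports this divisibility to the equivariant setting, and combining with the first inclusion yields the full equality. The main technical obstacle in implementing this plan is that, without a $\mu = 0$ hypothesis and over a general Gorenstein ring, the Selmer groups at the relative core vertices need not be free, so the usual core-vertex arguments fail; overcoming this is exactly the content of the refined error-term analysis of the paper, and it is this refinement that allows the classical inputs of Kato and of Skinner--Urban to be combined cleanly in the absence of the hypotheses usually imposed.
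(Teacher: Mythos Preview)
Your proposal correctly identifies Corollary~\ref{etnc result 2} as the operative result, but mis-describes how the individual hypotheses enter and, for the full equality, proposes a route that does not quite close.

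For the one inclusion, two points of correction. First, the dichotomy at $p$ (no $p$-th root of unity in $K$, or potentially good reduction) is \emph{not} used to adjust Nekov\'a\v{r} local conditions; the paper works with the relaxed structure throughout. Its role is in the error term of Theorem~\ref{new iwasawa theory euler systems main result}\,(i): by Lemma~\ref{error terms Iwasawa theory elliptic curves} the dichotomy forces $\bigoplus_{v\mid p}(E(K_{\infty,v})_{\mathrm{tor}}\otimes\Z_p)^\vee$ to be finite, so $\Fitt^0_{\Lambda_K}(X_{S_0}(\cT))^{\ast\ast}=\Lambda_K$ and the Fitting-ideal factor disappears from the conclusion. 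Second, $L(E/K,1)\neq 0$ is not fed into the Kolyvagin-derivative argument at all; Corollary~\ref{etnc result 2}\,(i) gives $z^{\mathrm{Kato}}_K\in\im\Theta_{K,S(K)}$ unconditionally. The non-vanishing is used only to ensure that $e_{0,K}=1$, i.e.\ that the analytic-rank-zero component of $\TNC(h^1(E/K)(1),\Z_p[\cG_K])$ is the whole conjecture, so that the containment just obtained really is one of the two inclusions in Kato's conjecture.

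For the full equality, your Skinner--Urban plus ``control-theorem descent from $\Q_\infty$ to $K$'' has a gap: Skinner--Urban establishes the cyclotomic main conjecture for $E/\Q$ over $\Z_p[[\Gamma]]$, not the equivariant main conjecture over $\Lambda_K=\Z_p[[\cG_{K_\infty}]]$, and there is no standard descent that converts the former into the latter. The paper's route is different and more direct. It uses Corollary~\ref{etnc result 2}\,(ii): equality in $\im\Theta_{K,S(K)}$ follows once the $p$-part of Birch--Swinnerton-Dyer is known for every subfield $F\subseteq K$ with $p\nmid[F:\Q]$. When $K/\Q$ is a $p$-extension the only such $F$ is $\Q$ itself; since $L(E/K,1)\neq 0$ forces $L(E,1)\neq 0$, and $E$ has good ordinary reduction at $p$ with $\mathrm{SL}_2(\Z_p)\subseteq\im\rho_{E,p}$, Remark~\ref{curve evidence} (the Burungale--Castella--Skinner result) supplies BSD for $E/\Q$. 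This, combined with the first inclusion, gives the full equality via the Nakayama-type argument in the proof of Corollary~\ref{etnc result 2}\,(ii) (which also uses Rohrlich's non-vanishing theorem to handle the Iwasawa tower).
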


We note that this result, in particular, avoids any hypotheses relating to trivial zeros modulo $p$ (such as $p$ being `non-anomalous' in the terminology of Mazur \cite{Mazur72}) that have been used in previous work in this area (see, for example, the discussion in \cite[\S\,6.4.1]{bss2}) and also applies in many situations where $E$ has additive reduction at $p$. For this reason, it even leads to concrete new evidence in support of the precise conjecture of Birch and Swinnerton-Dyer  (see Corollary \ref{bsd evidence}). In addition, in the companion article \cite{BullachHonnor} of Honnor and the first author, the above result plays a pivotal role in the verification of a large part of the conjectures of  Mazur and Tate \cite{mt} on modular symbols mentioned earlier. 

As a further application, we also consider Tate motives over a number field $k$. In this setting, our approach leads to evidence for Kato's conjecture that is conditional only on a conjectural integrality property of the Rubin--Stark Euler system (see Theorems \ref{G_m main result} and \ref{RS implies etnc}). In particular, if $k$ is an imaginary quadratic field, then the latter system is the (integral) Euler system of elliptic units and this allows us to prove the following unconditional result (which follows immediately from Theorem~\ref{etnc imaginary quadratic fields} and Corollary~\ref{etnc imaginary quadratic twist main result}). 

\begin{thmintro} Let $K$ be a finite abelian extension of an imaginary quadratic field $k$.
Then for every prime $p > 3$ and integer $j \geq 1$, Kato's generalised Iwasawa main conjecture is valid for the pair $(h^0(\mathrm{Spec}\,K)(1 - j), \Z_p [\gal{K}{\Q}])$.
\end{thmintro}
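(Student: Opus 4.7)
The plan is to deduce this statement from the general abstract machinery developed earlier in the paper, specialised to the Euler system of elliptic units. The argument has two main steps: establishing the base case via $\GG_m$, and extending to arbitrary twists by Iwasawa-theoretic deformation.

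For the base case, I would invoke Theorem~\ref{RS implies etnc}, which reduces Kato's main conjecture for the pair $(h^0(\Spec K)(1),\Z_p[\gal{K}{k}])$ to the conjectural integrality of the Rubin--Stark Euler system over finite abelian extensions of $k$. When $k$ is imaginary quadratic, the relevant Rubin--Stark elements are none other than elliptic units, whose integrality is classical. This supplies the required hypothesis unconditionally and yields the base case of Kato's conjecture. The passage from the coefficient ring $\Z_p[\gal{K}{k}]$ to $\Z_p[\gal{K}{\Q}]$ is then handled by the standard induction/restriction compatibility of the equivariant ETNC (which, in the setting of Nekov\'{a}\v{r}--Selmer complexes, preserves the precise determinantal equality rather than merely an inclusion).

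For the twisting step, I would exploit the fact that Theorem~\ref{RS implies etnc}, applied over the cyclotomic $\Z_p$-tower of $K$ rather than just at finite level, delivers an equality of Knudsen--Mumford determinants in the corresponding Iwasawa algebra. Specialisation of this Iwasawa-theoretic equality under the evaluation maps attached to the Tate twists $\Z_p(1-j)$ then yields the asserted statement at each $j \geq 1$, with Artin--Verdier duality invoked where necessary to convert between the formulations in terms of $\Z_p(j)$ and its Kummer dual $\Z_p(1-j)$.

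The main technical hurdle — and the reason this becomes possible within the Nekov\'{a}\v{r}--Selmer framework rather than the classical one — is the presence of ``trivial zeros mod $p$'' that typically appear during both the base change and the cyclotomic specialisation. In earlier frameworks such phenomena would force either a $\mu=0$ or a non-anomalous hypothesis, both of which would destroy the unconditional nature of the conclusion. Here, the relative core vertex construction together with the bounded-error-term estimates established in Theorems~\ref{new strategy main result} and \ref{kolyvagin derivative thm} allow one to retain the precise determinantal equality without imposing any such auxiliary condition. The hypothesis $p>3$ is needed to guarantee the large-image Galois conditions underlying the relative core vertex analysis.
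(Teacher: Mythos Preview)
Your base-case step has a genuine gap. Theorem~\ref{RS implies etnc} carries hypotheses that are absent from the target statement: it requires $p$ to be inert in $k$, $p$ not to divide the class number of $k$, and $\mu_K[p]=(0)$. For an arbitrary prime $p>3$ and an arbitrary finite abelian $K/k$ none of these can be guaranteed, so that theorem does not deliver the unconditional base case you need. The paper instead proves the base case as Theorem~\ref{etnc imaginary quadratic fields}, and the argument there is structurally different from what you sketch: for characters $\chi\in\{\bm{1},\omega_p\}$ of $\nabla_K$ (precisely those excluded by Hypotheses~\ref{new strategy iwasawa hyps}), and also for $p$ split in $k$, the paper cites external work of Hofer and the first author; for the remaining characters with $p$ non-split it takes $k_\infty$ to be the \emph{full $\Z_p^2$-extension} of $k$ rather than the cyclotomic $\Z_p$-extension, so that $Y_{K_\infty,\Pi_k^p}$ becomes pseudo-null over the two-variable Iwasawa algebra. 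This pseudo-nullity is what removes the obstruction term $\Fitt^0_{\Lambda_F}(Y_{F_\infty,\Pi_k^p}(\cT_\chi))^{\ast\ast}$ in Theorem~\ref{G_m main result} and lets the analytic class number formula upgrade the inclusion to the full equality of Conjecture~\ref{eIMC G_m}.

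Your twisting step is broadly correct in outline but omits the essential analytic input: Corollary~\ref{etnc imaginary quadratic twist main result} relies on explicit results of Deninger, Kings, and Johnson-Leung to identify $\mathrm{Tw}_j(\varepsilon_{K_\infty,\Sigma})$ with (a unit times) the Chern class of a specific $K$-theory element whose Beilinson regulator computes the higher $L$-derivatives; abstract specialisation alone does not furnish this reciprocity law. Finally, your third paragraph is misdirected for this particular result: $T=\Z_p(1)$ is one-dimensional so no large-image condition arises, and the role of $p>3$ here is simply to render Hypotheses~\ref{new strategy iwasawa hyps}\,(ii${}^\ast$) and (iv) vacuous. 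The trivial-zeros-mod-$p$ and relative-core-vertex phenomena you describe are genuine features of the general theory, but in the imaginary-quadratic $\GG_m$ setting the decisive technical points are the pseudo-nullity over the $\Z_p^2$-tower and the separate treatment of the exceptional characters.
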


This theorem completes earlier partial results of Bley \cite{Bley04, Bley06}, of Johnston-Leung and Kings \cite{JoKi11}, and of Hofer and the first author \cite{BullachHofer}. It thereby also implies a variety of more explicit conjectures for abelian extensions of imaginary quadratic fields, ranging from the `Rubin--Stark conjecture' from \cite{Rub96} to the `refined class number formula' conjecture of Mazur--Rubin \cite{MazurRubin} and Sano \cite{Sano} and several conjectures in classical Galois module theory (see \cite{Burns01} for more details regarding these connections). 

At the same time, the above result allows us to verify the Quillen--Lichtenbaum Conjecture for a new family of explicit examples (see Remark \ref{lichtenbaum cor}) and also answers the open question recorded by Burungale and Tian in \cite[Rem.~2.2]{BurungaleTian26} and so can be expected to have interesting consequences in the Iwasawa theory of CM modular forms (cf.\@ \cite[Rem.~2.7]{BurungaleTian26}). Indeed, as a first application in the latter direction, we shall here combine the above result 
 with the general strategy described by 
Kato in \cite[Ch.\@ I, \S\,3.3]{Kato93b} (see also \cite{bbs}) to obtain the following refinement of the main result of Burungale and Flach \cite{BurungaleFlach} relating to the Birch and Swinnerton-Dyer Conjecture for CM elliptic curves (see Corollary \ref{CM elliptic curves main result} and Remark  \ref{bf remark}). 

\begin{thmintro} Let $E$ be an elliptic curve over a number field $F$ that contains an imaginary quadratic field $k$. Assume that $E$ has complex multiplication by the ring of integers of $k$ and that the extension $F ( E_\mathrm{tors}) / k$ is abelian, and write $B$ for the Weil restriction of $E$ to $k$. Then, for every finite abelian extension $K$ of $k$ with $L (B / K, 1) \neq 0$, and every prime $p > 3$, Kato's generalised Iwasawa main conjecture is valid for the pair $(h^1 (B / K) (1), (\mathrm{End}_k (B) \otimes_\Z \Z_p)[\gal{K}{k}])$.  \end{thmintro}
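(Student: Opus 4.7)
Following the general strategy outlined by Kato in \cite[Ch.\ I, \S\,3.3]{Kato93b} (see also \cite{bbs}), the plan is to reduce the assertion for the CM motive $h^1 (B/K)(1)$ to the validity of Kato's conjecture for Tate motives $h^0 (\Spec K')(1-j)$ over finite abelian extensions $K'$ of $k$, which has just been established in the preceding theorem.

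First, I would exploit the CM hypothesis together with the assumption that $F(E_{\tors})/k$ is abelian. By Shimura--Taniyama theory, $E/F$ corresponds to an algebraic Hecke character $\psi$ of $F$ with values in $k^\times$, and the assumption on $F(E_{\tors})/k$ ensures that $\psi$, its complex conjugate, and all twists $\psi \chi$ and $\bar\psi \chi$ for $\chi \in \widehat{\gal{KF}{F}}$ factor through the idele class groups of suitable finite abelian extensions $K'_\chi$ of $k$, up to twisting by a power of the cyclotomic character of $k$. The Artin formalism applied to the Weil restriction $B = \mathrm{Res}_{F/k} E$ then yields a factorisation
\[ L(B/K, s) \,=\, L(E/KF, s) \,=\, \prod_\eta L(\eta, s), \]
where $\eta$ ranges over the resulting finite-order Hecke characters of the various $K'_\chi$. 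Correspondingly, the $p$-adic realisation of the pair $(h^1 (B/K)(1), (\End_k (B) \otimes_\Z \Z_p)[\gal{K}{k}])$ decomposes, $\gal{K}{k}$-equivariantly, as a direct product of pairs of the shape $(h^0 (\Spec K'_\chi)(1-j), \Z_p [\gal{K'_\chi}{k}])$ for explicit $K'_\chi$ and with $j$ determined by the weight of $\psi$ (here $j = 1$).

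Next, I would invoke the compatibility of both sides of Kato's conjecture with this decomposition. Concretely, the motivic periods, Beilinson regulators and local correction terms appearing on the $h^1(B/K)(1)$-side translate into their counterparts for the Tate motives $h^0(\Spec K'_\chi)(1-j)$ via Deligne's reciprocity law for Hecke $L$-values together with the explicit theory of elliptic units (which supplies the necessary algebraic incarnation of the archimedean periods). Since $p > 3$ and every $K'_\chi$ is a finite abelian extension of $k$, the preceding theorem supplies Kato's conjecture for each factor, and combining these yields the desired conclusion for the CM pair $(h^1(B/K)(1), (\End_k(B) \otimes_\Z \Z_p)[\gal{K}{k}])$.

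The principal obstacle is verifying that this reduction respects the full $(\End_k (B) \otimes_\Z \Z_p)[\gal{K}{k}]$-equivariant structure rather than only a coarser group-ring structure. This ultimately comes down to matching the idempotent decomposition of $\End_k(B) \otimes_\Z \Z_p$ determined by the CM type of $B$ with Deligne's explicit description of the period lattices of the constituent Hecke characters, which is the technical heart of the strategy of \cite{bbs}. Once that compatibility is secured, the remainder of the argument is a formal assembly of the preceding theorem across the finitely many factors.
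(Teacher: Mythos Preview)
Your overall strategy is correct and aligns with the paper's: reduce the CM motive to Tate motives over abelian extensions of $k$ and then invoke the preceding theorem. However, the paper's execution is more streamlined than your decomposition-into-factors approach, and a few of your technical attributions are off.

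The paper avoids decomposing into simple components and instead uses a single twisting argument. The key structural input (Lemma~\ref{CM elliptic curves lemma 1}) is that $\cA_p \cong \cO_p[\cG_F]$ and that $T_p B$ is a \emph{free $\cA_p$-module of rank one}. This yields a single character $\rho\colon G_k \to \cA_p[\cG_K]^\times$, and the field $L = (k^c)^{\ker\rho}$ is abelian over $k$. One then has $\cT \cong \Lambda(1) \otimes_{\Lambda,\rho^{-1}} \cA_p[\cG_K]$ with $\Lambda = \Z_p\llbracket \cG_L\rrbracket$, so the Selmer complex for $\cT$ is literally a base change of the one for $\Lambda(1)$. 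The basis $\fz_{\mathbb{G}_m/L,\Sigma}$ of $\Det_\Lambda(C_{S,\Sigma}(\Lambda(1)))$ supplied by Theorem~\ref{etnc imaginary quadratic fields} then twists (via the map $\mathrm{Tw}^{\det}$ of Lemma~\ref{twisting lemma}) to an $\cA_p[\cG_K]$-basis of $\Det_{\cA_p[\cG_K]}(C_{S,\Sigma}(\cT))$. This sidesteps entirely the equivariance compatibility check you flag as the principal obstacle: the $\cA_p[\cG_K]$-structure is carried automatically by the base-change map on determinants.

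Two specific corrections. First, the explicit reciprocity input is not ``Deligne's reciprocity law'' but the Kato--Wiles reciprocity law (\cite[Prop.~15.9]{Kato04}), which computes the dual exponential of the twisted elliptic units in terms of the Hecke $L$-values $L_S(\overline{\varphi}_\tau\chi,1)$. Second, ``Beilinson regulators'' are not relevant here: since $L(B/K,1)\neq 0$ the motive is in analytic rank zero and the period-regulator isomorphism reduces to the comparison of archimedean periods with the $p$-adic dual exponential map; there is no higher $K$-theory. Relatedly, the Hecke characters that appear are the Serre--Tate characters $\varphi_i\colon \mathbb{A}_k^\times \to L_i^\times$ of the simple isogeny factors $B_i$ of $B$ over $k$, not characters of $F$ as you write.
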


At this point, it seems worth remarking that, whilst all of the above results relate to  classical rank-one Euler systems, their proofs involve (a refined version  of) the higher-rank theory of Stark systems introduced by Mazur and Rubin \cite{MazurRubin2} and Sano \cite{Sano14} and, as far as we are aware, there is no approach to these results that can avoid these higher--rank aspects.  
For example, the (rank-one) proof strategies used by Rubin \cite{Rubin-euler} and by Mazur--Rubin \cite{MazurRubin04} rely in an essential way on algebraic results such as the structure theorem for Iwasawa modules that are simply not available for the more general coefficient rings that we are forced to consider. \\ 
Finally we remark that, as mentioned earlier, Euler systems related to special values of $L$-series are now known to exist in a variety of other important settings   and in each of these cases our theory can be expected to have applications.\medskip\\
{\sffamily \bfseries Overview of contents}
For clarity of exposition, we have divided this article in two parts. In Part I, we extend  previous work of Mazur and Rubin \cite{Rubin-euler, MazurRubin04, MazurRubin2} and of Sakamoto et al.\@ \cite{bss, bss2} in order to develop a general theory of Euler, Kolyvagin and Stark systems for Nekov\'a\v{r}--Selmer complexes relative to a morphism $\cR \to R$ of local complete Gorenstein rings. In Part~II (comprising \S\,\ref{relaxed Kato section} to \S\,\ref{Gm section}), we  then illustrate this general theory by presenting some concrete arithmetic applications, including each of those discussed above. In fact, a reader who is mainly interested to know how our techniques apply to arithmetic problems may prefer to simply read \S\,\ref{statement of main result section}, for a discussion of key concepts and hypotheses and the statement of Theorem \ref{new strategy main result}, and then directly pass to Part II.  

In a little more detail, then, the main contents of this article are as follows. In \S\,\ref{algebraic preliminaries section} we review the families of rings that arise in the development of our theory and then establish necessary technical results concerning Matlis duals, exterior biduals, Fitting ideals, and resolutions and determinants of complexes. In \S\,\ref{ss, m and c} we review Selmer structures both in the sense of Nekov\'a\v{r}, which we refer to as 
Nekov\'a\v{r}(--Selmer) structures, and of Mazur and Rubin, which we refer to as Mazur--Rubin(--Selmer) structures, and establish some useful relations between the cohomology groups of the Selmer complex of a Nekov\'a\v{r} structure and the Selmer groups of an associated Mazur--Rubin 
structure and its dual. We also study dual Nekov\'a\v{r} structures, describe relations between Selmer complexes that follow from the Artin--Verdier duality theorem, and construct a family of perfect Selmer complexes that plays a vital role in our theory. In \S\,\ref{statement of main result section}, we introduce a notion of Euler systems relative to Nekov\'{a}\v{r} structures, discuss the hypotheses under which our theory can be developed and then state our main technical result (Theorem \ref{new strategy main result}) concerning relations between the values of an Euler system for a given Nekov\'{a}\v{r} structure and the determinant of its associated Selmer complex. The following two sections then  comprise the technical heart of our article as we extend previous arguments relating to Kolyvagin systems for $p$-adic representations over finite self-injective rings. Firstly, in \S\,\ref{koly sys I section}, as a replacement for the `modified Selmer structures' of Mazur and Rubin that play a key role in previous approaches we study the 
Selmer complexes of a natural family of `modified Nekov\'{a}\v{r} structures', and also prove the existence of a 
`Kolyvagin derivative homomorphism' in the setting of Euler and Kolyvagin systems relative to Nekov\'{a}\v{r} structures. Here it is perhaps worth pointing out again that our contribution is new even in the setting of classical rank-one systems, since the weaker hypotheses and finer Selmer groups that we consider force us to use Selmer complexes in the construction of the Kolyvagin derivative (see \S\,\ref{rank one proof}). Then, in 
\S\,\ref{koly 2 section}, we prove the existence in our setting of relative core vertices and investigate the extent to which they can be used to control the value at the unit modulus of a Kolyvagin system relative to a Nekov\'{a}\v{r} structure.  In \S\,\ref{rss section} we combine the main results of \S\,\ref{koly sys I section} and \S\,\ref{koly 2 section} with an algebraic construction of Stark systems and a delicate limit argument in order to prove Theorem \ref{new strategy main result}. Partly with future applications in mind, in an appendix to Part I we also present an axiomatic treatment of the theory of algebraic Stark systems. Then, as a first step towards arithmetic applications, in \S\,\ref{relaxed Kato section} we explain how to apply Theorem \ref{new strategy main result} for certain `modified, relaxed' Nekov\'a\v{r} structures in order to study the general case of Kato's generalised Iwasawa main conjecture. Finally, in \S\,\ref{elliptic section} and \S\,\ref{Gm section} we present some concrete applications of this strategy and, in particular, prove all of the results stated above by respectively using the Euler systems of Kato's zeta elements and of elliptic units. \medskip \\
{\sffamily \bfseries Acknowledgements}
The first author would like to thank Matthew Honnor, David Loeffler, Wendelin Lutz, Gautier Ponsinet, Ryotaro Sakamoto, Takamichi Sano, Alex Torzewski, and Christian Wuthrich for helpful comments and discussions. The first author also gratefully acknowledges the financial support of the Physical Sciences Research Council [EP/W524335/1] through a Doctoral Prize Fellowship at University College London, 
and of the Spanish Ministry of Science and
Innovation through a
Juan de la Cierva Fellowship at Instituto Ciencias Matem\'aticas (ICMAT) as well as through project [PID2022-142024NB-I00] funded by [MCIU/\-AEI/\-10.13039/\-501100011033].

The second author is very grateful to Ryotaro Sakamoto and Takamichi Sano for many fruitful discussions relating to Euler and Kolyvagin systems, and to Masato Kurihara for his help and encouragement. 

\filbreak
\part{The general theory}

In this first part of the article we develop a general theory of Euler and Kolyvagin systems for Nekov\'a\v{r}--Selmer complexes over local complete Gorenstein rings. Arithmetic consequences of this theory will be discussed in the second part of the article (starting in \S\,\ref{relaxed Kato section}).

\section{Algebraic preliminaries} \label{algebraic preliminaries section}

This section establishes some general algebraic results that are essential to the theory developed in later sections.  
 Throughout, we fix a commutative Noetherian ring $R$. For each natural number $n$, we write $\Spec^n (R)$ and $\Spec^{\le n}(R)$ for the sets of prime ideals of $R$ that are of height $n$ and of height at most $n$ respectively.

We endow the linear dual $M^\ast \coloneqq \Hom_R (M, R)$ of an $R$-module $M$ with the structure of an $R$-module by setting $(x \cdot f) (m) \coloneqq x \cdot f(m)$. 

We frequently use (without explicit comment) that inverse limits over systems indexed by the natural numbers are exact on the category of finitely generated $\ZZ_p$-modules.

Throughout the article we also use the following convenient notations: we set $\N_0 \coloneqq \N\cup\{0\}$ and, for $n\in \N$, write $[n]$ for the ordered set $\{i \in \N: i\le n\}$. 

\subsection{Hypotheses on rings} 

\subsubsection{$G_n$-rings}

We first introduce the categories of rings for which most of our theory will be developed. 

\begin{definition} Let $R$ be a commutative Noetherian ring.
Then, for each $n \in \N_0$, one says   
\begin{itemize}[label=$\circ$]
\item $R$ has property $(G_n)$ if $R_\p$ is Gorenstein for all $\p \in \mathrm{Spec}^{\leq n}(R)$; 
\item $R$ has property $(S_n)$ if $\mathrm{depth}(R_\p) \geq \mathrm{inf} (n, \mathrm{ht} (\p) )$ for all $\p\in \mathrm{Spec}(R)$; 
\index{Serre's condition $(S_n)$}
\item $R$ is a `$G_n$-ring' if it has both of the properties $(G_{n - 1})$ and $(S_n)$. 
\end{itemize}
\end{definition}

\begin{rk} \label{ring-conditions-rk}
\begin{romanliste}
\item Condition (S$_n$) is often referred to as `Serre's condition'.  
\item $G_n$-rings were first studied by Ischebeck \cite{Ischebeck69} and by Reiten and Fossum \cite{ReitenFossum72}. $G_2$-rings are also studied by Vasconcelos \cite{Vasconcelos68,Vasconcelos70} who refers to them as `quasi-normal rings'. 
\item A ring is Cohen--Macaulay, respectively Gorenstein, if and only if it satisfies $(S_n)$, respectively $(G_n)$, for all $n \in \N_0$. Since every Gorenstein ring is Cohen--Macaulay, this also shows that Gorenstein rings satisfy both $(S_n)$ and $(G_n)$ for all $n \in \N_0$. 
 \end{romanliste}\end{rk}

Our first result shows that the class of $G_n$-rings is closed under a variety of natural operations. In particular, it implies  (equivariant) Iwasawa algebras and group rings of finite abelian groups with coefficients in $\Z$ or $\Z_p$ are $G_n$-rings (in fact, Gorenstein rings).      

\begin{lem} \label{G_n ring examples}
If $R$ is a $G_n$-ring for some $n \in \N_0$, then the following claims are valid:
\begin{romanliste}
\item The polynomial ring $R[t]$ for an indeterminate $t$ is a $G_n$-ring.
\item The ring of formal power series $R \llbracket t \rrbracket$ is a $G_n$-ring.
\item The group ring $R[ G ]$ for a finite abelian group $G$ is a $G_n$-ring.
\end{romanliste}
\end{lem}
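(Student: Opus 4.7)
The plan is to deduce all three parts from a common framework based on the behaviour of the Gorenstein property and of depth under flat local extensions. To this end, I would first recall three standard facts about a flat local homomorphism $\varphi\colon (R,\mathfrak m) \to (S,\mathfrak n)$ of Noetherian local rings: (a) $S$ is Gorenstein if and only if both $R$ and the closed fibre $S/\mathfrak m S$ are Gorenstein; (b) $\mathrm{depth}(S) = \mathrm{depth}(R) + \mathrm{depth}(S/\mathfrak m S)$; and (c) $\mathrm{ht}(\mathfrak n) = \mathrm{ht}(\mathfrak m) + \mathrm{ht}(\mathfrak n /\mathfrak m S)$.

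For each of the three constructions I would verify that the relevant extension $R \to S$ is flat and that all of its fibres are Cohen--Macaulay (so their depth equals their Krull dimension) and Gorenstein. Given an arbitrary prime $\mathfrak q$ of $S$ with contraction $\mathfrak p = \mathfrak q \cap R$, and writing $h_1 = \mathrm{ht}(\mathfrak p)$ and $h_2 = \mathrm{ht}(\mathfrak q/\mathfrak p S)$, applying (a)--(c) to the induced flat local map $R_\mathfrak p \to S_\mathfrak q$ then gives
\[
\mathrm{depth}(S_\mathfrak q) = \mathrm{depth}(R_\mathfrak p) + h_2 \geq \min(n, h_1) + h_2 \geq \min(n, h_1 + h_2) = \min(n, \mathrm{ht}(\mathfrak q)),
\]
which yields property $(S_n)$ for $S$; and at any prime $\mathfrak q$ of height at most $n-1$ one has $h_1 \leq n-1$, so property $(G_{n-1})$ for $R$ combines with the Gorenstein fibre to yield $(G_{n-1})$ for $S$.

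It remains to implement this for each of (i)--(iii). For $R[t]$, flatness is automatic and the fibre at $\mathfrak p$ is the PID $k(\mathfrak p)[t]$, whose localisations are regular of dimension $0$ or $1$. The case of $R\llbracket t\rrbracket$ runs along the same lines: flatness holds since $R$ is Noetherian, and Noetherianity also gives the fibre computation $R\llbracket t\rrbracket / \mathfrak p R\llbracket t\rrbracket \cong (R/\mathfrak p)\llbracket t\rrbracket$, whose two prime ideals $(0)$ and $(t)$ both give rise to regular localisations. For $R[G]$, the map is finite flat since $R[G]$ is free of rank $|G|$ over $R$, so heights are preserved (forcing $h_2 = 0$) and each fibre $k(\mathfrak p)[G]$ is a finite-dimensional Frobenius algebra, hence Artinian Gorenstein.

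The main technical subtlety I anticipate lies in case (ii), namely the justification of both the flatness of $R \to R\llbracket t\rrbracket$ and of the fibre isomorphism $R\llbracket t\rrbracket \otimes_R R/\mathfrak p \cong (R/\mathfrak p)\llbracket t\rrbracket$, each of which rests essentially on the Noetherianity of $R$ (and neither of which is automatic for a non-Noetherian base). All other verifications reduce to clean book-keeping with the three facts (a), (b) and (c) stated above.
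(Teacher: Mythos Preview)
Your framework---reducing to flat local homomorphisms via facts (a), (b), (c)---is correct and is essentially what Reiten--Fossum do; the paper simply cites them. Parts (i) and (iii) go through as you describe: the fibres $k(\mathfrak p)[t]$ and $k(\mathfrak p)[G]$ are exactly as claimed.

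The gap is in (ii). Your identity $R\llbracket t\rrbracket/\mathfrak p R\llbracket t\rrbracket \cong (R/\mathfrak p)\llbracket t\rrbracket$ is correct, but this is $R\llbracket t\rrbracket\otimes_R R/\mathfrak p$, not the fibre $R\llbracket t\rrbracket\otimes_R k(\mathfrak p)$. The latter is obtained by further inverting $R/\mathfrak p\setminus\{0\}$, and because formation of power series does \emph{not} commute with localisation (only with quotients by finitely generated ideals), this yields a proper subring of $k(\mathfrak p)\llbracket t\rrbracket$ whenever $\mathfrak p$ is non-maximal. In particular it is not a ring with only the two primes $(0)$ and $(t)$: for instance, if $R/\mathfrak p = \ZZ$ then $2-t$ generates a height-one prime of $\ZZ\llbracket t\rrbracket$ meeting $\ZZ$ only in $(0)$, hence surviving in the fibre, and distinct from both $(0)$ and $(t)$. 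Thus the closed fibre of $R_\mathfrak p\to (R\llbracket t\rrbracket)_\mathfrak q$ may be the localisation of $(R/\mathfrak p)\llbracket t\rrbracket$ at any of infinitely many such primes, and you have not checked that these other localisations are Gorenstein. Supplying that check is exactly the content of Reiten--Fossum's Proposition~3, and is where the genuine work lies; the issues you flag (flatness, and the quotient isomorphism) are real but secondary.
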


\begin{proof}
Claim (i) is proved in \cite[Cor.\@ (a) after Prop.\@ 1]{ReitenFossum72} and (ii) is  \cite[Cor.\@ after Prop.\@ 3]{ReitenFossum72}. To prove (iii) we apply \cite[Prop.\@ 1\,(ii)]{ReitenFossum72}: to see that the fibres of $R \to R[G]$ are $n$-Gorenstein it suffices to note that, for any field $F$, the ring $F[G]$ is Gorenstein  (this follows from \cite[Cor.\@ 8]{EilenbergNakayama} which shows that $\mathrm{inj dim}_{F [G]} (F [G]) =\mathrm{inj dim}_{F} (F) = 0$).  
\end{proof}

We recall that an $R$-module $M$ is said to be `pseudo-null'\index{pseudo-null} if $M_\p$ vanishes for all $\p\in \Spec^{\leq 1}(R)$. The following observations about this notion will be useful. 

\begin{lem}[Sakamoto] \label{ryotaro-useful-lem}
Fix an $S_2$-ring $R$ and an $R$-module $M$. 
\begin{romanliste}
\item If $M$ is  pseudo-null, then $M^\ast$ and $\Ext^1_R (M, R)$ both vanish.
\item If $R$ is also a $G_0$-ring, and $N_1$ and $N_2$ are finitely generated submodules of $M$ with $(N_1)_\p \subseteq (N_2)_\p$ for all $\p \in \Spec^{\leq 1}(R)$, then the natural map  $(N_1)^{\ast \ast} \to (N_2)^{\ast \ast}$ is injective. 
\end{romanliste}
\end{lem}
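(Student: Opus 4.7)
For (i), the key facts are that $(S_2)$ implies $(S_1)$, hence $\mathrm{Ass}(R) \subseteq \Spec^{0}(R)$, and that at any prime $\p$ of height $\geq 2$ we have $\mathrm{depth}(R_\p) \geq 2$. I would first observe that any finitely generated submodule of $R$ whose localisation at every associated prime of $R$ vanishes must itself be zero. For $M^\ast$, given a map $f \colon M \to R$ and $m \in M$, the submodule $R \cdot f(m) \subseteq R$ is finitely generated with $\mathrm{Ass}(R \cdot f(m)) \subseteq \mathrm{Ass}(R) \subseteq \Spec^{\leq 1}(R)$, while $(R \cdot f(m))_\p = 0$ at every such $\p$ (as the image of the zero map $(Rm)_\p = 0 \to R_\p$); hence $R \cdot f(m) = 0$, so $f(m) = 0$. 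This being valid for all $m \in M$, one gets $f = 0$.

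For $\Ext^1_R(M, R)$ (reducing to the case that $M$ is finitely generated), I would choose a finite free presentation $0 \to K \to F \to M \to 0$ and use $M^\ast = 0$ to obtain the short exact sequence
\[
0 \to F \to K^\ast \to L \to 0, \qquad L \coloneqq \Ext^1_R(M, R).
\]
Localising at $\p \in \Spec^{\leq 1}(R)$ gives $L_\p = 0$. At any $\p$ of height $\geq 2$, $(S_2)$ yields $\mathrm{depth}(R_\p) \geq 2$, while $K^\ast$ embeds in a finitely generated free $R$-module (since $K$ is finitely generated), so $\mathrm{Ass}(K^\ast) \subseteq \mathrm{Ass}(R) \subseteq \Spec^{0}(R)$ and therefore $\mathrm{depth}(K^\ast_\p) \geq 1$. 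The depth lemma then gives $\mathrm{depth}(L_\p) \geq \min(\mathrm{depth}(F_\p) - 1, \mathrm{depth}(K^\ast_\p)) \geq 1$, so $\p \notin \mathrm{Ass}(L)$. Combined with vanishing on $\Spec^{\leq 1}(R)$, this forces $\mathrm{Ass}(L) = \varnothing$ and hence $L = 0$.

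For (ii), set $P \coloneqq N_1/(N_1 \cap N_2)$: the hypothesis $(N_1)_\p \subseteq (N_2)_\p$ at each $\p \in \Spec^{\leq 1}(R)$ gives $(N_1 \cap N_2)_\p = (N_1)_\p$ there, so $P$ is finitely generated and pseudo-null. Dualising the exact sequence $0 \to N_1 \cap N_2 \to N_1 \to P \to 0$ twice and using (i) to kill $P^\ast$ and $\Ext^1_R(P, R)$ identifies $N_1^{\ast\ast}$ with $(N_1 \cap N_2)^{\ast\ast}$ in a way that makes the natural map $N_1^{\ast\ast} \to N_2^{\ast\ast}$ into the double dual $j^{\ast\ast}$ of the inclusion $j \colon N_1 \cap N_2 \hookrightarrow N_2$. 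It therefore suffices to show $j^{\ast\ast}$ is injective. Setting $Q \coloneqq N_2/(N_1 \cap N_2)$ and splitting the long exact sequence obtained from dualising $0 \to N_1 \cap N_2 \to N_2 \to Q \to 0$ into two short exact sequences, a short diagram chase after one further round of dualisation identifies $\ker(j^{\ast\ast})$ with $C^\ast$, where $C \subseteq \Ext^1_R(Q, R)$ is the cokernel of $N_2^\ast \to (N_1 \cap N_2)^\ast$.

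The main obstacle is then to kill this $C^\ast$, and here the $(G_0)$ hypothesis enters decisively: for each $\p \in \mathrm{Ass}(R) \subseteq \Spec^{0}(R)$, the ring $R_\p$ is Gorenstein of dimension $0$, hence self-injective, so $\Ext^1_{R_\p}(Q_\p, R_\p) = 0$ and therefore $C_\p = 0$. Reapplying to $C$ the argument used for $M^\ast$ in (i) then yields $C^\ast = 0$, which completes the proof.
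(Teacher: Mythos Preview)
Your proof is correct. A brief comparison with the paper's treatment:

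For (i), the paper simply cites \cite[Lem.~B.11]{Sakamoto20}; your depth-lemma argument is a standard and complete way to establish the result directly, so here you supply what the paper leaves to the reference. One small remark: your parenthetical ``reducing to the case that $M$ is finitely generated'' is not justified in the write-up, but the applications in the paper only ever use (i) for finitely generated $M$, so this is harmless in context.

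For (ii), your route is essentially the mirror image of the paper's. The paper passes through $N_1 + N_2$: it uses (i) on the pseudo-null quotient $(N_1+N_2)/N_2$ to identify $N_2^{\ast\ast} \cong (N_1+N_2)^{\ast\ast}$, and then invokes \cite[Lem.~C.1]{Sakamoto20} to get the injectivity of $N_1^{\ast\ast} \to (N_1+N_2)^{\ast\ast}$. You instead pass through $N_1 \cap N_2$: you use (i) on the pseudo-null quotient $N_1/(N_1 \cap N_2)$ to identify $N_1^{\ast\ast} \cong (N_1\cap N_2)^{\ast\ast}$, and then prove the injectivity of $(N_1\cap N_2)^{\ast\ast} \to N_2^{\ast\ast}$ by hand via your $C^\ast = 0$ argument. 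The two arguments are structurally identical (one inclusion handled by pseudo-nullity, the other by the $(G_0)$ hypothesis), and your direct treatment of the $(G_0)$ step unpacks exactly what \cite[Lem.~C.1]{Sakamoto20} provides as a black box: the fact that over a zero-dimensional Gorenstein local ring, $\Ext^1$ into the ring vanishes. So the difference is cosmetic, with the benefit that your version is self-contained.
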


\begin{proof}
Claim (i) is proved in \cite[Lem.\@ B.11]{Sakamoto20} and (ii) follows from the proof of \cite[Lem.\@ C.13]{Sakamoto20}. Since (ii) is however slightly more general than the statement given in loc.\@ cit., we reproduce the argument here. By assumption, the module $(N_1 + N_2) / N_2$ is pseudo-null and so, by (i), the natural map $(N_1 + N_2)^\ast \to N_2^\ast$ is bijective. Hence, by \cite[Lem.\@ C.1]{Sakamoto20} (with $r = 1$), the natural map  $(N_1)^{\ast \ast} \to (N_1 + N_2)^{\ast \ast} \cong (N_2)^{\ast \ast}$ is injective, as claimed. 
\end{proof}

\subsubsection{Complete Gorenstein rings}\label{cgr section} 

In this section we fix a prime number $p$ and Noetherian ring $R$ that satisfy the following condition.
\begin{equation} \label{ring condition}
    \text{$R$ is a local complete Gorenstein ring with finite residue field of characteristic $p$.}
\end{equation}
We write $d$ for the dimension of $R$, $\m$ for its maximal ideal and $\mathbb{k}$ for its residue field $R/\m$. We also fix a decreasing filtration 
\begin{equation}\label{filtration}
\a_0 \coloneqq \m \supseteq 
\a_1 \supseteq \a_2 \supseteq \dots \end{equation}
of ideals of $R$ such that, for every $n$, the ring $R_n \coloneqq R / \a_n$ is Gorenstein of dimension zero (or equivalently, finite and self-injective) and the natural map $R \to {\varprojlim}_n (R / \a_n)$ is bijective. 

\begin{remark}\label{ryotaro example} The condition (\ref{ring condition}) ensures $R$ is Cohen--Macaulay and so one can fix a regular sequence $\{x_i\}_{i\in [d]}$ in $\m$ with $\sqrt{(x_1, \dots, x_d)} = \m$. One then obtains a filtration (\ref{filtration}) of the required sort by setting $\a_n \coloneqq (x_1^n, \dots, x_d^n)$ for all $n \geq 1$. Indeed, $\{x_i^n\}_{i \in [d]}$ is a regular sequence, $\sqrt{\a_n} = \m$, the quotient $R/\a_n$ is Gorenstein (cf.\@ \cite[Prop.\@ 3.1.19\,(b)]{BrunsHerzog}) of dimension zero and the map $R\to {\varprojlim}_n R/\a_n$ is bijective since $R$ is complete, Noetherian, and local. Further, this filtration is universal in the sense that if $\{ \a'_n \}_{n \in \N}$ is any other filtration as in (\ref{filtration}), then for every $n$ there exists $m \in \N$ with $\a_m \subseteq \a'_n$. (This is true since $R / \a'_n$ being zero-dimensional implies $R / \p$ is zero-dimensional for any associated prime $\p \subseteq R$ of $R / \a'_n$ and so $R /\p$ is field; it follows that $\m$ is the only associated prime of $R / \a'_n$ and so $\a'_n$ is $\m$-primary, as required.) The following special instances of the filtration (\ref{filtration}) will also be of importance to us.  

\begin{romanliste}
\item $R$ is finite, local and self-injective: for every $n \geq 1$ one can set $\mathfrak{a}_n = (0)$ so $R_n = R$. 
\item $R$ is a $\ZZ_p$-order that is local and Gorenstein: one can set $\a_n = (p^n)$ for $n \geq 1$, so $R_n = R/p^n R$. 
\item $d > 1$ and $R$ is the completed group ring $\ZZ_p\llbracket G \rrbracket$ of an abelian pro-$p$ compact $p$-adic Lie group $G$ of rank $d-1$. Fix a base $\{U_n\}_{n \ge 1}$ of open neighbourhoods of the identity of $G$ with $U_{n+1}\subset U_n$ for all $n$. Then, for each $n \geq 1$, one can take $\a_n$ to be the kernel of the canonical (surjective) projection map from $R=\ZZ_p\llbracket G \rrbracket$ to $R_n \coloneqq (\ZZ_p/p^n \Z_p)[G/U_n]$. 
\end{romanliste}
\end{remark}

We fix an injective envelope $E_R (\mathbb{k})$ of the $R$-module $\mathbb{k}$ (as defined in \cite[Def.\@ 3.2.3]{BrunsHerzog}) and define the  `Matlis dual' of an $R$-module $M$ by setting 
\[ M^\vee \coloneqq \Hom_R (M, E_R (\mathbb{k}))\] 
(regarded as an $R$-module in the obvious way). We recall that $E_R(\mathbb{k})$, and hence also $M^\vee$ for any given module $M$, is unique up to isomorphism. In addition, the injectivity $E_R (\mathbb{k})$ implies that the assignment $M\mapsto M^\vee$ induces a contravariant exact functor on $R$-modules and, if $M$ is either a finitely generated or Artinian $R$-module, then Matlis duality asserts that the natural map $M \to M^{\vee \vee}$ is bijective (cf.\@ \cite[Th.\@ 3.2.13]{BrunsHerzog}).
 
 Finally, we record a useful description of $E_R(\mathbb{k})$ in terms of the filtration (\ref{filtration}) (this result is essentially well-known but, for lack of a good reference, we include a proof).

\begin{lem} \label{inj env lem}
    The following claims are valid.
    \begin{romanliste}
    \item For every $n$ one can take $E_{R_n} (\mathbb{k}) = R_n$ and also fix an isomorphism of $R_n$-modules $R_n \cong \Hom_\Lambda ( R_n, E_R(\mathbb{k})) \cong E_R(\mathbb{k})[\a_n]$. 
     \item The isomorphisms in (i) induce an identification $T_n^\vee \cong T_n^\ast$ and also an injective homomorphism $R_n \cong E_R(\mathbb{k})[\a_n]\subseteq E_R(\mathbb{k})[\a_{n+1}]\cong R_{n+1}$ of $R$-modules. The module $E_R(\mathbb{k})$ is isomorphic to the inductive limit ${\varinjlim}_{n \in \N} R_n$ with respect to the latter morphisms.  
     \item Let $M$ be a finitely generated $R_{n + 1}$-module. Then any choice of injection $R_n  \hookrightarrow R_{n + 1}$ as in (ii) induces an isomorphism $(M [\a_n])^\ast \cong M^\ast / \a_n M^\ast$. 
    \end{romanliste} 
\end{lem}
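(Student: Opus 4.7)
The plan is to prove the three parts in sequence, exploiting throughout the fact that each $R_n$ is zero-dimensional Gorenstein and hence self-injective.

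For (i), I would first recall the standard fact that a zero-dimensional Gorenstein local ring is its own injective envelope, giving $E_{R_n}(\mathbb{k}) = R_n$. The identification $\Hom_R(R_n, E_R(\mathbb{k})) \cong E_R(\mathbb{k})[\a_n]$ is immediate from the universal property of $R_n = R/\a_n$: an $R$-linear map out of $R_n$ is determined by the image of $1$, which must be annihilated by $\a_n$. To identify both with $R_n$, I would show that $\Hom_R(R_n, E_R(\mathbb{k}))$, viewed naturally as an $R_n$-module, is itself an injective envelope of $\mathbb{k}$ over $R_n$: injectivity follows from the fact that $\Hom_R(R_n, -)$ is right adjoint to restriction of scalars along $R \to R_n$ and hence preserves injectives; essentiality over $R_n$ follows from that of $\mathbb{k} \subseteq E_R(\mathbb{k})$ over $R$, since any nonzero $R_n$-submodule is in particular a nonzero $R$-submodule. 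Uniqueness of injective envelopes then yields the desired isomorphism $R_n \cong \Hom_R(R_n, E_R(\mathbb{k}))$.

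For (ii), the identification $T_n^\vee \cong T_n^\ast$ (with $T_n^\ast$ understood as the $R_n$-linear dual) is a formal consequence of (i) and Hom--tensor adjunction:
\[
T_n^\vee = \Hom_R(T_n, E_R(\mathbb{k})) \cong \Hom_{R_n}\bigl(T_n, \Hom_R(R_n, E_R(\mathbb{k}))\bigr) \cong \Hom_{R_n}(T_n, R_n).
\]
The containment $\a_{n+1} \subseteq \a_n$ gives $E_R(\mathbb{k})[\a_n] \subseteq E_R(\mathbb{k})[\a_{n+1}]$, which via (i) yields the asserted injection $R_n \hookrightarrow R_{n+1}$. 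For the direct limit description, I would invoke the Artinianness of $E_R(\mathbb{k})$ (a standard consequence of $R$ being complete local Noetherian), whence every element of $E_R(\mathbb{k})$ is annihilated by some power of $\m$; since each $\a_n$ is $\m$-primary and the sequence is cofinal with $\{\m^n\}_n$ by the universality recorded in Remark \ref{ryotaro example}, one obtains $E_R(\mathbb{k}) = \bigcup_n E_R(\mathbb{k})[\a_n]$, which under the isomorphisms of (i) becomes $\varinjlim_n R_n$.

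For (iii), I would exploit the standard exchange of kernels and cokernels under Matlis duality over the self-injective ring $R_{n+1}$. Applying $\Hom_{R_{n+1}}(-, R_{n+1})$ to the four-term exact sequence $0 \to M[a] \to M \xrightarrow{\,a\,} M \to M/aM \to 0$ for $a \in \a_n$ and using the injectivity of $R_{n+1}$ gives $(M[a])^\ast \cong M^\ast/aM^\ast$; an induction over a finite generating set of $\a_n$ then yields $(M[\a_n])^\ast \cong M^\ast/\a_n M^\ast$, with the comparison between the $R_n$- and $R_{n+1}$-duals of $M[\a_n]$ provided by (ii). The main obstacle, though not a serious one, is arguably the verification in (i) that $\Hom_R(R_n, E_R(\mathbb{k}))$ is an injective envelope of $\mathbb{k}$ as an $R_n$-module, since one must carefully transfer the essentiality property across the change of rings; once (i) is in hand, (ii) and (iii) reduce to routine adjunction and elementary Matlis duality manipulations.
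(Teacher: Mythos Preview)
Your proof is correct. For (i) and (ii) you follow essentially the paper's route: you identify $E_R(\mathbb{k})[\a_n]$ as an injective envelope of $\mathbb{k}$ over $R_n$ (the paper does this via a cardinality comparison, you via the right-adjoint-preserves-injectives fact), you obtain $T_n^\vee \cong T_n^\ast$ by the same Hom--tensor adjunction, and your direct-limit argument via Artinianness of $E_R(\mathbb{k})$ is parallel to the paper's argument via associated primes.

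For (iii) your approach is genuinely different. The paper fixes an injective copresentation $0 \to M \to R_{n+1}^{\oplus l} \xrightarrow{\cdot A} R_{n+1}^{\oplus m}$ (using self-injectivity of $R_{n+1}$), applies the $\a_n$-torsion functor, dualises, and then compares with the reduction mod $\a_n$ of the $R_{n+1}$-dual presentation via the explicit isomorphism $\sigma\colon R_n \cong R_{n+1}[\a_n]$ and the matrix transpose $A^{\mathrm{t}}$. You instead dualise the four-term multiplication-by-$a$ sequence over the self-injective ring $R_{n+1}$ to get $(M[a])^\ast \cong M^\ast/aM^\ast$, and then induct over a finite generating set of $\a_n$. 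Your argument is more elementary and avoids the matrix bookkeeping; the paper's version treats the whole ideal $\a_n$ in one step and makes the dependence on the chosen embedding $R_n \hookrightarrow R_{n+1}$ more transparent. Both are valid Matlis-duality manipulations.
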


\begin{proof}
     At the outset we recall that, for any ring $\cR$, an extension $N \subseteq M$ of $\cR$-modules is called `essential' if $U \cap N \neq (0)$ for every non-zero $\cR$-submodule $U \subseteq M$. By \cite[Prop.\@ 3.2.2]{BrunsHerzog} one can characterise injective $\cR$-modules $I$ as those $\cR$-modules that have no essential extensions $I \subseteq M$ with $I \not= M$, and $E_\cR (N)$ is then defined to be a maximal essential extension of $N$.
     
    The first assertion of (i) is proved in \cite[Th.\@ 3.2.10]{BrunsHerzog} and to prove the rest of (i) it is enough to construct an isomorphism of $R$-modules $E_{R_n} (\mathbb{k}) \cong  E_R(\mathbb{k})[\a_n]$. To prove this, we note that $\mathbb{k} \subseteq E_\Lambda (\mathbb{k}) [\a_n]$ is an essential extension of $R_n$-modules (since 
     $\mathbb{k} \subseteq E_R (\mathbb{k})$ is an essential extension of $R$-modules) and so $E_R(\mathbb{k}) [\a_n]$ identifies with a submodule of $E_{R_n} (\mathbb{k})$. In particular, $E_R(\mathbb{k}) [\a_n]$ is finite and hence it is enough to show $|E_R(\mathbb{k}) [\a_n]| \ge |E_{R_n} (\mathbb{k})|$. But this is clear since $\mathbb{k} \subseteq E_{R_n} (\mathbb{k})$ is an essential extension of $R$-modules so that $E_{R_n} (\mathbb{k})$ identifies with a submodule of $E_{R} (\mathbb{k})[\a_n]$. This proves (i). \ 
     
The first isomorphism in (ii) is obtained as the composite map
\begin{align*}
    T_n^\vee  = \Hom_R ( T_n, E_R (\mathbb{k})) =&\, \Hom_R ( T_n \otimes_{R} R_n, E_R (\mathbb{k})) \\
    \cong&\, \Hom_{R_n} ( T_n, \Hom_R (R_n, E_R (\mathbb{k}))) \cong \Hom_{R_n} ( T_n, R_n) = T_n^\ast,
\end{align*}
where the first isomorphism is induced by Tensor-Hom adjunction and the second  by the isomorphism in (i). The second assertion of (ii) is clear and to prove the final assertion we note that ${\varinjlim}_{n \in \N} R_n$ is an essential extension of $\mathbb{k}$. Indeed, let $x \in {\varinjlim}_{n \in \N} R_n$, then $x \in R_n$ for some $n$ and since $R_n$ is an essential extension of $\mathbb{k}$, it follows that $(R_n x) \cap \mathbb{k} \neq 0$, as required. Now let $E_R(\mathbb{k})$ be a maximal essential extension of $\mathbb{k}$ that contains ${\varinjlim}_{n \in \N} R_n$. Then any element 
$m$ of $E_R (\mathbb{k})$ is annihilated by some ideal $\a_n$: indeed, it is enough to show $\Ann_R (m)$ is $\m$-primary, and this follows from the inclusion of sets of associated primes $\mathrm{Ass}_R ( R / \Ann_R (m)) \subseteq \mathrm{Ass}_R ( E_R (\mathbb{k}) = \{ \m \}$, where the last equality is by \cite[Lem.\@ 3.2.7\,(a)]{BrunsHerzog}. Now, one has $E_R (\mathbb{k}) [\a_n] = R_n$ by (i) and so we deduce that $m$ belongs to ${\varinjlim}_{n \in \N} R_n$. It follows that ${\varinjlim}_{n \in \N} R_n$ is a maximal essential extension of $\mathbb{k}$, as required to prove (ii). \\
To prove (iii), we may fix an injective presentation $0 \to M \to R_{n + 1}^{\oplus l} \xrightarrow{\cdot A} R_{n + 1}^{\oplus m}$ for suitable integers $l, m \geq 0$ and an $(m \times l)$-matrix $A = (a_{ij})_{ij}$ because $R_{n + 1}$ is injective as an $R_{n+1}$-module. Since taking $\a_n$-torsion defines a left-exact functor, dualising leads to an exact sequence
\begin{cdiagram}
    ( R_{n + 1} [\a_n])^{\oplus m} \arrow{r}{\cdot A^\mathrm{t}} & (R_{n + 1} [\a_n])^{\oplus l} \arrow{r} & (M [\a_n])^\ast \arrow{r} & 0,
\end{cdiagram}%
where $A^\mathrm{t} = (a_{ji})_{ij}$ is the transpose of $A$. Write $\sigma \: R_n \hookrightarrow R_{n + 1}$ for the fixed embedding, which restricts to an isomorphism $\hat\sigma \: R_n \cong R_{n +1} [\a_n]$
because $R_n$ and $R_{n +1} [\a_n] \cong \Hom_{R_{n + 1}} ( R_n, R_{n + 1})$ have the same length as $R_{n + 1}$-modules. The $R_{n + 1}$-linearity of $\sigma$ then implies that the diagram
\begin{cdiagram}[column sep=small, row sep=small]
    ( R_{n + 1} [\a_n])^{\oplus m} \arrow{r}{\cdot A^\mathrm{t}} & (R_{n + 1} [\a_n])^{\oplus l} \arrow{d}[left]{(\hat\sigma^{-1})^{\oplus l}}[right]{\simeq}\\ 
    R_n^{\oplus m} \arrow{r}{\cdot A^\mathrm{t}} \arrow{u}[left]{\sigma^{\oplus m}}[right]{\simeq} & R_n^{\oplus l} 
\end{cdiagram}%
is commutative. 
Since the cokernel of $R_n^{\oplus m} \xrightarrow{\cdot A^\mathrm{t}} R_n^{\oplus l}$ is canonically isomorphic to $M^\ast / \a_n M^\ast$, we deduce that the map  $(\hat\sigma^{-1})^{ \oplus l}$ induces an isomorphism $(M [\a_n])^\ast \xrightarrow{\simeq} M^\ast / \a_n M^\ast$, as required. 
\end{proof}

In connection with Lemma \ref{inj env lem} the following general observation will be useful.

\begin{lem} \label{ryotaro's reduction trick}
Let $R$ be a Noetherian local ring with maximal ideal $\cM$. For every non-zero element $x$ in an $R$-module $F$ there is an element $r \in R$ such that $r \cdot x$ is both non-zero and belongs to $F [\cM] \coloneqq \{ y \in F \mid a \cdot m = 0 \text{ for all } a \in \cM \}$. 
\end{lem}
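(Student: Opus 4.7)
The plan is to produce $r$ by locating the deepest non-vanishing level of the descending chain of powers $\cM^k \cdot x$ inside $F$. First, I would reformulate the conclusion using the annihilator $I \coloneqq \Ann_R(x)$: finding $r$ with $r x \neq 0$ and $\cM \cdot (r x) = 0$ is the same as showing that the ideal quotient $(I : \cM) = \{ s \in R \mid \cM s \subseteq I \}$ strictly contains $I$. This reformulation makes clear that the question is entirely about the cyclic submodule $R x \cong R/I$ of $F$.

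Next, I would examine the descending chain $R x \supseteq \cM x \supseteq \cM^2 x \supseteq \cdots$ and let $n \geq 1$ be minimal with $\cM^n x = 0$ (using the convention $\cM^0 = R$ when $n = 1$). By minimality $\cM^{n-1} x \neq 0$, so one can choose $r \in \cM^{n-1}$ with $r x \neq 0$; for any $a \in \cM$ one then has $a \cdot (r x) = (a r) x \in \cM^n x = 0$, so $r x$ lies in $F[\cM]$ and $r$ is the required element.

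The one genuinely non-trivial point is the existence of the minimal $n$, i.e.\@ that some power of $\cM$ annihilates $x$. This is the step I expect to be the main obstacle in full generality, since without it the conclusion fails (for instance if $R$ is a discrete valuation ring and $x = 1 \in R = F$). In the intended applications following Lemma \ref{inj env lem}, however, the relevant modules $F$ are $\cM$-power torsion: they arise as $R_n$-modules, as submodules of $E_R(\mathbb{k}) \cong \varinjlim_n R_n$, or as Artinian modules, and in each such case every element is annihilated by some ideal $\a_n$ of the filtration (\ref{filtration}) and hence, since $R/\a_n$ is a zero-dimensional local ring whose maximal ideal is nilpotent, by a power of $\cM$. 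Thus existence of $n$ is automatic in context, and the argument above supplies the desired $r$.
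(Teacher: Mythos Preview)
Your approach is essentially the same as the paper's: both proofs look at the descending chain $Rx \supseteq \cM x \supseteq \cM^2 x \supseteq \cdots$, find the minimal $n$ with $\cM^n x = 0$, and take $r \in \cM^{n-1}$ with $rx \neq 0$. You have, however, been more careful than the paper on the one point that matters.

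The paper's proof opens with ``Since $R$ is Artinian, the descending chain \ldots\ becomes stationary'' and then applies Nakayama's lemma to conclude $\cM^l x = 0$. But Artinian-ness does not follow from the stated hypothesis of Noetherian local, and as you correctly observe, the statement is false in that generality (your DVR counterexample $R = F$, $x = 1$ is exactly right). The paper's proof is valid only under the additional assumption that $R$ is Artinian (equivalently, zero-dimensional), which is indeed the situation in every application of the lemma in the paper: it is invoked for the finite self-injective rings $R_n$, or for modules that are $\cM$-power torsion for the reasons you describe. So the statement as written is slightly mis-stated, and you have correctly diagnosed both the gap and its resolution in context.

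One minor point: your reformulation via the ideal quotient $(I:\cM) \supsetneq I$ is a nice way to see what is being asked, but it is not needed for the argument and the paper does not use it. Also, the paper's final sentence (``take $r$ to be any element of $\cM^{l-1} \setminus \cM^l$'') is slightly imprecise, since not every such $r$ need satisfy $rx \neq 0$; your formulation (choose $r \in \cM^{n-1}$ with $rx \neq 0$, which exists since $\cM^{n-1} x \neq 0$) is the correct one.
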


\begin{proof}
Since $R$ is Artinian, the descending chain of $R$-modules $R x \supseteq \cM \cdot x \supseteq \dots \supseteq \cM^l \cdot x \supseteq \dots $ becomes stationary and so there exists $i \in \N_0$ such that $\cM^l \cdot x = \cM^{l + 1} \cdot x$. By Nakayama's lemma, this implies $\cM^l \cdot x = 0$.
Let $l$ be the smallest non-negative integer with this property. Then, since $x \neq 0$ (by assumption), $l \geq 1$ and, by the minimality of $l$, $\cM^{l - 1} \cdot x$ is non-zero and contained in $F [\cM]$. We may thus take $r$ to be any element of $\cM^{l - 1} \setminus \cM^l$.
\end{proof}

\begin{remark}\label{switch} Fix an element $r$ as in Lemma \ref{ryotaro's reduction trick} with $F = R$ and $x=1$. If $R$ is a zero-dimensional Gorenstein local ring, the ideal $R[\mathcal{M}]$ is a one-dimensional $\mathbb{k}$-vector space by Lemma \ref{inj env lem}\,(i) and so, in particular, principal. The assignment $1\mapsto r$ therefore induces an isomorphism of $R$-modules $\mathbb{k} = R/\mathcal{M} \cong R[\mathcal{M}]$. 
\end{remark}

For a general result on base change over inverse limit rings, see Lemma \ref{Tor lemma} below.

\subsection{Fitting ideals} 
In this section we fix a commutative unital ring $\cR$ and a finitely-presented $\cR$-module $M$. We then fix (as we may) a resolution of $M$ of the form
\[
\cR^{\oplus m} \xrightarrow{\varphi} \cR^{\oplus n} \to M \to 0
\]
in which $m \ge n$. For $i\in \N_0$, the $i$-th `Fitting ideal' $\Fitt^i_\cR (M)$ of $M$ is then defined to be the ideal of $\cR$ that is generated by the set of all $(n - i) \times (n - i)$-minors of any matrix representing $\varphi$ (where we use the convention that the determinant of an empty matrix is $1$). 

It is easily checked that the above definition is independent of the chosen resolution, that $\Fitt^{i}_\cR (M)\subseteq \Fitt^{j}_\cR (M)$ if $i \le j$ and that $\Fitt^{i}_\cR (M) = \cR$ if $ i \ge n$ (cf.\@ \cite[\S\,3.1, Th.~1, Th.~2]{Northcott}).  
In the following result we record several further properties of these ideals, most of which are well-known, that we shall rely on throughout this article. (A more detailed discussion, and further properties, of Fitting ideals can be found in  \cite[\S\,3.1]{Northcott}, \cite[App.]{MazurWiles} and \cite[\S\,20.2]{eisenbud-comm-algebra}.)

\begin{lem}\label{standard fitting props}
    The following claims are valid for all finitely-presented $\cR$-modules $M$ and $N$ and all $i$ and $j$ in $\N_0$.
    \begin{itemize}
         \item[(i)] If $f \: M \to N$ is a surjective map of $\cR$-modules, then $\Fitt^i_\cR (M) \subseteq \Fitt^i_\cR (N)$.
        \item[(ii)] If $0 \to N \to M \to M / N \to 0$ is a short exact sequence of $\cR$-modules, then one has $\Fitt^i_\cR (N) \cdot \Fitt^j_\cR (M / N) \subseteq \Fitt^{i + j}_\cR (M)$. 
        \item[(iii)] One has $\Fitt^i_\cR (M \oplus N) = \sum_{a = 0}^{a=i} \Fitt^a_\cR (M) \cdot \Fitt^{i-a}_\cR (N)$. In particular, if $N$ is a non-zero free $\cR$-module of rank $r$, then $\Fitt^i_\cR (M) = \Fitt_\cR^{i+r}(M\oplus N)$. 
        \item[(iv)] If $f \: \cR \to \cR'$ is a morphism of rings, then $ M \otimes_\cR \cR'$ is a finitely-presented $\cR'$-module and $\Fitt^i_{\cR'} ( M \otimes_\cR \cR')$ is equal to the ideal of $\cR'$   generated by $f(\Fitt^i_\cR (M))$. 
        \item[(v)] (Buchsbaum--Eisenbud) If $j < i$, then $\Fitt^j_\cR (M)$ annihilates $\exprod^i_\cR M$. In particular, $M$ is itself annihilated by $\Fitt^0_\cR (M)$.  
    \end{itemize}
\end{lem}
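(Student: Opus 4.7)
The plan is to verify (i)--(iv) by direct manipulation of presentation matrices and to cite the Buchsbaum--Eisenbud annihilator theorem for (v) rather than reprove it. For (i), composing the given surjection $\cR^n \twoheadrightarrow M$ with $f \colon M \twoheadrightarrow N$ and adjoining generators of the enlarged kernel yields a presentation $\cR^{m'} \xrightarrow{\varphi'} \cR^n \twoheadrightarrow N$ whose matrix has $\varphi$ as a submatrix; every $(n-i)$-minor of $\varphi$ then appears as a minor of $\varphi'$, forcing the claimed containment. Claim (iv) is immediate, because right-exactness of $- \otimes_\cR \cR'$ transforms the presentation of $M$ into a presentation of $M \otimes_\cR \cR'$ whose matrix is obtained by applying $f$ entrywise, so the minors of the new matrix are the $f$-images of those of $\varphi$.

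For (ii), I would assemble presentations $\cR^{m_1} \xrightarrow{\alpha} \cR^{n_1} \twoheadrightarrow N$ and $\cR^{m_2} \xrightarrow{\beta} \cR^{n_2} \twoheadrightarrow M/N$ into a horseshoe-type block upper triangular presentation of $M$ of the form $\bigl(\begin{smallmatrix} \alpha & C \\ 0 & \beta \end{smallmatrix}\bigr)$, where $C$ encodes lifts of the relations on $M/N$ through the chosen generators of $N$. An $(n_1 + n_2 - i - j) \times (n_1 + n_2 - i - j)$ minor of this matrix obtained by pairing an $(n_1-i)$-minor of $\alpha$ with an $(n_2-j)$-minor of $\beta$ evaluates to a product lying in $\Fitt^i_\cR(N) \cdot \Fitt^j_\cR(M/N)$, giving (ii). Claim (iii) is analogous, via a block-diagonal presentation of $M \oplus N$ and Laplace expansion; the consequence for free $N$ of rank $r$ follows because such $N$ has an empty presentation matrix, making $\Fitt^s_\cR(N)$ the zero ideal for $s < r$ and the unit ideal for $s \geq r$.

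For (v), the inclusion $\Fitt^j_\cR(M) \subseteq \Fitt^{i-1}_\cR(M)$ recorded in the paragraph preceding the lemma reduces us to the boundary case $j = i-1$, which is the Buchsbaum--Eisenbud theorem $\Fitt^{i-1}_\cR(M) \cdot \exprod^i_\cR M = 0$ (see, for example, \cite[\S\,3.1]{Northcott}). The elementary case $i = 1$ of this follows directly from the adjugate identity $\det(A) \cdot \mathrm{Id}_n = A \cdot \mathrm{adj}(A)$ applied to any $n \times n$ submatrix $A$ of $\varphi$, which forces $\det(A) \cdot e_k$ to lie in $\im(\varphi)$ for each standard basis vector. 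The main obstacle in the whole statement is the passage from $i = 1$ to higher $i$ in (v): this is not formal from the $i = 1$ case and relies on the more delicate analysis of exterior powers of presentation matrices underlying Buchsbaum--Eisenbud, which I would quote rather than reproduce.
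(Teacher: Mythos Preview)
Your proposal is correct and follows essentially the same approach as the paper: claims (i)--(iv) are handled by the same presentation-matrix manipulations (with the paper likewise deferring to Northcott for the details of (ii) and (iii)), and for (v) both you and the paper reduce to the boundary case $i = j+1$ and invoke Buchsbaum--Eisenbud. The only notable difference is that the paper actually reproduces the Buchsbaum--Eisenbud argument in full (via the maps $\varphi^{(s)} \colon (\exprod^s \cR^m) \otimes \exprod^j \cR^n \to \exprod^{j+s} \cR^n$ and the identification of $\Fitt^j$ with the image of $\varphi^{(n-j)}$), whereas you cite it; also, your reference to \cite[\S\,3.1]{Northcott} for this result is imprecise, as Northcott's \S3.1 contains only the $\Fitt^0$ case and the general statement is \cite[Cor.\@ 1.3]{BuchsbaumEisenbud77}.
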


\begin{proof} We fix a free presentation $\cR^{\oplus m} \xrightarrow{\varphi} \cR^{\oplus n} \xrightarrow{\pi} M \to 0$ and note that we have an exact sequence $0 \to \ker \pi \to \ker (f \circ \pi) \to \ker f \to 0$. We can therefore construct a free presentation $\cR^{\oplus (m + m')} \xrightarrow{\varphi'} \cR^{\oplus n} \xrightarrow{f \circ \pi} N \to 0$ in which $\varphi'$ restricts to $\varphi$ on $\cR^{\oplus m}$. This implies (i) and the proof of (ii) is similar (see \cite[\S\,3.1, Ex.~2, solution on p.\@ 90]{Northcott} for the details). The first equality in (iii) is proved in \cite[\S\,3.1, Ex.~3, solution on p.\@ 92]{Northcott} and the second assertion follows as a consequence upon replacing $i$ by $i+r$ in the first equality and noting that $\Fitt_\cR^j(N)$ is equal to $(0)$ if $j < r$ and to $\cR$ if $j \ge r$. Claim (iv) follows directly from the definitions upon noting $(-) \otimes_\cR \cR'$ is a right-exact functor and hence that tensoring a free presentation of $M$ with $\cR'$ gives a free presentation of $M \otimes_\cR \cR'$.  

The second assertion of (v) is classical and proved in \cite[\S\,3.1, Th.~5]{Northcott}. It is also an immediate consequence of the first assertion with $i = 1$ and $j = 0$. To prove the first assertion of (v), we note that the general result of \cite[Cor.\@ 1.3]{BuchsbaumEisenbud77} proves $\Fitt^j_\cR (M)$ annihilates $\exprod^{j + 1}_\cR M$. (Note the different convention for Fitting ideals: in loc.\@ cit.\@ the authors write $F_k (M)$ for $\Fitt^{k - 1}_\cR (M)$ in our notation.) The claimed result then follows from the existence of a surjective map $(\exprod^{j + 1}_\cR M) \otimes_\cR (\exprod^{i - j - 1}_\cR M) \twoheadrightarrow \exprod^i_\cR M$. Since the cited result is perhaps not so well-known, for the convenience of the reader we include its proof. Fix a free presentation $\cR^{\oplus m} \stackrel{\varphi}{\to} \cR^{\oplus n} \to M \to 0$ of $M$ with $n \geq j + 1$. For every $s \geq 0$ we define the map
\[
\varphi^{(s)} \: \big( \exprod^s_\cR \cR^{\oplus m} \big) \otimes_\cR \exprod^j_\cR \cR^{\oplus n} \to \exprod^{j + s}_\cR \cR^{\oplus n}, \quad 
(x_1 \wedge \dots \wedge x_s) \otimes y \mapsto  \varphi (x_1) \wedge \dots \wedge \varphi (x_s) \wedge y.
\]
The starting point is then the exact sequence
\begin{cdiagram}
    \cR^{\oplus m} \otimes_\cR \exprod^j_\cR \cR^{\oplus n} \arrow{r}{\varphi^{(1)}} & \exprod^{j + 1}_\cR \cR^{\oplus n} \arrow{r} & \exprod^{i + 1}_\cR M \arrow{r} & 0,
\end{cdiagram}%
which reduces us to proving that, for every $\lambda \in \Fitt^j_\cR (M)$ and $b = b_1 \wedge \dots \wedge b_{s + 1} \in \exprod^{j + 1}_\cR \cR^{\oplus n} $, one has $\lambda b \in \im(\varphi^{(1)})$.\\
Let $e_1, \dots, e_{n - j - 1} \in \cR^{\oplus n}$ be basis vectors such that $\{ b_1, \dots, b_{j + 1}, e_1, \dots, e_{n - (j + 1)} \}$ is a linearly independent set. Then, setting $e \coloneqq e_1 \wedge \dots \wedge e_{n - j - 1}$, one has $e^\ast (e \wedge b) = b$. Now, any choice of isomorphism 
$\exprod^{n}_\cR \cR^{\oplus n} \cong \cR $ induces an identification
\[
\im \Big (
\big( \exprod^{n - j}_\cR \cR^{\oplus m} \big) \otimes_\cR \exprod^{j}_\cR \cR^{\oplus n} \xrightarrow{\varphi^{(n - j)}} \exprod^{n}_\cR \cR^{\oplus n} \cong \cR \Big) \cong  \Fitt^j_\cR (M)
\]
and so, in particular, $\Fitt^j_\cR (M)$ annihilates $\coker(\varphi^{(n - j)})$. That is, $\lambda \cdot (e \wedge b)$ can be written in the form $\sum_{l\in [k]} \varphi ( x_{l1}) \wedge \dots \wedge \varphi (x_{l (n - j)}) \wedge y_l$ for suitable $x_{lp} \in \cR^{\oplus m}$ and $y_i \in \exprod^j_\cR \cR^{\oplus n}$ with $p \in [k]$ (for some $k$) and $l \in [n - j]$. It follows that
\begin{align*}
\lambda b & = e^\ast ( \lambda \cdot (e \wedge b)) =  e^\ast \big ( {\sum}_{l \in [k]} \varphi ( x_{l1}) \wedge \dots \wedge \varphi (x_{l (n - j)}) \wedge y \big)\\
& = {\sum}_{l \in [k]}{\sum}_{p\in [n-j]} \pm \varphi (x_{lp}) \wedge e^\ast \big( \varphi (x_{l1} \wedge \dots \wedge \widehat{ \varphi ( x_{lp})} \wedge \dots \wedge \varphi ( x_{lj}) \wedge y_l \big)
\end{align*}
belongs to $\im(\varphi^{(1)})$, as required.\end{proof}

Finally, we establish a technical result about inverse limits of Fitting ideals that we will need in later arguments. We note that claim (i) of this result extends the result \cite[Th.\@ 2.1]{GreitherKurihara2008} of Greither and Kurihara and that claim (ii) develops an idea of Popescu and Yin \cite{popescu2024fittingidealsprojectivelimits}.

\begin{prop} \label{Greither--Kurihara Lemma}
Let $(\cR_i, \rho_i)_{i \in \N}$ be a projective system of rings, and set $\cR \coloneqq {\varprojlim}_{i \in \N} \cR_i$. Let $(M_i, f_i)_{i\in \N}$ be a projective system of $\cR_i$-modules with surjective transition maps $f_i$, and write $M$ for the associated $\cR$-module ${\varprojlim}_{i \in \N} M_i$. Assume that $M$ is finitely presented and that, in addition, either of the following conditions are satisfied: 
\begin{romanliste}
    \item Each ring $\cR_i$ (resp.\@ module $M_i$) is a compact Hausdorff space and the transition maps $\rho_i$ (resp.\@ $f_i$) are continuous, and there  
     exists a natural number $s$ such that, for all $i \in \N$, the kernel of  $
    M \otimes_\cR \cR_i \to M_i$ is generated by $s$ elements over $\cR_i$. 
    \item $\cR$ satisfies condition (\ref{ring condition}), and $\cR_i \coloneqq \cR / \a_i$ as in \S\,\ref{cgr section}. 
\end{romanliste}
Then, for every integer $r \geq 0$, one has $\Fitt^r_\cR (M) = 
{\varprojlim}_{i \in \N} \Fitt^r_{\cR_i} (M_i)$, where the limit is taken with respect to the maps $\Fitt^r_{\cR_{i + 1}} (M_{i + 1}) \to \Fitt^r_{\cR_i} (M_i)$ induced by the restriction of $\rho_i$. 
\end{prop}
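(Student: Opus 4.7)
My plan is to establish the two inclusions separately.

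\textbf{Easy direction ($\subseteq$).} Since $M$ is finitely presented I fix a free presentation $\cR^m \to \cR^n \to M \to 0$ and tensor with $\cR_i$ to obtain a presentation of $M \otimes_\cR \cR_i$. Lemma~\ref{standard fitting props}(iv) then gives $\Fitt^r_\cR(M) \cdot \cR_i = \Fitt^r_{\cR_i}(M \otimes_\cR \cR_i)$, and Lemma~\ref{standard fitting props}(i) applied to the canonical surjection $M \otimes_\cR \cR_i \twoheadrightarrow M_i$ yields $\Fitt^r_{\cR_i}(M \otimes_\cR \cR_i) \subseteq \Fitt^r_{\cR_i}(M_i)$. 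Passing to the inverse limit gives $\Fitt^r_\cR(M) \subseteq {\varprojlim}_i \Fitt^r_{\cR_i}(M_i)$.

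\textbf{Reverse direction; general strategy.} Take a compatible sequence $(y_i)_i$ with $y_i \in \Fitt^r_{\cR_i}(M_i)$; I want to produce $y \in \Fitt^r_\cR(M)$ projecting to each $y_i$. The idea is to write each $y_i$ as an $\cR_i$-linear combination of $(n-r)\times(n-r)$-minors of a matrix $\tilde\varphi_i$ that extends $\varphi_i$ (the reduction of the fixed presentation of $M$) by lifts to $\cR_i^n$ of generators of $\ker(M \otimes_\cR \cR_i \twoheadrightarrow M_i)$, and then to select these data coherently across $i$ so that they assemble to a single presentation datum over $\cR$.

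\textbf{Case (i).} The uniform bound $s$ ensures that $\tilde\varphi_i$ has $m + s$ columns for every $i$; hence the data consisting of the $s$ extra columns together with the coefficients expressing $y_i$ as a combination of the minors of $\tilde\varphi_i$ lies in a compact Hausdorff space $Z_i$ which is a cartesian power of $\cR_i$ of size independent of $i$ (for $M_i$ I also include the data of a lift of each column of $\tilde\varphi_i$ to $\cR_i^n$). For each $i$, the set of valid representations of $y_i$ is a nonempty closed subset $X_i \subseteq Z_i$, and continuity of the transition maps organises $(X_i)_i$ into an inverse system of nonempty compact Hausdorff spaces indexed by the filtered set $\N$. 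By the standard fact that such systems have nonempty inverse limit -- the finite intersection property inside the compact product $\prod_i Z_i$ -- any element of ${\varprojlim}_i X_i$ assembles both the desired $y \in \cR$ and its expression as a combination of minors of a map $\cR^{m+s} \to \cR^n$, thereby witnessing $y \in \Fitt^r_\cR(M)$.

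\textbf{Case (ii).} Here $\cR_i = \cR/\a_i$ is finite (each being a zero-dimensional Gorenstein ring with finite residue field), so $M_i$ is also finite and the topological hypotheses of case (i) hold automatically. The obstacle is that the uniform bound $s$ on the number of generators of $\ker(M \otimes_\cR \cR_i \twoheadrightarrow M_i)$ is no longer given a priori. To circumvent this I would use that, by Remark~\ref{ryotaro example}, the filtration $\{\a_i\}_i$ is cofinal with the $\m$-adic filtration, so every finitely generated ideal $I$ of the complete local Noetherian ring $\cR$ is closed and hence equals ${\varprojlim}_i (I + \a_i)/\a_i$; combined with an adaptation of the compactness argument of case (i) following the strategy of Popescu and Yin \cite{popescu2024fittingidealsprojectivelimits}, this yields the reverse inclusion. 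The main obstacle of the whole proof is precisely this point -- controlling the growing kernel of $M \otimes_\cR \cR_i \twoheadrightarrow M_i$ without a uniform bound -- which is why case (ii) cannot be reduced directly to case (i) despite all relevant spaces being automatically compact.
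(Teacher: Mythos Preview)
Your easy direction and your treatment of case (i) are essentially the paper's argument: fix a presentation, bound the number of extra relations by $s$, encode the choice of extra columns together with the linear-combination coefficients in a finite-dimensional parameter space over $\cR_i$, and lift a compatible family through the inverse limit by compactness. The paper phrases this via the kernels $K_i = \ker(\cR_i^{t_2} \to M_i)$ and an explicit map $\Phi_i$, but the substance is the same.

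Your case (ii), however, has a genuine gap. You correctly identify the obstacle (no uniform bound on the number of generators of $\ker(M\otimes_\cR\cR_i\to M_i)$) and correctly observe that finiteness of the $\cR_i$ gives compactness for free, but then you only gesture at ``an adaptation of the compactness argument following Popescu--Yin'' without saying what that adaptation is. The closedness observation you make yields only $\Fitt^r_\cR(M) = \varprojlim_i \Fitt^r_{\cR_i}(M\otimes_\cR\cR_i)$, which is not what is needed: the right-hand side of the claim involves $M_i$, not $M\otimes_\cR\cR_i$, and the gap between these is precisely the growing kernel you flagged.

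The paper's actual mechanism is induction on $d = \dim\cR$. For $d\le 1$, Remark~\ref{greither kurihara lemma rk} supplies the uniform bound $s$ (the minimal number of generators of ideals in a one-dimensional local Noetherian ring is bounded), so case (i) applies directly. For $d>1$, one first reduces to the concrete filtration $\a_n = (x_1^n,\ldots,x_d^n)$ from Remark~\ref{ryotaro example} (a change-of-filtration argument using that both filtrations are cofinal), then writes
\[
\Fitt^r_\cR(M) = \varprojlim_n \Fitt^r_{\cR/(x_d^n)}(M\otimes_\cR \cR/(x_d^n)) = \varprojlim_n\varprojlim_m \Fitt^r_{\cR/(\a'_m,x_d^n)}(M_m\otimes_\cR \cR/(x_d^n))
\]
with $\a'_m = (x_1^m,\ldots,x_{d-1}^m)$; the first equality is case (i) in dimension one (over $\cR$, killing powers of one regular element) and the second is the inductive hypothesis applied over the $(d-1)$-dimensional ring $\cR/(x_d^n)$. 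A final Krull-intersection argument removes the auxiliary variable $x_d$ and recovers $\varprojlim_m\Fitt^r_{\cR_m}(M_m)$. This dimension-peeling is the missing ingredient in your sketch.
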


\begin{proof} We first assume that the conditions in (i) are satisfied. 
Since each map $f_i \: M_{i + 1} \to M_i$ induces a surjection $M_{i + 1} \otimes_{\cR_{i + 1}} \cR_i \to M_i$, applying parts (iv) and (i) of Lemma \ref{standard fitting props} implies that there is an inclusion 
\[
\rho_i ( \Fitt^r_{\cR_{i + 1}} ( M_{i + 1})) = \Fitt^r_{\cR_i} (M_{i + 1} \otimes_{\cR_{i + 1}} \cR_i) \subseteq \Fitt^r_{\cR_i} (M_i).
\]
In a similar way, since the second assertion of (i) implies that the natural map $M \otimes_\cR \cR_i \to M_i$ is surjective, one has  $\theta_i ( \Fitt^r_{\cR} ( M)) = \Fitt^r_{\cR_i} ( M \otimes_\cR \cR_i) \subseteq 
\Fitt^r_{\cR_i} (M_i)$, with $\theta_i$ the natural projection map $\cR \to \cR_i$. By taking the limit over $i$, we therefore obtain an inclusion $\Fitt^r_{\cR} ( M) \subseteq {\varprojlim}_{i \in \N} \Fitt^r_{\cR_i} (M_i)$, and so we must prove the reverse inclusion.

To do this, we use the fact $M$ is finitely presented to fix an exact sequence of $\cR$-modules 
\begin{cdiagram}
    \cR^{t_1} \arrow{r} & \cR^{t_2} \arrow{r} & M \arrow{r} & 0
\end{cdiagram}%
for suitable natural numbers $t_1$ and $t_2$. Now, each of the composite maps $\cR^{t_2} \to M \to M_i$, where the second arrow is the natural projection, factors through $\cR^{t_2} \to \cR^{t_2}_i$.
The assumption that the transition maps $f_j \: M_{j + 1} \to M_j$ are surjective ($j \geq i$) implies that $M \to M_i$ is surjective, hence we obtain a surjective map $\cR^{t_2}_i \to M_i$. 
Setting $K_i \coloneqq \ker ( \cR^{t_2}_i \to M_i)$, we obtain an exact commutative diagram 
\begin{equation} \label{diagram bounded number of generators}
\begin{tikzcd}[row sep=small]
    \cR_i^{t_1} \arrow{d} \arrow{r} & \cR^{t_2}_i \arrow[equals]{d} \arrow[twoheadrightarrow]{r} & M \otimes_\cR \cR_i \arrow{d}  \\
     K_i \arrow[hookrightarrow]{r} & \cR^{t_2}_i\arrow[twoheadrightarrow]{r} & M_i.
\end{tikzcd}%
\end{equation}

The Snake Lemma, applied to the above diagram, then combines with condition (iii) to imply that $K_i$ can be generated by at most $t_1 + s$ elements. \\
Since $M_i$ is Hausdorff, $K_i$ is a closed subspace of a compact Hausdorff space, hence is itself compact Hausdorff.  
The associated first derived limit therefore vanishes and so, passing to the limit of the bottom sequences in (\ref{diagram bounded number of generators}), we obtain the exact sequence
\begin{cdiagram}
    0 \arrow{r} & K \coloneqq {\varprojlim}_{i \in \N} K_i \arrow{r} & \cR^{t_2} \arrow{r} & M \arrow{r} & 0.
\end{cdiagram}%
Using this sequence, $\Fitt^r_\cR (M)$ can then be computed as follows: 
For every subset $J \subseteq [t_2]$ of cardinality $r$, we now write $\pi_J \: \cR^{t_2} \to \cR^r$ for the projection $(x_1, \dots, x_{t_2}) \mapsto (x_j)_{j \in J}$. For each $v = (v_1, \dots, v_r) \in K^r$, we then write $\pi_J (v)$ for the $(r \times r)$-matrix with columns $\pi_J (v_1), \dots, \pi_J (v_r)$. Then $\Fitt^r_\cR (M)$ is the ideal of $\cR$ generated by all $\det ( \pi_J (v))$ with $v$ ranging over $K^r$ and $J$ ranging over all subsets of $[t_2]$ of cardinality $r$. \\
We now give a similar description of $\Fitt_{\cR_i} (M_i)$ for every $i \in \N$. We write $\pi_{J, i} \: \cR_i^{t_2} \to \cR_i^r$ for the projection onto coordinates in $J$, and, for every $v = (v_1, \dots, v_r) \in K_i^r$, write $\pi_{J, i} (v)$ for the matrix with columns $\pi_{J, i} (v_1), \dots, \pi_{J, i} (v_r)$. 
By definition, $\Fitt^r_{\cR_i} (M_i)$ is then the ideal of $\cR_i$ generated by all determinants of such matrices $\pi_{J, i} (v)$.\\
 Since $K_i$ can be generated by $t_2 + s$ elements and the determinant is multilinear, any matrix of the form above can be written as a sum of $N \coloneqq (t_2 + s)^{t_2 - r}$ determinants with columns from $\pi_J (K_i)$. 
If we set $\cW_i \coloneqq \bigoplus_{J} (K_i^{r})^{N}$, then we therefore have that $\Fitt^r_{\cR_i} (M_i)$ is equal to the image of the map
\[\Phi_i \: 
\cW_i = {\bigoplus}_{J} (K_i^{r})^{N} \to \cR_i, \quad 
(v_{J, j})_{J, j} \mapsto {\sum}_{J} {\sum}_{j \in [N]} \det (\pi_J (v_{J, j})),
\]
where $J$ runs over all subsets of $\{1, \dots, t_2\}$ of cardinality $t_2 - r$. \\
We now claim that ${\varprojlim}_{i \in \N} (\im \Phi_i) 
$ is equal to the image of the map $\Phi \: {\varprojlim}_{i \in \N} \cW_i \to \cR$ induced by $(\Phi_i)_{i \in \N}$. To show this, we suppose to be given an element $x = (x_i)_{i \in \N} $ of ${\varprojlim}_{i \in \N} (\im \Phi_i)$.
For every $i \in \N$, we write $U_i \coloneqq \Phi_i^{-1} (x_i)$ for the full preimage of $x_i$ under $\Phi_i$ and note that each of the transition maps $\tilde \rho_{i, j} \: \cW_j \to \cW_i$ takes $U_j$ to $U_i$ for $j \geq i$. Since $\cW_j$ is finitely generated over the finite ring $\cR_j$, it is again compact Hausdorff. As the preimage of a closed set under a continuous map, each  $U_j$ is then a closed subset of a compact Hausdorff space, hence itself compact Hausdorff. Now, $\tilde \rho_{i, j}$ is continuous and so $\tilde \rho_{i, j} ( U_j)$ is a compact subset of $\cW_i$, which implies that it must be closed. In addition, the ascending chain given by the $\tilde \rho_{i, j} ( U_j)$ has the finite intersection property because each preimage $U_j$ is non-empty. 
It follows that the intersection $
V_i \coloneqq {\bigcap}_{j \geq i} \tilde \rho_{i, j} ( U_j)$ 
is non-empty because $\cW_i$ is compact. We now inductively construct a preimage $(w_i)_{i \in \N} \in {\varprojlim}_{i \in \N} \cW_i$ of $x$ with each $w_i$ in $V_i$. For the induction base, we take $w_1$ to be any element of $V_1$. Now fix $i \geq 1$ and suppose that $w_i$ is already constructed. We then define $U_j' \coloneqq U_j \cap \tilde \rho_{i, j}^{-1} (w_i)$ for all $j > 1$ and note that each $U_j'$ is non-empty because $w_i$ belongs to $V_i$. Similar to before, it follows that the descending chain $(\tilde \rho_{i + 1, j} (U'_j))_{j \geq i + 1}$ has the finite intersection property, and hence that $
V'_{i + 1} \coloneqq {\bigcap}_{j \geq i + 1} \tilde \rho_{i + 1, j} (U'_j)$ is non-empty. Any element $w_{i + 1}$ of $V'_{i + 1}$ will then both belong to $V_i$ and have the property that $\tilde \rho_{i + 1, i} (w_{i + 1}) = w_i$, as required. This concludes the induction step. \\ 
We have thereby proved that any element $x$ of ${\varprojlim}_{i \in \N} (\im \Phi_i) = {\varprojlim}_{i \in \N} \Fitt^r_{\cR_i} (M_i)$ can be lifted to en element of
${\varprojlim}_{ i\in \N} \cW_i = {\varprojlim}_{ i\in \N} (\bigoplus_J (K_i^r)^N) = \bigoplus_J (K^r)^N$. It follows that $x$ is a finite sum of determinants of matrices of the form $\pi_J (v)$ with $v \in K^r$ and hence, by the explicit description given above, belongs to $\Fitt^r_\cR (M)$. This proves the claimed equality under condition (i). \\
In the remainder of the argument, we therefore assume the validity of condition (ii) and we argue by induction on $d  = \dim \cR$. Since each projection  map $M \to M_n$ is surjective, each $\cR_n$-module $M_n$ is finite and hence compact Hausdorff. The above argument therefore combines with Remark \ref{greither kurihara lemma rk} below to prove the claim if $d \leq 1$. In the following we thus assume $d > 1$. \\
  Following Remark \ref{ryotaro example}, we obtain a filtration (\ref{filtration}) of $\cR$ by setting $\fb_n = (x_1^n, \dots, x_d^n)$, and we now first justify that we may assume $\a_n = \fb_n$. 
  To do this we recall, as noted in Remark~\ref{ryotaro example}, one can define $f \: \N \to \N$ such that $\fb_{f (n)} \subseteq \a_n$ for all $n \in \N$. In particular, each $M_n$ is a $\cR / \fb_{f (n)}$-module and it suffices to prove that ${\varprojlim}_{n \in \N} \Fitt^r_{\cR / \fb_{f (n)}} (M_n) = {\varprojlim}_{n \in \N} \Fitt^r_{\cR_n} (M_n) $. Since the surjection $\cR / \fb_{f (n)} \twoheadrightarrow \cR_n = \cR / \a_n$ maps $\Fitt^r_{\cR / \fb_{f (n)}} (M_n)$ onto $\Fitt^r_{\cR_n} (M_n)$ by Lemma~\ref{standard fitting props}\,(iv), we have an exact sequence
  \begin{equation} \label{change of filtration}
  \begin{tikzcd}
      0 \arrow{r} & \a_n / \fb_{f (n)} \arrow{r} & \Fitt^r_{\cR / \fb_{f (n)}} (M_n) + (\a_n / \fb_{f (n)} ) \arrow{r} & \Fitt^r_{\cR_n} (M_n) \arrow{r} & 0,
  \end{tikzcd}%
  \end{equation}
  in which all involved modules are finite. Now, taking the limit of the surjections $\a_n \twoheadrightarrow \a_n / \fb_{f (n)}$ gives a surjection ${\varprojlim}_{n \in \N} \a_n \twoheadrightarrow {\varprojlim}_{n \in \N} ( \a_n / \fb_{f (n)})$, and this shows the vanishing of ${\varprojlim}_{n \in \N} ( \a_n / \fb_{f (n)})$ because  ${\varprojlim}_{n \in \N} \a_n = \bigcap_{n \in \N} \a_n$ vanishes by the assumption that the natural map $\cR \to {\varprojlim}_{n \in \N} \cR / \a_n$ is bijective. In a similar fashion, one shows that the natural map ${\varprojlim}_{n \in \N} \Fitt^r_{\cR / \fb_{f (n)}} (M_n) \to {\varprojlim}_{n \in \N} ( \Fitt^r_{\cR / \fb_{f (n)}} (M_n) + (\a_n / \fb_{f (n)} ))$ is an isomorphism. Passing to the limit over the exact sequences (\ref{change of filtration}) now shows that the natural map ${\varprojlim}_{n \in \N} \Fitt^r_{\cR / \fb_{f (n)}} (M_n) \to {\varprojlim}_{n \in \N} \Fitt^r_{\cR_n} (M_n)$ is bijective, as claimed.\\
  In the remainder of this argument we therefore assume that $\a_n = \fb_n = (x_1^n, \dots, x_d^n)$, and we also set $\a'_n \coloneqq (x_1^n, \dots, x_{d - 1}^n)$. Now, one has
    \begin{align*}
    \Fitt^r_\cR (M) & = {\varprojlim}_{n \in \N} \Fitt^r_{\cR / (x_d^n)} ( M \otimes_\cR ( \cR / (x_d^n))) \\
    & = {\varprojlim}_{n \in \N} {\varprojlim}_{m \in \N} \Fitt^r_{\cR / (\a'_m, x_d^n)} ( M_m \otimes_\cR ( \cR / (x_d^n))).
    \end{align*}
    Here the first equality follows from the above argument (under condition (i)) and the second equality holds by our induction hypothesis. Since $M_m \cong {\varprojlim}_{n \in \N} (M_m \otimes_\cR( \cR / (x_d^n)))$ for all $m \in \N$, we may use the induction hypothesis again to deduce that
    \[
    {\varprojlim}_{n \in \N} {\varprojlim}_{m \in \N} \Fitt^r_{\cR / (\a'_m, x_d^n)} ( M_m \otimes_\cR ( \cR / (x_d^n))) = {\varprojlim}_{m \in \N} \Fitt^r_{\cR / \a'_m} ( M_m).
    \]
    Now, the image of $\Fitt^r_{\cR / \a'_m} ( M_m)$ in $\cR_m$ is equal to $\Fitt^r_{\cR_m} (M_m)$ by Lemma \ref{standard fitting props}\,(iv), so we have an exact sequence
    \begin{equation} \label{exact sequence to compute limit of Fitt}
    \begin{tikzcd}
        0 \arrow{r} & \a_m / \a'_m \arrow{r} & \Fitt^r_{\cR / \a'_m} ( M_m) + (\a_m / \a'_m)  \arrow{r} & \Fitt^r_{\cR_m} (M_m) \arrow{r} & 0.
   \end{tikzcd}%
   \end{equation}
    Moreover, $\a_m / \a'_m$ is generated by $x_d^m$ and so there exists a surjective map $x_d^m \cR \twoheadrightarrow \a_m / \a'_m$. Taking the limit over $m$ then gives, because the involved modules are finitely generated $\cR$-modules and hence compact Hausdorff, a surjection 
    ${\varprojlim}_{m \in \N} (x^m_d \cR) \twoheadrightarrow {\varprojlim}_{m \in \N } ( \a_m / \a_m')$.
    In addition, the Krull intersection theorem implies that the limit ${\varprojlim}_{m \in \N} (x^m_d \cR) = \bigcap_{m \in \N} (x^m_d \cR)$ vanishes, and so the same is also true for ${\varprojlim}_{m \in \N } ( \a_m / \a_m')$. A similar argument shows that ${\varprojlim}_{m \in \N} \big( ( \Fitt^r_{\cR / \a'_m} ( M_m) + (\a_m / \a'_m)) / \Fitt^r_{\cR / \a'_m} ( M_m) \big)$ vanishes. 
    Taking the limit (over $m$) of the exact sequence (\ref{exact sequence to compute limit of Fitt}) (in which all involved modules are compact Hausdorff) shows that
    \[
    {\varprojlim}_{m \in \N} \Fitt^r_{\cR_m} (M_m) = {\varprojlim}_{m \in \N} \big( \Fitt^r_{\cR / \a'_m} ( M_m) + (\a_m / \a'_m) \big) = {\varprojlim}_{m \in \N}  \Fitt^r_{\cR / \a'_m} ( M_m),
    \]
    as required to complete the proof. 
\end{proof}

\begin{rk} \label{greither kurihara lemma rk}
    The second condition in Proposition \ref{Greither--Kurihara Lemma}\,(i) is automatically valid if $\cR$ is local Noetherian and $\Ann_\cR (M)$ contains an $\cR$-regular sequence of length $\dim (\cR) - 1$. (In particular, the condition is satisfied if either $\dim \cR \leq 1$ or if both $\dim (\cR) = 2$ and $M$ is $\cR$-torsion.)    To justify this, we note that the diagram (\ref{diagram bounded number of generators}) implies it is enough to prove the minimal number of $\cR_i$-generators of $K_i$ is bounded independently of $i$. If $\{x_i\}_{i \in [d-1]}$ is an $\cR$-regular sequence in $\Ann_\cR (M)$, then $(x_1, \dots, x_{d - 1}) \cdot \cR^{t_2}_i\subseteq K_i$. It therefore suffices to bound the minimal number of $\cR_i$-generators of $K_i / (x_1, \dots, x_{d - 1}) \cR^{t_2}_i$. The latter is now a submodule of $\cR^{t_2}_i / (x_1, \dots, x_{d - 1}) \cR^{t_2}_i$, and hence a subquotient of $(\cR / (x_1, \dots, x_{d - 1}))^{t_2}$. Since $\{x_i\}_{i \in [d-1]}$ is an $\cR$-regular sequence, $\cR / (x_1, \dots, x_{d - 1})$ is a local Noetherian ring of Krull dimension one and so the claim follows from the fact that the minimal number of generators of any ideal of such a ring can be bounded by a constant that only depends on the ring (see, for example, \cite{shalev}).
\end{rk}

\subsection{Exterior power biduals}

For any $r \in \N_0$, we write $\exprod^r_R M$ for the $r$-fold exterior power of an $R$-module $M$. The `$r$-th exterior power bidual' of $M$ is then defined by setting 
\[ \bidual^r_R M \coloneqq \left ( \exprod^r_R M^\ast \right )^\ast.\] 
This construction is motivated by the approach of Rubin in \cite[\S\,1.2]{Rub96} and is extensively studied in the literature (cf.\@ \cite[App.\@ A]{sbA},  \cite[App.\@ B]{Sakamoto20}). In this section, we establish several properties of these modules that will be useful in subsequent arguments. 

\subsubsection{Rank reduction} \label{rank reduction section}

If $s\in \N_0$ is such that $s \le r$, and $f$ is an element of $\exprod^s_R M^\ast$, then the assignment $a \mapsto [g \mapsto a ( f \wedge g )]$ induces a `rank reduction' map 
$\bidual^r_R M \to \bidual^{r - s}_R M$ which, by abuse of notation, we  continue to denote by $f$. 

\begin{lem} \label{biduals lemma 1}
Let $R$ be a $G_2$-ring and suppose to be given, for some $ s \in \mathbb{N}$, an exact sequence
\begin{cdiagram}
0 \arrow{r} & N \arrow{r} & M \arrow{rr}{(f_i)_{i\in [s]}} & & R^{\oplus s}
\end{cdiagram}
of finitely generated $R$-modules. Then the following claims are valid. 
\begin{itemize}
\item[(i)] For any $r\in \N$, there exists an exact sequence
\begin{equation} \label{ryotaro exact sequence}
\begin{tikzcd}
0 \arrow{r} & \bidual^r_R N \arrow{r} & \bidual^r_R M \arrow{rr}{(f_i)_{i\in [s]}} & & {\bigoplus}_{i\in [s]} \bidual^{r - 1}_R M.
\end{tikzcd}%
\end{equation}%
Here the first map is induced by the inclusion $N \to M$, and the second is the diagonal map induced by the maps $f_i \: \bidual^r_R M  \to \bidual^{r - 1}_R M$ for each $i \in [s]$. 
\item[(ii)] If $s \leq r$, then the map $\wedge_{i \in [s]} f_i \: \bidual^r_R M \to \bidual^{r - s}_R M$ factors through the map $\bidual^{r - s}_R N \hookrightarrow \bidual^{r - s}_R M$ in (\ref{ryotaro exact sequence}). In particular, there exists an induced map
\begin{equation} \label{rank reduction map}
{\wedge}_{i \in [s]} f_i \: \bidual^r_R M \to \bidual^{r - s}_R N.
\end{equation}
\end{itemize}
\end{lem}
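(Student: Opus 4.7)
The plan for part (i) begins by observing that the first arrow in (\ref{ryotaro exact sequence}) is the $R$-dual of the map $\exprod^r_R M^* \to \exprod^r_R N^*$ obtained by taking $r$-th exterior powers of the restriction $M^* \to N^*$ induced by the inclusion $N \hookrightarrow M$. Since each $f_i \in M^*$ restricts to zero on $N$, a direct computation shows that the composition of this first arrow with $f_j \colon \bidual^r_R M \to \bidual^{r-1}_R M$ vanishes for every $j$, giving one half of exactness. The substantive content of (i) is thus exactness in the middle, namely that every $a \in \bidual^r_R M$ annihilated by each $f_j$ lies in the image of $\bidual^r_R N \to \bidual^r_R M$.

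To establish this, my plan is to reduce the question to a local one. Both $\bidual^r_R N$ (viewed via the natural map) and $K := \ker((f_i)_{i \in [s]})$ are finitely generated submodules of $\bidual^r_R M$, so by Lemma \ref{ryotaro-useful-lem}\,(ii) it suffices to show that they coincide after localisation at every $\mathfrak{p} \in \Spec^{\leq 1}(R)$. Since localisation commutes with $(-)^*$, $\exprod^r_R$, and $\bidual^r_R$ for finitely generated modules, the verification reduces to the case where $R$ is a Gorenstein local ring of Krull dimension at most one. Over such a ring, finitely generated torsion-free modules are reflexive and $\Ext^1_R(-, R)$ vanishes on torsion-free modules; an explicit analysis of the dualisation of the given exact sequence, separating torsion from torsion-free parts of $Z$, should then yield the required exactness. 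I expect this local step, and in particular the task of controlling the contribution of the cokernel $Z$ at height-one primes, to be the main technical obstacle; this is also where the $G_2$ hypothesis, which ensures Gorensteinness at every prime of height at most one, is essential.

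Part (ii) will then follow from (i) by an elementary calculation. Given $a \in \bidual^r_R M$, set $b := \wedge_{i \in [s]} f_i(a) \in \bidual^{r-s}_R M$; concretely, $b(g) = a(f_1 \wedge \dots \wedge f_s \wedge g)$ for every $g \in \exprod^{r-s}_R M^*$. For each $j \in [s]$, applying $f_j$ to $b$ yields the functional $h \mapsto a(f_1 \wedge \dots \wedge f_s \wedge f_j \wedge h)$, which vanishes identically because a repeated factor in an exterior product is zero. Hence $b$ lies in the common kernel of all $f_j$ acting on $\bidual^{r-s}_R M$, which by part (i) applied with $r$ replaced by the non-negative integer $r - s$ coincides with the image of the injection $\bidual^{r-s}_R N \hookrightarrow \bidual^{r-s}_R M$. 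This produces the desired factorisation and the induced map (\ref{rank reduction map}).
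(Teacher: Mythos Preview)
Your treatment of part~(ii) is correct and matches the paper's argument exactly: one checks that $f_j \circ (\wedge_{i \in [s]} f_i)$ vanishes because the wedge $f_1 \wedge \dots \wedge f_s \wedge f_j$ has a repeated factor, and then applies part~(i) with $r$ replaced by $r-s$.

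For part~(i), however, your proposal has two genuine gaps. First, you presuppose that $\bidual^r_R N \to \bidual^r_R M$ is injective when you describe $\bidual^r_R N$ as a submodule of $\bidual^r_R M$; but this injectivity is precisely part of the claim and does not follow formally from the inclusion $N \hookrightarrow M$ (the induced map $\exprod^r_R M^* \to \exprod^r_R N^*$ need not be surjective). Second, Lemma~\ref{ryotaro-useful-lem}\,(ii) does not say what you want: it asserts that $(N_1)^{**} \to (N_2)^{**}$ is injective, not that $N_1 = N_2$. To upgrade your localisation strategy to an actual proof you would need to establish separately that both the image of $\bidual^r_R N$ and the kernel $K$ are reflexive (the former requires the injectivity you have not yet shown), and then argue that reflexive submodules of a common module agreeing at all height-$\leq 1$ primes must coincide. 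Even granting all of this, the promised ``explicit analysis'' over a one-dimensional Gorenstein local ring is where all the content lies, and you have not attempted it.

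The paper takes a different and more direct route that avoids localisation entirely. It dualises the right-exact sequence $(M/N)^* \otimes_R \exprod^{r-1}_R M^* \to \exprod^r_R M^* \to \exprod^r_R Y \to 0$ (where $Y = \im(M^* \to N^*)$) to obtain a left-exact sequence into which $\bidual^r_R M$ fits in the middle; injectivity of $(\exprod^r_R Y)^* \hookrightarrow \bidual^r_R M$ is then automatic. The identification $(\exprod^r_R Y)^* \cong \bidual^r_R N$ follows because $N^*/Y$ embeds in $\Ext^2_R(Z,R)$, which is pseudo-null by the $(G_1)$ hypothesis, and the embedding into $\bigoplus \bidual^{r-1}_R M$ uses that $\coker((R^s)^* \to (M/N)^*) \subseteq \Ext^1_R(Z,R)$ is torsion by $(G_0)$. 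This global argument via Lemma~\ref{ryotaro-useful-lem}\,(i) is both shorter and delivers injectivity for free.
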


\begin{proof}
Claim (i) requires a slight variation of the proof of \cite[Lem.\@ B.12]{Sakamoto20} and so, for clarification, we shall provide full details. Thus, by dualising the tautological exact sequence $0 \to N \to M \to M / N \to 0$, and setting $Y \coloneqq \im \{ M^\ast \to N^\ast \}$, one obtains an  exact sequence
\begin{equation} \label{Y exact sequence}
\begin{tikzcd}
 0 \arrow{r} & (M / N)^\ast \arrow{r} & M^\ast \arrow{r} & Y \arrow{r} & 0.
\end{tikzcd}%
\end{equation}%
This in turn  implies the existence of an exact sequence
\begin{cdiagram}
 (M / N)^\ast \otimes_R \exprod^{r - 1}_R M^\ast \arrow{r} & 
 \exprod^r_R M^\ast \arrow{r} & \exprod^r_R Y \arrow{r} & 0
\end{cdiagram}%
(see \cite[Lem.\@ 2.5]{bss} for details) and, upon dualising again, this gives an exact sequence 
\begin{equation} \label{another intermediate step exact sequence}
\begin{tikzcd}
 0 \arrow{r} & \big( \exprod^r_R Y \big)^\ast \arrow{r} & \bidual^r_R M \arrow{r} & \big ( (M / N)^\ast \otimes_R \exprod^{r - 1}_R M^\ast \big)^\ast.
\end{tikzcd}%
\end{equation}%
We now first claim that $\big( \exprod^r_R Y \big)^\ast \cong \bidual^r_R N$. To this end, we set $Z \coloneqq \coker ( M \to R^{\oplus s})$
and note that the cokernel of the inclusion $Y \subseteq N^\ast$ identifies with a submodule of $\Ext^1_R ( M / N, R) \cong \Ext^2_R (Z, R)$, and so is pseudo-null since for all $\p\in \rm{Spec}^1(R)$ the localisation $R_\p$ has injective dimension at most one by condition $(G_1)$ on $R$. It follows that both kernel and cokernel of the natural map
$\exprod^r_R Y \to \exprod^r_R N^\ast$ are pseudo-null, which, by Lemma \ref{ryotaro-useful-lem}\,(i), implies that the dual map $\bidual^r_R N \to \big( \exprod^r_R Y \big)^\ast$ is an isomorphism, as claimed.
\\
We now set $C \coloneqq \coker \bigl( (R^{ s})^\ast \to (M / N)^\ast \bigr)$ and observe that 
applying the right-exact functor $( -) \otimes_R \exprod^{r - 1}_R M^\ast$ to the exact sequence $ (R^{ s})^\ast \to (M/N)^\ast \to C \to 0$ gives an exact sequence
\begin{equation} \label{intermediate step exact sequence}
\begin{tikzcd}
(R^{ s})^\ast \otimes_R \exprod^{r - 1}_R M^\ast 
\arrow{r} & ( M / N)^\ast \otimes_R \exprod^{r - 1}_R M^\ast  \arrow{r} & C \otimes_R \exprod^{r - 1}_R M^\ast  \arrow{r} & 0.
\end{tikzcd}%
\end{equation}%
To investigate the last term in this sequence, we note that $C$ naturally identifies with a submodule of $\Ext^1_R ( Z, R)$. It follows from condition ($G_0$) on $R$ that the latter, and hence also $C$, is a torsion $R$-module. By dualising (\ref{intermediate step exact sequence}) we therefore obtain an injection 
\[
\big ( ( M / N)^\ast \otimes_R \exprod^{r - 1}_R M^\ast \big)^\ast \hookrightarrow 
\big ( (R^{ s})^\ast \otimes_R \exprod^{r - 1}_R M^\ast  \big)^\ast.
\]
Now, $(R^{s})^\ast$ is a free $R$-module and so tensoring with $(R^{s})^\ast$ commutes with taking duals. This combines with the isomorphism $(R^{s})^{\ast \ast} \cong R$ to give an isomorphism
\[
\big ( (R^{s})^\ast \otimes_R \exprod^{r - 1}_R M^\ast  \big)^\ast 
\cong R^{s} \otimes_R \bidual^{r - 1}_R M. 
\]
By composing the last two displayed maps, we may then deduce the exact sequence (\ref{ryotaro exact sequence}) claimed in (i) from the exact sequence (\ref{another intermediate step exact sequence}) upon recalling the isomorphism $\big( \exprod^r_R Y \big)^\ast \cong \bidual^r_R N$.\\
By the exactness of (\ref{ryotaro exact sequence}) it suffices to prove, for each $i \in [s]$, that the composite homomorphism  $f_i \circ ( \exprod_{j \in [s]} f_j)$ vanishes   in order to verify (ii). For this, it is  enough to note that, for each $a\in \bidual^r_R M$, the element $
(f_i \circ ( {\wedge}_{j \in [s]} f_j)) ( a) 
=  \big ( ( {\wedge}_{j \in [s]} f_j) \wedge f_i \big) (a) 
$ vanishes since $f_i \wedge f_i = 0$. 
\end{proof}

\begin{rk}
It is known that $R$ is a $G_1$-ring if and only if, for each finitely generated $R$-module $M$, the kernel of the canonical map $M \to M^{\ast \ast}$ is the $R$-torsion submodule of $M$. Indeed, since 
 ($S_1$) is equivalent to asserting each prime in $\Spec^{ \geq 1} (R)$ contains a nonzero divisor, and hence that $R$ has no embedded primes, the claimed equivalence follows directly from \cite[Th.\@ A.1]{Vasconcelos68}. 
\end{rk}

The next result extends \cite[Prop.\@ 2.4]{bss}. Before stating it, we recall that an $R$-module $M$ is called `torsion-less' if the natural map $M \to M^{\ast \ast}$ is injective (cf.\@  Bass \cite[\S\,3]{Bass62}).

\begin{cor}\label{prop 2.4 extension}
    Let $R$ be a $G_2$-ring and $r \geq 1$ an integer. If $N$ is an $R$-submodule of a finitely generated $R$-module $M$ with the property that $M/ N$ is torsion-less, then one has
    \[
    \bidual^r_R N = \big \{ a \in \bidual^r_R M \mid f(a) \in N^{\ast \ast} \text{ for all } f \in \exprod^{r - 1}_R M^\ast \big\}.
    \]
\end{cor}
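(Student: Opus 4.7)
The plan is to reduce the claim to an application of Lemma \ref{biduals lemma 1}(i) by suitably embedding $M/N$ into a free module. Since $M/N$ is finitely generated over a Noetherian ring and torsion-less, the canonical map $M/N \to (M/N)^{**}$ is injective; moreover, dualising a surjection $R^s \twoheadrightarrow (M/N)^*$ (which exists as $(M/N)^*$ is finitely generated) yields an injection $(M/N)^{**} \hookrightarrow R^s$. Composing produces an exact sequence
\begin{equation*}
0 \to N \to M \xrightarrow{(f_i)_{i \in [s]}} R^s \to Z \to 0
\end{equation*}
with $Z$ the resulting cokernel, so that Lemma \ref{biduals lemma 1}(i) identifies $\bidual^r_R N$ with the kernel of the diagonal map $(f_i)_{i \in [s]} \colon \bidual^r_R M \to \bigoplus_{i \in [s]} \bidual^{r-1}_R M$. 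Everything then comes down to showing this kernel coincides with the right-hand side in the statement.

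Observe first that Lemma \ref{ryotaro-useful-lem}(ii), applied with $N_1 = N_2 = N$, guarantees that $N^{**}$ embeds canonically into $M^{**}$, so the condition ``$f(a) \in N^{**}$'' is meaningful. Furthermore, since the embedding $N^{**} \hookrightarrow M^{**}$ is induced by precomposition with the restriction $M^* \to N^*$, every element of (the image of) $N^{**}$ in $M^{**}$ annihilates the submodule $(M/N)^* \subseteq M^*$ of functionals vanishing on $N$; this easy direction is all that is required below.

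For the inclusion $\subseteq$, let $a \in \bidual^r_R N$ and write $\iota(a) \in \bidual^r_R M$ for its image under the first arrow of (\ref{ryotaro exact sequence}), which acts by $\phi \mapsto a(\phi|_N)$ for $\phi \in \exprod^r_R M^*$ (with $\phi|_N$ denoting termwise restriction). For any $f \in \exprod^{r-1}_R M^*$, the element $f(\iota(a)) \in M^{**}$ sends $h \in M^*$ to $a(f|_N \wedge h|_N)$, which is precisely the image in $M^{**}$ of the rank-reduced element $f|_N(a) \in \bidual^1_R N = N^{**}$; hence $f(\iota(a)) \in N^{**}$, as required.

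For the reverse inclusion, suppose $a \in \bidual^r_R M$ satisfies $f(a) \in N^{**}$ for every $f \in \exprod^{r-1}_R M^*$. Each $f_i \in M^*$ vanishes on $N$ by construction, so $f_i \in (M/N)^*$. Given any $g \in \exprod^{r-1}_R M^*$, the hypothesis implies $g(a) \in N^{**}$, so $g(a)$ annihilates $f_i$, i.e.\ $a(g \wedge f_i) = 0$. Antisymmetry then gives $a(f_i \wedge g) = (-1)^{r-1} a(g \wedge f_i) = 0$ for all such $g$, so $f_i(a) = 0$ in $\bidual^{r-1}_R M$ for every $i \in [s]$. Lemma \ref{biduals lemma 1}(i) now yields $a \in \bidual^r_R N$. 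The only substantive step is arranging the embedding $M/N \hookrightarrow R^s$ so that Lemma \ref{biduals lemma 1} applies; after that the argument is a routine manipulation of exterior algebra, so no real obstacle is anticipated.
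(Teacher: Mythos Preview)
Your proof is correct and follows essentially the same approach as the paper: embed $M/N$ into a free module via the torsion-less hypothesis, then invoke Lemma~\ref{biduals lemma 1}(i). The paper reopens the proof of that lemma to use the intermediate sequence~(\ref{another intermediate step exact sequence}) with third term $\bigl((M/N)^* \otimes_R \exprod^{r-1}_R M^*\bigr)^*$, whereas you use the lemma's stated conclusion with third term $\bigoplus_i \bidual^{r-1}_R M$; both routes unwind to the same computation. One small slip: your invocation of Lemma~\ref{ryotaro-useful-lem}(ii) with $N_1 = N_2 = N$ yields nothing---you want $N_1 = N$, $N_2 = M$ (both as submodules of $M$) to conclude that $N^{**} \hookrightarrow M^{**}$.
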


\begin{proof}
Choose a surjection $R^{\oplus s} \twoheadrightarrow (M / N)^\ast$. Since $M / N$ is assumed to be torsion-less, taking duals gives an injection $M / N \hookrightarrow (M / N)^{\ast \ast} \hookrightarrow R^{\oplus s}$. 
We may therefore apply the argument of Lemma \ref{biduals lemma 1}\,(a) to the exact sequence $0 \to N \to M \to R^{\oplus s}$ to deduce the exact sequence (see the exact sequence (\ref{another intermediate step exact sequence}))
\begin{equation} \label{useful exact sequence 2}
\begin{tikzcd}
    0 \arrow{r} & \bidual^r_R N \arrow{r} & \bidual^r_R M \arrow{r} & \big( ( M / N)^\ast \otimes_R \exprod^{r - 1}_R M^\ast \big)^\ast.
\end{tikzcd}%
\end{equation}
In order to show that an element $a$ of $\bidual^r_R M$ belongs to $\bidual^r_R N$ it is therefore sufficient to prove that $a (m \wedge f) = 0 $ for all $m \in (M / N)^\ast$ and $f \in \exprod^{r - 1}_R M^\ast$. By definition, $f (a)$ is the map $x \mapsto a(x \wedge f)$, so $a (m \wedge f) = f(a) (m)$. Now suppose that $f (a)$ belongs to $N^{\ast \ast}$, then the exact sequence (\ref{useful exact sequence 2}) for $r = 1$ shows that $f (a)$ belongs to the kernel of the natural map $M^{\ast \ast} \to (M / N)^{\ast \ast}$ and so $f(a)(m)$ must vanish, as required.
\end{proof}

\begin{lem} \label{ann im comparison} \label{lemma-image-Fitt}
Let $R$ be a $G_2$-ring, $M$ a finitely generated $R$-module, and $r$ a non-negative integer. Then, for every element $a$ of $\bidual^r_R M$, the following claims are valid.
\begin{romanliste}
    \item If $R$ is a self-injective ring, then $\im (a) = \Ann_R ( \Ann_R ( a))$.
    \item If $\Ann_R (m) = (0)$, then $\im (m)^{\ast \ast} = \Fitt^0_{R} ( \Ext^1_R  ( (\bidual^r_R M)/(Rm), R  ))^{\ast \ast}$.
\end{romanliste} 
\end{lem}

\begin{proof}
By definition, we  have $\im (a) = \{ a ( f) \mid f \in \exprod^r_R M^\ast \}$. If $R$ is self-injective, then the map
\[
\exprod^r_R M^\ast \to \big( \bidual^r_R M\big)^\ast, \quad f \mapsto \{ m \mapsto m (f) \}
\]
 is an isomorphism, and so $\im (a) = \{ f (a) \mid f \in (\bidual^r_R M)^\ast \}$. Now, $R$ being self-injective also implies that any $\varphi \in (Ra)^\ast$ can be lifted to an element of $(\bidual_R^r M)^\ast$, whence $\im (a) = \{ f (a) \mid f \in (Ra)^\ast \}$. Since $R a \cong R / \Ann_R (a)$, claim (i) now follows from the isomorphism
 \[
 \Hom_R ( R / \Ann_R (a), R) \stackrel{\simeq}{\longrightarrow} \Ann_R (\Ann_R (a)), 
 \quad f \mapsto f (1).
 \]
 Turning to claim (ii), Lemma \ref{ryotaro-useful-lem}\,(ii) allows us to verify the claimed equality locally at primes of height at most one. Since $R$ is a $G_2$-ring, we therefore may, and will,  assume $R$ is a Gorenstein ring of dimension at most one. 
In this case, then, the module $\Ext^1_R ( \bidual^r_R M, R)$ vanishes since the exterior bidual $\bidual^r_R M$ is reflexive. Upon dualising the tautological exact sequence
\begin{cdiagram}
0 \arrow{r} & Ra \arrow{r} & \bidual^r_R M \arrow{r} & (\textstyle \bidual^r_R M)/(Ra) \arrow{r} & 0
\end{cdiagram}%
we therefore obtain an exact commutative diagram
\begin{equation*} \label{diagram-image-fitt}
\begin{tikzcd}
& \big( \bidual^r_R M \big)^\ast \arrow[twoheadrightarrow]{d}[left]{f \mapsto f(a)} \arrow{r} & (Ra)^\ast \arrow{d}[left]{f \mapsto f(a)} \arrow{r} & \Ext^1_R \big ( (\textstyle \bidual^r_R M)/(Ra), R \big ) \arrow{r} \arrow[dashed]{d} & 0 \\
0 \arrow{r} & \im (a) \arrow{r} & R \arrow{r} & R/\im (a) \arrow{r} & 0.
\end{tikzcd}
\end{equation*}
Here the second vertical map is bijective as $a$ is assumed to generate a free module of rank one, and the first vertical map is surjective since the present hypotheses on $R$ imply the natural map $\exprod^r_R M^\ast \to (\exprod^r_R M^\ast)^{\ast \ast} = (\bidual^r_R M)^\ast$ is surjective (cf.\@ \cite[Prop.~(5.4.9)\,(iii)]{NSW}). Upon applying the Snake Lemma to the diagram, one therefore finds that the dotted vertical map (that is induced by the commutativity of the diagram) is an isomorphism. This isomorphism leads directly to the claimed description of $\im (a)$.
\end{proof}

\begin{rk} It is known that a Noetherian ring $R$ is self-injective if and only if, for every ideal $I$ of $R$, one has $\Ann_R ( \Ann_R (I)) = I$  (cf.\@ \cite[Th.\@ 15.1]{Lam99}).  
\end{rk}

\subsubsection{Reduced rings}

In this subsection we assume to be given a reduced Noetherian ring $\cR$. In this case the total quotient ring  $\cQ$ of $\cR$ is a finite product of fields and so is a semisimple, semilocal ring. This fact is used in the following result to interpret exterior power biduals in terms of the lattices considered by Rubin in \cite[\S\,1.2]{Rub96}. 

\begin{lemma} \label{biduals-reduced-rings}
If $M$ is a finitely generated $\cR$-module, then, for any integer $r \geq 0$, the map $\xi^r_M$ induces an isomorphism
\begin{equation*} \label{xi-map}
\big \{ a \in \cQ \otimes_\cR \exprod^r_\cR M \mid f (a) \in \cR \text{ for all } f \in \exprod^r_\cR M^\ast \big \} \stackrel{\simeq}{\longrightarrow} \bidual^r_\cR M. 
\end{equation*} 
\end{lemma}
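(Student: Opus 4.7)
The plan is to realise $\xi^r_M$ as the inverse of a natural embedding of $\bidual^r_\cR M$ into $\cQ \otimes_\cR \exprod^r_\cR M$ whose image is precisely the stated integrality lattice. Write $\widetilde M \coloneqq \cQ \otimes_\cR M$. Since $\cQ$ is a finite product of fields, $\widetilde M$ is a finitely generated projective $\cQ$-module, so $\exprod^r_\cQ \widetilde M$, $\exprod^r_\cQ \widetilde M^\ast$ and $\bidual^r_\cQ \widetilde M$ are all projective, and the natural evaluation pairing identifies $\exprod^r_\cQ \widetilde M$ with $\bidual^r_\cQ \widetilde M$ (a fact easily checked after localising to a free module).

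Since $\cR$ is Noetherian and $M$ is finitely generated, any surjection $\cR^{\oplus n} \twoheadrightarrow M$ dualises to an inclusion $M^\ast \hookrightarrow \cR^{\oplus n}$, so $M^\ast$, and hence $\exprod^r_\cR M^\ast$, is finitely presented. Flatness of $\cQ$ over $\cR$, together with the compatibility of $\Hom$ with flat base change on finitely presented modules, then yields a chain of natural isomorphisms
\[
\cQ \otimes_\cR \bidual^r_\cR M \cong \Hom_\cQ(\cQ \otimes_\cR \exprod^r_\cR M^\ast, \cQ) \cong \bidual^r_\cQ \widetilde M \cong \exprod^r_\cQ \widetilde M \cong \cQ \otimes_\cR \exprod^r_\cR M.
\]
Furthermore, $\bidual^r_\cR M = \Hom_\cR(\exprod^r_\cR M^\ast, \cR)$ is $\cR$-torsion-free: as $\cR$ is reduced, $\cR$ embeds in $\cQ$, so any functional annihilated by a non-zerodivisor of $\cR$ must vanish pointwise. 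The localisation map $\bidual^r_\cR M \hookrightarrow \cQ \otimes_\cR \bidual^r_\cR M$ is thus injective and, composing with the above chain, defines an embedding $\iota \colon \bidual^r_\cR M \hookrightarrow \cQ \otimes_\cR \exprod^r_\cR M$.

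Unwinding the definitions, for any $\varphi \in \bidual^r_\cR M$ the element $a \coloneqq \iota(\varphi)$ satisfies $f(a) = \varphi(f) \in \cR$ for every $f \in \exprod^r_\cR M^\ast$, so the image of $\iota$ is contained in the integrality lattice. Conversely, for any $a$ in this lattice the assignment $f \mapsto f(a)$ lands in $\cR$ and thus defines a functional in $\bidual^r_\cR M$ whose image under $\iota$ is $a$. Hence $\iota$ is a bijection onto the integrality lattice, and $\xi^r_M$ is its inverse.

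I expect the main subtlety to lie in verifying that each comparison map in the displayed chain of isomorphisms — localisation commuting with $\Hom$, with exterior powers, and biduality for finitely generated projective $\cQ$-modules — is genuinely an isomorphism and that the overall composite agrees with the evaluation pairing on wedges; once this bookkeeping is settled, the argument is formal.
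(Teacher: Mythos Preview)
Your proposal is correct and follows essentially the same approach as the paper's proof. The paper phrases the embedding $\bidual^r_\cR M \hookrightarrow \Hom_\cR(\exprod^r_\cR M^\ast, \cQ)$ via left-exactness of $\Hom_\cR(\exprod^r_\cR M^\ast, -)$ applied to $0 \to \cR \to \cQ \to \cQ/\cR \to 0$ (rather than your torsion-freeness argument) and uses tensor--hom adjunction in place of flat base change for one step of the chain, but the underlying identifications and the final matching of the integrality lattice are the same.
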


\begin{proof} This argument closely follows that of \cite[Prop.\@ A.8]{sbA} (which deals only with Gorenstein orders).
 Firstly, by applying the functor $\Hom_\cR ( \exprod^r_\cR M^\ast, -)$ to the tautological exact sequence
$0 \to \cR \to \cQ \to \cQ/\cR \to 0$ one deduces that $\bidual^r_\cR M$ identifies with the kernel of the natural map $\Hom_\cR \big ( \exprod^r_\cR M^\ast, \cQ \big ) \to \Hom_\cR \big ( \exprod^r_\cR M^\ast, \cQ/\cR \big)$. In addition, since $\cQ$ is semi-simple, the finitely generated $\cQ$-module $\cQ \otimes_\cR M$ is projective and so there exists a natural composite isomorphism 
\[
 \cQ \otimes_\cR \exprod^r_\cR M \cong \exprod^r_\cQ ( \cQ \otimes_\cR M) 
\cong \bidual^r_\cQ ( \cQ \otimes_\cR M) \cong \Hom_\cR ( \exprod^r_\cR M^\ast, \cQ ),
\]
where the first isomorphism is clear, the second is  $\xi^r_{\cQ \otimes_\cR M}$ and the third is induced by tensor-hom adjunction. 
 The claimed equality is therefore true since an element $a$ belongs to the kernel of the natural map $\cQ \otimes_\cR \exprod^r_\cR M \to  \Hom_\cR \big ( \exprod^r_\cR M^\ast, \cQ/\cR \big )$ if and only if one has $ f (a) \in \cR$ for every $f \in \exprod^r_\cR M^\ast$.  
\end{proof}

\subsection{Complexes, resolutions, and determinants}\label{crd section} 

In the sequel, for a (commutative) noetherian ring $\Lambda$, we write $D(\Lambda)$ for the derived category of $\Lambda$-modules. We also write $D^-(\Lambda)$ and $D^{\rm{perf}}(\Lambda)$ for the full triangulated subcategories of $D(\Lambda)$ comprising complexes that are respectively bounded above and `perfect' (that is, isomorphic in $D(\Lambda)$ to a bounded complex of finitely generated projective $\Lambda$-modules). 

Each object $C$ of $D^{\rm{perf}}(\Lambda)$ has an associated Euler characteristic $\bm{\chi}_\Lambda(C)$ in the Grothendieck group $K_0(\Lambda)$ of finitely generated projective $\Lambda$-modules. If $\Lambda$ is local, then a finitely generated projective module is free and so the $\Lambda$-rank map $P \mapsto \mathrm{rk}_\Lambda(P)$ induces an isomorphism of $K_0(\Lambda)$ with $\ZZ$. In such cases, we regard $\bm{\chi}_\Lambda(C)$ as an integer (so that the class of $C$ in $K_0(\Lambda)$ is equal to $\bm{\chi}_\Lambda(C)\cdot [\Lambda]$).  

We write $D^{\rm{perf},0}(\Lambda)$ for the full triangulated subcategory of $D^{\rm{perf}}(\Lambda)$ represented by complexes $C$ for which $\bm{\chi}_\Lambda(C) = 0$. 
Let $a$ and $b$ be integers with $a\le b$ and let $D^\bullet(\Lambda)$ denote either $D(\Lambda)$, $D^{\rm{perf}}(\Lambda)$ or $D^{\rm{perf},0}(\Lambda)$. Then we write $D^\bullet_{[a,b]}(\Lambda)$, $D^\bullet_{[a,\cdot]}(\Lambda)$ and $D^\bullet_{[\cdot,b]}(\Lambda)$ for the subcategories of $D^\bullet(\Lambda)$ that respectively comprise complexes $C$ for which $H^i(C) = (0)$ if either $i < a$ or $i >b$, $H^i(C)=(0)$ if $i < a$ and $H^i(C)=(0)$ if $i > b$.

In the rest of this section, we fix a ring $R \cong \varprojlim_{n \in \N} R_n$ as in \S\,\ref{cgr section} (so that, in particular, $R$ is Noetherian and local) and describe two useful constructions of complexes in $D^{\rm{perf}}(R)$. 

\subsubsection{Limit complexes}

The following result is essentially well-known. 

\begin{lemma}\label{limit result} We fix integers $a$ and $b$ with $a<b$ and, for every natural number $n$, assume to be given data of the following form
\begin{liste}
\item an object $C_n$ of $D^{\mathrm{perf}}_{[a,b]}(R_n)$; 
\item an isomorphism $\theta_n \: R_n\otimes^{\mathbb{L}}_{R_{n+1}}C_{n+1} \cong C_n$ in $D(R_n)$. 
\end{liste}
Then there exists a bounded complex $\varprojlim_nC_n$ of finitely generated free $R$-modules that is unique up to isomorphism in $D^{\mathrm{perf}}(R)$ and has the following properties: 
\begin{romanliste}
\item $(\varprojlim_nC_n)^i = (0)$ unless $a\le i\le b$;
\item for every $m$, there exists an isomorphism $R_m\otimes^{\mathbb{L}}_{R} (\varprojlim_nC_n) \cong C_m$ in $D(R_m)$;
\item for every $i$, the induced composite map 
\[ H^i({\varprojlim}_nC_n) \to {\varprojlim}_nH^i(R_n\otimes^{\mathbb{L}}_{R} ({\varprojlim}_mC_m)) \cong {\varprojlim}_n H^i(C_n)\]
is bijective. 
\end{romanliste}
Further, if $R_m\otimes^{\mathbb{L}}_{\Lambda}C$ belongs to $D^{\mathrm{perf},0}(R_m)$ for any given $m$, then ${\varprojlim}_nC_n$ belongs to $D^{\mathrm{perf},0}(R)$.  
\end{lemma}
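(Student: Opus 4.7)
The strategy is to realise $\varprojlim_n C_n$ as the termwise inverse limit of a compatible tower of strict representatives of the $C_n$. The starting observation is that each $R_n$ is finite, local, and self-injective, hence quasi-Frobenius; in particular, every finitely generated projective $R_n$-module is free, and any bounded complex of such modules is homotopy equivalent to one concentrated in the degrees of its cohomology, since an exact tail of projective $=$ injective modules splits off at either end. Combined with the assumption $C_n \in D^{\mathrm{perf}}_{[a,b]}(R_n)$, this allows each $C_n$ to be represented by a strict bounded complex $P_n^\bullet$ of finitely generated free $R_n$-modules with $P_n^i = 0$ for $i \notin [a,b]$.

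The core of the argument is to inductively refine these representatives so that $R_n \otimes_{R_{n+1}} P_{n+1}^\bullet = P_n^\bullet$ on the nose. Given $P_n^\bullet$ and any strict bounded free model $Q_{n+1}^\bullet$ of $C_{n+1}$, the reduction $R_n \otimes_{R_{n+1}} Q_{n+1}^\bullet$ also represents $C_n$ via $\theta_n$, and is therefore homotopy equivalent to $P_n^\bullet$ as a bounded complex of finitely generated free modules over the self-injective local ring $R_n$. Because the kernel of $R_{n+1} \twoheadrightarrow R_n$ is contained in the Jacobson radical of the Artinian ring $R_{n+1}$, Nakayama's lemma lets one lift finitely generated free $R_n$-modules uniquely and lift chain maps between them, so $Q_{n+1}^\bullet$ can be replaced by an isomorphic strict complex $P_{n+1}^\bullet$ whose reduction modulo $\a_n/\a_{n+1}$ is exactly $P_n^\bullet$. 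This compatible lifting step is where I expect the main technical obstacle: the obstructions to lifting differentials live in $\Ext^1$-groups that vanish because the terms are free over the self-injective ring $R_{n+1}$, but one must organise the inductive choices coherently with the given data $(\theta_n)_n$.

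Having constructed such a tower, define $\varprojlim_n C_n$ to be the complex $P^\bullet$ with $P^i \coloneqq \varprojlim_n P_n^i$ and differentials induced from those of the $P_n^\bullet$. Because each transition $P_{n+1}^i \twoheadrightarrow P_n^i$ is surjective, the Mittag--Leffler condition holds and the inverse limit is exact on degrees; each $P^i$ is then a finitely generated free $R$-module of rank equal to $\rank_{R_n}(P_n^i)$, vanishing outside $[a,b]$, which gives (i). Property (ii) follows from the flatness of each $P^i$: one has $R_m \otimes_R^{\mathbb{L}} P^\bullet = R_m \otimes_R P^\bullet \cong P_m^\bullet \cong C_m$ in $D(R_m)$. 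Property (iii) then follows from the Milnor exact sequence for $\mathrm{R}\varprojlim$ applied to the towers $H^i(P_n^\bullet)$ of finite modules over the Artinian rings $R_n$, whose $\mathrm{R}\varprojlim^1$ terms vanish by Mittag--Leffler.

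Uniqueness in $D^{\mathrm{perf}}(R)$ is then a standard obstruction argument: any two perfect complexes compatibly reducing to the $C_n$ modulo $\a_n$ are linked by a quasi-isomorphism whose existence is guaranteed by the vanishing of $\mathrm{R}\varprojlim^1$ on the tower of $\Hom$-sets between their strict models. The final Euler-characteristic claim is immediate: since $\rank_R(P^i) = \rank_{R_m}(P_m^i)$ for every $i$ and $m$, one has $\chi_R(P^\bullet) = \chi_{R_m}(P_m^\bullet)$, which vanishes whenever one of the fibres lies in $D^{\mathrm{perf},0}(R_m)$.
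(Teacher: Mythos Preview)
Your proposal is correct and follows essentially the same strategy as the paper: build a compatible tower of strict free representatives, take the termwise limit, and read off the properties via Nakayama and Mittag--Leffler. The differences are purely in packaging. Where you construct the compatible tower by hand using minimality and the quasi-Frobenius property of $R_n$, the paper outsources this step to the general machinery of \cite[XV, p.~472ff.]{sga5}, which produces representatives that are free in degrees $>a$ and of finite projective dimension in degree $a$, and then invokes Auslander--Buchsbaum (using that each $R_n$ is zero-dimensional Cohen--Macaulay) to conclude that the degree-$a$ term is also free. Your route via self-injectivity is more direct and avoids the external reference; the paper's route is more portable to settings where self-injectivity is not available. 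For the Euler-characteristic claim, your argument (matching ranks termwise) is in fact cleaner than the paper's, which somewhat redundantly invokes the Bass Cancellation Theorem even though $K_0(R_m) \cong \ZZ$ already follows from $R_m$ being local. The one place your sketch is thin is the inductive lifting step: to make ``replace $Q_{n+1}^\bullet$ by an isomorphic complex reducing exactly to $P_n^\bullet$'' precise, you should work with minimal complexes throughout, so that the reduction of a minimal $Q_{n+1}^\bullet$ is again minimal and hence isomorphic (not merely homotopy equivalent) to $P_n^\bullet$, at which point the isomorphism lifts by Nakayama.
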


\begin{proof} The conditions (a) and (b) imply that the general argument of \cite[XV, p.~472 to the end]{sga5} applies to realise each $C_n$ by a bounded complex $\hat C_n^\bullet$ of finitely generated $R_n$-modules with $\hat C^i_n = (0)$ if either $i < a$ or $i > b$, $\hat C^i_n$ free for all $i \not= a$ and $\hat C_n^a$ of finite projective dimension, and each isomorphism $\theta_n$ by an isomorphism of complexes $\hat\theta_n \: R_n\otimes_{R_{n+1}}\hat C^\bullet_{n+1} \cong \hat C^\bullet_n$ in such a way that the following condition is satsified. If $(\tilde C_n^\bullet,\tilde\theta_n)$ is any other family of complexes and isomorphisms  that satisfy the same conditions, then there exists a family $(\phi_n \: \hat C_n^\bullet \to \tilde C_n^\bullet)_n$ of isomorphisms of complexes of $R_n$-modules with the property that, for every $n$, there exists a commutative diagram of morphisms of complexes of $R_n$-modules  
\begin{equation}\label{compat diagram}
\xymatrixrowsep{5mm}
\xymatrix{
 \hat C_{n+1}^\bullet \ar[d] \ar[r]^{\phi_{n+1}} & \tilde C_{n+1}^\bullet \ar[d] \\
 \hat C_{n}^\bullet  \ar[r]^{\phi_{n}} & \tilde C_{n}^\bullet}\end{equation}
in which the first and second vertical morphism are respectively induced by $\hat\theta_n$ and $\tilde\theta_n$. 

We next claim that each $R_n$-module $\hat C_n^a$ is projective, and hence free. To see this, we note that for each $\p\in \mathrm{Spec}(R_n)$, one has $\mathrm{depth}_{R_{n,\p}} (R_{n,\p}) = \dim (R_{n,\p}) = 0$ (as $R_n$ is Cohen--Macaulay), so $\mathrm{pd}_{R_{n,\p}} (\hat C^a_{n,\p}) = 0$ (by the Auslander--Buchsbaum formula) and hence $\hat C^a_{n,\p}$ is a free $R_{n,\p}$-module. It follows that $\hat C^a_n$ is a locally free $R_n$-module and hence projective, as claimed. 

The limit $C \coloneqq {\varprojlim}_n \hat C^\bullet_n$ with respect to the morphisms $\hat C^\bullet_{n+1}\to R_n\otimes_{R_{n+1}}\hat C^\bullet_{n+1} \cong \hat C_n^\bullet$ induced by $\hat\theta_n$ is then a bounded complex of $R$-modules that is unique up to isomorphism in $D(R)$ (as a consequence of the diagrams (\ref{compat diagram})) and for which there exists for every $n$ a natural isomorphism $R_n\otimes_R C \cong \hat C^\bullet_n$ of complexes of $R_n$-modules. 

We now claim that each term $C^i = {\varprojlim}_n \hat C_n^i$ is a finitely generated free $R$-module, and hence that $C$ belongs to $D^{\mathrm{perf}}(R)$. To see this, we note each $R_n$-module $\hat C_n^i$ is finitely generated and free and that each projection map $\hat C_{n+1}^i \to R_n\otimes_{R_{n+1}}\hat C_{n+1}^i \cong \hat C_n^i$ is surjective and has kernel $(\a_n / \a_{n + 1})\hat C^i_{n + 1}$. Thus, since $\a_n / \a_{n + 1}$ is contained in the Jacobson radical of $R_{n + 1}$, Nakayama's Lemma implies that each limit $C^i = {\varprojlim}_n \hat C_n^i$ is finitely generated and free over $R = {\varprojlim}_nR_n$ with ${\rm{rk}}_R(C^i) = {\rm{rk}}_{R_m}(\hat C_m)$ for any choice of $m$.

Next we note that each group $H^i(C_n)$ is finite and hence, since inverse systems of finite groups satisfy the Mittag--Leffler condition, the first derived limit ${\varprojlim}_{n}^{(1)} H^i (C_n)$ vanishes. From the exact sequences $0 \to {\varprojlim}_{n}^{(1)} H^{i - 1} (C_n) \to H^i(C) \to {\varprojlim}_nH^i(\hat C_n) \to 0$ we then deduce that the natural maps $H^i(C) \to {\varprojlim}_nH^i(\hat C_n) = {\varprojlim}_n H^i(C_n)$ are bijective, as claimed in (c).\\ 
It thus only remains to show that $\bm{\chi}_R(C) = 0$ if $\bm{\chi}_{R_m}(R_m\otimes^{\mathbb{L}}_{\Lambda}C) = 0$ for any given $m$. But this is true, since, by the argument above, each $R$-module $C^i$ is free of rank ${\rm{rk}}_{R_m}(\hat C_m^i)$, whilst, as $R_m$ is Cohen--Macaulay, the Bass Cancellation Theorem \cite[Th.\@ I.2.3\,(b)]{K-book} combines with the vanishing of $\bm{\chi}_{R_m}(R_m\otimes_RC) = \bm{\chi}_{R_m}(\hat C_m^\bullet)$ to imply that $\sum_{i\in \ZZ}(-1)^{i}{\rm{rk}}_{R_m}(\hat C_m^i)=0$.  
\end{proof} 

We next record relations between the cohomology modules of complexes $C_n$ as in Lemma \ref{limit result}.

\begin{lem} \label{how the cohomology base changes lemma}
    Fix integers $a$ and $b$ with $a<b$ and, for every natural number $n$, assume the pair $(C_n, \theta_n)$ satisfies (a) and (b) in Lemma \ref{limit result}. Then the following claims are valid.
    \begin{romanliste}
        \item Any choice of $R_{n + 1}$-linear injection $j_n \: R_n \hookrightarrow R_{n + 1}$ of rings as in Lemma \ref{inj env lem} induces an isomorphism $H^a (C_{n + 1}) [\a_{n + 1}] \cong H^a (C_n)$.
        \item Each map $\theta_{n + 1}$ induces an isomorphism $H^b (C_{n + 1}) \otimes_{R_{n + 1}} R_n \cong H^b (C_n)$.
    \end{romanliste}
\end{lem}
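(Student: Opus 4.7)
The plan is to represent $C_{n+1}$ by a concrete bounded complex $P^{\bullet}$ of finite free $R_{n+1}$-modules concentrated in degrees $[a, b]$ and then to compute both cohomology groups directly. Such a $P^{\bullet}$ exists by exactly the argument used in the proof of Lemma~\ref{limit result}: any perfect representative has bottom-degree term of finite projective dimension, and the Auslander--Buchsbaum formula together with the fact that $R_{n+1}$ is zero-dimensional Cohen--Macaulay (indeed Gorenstein) forces this term to be free. With this representative fixed, each $P^i$ is flat, so $\theta_n$ supplies an isomorphism $C_n \cong R_n \otimes_{R_{n+1}}^{\mathbb{L}} P^{\bullet} = R_n \otimes_{R_{n+1}} P^{\bullet}$ in $D(R_n)$.

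For part~(ii), I would view $R_n$ via the natural $R_{n+1}$-algebra surjection $R_{n+1} \twoheadrightarrow R_n$, so that $R_n \otimes_{R_{n+1}} P^i = P^i/\a_n P^i$. Since $P^{\bullet}$ has no term in degree $b+1$, its top cohomology is simply a cokernel, and right-exactness of $(-) \otimes_{R_{n+1}} R_n$ yields
\[
H^b(C_n) \;\cong\; \coker\bigl(P^{b-1}/\a_n P^{b-1} \to P^b/\a_n P^b\bigr) \;\cong\; H^b(P^{\bullet}) \otimes_{R_{n+1}} R_n \;\cong\; H^b(C_{n+1}) \otimes_{R_{n+1}} R_n,
\]
as required.

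For part~(i), I would instead realise $R_n$ inside $R_{n+1}$ via the injection $j_n \colon R_n \hookrightarrow R_{n+1}$ of Lemma~\ref{inj env lem}(ii), whose image is the $\a_n$-torsion submodule $R_{n+1}[\a_n]$. Because each $P^i$ is finite free, this identifies $R_n \otimes_{R_{n+1}} P^i$ with $P^i[\a_n]$, and these identifications commute with the differentials, all being induced by the single $R_{n+1}$-linear map $R_n \to R_{n+1}$. One thus obtains an isomorphism of complexes $R_n \otimes_{R_{n+1}} P^{\bullet} \cong P^{\bullet}[\a_n]$; since $P^{\bullet}$ has no term in degree $a-1$, bottom cohomology is simply a kernel, so
\[
H^a(C_n) \;\cong\; H^a(P^{\bullet}[\a_n]) \;=\; (\ker d^a) \cap P^a[\a_n] \;=\; H^a(C_{n+1})[\a_n],
\]
which is the claimed identification.

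The main subtlety lies in part~(i). The derived tensor product $R_n \otimes_{R_{n+1}}^{\mathbb{L}} C_{n+1}$ does not commute with passage to $\a_n$-torsion in general, and the Tor spectral sequence would present $H^a(C_n)$ as an extension involving the groups $\Tor^{R_{n+1}}_i(R_n, H^{a+i}(C_{n+1}))$ for $i \geq 0$. Swapping the quotient description $R_n = R_{n+1}/\a_n R_{n+1}$ for the submodule description $R_n \cong R_{n+1}[\a_n]$ collapses this spectral sequence to the single clean expression $H^a(C_{n+1})[\a_n]$; this is possible only because $a$ is the bottom degree (so no $d^{a-1}$ appears), and the submodule description itself is available only thanks to the self-injectivity of $R_{n+1}$ that is encoded in Lemma~\ref{inj env lem}.
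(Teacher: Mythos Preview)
Your proof is correct and follows essentially the same approach as the paper: represent $C_{n+1}$ by a bounded complex of finite free $R_{n+1}$-modules concentrated in $[a,b]$, then use right-exactness of $\otimes$ for the top cohomology and the identification $R_n \otimes_{R_{n+1}} P^i \cong P^i[\a_n]$ (via $j_n$) together with left-exactness of torsion for the bottom cohomology. The paper presents part~(i) via a commutative square with a dashed arrow, whereas you identify the two complexes directly; the content is the same.
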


\begin{proof}
    We have seen in the proof of Lemma \ref{limit result} that $C_{n + 1}$ can be represented by a bounded complex $\hat C^\bullet_{n + 1}$ of finitely generated free $R_{n + 1}$-modules $\hat C^i_{n + 1}$ that are zero if $i \not \in [a, b]$ and with differentials $\partial^i_{n +1} \: \hat C_{n + 1}^i \to \hat C_{n + 1}^{i + 1}$. In particular, $H^a (C_{n + 1})$ identifies with the kernel of $\partial^a_{n + 1}$ and so we may construct a commutative diagram of the form
    \begin{cdiagram}
        0 \arrow{r} & H^a (C_{n + 1}) [\a_{n + 1}] \arrow{r}  & \hat C^a_{n + 1} [\a_{n + 1}] \arrow{r}{\partial^a_{n +1}}  & \hat C^{a + 1}_{n + 1}  \\ 
        0 \arrow{r} & H^a (C_{n + 1} \otimes^\mathbb{L}_{R_{n + 1}} R_n) \arrow[dashed]{u} \arrow{r} & C^a_{n + 1} \otimes_{R_{n + 1}} R_n \arrow{r}{\partial^a_{n +1}}  \arrow{u}{\simeq} & C^{a + 1}_{n + 1} \otimes_{R_{n + 1}} R_n \arrow{u}{\simeq},
    \end{cdiagram}%
    where the first row is exact because $(-) [\a_{n + 1}]$ is a left-exact functor, the two vertical isomorphisms are induced by our choice of identification $j_n \: R_n \cong R_{n + 1} [\a_{n + 1}]$, and the second square commutes because $j_n$ is $R_{n + 1}$-linear. Since $\theta_n$ identifies $C_n$ with $\hat C^\bullet_{n + 1} \otimes^\mathbb{L}_{R_{n + 1}} R_n$, the dashed arrow in the above diagram gives the isomorphism required to establish claim (i). \\
    As for claim (ii), we note that $H^b (C_{n + 1})$ identifies with the cokernel of $\partial^b_{n + 1}$ because $\hat C_{n + 1}^i $ vanishes for $i > b$. 
    Now, $C_n \cong \hat C^\bullet_{n + 1} \otimes^\mathbb{L}_{R_{n + 1}} R_n$ via $\theta_n$ and so 
     $H^1 (C_n)$ can similarly be identified with the cokernel of the map $\hat C_{n + 1}^{b - 1} \otimes_{R_{n + 1}} R_n \to \hat C_{n + 1}^b \otimes_{R_{n + 1}} R_n$ induced by $\partial^b_{n + 1}$. Given this,  (ii) follows upon noting  these two cokernels agree since $( - ) \otimes_{R_{n + 1}} R_n$ is a right-exact functor.
\end{proof}

We finally prove a useful technical result concerning Tor-groups.

\begin{lem} \label{Tor lemma}
Let $ f\: R \to S$ be a morphism of rings satisfying (\ref{ring condition}), and assume that $f$ arises as the limit of commutative diagrams of finite rings of the form
\begin{cdiagram}[row sep=small]
    R_{n + 1} \arrow{r}{f_{n + 1}} \arrow{d}
    & S_{n + 1} \arrow{d}
    \\
    R_n \arrow{r}{f_n} & S_n.
\end{cdiagram}%
    Let $(M_n)_{n \in \N}$ be a projective system of $R_n$-modules and set $M \coloneqq \varprojlim_{n \in \N} M_n$.
    \begin{romanliste}
    \item For every $n \in \N$ such that $M_{n + 1}$ is finitely presented as an $R_{n + 1}$-module, the maps $f_n$ and $M_{n + 1} \to M_n$ induce a natural map 
    \[ \Tor_1^{R_{n + 1}} ( M_{n + 1}, S_{n + 1}) \to \Tor_1^{R_n} (M_n, S_n),\] 
    where we regard  each $S_n$ as an $R_n$-module via $f_n$. Further, if $f_{n+1}$ and $f_n$ are surjective and the natural map $M_{n+1}\otimes_{R_{n+1}}R_n \to M_n$ is bijective, then the cokernel of the above map is isomorphic to the cokernel of the natural map
    \begin{equation} \label{2nd Tor base change map}
    \Tor_1^{R_{n + 1}} (M_{n + 1}, R_n) \to \Tor_1^{S_{n + 1}} ( M_{n + 1} \otimes_{R_{n + 1}} S_{n + 1}, S_n). 
    \end{equation}
    \item If all transition maps $M_{n + 1} \to M_n$ are surjective, and $M$ is finitely presented as an $R$-module, then there exists a canonical isomorphism 
    \[ {\varprojlim}_{n \in \N }\Tor_1^{R_n} (M_n, S_n) \cong \Tor_1^R (M, S),\] 
where the limit is defined with respect to the maps in (i).     
    \end{romanliste}
\end{lem}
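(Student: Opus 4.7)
For part (i), the natural map is the one induced by the compatible data $f_n$, $M_{n+1} \to M_n$ and $S_{n+1} \to S_n$ via the functoriality of Tor. To identify its cokernel, I would exploit the two quasi-isomorphisms
\[
M_{n+1} \otimes^{\mathbb{L}}_{R_{n+1}} S_n \simeq (M_{n+1} \otimes^{\mathbb{L}}_{R_{n+1}} R_n) \otimes^{\mathbb{L}}_{R_n} S_n \simeq (M_{n+1} \otimes^{\mathbb{L}}_{R_{n+1}} S_{n+1}) \otimes^{\mathbb{L}}_{S_{n+1}} S_n,
\]
which produce two change-of-rings (Grothendieck) spectral sequences, both abutting to $\Tor_{\ast}^{R_{n+1}}(M_{n+1}, S_n)$. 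Writing $Z \coloneqq \Tor_1^{R_{n+1}}(M_{n+1}, S_n)$, their five-term exact sequences (combined with the hypothesis $M_{n+1}\otimes_{R_{n+1}} R_n \cong M_n$) give surjections $Z \twoheadrightarrow \Tor_1^{R_n}(M_n, S_n)$ and $Z \twoheadrightarrow \Tor_1^{S_{n+1}}(M_{n+1} \otimes_{R_{n+1}} S_{n+1}, S_n)$ whose respective kernels are the images of $\Tor_1^{R_{n+1}}(M_{n+1}, R_n) \otimes_{R_n} S_n$ and $\Tor_1^{R_{n+1}}(M_{n+1}, S_{n+1}) \otimes_{S_{n+1}} S_n$. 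The natural map factors through $Z$ along both routes, and a short diagram chase identifies its cokernel with the quotient of $Z$ by the sum of these two image submodules, which is precisely the cokernel of~(\ref{2nd Tor base change map}). Here the surjectivity of $f_n$ is essential: it forces every $R_n$-submodule of $Z$ to be an $S_n$-submodule, so that the image of $\Tor_1^{R_{n+1}}(M_{n+1}, R_n)$ in $Z$ coincides with that of $\Tor_1^{R_{n+1}}(M_{n+1}, R_n) \otimes_{R_n} S_n$.

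For part (ii), the map $\Tor_1^R(M, S) \to \varprojlim_n \Tor_1^{R_n}(M_n, S_n)$ is induced from the maps of~(i) by Tor functoriality. To prove bijectivity, I would first choose a finite free presentation $0 \to K \to R^b \to M \to 0$ with $K$ finitely generated (using that $R$ is Noetherian), which identifies $\Tor_1^R(M, S) = \ker(K \otimes_R S \to S^b)$ and $\Tor_1^R(M, S_n) = \ker(K \otimes_R S_n \to S_n^b)$. Since the transitions $S_{n+1} \to S_n$ are surjective, the inverse systems $(K \otimes_R S_n)_n$ and $(S_n^b)_n$ have surjective transition maps, and Mittag--Leffler combined with the left-exactness of $\varprojlim$ yields $\Tor_1^R(M, S) = \varprojlim_n \Tor_1^R(M, S_n)$.

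It then remains to compare $\Tor_1^R(M, S_n)$ with $\Tor_1^{R_n}(M_n, S_n)$ in the limit. Setting $T_n \coloneqq \ker(M \otimes_R R_n \twoheadrightarrow M_n)$, the fact that $M$ is a finitely generated module over the complete Noetherian local ring $R$ yields $M = \varprojlim_n M \otimes_R R_n$; combined with $M = \varprojlim_n M_n$ this forces $\varprojlim_n T_n = 0$, and the finiteness of each $T_n$ then gives $\varprojlim^1 T_n = 0$ as well. Combining the change-of-rings spectral sequence $\Tor_p^{R_n}(\Tor_q^R(M, R_n), S_n) \Rightarrow \Tor_{p+q}^R(M, S_n)$ (handled exactly as in part~(i)) with the long Tor-sequence of $0 \to T_n \to M \otimes_R R_n \to M_n \to 0$, all defects between $\Tor_1^R(M, S_n)$ and $\Tor_1^{R_n}(M_n, S_n)$ are controlled by finite modules with surjective or compact-Hausdorff transitions, whose inverse limits then vanish by a Mittag--Leffler juggling in the style of the proof of Proposition~\ref{Greither--Kurihara Lemma}. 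The main technical hurdle will be orchestrating these several Mittag--Leffler arguments simultaneously so that every relevant first derived limit is controlled; this is made feasible by the surjectivity of the transition maps and the finiteness of each $R_n$.
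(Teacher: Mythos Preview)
Your proposal is correct and follows essentially the same route as the paper. For part (i), the paper also pivots through $\Tor_*^{R_{n+1}}(M_{n+1}, S_n)$, but packages the argument via the hypercohomology spectral sequence of a two-term free resolution $P_{n+1}$ of $M_{n+1}$ (yielding the exact sequence you would get from your change-of-rings spectral sequences) and then applies the Snake Lemma to a diagram whose middle column is $H^0(P_{n+1}\otimes^{\mathbb L}_{R_{n+1}} S_n)$ rather than $Z$ itself; your use of $Z$ as the pivot is a clean refinement of this. For part (ii), the paper likewise fixes a finite free $R$-presentation and first identifies $\Tor_1^R(M,S)\cong\varprojlim_n\Tor_1^{R_n}(M\otimes_R R_n,S_n)$, then handles the discrepancy with $\Tor_1^{R_n}(M_n,S_n)$ via the kernels $K_n=\ker(M\otimes_R R_n\to M_n)$ (your $T_n$) and the same finiteness/Mittag--Leffler reasoning you outline.
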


\begin{proof}
At the outset we note that if $\cR \to \cS$ is a morphism of commutative Noetherian rings and $C$ is a perfect complex of $\cR$-modules, then one has a spectral sequence
\begin{equation*} \label{Tor spectral sequence}
    E_2^{i,j} = \Tor_{-i}^{\cR} ( H^j (C), \cS) \Rightarrow E^{i + j} = H^{i + j} ( C \otimes^\mathbb{L}_\cR \cS).
\end{equation*}
If $C$ is acyclic outside degrees zero and one, this spectral sequence degenerates on its second page to give an exact sequence
\begin{equation} \label{exact sequence from Tor spectral sequence}
    \begin{tikzcd}[column sep=small]
        \Tor_2^\cR ( H^1 (C), \cS) \arrow{r} & H^0 ( C) \otimes_\cR \cS \arrow{r} & H^0 ( C \otimes^\mathbb{L}_\cR \cS) \arrow{r} & \Tor_1^\cR ( H^1 (C), \cS) \arrow{r} & 0.
    \end{tikzcd}
\end{equation}
Choose an $R_{n + 1}$-free presentation $P^0_{n + 1} \to P^1_{n + 1} \to M_{n + 1} \to 0$ of $M_{n + 1}$, which we regard as a perfect complex $P_{n + 1} = [P^0_{n + 1} \to P^1_{n + 1}]$ in $D(R_{n + 1})$, where $P^0_{n + 1}$ is placed in degree zero. We then obtain a commutative diagram of the form
\begin{equation*} \label{Tor diagram}
\begin{tikzcd}[column sep=small, row sep=small]
    H^0 ( P_{n + 1} )\otimes_{R_{n+1}}S_{n+1} \arrow{r} \arrow{d} & H^0 ( P_{n + 1} \otimes^\mathbb{L}_{R_{n + 1}} S_{n + 1}) \arrow[twoheadrightarrow]{r} \arrow{d} & \Tor_1^{R_{n + 1}} ( M_{n + 1}, S_{n + 1}) \arrow[dashed]{d} \\
    H^0 ( P_{n + 1} \otimes^\mathbb{L}_{R_{n + 1}} R_{n})\otimes_{R_n}S_n \arrow{r} 
    & H^0 ( P_{n + 1} \otimes^\mathbb{L}_{R_{n + 1}} S_{n}) \arrow[twoheadrightarrow]{r} 
    & \Tor_1^{R_{n}} ( M_{n + 1} \otimes_{R_{n + 1}} R_n, S_{n}).
\end{tikzcd}%
\end{equation*}
Here the first two vertical arrows are the natural maps, the first row is (\ref{exact sequence from Tor spectral sequence}) applied with $C = P_{n + 1}$, $\cR = R_{n + 1}$, and $\cS = S_{n + 1}$, and the second row is (\ref{exact sequence from Tor spectral sequence}) applied with $C = P_{n + 1} \otimes^\mathbb{L}_{R_{n + 1}} R_{n}$, $\cR = R_{n}$, and $\cS = S_{n}$. Since the first square commutes, the dashed arrow now exists due to exactness of the rows, and we define the map in claim (i) to be the composite of this dashed arrow with the natural map $\Tor_1^{R_{n}} ( M_{n + 1} \otimes_{R_{n + 1}} R_n, S_{n}) \to \Tor_1^{R_n} (M_n, S_n)$ induced by $M_{n + 1} \otimes_{R_{n + 1}} R_n \to M_n$. 

To complete the proof of (i), we assume that the maps $f_{n+1}$ and $f_n$ are surjective and the map $M_{n + 1} \otimes_{R_{n + 1}} R_n \to M_n$ is bijective.  In this case, the above diagram simplifies to give an exact commutative diagram 
\begin{equation*}\begin{tikzcd}[column sep=small, row sep=small]
    H^0 ( P_{n + 1} ) \arrow{r} \arrow{d} & H^0 ( P_{n + 1} \otimes^\mathbb{L}_{R_{n + 1}} S_{n + 1}) \arrow[twoheadrightarrow]{r} \arrow{d} & \Tor_1^{R_{n + 1}} ( M_{n + 1}, S_{n + 1}) \arrow{d} \\
    H^0 ( P_{n + 1} \otimes^\mathbb{L}_{R_{n + 1}} R_{n}) \arrow{r} 
    \arrow[twoheadrightarrow]{d} 
    & H^0 ( P_{n + 1} \otimes^\mathbb{L}_{R_{n + 1}} S_{n}) \arrow[twoheadrightarrow]{r} \arrow[twoheadrightarrow]{d} 
    & \Tor_1^{R_{n}} ( M_{n}, S_{n})\\
    \Tor_1^{R_{n + 1}} (M_{n + 1}, R_n) \arrow{r}{\alpha} &\Tor_1^{S_{n + 1}} ( M_{n + 1} \otimes_{R_{n + 1}} S_{n + 1}, S_n).
\end{tikzcd}%
\end{equation*}
Here the exactness of the first two columns follows from an application of (\ref{exact sequence from Tor spectral sequence}) with $C = P_{n + 1}$, $\cR = R_{n + 1}$, and $\cS = S_{n}$, respectively  $C = P_{n+1}\otimes^\mathbb{L}_R S_{n+ 1}$, $\cR = S_{n + 1}$, and $\cS = S_{n}$, and $\alpha$ is the natural map in (\ref{2nd Tor base change map}). The final assertion of (i) is therefore obtained by applying the Snake Lemma to this diagram.  
\\
To prove (ii), we fix an $R$-free presentation $P^0 \to P^1 \to M \to 0$ and regard it as a complex $P = [P^0 \to P^1]$ in $D(R)$. Taking $P_{n + 1} = P \otimes^\mathbb{L}_R R_{n + 1}$ and $M_{n + 1} = M \otimes_R R_{n + 1}$ in  the first diagram above  and passing to the limit over $n$
leads to an exact sequence (because all appearing modules are assumed to be finite) that forms the first row of the diagram
\begin{cdiagram}[column sep=small]
{{\varprojlim}}_{n \in \N} H^0 ( P \otimes^\mathbb{L}_R R_{n}) \arrow{r} \arrow{d}{\simeq} & {{\varprojlim}}_{n \in \N} H^0 ( P \otimes^\mathbb{L}_R S_{n}) \arrow[twoheadrightarrow]{r} \arrow{d}{\simeq} & {{\varprojlim}}_{n \in \N} \Tor_1^{R_{n}} ( M_{n}, S_{n})  \arrow[dashed]{d} \arrow{r} & 0\\
H^0 ( P) \arrow{r}  & H^0 ( P \otimes^\mathbb{L}_R S) \arrow[twoheadrightarrow]{r} & \Tor_1^{R} ( M, S) \arrow{r} & 0.
\end{cdiagram}%
Here the second row is (\ref{exact sequence from Tor spectral sequence}) applied with $C = P$, $\cR = R$, and $\cS = S$, and the isomorphisms are by Lemma \ref{limit result}. As a consequence, we deduce that the dashed arrow is an isomorphism.\\
For every $n \in \N$, we now write $K_n$ for the kernel of the map $M \otimes_R R_n \to M_n$ (which is surjective because the system $(M_n)_{n \in \N}$ is assumed to have surjective transition maps). Since all appearing modules in the exact sequence
\begin{cdiagram}
    \Tor_1^{R_n} (K_n, S_n) \arrow{r} & \Tor_1^{R_n} (M \otimes_R R_n, S_n) \arrow{r} & \Tor_1^{R_n} (M_n, S_n) \arrow{r} & K_n \otimes_{R_n} S_n
\end{cdiagram}%
are finite, passing to the limit over $n$ yields an exact sequence. To prove the isomorphism claimed in (ii), it is therefore enough to show $\varprojlim_{n \in \N}\Tor_1^{R_n} (K_n, S_n)$ and $\varprojlim_{n \in \N} (K_n \otimes_{R_n} S_n)$ both vanish. To do this, we note that the limit over the maps $M \otimes_R R_n \to M_n$ is an isomorphism (because $M$ is finitely presented and each $R_n$ is finite), and hence that $\varprojlim_{n \in \N} K_n$ vanishes. We now fix $m \in \N$, write $g_n$ for the map $K_n \to K_m$ for every $n \geq m$, and consider the group of universal norms $U_m \coloneqq \bigcap_{n \geq m} g_n (K_n)$ in $K_m$. Since each $K_n$ is finite, $U_m$ coincides with the image of $\varprojlim_{n \in \N} K_n \to K_m$ (cf.\@ \cite[Lem.\@ 3.10]{BullachDaoud}), which implies that $U_m = (0)$. By finiteness of $K_m$, we can therefore fix an integer $N$ with $N \geq m$ for which $g_N$ is the zero map. It follows that also the maps 
$\Tor_1^{R_N} (K_N, S_N) \to \Tor_1^{R_m} (K_m, S_m)$ and $K_N \otimes_{R_N} S_N \to K_m \otimes_{R_m} S_m$ induced by $g_N$ are both zero. As $m$ was chosen arbitrarily, this proves the required vanishing of both $\varprojlim_{n \in \N}\Tor_1^{R_n} (K_n, S_n)$ and $\varprojlim_{n \in \N} (K_n \otimes_{R_n} S_n)$, thereby concluding the proof of (ii).
\end{proof}

\subsubsection{Quadratic resolutions}

The class of complexes considered in the next result plays an important role in later arguments.

\begin{lem}\label{finite level reps} 
Let $C$ be an object of $D^{\mathrm{perf}}_{[m, 1]}(R)$ for some $m \le 1$ such that, for some $m ' \in \N_0$ and $n' \in \N$, the complex $R_{n'}\otimes^{\mathbb{L}}_{R}C$ belongs to $D^{\mathrm{perf}}_{[0,m']}(R_{n'})$. Then the following claims are valid. 
\begin{romanliste}
\item $R_n\otimes^{\mathbb{L}}_{R}C$ belongs to $D^{\mathrm{perf}}_{[0,1]}(R_n)$ for all $n \in \N$, and the integer $a\coloneqq \bm{\chi}_{R_n}(R_n\otimes^{\mathbb{L}}_{R}C)$ is independent of $n$.
\item $C$ is isomorphic in $D^{\mathrm{perf}}(R)$ to a complex of finitely-generated free $R$-modules $P_0 \stackrel{\phi}{\to} P_1$, in which $P_0$ occurs in degree $0$ and one has $\bm{\chi}_R(C) = \mathrm{rk}_R(P_0)-\mathrm{rk}_R(P_1) = a$. 
\item Let $Y$ be a free $R$-module quotient of $H^1 (C)$. Then the composite map
\[ \pi \: P_1 \to \mathrm{cok}(\phi) \cong H^1 (C) \to Y\] 
induced by (ii) induces an isomorphism of $R$-modules $P_1 \cong  \ker (\pi)\oplus Y$. In particular, the $R$-module $\ker(\pi)$ is free of rank $\mathrm{rk}_R(P_1) - \mathrm{rk}_R(Y)$.
\end{romanliste}
\end{lem}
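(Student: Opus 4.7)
The plan is to represent $C$ by an explicit minimal free complex and then read off all three claims directly.

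For (i), I would start by fixing a bounded representative $P^\bullet$ of $C$ by finitely generated free $R$-modules (which exists because $R$ is local Noetherian and $C$ is perfect, so any finitely generated projective $R$-module is free). For each $n$ the complex $R_n \otimes^{\mathbb{L}}_R C$ is then represented by $R_n \otimes_R P^\bullet$, in which the term in each degree is free over $R_n$ of the same rank as $P^i$ is over $R$. The Euler characteristic $\chi_{R_n}(R_n \otimes^{\mathbb{L}}_R C) = \sum_i (-1)^i \mathrm{rk}_R(P^i) = \chi_R(C)$ is therefore independent of $n$.

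For (ii), my plan is to pass to a minimal free representative $\tilde P^\bullet$ of $C$. Such a representative exists over the local Noetherian ring $R$ and is unique up to isomorphism of complexes, characterised by the property that each differential maps $\tilde P^i$ into $\mathfrak{m}\tilde P^{i+1}$. Tensoring with the residue field $k = R_0 = R/\mathfrak{m}$ then kills all differentials, so Nakayama's lemma implies that $\tilde P^i = 0$ if and only if $H^i(k \otimes^{\mathbb{L}}_R C) = 0$. The assumption $C \in D^{\mathrm{perf}}_{[\cdot,1]}(R)$ allows one to truncate so that $\tilde P^i = 0$ for $i > 1$, and applying the hypothesis at $n = 0$ gives $H^i(k \otimes^{\mathbb{L}}_R C) = 0$ for $i < 0$, whence $\tilde P^i = 0$ also in such degrees. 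Setting $P_0 \coloneqq \tilde P^0$, $P_1 \coloneqq \tilde P^1$, and $\phi$ equal to the single non-zero differential then yields the required two-term complex, and the identity $\mathrm{rk}_R(P_0) - \mathrm{rk}_R(P_1) = \chi_R(C) = a$ follows from the definition of the Euler characteristic combined with (i).

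For (iii), the map $\pi$ is surjective as a composition of two surjections. Since $Y$ is free and hence projective, the short exact sequence $0 \to \ker(\pi) \to P_1 \to Y \to 0$ splits and produces the decomposition $P_1 \cong \ker(\pi) \oplus Y$. As a direct summand of the finitely generated free $R$-module $P_1$, the kernel $\ker(\pi)$ is finitely generated projective over the local ring $R$, and therefore free of rank $\mathrm{rk}_R(P_1) - \mathrm{rk}_R(Y)$.

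The only point requiring real care is the reduction to the residue field in step (ii): it relies on $n = 0$ being admissible in the hypothesis, which is natural given the convention $\mathfrak{a}_0 = \mathfrak{m}$ fixed when the filtration $(\mathfrak{a}_n)$ is introduced. A direct universal-coefficient argument starting from some $R_n$ with $n \geq 1$ and passing down to $k$ would \emph{not} straightforwardly transport the required vanishing, because the resulting $\Tor$-contributions could in principle introduce cohomology of $k \otimes^{\mathbb{L}}_R C$ in arbitrarily negative degrees.
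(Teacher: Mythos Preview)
Your proof is correct and, for (ii), takes a route genuinely different from the paper's. You argue via the minimal free resolution of $C$ over $R$: its nonzero terms sit exactly in the degrees where $H^i(\mathbb{k}\otimes^{\mathbb{L}}_R C)\neq 0$, the upper vanishing follows from $C\in D^{\mathrm{perf}}_{[\cdot,1]}(R)$ (any free representative can be truncated to degrees $\le 1$, and this persists after $-\otimes_R\mathbb{k}$), and the lower vanishing is precisely the hypothesis at $n=0$ (legitimate under the convention $\mathfrak{a}_0=\mathfrak{m}$). This is short and entirely standard. The paper instead reduces to $\chi_R(C)=0$, constructs a two-term resolution $P_n\to P_n$ over each finite quotient $R_n$ by exploiting that $R_n$ is self-injective, and then passes to the inverse limit via Lemma~\ref{limit result}. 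The payoff of the longer route is that it produces the resolution together with a \emph{prescribed} surjection $P_1\twoheadrightarrow H^1(C)$ chosen in advance; this normalisation is convenient in later constructions (for instance Lemma~\ref{det projection lemma}) where one wants the splitting $P_1\cong\ker(\pi)\oplus Y$ built into the representative from the outset, although your a~posteriori splitting in (iii) yields the same conclusion for the lemma as stated. Your closing worry is also slightly overcautious: even starting from $n\ge 1$, the self-injectivity of $R_n$ lets one split off the bottom term and represent $R_n\otimes^{\mathbb{L}}_R C$ by free modules in degrees $\ge 0$, whence the further base change to $\mathbb{k}$ stays there too---this is exactly the mechanism underlying the paper's approach.
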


\begin{proof} 
Since $C$ belongs to $D^\mathrm{perf} (R)$, it is clear that $C_n \coloneqq C \otimes_R^\mathbb{L} R$ is in $D^\mathrm{perf} (R_n)$ for every $n$. The same argument as for Lemma \ref{how the cohomology base changes lemma}\,(ii) moreover shows (under the given hypotheses) that $H^b (C_n) \cong H^b (C) \otimes_R R_n$ for $b \in \N$, and hence that $C_n$ belongs to $D^\mathrm{perf}_{[m_n, 1]} (R_n)$ for some $m_n \le 0$. In addition, the assumption that $C_{n'}$ belongs to $D^\mathrm{perf}_{[0, m']} (R_{n'})$ combines with the results of Lemmas \ref{how the cohomology base changes lemma}\,(i) and \ref{ryotaro's reduction trick} to imply that, for each $a < 0$ and every $n \in \N$, the module $H^a (C_n)$ vanishes, as required to prove $C_n$ belongs to $D^\mathrm{perf}_{[0, 1]} (R_n)$, as claimed in (i). The second assertion of (i) being clear, we now define a complex 
\[ \tilde C\coloneqq \begin{cases}  C\oplus R^{\oplus a}[-1], &\text{if $a \ge 0$,}\\
C\oplus R^{\oplus (-a)}[0], &\text{if $a < 0$.}\end{cases}\] 
This complex belongs to $D^{\mathrm{perf}}_{[m, 1]}(R)$ and (i) implies that, for all $n$, $R_n\otimes^{\mathbb{L}}_{R}\tilde C$ belongs to $D^{\mathrm{perf}}_{[0,1]}(R_n)$. It is also clear that the validity of (ii) and (iii) for $\tilde C$ implies their validity for $C$.  Hence, after replacing $C$ by $\tilde C$ if necessary, in the rest of the argument we will assume $a=0$. 

We first consider the case that $R$ is finite and self-injective. Then, after fixing a surjective map of $R$-modules $\tau \: P \to H^1(C)$, with $P$ finitely generated and free, the argument of \cite[Prop.~A.11\,(i)]{sbA} proves the existence of a finitely generated $R$-module $P'$ of finite projective dimension such that $C$ is isomorphic to $P'\xrightarrow{\phi} P$ and the induced map $P \to {\rm{cok}}(\phi) \cong H^1(C)$ coincides with $\tau$. Since the vanishing of $\bm{\chi}_{R}(C)$ combines with the Bass cancellation theorem \cite[Th.~I.2.3\,(b)]{K-book} to imply $P'$ is isomorphic to $P$ (just as in the argument of Lemma \ref{limit result}), the claimed result is therefore true in this case. 

To deal now with the general case, we fix a surjective map of $R$-modules $\tau\: P \to H^1(C)$, with $P$ finitely generated and free and note that, as $Y$ is free, the composite $\tau'$ of $\tau$ with the projection $H^1(C) \to Y$ induces an isomorphism $P \cong \ker(\tau') \oplus Y$. For each $n$, we set $P_n \coloneqq  R_n\otimes_RP$ and $C_n \coloneqq  R_n\otimes_R^{\mathbb{L}}C$ and note $H^1(C_n) \cong R_n\otimes_R H^1(C)$ since $H^i(C) = (0)$ for all $i > 1$. With $\tau_n$ denoting the surjective map $P_n \to H^1(C_n)$ induced by $\tau$, we can then combine the argument in the last paragraph with that of Lemma \ref{limit result} to fix a resolution $P_n\xrightarrow{\phi_n}P_n$ of $C_n$ so that the induced map $P_n \to {\rm{cok}}(\phi_n) \cong H^1(C_n)$ is $\tau_n$ and the isomorphism $R_n\otimes_{R_{n+1}}^{\mathbb{L}}C_{n+1} \cong C_n$ in $D(R_n)$ is induced by the identification $R_n\otimes_{R_{n+1}}P_{n+1} = R_n\otimes_R P = P_n$. Setting 
$\phi \coloneqq  {\varprojlim}_n\phi_n$, one can then check the complex $P \xrightarrow{\phi} P$ is isomorphic in $D(R)$ to $C$ in such a way that the induced map $P \to {\rm {cok}}(\phi) \cong H^1(C)$ coincides with $\tau = {\varprojlim}_n\tau_n$. Given this, it is easily verified that the complex $P\xrightarrow{\phi} P$ has all of the required properties. 
\end{proof}

In the next two results, we record some useful consequences of the quadratic resolutions constructed in Lemma \ref{finite level reps}\,(i). In both of these results, we fix data $C$ and $Y$ as in the last result, we write $X$ for the kernel of the (given) surjective homomorphism $H^1(C) \to Y$ and we set 
\[ r = r_{Y,C} \coloneqq {\mathrm{rk}}_R(Y) + \bm{\chi}_R(C).\]

\begin{lem} \label{comparing Fitting ideals of H0 and H1 lemma}
   Fix $i\in \N_0$ such that $i+ r\ge 0$. Then one has 
\[\Fitt^{i + r}_R ( H^1 (\RHom_R (C, R) [-1])) = \Fitt^i_R (X)\]
and hence, if $R$ is self-injective, also $\Fitt^{i + r}_R ( H^0 (C)^\ast) = \Fitt^i_R (X)$. 
\end{lem}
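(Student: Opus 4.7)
The plan is to exploit the quadratic resolution of $C$ provided by Lemma~\ref{finite level reps} and then reduce the statement to a transparent matrix computation. First, I would apply parts (ii) and (iii) of that lemma to represent $C$ by $[P_0 \xrightarrow{\phi} P_1]$ with $P_0, P_1$ finitely generated free $R$-modules and, given the free quotient $\pi \: H^1(C) \twoheadrightarrow Y$, to split $P_1 \cong P_1' \oplus Y$ with $P_1' = \ker(\pi)$. Since $\mathrm{im}(\phi)$ is contained in the kernel of $P_1 \twoheadrightarrow H^1(C) \twoheadrightarrow Y$, and hence in $P_1'$, the map $\phi$ has block-diagonal form $(\phi', 0)$. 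Consequently $C$ is isomorphic in $D^\mathrm{perf}(R)$ to $[P_0 \xrightarrow{\phi'} P_1'] \oplus Y[-1]$; in particular $H^0(C) = \ker(\phi')$ and $X = \ker(H^1(C) \twoheadrightarrow Y) = \mathrm{coker}(\phi')$. Writing $m = \mathrm{rk}_R(P_0)$ and $n = \mathrm{rk}_R(P_1')$, the identity $r = \mathrm{rk}_R(Y) + \chi_R(C) = m - n$ is then immediate.

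With this setup, the first equality reduces to a matrix computation. Since $C$ is already a bounded complex of finitely generated free modules, $\RHom_R(C, R)[-1]$ is represented by $[P_1^* \xrightarrow{\phi^*} P_0^*]$ placed in degrees $[0,1]$, and the block-diagonal form of $\phi$ gives $H^1(\RHom_R(C, R)[-1]) = \mathrm{coker}(\phi^*) = \mathrm{coker}((\phi')^*)$. If $M$ is an $(n \times m)$-matrix representing $\phi'$, then $\Fitt^i_R(X)$ is generated by the $(n-i)$-minors of $M$, while $\Fitt^{i+r}_R(\mathrm{coker}((\phi')^*))$ is generated by the $(m - (i+r)) \times (m - (i+r)) = (n-i) \times (n-i)$ minors of the transpose $M^T$. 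Since the $k$-minors of $M$ and $M^T$ coincide, the two ideals agree.

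For the second assertion, I would invoke that self-injectivity of $R$ makes $\mathrm{Ext}^j_R(-, R)$ vanish for all $j \geq 1$. Applying $\Hom_R(-, R)$ to the two short exact sequences $0 \to \ker(\phi') \to P_0 \to \mathrm{im}(\phi') \to 0$ and $0 \to \mathrm{im}(\phi') \to P_1' \to X \to 0$ obtained from the quadratic resolution therefore gives exact sequences
\[
0 \to \mathrm{im}(\phi')^* \to P_0^* \to H^0(C)^* \to 0, \qquad 0 \to X^* \to (P_1')^* \to \mathrm{im}(\phi')^* \to 0,
\]
which splice to identify $\mathrm{coker}((\phi')^*)$ with $\ker(\phi')^* = H^0(C)^*$. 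Combined with the first equality, this yields the claim. The only subtle point throughout is the rank bookkeeping that produces $r = m - n$; once this is in hand, both identities follow essentially formally from the quadratic resolution and transposition of matrices.
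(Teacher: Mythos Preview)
Your proof is correct and follows essentially the same route as the paper: represent $C$ by a two-term free complex via Lemma~\ref{finite level reps}, dualize, and identify the two Fitting ideals by comparing minors of a matrix with those of its transpose. The only organizational difference is that the paper first reduces to the case $\chi_R(C)=0$ (obtaining a square matrix and the clean statement $\Fitt^j_R(H^1(C))=\Fitt^j_R(H^1(\RHom_R(C,R)[-1]))$ for all $j$), whereas you work directly with the rectangular matrix $\phi'\colon P_0\to P_1'$ and absorb the rank shift $r=m-n$ into the minor count; the two arrangements are equivalent.
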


\begin{proof} By using Lemma \ref{standard fitting props}\,(iii), it is easily checked that the stated claims are valid if and only if they are valid after replacing $C$ by the complex $\tilde C$ used in the proof of Lemma \ref{finite level reps}. In particular, since $\bm{\chi}_R(\tilde C) = 0$, in the sequel we shall assume $\bm{\chi}_R(C) = 0$, and hence that $r = {\mathrm{rk}}_R(Y)$.

Now, since $Y$ is free, there exists a (non-canonical) isomorphism of $R$-modules $H^1(C) \cong X \oplus Y$ and so, by using Lemma \ref{standard fitting props}\,(iii) again, one finds that  $\Fitt^i_R (X) = \Fitt^{i + r}_R (H^1 (C))$. It therefore suffices to prove that $\Fitt^j_R ( H^1 (\RHom_R (C, R)[-1])) = \Fitt^j_R (H^1 (C))$ for each $j\ge 0$. 

To do this we note that, since $\bm{\chi}_R(C) = 0$, Lemma \ref{finite level reps}\,(ii) implies that we can fix a resolution of $C$ of the form $P \xrightarrow{\phi} P$, with the first term placed in degree $0$. We set $n \coloneqq \mathrm{rk}_R (P)$ and write $A$ for the $(n \times n)$-matrix representing $\phi$ with respect to a fixed choice of basis $\mathfrak{B}$ for $P$. Then $\Fitt^j_R (H^1 (C))$ can be computed as the ideal of $R$ that is generated by the collection of  $(n - j) \times (n - j)$-minors of $A$. 
    We write $b^\ast \: P \to R$ for the `dual' of $b \in \mathfrak{B}$ and define a basis of $P^\ast$ by $\mathfrak{B}^\ast \coloneqq \{ b^\ast \mid b \in \mathfrak{B}\}$. With respect to this choice of basis, the dual $\phi^\ast \: P^\ast \to P^\ast$ is represented by the transpose $A^\mathrm{t}$ of $A$. The complex $\RHom_R (C, R) [-1]$ is represented by $P^\ast \xrightarrow{\phi^\ast} P^\ast$ so that $\coker (\phi^\ast) = H^1 ( \RHom_R (C, R) [-1])$, hence we deduce that $\Fitt^j_R ( H^1 ( \RHom_R (C, R) [-1]))$ is equal to the ideal of $R$ generated by the $(n - j) \times (n - j)$-minors of $\iota (A^\mathrm{t})$. Since the sets of minors of $A$ and $A^\mathrm{t}$ are in bijective correspondence, this proves $\Fitt^j_R ( H^1 (\RHom_R (C, R) [-1]))$ is equal to $\Fitt^j_R (H^1 (C))$, and hence completes the proof of the first claim. 
   
The second claim is then true since if $R$ is self-injective, then  $\Hom_R (- , R)$ is an exact functor and so $H^1 (\RHom_R (C, R)[-1]) = H^0 (C)^\ast$.
\end{proof} 

\begin{lemma}\label{det projection lemma} Set  $r_Y \coloneqq \mathrm{rk}_R(Y)$. Then, if $r>0$, the following claims are valid. 
\begin{itemize}
\item[(i)] Each choice of ordered basis $b_\bullet$ of the (free) $R$-module $Y$ gives rise to a canonical homomorphism of $R$-modules $\vartheta_{C,b_\bullet} \: \Det_R (C) \to \bidual^{r}_R H^0 (C).$
\item[(ii)] Let $R\to R'$ be a surjective homomorphism of rings, with $R'$ as in \S\,\ref{cgr section}. Then the object $C' \coloneqq R'\otimes^{\mathbb{L}}_RC$ of $D^{\mathrm{perf}}(R')$ satisfies the conditions of Lemma \ref{finite level reps} with $R$ replaced by $R'$. In addition, $Y' \coloneqq  R'\otimes_RY$ is a free $R'$-module quotient of $H^1(C') \cong R'\otimes_RH^1(C)$ of rank $r_Y$, the image $b_\bullet'$ of the basis $b_\bullet$ in (i) is an ordered $R'$-basis of $Y'$ and there exists a canonical commutative diagram of $R$-module homomorphisms 
\begin{cdiagram} \Det_R(C) \arrow{rr}{\vartheta_{C,b_\bullet}} \arrow{d}{{\pi_{C,R'}}} & & \bidual^{r}_R H^0(C) \arrow{d}{{\bigcap}_R^{r}\pi_{C,R'}} \\ 
    \Det_{R'} (C') \arrow{rr}{\vartheta_{C',b_\bullet'}} & & \bidual^{r}_{R'} H^0(C').
\end{cdiagram}%
Here $\pi_{C,R'}$ is the canonical composite $\Det_{R} (C) \to R'\otimes_R\Det_{R} (C) \cong \Det_{R'} (C')$ and the map ${\bigcap}_R^r\pi_{C,R'}$ is defined in the course of the proof below.  
\end{itemize}
\end{lemma}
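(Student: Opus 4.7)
The plan is to leverage Lemma \ref{finite level reps} to represent $C$ concretely as a two-term complex and to extract the determinant-to-bidual map from its differential, after trimming off the chosen free summand.

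For claim (i), I would first apply Lemma \ref{finite level reps}(ii) to fix a quadratic resolution $C \cong [P_0 \xrightarrow{\phi} P_1]$ and then Lemma \ref{finite level reps}(iii) to obtain a decomposition $P_1 \cong \ker(\pi)\oplus Y$ compatible with the surjection $\pi\: P_1 \twoheadrightarrow H^1(C) \twoheadrightarrow Y$. Setting $n_0 := \mathrm{rk}_R(P_0)$ and $n_1' := \mathrm{rk}_R(\ker(\pi)) = \mathrm{rk}_R(P_1) - r_Y$, the composite $\pi\circ\phi$ vanishes, so $\phi$ factors through a map $\tilde\phi\: P_0 \to \ker(\pi)$, yielding a complex $C'' := [P_0 \xrightarrow{\tilde\phi}\ker(\pi)]$ which satisfies $H^0(C'') = H^0(C)$, $H^1(C'') = X$ and $\chi_R(C'') = n_0 - n_1' = r$. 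The basis $b_\bullet$ trivialises $\Det_R(Y)$, hence induces a canonical isomorphism $\Det_R(C) \cong \Det_R(C'')$, and it remains to produce a canonical map $\Det_R(C'') \to \bidual^r_R H^0(C)$.

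To do this, I would fix an ordered basis of $\ker(\pi)$ with dual basis $\{f_j^\ast\}_{j\in [n_1']}$, set $\tilde\phi_j := f_j^\ast\circ \tilde\phi \in P_0^\ast$, and apply Lemma \ref{biduals lemma 1}(ii) (noting that $R$ is Gorenstein, hence a $G_2$-ring, and that $r>0$ implies $n_1' \leq n_0$) to the exact sequence
\[ 0 \to H^0(C) \to P_0 \xrightarrow{(\tilde\phi_j)_{j\in [n_1']}} R^{n_1'} \to X \to 0. \]
This yields a rank reduction $\tilde\phi_1 \wedge \cdots \wedge \tilde\phi_{n_1'}\: \bidual^{n_0}_R P_0 \to \bidual^r_R H^0(C)$; composing with the identifications $\Det_R(P_0) \cong \bigwedge^{n_0}_R P_0 = \bidual^{n_0}_R P_0$ and $\Det_R(\ker(\pi))^{-1}\cong R$ (via the chosen dual basis) defines the desired map $\vartheta_{C,b_\bullet}$. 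A standard cofactor computation then shows the result is independent of the chosen basis of $\ker(\pi)$, while independence of the quadratic resolution follows since any two such resolutions of $C$ differ only by the addition of acyclic summands of the form $[R \xrightarrow{\id} R]$, which manifestly contribute trivially to both sides.

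For claim (ii), applying $R' \otimes^{\mathbb{L}}_R -$ to the chosen resolution produces $C' \cong [R' \otimes_R P_0 \to R' \otimes_R P_1]$, which evidently satisfies the hypotheses of Lemma \ref{finite level reps} over $R'$, and the image basis $b_\bullet'$ of $Y' = R' \otimes_R Y$ realises the splitting needed for Lemma \ref{finite level reps}(iii) in that setting. I would define $\bidual^r_R \pi_{C,R'}$ via tensor-hom adjunction: the natural $R'$-linear map $H^0(C) \otimes_R R' \to H^0(C')$ combines with the canonical map $\bidual^r_R H^0(C) \to \bidual^r_{R'}(H^0(C) \otimes_R R')$ (obtained by restricting $R'$-linear functionals to $R$-linear ones) to yield an $R$-linear map $\bidual^r_R H^0(C) \to \bidual^r_{R'} H^0(C')$. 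Commutativity of the square then follows by unwinding both composites in the explicit construction of (i): each reduces to the rank reduction associated with $R'\otimes_R\tilde\phi$ with respect to the image basis $b_\bullet'$ of $\ker(\pi)\otimes_R R'$.

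The main obstacle will be (i), specifically the verification that $\vartheta_{C,b_\bullet}$ is genuinely intrinsic, i.e.\ depends only on the pair $(C,b_\bullet)$ and not on the ancillary quadratic resolution or on the choice of basis of $\ker(\pi)$; this is a delicate multilinear algebra calculation but is amenable to the usual cofactor technique. Once (i) is established, (ii) is essentially bookkeeping, since every ingredient of the construction in (i) is manifestly functorial under the base change $R \to R'$.
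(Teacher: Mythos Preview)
Your approach is essentially the same as the paper's. For (i), both you and the paper split off $Y$ via Lemma~\ref{finite level reps}(iii) and then apply the rank-reduction map of Lemma~\ref{biduals lemma 1}(ii) to the resulting two-term complex of Euler characteristic $r$. Your claim that any two quadratic resolutions over the local ring $R$ differ by acyclic free summands is correct (via minimal perfect representatives), though the paper instead compares two arbitrary resolutions by choosing a quasi-isomorphism, enlarging one side to make it degree-wise surjective, and analysing the short exact sequence of complexes with acyclic kernel; the independence of the determinant identification from the choice of quasi-isomorphism is then a property of the determinant functor rather than something ``manifest''.

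One genuine gap in (ii): your description of $\bigcap^r_R\pi_{C,R'}$ as arising from ``restricting $R'$-linear functionals to $R$-linear ones'' does not produce a map in the required direction. There is no purely formal natural transformation $\bidual^r_R M \to \bidual^r_{R'}(M\otimes_R R')$: the base-change map $M^*\otimes_R R' \to (M\otimes_R R')^{*_{R'}}$ points the right way on duals, but after taking a further dual to form the bidual it reverses, and one cannot in general extend a functional along it. The paper constructs the map concretely instead: it uses Lemma~\ref{biduals lemma 1}(i) to embed $\bidual^r_R H^0(C)$ into $\bigwedge^r_R P_0$ and $\bidual^r_{R'} H^0(C')$ into $\bigwedge^r_{R'}(P_0\otimes_R R')$, and then observes that the obvious projection between these exterior powers restricts to the desired map on biduals. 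This makes $\bigcap^r_R\pi_{C,R'}$ a priori depend on the resolution, with independence inherited from the argument in (i). You should replace your adjunction-based description with this concrete construction.
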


\begin{proof}   
Set $a \coloneqq \bm{\chi}_R(C)$. Then, since $r = r_Y+a > 0$, the argument of Lemma \ref{finite level reps}\,(ii) implies we can fix a natural number $n\geq r$ such that $C$ has a resolution $P^\bullet$ of the form $R^{\oplus n} \xrightarrow{\phi} R^{\oplus (n - r)} \oplus Y$. Then $\mathrm{rk}_R(R^{\oplus (n - r)} \oplus Y) = n-a$ and we consider the composite homomorphism of $R$-modules 
\begin{align*}
\vartheta_{P^\bullet, b_\bullet} \: \Det_R (P^\bullet) & = \big( \exprod^n_R R^{\oplus n} \big) \otimes_R \big ( \exprod_R^{n - a} (R^{\oplus (n-r)}\oplus Y)\big)^\ast \\
& \xrightarrow{\simeq} \big( \exprod^n_R R^{\oplus n} \big) \otimes_R \big ( \exprod_R^{n - r} R^{\oplus (n - r)} \big)^\ast \otimes_R \big( \exprod^{r_Y}_R Y \big)^\ast \\
& \xrightarrow{\simeq} \big( \exprod^n_R R^{\oplus n} \big) \otimes_R  \exprod_R^{n - r} (R^{\oplus (n - r)})^\ast  \\
& \xrightarrow{\vartheta_\phi} \bidual^{r}_R H^0 (C).
\end{align*}
Here the first isomorphism is clear and the second is induced by the isomorphism $\big( \exprod^{r_Y}_R Y \big)^\ast \to R$ that evaluates elements on ${\bigwedge}_{i=1}^{i=r_Y}b_i$ and the canonical isomorphism  $\big ( \exprod_R^{n -r} R^{\oplus (n - r)} \big)^\ast \cong   \exprod_R^{n - r} (R^{\oplus (n - r)})^\ast$. In addition, the map $\vartheta_\phi$ is defined by the condition that, for all $a \in \exprod^n_R R^{\oplus n}$ and $f_i \in (R^{\oplus (n - r)})^\ast$, one has  
\[\vartheta_\phi\bigl(a \otimes_R  {\wedge}_{i\in [n-r]}f_i\bigr) = (-1)^{r(n - r)} \cdot ({\wedge}_{i\in [n-r]}(f_i \circ \phi) \big) (a)\in  \exprod^{r}_R R^{\oplus n}. \]
In particular, writing $\{ \tilde b_i\}_{i \in [n- r]}$ for the standard (ordered) basis of $R^{\oplus (n- r)}$, the inclusion $\im(\vartheta_\phi) \subseteq \bidual^{r}_R H^0 (C)$ follows by applying Lemma \ref{biduals lemma 1}\,(b) to the exact sequence
\[ 0 \to H^0 (C) \to R^n \xrightarrow{(\tilde b_i^\ast \circ \phi)_{i}} {\bigoplus}_{i \in [n-r]} R.\]
Here the sequence is induced by the fixed identification $H^0(C) \cong \ker(\phi)$ after noting that $\im(\phi)$ is contained in the direct summand $R^{\oplus (n-r)}$ of $R^{\oplus (n-r)}\oplus Y$.   

To prove that this construction only depends on the pair $(C,b_\bullet)$, and hence complete the proof of (i), we now fix an alternative representative $\widetilde P^\bullet$ for $C$ of the form $R^{\oplus n'} \xrightarrow{\phi'} R^{\oplus (n' - r)} \oplus Y$ and use it to construct a map $\vartheta_{\widetilde P^\bullet,b_\bullet}$ just as above. We can then fix a quasi-isomorphism $\theta^\bullet \: \widetilde P^\bullet \to P^\bullet$ of complexes of $R$-modules with the property that $H^0(\theta^\bullet)$ and $H^1(\theta^\bullet)$ induce the identity maps on $H^0(C)$ and $H^1(C)$. Then, if necessary after replacing $\widetilde P^\bullet$ by its direct sum with a complex $R^{\oplus m}\xrightarrow{{\rm{id}}} R^{\oplus m}$ for a suitable natural number $m$, we can assume that the maps $\theta^0$ and $\theta^1$ are both surjective (without changing $\Det_R(\widetilde P^\bullet)$). Writing $\ker(\theta^\bullet)$ for the complex $\ker(\theta^0)\xrightarrow{\phi}\ker(\theta^1)$, we thereby obtain a short exact sequence of complexes of $R$-modules $0 \to \ker(\theta^\bullet) \to \widetilde P^\bullet \xrightarrow{\theta^\bullet} P^\bullet \to 0$. This sequence implies firstly that $\ker(\theta^\bullet)$ is acyclic and then, by taking determinants, induces an isomorphism of $R$-modules
\[ \iota \: \Det_R (\widetilde P^\bullet) \cong \Det_R (\ker(\theta^\bullet)) \otimes _R \Det_R (P^\bullet) \cong \Det_R (P^\bullet) \]
that is independent of the choice of $\theta^\bullet$. By an easy diagram chase, one then checks that $\vartheta_{\widetilde P^\bullet, b_\bullet} = \vartheta_{P^\bullet, b_\bullet}\circ \iota.$ This  equality implies that the above construction gives a well-defined map $\Det_R (C) \to \bidual^r_R H^0 (C)$ that only depends on $C$ and the choice of basis $b_\bullet$, as required.  

To prove (ii), we note that $C'$ is represented by the complex $R'\otimes_RP^\bullet$. Given this representative, all assertions of (ii) are clear except for the commutative diagram. To construct this diagram, we note Lemma \ref{biduals lemma 1}\,(a) implies the existence of an exact commutative diagram of $R$-modules
\begin{cdiagram}[column sep=small, row sep=small]
    0 \arrow{r} & \bidual^{r}_{R} H^0(C) \arrow[dashed]{d} \arrow{r} & \exprod^{r}_{R} P \arrow{r} \arrow{d} & P \otimes_{R} \exprod^{r - 1}_{R} P \arrow{d} \\ 
    0 \arrow{r} & \bidual^{r}_{R'} H^0(C') \arrow{r} & \exprod^{r}_{R'}P' \arrow{r} & P' \otimes_{R'} \exprod^{r - 1}_{R'} P'.
\end{cdiagram}%
Here we set $P = R^n \cong R^{n-r}\oplus Y$ and $P'\coloneqq  R'\otimes_RP$ and the solid vertical arrows are the natural projection maps. In particular, since the square in this diagram commutes, there exists a dashed arrow that makes the entire diagram commutative. It is easily checked that this dashed arrow is independent of the choice of representative $P^\bullet$ of $C$ and we denote it by ${\bigcap}_R^{r}\pi_{C,R'}$. Given this explicit construction of ${\bigcap}_R^{r}\pi_{C,R'}$, and those of the maps $\vartheta_{P^\bullet,b_\bullet}$ and $\vartheta_{R'\otimes_RP^\bullet,b_\bullet'}$ in (i), it is then a straightforward exercise to check that the diagram given in (ii) commutes, as required. \end{proof}

\begin{remark}\label{basis independence} For data $C$, $b_\bullet$ and $r_Y$ ($= |b_\bullet|$) as in Lemma \ref{det projection lemma}, the map $\vartheta_{C, b_\bullet}$ has the following additional properties.
\begin{romanliste}
    \item For any ordered basis $\tilde b_\bullet$ of $Y$, define a matrix $U = U_{\tilde b_\bullet,b_\bullet}$ in ${\mathrm{GL}}_{r_Y}(R)$ by the condition $\tilde b_\bullet = U \cdot  b_\bullet$. Then ${\bigwedge}_{i\in [r_Y]}\tilde b_i = {\mathrm{det}}(U)\cdot {\bigwedge}_{i\in [r_Y]}b_i$ and hence, by directly comparing the explicit construction of $\vartheta_{C,\tilde b_\bullet}$ and $\vartheta_{C,b_\bullet}$ in Lemma \ref{det projection lemma}\,(i), one verifies that  $\vartheta_{C,\tilde b_\bullet} = {\mathrm{det}}(U)\cdot \vartheta_{C,b_\bullet}$.
\item If both $R$ is reduced and $\bm{\chi}_R(C) = 0$, then an alternative description of the map $\vartheta_{C,b_\bullet}$ is presented in \cite[Prop.\@ A.11]{sbA}. 
\end{romanliste}
\end{remark}

\section{Selmer structures, complexes, and modules}\label{ss, m and c}

\subsection{Galois cohomology}

In this preliminary section we recall the definition, and basic properties, of some relevant Galois cohomology complexes.

\subsubsection{Definitions and conventions}

Let $R$ be a local complete Gorenstein ring with finite residue field of characteristic $p$ as in condition (\ref{ring condition}). (The constructions in this section can more generally be made for a ring that satisfies the weaker condition ($\ast$) in \cite[\S\,1.4]{fukaya-kato} but since it is sufficient for our purposes to work under assumption (\ref{ring condition}), we shall do so in order to streamline exposition.)\\
Let $( \a_n)_{n \in \N}$ be the descending filtration of finite-index ideals of $R$ from (\ref{filtration}), which has the property that the natural map $R \to \varprojlim_{n \in \N} ( R / \a_n)$ is an isomorphism. 
The discrete topology on the finite rings $R / \a_n$ then naturally induces a topology on $R$. More generally, for any finitely presented $R$-module $M$ we have an isomorphism $M \to \varprojlim_{n \in \N} ( M / \a_n M)$ that allows us to endow $M$ with the inverse limit topology induced by the discrete topology on the finite modules $M / \a_n M$. \\
Let $G$ be a topological group and let $M$ be an $R [G]$-module. 
Following Nekov\'a\v{r} \cite[Def.\@ 3.3.4]{SelmerComplexes} we call an $R[G]$-module $M$ `ind-admissible' if it is equal to the union $\bigcup_{N \in \mathscr{S} (M)} N$ 
with $\mathscr{S} (M)$ the set of all $R [G]$-submodules $N \subseteq M$ which are finitely presented as an $R$-module and on which the action of $G$ is continuous.
For every ind-admissible $R[G]$-module $M$, we then define the (inhomogenous) continuous cochains of degree $i \geq 0$ as
\[
\mathscr{C}^i (G, M) \coloneqq {\varinjlim}_{N \in \mathscr{S} (M)} \mathrm{Maps}_\mathrm{cont} (G^{\oplus i}, N),
\]
and write $\mathscr{C}^\bullet (G, M)$ for the associated complex of (inhomogenous) continuous cochains (see, for example, \cite[Ch.\@ I, \S\,2]{NSW} for the definition of the differential for this complex). We write $\RGamma (G, M)$ for the object of $D (R)$ defined by the complex $\mathscr{C}^\bullet (G, M)$.\\
If $F$ denotes a perfect field with algebraic closure $F^\mathrm{c}$ and absolute Galois group $G_F \coloneqq \gal{F^\mathrm{c}}{F}$, then we set 
\[ \RGamma (F, M) \coloneqq \RGamma (G_F, M).
\]
We also fix a number field $k$ and an algebraic closure $k^\mathrm{c}$ of $k$. If $F$ is an extension of $k$ in $k^\mathrm{c}$, we write $\Pi_F$ for the set of all places of $F$ and $\Pi_F^\infty$ and $\Pi_F^\ell$ for each rational prime $\ell$ for the subsets of $\Pi_F$ comprising places that are respectively archimedean and $\ell$-adic. We set
 $\Pi_F^\RR\coloneqq \{\q \in \Pi_F^\infty: F_\q = \RR\}$ and  $\Pi_F^\CC\coloneqq \{\q \in \Pi_F^\infty: F_\q = \CC\}$ (so $\Pi_F^\infty = \Pi_F^\RR\cup \Pi_F^\CC$), and will often work under the following additional assumption on the pair $(k, p)$.
 \begin{equation} \label{p=2 condition}
    \text{If $p = 2$, then $\Pi_k^\R = \emptyset$.}
\end{equation}
Denote by $S_\ram (M) \subseteq \Pi_F$ the subset of all places for which (a choice of) the inertia subgroup acts non-trivially on the $G_F$-module $M$, and set
\[
S (M) \coloneqq \Pi_F^\infty \cup \Pi_F^p \cup S_\ram (M).
\]
If $S \subseteq \Pi_F$ is a subset that contains $S_\ram (M)$, then $M$ is naturally acted upon by $G_{F, S} \coloneqq \gal{F_S}{F}$ with $F_S$ the maximal extension of $F$ unramified outside $S_F$, and we define 
\[
\RGamma (\cO_{F, S} , M) \coloneqq \RGamma (G_{F, S}, M).
\]
This notation is motivated by the fact that $\RGamma (F, M)$ and $\RGamma (\cO_{F, S} , M)$ coincide with the \'etale cohomology complexes $\RGamma ( (\Spec F)_\et, M)$ and $\RGamma ( (\Spec \cO_{F, S})_\et, M)$ if $M$ defines an \'etale sheaf on the \'etale site of $\Spec F$ (resp.\@ of $\Spec \cO_{F, S}$).\\
In each degree $i$, we set $H^i (\Lambda, M) \coloneqq H^i ( \RGamma (\Lambda, M))$ if $\Lambda$ denotes either $F$ or $\cO_{F, S}$.\\
We furthermore define
\[
\mathscr{C}^\bullet_\mathrm{c} (G_{F, S}, M) \coloneqq \mathrm{cone} \big ( \mathscr{C}^\bullet (G_{F, S}, M)
\xrightarrow{\iota} {\bigoplus}_{v \in S} \mathscr{C}^\bullet (G_{F_v}, M) \big ) [-1],
\]
where $\iota$ denotes the natural restriction map, 
and write $\RGamma_\mathrm{c} (\cO_{F, S}, M)$ for the corresponding object of $D (R)$ (which coincides with the complex of compact-support \'etale cohomology if $M$ defines an \'etale sheaf on $\Spec \cO_{F, S}$). 
In particular, in $D(R)$ one has an exact triangle 
\begin{equation}\label{compact def} 
\RGamma_\mathrm{c} (\mathcal{O}_{F, S},M) \to \RGamma(\mathcal{O}_{F, S},M) \xrightarrow{ \iota} \textstyle \bigoplus_{\q }\RGamma(F_\q,M)\to .\end{equation}
In each degree $i$, we then set 
\[ H^i_\mathrm{c} (\cO_{F, S}, M) \coloneqq H^i ( \RGamma (\cO_{F, S}, M)).\]
For a place $\q \in \Pi_F\setminus \Pi_F^\infty$,  we write $\kappa_\q$ for its residue field and define a complex in $D(R)$ 
\begin{equation*} \RGamma_f(F_\q,M)\coloneqq \RGamma(\kappa_\q, M^{\Gal(F_\q^c/F_\q^{\rm{nr}})}) ,\end{equation*}
where $F_\q^{\rm{nr}}$ denotes the maximal unramified extension of $F_\q$ in $F_\q^\mathrm{c}$. In each degree $i$, we set 
\begin{equation}\label{RGamma_f def}H^i_f(F_\q,M)\coloneqq H^i(\mathrm{R}\Gamma_f(F_\q,M)) = \begin{cases} H^0(F_\q,M), &\text{if $i=0$,}\\
\ker(H^1(F_\q,M) \to  H^1(F_\q^{\rm{nr}}, M)), &\text{if $i=1$},\\
                                                                          (0), &\text{if $i \in \{0,1\}$}.\end{cases}\end{equation}

We also note the existence of a natural composite `inflation' morphism in $D(R)$ 
\[ \iota_{M,\q} \: \RGamma_f(F_\q,M) \to  \RGamma(F_\q, M^{\Gal(F_\q^\mathrm{c}/F_\q^{\rm{nr}})})\to \RGamma(F_\q, M).\]

\subsubsection{Some basic properties}

The case that $A$ is a projective $R$-module is of particular interest to us and we make much use of the following general result.

For a finitely generated projective $R$-module $P$ we write $[P]_R$ for its class in $K_0(R)$. We also fix an injective hull $E_R (\mathbb{k})$ of $\mathbb{k}$ and write $(- )^\vee \coloneqq \Hom_R (- , E_R (\mathbb{k}))$ for the Matlis dual functor. 

\begin{lemma}\label{flach result} 
Assume $R$ satisfies condition (\ref{ring condition}), let $F$ be a number field, and let $A$ be a finite-rank free $R$-module endowed with a continuous action of $G_F$ such that $S(A)$ is finite. 
If $S \subseteq \Pi_F$ is a finite set that contains $S (A)$, then the following claims are valid. 
\begin{romanliste}
\item $\mathrm{R}\Gamma_\mathrm{c} (\cO_{F, S}, A)$ belongs to $D^{\mathrm{perf},0}_{[1,3]}(R)$. 
\item If $\q \in \Pi_F\setminus (\Pi_F^\infty\cup \Pi_F^p)$, then $\mathrm{R}\Gamma(F_\q, A)$ belongs to $D^{\mathrm{perf},0}_{[0,2]}(R)$.
\item If $\q \in \Pi_F^p$, then $\mathrm{R}\Gamma(F_\q, A)$ belongs to $D^{\mathrm{perf}}_{[0,2]}(R)$ and $\bm{\chi}_R\bigl(\mathrm{R}\Gamma(F_\q, A)\bigr) = -[F_\q:\QQ_p]\cdot[A]_R$. 
\item If $\q \in \Pi_F\setminus S(A)$, then $\mathrm{R}\Gamma_f(F_\q, A)$ belongs to $D^{\mathrm{perf},0}_{[0,1]}(R)$.
\item If (\ref{p=2 condition}) is valid, then $\mathrm{R}\Gamma(\cO_{F, S}, A)$ belongs to $D^{\mathrm{perf}}_{[0,2]}(R)$ and $\mathrm{R}\Gamma(F_\q, A)$  belongs to $D^{\mathrm{perf}}_{[0,0]}(R)$ for each $\q\in \Pi_F^\infty$.
\end{romanliste}
In the remainder of the result, we let $\Phi$ denote any one of the functors that sends $A$ to $\mathrm{R}\Gamma_\mathrm{c} (\cO_{F, S}, A)$, to $\mathrm{R}\Gamma(F_\q, A)$ for $\q\in \Pi_F\setminus \Pi_F^\infty$, to $\mathrm{R}\Gamma_f(F_\q, A)$ for $\q\in \Pi_F\setminus S(A)$, or, if (\ref{p=2 condition}) is valid, either to $\mathrm{R}\Gamma(\cO_{F, S}, A)$ or to $\mathrm{R}\Gamma(F_\q, A)$ for $\q\in \Pi_F^\infty$.

\begin{itemize}
\item[(vi)] For any morphism $R\to R'$ of rings satisfying (\ref{ring condition}), there exists a natural isomorphism $\Phi(A) \otimes_R^\mathbb{L} R' \cong \Phi(A\otimes_R R')$ in $D(R')$. 
\item[(vii)] For any morphism $R\to R'$ of finite rings satisfying (\ref{ring condition}), there exists a natural isomorphism $\RHom_R (R',\Phi(A^\vee))  \cong \Phi(\Hom_R (R', A^\vee))$ in $D(R')$.
\item[(viii)] Lemma \ref{limit result} gives rise to a well-defined object $\varprojlim_{n\in \N}\Phi(A/\a_nA)$ of $D(R)$ for which there exists a natural isomorphism $\Phi(A) \to \varprojlim_{n\in \N}\Phi(A/\a_nA)$ in $D(R)$. 
\end{itemize}
\end{lemma}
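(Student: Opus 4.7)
The plan is to reduce every assertion to the finite-coefficient level by writing $A$ as the inverse limit of the finite free $R_n$-modules $A_n \coloneqq A\otimes_R R_n$, and then transport the perfectness and Euler-characteristic data back to $R$ via Lemma~\ref{limit result}. The crucial input at each finite stage is that $R_n$ is finite and self-injective and $A_n$ is $R_n$-free, placing $\Phi(A_n)$ in the setting where perfectness and degree bounds are classical.

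At each finite level, (i) follows from Artin--Verdier duality together with the fact that the cohomological dimension of $G_{F,S}$ acting on a finite $p$-power torsion module is at most $2$ (with the archimedean correction accounting for degree $3$); (ii) is Tate's local duality, giving both the degree bound and the Euler-characteristic vanishing outside $p$; (iii) is the local Euler--Poincar\'e characteristic formula at a $p$-adic prime, providing $\chi_{R_n}(\RGamma(F_\q,A_n)) = -[F_\q:\QQ_p]\cdot[A_n]_{R_n}$; (iv) is immediate from $\RGamma_f(F_\q,A_n) = \RGamma(\kappa_\q, A_n^{\Gal(F_\q^c/F_\q^{\mathrm{ur}})})$, since $G_{\kappa_\q}\cong\widehat{\ZZ}$ has cohomological dimension $1$; and (v), under (\ref{p=2 condition}), combines (i)--(iv) via the defining triangle (\ref{compact def}), using that archimedean local cohomology is concentrated in degree $0$ once no real place intervenes (or $p$ is odd).

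To pass from each $R_n$ back to $R$, I would verify the hypotheses of Lemma~\ref{limit result} for the system $\{\Phi(A_n)\}_n$: the transition isomorphism $R_n\otimes^{\mathbb{L}}_{R_{n+1}}\Phi(A_{n+1}) \cong \Phi(A_n)$ is the finite-level incarnation of (vi) and follows from the standard compatibility of continuous cochain complexes with tensor products applied to flat modules. Lemma~\ref{limit result} then produces a perfect complex $\varprojlim_n\Phi(A_n)$ in $D^{\mathrm{perf}}(R)$ of the required amplitude, with Euler characteristic equal to that of any single level, thereby yielding (i)--(v). The identification of this limit with $\Phi(A)$ itself --- the content of (viii) --- uses that $\mathscr{C}^i(G,A) = \varprojlim_n\mathscr{C}^i(G,A_n)$ for the ind-admissible module $A$, combined with the Mittag--Leffler property of the system $\{A_n\}$, which holds automatically since each $A_n$ is finite. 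Claim (vi) then follows by rerunning the limit argument for $A\otimes_R R'$ in place of $A$, and (vii) reduces at each finite level to the adjunction $\RHom_{R_n}(R'_n,\Hom_{R_n}(A_n,E_{R_n}(\mathbb{k}))) \cong \Hom_{R_n}(R'_n\otimes_{R_n}A_n,E_{R_n}(\mathbb{k}))$ combined with the self-injectivity of $R_n$.

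The main obstacle will be ensuring that the Euler characteristic of $\RGamma(F_\q,A)$ at a $p$-adic place $\q$ in (iii) is genuinely computed from its finite-level reductions in the manner required by Lemma~\ref{limit result}: by $R$-freeness of $A$, the class $[A_n]_{R_n}\in K_0(R_n)\cong\ZZ$ equals the (constant) $R$-rank of $A$, so that the Euler characteristic of the limit complex reads off as $-[F_\q:\QQ_p]\cdot[A]_R$, as required.
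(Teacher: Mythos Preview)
Your approach is genuinely different from the paper's. The paper simply cites Flach \cite{Flach00} and Fukaya--Kato \cite{fukaya-kato} for (i)--(vi), then uses the perfectness of $\Phi(A)$ thus established to prove (viii) by choosing a bounded free representative $P^\bullet$ of $\Phi(A)$ and observing that $R_n\otimes_R P^\bullet$ represents $\Phi(A_n)$. You instead work bottom-up: establish (i)--(v) for the finite quotients $A_n$ classically, invoke Lemma~\ref{limit result}, and then identify the resulting limit with $\Phi(A)$ via a cochain-level argument. This is more self-contained but logically more delicate, since the identification $\Phi(A)\cong\varprojlim_n\Phi(A_n)$ must precede the perfectness assertions rather than follow from them; you handle this correctly by appealing to $\mathscr{C}^i(G,A)=\varprojlim_n\mathscr{C}^i(G,A_n)$ and Mittag--Leffler.

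There is, however, a gap in your treatment of (vii). The adjunction you write, $\RHom_{R}(R',A^\vee)\cong(R'\otimes_{R}A)^\vee$, is a statement about the module $A^\vee$; what (vii) asks for is that the \emph{functor} $\Phi$ commutes with $\RHom_R(R',-)$. The paper handles this by invoking Flach's representation $\Phi(M)\cong\RHom_{\mathscr{B}}(P^\bullet,M)$ for a perfect complex $P^\bullet$ over $\mathscr{B}=\Z_p[\Aut_{\Z_p}(A)]$, which reduces the claim to the swap of two $\RHom$'s. Your route can be completed, but you must argue at the cochain level: since $R$ is finite self-injective and $A^\vee$ is $R$-free, each term $\mathscr{C}^i(G,A^\vee)$ is $R$-injective, so $\RHom_R(R',\mathscr{C}^\bullet(G,A^\vee))=\Hom_R(R',\mathscr{C}^\bullet(G,A^\vee))$, and then the termwise isomorphism $\Hom_R(R',\mathrm{Maps}(G^i,A^\vee))\cong\mathrm{Maps}(G^i,\Hom_R(R',A^\vee))$ gives the result. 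As written, your sentence conflates this with the module-level adjunction and does not explain how $\Phi$ enters.
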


\begin{proof} 
Claims (i) -- (vi) follow directly from the general results established by Flach in \cite[\S\,4 and \S\,5]{Flach00} and by Fukaya and Kato in \cite[Prop.\@ 1.6.5]{fukaya-kato}.\\
We next prove claim (vii).
Under the conditions of (vii), the functor $\Phi$ satisfies the assumptions of \cite[Prop.\@ 3.1]{Flach00} with the algebra $\mathscr{B}$ in loc.\@ cit.\@ taken to be $\Z_p [\Aut_{\Z_p} (A) ]$. It follows that there exists a bounded complex $P^\bullet$ of finitely generated projective $\mathscr{B}$-modules such that $\Phi(A) \cong \RHom_{\mathscr{B}} (P^\bullet, A)$ in $D(R)$. By construction, $\RHom_{\mathscr{B}} (P^\bullet, A)$ is a complex of finitely generated projective (hence also injective because $R$ is self-injective) $R$-modules and so by definition $\RHom_R (R', \Phi(A)) = \RHom_R (R',\RHom_{\mathscr{B}} (P^\bullet, A))$ in $D(R')$. To prove (vii), it is therefore enough to construct an isomorphism in $D(R')$ 
\begin{equation} \label{exchanging homs iso}
\RHom_R (R',\RHom_{\mathscr{B}} (P^\bullet, A)) \to \RHom_\mathscr{B} ( P^\bullet, \Hom_R (R', A)).
\end{equation}
For this, we note that for any finitely generated projective $\mathscr{B}$-module $P$, there exists a natural isomorphism of $R'$-modules 
\[
\Hom_R (R', \Hom_\mathscr{B} (P, A)) \xrightarrow{\simeq} \Hom_\mathscr{B} ( P, \Hom_R (R', A)),
\quad f \mapsto ( a \mapsto (b \mapsto f (b)(a))).
\]
The map (\ref{exchanging homs iso}) is then induced by these isomorphisms with $P$ taken to be each individual term of the complex $P^\bullet$.\\
Finally, to prove (viii), we note that the family of complexes $(\Phi(A/\a_nA))_n$ satisfies the conditions (a) and (b) of Lemma~\ref{limit result}. Condition (a) is satisfied as a consequence of the respective result of (i) -- (v) and the existence of the required isomorphisms in  (b) follows from (vi) with $(R\to R',A)$ taken to be the pairs $(R/\a_{n+1} \to R/\a_n, A/\a_{n+1}A)$. The construction of Lemma~\ref{limit result} therefore gives a complex $\varprojlim_{n\in \N}\Phi(A/\a_nA)$ in $D(R)$. In addition, since $\Phi(A)$ belongs to $D^{\mathrm{perf}}(R)$, we may fix a bounded complex $P^\bullet$ of finitely generated projective $R$-modules that is isomorphic to $\Phi(A)$ in $D(R)$. Then, for each $n$, the isomorphism in (vi) applied to $R\to R/\a_n$ implies that $(R/\a_n)\otimes_RP^\bullet$ is isomorphic to $\Phi(A/\a_nA)$ in $D(R/\a_n)$. One therefore obtains a composite isomorphism in $D(R)$ 
\[ \Phi(A) \cong P^\bullet \cong {\varprojlim}_{n\in \N} ((R/\a_n)\otimes_RP^\bullet) \cong {\varprojlim}_{n\in \N} \Phi(A/\a_nA),\]
in which the second isomorphism results from the fact each $R$-module $P^i$ is finitely generated projective and the third is a consequence of the uniqueness assertion in Lemma \ref{limit result}. 
\end{proof} 

To recall the relevant duality theorems for Galois cohomology, we write $\mathrm{R}\Gamma_{\rm{Tate}}(F_\q,A)$ for the standard complex computing Tate cohomology of $A$ over $G_{F_\q}$ for $\q \in \Pi_F^\RR$,
and then define an object $\widetilde{\RGamma}_\mathrm{c}  (\cO_{F, S}, A)$ of $D(R)$ by the triangle
\[
\widetilde{\RGamma}_\mathrm{c}  (\cO_{F, S}, A) \to \RGamma (\cO_{F, S}, A) \xrightarrow{\lambda_S (A)} {\bigoplus}_{\q \in S} \RGamma_\mathrm{Tate} (F_\q, A) \to \cdot
\]
with $\lambda_S (A)$ the natural localisation morphism.

\begin{proposition}\label{local Tate duality thm}
If $R$ satisfies (\ref{ring condition}) and $A$ is a finitely generated free $R$-module, then the following claims are valid.  
\begin{itemize}
\item[(i)] For each $\q \in \Pi_F\setminus \Pi_F^\infty$, there are canonical isomorphisms in $D(R)$
\begin{equation*}\label{R linear local duality}
    \RHom_R (\RGamma (F_\q, A^\ast (1)), R) [-2] \cong \RGamma (F_\q, A) \cong \RHom_R ( \RGamma (F_\q, A^\vee (1), R)), E_R (\mathbb{k})) [-2].\end{equation*}
    \item[(ii)] There exist canonical isomorphisms in $D(R)$ 
\[{\hskip-0.3truein \RHom_R ( \widetilde{\RGamma}_\mathrm{c}  (\cO_{F, S}, A^\ast (1)), R) [-3]\cong \RGamma (\cO_{F, S}, A) \cong 
 \RHom_R ( \widetilde{\RGamma}_\mathrm{c}  (\cO_{F, S}, A^\vee (1)), E_R (\mathbb{k})) [-3].}\]
\end{itemize}
\end{proposition}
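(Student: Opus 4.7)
The plan is to reduce to the classical local Tate duality and Artin--Verdier theorems over the finite zero-dimensional Gorenstein rings $R_n = R/\a_n$, and then to pass to the inverse limit over $n$ using Lemma \ref{flach result} together with Lemma \ref{inj env lem}.

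First, set $A_n \coloneqq A/\a_n A$, which is a free $R_n$-module. Since $R_n$ is Gorenstein of dimension zero, Lemma \ref{inj env lem}(i) identifies $E_{R_n}(\mathbb{k})$ with $R_n$; consequently, for any finite $R_n$-module $M$, the $\mathbb{Z}_p$-linear Pontryagin dual $\Hom_{\mathbb{Z}_p}(M, \mathbb{Q}_p/\mathbb{Z}_p)$ is canonically $R_n$-isomorphic to the $R_n$-linear dual $M^\ast = \Hom_{R_n}(M, R_n)$. The classical local Tate and Artin--Verdier duality theorems (as in \cite[Ch.\@ 5]{SelmerComplexes}) therefore translate into natural $R_n$-linear isomorphisms in $D(R_n)$
\begin{align*}
\RGamma(F_\q, A_n) &\cong \RHom_{R_n}(\RGamma(F_\q, A_n^\ast(1)), R_n)[-2],\\
\RGamma(\cO_{F,S}, A_n) &\cong \RHom_{R_n}(\widetilde{\RGamma}_c(\cO_{F,S}, A_n^\ast(1)), R_n)[-3].
\end{align*}

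For the first isomorphism in each part of the Proposition, I would use that $A$ is free to identify $A^\ast(1) \otimes_R R_n$ with $A_n^\ast(1)$. Combining Lemma \ref{flach result}(vi) with tensor--hom adjunction then gives $\RHom_{R_n}(\RGamma(F_\q, A_n^\ast(1)), R_n) \cong \RHom_R(\RGamma(F_\q, A^\ast(1)), R_n)$, and similarly in the global setting. Since Lemma \ref{flach result}(viii) yields $\RGamma(F_\q, A) \cong \varprojlim_n \RGamma(F_\q, A_n)$ and $\widetilde{\RGamma}_c(\cO_{F,S}, A) \cong \varprojlim_n \widetilde{\RGamma}_c(\cO_{F,S}, A_n)$, and since $\RGamma(F_\q, A^\ast(1))$ and $\widetilde{\RGamma}_c(\cO_{F,S}, A^\ast(1))$ are perfect $R$-complexes (so that $\RHom_R(-, \varprojlim_n R_n)$ coincides with $\varprojlim_n \RHom_R(-, R_n)$), passing to $\varprojlim_n$ of the displayed $R_n$-linear isomorphisms produces the first isomorphism in both (i) and (ii).

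For the Matlis-dual isomorphism, set $A_n^\vee \coloneqq \Hom_R(A_n, E_R(\mathbb{k}))$, which is $R_n$-isomorphic to $A_n^\ast$ by Lemma \ref{inj env lem}(i). Since $A^\vee \cong \varinjlim_n A_n^\vee$ as ind-admissible $R[G_F]$-modules, the definition of continuous cochain cohomology gives $\RGamma(F_\q, A^\vee(1)) \cong \varinjlim_n \RGamma(F_\q, A_n^\vee(1))$, and likewise for the compact-support global variant. Combining this with the adjunction identity $\RHom_R(M, E_R(\mathbb{k})) \cong \RHom_{R_n}(M, E_{R_n}(\mathbb{k}))$ valid for any $R_n$-module $M$ (which again uses Lemma \ref{inj env lem}(i)), one obtains
\begin{align*}
\RHom_R(\RGamma(F_\q, A^\vee(1)), E_R(\mathbb{k}))
&\cong \varprojlim_n \RHom_{R_n}(\RGamma(F_\q, A_n^\vee(1)), R_n)\\
&\cong \varprojlim_n \RGamma(F_\q, A_n)[-2] \cong \RGamma(F_\q, A)[-2],
\end{align*}
and likewise in the global case with $\widetilde{\RGamma}_c(\cO_{F,S}, -)$ in place of $\RGamma(F_\q, -)$.

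The principal obstacle is to verify that the classical duality quasi-isomorphisms at each finite level can be chosen compatibly with both the $R_n$-actions and with the transition maps $R_{n+1} \twoheadrightarrow R_n$, so that the various inverse limits above may be formed coherently in $D(R)$. This compatibility is intrinsic to the derived-category formulation of the local Tate and Artin--Verdier duality theorems on which the argument rests, and its verification amounts to tracking through the naturality of the duality pairings at the level of complexes.
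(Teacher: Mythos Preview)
Your approach is correct in outline but takes a substantially more indirect route than the paper. The paper's proof is a two-line citation: since $R$ is Gorenstein it is its own dualising module (by \cite[Th.\@ 3.3.7(a)]{BrunsHerzog}), and the duality isomorphisms then follow directly from Nekov\'a\v{r}'s general results \cite[Th.\@ 5.2.6 and \S5.7.5]{SelmerComplexes} applied over $R$ itself. The point is that Nekov\'a\v{r}'s derived-category duality theorems are already formulated for coefficient rings possessing a dualising module, so no reduction to finite level is needed.

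Your limit argument is a legitimate alternative and has some expository value in that it makes the finite-level picture explicit, but it is circular in practice: the reference you invoke for the $R_n$-level isomorphisms is the same Chapter~5 of \cite{SelmerComplexes} that already covers $R$. Moreover, the compatibility issue you correctly identify as the ``principal obstacle'' is not merely bookkeeping---to make your argument rigorous you would need to verify that the duality pairings at each level $n$ arise from a single cup-product pairing over $R$, which is essentially what the direct citation establishes. A secondary wrinkle is that Lemma~\ref{flach result}(viii) does not list $\widetilde{\RGamma}_c$ among its functors $\Phi$, so your global limit step would need an extra justification via the defining triangle for $\widetilde{\RGamma}_c$.
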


\begin{proof}
    Since $R$ is assumed to be Gorenstein, it is its own dualising module (cf.\@ \cite[Th.\@ 3.3.7\,(a)]{BrunsHerzog}). Given this fact, the isomorphisms in (i) and (ii) are respectively proved in \cite[Th.~5.2.6 and \S\,5.7.5]{SelmerComplexes} (see also \cite[\S\,5.1, Lem.\@ 12\,b)]{BurnsFlach01} for the first isomorphism in (ii)).
\end{proof}

\subsection{Selmer structures of Nekov\'a\v{r} and of Mazur--Rubin}

Throughout this section, we fix a ring $R$ satisfying (\ref{ring condition}) and  an ind-admissible $R[G_k]$-module $A$ for which $S(A)$ is finite. 

\subsubsection{Nekov\'{a}\v{r}--Selmer structures}

\begin{definition}
A `Nekov\'{a}\v{r}(--Selmer) structure' $\mathscr{F}$ on $A$ comprises  
\begin{itemize}
    \item a finite set $S (\mathscr{F})$ of places of $k$ that contains $S(A)$, and 
    \item for each $\q\in \Pi_k$, a morphism in $D(R)$ 
    \[ \mathrm{R}\Gamma_{\mathscr{F}}(k_\q,A)\xrightarrow{\theta_{\mathscr{F},\q}} \mathrm{R}\Gamma(k_\q,A),\] 
    with 
    \[ \bigl(\RGamma_{\mathscr{F}}(k_\q,A), \theta_{\mathscr{F},\q}\bigr) \coloneqq \bigl(\mathrm{R}\Gamma_f(k_\q,A),\iota_{A,\q}\bigr) \,\,\,\text{ for all }\,\,\,\q \in \Pi_k\setminus S(\mathscr{F}).\] 
We write $\RGamma_{\!/ \mathscr{F}} (k_\q, A)$ for the mapping cone of $\theta_{\mathscr{F}, \q}$ and in each degree $i$ set 
    \[ H^i_\mathscr{F}(k_\q,A) \coloneqq H^i\bigl(\RGamma_\mathscr{F}(k_\q,A)\bigr)\,\,\,\text{and}\,\,\, H^i_{\!/\mathscr{F}}(k_\q,A) \coloneqq H^i\bigl(\RGamma_{\!/\mathscr{F}}(k_\q,A)\bigr).\]
\end{itemize}
\end{definition}

\begin{definition}\label{selmer complex def} The `Selmer complex' associated to a Nekov\'{a}\v{r} structure $\mathscr{F}$ on $A$ is the object $\RGamma_\mathscr{F}(k,A)$  of $D(R)$ that is defined (up to isomorphism) via the existence of an exact triangle   
\begin{equation}\label{mapping fibre} \RGamma_\mathscr{F}(k,A) \to \RGamma(\mathcal{O}_{k,S(\mathscr{F})},A) \oplus {\bigoplus}_{\q \in S(\mathscr{F})}\RGamma_\mathscr{F}(k_\q,A) \xrightarrow{ (\iota_{S (\mathscr{F})},(\theta_{\mathscr{F},\q})_\q)}{\bigoplus}_{\q \in S(\mathscr{F})}\RGamma(k_\q,A)\to \cdot\end{equation}
in which $\iota_\mathscr{F}$ is the diagonal localisation map. In each degree $i$ we set 
\[ H^i_\mathscr{F}(k,A) \coloneqq H^i\bigl(\RGamma_\mathscr{F}(k,A)\bigr).\] 
We also say that $\mathscr{F}$ is `perfect' if $\RGamma_\mathscr{F}(k,A)$ belongs to $D^{\mathrm{perf}}(R)$. 
\end{definition}

\begin{rk}
    The definition of $\RGamma_\mathscr{F}(k,A)$ given in Definition \ref{selmer complex def} specifies it only up to non-unique isomorphism in $D(R)$. However, in all examples that are relevant to this article, the structure $\mathscr{F}$ is given by data on the level of complexes so that the associated Selmer complex can be explicitly defined via a mapping cone construction. 
\end{rk}

The Octahedral axiom implies that the exact triangle (\ref{mapping fibre}) is equivalent to an exact triangle 
\begin{equation}\label{mapping fibre2} \RGamma_\mathscr{F}(k,A) \to \RGamma(\mathcal{O}_{k,S(\mathscr{F})},A) \xrightarrow{ \iota'_{S (\mathscr{F})}} {\bigoplus}_{\q \in S(\mathscr{F})}\RGamma_{\!/\mathscr{F}}(k_\q,A)\to \cdot\end{equation}
in $D(R)$ in which the morphism $\iota'_{S(\mathscr{F})}$ is induced by $\iota_{S (\mathscr{F})}$. The following notion allows us to extend this observation. 

\begin{definition} Let $\mathscr{F}$ and $\mathscr{F}'$ be Nekov\'{a}\v{r} structures on $A$. Then we say that $\mathscr{F}'$ `is a refinement of' (or, more simply, `refines') $\mathscr{F}$, to be written as $\mathscr{F}'\le \mathscr{F}$, if for each $\q \in \Pi_k$ there exists a specified morphism 
\[ j_{\mathscr{F}',\mathscr{F},\q} \: \mathrm{R}\Gamma_{\mathscr{F}'}(k_\q,A) \to \mathrm{R}\Gamma_\mathscr{F}(k_\q,A)\]
in $D(R)$ with the following properties:  $\theta_{\mathscr{F}',\q} =  \theta_{\mathscr{F},\q}\circ j_{\mathscr{F}',\mathscr{F},\q}$ and, if $\q \notin S(\mathscr{F}')\cup S(\mathscr{F})$, then $j_{\mathscr{F}',\mathscr{F},\q} $ is the identity morphism $\mathrm{R}\Gamma_f(k_\q,A) \to \mathrm{R}\Gamma_f(k_\q,A)$. If $\mathscr{F}'$ is a refinement of $\mathscr{F}$, then for each $\q \in \Pi_k$ we write $\RGamma_{\mathscr{F}/\mathscr{F}'}(k_\q,A)$ for the mapping cone of $j_{\mathscr{F}',\mathscr{F},\q}$.
\end{definition}

\begin{lemma}\label{basic properties} The following claims are valid for any Nekov\'{a}\v{r} structure $\mathscr{F}$ on $A$.
\begin{romanliste}
\item If $S$ is any finite subset of $\Pi_k$ with $S(\mathscr{F})\subseteq S$, then there exists an exact triangle in $D(R)$  
\begin{equation*} \RGamma_\mathscr{F}(k,A) \to \RGamma(\mathcal{O}_{k,S},A) \oplus {\bigoplus}_{\q \in S}\RGamma_\mathscr{F}(k_\q,A) \xrightarrow{ (\iota_S,(\theta_{\mathscr{F},\q})_\q)}{\bigoplus}_{\q \in S}\RGamma(k_\q,A)\to\cdot \end{equation*}
in which $\iota_S$ is the diagonal localisation map

\item Let $\mathscr{F}'$ be a refinement of $\mathscr{F}$. Then there exists a canonical exact triangle in $D(R)$ 
\[ \RGamma_{\mathscr{F}'}(k,A) \to \RGamma_\mathscr{F}(k,A) \to {\bigoplus}_{\q \in S(\mathscr{F}')\cup S(\mathscr{F})}\RGamma_{\mathscr{F}/\mathscr{F}'}(k_\q,A)\to \cdot\]
\item If $A$ is a finite-rank free $R$-module, then $\mathscr{F}$ is perfect if and only if, for every $\q$ in $S(\mathscr{F})$, the complex  $\RGamma_\mathscr{F}(k_\q,A)$ belongs to $D^{\mathrm{perf}}(R)$.  If this is the case, then in $K_0(R)$ one has 
\[ \bm{\chi}_R\bigl(\RGamma_{\mathscr{F}}(k,A)\bigr) = {\sum}_{\q \in S(\mathscr{F})}\bm{\chi}_R\bigl(\RGamma_\mathscr{F}(k_\q,A)\bigr).\]

\item If either $i< 0$ or $i > 3$, then $H^i_\mathscr{F}(k,A) = {\bigoplus}_{\q\in S(\mathscr{F})}H^i_\mathscr{F}(k_\q,A)$. The same equality is valid for $i =3$ if $A$ is a finitely generated free $R$-module and there exists a place $\q_0 \in S(\mathscr{F})\setminus \Pi_k^\RR$ for which $H^2(\theta_{\mathscr{F},\q_0})$ is bijective.
\end{romanliste}
\end{lemma}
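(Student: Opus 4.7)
The plan is to handle claims (i)--(iv) in order, with the genuine difficulty concentrated in the $i=3$ case of (iv). For (i), I would induct on $|S \setminus S(\mathscr{F})|$, the base case being tautological. Since $\RGamma_\mathscr{F}(k_\q,A) = \RGamma_f(k_\q,A)$ and $\theta_{\mathscr{F},\q} = \iota_{A,\q}$ for $\q \notin S(\mathscr{F})$, the extra local contribution of such a place to the defining mapping fibre is the cone $\RGamma_{/f}(k_\q,A)$ of $\iota_{A,\q}$; an application of the standard excision triangle $\RGamma(\mathcal{O}_{k,S},A) \to \RGamma(\mathcal{O}_{k,S\cup\{\q\}},A) \to \RGamma_{/f}(k_\q,A) \to$ combined with the octahedral axiom then absorbs this contribution into $\RGamma(\mathcal{O}_{k,S\cup\{\q\}},A)$, delivering the inductive step. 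For (ii), I would first use (i) to enlarge the ramification sets in both defining triangles to $S \coloneqq S(\mathscr{F}) \cup S(\mathscr{F}')$; the morphisms $j_{\mathscr{F}',\mathscr{F},\q}$ then assemble, together with the identity on $\RGamma(\mathcal{O}_{k,S},A)$ and on each $\RGamma(k_\q,A)$, into a morphism between the two defining triangles, and the $3 \times 3$-lemma applied to this morphism produces the desired exact triangle with connecting term $\bigoplus_\q \RGamma_{\mathscr{F}/\mathscr{F}'}(k_\q,A)$ (the cone vanishing for $\q \notin S$ by the refinement hypothesis).

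Claim (iii) should be read off directly from the defining triangle (\ref{mapping fibre}), invoking (as I shall tacitly do throughout the rest of the proof) the assumption (\ref{p=2 condition}) so that Lemma \ref{flach result}(v) applies: both $\RGamma(\mathcal{O}_{k,S(\mathscr{F})},A)$ and each $\RGamma(k_\q,A)$ then lie in $D^\mathrm{perf}(R)$, and so perfectness of $\RGamma_\mathscr{F}(k,A)$ is equivalent to perfectness of every $\RGamma_\mathscr{F}(k_\q,A)$ with $\q \in S(\mathscr{F})$. For the Euler characteristic identity, additivity in the defining triangle reduces the claim to the formula $\chi_R(\RGamma(\mathcal{O}_{k,S(\mathscr{F})},A)) = \sum_\q \chi_R(\RGamma(k_\q,A))$, which follows by combining the vanishing $\chi_R(\RGamma_c(\mathcal{O}_{k,S(\mathscr{F})},A)) = 0$ from Lemma \ref{flach result}(i) with the defining triangle (\ref{compact def}).

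For claim (iv), the vanishing for $i<0$ is trivial term-by-term in the long exact sequence of (\ref{mapping fibre}), and for $i>3$ it likewise follows because $H^i(\mathcal{O}_{k,S(\mathscr{F})},A)$, $H^{i-1}(k_\q,A)$ and $H^i(k_\q,A)$ all vanish in that range by Lemma \ref{flach result} under (\ref{p=2 condition}). The hard part will be the case $i=3$: here $H^3(\mathcal{O}_{k,S(\mathscr{F})},A)$ and each $H^3(k_\q,A)$ also vanish, so the claim reduces to proving surjectivity of
\[
\psi \colon H^2(\mathcal{O}_{k,S(\mathscr{F})},A) \oplus \bigoplus_{\q \in S(\mathscr{F})} H^2_\mathscr{F}(k_\q,A) \to \bigoplus_{\q \in S(\mathscr{F})} H^2(k_\q,A).
\]
The localisation triangle (\ref{compact def}) together with the vanishing of $H^3(\mathcal{O}_{k,S(\mathscr{F})},A)$ identifies the cokernel of the first summand's contribution with $H^3_c(\mathcal{O}_{k,S(\mathscr{F})},A)$; since $\mathrm{im}(H^2(\theta_{\mathscr{F},\q_0})) = H^2(k_{\q_0},A)$ by hypothesis, it therefore suffices to prove that the composite $H^2(k_{\q_0},A) \hookrightarrow \bigoplus_\q H^2(k_\q,A) \twoheadrightarrow H^3_c(\mathcal{O}_{k,S(\mathscr{F})},A)$ is surjective. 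I would establish this by applying the exact contravariant functor $(-)^\vee$ of Matlis duality and invoking the Matlis forms of local and global duality from Proposition \ref{local Tate duality thm} (and it is precisely at this point that the condition $\q_0 \notin \Pi_k^\RR$ is essential, since Proposition \ref{local Tate duality thm}(i) is stated only for non-archimedean places): the dualised composite identifies with the natural restriction $H^0(k,A^\vee(1)) \to H^0(k_{\q_0},A^\vee(1))$ between the respective modules of Galois-invariants inside $A^\vee(1)$, which is manifestly injective, and the exactness of Matlis duality then delivers the required surjectivity.
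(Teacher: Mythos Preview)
Your arguments for (i) and (ii) are correct and essentially match the paper's (the paper does (i) in one step via the inflation--restriction triangle rather than by induction, but this is cosmetic). Your duality argument for the $i=3$ case of (iv) is also the same as the paper's.

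There is, however, a genuine gap in your treatment of (iii) and of the first assertion of (iv): you explicitly impose condition~(\ref{p=2 condition}), but this is \emph{not} among the hypotheses of the lemma. Without it, Lemma~\ref{flach result}(v) does not apply, so neither $\RGamma(\mathcal{O}_{k,S(\mathscr{F})},A)$ nor $\RGamma(k_\q,A)$ for real $\q$ need be perfect (nor acyclic in degrees $>2$), and your argument via the defining triangle~(\ref{mapping fibre}) breaks down. The paper circumvents this by first combining~(\ref{mapping fibre}) with the compact-support triangle~(\ref{compact def}) (via the Octahedral axiom) to obtain the auxiliary triangle
\[
\RGamma_c(\mathcal{O}_{k,S(\mathscr{F})},A) \to \RGamma_{\mathscr{F}}(k,A) \to {\bigoplus}_{\q \in S(\mathscr{F})}\RGamma_\mathscr{F}(k_\q,A)\to\cdot
\]
and then exploiting that $\RGamma_c$ lies in $D^{\mathrm{perf},0}_{[1,3]}(R)$ \emph{unconditionally} by Lemma~\ref{flach result}(i). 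Claim (iii) and both assertions of (iv) then follow directly from the long exact cohomology sequence of this triangle (for $i=3$ the paper runs your duality argument with $\gamma\colon \bigoplus_\q H^2_\mathscr{F}(k_\q,A)\to H^3_c$ in place of your map from $\bigoplus_\q H^2(k_\q,A)$, but the substance is identical). Replacing your use of~(\ref{mapping fibre}) by this triangle throughout fixes the gap.
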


\begin{proof}For a finite subset $\Sigma$ of $\Pi_k$ we write $C_{\Sigma}(A), C^f_{\Sigma}(A)$ and $C^\mathscr{F}_{\Sigma}(A)$ for the respective direct sums over $\q \in \Sigma$ of the complexes $\mathrm{R}\Gamma(k_\fq,A), \mathrm{R}\Gamma_f(k_\fq,A)$ and $\mathrm{R}\Gamma_\mathscr{F}(k_\fq,A)$. We then recall that, for any finite subsets $S'$ and $S$ with $S(A)\subseteq S'\subseteq S$, there exists a canonical `inflation-restriction' exact triangle in $D(R)$  
\begin{equation}\label{inf res triangle} \RGamma(\mathcal{O}_{k,S'},A) \xrightarrow{\iota_{S',S}(A)} \RGamma(\mathcal{O}_{k,S},A)\oplus C^f_{S\setminus S'}(A)  \xrightarrow{(\iota_{S',S}', (\iota_{\q,A})_\q)} C_{S\setminus S'}(A) \to \end{equation}
in which $\iota_{S',S}'$ is the canonical localisation morphism.  

To prove (i) we assume $S' = S(\mathscr{F})$ and consider the following diagram in $D(R)$ 
\[\xymatrix{ \RGamma(\mathcal{O}_{k,S'},A)\oplus C_{S'}^\mathscr{F}(A) \ar[r] \ar[d]^{(\iota_\mathscr{F},(\theta_{\mathscr{F},\q})_\q)}& \bigl( \RGamma(\mathcal{O}_{k,S},A)\oplus C^f_{S\setminus S'}(A)\bigr) \oplus C_{S'}^\mathscr{F}(A) \ar[r]   \ar[d]^{((\iota_S,(\iota_{\q,A})_\q),(\theta_{\mathscr{F},\q})_\q)} & C_{S\setminus S'}(A)\ar@{=}[d] \ar[r] &.\\ C_{S'}(A) \ar[r] & C_S(A) \ar[r] & C_{S\setminus S'}(A) \ar[r] &.}
\]
Here the upper row is the exact triangle induced by (\ref{inf res triangle}) and the lower row is the canonical exact triangle. In addition, because $C^f_{S\setminus S'}(A) = C^\mathscr{F}_{S\setminus S'}(A)$ (since $S' = S(\mathscr{F})$), the central vertical map agrees with $(\iota_S,(\theta_{\mathscr{F},\q}))_\q)$ where $\q$ runs over $S$, and the diagram commutes. The Octahedral axiom therefore implies that the mapping fibres of the first and second vertical morphisms are canonically isomorphic in $D(R)$ and this implies (i). 

To prove (ii) we set $S \coloneqq S(\mathscr{F}')\cup S(\mathscr{F})$ and consider the following diagram in $D(R)$
\[\xymatrix{ \RGamma_{\mathscr{F}'}(k,A) \ar[r]  & \RGamma(\mathcal{O}_{k,S},A) \oplus C_{S}^{\mathscr{F}'}(A) 
\ar[rr]^{\hskip 0.5truein(\iota_S,(\theta_{\mathscr{F}',\q})_\q)}   \ar[d]^{({\id},(j_{\mathscr{F}',\mathscr{F},\q})_\q)} & & C_{S}(A) \ar@{=}[d] \ar[r] &.\\
\RGamma_{\mathscr{F}}(k,A) \ar[r] & \RGamma(\mathcal{O}_{k,S},A) \oplus C_{S}^{\mathscr{F}}(A) \ar[rr]^{\hskip 0.5truein(\iota_S,(\theta_{\mathscr{F},\q})_\q)}  & & C_{S}(A)\ar[r] &.}
\]
Here the two rows are the exact triangles obtained from (i) and the square commutes by choice of the morphisms $j_{\mathscr{F}',\mathscr{F},\q}$. In particular, the Octahedral axiom implies both that the diagram can be completed to give a morphism of exact triangles and hence that there exists an exact triangle as stated in (ii). 

We note next that the exact triangle (\ref{compact def}) defining $\mathrm{R}\Gamma_c(\mathcal{O}_{k,S},A)$ combines with the exact triangle in (i) (with $S = S(\mathscr{F})$) to imply, via the Octahedral axiom, the existence of an exact triangle in $D(R)$
\begin{equation}\label{cs versus Selmer}\RGamma_\mathrm{c} (\mathcal{O}_{k,S},A) \to \RGamma_{\mathscr{F}}(k,A) \to {\bigoplus}_{\q \in S}\RGamma_\mathscr{F}(k_\q,A)\to .\end{equation}
Given this triangle,  both (iii) and the first assertion of (iv) follow directly from the result of Lemma \ref{flach result}\,(i). To prove the second assertion of (iv) we note that the long exact cohomology sequence of the last displayed triangle gives an exact sequence 
\[ {\bigoplus}_{\q \in S}H^2_\mathscr{F}(k_\q,A) \xrightarrow{\gamma} H^3_c(\mathcal{O}_{k,S},A) \to H^3_\mathscr{F}(k,A) \to {\bigoplus}_{\q\in S}H^3_\mathscr{F}(k_\q,A)\to H^4_c(\mathcal{O}_{k,S},A)= (0).\] 
It is therefore enough to show that the stated hypothesis implies $\gamma$ is surjective. This is true, since the compatibility of local and global duality (as respectively expressed by the second isomorphisms in Proposition \ref{local Tate duality thm}\, (i) and (ii)) identifies the composite map 
\[ H^2(k_{\q_0},A)\xrightarrow{H^2(\theta_{\mathscr{F},\q_0})^{-1}} H^2_\mathscr{F}(k_{\q_0},A) \subseteq {\bigoplus}_{\q \in S}H^2_\mathscr{F}(k_\q,A) \xrightarrow{\gamma} H^3_c(\mathcal{O}_{k,S},A)\]
with the Matlis-dual of the inclusion map $H^0(\mathcal{O}_{k,S},A^\vee(1)) \to H^0(k_{\q_0}, A^\vee(1))$. In particular, since the latter map is injective, its Matlis duals is surjective, as claimed.  
\end{proof} 

\begin{example}\label{nss} (Greenberg--Nekov\'{a}\v{r} structures) Fix a finite subset $S$ of $\Pi_k$ with $S(A)\subseteq S$ and assume to be given, for each $\q \in S$, an ind-admissible $R[G_{k_\q}]$-module
$A_\q$ for which there exists a morphism $j_\q \: A_\q \to A$ of $R [G_{k_\q}]$-modules. Then one obtains a Nekov\'{a}\v{r} structure $\mathscr{F} = \mathscr{F}( (A_\q,j_\q)_{\q \in S})$ with $S(\mathscr{F}) \coloneqq S$ by setting 
\[ \bigl(\mathrm{R}\Gamma_{\mathscr{F}}(k_\q,A), \theta_{\cF, \q} \bigr) \coloneqq \bigl(\mathrm{R}\Gamma(k_\q,A_\q), \mathrm{R}\Gamma (k_\q, j_\q)\bigr)\,\,\,\text{for all} \,\,\,\q \in S.\]
This approach is discussed more fully in  \cite[\S\,7.8]{SelmerComplexes}, and the following  examples will play a key role in subsequent arguments.

\begin{romanliste}

\item The `relaxed' Nekov\'{a}\v{r} structure $\mathscr{F}_{\mathrm{rel}} = \mathscr{F}_{\mathrm{rel}}(A,S)$ for $A$ and $S$. One has $S(\mathscr{F}_{\mathrm{rel}}) = S$ and, for each $\q\in  S$, sets 
\[ \bigl( \RGamma_{\mathscr{F}_{\mathrm{rel}}}(k_\q,A), \theta_{\mathscr{F}_{\mathrm{rel}},\q}\bigr)\coloneqq \bigl( \RGamma(k_\q,A), \mathrm{id}\bigr).\]
Hence $\mathrm{R}\Gamma_{\mathscr{F}_{\mathrm{rel}}}(k,A) = \RGamma(\mathcal{O}_{k,S},A)$ and so Lemma \ref{flach result}\,(v) implies that $\mathscr{F}_{\mathrm{rel}}$ is perfect if $A$ is a free $R$-module of finite rank and (\ref{p=2 condition}) is satisfied. 
\item The `canonical' Nekov\'{a}\v{r} structure $\mathscr{F}_{\rm{can}} = \mathscr{F}_{\rm{can}}(A,S)$ for $A$ and $S$ refines $\mathscr{F}_{\mathrm{rel}}(A,S)$. One has $S(\mathscr{F}_{\rm{can}}) = S$ and for  $\q \in S$ sets 
\[\bigl( \RGamma_{\mathscr{F}_{\mathrm{can}}}(k_\q,A), \theta_{\mathscr{F}_{\mathrm{can}},\q}\bigr) \coloneqq \begin{cases} \bigl(\mathrm{R}\Gamma_f(k_\q,A), \iota_{A,\q}\bigr), &\text{if $\q\in S\setminus (\Pi_k^\infty \cup \Pi_k^p)$}\\
\bigl(\RGamma(k_\q,A), {\mathrm{id}}\bigr),&\text{if $\q\in \Pi_k^\infty \cup \Pi_k^p$}.\end{cases}\]
In particular, if $A$ is a free $R$-module of finite rank, then Lemma \ref{basic properties}\,(iii) combines with Lemma \ref{flach result}\,(ii) and (iv) to imply $\mathscr{F}_{\rm{can}}$ is perfect if and only if, for each $\q \in \Pi_k^\infty$, the $G_{k_\q}$-module $A$ is cohomologically-trivial and the $R$-module $H^0(k_\q,A)$ is projective and, for each $\q\in S(A) \setminus (\Pi_k^\infty \cup \Pi_k^p)$, the complex $\mathrm{R}\Gamma_f(k_\q,A)$ belongs to $D^\mathrm{perf}(R)$.
\item The `strict' Nekov\'{a}\v{r} structure $\mathscr{F}_{\mathrm{str}} = \mathscr{F}_{\!\mathrm{str}}(A,S)$ for $A$ and $S$ refines $\mathscr{F}_{\mathrm{can}}(A,S)$, and hence also $\mathscr{F}_{\!\mathrm{rel}}(A,S)$. One has $S(\mathscr{F}_{\mathrm{str}}) = S$ and for  $\q \in S$ sets 
\[ \bigl( \RGamma_{\!\mathscr{F}_{\mathrm{str}}}(k_\q,A), \theta_{\!\mathscr{F}_{\mathrm{str}},\q}\bigr)\coloneqq \bigl(0,0\bigr).\]
One therefore has $\mathrm{R}\Gamma_{\mathscr{F}_{\mathrm{str}}}(k,A) =  \RGamma_\mathrm{c} (\mathcal{O}_{k,S},A)$ and so Lemma \ref{flach result}\,(i) implies $\mathscr{F}_{\mathrm{str}}$ is perfect if $A$ is a free $R$-module of finite rank. 
\end{romanliste}
\end{example}

 The approach of Example \ref{nss} does not encompass all Nekov\'{a}\v{r} structures  of arithmetic interest. For instance, if $S$ is a finite subset of $\Pi_k$ with $S(A)\subseteq S$, then the specification for each $\q \in S$ of a projective $R$-submodule $X_\q$ of $H^1(k_\q,A)$ determines a perfect Nekov\'{a}\v{r} structure $\mathscr{F} = \mathscr{F}((X_\q)_\q)$ with $S(\mathscr{F}) = S$ and, for each $\q \in S$, $\RGamma_\mathscr{F}(k_\q,A) = X_\q[-1]$ and $\theta_{\mathscr{F},\q}$ the unique morphism $X_\q[-1]\to \RGamma(k_\q,A)$ in $D(R)$ for which $H^1(\theta_{\mathscr{F},\q})$ is the inclusion $X_\q \subseteq H^1(k_\q,A)$. In particular, this construction incorporates the `perfect Selmer structures' introduced in \cite[\S\,2]{bmc} in order to study refined versions of the Birch and Swinnerton-Dyer Conjecture for abelian varieties over number fields. 

There are also  several natural ways in which a given Nekov\'{a}\v{r} structure $\mathscr{F}$ on $A$ gives rise to further structures. In \S\,\ref{duality realtions section} we will discuss natural notions of `dual' Nekov\'{a}\v{r} structure. To end this section, we record several other constructions that are also important for our approach. 

\begin{example}\label{lesc} ($\Sigma$-modifications) 
 Let $\Sigma$ be a finite subset of $\Pi_k$. Then the `$\Sigma$-modification' $\mathscr{F}_\Sigma$ and `$\Sigma$-comodification' $\mathscr{F}^\Sigma$ of $\mathscr{F}$ are the Nekov\'{a}\v{r} structures on $A$ that are specified as follows: $S(\mathscr{F}_\Sigma) = S(\mathscr{F}^\Sigma) \coloneqq S(\mathscr{F}) \cup \Sigma$; for $\q \in S(\mathscr{F})$, one has 
 \[ (\mathrm{R}\Gamma_{\mathscr{F}_\Sigma}(k_\q,A), \theta_{\mathscr{F}_\Sigma,\q}) =  (\mathrm{R}\Gamma_{\mathscr{F}^\Sigma}(k_\q,A), \theta_{\mathscr{F}^\Sigma,\q}) \coloneqq (\mathrm{R}\Gamma_{\mathscr{F}}(k_\q,A),\theta_{\mathscr{F},\q});\]
 for $\q \in \Sigma\setminus S(\mathscr{F})$, one has 
 \[ (\mathrm{R}\Gamma_{\mathscr{F}_\Sigma}(k_\q,A),\theta_{\mathscr{F}_\Sigma,\q}) \coloneqq (0,0) \quad\text{and}\quad (\mathrm{R}\Gamma_{\mathscr{F}^\Sigma}(k_\q,A),\theta_{\mathscr{F}^\Sigma,\q}) \coloneqq (\mathrm{R}\Gamma(k_\q,A),\mathrm{id}).\]
If $\Sigma = \emptyset$, then 
$\mathscr{F}_\Sigma = \mathscr{F}^\Sigma = \mathscr{F}$. In general, $\mathscr{F}_\Sigma$ refines $\mathscr{F}$ and Lemma \ref{basic properties}\,(ii) gives an  exact triangle in $D(R)$   
\begin{equation}\label{sigma triangle} \RGamma_{\mathscr{F}_\Sigma}(k,A) \to \RGamma_\mathscr{F}(k,A) \to {\bigoplus}_{\q \in \Sigma\setminus S(\mathscr{F})}\mathrm{R}\Gamma_f(k_\q,A)\to \cdot\end{equation}
In addition, by comparing the exact triangles in Lemma \ref{basic properties}\,(i) for $\mathscr{F}$ and $\mathscr{F}^\Sigma$ (and with $S$ taken to be $S(\mathscr{F})\cup \Sigma$ in both cases), one finds that local Tate duality (Theorem \ref{local Tate duality thm}\,(i)) for each $\q \in \Sigma\setminus S(\mathscr{F})$ gives rise to an exact triangle in $D(R)$ 
\[ \RGamma_{\mathscr{F}}(k,A) \to \RGamma_{\mathscr{F}^\Sigma}(k,A) \to {\bigoplus}_{\q \in \Sigma\setminus S(\mathscr{F})}\mathrm{R}\Gamma_f(k_\q,A^\vee(1))^\vee[-2]\to \cdot\]
These two displayed exact triangles combine with Lemma \ref{flach result}\,(ii) to imply that $\mathscr{F}$ is perfect if and only if $\mathscr{F}_\Sigma$ and $\mathscr{F}^\Sigma$ are both perfect. As a concrete  example, if $\mathscr{F}$ is the relaxed structure $\mathscr{F}_{\mathrm{rel}} = \mathscr{F}_{\mathrm{rel}}(A,S)$ from Example \ref{nss}\,(i), then one has $ \RGamma_{\mathscr{F}_{\rm{rel}}}(k,A) = \RGamma(\mathcal{O}_{k,S},A)$. In particular, if $\Sigma \cap S = \emptyset$, then the exact triangle (\ref{sigma triangle}) implies that $ \RGamma_{\mathscr{F}_{\rm{rel},\Sigma}}(k,A)$ coincides with the `$\Sigma$-modified \'etale cohomology' complexes $\mathrm{R}\Gamma_{\Sigma}(\cO_{k,S},A)$ used in \cite{bss} and \cite{sbA}. In the case $A = \ZZ_p(1)$ such constructions were first used by Gross \cite{gross88} and Rubin \cite{Rub96} in the context of Stark's conjectures (see also \cite[Ch.\@ IX, \S\,5]{Rubin-euler}).
\end{example}

\begin{example}\label{remark selmer} (Induced structures) \ 

\begin{romanliste}
\item If $\iota \:A' \to A$ is a homomorphism of continuous $R[G_{k}]$-modules, then $\mathscr{F}$ induces a Nekov\'{a}\v{r} structure $\iota^\ast(\mathscr{F})$ on $A'$ as follows. One has $S(\iota^\ast(\mathscr{F})) \coloneqq S(\mathscr{F})$; for $\q\in S(\mathscr{F})$ one defines $\RGamma_{\iota^\ast(\mathscr{F})}(k_\q,A')$ via the exact triangle in $D(R)$ 
\[ \mathrm{R}\Gamma_{\iota^\ast(\mathscr{F})}(k_\q,A') \to \mathrm{R}\Gamma(k_\q,A')\oplus \mathrm{R}\Gamma_\mathscr{F}(k_\q,A) \xrightarrow{(\mathrm{R}\Gamma(k_\q,\iota),
\theta_{\mathscr{F},\q})}  \mathrm{R}\Gamma(k_\q,A) \to ,\]
and takes $\theta_{\iota^\ast(\mathscr{F}),\q}$ to be the morphism induced by the first morphism in this triangle. When the map $\iota$ is clear from context, we will write $\mathscr{F}_{A'}$ in place of $\iota^\ast(\mathscr{F})$.

\item If $j \: A \to A'$ is a homomorphism of continuous $R[G_{k}]$-modules, then $\mathscr{F}$ induces a Nekov\'{a}\v{r} structure $j_\ast(\mathscr{F})$ on $A'$ as follows. One has $S(j_\ast(\mathscr{F})) \coloneqq S(\mathscr{F})$; for $\q\in S(\mathscr{F})$, one defines $\mathrm{R}\Gamma_{j_\ast(\mathscr{F})}(k_\q,A')$ to be $\mathrm{R}\Gamma_{\mathscr{F}}(k_\q,A)$ and $\theta_{j_\ast(\mathscr{F}),\q}$ to be the composite $\mathrm{R}\Gamma(k_\q,j)\circ \theta_{\mathscr{F},\q}$. When the map $\iota$ is clear from context, we will write $\mathscr{F}_{A'}$ in place of $j_\ast(\mathscr{F})$.

\item Fix a morphism of rings $R\to R'$ satisfying (\ref{ring condition}). Assume $A$ is a finitely generated free $R$-module and that condition (\ref{p=2 condition}) is valid. Then, for every place $\q \in \Pi_k$, there exists a natural isomorphism $\RGamma(k_\q, A) \otimes_R^\mathbb{L} R' \cong \RGamma(k_\q, A \otimes_R R')$ in $D(R')$ (cf.\@ Lemma \ref{flach result}\,(vi)) and so one can specify a Nekov\'{a}\v{r} structure $\mathscr{F}\otimes_RR'$ on $A\otimes_RR'$ as follows. One has $S(\mathscr{F}\otimes_RR') \coloneqq S(\mathscr{F})$; for $\q\in S(\mathscr{F})$, one sets 
\[ \mathrm{R}\Gamma_{\mathscr{F}\otimes_RR'}(k_\q,A\otimes_RR') \coloneqq \mathrm{R}\Gamma_{\mathscr{F}}(k_\q,A)\otimes_R^\mathbb{L} R'\] 
and takes $\theta_{\mathscr{F}\otimes_RR',\q}$ to be the following composite morphism in $D(R')$ 
\[ \mathrm{R}\Gamma_{\mathscr{F}\otimes_RR'}(k_\q,A\otimes_RR') \xrightarrow{\theta_{\mathscr{F},\q}\otimes_R^\mathbb{L} R'}\RGamma(k_\q, A) \otimes_R^\mathbb{L} R' \cong \RGamma(k_\q, A \otimes_R R').\]
\item Assume the same conditions as in (iii) and fix finite subsets $S$ and $\Sigma$ of $\Pi_k$ with $S(A)\subseteq S$ and $S\cap \Sigma = \emptyset$. Then, for the structures defined in Examples \ref{nss} and \ref{lesc}, Lemma \ref{flach result}\,(vi) implies that the corresponding induced structures $\mathscr{F}_{\mathrm{rel}}(A,S)_\Sigma\otimes_RR', \mathscr{F}_{\mathrm{rel}}(A,S)^\Sigma\otimes_RR'$, $\mathscr{F}_{\mathrm{str}}(A,S)_\Sigma\otimes_RR'$ and $\mathscr{F}_{\mathrm{str}}(A,S)^\Sigma\otimes_RR'$ respectively identify with $\mathscr{F}_{\mathrm{rel}}(A\otimes_RR',S)_\Sigma,$ $ \mathscr{F}_{\mathrm{rel}}(A\otimes_RR',S)^\Sigma, \mathscr{F}_{\mathrm{str}}(A\otimes_RR',S)_\Sigma$ and $\mathscr{F}_{\mathrm{str}}(A\otimes_RR',S)^\Sigma$. 
\end{romanliste}
\end{example}

For the induced structures defined in the last example, one has the following useful `control theorems' for Selmer complexes.

\begin{prop} \label{Selmer complex control thm}
    Assume  condition (\ref{p=2 condition}) is valid, and let $\mathscr{F}$ be a Nekov\'a\v{r} structure on a finitely generated free $R$-module $A$. Then the following claims are valid. 
    \begin{romanliste}
        \item For every morphism $R \to R'$ of rings satisfying (\ref{ring condition}) one has a natural isomorphism 
        $\RGamma_{\mathscr{F}} (k, A) \otimes^\mathbb{L}_R R' \cong \RGamma_{\mathscr{F}\otimes_R R'} (k, A \otimes_R R')
        $ in $D (R')$. 
        \item Let $R$ be a finite ring satisfying (\ref{ring condition}) and $I \subseteq R$ an ideal. Then one has a natural isomorphism $\RHom_R ( R / I, \RGamma_\mathscr{F} (k, A)) \cong \RGamma_{\mathscr{F}_{A [I]}} (k, A [I])
        $ in $D (R/I)$. 
    \item Assume $\mathscr{F}$ is perfect and, for $n \in \N$, set $A_n \coloneqq A\otimes_R(R/\a_n)$ and $\mathscr{F}_n\coloneqq \mathscr{F}\otimes_R(R/\a_n)$. Then Lemma \ref{limit result} defines an object $\varprojlim_{n \in \N} \RGamma_{\mathscr{F}_n} (k, A_n)$ of $D^{\mathrm{perf}}(R)$ that is naturally isomorphic to $\RGamma_\mathscr{F} (k, A)$. Hence, in each degree $i$, one has $H^i_\mathscr{F} (k, A) = \varprojlim_{n \in \N} H^i_{\mathscr{F}_n} (k, A_n)$. 
    \end{romanliste}
\end{prop}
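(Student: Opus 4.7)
The plan is to handle each of the three claims by applying an appropriate exact functor to the defining mapping-fibre triangle (\ref{mapping fibre}) of $\RGamma_\mathscr{F}(k,A)$ and identifying the resulting terms. For (i), I would apply the exact functor $-\otimes^{\mathbb{L}}_R R'$ to (\ref{mapping fibre}). Since this functor preserves distinguished triangles, the base-changed triangle has terms given by the base changes of $\RGamma(\cO_{k,S(\mathscr{F})},A)$, of each $\RGamma_\mathscr{F}(k_\q,A)$, and of each $\RGamma(k_\q,A)$. Lemma \ref{flach result}\,(vi) identifies the global and local Galois-cohomology base changes with the corresponding complexes for $A\otimes_R R'$, while by construction in Example \ref{remark selmer}\,(iii) the base-changed local conditions $\RGamma_\mathscr{F}(k_\q,A)\otimes^\mathbb{L}_R R'$ and morphisms $\theta_{\mathscr{F},\q}\otimes^\mathbb{L}_R R'$ are precisely the data of $\mathscr{F}\otimes_R R'$. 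The resulting triangle is thus the defining triangle for $\RGamma_{\mathscr{F}\otimes_R R'}(k, A\otimes_R R')$.

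For (ii), I would apply $\RHom_R(R/I,-)$ to (\ref{mapping fibre}). Since $R$ is finite Gorenstein local and hence self-injective, every finitely generated free $R$-module is injective, so that the continuous cochain complexes $\mathscr{C}^\bullet(G,A)$ (for $G=G_{k,S(\mathscr{F})}$ or $G=G_{k_\q}$) consist of injective $R$-modules, each cochain space being a filtered colimit of products of copies of $A$. Consequently $\RHom_R(R/I,-)$ agrees with the underived $\Hom_R(R/I,-)$ on these complexes, and a straightforward adjunction identifies $\Hom_R(R/I, \mathrm{Maps}_\mathrm{cont}(G^{\oplus i},A))$ with $\mathrm{Maps}_\mathrm{cont}(G^{\oplus i}, A[I])$; this gives the canonical isomorphisms $\RHom_R(R/I,\RGamma(\cO_{k,S(\mathscr{F})},A))\cong\RGamma(\cO_{k,S(\mathscr{F})},A[I])$ and $\RHom_R(R/I,\RGamma(k_\q,A))\cong\RGamma(k_\q,A[I])$, which may also be deduced from the relevant case of Lemma \ref{flach result}\,(vii). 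The identification of the local term $\RHom_R(R/I,\RGamma_\mathscr{F}(k_\q,A))$ with $\RGamma_{\iota^*(\mathscr{F})}(k_\q,A[I])$ then follows from the universal property of the mapping fibre defining the latter in Example \ref{remark selmer}\,(i), together with the other identifications just established. The hardest point of the whole proof is precisely this last step: one must verify carefully that $\RHom_R(R/I,-)$ commutes with the mapping-fibre construction defining the induced local condition, and this relies essentially on the self-injectivity of $R$ together with the compatibility afforded by Lemma \ref{flach result}\,(vii).

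For (iii), I would specialise (i) to $R'=R/\a_n$, obtaining compatible isomorphisms $\RGamma_\mathscr{F}(k,A)\otimes^{\mathbb{L}}_R R_n\cong\RGamma_{\mathscr{F}_n}(k,A_n)$ in $D(R_n)$ (where $R_n = R/\a_n$). Since $\mathscr{F}$ is perfect, $\RGamma_\mathscr{F}(k,A)$ can be represented by a bounded complex $P^\bullet$ of finitely generated free $R$-modules, and each reduction $R_n\otimes^\mathbb{L}_R P^\bullet$ belongs to $D^\mathrm{perf}(R_n)$. The family $(\RGamma_{\mathscr{F}_n}(k,A_n))_n$ therefore satisfies the hypotheses of Lemma \ref{limit result}, and the limit construction of that lemma produces an object naturally isomorphic to $P^\bullet$, since each free module $P^i$ satisfies $P^i\cong\varprojlim_n P^i/\a_n P^i$. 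The cohomological identity then follows from Lemma \ref{limit result}\,(iii), using that each group $H^i_{\mathscr{F}_n}(k,A_n)$ is a finite $R_n$-module so that the relevant $\varprojlim^{(1)}$-terms vanish by the Mittag--Leffler criterion.
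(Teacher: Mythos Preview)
Your arguments for (i) and (iii) are correct and follow the same route as the paper, the only cosmetic difference being that you apply the relevant functor to the defining triangle (\ref{mapping fibre}) whereas the paper works with the triangle (\ref{cs versus Selmer}).

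For (ii), the step you flag as ``the hardest point'' is a genuine gap. The identification $\RHom_R(R/I,\RGamma_\mathscr{F}(k_\q,A)) \cong \RGamma_{\iota^\ast(\mathscr{F})}(k_\q,A[I])$ does not follow from any universal property --- mapping fibres in $D(R)$ are only unique up to non-unique isomorphism --- and appears to fail in general. Already for the strict structure of Example~\ref{nss}\,(iii) the left-hand side is $\RHom_R(R/I,0)=0$, whereas by Example~\ref{remark selmer}\,(i) the induced local condition at $\q$ is the fibre of $\RGamma(k_\q,\iota)\colon\RGamma(k_\q,A[I])\to\RGamma(k_\q,A)$, namely $\RGamma(k_\q,A/A[I])[-1]$, which is typically nonzero. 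The underlying reason is that applying $\RHom_R(R/I,-)$ to the pair $(\RGamma_\mathscr{F}(k_\q,A),\theta_{\mathscr{F},\q})$ produces a local condition on $A[I]$ whose singular quotient is $\RHom_R(R/I,\RGamma_{/\mathscr{F}}(k_\q,A))$, while the construction of Example~\ref{remark selmer}\,(i) gives one with singular quotient $\RGamma_{/\mathscr{F}}(k_\q,A)$ itself; these agree only when $I$ already annihilates the latter complex. The paper's own one-line proof of (ii) via Lemma~\ref{flach result}\,(vii) leaves this same point unaddressed, so the difficulty appears to reside in the statement of (ii) as currently formulated rather than in your overall method.
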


\begin{proof} The mapping cone of a morphism in $D(R')$ is unique up to ismorphism. Hence, given the explicit definition of each complex $\RGamma_{\mathscr{F}\otimes_R R'} (k_\q, A \otimes_R R')$ and morphism $\theta_{\mathscr{F}_{A\otimes_RR'},\q}$,  the isomorphism in (i) results directly by comparing the triangles (\ref{cs versus Selmer}) for the structures $\mathscr{F}$ and $\mathscr{F}_{A \otimes_R R'}$ and taking account of the isomorphisms in Lemma \ref{flach result}\,(vi). 

The isomorphism in (ii) is obtained by a similar application of Lemma \ref{flach result}\,(vii) with $R' = R / I$.

To prove (iii), the fact $\mathscr{F}$ is perfect allows us to fix a bounded complex $P^\bullet$ of finitely generated projective $R$-modules that is isomorphic in $D(R)$ to $\RGamma_\mathscr{F} (k, A)$. We then fix integers $a$ and $b$ with $a< b$ and $P^i=(0)$ if either $i < a$ or $i > b$. Then by applying the isomorphism in (i) to the morphism $R \to R/\a_n$, respectively $R/\a_{n+1} \to R/\a_n$, we deduce  $\RGamma_{\mathscr{F}_n} (k, A/\a_nA)$ belongs to $D^{\mathrm{perf}}_{[a,b]}(R/\a_n)$, respectively that there exists an isomorphism in $D(R/\a_n)$
\[ \RGamma_{\mathscr{F}_{n+1}} (k, A_{n+1})\otimes^{\mathbb{L}}_{R/\a_{n+1}}R/\a_n \cong \RGamma_{\mathscr{F}_n} (k, A_n).\] The hypotheses of Lemma \ref{limit result} are therefore satisfied in this case so that  $\varprojlim_{n \in \N} \RGamma_{\mathscr{F}_n} (k, A_n)$ is well-defined and isomorphic in $D(R)$ to $\RGamma_\mathscr{F} (k, A)\cong P^\bullet$ by the uniqueness assertion of the latter result. This  isomorphism then induces an identification $H^i_\mathscr{F} (k, A) = \varprojlim_{n \in \N} H^i_{\mathscr{F}_n} (k, A_n)$ in each degree $i$ since all modules $P^j/\a_n$ and $H^j_{\mathscr{F}_n} (k, A_n)$ are finite and inverse limits are exact on the category of finite abelian groups.  
\end{proof}

\begin{rk} The isomorphisms in Proposition \ref{Selmer complex control thm}\,(i) and (ii) respectively give rise to  convergent spectral sequences
\begin{align*}
 \qquad  E_2^{i, j} & = \Tor_{-i}^R ( H^j_\mathscr{F} (k, A), R')  && \mkern-27mu\Rightarrow \quad E^{i + j} = H^{i + j}_{\mathscr{F}_{A \otimes_R R'}} (k, A\otimes_R R'), && \qquad \\
 \qquad   E_2^{i, j}& = \Ext^i_{R} (R / I, H^j_{\mathscr{F}} (k, A)) && \mkern-27mu\Rightarrow \quad E^{i + j}  = H^{i + j}_{\mathscr{F}_{A [I]}} (k, A [I]). && \qquad
        \end{align*}
    In particular, if $H^0_{\mathscr{F}} (k, A)$ vanishes, then the latter spectral sequence induces an isomorphism $H^1_{\mathscr{F}} (k, A) [I] \cong H^1_{\mathscr{F}_{A [I]}} (k, A [I])$. This observation is the natural analogue for Nekov\'a\v{r} structures of the result of \cite[Cor.\@ 3.8]{bss}.
\end{rk}

\subsubsection{Mazur--Rubin--Selmer structures}

\begin{definition}
A `Mazur--Rubin(--Selmer) structure' $\cF$ on $A$ comprises  
\begin{itemize}
    \item a finite set $S (\cF)$ of places of $k$ that contains $S(A)$, and 
    \item for each $\q\in \Pi_k$, an $R$-submodule 
    \[ H^1_\cF (k_\q, A)\subseteq H^1 (k_\q, A)\] 
of $H^1 (k_\q, A)$, with 
\[ H^1_\cF (k_\q, A) \coloneqq H^1_f (k_\q, A) \,\,\text{ for all }\,\, \q \in \Pi_k\setminus S(\cF),\]
where the group $H^1_f (k_\q, A)$ is as defined in (\ref{RGamma_f def}).
\end{itemize}
A  Mazur--Rubin structure $\cF'$ on $A$ `refines' $\cF$, written $\cF'\le \cF$, if $S(\cF)\subseteq S(\cF')$ and for each $\q \in \Pi_k$ one has 
$H^1_{\cF'} (k_\q, A) \subseteq H^1_{\cF} (k_\q, A)$. 
\end{definition}

The link between these structures and Nekov\'{a}\v{r} structures is as follows.

\begin{lemma}\label{comparison selmer structures0} The following claims are valid.
\begin{romanliste}
\item A Nekov\'{a}\v{r} structure $\mathscr{F}$ on $A$ induces a canonical Mazur--Rubin structure $h(\mathscr{F})$ on $A$. If $\mathscr{F}'$ is a Nekov\'{a}\v{r} structure that refines $\mathscr{F}$, then $h(\mathscr{F}')$ refines $h(\mathscr{F}).$
\item Let $\cF$ be a Mazur--Rubin structure on $A$. Then there exists a Nekov\'{a}\v{r} structure $\mathscr{F}$ on $A$ for which   $h(\mathscr{F})=\cF$, $S(\mathscr{F}) = S(\cF)$ and $\mathrm{R}\Gamma_\mathscr{F}(k_\q,A) = H^1_\cF(k_\q,A)[-1]$ for all $\q \in S(\cF)$. These conditions determine $\mathscr{F}$ uniquely if and only if, for every $\q$ in $S(\cF)$, the group  $\mathrm{Ext}^1_{R}(H^1_\cF(k_\q,A),H^0(k_\q,A))$ vanishes.
\end{romanliste}
\end{lemma}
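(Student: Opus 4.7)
My plan is to prove (i) by taking $H^1$-images of the structural morphisms, and to prove (ii) via an explicit roof construction in $D(R)$ that bypasses the \emph{a priori} cohomological obstruction. For (i), I would define $h(\mathscr{F})$ by setting $S(h(\mathscr{F})) \coloneqq S(\mathscr{F})$ and $H^1_{h(\mathscr{F})}(k_\q,A) \coloneqq \im ( H^1(\theta_{\mathscr{F},\q}))$ for all $\q \in \Pi_k$. The only axiom to verify is that this image coincides with $H^1_f(k_\q,A)$ when $\q \notin S(\mathscr{F})$; but in this case $\theta_{\mathscr{F},\q} = \iota_{A,\q}$, and under the identification $H^1_f(k_\q,A) = H^1(\kappa_\q, A^{I_\q})$, the inflation-restriction sequence shows that $H^1(\iota_{A,\q})$ is injective with image equal to $\ker ( H^1(k_\q,A) \to H^1(I_\q, A)) = H^1_f(k_\q,A)$. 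The refinement assertion is then immediate from the factorisation $\theta_{\mathscr{F}', \q} = \theta_{\mathscr{F}, \q} \circ j_{\mathscr{F}', \mathscr{F}, \q}$ provided by $\mathscr{F}' \le \mathscr{F}$.

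For the existence assertion in (ii), for each $\q \in S(\cF)$ I would fix a cochain-level representative $C^\bullet$ of $\RGamma(k_\q,A)$, write $Z^1 \subseteq C^1$ for the $1$-cocycles and $\pi \: Z^1 \twoheadrightarrow H^1(k_\q,A)$ for the canonical surjection, and set $\tilde Z^1 \coloneqq \pi^{-1}(H^1_\cF(k_\q,A))$. The two-term subcomplex $K_\q \coloneqq [C^0 \xrightarrow{d} \tilde Z^1]$ of $C^\bullet$ (with $C^0$ placed in degree zero) satisfies $H^0(K_\q) = H^0(k_\q,A)$ and $H^1(K_\q) = H^1_\cF(k_\q,A)$, and the induced map $C^0/H^0(k_\q,A) \hookrightarrow \tilde Z^1$ has cokernel $\tilde Z^1/B^1 = H^1_\cF(k_\q,A)$, so the quotient $K_\q/H^0(k_\q,A)[0]$ is quasi-isomorphic to $H^1_\cF(k_\q,A)[-1]$. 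The zig-zag
\[
H^1_\cF(k_\q, A)[-1] \xleftarrow{\sim} K_\q \hookrightarrow C^\bullet
\]
then defines a morphism $\theta_{\mathscr{F}, \q}$ in $D(R)$ whose effect on $H^1$ is, by construction, the inclusion $H^1_\cF(k_\q,A) \hookrightarrow H^1(k_\q,A)$; combined with the default choice $(\RGamma_f(k_\q,A), \iota_{A,\q})$ at places $\q \notin S(\cF)$, this yields a Nekov\'{a}\v{r} structure $\mathscr{F}$ with $h(\mathscr{F}) = \cF$ and the required properties.

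For uniqueness, I would apply the five-term exact sequence of the hyperext spectral sequence
\[
E_2^{p,q} = \Ext^p_R ( H^1_\cF(k_\q,A), H^q(k_\q,A)) \Rightarrow \Ext^{p+q}_R ( H^1_\cF(k_\q,A), \RGamma(k_\q,A)).
\]
Identifying $\Ext^1_R ( H^1_\cF(k_\q,A), \RGamma(k_\q,A))$ with $\Hom_{D(R)}( H^1_\cF(k_\q,A)[-1], \RGamma(k_\q,A))$ and recognising the map to $E_2^{0,1} = \Hom_R(H^1_\cF(k_\q,A), H^1(k_\q,A))$ as the functor $H^1$, one sees that the set of morphisms $\theta_{\mathscr{F}, \q}$ inducing the inclusion on $H^1$ forms a torsor over $E_2^{1,0} = \Ext^1_R ( H^1_\cF(k_\q,A), H^0(k_\q,A))$, so that $\mathscr{F}$ is unique if and only if this Ext group vanishes for every $\q \in S(\cF)$. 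The main conceptual subtlety is the \emph{a priori} obstruction (living in $E_2^{2,0} = \Ext^2_R ( H^1_\cF(k_\q,A), H^0(k_\q,A))$) to lifting the submodule $H^1_\cF(k_\q,A) \subseteq H^1(k_\q,A)$ to a morphism in $D(R)$: the role of the roof construction above is precisely to exhibit such a lift and hence show that this obstruction vanishes tautologically, with no further hypotheses on $R$ or $A$ needed.
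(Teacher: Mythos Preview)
Part (i) and the uniqueness half of (ii) are correct and match the paper's approach exactly: the paper also defines $h(\mathscr{F})$ via $S(h(\mathscr{F}))=S(\mathscr{F})$ and $H^1_{h(\mathscr{F})}(k_\q,A)=\im(H^1(\theta_{\mathscr{F},\q}))$, and for (ii) invokes the same hyperext spectral sequence (citing Verdier) to obtain a short exact sequence whose left-hand term $\Ext^1_R(H^1_\cF(k_\q,A),H^0(k_\q,A))$ governs uniqueness.

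Your explicit existence construction, however, has a gap. The leftward map in the zig-zag
\[
H^1_\cF(k_\q,A)[-1] \xleftarrow{\sim} K_\q \hookrightarrow C^\bullet
\]
is \emph{not} a quasi-isomorphism: as you yourself compute, $H^0(K_\q)=H^0(k_\q,A)$, so the projection $K_\q \twoheadrightarrow K_\q/H^0(k_\q,A)[0]\simeq H^1_\cF(k_\q,A)[-1]$ kills nonzero cohomology in degree zero whenever $H^0(k_\q,A)\neq 0$. The roof therefore does not represent a morphism $H^1_\cF(k_\q,A)[-1]\to\RGamma(k_\q,A)$ in $D(R)$; what it actually produces is a Nekov\'a\v{r} structure with local condition $K_\q$ (which does satisfy $h(\mathscr{F})=\cF$) rather than one with local condition $H^1_\cF(k_\q,A)[-1]$. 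Replacing $K_\q$ by $H^1_\cF(k_\q,A)[-1]$ requires a section of the triangle
\[
H^0(k_\q,A)[0]\to K_\q\to H^1_\cF(k_\q,A)[-1]\xrightarrow{+1},
\]
and the obstruction to such a section lies in $\Hom_{D(R)}(H^1_\cF(k_\q,A)[-1],H^0(k_\q,A)[1])=\Ext^2_R(H^1_\cF(k_\q,A),H^0(k_\q,A))$ --- exactly the class you identify at the end. So the roof does \emph{not} show this obstruction vanishes tautologically. The paper's route to existence is instead to assert, directly from the spectral sequence, that the map $\Hom_{D(R)}(H^1_\cF(k_\q,A)[-1],\RGamma(k_\q,A))\to\Hom_R(H^1_\cF(k_\q,A),H^1(k_\q,A))$ is surjective, and then to choose a preimage of the inclusion.
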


\begin{proof} Fix a Nekov\'{a}\v{r} structure $\mathscr{F}$ on $A$. Then one obtains a well-defined Mazur--Rubin structure $h(\mathscr{F})$ by setting 
\[ S(h(\mathscr{F})) = S(\mathscr{F}) \quad\text{and}\quad  H^1_{h(\mathscr{F})}(k_\q,A) \coloneqq \mathrm{im}(H^1(\theta_{\mathscr{F},\q}))\,\,\text{for all}\,\,\q \in \Pi_k.\] 
With this definition, it is also immediately clear that $h(\mathscr{F}') \le h(\mathscr{F})$ if $\mathscr{F}'\le \mathscr{F}$, as required to prove (i). \\
To prove (ii), we use for each $\q$ in $S(\cF)$ the convergent cohomological spectral sequence 
\begin{multline*} E_2^{p,q} = {\prod}_{t\in\bz}
{\rm{Ext}}^p_{R}(H^t(H^1_\cF(k_\q,A)[-1])),
 H^{q+t}(\mathrm{R}\Gamma(k_\q,A)))\\ \Rightarrow H^{p+q}(\RHom_{R}(H^1_\cF(k_\q,A)[-1],\mathrm{R}\Gamma(k_\q,A)))\end{multline*}
from \cite[{III, 4.6.10}]{verdier}. Since $H^1_\cF(k_\q,A)[-1]$ is acyclic outside degree $1$, this spectral sequence converges to give a short exact sequence 
\begin{multline*}
0 \to \mathrm{Ext}^1_{R}(H^1_\cF(k_\q,A),H^0(k_\q,A)) \to \Hom _{D(R)}(H^1_\cF(k_\q,A)[-1],\mathrm{R}\Gamma(k_\q,A))\\ \xrightarrow{\phi \mapsto H^1(\phi)} \Hom_R(H^1_\cF(k_\q,A),H^1(k_\q,A))\to 0.\end{multline*} 
We may therefore choose a morphism $\phi_\q \: H^1_\cF(k_\q,A)[-1]\to \mathrm{R}\Gamma(k_\q,A)$ in $D(R)$ for which $H^1(\phi_\q)$ is the inclusion $H^1_\cF(k_\q,A)\to H^1(k_\q,A)$. In particular, if we define $\mathscr{F}$ to be the Nekov\'{a}\v{r} structure with $S(\mathscr{F}) = S(\cF)$ and, for each $\q \in S(\cF)$, both $\RGamma_\mathscr{F}(k_\q,A) = H^1_\cF(k_\q,A)[-1]$ and $\theta_{\mathscr{F},\q} = \theta_\q$, then it is clear $h(\mathscr{F}) = \cF$. It is also clear from the displayed exact sequence that, given $\cF$, a Nekov\'{a}\v{r} structure with these conditions is unique if and only if $\mathrm{Ext}^1_{R}(H^1_\cF(k_\q,A),H^0(k_\q,A))$ vanishes for all $\q$ in $S(\cF)$.  
\end{proof} 

\begin{example}\label{ss def} Fix a finite subset $S$ of $\Pi_k$ with $S(A) \subseteq S$.  
\begin{itemize}

\item[(i)] The relaxed Selmer structure $\cF_{\rm{rel}} = \cF_{\rm{rel}}(A,S)$ for $A$ and $S$ defined in \cite[Exam.\@ 2.4]{bss2} is equal to 
 $h(\mathscr{F}_{\rm{rel}}(A,S))$. In particular, one has $S(\cF_{\rm{rel}})= S$ and $H_{\cF_{{\rm{rel}}}}^1(k_\q,A) = H^1(k_\q,A)$  for all $\q\in  S$.

\item[(ii)] The canonical Selmer structure $\cF_{\rm{can}} = \cF_{\rm{can}}(A,S)$ for $A$ and $S$ defined in \cite[Def.~3.2.1]{MazurRubin04} is equal to $h(\mathscr{F}_{\rm{can}}(A,S))$. In particular, one has $S(\cF_{\rm{can}}) = S$ and
\[ H_{\cF_{\rm{can}}}^1(k_\q,A) = \begin{cases}
H^1_f(k_\q,A), &\text{if $\q \in S\setminus (\Pi_k^\infty \cup \Pi_k^p)$,}\\
H^1(k_\q,A), &\text{if $\q \in \Pi_k^\infty \cup \Pi_k^p$.}\\
\end{cases}
\] 

\item[(iii)] If $R$ is a $\ZZ_p$-order, then the unramified Selmer structure $\cF_{\rm{ur}} = \cF_{{\rm{ur}}}(A,S)$ for $A$ and $S$ is a refinement of $\cF_{\rm{can}}(A)$  defined in \cite[Def.~5.1]{MazurRubin}. One has $S(\cF_{\rm{ur}}) =S$ and for $\q\in S$  sets 
\[ H_{\cF_{\rm{ur}}}^1(k_\q,A) \coloneqq \begin{cases} H_{\cF_{\rm{can}}}^1(k_\q,A), &\text{if $\q\in 
S\setminus \Pi_k^p$,}\\
\bigl(\bigcap_L {\rm{Cor}}_{L/k_\q}(H^1(L,A))\bigr)^{\rm{sat}}, &\text{if $\q\in \Pi_k^p$,}\end{cases}\]
where in the intersection $L$ runs over all finite unramified extensions of $k_\q$, and we write $X^{\rm{sat}}$ for the saturation of a subgroup $X$ of $H^1(k_\q,A)$. 
  In particular, if $R$ is a discrete valuation ring, then \cite[Cor.~5.3]{MazurRubin} shows that $\cF_{\rm{ur}}(A)$ coincides with $\cF_{\rm{can}}(A)$ if and only if $H^{0}(k_{\fq}, A^{\vee}(1))$ is finite for every $\fq\in \Pi_k^p$. 

\item[(iv)] Let $A'$ be a submodule, respectively quotient, of the $R[G_{k}]$-module $A$. Then, by \cite[Exam.~1.3.3 and 2.17]{MazurRubin04},  a Mazur--Rubin structure  $\cF$ on $A$ induces the Mazur--Rubin structure $\cF_{A'}$ on $A'$ with $S(\cF_{A'}) \coloneqq S(\cF)$ and,  for each $\q\in S(\cF)$, 
\[
H^1_{\cF_{A'}}(k_\q, A') \coloneqq \ker(H^1(k_\q,A') \to  H^1_{\!/\cF}(k_\q,A),\]
respectively 
\[H^1_{\cF_{A'}}(k_\q, A') \coloneqq\im (H^1_{\cF}(k_\q,A) \to H^1(k_\q,A')).
\]
If $\mathscr{F}$ is a Nekov\'{a}\v{r} structure on $A$ such that $h(\mathscr{F}) = \cF$, then, in the respective notation of Example \ref{remark selmer}\,(i) and (ii),  one has in both cases $\cF_{A'} = h( \mathscr{F}_{A'})$. 
\end{itemize}
\end{example}

\begin{remark}\label{remark selmer2} If $A'$ is a quotient of the $R[G_k]$-module $A$ then, for the following sorts of reasons, care is needed in the use of induced structures in the sense of Example \ref{ss def}\,(iv).

\begin{itemize}
\item[(i)] Whilst, in some cases, the induced structure $\cF_{\rm{can}}(A)_{A'}$ coincides with 
$\cF_{\rm{can}}(A')$, it is in general strictly finer (cf.\@ \cite[Lem.\@ 3.5]{Rubin-euler}, \cite[Prop.\@ 6.2.6]{MazurRubin04}).

\item[(ii)] Let $R \to R'$ be a surjective morphism of rings satisfying (\ref{ring condition}), and write $A'$ for the corresponding quotient $A\otimes_RR'$ of $A$. Assume $A$ is a free $R$-module, let $\mathscr{F}$ be a Nekov\'{a}\v{r} structure on $A$ and recall  the  Nekov\'{a}\v{r} structure $\mathscr{F}\otimes_RR'$ on $A'$ defined (under condition (\ref{p=2 condition})) in Example \ref{remark selmer}\,(iii). Then one has $h(\mathscr{F})_{A'}\le  h(\mathscr{F}\otimes_RR')$ and, in general, this refinement is strict. For example, in contrast to the situation for  Nekov\'{a}\v{r} structures themselves (cf.\@ Example \ref{nss}\,(iv)), the latter refinement is usually strict in the case that $\scrF$ is a relaxed  Nekov\'{a}\v{r} structure.  
\end{itemize}
\end{remark} 

To each Mazur--Rubin structure one associates a Selmer module as follows.

\begin{definition}\label{mr selmer def} Let $\cF$ be a Mazur--Rubin structure on $A$. For $\q \in \Pi_k$, we write $H^1_{ / \cF} (k_\q, A)$ for the quotient module $H^1 (k_\q, A) / H^1_\cF (k_\q, A)$. The Selmer module $H^1_{\cF} (k, A)$ of $\cF$ is then defined to be the kernel of the natural localisation map 
\[
 H^1 (\mathcal{O}_{k,S(\cF)}, A)\xrightarrow{\lambda(\cF)} \bigoplus_{\q\in S(\cF)} H^1_{ / \cF} (k_\q, A).
\]
\end{definition}

If  $S$ is any finite subset of $\Pi_k$ with $S(\cF)\subseteq S$, then the  argument of Lemma \ref{basic properties}\,(i) implies $H^1_{\cF} (k, A)$ is also equal to the kernel of the localisation  map 
\[ \lambda_S(\cF) \: 
 H^1 (\mathcal{O}_{k,S}, A)\to \bigoplus_{\q\in S} H^1_{ / \cF} (k_\q, A).\]

To  describe the link between the Selmer complex of a Nekov\'{a}\v{r} structure $\mathscr{F}$ on $A$ and the Selmer module of the Mazur--Rubin structure $h(\mathscr{F})$, we use, for each finite subset $S$ of $\Pi_k$, the canonical diagonal map
\begin{equation}\label{lambda 0 def} \lambda^0_S(\mathscr{F})\: H^0(k,A) \to \bigoplus_{\q \in S} \frac{H^0(k_\q,A)}{\im(H^0(\theta_{\mathscr{F},\q}))}.\end{equation}
(This is a natural analogue in degree zero of the map $\lambda_S( h(\mathscr{F}))$.)

\filbreak
\begin{lemma}\label{comparison selmer structures} For each Nekov\'{a}\v{r} structure $\mathscr{F}$ on $A$ the following claims are valid. 
\begin{itemize}
\item[(i)]  There exist canonical short exact sequences of $R$-modules  
\begin{cdiagram}[row sep=tiny, column sep=small]
0 \arrow{r} & {\bigoplus}_{\q \in S(\mathscr{F})}\ker(H^0(\theta_{\mathscr{F},\q})) \arrow{r} &  H^0_\mathscr{F}(k,A)\arrow{r} & \ker(\lambda^0_{S(\mathscr{F})}(\mathscr{F}))
\arrow{r} & 0\\
0\arrow{r} & H^1_{h(\mathscr{F})}(k,A)^\vee \arrow{r} & H^1_{\mathscr{F}}(k,A)^\vee \arrow{r} &  \ker(\lambda^0_{S(\mathscr{F})}(\mathscr{F})^\vee)\arrow{r} &  0.
\end{cdiagram}%
\item[(ii)] If $S$ and $S'$ are finite subsets of $\Pi_k$ with $S'\subseteq S$, then there exists a canonical exact sequence of $R$-modules 
\[ 0\to \ker(\lambda^0_{S\setminus S'}(\mathscr{F})^\vee)\to \ker(\lambda^0_{S}(\mathscr{F})^\vee) \xrightarrow{\alpha} \bigoplus_{\q \in S'} \left(\frac{H^0(k_\q,A)}{\im(H^0(\theta_{\mathscr{F},\q}))}\right)^\vee \to \mathrm{cok}(\lambda^0_{S\setminus S'}(\mathscr{F})^\vee) \to 0.\]
\end{itemize}
\end{lemma}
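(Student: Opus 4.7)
The plan is to derive both statements from the long exact cohomology sequence of the defining triangle (\ref{mapping fibre2})
\[ \RGamma_\mathscr{F}(k,A) \to \RGamma(\cO_{k,S(\mathscr{F})},A) \xrightarrow{\iota'} \bigoplus_{\q \in S(\mathscr{F})} \RGamma_{\!/\mathscr{F}}(k_\q, A) \to, \]
combined with the local triangles $\RGamma_\mathscr{F}(k_\q,A) \xrightarrow{\theta_{\mathscr{F},\q}} \RGamma(k_\q,A) \to \RGamma_{\!/\mathscr{F}}(k_\q,A)$. Taking cohomology of the latter, and using that both source and target are acyclic in degree $-1$, one obtains the identification $H^{-1}_{\!/\mathscr{F}}(k_\q,A) = \ker H^0(\theta_{\mathscr{F},\q})$ and, for each $i \geq 0$, a canonical extension
\[ 0 \to \coker(H^i(\theta_{\mathscr{F},\q})) \to H^i_{\!/\mathscr{F}}(k_\q,A) \to \ker H^{i+1}(\theta_{\mathscr{F},\q}) \to 0. \]
In particular, the composite $H^0(k_\q,A) \to H^0_{\!/\mathscr{F}}(k_\q,A)$ factors through $\coker(H^0(\theta_{\mathscr{F},\q}))$ and analogously for $H^1$, the map $H^1(k_\q,A) \to H^1_{\!/\mathscr{F}}(k_\q,A)$ has kernel $H^1_{h(\mathscr{F})}(k_\q,A) = \im H^1(\theta_{\mathscr{F},\q})$.

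For the first short exact sequence in (i), the relevant LES fragment
\[ 0 \to \bigoplus_\q \ker H^0(\theta_{\mathscr{F},\q}) \xrightarrow{\delta^{-1}} H^0_\mathscr{F}(k,A) \to H^0(k,A) \xrightarrow{H^0(\iota')} \bigoplus_\q H^0_{\!/\mathscr{F}}(k_\q,A) \]
uses the vanishing of $H^{-1}(\cO_{k,S(\mathscr{F})},A)$, which forces $\delta^{-1}$ to be injective. The factorization noted above identifies $\ker H^0(\iota')$ with $\ker(\lambda^0_{S(\mathscr{F})}(\mathscr{F}))$, giving the first sequence. Continuing one step gives the short exact sequence $0 \to \coker(H^0(\iota')) \to H^1_\mathscr{F}(k,A) \to \ker H^1(\iota') \to 0$, in which $\ker H^1(\iota') = H^1_{h(\mathscr{F})}(k,A)$ by the kernel computation above and $\coker(H^0(\iota'))$ may be identified, through a diagram chase using the extension structure of $\bigoplus_\q H^0_{\!/\mathscr{F}}(k_\q,A)$, in such a way that its Matlis dual is $\ker(\lambda^0_{S(\mathscr{F})}(\mathscr{F})^\vee)$. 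Matlis-dualizing the resulting sequence (using the exactness of Matlis duality and the identities $\ker(f^\vee) = \coker(f)^\vee$ and $\coker(f^\vee) = \ker(f)^\vee$ for maps of finitely presented $R$-modules) then delivers the second sequence in (i).

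For part (ii), I will consider the two-term complexes (in degrees $0$ and $1$)
\[ X_T : H^0(k,A) \xrightarrow{\lambda^0_T(\mathscr{F})} \bigoplus_{\q \in T} H^0(k_\q,A)/\im H^0(\theta_{\mathscr{F},\q}), \qquad T \in \{S, \, S \setminus S'\}, \]
with the natural projection $X_S \to X_{S \setminus S'}$ induced by forgetting the $\q \in S'$ components. Its kernel is the complex $Y[-1]$, where $Y := \bigoplus_{\q \in S'} H^0(k_\q,A)/\im H^0(\theta_{\mathscr{F},\q})$ is placed in degree $1$, so the short exact sequence of complexes $0 \to Y[-1] \to X_S \to X_{S \setminus S'} \to 0$ gives a long exact cohomology sequence relating the kernels and cokernels of $\lambda^0_S(\mathscr{F})$ and $\lambda^0_{S \setminus S'}(\mathscr{F})$ through $Y$. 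Matlis-dualizing this and invoking the same identities $\ker(f^\vee) = \coker(f)^\vee$ and $\coker(f^\vee) = \ker(f)^\vee$ yields the desired four-term exact sequence.

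The main technical obstacle will lie in the precise identification of $\coker(H^0(\iota'))$ in (i): one must unwind the two-step filtration on $\bigoplus_\q H^0_{\!/\mathscr{F}}(k_\q,A)$ inherited from the local extensions and combine this with the image of the localisation map, exploiting the exactness of Matlis duality on finitely presented modules to obtain the clean sub-quotient identification asserted in the lemma. All other steps reduce to standard diagram chases in $D(R)$ combined with the well-understood behaviour of long exact sequences of mapping cones.
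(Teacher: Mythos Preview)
Your approach is essentially the paper's. For (i) the paper argues via exactly the same triangle (cast as the refinement $\mathscr{F}\le \mathscr{F}_{\rm rel}$ in Lemma~\ref{basic properties}(ii)), extracts the key sequence $0\to\mathrm{cok}(\lambda^0_{S(\mathscr{F})}(\mathscr{F}))\to H^1_\mathscr{F}(k,A)\to H^1_{h(\mathscr{F})}(k,A)\to 0$ (this is (\ref{nekovar mr compare})) and then Matlis-dualizes; for (ii) the paper applies the Snake Lemma directly to the Matlis-dualized localization diagram, which is exactly the dualized long exact sequence of your short exact sequence of two-term complexes $0\to Y[-1]\to X_S\to X_{S\setminus S'}\to 0$.
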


\begin{proof} To prove (i) we abbreviate $S(\mathscr{F})$ to $S$. We then note that $\mathscr{F}$ refines the relaxed structure $\mathscr{F}_{\mathrm{rel}}$ defined in Example \ref{nss}\,(i), and hence that the long exact cohomology sequence of the exact triangle of Lemma \ref{basic properties}\,(ii) for the pair $(\mathscr{F},\mathscr{F}_{\mathrm{rel}})$ gives an exact commutative diagram 
\begin{cdiagram}[column sep=small]
H^{i-1}(\cO_{k,S},A)  \arrow{r} & \bigoplus H^{i-1}(C_\q) \arrow{r} & H^i_\mathscr{F}(k,A) \ar[r] & H^{i}(\cO_{k,S},A) \ar[r] & \bigoplus H^{i}(C_\q) \\
H^{i-1}(\cO_{k,S},A) \arrow[equals]{u} \ar["\lambda^{i-1}"]{r} & \bigoplus\frac{H^{i-1}(k_\q,A)}{\im(H^{i-1}(\theta_{\mathscr{F},\q}))} \ar["(\beta^{i-1}_\q)_\q"]{u} & & H^{i}(\cO_{k,S},A) \arrow[equals]{u} \ar["\lambda^i"]{r}  & \bigoplus \frac{H^{i}(k_\q,A)}{\im(H^i(\theta_{\mathscr{F},\q}))} \ar["(\beta^i_\q)_\q"]{u}
\end{cdiagram}%
Here all direct sums run over $\q \in S$ and for such $\q$ we set $C_\q \coloneqq \RGamma_{\mathscr{F}_{\mathrm{rel}}/\mathscr{F}}(k_\q,A) = \RGamma_{\!/\mathscr{F}}(k_\q,A)$ and write $\beta^i_\q$ for the injective map induced by the long exact cohomology sequence of the tautological exact triangle 
$\RGamma_{\mathscr{F}}(k_\q,A)\to \RGamma(k_\q,A) \to C_\q\to \cdot$. In addition, $\lambda^i$ denotes the natural localisation map so that $\lambda^0 = \lambda^0_S(\mathscr{F})$ and $\lambda^1 = \lambda_S(h(\mathscr{F}))$. 

Now $H^{-1}(\cO_{k,S},A) = (0)$  and, for each $\q$, also $H^{-1}(C_\q) = \ker(H^0(\theta_{\mathscr{F},\q}))$ since 
$H^{-1}(k_\q,A) = (0)$. Given these facts, the exactness of the above diagram with $i=0$, respectively $i=1$, directly gives the first exact sequence in (i), respectively a short exact sequence 
\begin{equation}\label{nekovar mr compare} 0 \to \mathrm{cok}(\lambda^0_S(\mathscr{F})) \to H^1_\mathscr{F}(k,A) \to H^1_{h(\mathscr{F})}(k,A) \to 0.\end{equation}
The second exact sequence in (i) is then obtained as the Matlis dual of the latter sequence. 

The exact sequence in (ii) is directly obtained by applying the Snake Lemma to the obvious exact commutative diagram
\begin{equation*}\xymatrix@R=1em{ 
\ker(\lambda^0_{S\setminus S'}(\mathscr{F})^\vee) \ar@{^{(}->}[r] \ar@{^{(}->}[d] & \left(\bigoplus_{\q \in S\setminus S'} \frac{H^0(k_\q,A)}{\im(H^0(\theta_{\mathscr{F},\q}))}\right)^\vee  \ar@{->>}[rr]^{\hskip0.3truein\lambda^0_{S\setminus S'}(\mathscr{F})^\vee} \ar@{^{(}->}[d] & &\im(\lambda^0_{S\setminus S'}(\mathscr{F})^\vee)  \ar@{^{(}->}[d]\\
\ker(\lambda^0_{S}(\mathscr{F})^\vee) \ar@{^{(}->}[r] & \left(\bigoplus_{\q \in S} \frac{H^0(k_\q,A)}{\im(H^0(\theta_{\mathscr{F},\q}))}\right)^\vee  \ar@{->>}[rr]^{\hskip0.4truein\lambda^0_{S}(\mathscr{F})^\vee} \ar@{->>}[d] & &H^0(k,A)^\vee \\
&\left(\bigoplus_{\q \in S'} \frac{H^0(k_\q,A)}{\im(H^0(\theta_{\mathscr{F},\q}))}\right)^\vee .
& & \hskip1.95truein \qed \qedhere}
\end{equation*} 
\end{proof}

Let  $\cF$ be a Mazur--Rubin structure on $A$ and, for each $n$, write $\cF_n$ for the induced structure $\cF_{A_n}$ on $A_n = A\otimes_R(R/\a_n)$. Then the explicit definition of induced structure (in Example~\ref{ss def}\,(iv)) implies  the existence of a natural projection map $H^1_{\cF_{n+1}}(k, A_{n+1}) \to H^1_{\cF_n}(k, A_n)$. In a similar way, there is a natural diagonal projection map from $H^1_{\cF} (k, A)$ to the corresponding inverse limit ${\varprojlim}_{n \in \N} H^1_{\cF_n} (k, A_n)$. The following result is the analogue of Proposition \ref{Selmer complex control thm} in this setting (and does not require $A$ to be a free $R$-module). 

\begin{lem} \label{limit of selmer group lemma}
  Assume $A$ is finitely generated as an $R$-module. Then, for any Mazur--Rubin structure $\cF$ on $A$, the natural map $H^1_{\cF} (k, A) \to {\varprojlim}_{n \in \N} H^1_{\cF_n} (k, A_n)$ is bijective.
\end{lem}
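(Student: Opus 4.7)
The plan is to exploit the exact commutative diagram whose rows are the defining left-exact sequences
\[
0 \to H^1_\cF(k,A) \to H^1(\cO_{k,S(\cF)}, A) \xrightarrow{\lambda(\cF)} \bigoplus_{\q \in S(\cF)} H^1_{/\cF}(k_\q, A)
\]
and its analogue obtained by passing to the inverse limit (over $n$) of the corresponding sequences for each $(A_n, \cF_n)$, with columns $\alpha$, $\gamma$, $\beta$ being the natural maps induced by the quotients $A \to A_n$. A standard diagram chase shows that $\alpha$ is bijective provided $\gamma$ is bijective and $\beta$ is injective.

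Bijectivity of $\gamma$ follows from the Milnor short exact sequence together with the vanishing of $\varprojlim_n^{(1)} H^0(\cO_{k,S(\cF)}, A_n)$, which holds because each $A_n$ is a finite module over the finite ring $R_n$, so the inverse system $\{H^0(\cO_{k,S(\cF)},A_n)\}_n$ consists of finite groups and trivially satisfies the Mittag--Leffler condition. For the injectivity of $\beta$, it is enough to treat each summand $\beta_\q \: H^1_{/\cF}(k_\q,A) \to \varprojlim_n H^1_{/\cF_n}(k_\q,A_n)$ separately. Writing $P_{\q,n} \coloneqq \ker(H^1(k_\q,A) \to H^1(k_\q,A_n))$, one checks directly from the definition of $\cF_n$ that the kernel of $\beta_\q$ identifies with the quotient $(\bigcap_n (H^1_\cF(k_\q,A) + P_{\q,n}))/H^1_\cF(k_\q,A)$; so injectivity of $\beta_\q$ is equivalent to the topological statement that $H^1_\cF(k_\q,A)$ is closed in $H^1(k_\q,A)$ with respect to the inverse-limit topology having $\{P_{\q,n}\}_n$ as a base of open neighbourhoods of zero.

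The core step is therefore to establish this closedness, and the plan here is as follows. First I would show that $H^1(k_\q,A)$ is a finitely generated $R$-module by taking a finite presentation $R^m \to R^n \to A \to 0$, passing to the long exact cohomology sequence, and applying Lemma \ref{flach result}\,(ii)--(iii) to $A = R$ to deduce that each $H^i(k_\q,R)$ is finitely generated. Next I would compare the inverse-limit topology on $H^1(k_\q,A)$ with its $\m$-adic topology. Both topologies are profinite and hence compact Hausdorff: the first because each $H^1(k_\q,A_n)$ is finite, the second because $R$ has finite residue field, so each $R/\m^n$ is finite and $H^1(k_\q,A)$ is finitely generated. Since $\a_n$ annihilates $A_n$, one has $\a_n H^1(k_\q,A) \subseteq P_{\q,n}$, and because each $\a_n$ is $\m$-primary (as $R/\a_n$ is Artinian) there exists $r_n \in \N$ with $\m^{r_n} \subseteq \a_n$; hence $\m^{r_n} H^1(k_\q,A) \subseteq P_{\q,n}$, proving that the $\m$-adic topology refines the inverse-limit topology. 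Two comparable compact Hausdorff topologies must coincide, so the two topologies agree.

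The required closedness is then a consequence of standard commutative algebra: the quotient $H^1(k_\q,A)/H^1_\cF(k_\q,A)$ is a finitely generated module over the complete Noetherian local ring $R$ and so, by the Artin--Rees lemma and Krull's intersection theorem, is Hausdorff in its $\m$-adic topology. This gives $H^1_\cF(k_\q,A) = \bigcap_n (H^1_\cF(k_\q,A) + \m^n H^1(k_\q,A))$, which under the topological equality just established yields $H^1_\cF(k_\q,A) = \bigcap_n (H^1_\cF(k_\q,A) + P_{\q,n})$, as required. The main obstacle is thus the identification of the $\m$-adic and inverse-limit topologies on $H^1(k_\q,A)$; once this comparison is in hand, the rest of the proof reduces to a diagram chase together with standard commutative algebra.
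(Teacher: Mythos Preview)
Your argument is correct, but it takes a substantially longer route than the paper's. The paper uses the same global diagram with vertical maps $\alpha,\gamma,\beta$, but for the injectivity of each $\beta_\q$ it observes that, \emph{by the very definition} of the induced Mazur--Rubin structure $\cF_n$ on the quotient $A_n$ (Example~\ref{ss def}\,(iv)), one has $H^1_{\cF_n}(k_\q,A_n)=\im\bigl(H^1_\cF(k_\q,A)\to H^1(k_\q,A_n)\bigr)$, so each transition map $H^1_\cF(k_\q,A)\to H^1_{\cF_n}(k_\q,A_n)$ is surjective. Since the inverse system consists of finite groups, $\gamma_\q\colon H^1_\cF(k_\q,A)\to\varprojlim_n H^1_{\cF_n}(k_\q,A_n)$ is then surjective, and one obtains $\beta_\q$ bijective from a Snake Lemma applied to the local short exact sequences $0\to H^1_{\cF_n}(k_\q,A_n)\to H^1(k_\q,A_n)\to H^1_{/\cF_n}(k_\q,A_n)\to 0$ (whose limit is still short exact, again by finiteness).

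Your topological approach---identifying $\ker\beta_\q$ with a closure defect, establishing the finite generation of $H^1(k_\q,A)$, comparing the $\m$-adic and inverse-limit topologies via a compact--Hausdorff argument, and then invoking Artin--Rees/Krull---is valid, but all of this machinery is bypassed by the one-line observation that the local Selmer conditions at level $n$ are \emph{defined} to be images from level~$\infty$. The paper's route is thus considerably more elementary and also yields the stronger conclusion that each $\beta_\q$ is bijective rather than merely injective.
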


\begin{proof} 
Consider the exact commutative diagram
\begin{cdiagram}[column sep=small, row sep=small]
    0 \arrow{r} & H^1_{\cF} (k, A) \arrow{r} \arrow{d} & H^1 (k, A) \arrow{d}{\simeq} \arrow{r} & {\bigoplus}_{v\in \Pi_k} H^1_{\!/\cF} (k_v, A) \arrow{d}{\alpha} \\ 
    0 \arrow{r} & {\varprojlim}_n H^1_{\cF_n} (k, A_n) \arrow{r} & {\varprojlim}_n H^1 (k, A_n) \arrow{r} &  {\varprojlim}_n {\bigoplus}_{v\in \Pi_k} H^1_{\!/\cF_n}(k_v, A_n).& 
\end{cdiagram}%
Here the middle vertical map is bijective by \cite[Prop.\@ B.2.3]{Rubin-euler} since each $A_n$ is finite. It therefore suffices to prove that $\alpha$ is injective and hence, since  $\alpha$ factors as a natural composite 
\[\bigoplus_{v \in \Pi_k} H^1_{\!/\cF} (k_v, A) \xrightarrow{(\beta_v)_v} \bigoplus_{v \in \Pi_k} {\varprojlim}_n  H^1_{\!/\cF_{n}} (k_v, A_n)\hookrightarrow {\varprojlim}_n \bigoplus_{v \in \Pi_k} H^1_{\!/\cF_{n}} (k_v, A_n),\]
 it suffices to show  that each map $\beta_v$ is bijective. To do this, we consider the natural exact commutative diagram
\begin{cdiagram}[column sep=small, row sep=small]
0 \arrow{r} & H^1_\cF (k_v, A) \arrow{d}{\gamma_v} \arrow{r} & H^1 (k_v, A) \arrow{r} \arrow{d}{\simeq} & 
H^1_{\!/\cF} (k_v, A) \arrow{r} \arrow{d}{\beta_v} & 0 \\ 
0 \arrow{r} & {\varprojlim}_n H^1_{\cF_{n}} (k_v, A_n) \arrow{r} & {\varprojlim}_n H^1 (k_v, A_n) \arrow{r} & {\varprojlim}_n H^1_{\!/\cF_{n}} (k_v, A_n)  \arrow{r} & 0.
\end{cdiagram}%
Here the second vertical map is again bijective because each $A_n$ is finite and the exactness of the bottom row is obtained by applying the functor ${\varprojlim}_n ( - )$ to the underlying (tautological) exact sequence of finite groups. The bijectivity of $\beta_v$ will therefore follow as a consequence of the Snake Lemma provided that $\gamma_v$ is surjective. This is in turn true  since each map $H^1_\cF (k_v, A) \to H^1_{\cF_n} (k_v, A_n)$ is surjective (by definition of $\cF_{n}$) and inverse limits are exact on the category of finite groups. 
\end{proof}

\subsection{Duality results}\label{duality realtions section}

If $A$ is finitely generated over $R$, then, for each $\q\in \Pi_k$, local Tate duality induces a canonical isomorphism of $R$-modules $H^1(k_\q, A) \simeq H^1(k_\q, A^\vee(1))^\vee$. Following 
 \cite[\S1.3]{MazurRubin04}, for any Mazur--Rubin structure $\cF$ on $A$, one uses these isomorphisms to define a `dual' Mazur--Rubin structure $\cF^\vee$ on $A^\vee(1)$ by setting $S(\cF^\vee)\coloneqq S(\cF)$ and, for each $\q\in \Pi_k$,   
\[ H^1_{\cF^\vee}(k_\q, A^\vee(1)) \coloneqq \ker \big (H^1(k_\q, A^\vee(1)) \simeq H^1(k_\q, A)^\vee\to H^1_{\cF}(k_\q,A)^\vee \big ) \cong H^1_{\!/\cF}(k_\q, A)^\vee . \]

In this section, we discuss the natural analogues of this construction for Nekov\'{a}\v{r} structures.

\subsubsection{Dual Nekov\'{a}\v{r}--Selmer structures}

The following terminology will be convenient. 

\begin{definition} A Nekov\'{a}\v{r} structure $\mathscr{F}$ on $A$ is `$\infty$-relaxed', respectively `$\infty$-strict', if for every $\q\in \Pi_k^\infty$ one has  $\mathrm{R}\Gamma_{\mathscr{F}}(k_\q,A) = \mathrm{R}\Gamma(k_\q,A)$ and $\theta_{\mathscr{F},\q}$ is the identity morphism, respectively $\mathrm{R}\Gamma_{\mathscr{F}}(k_\q,A)$ is the zero complex.\end{definition}

\begin{example} The structures $\mathscr{F}_{\mathrm{rel}}(A,S)_\Sigma$, $\mathscr{F}_{\mathrm{rel}}(A,S)^\Sigma$, $\mathscr{F}_{\mathrm{can}}(A,S)_\Sigma$ and $\mathscr{F}_{\mathrm{can}}(A,S)_\Sigma$ from  Example \ref{nss} are $\infty$-relaxed, whilst $\mathscr{F}_{\mathrm{str}}(A,S)_\Sigma$ and $\mathscr{F}_{\mathrm{str}}(A,S)^\Sigma$ are $\infty$-strict.\end{example}

We now introduce the notions of dual Nekov\'{a}\v{r} structure that are useful in our theory. 

\begin{definition}\label{dualisable structures} 
For any Nekov\'{a}\v{r} structure $\mathscr{F}$ on a finite-rank free $R$-module $A$, we specify  `dual' $\infty$-strict Nekov\'{a}\v{r} structures $\mathscr{F}^\ast$ on $A^\ast(1)$ and $\mathscr{F}^\vee$ on $A^\vee (1)$ as follows. 
\begin{liste}
 \item  $S(\mathscr{F}^\ast) = S(\mathscr{F})$; for $\q \in \Pi_k^\infty$ one has $\mathrm{R}\Gamma_{\mathscr{F}^\ast}(k_\q,A^\ast(1)) \coloneqq 0$; for $\q \in S(\mathscr{F})\setminus \Pi_k^\infty$, we set 
\[ \mathrm{R}\Gamma_{\mathscr{F}^\ast}(k_\q,A^\ast(1)) \coloneqq \RHom_R(\mathrm{R}\Gamma_{\!/\mathscr{F}}(k_\q,A),R[-2])\]
and take $\theta_{\mathscr{F}^\ast,\q}$ to be the composite morphism 
\[ \mathrm{R}\Gamma_{\mathscr{F}^\ast}(k_\q,A) \xrightarrow{\phi_\q^\ast[-2]} \RHom_R(\mathrm{R}\Gamma(k_\q,A),R[-2]) \cong \mathrm{R}\Gamma(k_\q,A^\ast(1))\]
where $\phi_\q$ is the natural morphism $\mathrm{R}\Gamma(k_\q,A) \to \mathrm{R}\Gamma_{\!/\mathscr{F}}(k_\q,A)$ and the isomorphism is the first isomorphism in Proposition \ref{local Tate duality thm}\,(i). 
\item $S(\mathscr{F}^\vee) = S(\mathscr{F})$;  for $\q \in \Pi_k^\infty$ one has $\mathrm{R}\Gamma_{\mathscr{F}^\ast}(k_\q,A^\vee (1)) \coloneqq 0$; for $\q \in S(\mathscr{F})\setminus \Pi_k^\infty$, we set 
\[ \mathrm{R}\Gamma_{\mathscr{F}^\vee}(k_\q,A^\vee (1)) \coloneqq \RHom_R(\mathrm{R}\Gamma_{\!/\mathscr{F}}(k_\q,A),E_R (\mathbb{k}) [-2])\]
and take $\theta_{\mathscr{F}^\vee,\q}$ to be the composite morphism 
\[ \mathrm{R}\Gamma_{\mathscr{F}^\vee}(k_\q,A) \xrightarrow{\phi_\q^\vee [-2]} \RHom_R(\mathrm{R}\Gamma(k_\q,A),E_R (\mathbb{k}) [-2]) \cong \mathrm{R}\Gamma(k_\q,A^\vee(1))\]
where $\phi_\q$ is the natural morphism $\mathrm{R}\Gamma(k_\q,A) \to \mathrm{R}\Gamma_{\!/\mathscr{F}}(k_\q,A)$ and the isomorphism is the second isomorphism in Proposition \ref{local Tate duality thm}\,(i). 
\end{liste}
\end{definition} 

\begin{example}\label{dual comments} For any finite subset $\Sigma$ of $\Pi_k$ with $\Sigma \cap S(\mathscr{F}) = \emptyset$,  one has $(\mathscr{F}_{\rm{rel}}(A,S)_{\Sigma})^\ast = \mathscr{F}_{\rm{str}}(A^\ast(1),S)^\Sigma$ and $(\mathscr{F}_{\rm{rel}}(A,S)_{\Sigma})^\vee = \mathscr{F}_{\rm{str}}(A^\vee (1),S)^\Sigma$. 
\end{example} 

The following result establishes some important compatibilities between the notions of dual and induced Nekov\'a\v{r} and Mazur--Rubin structures. 

\begin{lem} \label{some basic duality properties of Nekovar structures}
For every Nekov\'a\v{r} structure $\mathscr{F}$ on a finite-rank free $R$-module $A$ the following claims are valid.
\begin{romanliste}
\item For a morphism $R \to R'$ of rings satisfying (\ref{ring condition}), the isomorphism $(A \otimes_R R')^\ast (1) \cong A^\ast (1) \otimes_R R'$ induces an equality $(\mathscr{F}_{A \otimes_R R'})^\ast = (\mathscr{F}^\ast)_{A^\ast (1) \otimes_R R'}$ of Nekov\'a\v{r} structures.
    \item For every $n \leq m$, the isomorphism $A_n^\ast (1) \cong A_m^\vee [\a_n]$ induces an equality $(\mathscr{F}_{A_n})^\ast = (\mathscr{F}^\ast_{A_m})_{A_m^\vee [\a_n] (1)}$ of Nekov\'a\v{r} structures.
    \item One has an equality $h ( \mathscr{F}^\vee) = h (\mathscr{F})^\vee$ of Mazur--Rubin structures  on $A^\vee(1)$.
\end{romanliste}
\end{lem}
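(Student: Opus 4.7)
The plan is to verify each of the three equalities place-by-place, noting that in all three the underlying set $S(\mathscr{F})$ is preserved. For $\q\in\Pi_k^\infty$ both sides vanish, since $\mathscr{F}^\ast$ and $\mathscr{F}^\vee$ are $\infty$-strict by construction and the induced-structure operations inherit this property; for $\q\notin S(\mathscr{F})$ both sides reduce to the unramified local condition $\RGamma_f$ (using also that $H^1_f$ is its own annihilator under local Tate duality at unramified places). It therefore suffices to verify agreement at $\q\in S(\mathscr{F})\setminus\Pi_k^\infty$.

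For claim (i), at such a place the base-change structure $\mathscr{F}_{A\otimes_R R'}$ has by definition $\RGamma_{\mathscr{F}_{A\otimes_R R'}}(k_\q,A\otimes_R R') \cong \RGamma_\mathscr{F}(k_\q,A)\otimes_R^{\mathbb{L}} R'$, with associated morphism obtained by tensoring $\theta_{\mathscr{F},\q}$. Since $(-)\otimes_R^{\mathbb{L}} R'$ is exact it preserves mapping cones, yielding a natural isomorphism $\RGamma_{/\mathscr{F}_{A\otimes_R R'}}(k_\q,A\otimes_R R') \cong \RGamma_{/\mathscr{F}}(k_\q,A)\otimes_R^{\mathbb{L}} R'$. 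Combining this with the standard isomorphism $\RHom_R(C,R)\otimes_R^{\mathbb{L}} R'\cong \RHom_{R'}(C\otimes_R^{\mathbb{L}} R',R')$ for a perfect complex $C$ of $R$-modules identifies the defining complex of $(\mathscr{F}_{A\otimes_R R'})^\ast$ with that of $(\mathscr{F}^\ast)_{A^\ast(1)\otimes_R R'}$ at $\q$. The matching of the morphisms $\theta$ is then read off from the base-change compatibility of the local duality isomorphism in Proposition \ref{local Tate duality thm}(i), which follows from Lemma \ref{flach result}(vi).

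For claim (ii), the key input is Lemma \ref{inj env lem}. A choice of embedding $R_n\hookrightarrow R_{n+1}$ as in part (ii) of that lemma identifies $R_n$ with $E_R(\mathbb{k})[\a_n]$ and, iterating part (iii) from $n$ up to $m$, induces a canonical chain of isomorphisms
\[
A_n^\ast \xrightarrow{\simeq} A_m^\vee[\a_n] \hookrightarrow A_m^\vee \xrightarrow{\simeq} A_m^\ast
\]
compatible with the inclusion $A_m^\vee[\a_n]\subseteq A_m^\vee$. Under these identifications, the $R_n$-valued local duality used to define $(\mathscr{F}_{A_n})^\ast$ is transformed into the Matlis-valued local duality (the second isomorphism of Proposition \ref{local Tate duality thm}(i)) that underlies the restriction of $(\mathscr{F}^\ast_{A_m})$ to the submodule $A_m^\vee[\a_n](1)\subseteq A_m^\ast(1)$. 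Together with the corresponding transport of $\theta_{\mathscr{F}^\ast,\q}$, this yields the claimed equality of structures.

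For claim (iii), the archimedean case is immediate: under (\ref{p=2 condition}) one has $H^1(k_\q,A)=0$ (either $k_\q=\CC$, in which case $G_{k_\q}$ is trivial, or $p$ is odd and $|G_\RR|=2$ is a unit in $R$), so both sides are zero. For $\q\in S(\mathscr{F})\setminus\Pi_k^\infty$, the long exact cohomology sequence of the triangle defining $\RGamma_{/\mathscr{F}}(k_\q,A)$ gives $H^1_{/\mathscr{F}}(k_\q,A) \cong H^1(k_\q,A)/H^1_{h(\mathscr{F})}(k_\q,A)$. Injectivity of $E_R(\mathbb{k})$ then implies $H^1(\RHom_R(\RGamma_{/\mathscr{F}}(k_\q,A),E_R(\mathbb{k})[-2])) \cong H^1_{/\mathscr{F}}(k_\q,A)^\vee$, and postcomposing with the local duality isomorphism of Proposition \ref{local Tate duality thm}(i) realises this as the annihilator of $H^1_{h(\mathscr{F})}(k_\q,A)$ inside $H^1(k_\q,A^\vee(1))$, which is by definition $H^1_{h(\mathscr{F})^\vee}(k_\q,A^\vee(1))$. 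The main obstacle will be claim (ii): one has to check that the above chain of identifications is compatible with the mapping-cone morphisms $\theta_{\mathscr{F}^\ast,\q}$ and independent of the chosen embeddings $R_n\hookrightarrow R_{n+1}$, which ultimately reduces to the explicit compatibility established in the proof of Lemma \ref{inj env lem}(iii).
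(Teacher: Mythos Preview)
Your arguments for (i) and (iii) are correct and essentially match the paper's approach. For (ii), you take a different and, as you yourself acknowledge, incomplete route.

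The paper resolves (ii) via a clean functorial argument that sidesteps the compatibility check you flag. It introduces two functors
\[
F_1(-) = \RHom_{R_n}\bigl(- \otimes^{\mathbb{L}}_R R_n,\, R_n\bigr), \qquad F_2(-) = \RHom_R\bigl(R_n,\, \RHom_R(-, E_R(\mathbb{k}))\bigr),
\]
and observes that derived Tensor--Hom adjunction (\cite[Th.~10.8.7]{Weibel-homologicalAlgebra}) gives a natural isomorphism $F_1 \cong F_2$. Applied to $\RGamma_{/\mathscr{F}}(k_\q, A_m)$ together with $\theta_{\mathscr{F},\q}$, this yields a commutative square. One then composes with the two forms of local Tate duality from Proposition~\ref{local Tate duality thm}(i): the $R_n$-linear version (combined with Lemma~\ref{flach result}(vi)) for $F_1$, and the Matlis-dual version followed by $\RHom_R(R_n,-)$ (via Lemma~\ref{flach result}(vii)) for $F_2$. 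These recover precisely $\theta_{(\mathscr{F}_{A_n})^\ast,\q}$ and $\theta_{(\mathscr{F}^\vee)_{A_m^\vee[\a_n](1)},\q}$ respectively. The one remaining point --- that the two local-duality isomorphisms intertwine under $F_1 \cong F_2$ --- holds because both are induced by cup products, and the paper cites \cite[proof of Prop.~5.2.4]{SelmerComplexes}.

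Your approach via iterating Lemma~\ref{inj env lem}(iii) is not wrong in principle, but the obstacle you identify does not straightforwardly reduce to that lemma: what is needed is compatibility of your chain of identifications with the cup-product pairings underlying local duality, not merely module-level isomorphisms, and this is genuinely more than Lemma~\ref{inj env lem}(iii) provides. The paper's functorial packaging absorbs exactly this issue into a single naturality statement and thereby completes the step you leave open.
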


\begin{proof}
The proofs of (i) and (ii) proceed along very similar lines and so we restrict ourselves to the proof of (ii), leaving details for  (i) to the attentive reader. For this, we define functors
\[ F_1 (-)  \coloneqq \RHom_{R_n} ( - \otimes^\mathbb{L}_R R_n, R_n) \quad\text{and}\quad F_2 (-)  \coloneqq \RHom_R (R_n, \RHom_R ( -, E_R (\mathbb{k}))).\]
Then it follows from derived Tensor-Hom adjunction \cite[Th.\@ 10.8.7]{Weibel-homologicalAlgebra} that there is natural isomorphism of functors  $F_1 \cong F_2$, hence for every $\q \in \Pi_k$ we have a commutative diagram
\begin{cdiagram}
    F_1 ( \RGamma_{\!/ \mathscr{F}} (k_\q, A_m)) \arrow{rr}{F_1 (\theta_{\mathscr{F}, \q})} \arrow{d}{\simeq} & & F_1 ( \RGamma (k_\q, A_m)) \arrow{d}{\simeq} \\
    F_2 ( \RGamma_{\!/ \mathscr{F}} (k_\q, A_m)) \arrow{rr}{F_1 (\theta_{\mathscr{F}, \q})} & & F_2 ( \RGamma (k_\q, A_m))
\end{cdiagram}%
In addition, 
we have an isomorphism (using Lemma \ref{flach result}\,(vi) for the first and Theorem \ref{local Tate duality thm}\,(i) for the second isomorphism)
\[
t_1 \: F_1 ( \RGamma (k_\q, A_m)) \xrightarrow{\simeq} \RHom_{R_n} ( \RGamma (k_\q, A_n), R_n) \xrightarrow{\simeq} \RGamma (k_\q, A_n^\vee (1))
\]
and also an isomorphism (using Theorem \ref{local Tate duality thm}\,(i) for the first and Lemma \ref{flach result}\,(vii) for the second isomorphism)
\[
t_2 \: F_2 ( \RGamma (k_\q, A_m)) \xrightarrow{\simeq} \RHom_R (R_n, \RGamma (k_\q, A_m^\vee (1))) \xrightarrow{\simeq} \RGamma (k_\q, A_n^\vee (1)).
\]
By definition, one then has $\theta_{(\mathscr{F}_{A_n})^\ast, \q} = t_1 \circ F_1 ( \theta_{\mathscr{F}, \q})$ and $\theta_{(\mathscr{F}^\vee)_{A_m^\vee [\a_n] (1)}, \q} = t_2 \circ F_2 ( \theta_{\mathscr{F}, \q})$. \\
Recall that the isomorphisms of local Tate duality in Theorem \ref{local Tate duality thm}\,(i) are induced by cup products and are therefore functorial. 
It follows that we have a commutative diagram (cf.\@ \cite[proof of Prop.\@ 5.2.4]{SelmerComplexes})
\begin{cdiagram}
    F_1 ( \RGamma (k_\q, A_m)) \arrow{d}{\simeq} \arrow{r}{t_1} & \RGamma (k_\q, A_n^\vee (1)) \arrow[equals]{d} \\ 
    F_2 ( \RGamma (k_\q, A_m))  \arrow{r}{t_2} & \RGamma (k_\q, A_n^\vee (1)),
\end{cdiagram}%
which together with the previous discussion proves the claimed equality $(\mathscr{F}_{A_n})^\ast = (\mathscr{F}^\vee)_{A_m^\vee [\a_n] (1)}$ of Nekov\'a\v{r} structures.\\
To prove (iii), we recall that, for $\q \in \Pi_k$, the group $H^1_{h (\mathscr{F})} (k_\q, A^\vee (1))$ is defined to be the image of $H^1 ( \theta_{\mathscr{F}^\vee, \q})$. Now, there is the commutative diagram
    \begin{cdiagram}
        \RGamma_{\mathscr{F}^\vee} (k_\q, A^\vee (1)) \arrow[equals]{r} \arrow{d}{\theta_{\mathscr{F}^\ast, \q}} & \RHom_R ( \RGamma_{\!/ \mathscr{F}} (k_\q, A^\ast (1)), E_R (\mathbb{k})) [-2] \arrow{d}{ \theta_{\mathscr{F}, \q}^\vee} \\ 
        \RGamma (k_q, A^\vee (1)) & \RHom_R ( \RGamma (k_q, A), E_R (\mathbb{k})) [-2] \arrow{l}[above]{\simeq}.
    \end{cdiagram}%
    Here the equality in the upper row  follows directly from the definition of $\mathscr{F}^\vee$, and the lower isomorphism is the second in Proposition \ref{local Tate duality thm}\,(i). Then,  since $E_R (\mathbb{k})$ is self-injective so that the functor $(-)^\vee = \Hom_R ( - , E_R (\mathbb{k}))$ is exact (and hence commutes with cohomology), upon taking cohomology one obtains a commutative diagram
    \begin{cdiagram}
        H^1_{\mathscr{F}^\vee} (k_\q, A^\vee (1))  \arrow[equals]{r} \arrow{d}{H^1(\theta_{\mathscr{F}, \q})} & H^1_{\!/ \mathscr{F}^\vee} (k_\q, A^\vee (1))^\vee \arrow{d}{H^1 ( \theta_{\mathscr{F}, \q})^\vee} \\ 
        H^1 (k_\q, A^\vee (1)) & H^1 (k_\q, A)^\vee \arrow{l}[above]{\simeq},
    \end{cdiagram}%
    where the lower isomorphism is induced by local Tate duality. We deduce that
    \begin{align*}
        H^1_{h (\mathscr{F})} (k_\q, A^\vee (1)) & \coloneqq \im ( H^1 ( \theta_{\mathscr{F}^\vee, \q})) \\ 
        & = \im \big (  H^1_{\!/ \mathscr{F}} (k_\q, A^\vee (1))^\ast \xrightarrow{H^1 ( \theta_{\mathscr{F}, \q})^\vee} H^1 (k_\q, A)^\vee  \cong H^1 (k_\q, A^\vee (1)) \big) \\ 
        & \cong \ker \big ( \im (\theta_{\mathscr{F}, \q})^\vee \to H^1 (k_\q, A)^\vee  \cong H^1 (k_\q, A^\vee (1)) \big) \\ 
        & = \ker \big ( H^1_{h (\mathscr{F})} (k_\q, A)^\vee \to H^1 (k_\q, A)^\vee  \cong H^1 (k_\q, A^\vee (1)) \big) \\ 
        & \eqqcolon H^1_{h (\mathscr{F})^\vee} (k_\q, A^\vee (1)),
    \end{align*}
    where the isomorphism is obtained by applying the exact functor $(-)^\vee$ to the exact sequence
    \[
    H^1_{\mathscr{F}} (k_\q, A) \xrightarrow{H^1 (\theta_{\mathscr{F}, \q})} 
    H^1 (k_\q, A) \to H^1_{\!/ \mathscr{F}} (k_\q, A).
    \]
    This proves the claimed equality $h ( \mathscr{F}^\vee) = h (\mathscr{F})^\vee$ of Mazur--Rubin structures.
\end{proof}

\begin{remark}\label{dual limit selmer complex} Assume $A$ (and hence also $A^\ast(1)$) is a finite-rank free $R$-module and that $\mathscr{F}^\ast$ is perfect. Then, by combining the identifications $(\mathscr{F}^\ast)_{A_n^\ast (1)} = (\mathscr{F}_{A_n})^\ast$ in Lemma \ref{some basic duality properties of Nekovar structures}\,(i) with the result of  Proposition \ref{Selmer complex control thm}\,(iii) for $\mathscr{F}^\ast$, one obtains a natural isomorphism in $D(R)$
\[ \RGamma_{\mathscr{F}^\ast} (k, A^\ast(1)) \cong {\varprojlim}_n \RGamma_{(\mathscr{F}_{A_n})^\ast} (k, A^\ast_n (1))\] 
 and,  in each degree $i$,  an induced identification $H^i_{\mathscr{F}^\ast} (k, A^\ast(1))= \varprojlim_{n \in \N} H^i_{(\mathscr{F}_{A_n})^\ast} (k, A_n^\ast(1))$. \end{remark}

\begin{remark} \label{compatibility of MR structures under local duality rk}
Let $\cF$ be a Mazur--Rubin structure on a finitely generated $R$-module $A$. Then, for each $n$,  there exists an equality of Mazur--Rubin structures $(\cF_{A_n})^\ast = (\cF^\vee)_{A^\vee [\a_n] (1)}$ on $A_n^\ast (1) \cong A^\vee [\a_n]$ that is analogous to Lemma \ref{some basic duality properties of Nekovar structures}\,(ii) (cf.\@ \cite[Ex.~1.3.3]{MazurRubin04}). This fact  combines with the fixed identification
\[ A_n^\ast \cong A_n^\vee = A_{n + 1}^\vee [\a_n] \hookrightarrow A_{n + 1}^\vee = A_{n + 1}^\ast\] 
(cf.\@ Lemma 
\ref{inj env lem}) to induce both a morphism $H^1_{(\cF_{A_n})^\ast} (k, A_n^\ast (1)) \to H^1_{(\cF_{A_{n + 1}})^\ast} (k, A_{n + 1}^\ast (1))$ and also a diagonal map $H^1_{\cF^\vee} (k, A^\vee (1)) \to \textstyle \varinjlim_{n \in \N} H^1_{(\cF_{A_n})^\ast} (k, A_n^\ast (1))$, where the limit is taken with respect to the above morphisms. By a similar argument to that in Lemma \ref{limit of selmer group lemma} one can prove that this diagonal map is bijective. \end{remark}

We will also use the following technical result regarding the $\a_n$-torsion submodules of dual Selmer modules.  

\begin{lem}\label{dual selmer group commutes with M torsion}
Assume that $A$ is $R$-free of finite rank and such that  $\Abar^\vee (1)^{G_k}=(0)$. Then, for any Mazur--Rubin structure $\cF$ on $A$, and every $n \in \N_0$, the natural map $A_n^\vee (1) \hookrightarrow A^\vee (1)$ induces an isomorphism
    $H^{1}_{(\cF_{A_n})^\vee} (k, A_n^\vee (1)) \stackrel{\simeq}{\longrightarrow} H^{1}_{\cF^\vee} (k, A^\vee (1)) [\a_n]
    $.
\end{lem}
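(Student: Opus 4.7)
The plan is to identify both $H^1_{(\cF_{A_n})^\vee}(k, A_n^\vee(1))$ and $H^1_{\cF^\vee}(k, A^\vee(1))[\a_n]$ with the kernel of a single localisation map, by restricting the sequence defining the second group to its $\a_n$-torsion. Writing $S = S(\cF)$, I would first upgrade the hypothesis to the vanishing $H^0(\cO_{k,S}, A^\vee(1)) = 0$: since $A^\vee(1)$ is Artinian as an $R$-module, Lemma \ref{ryotaro's reduction trick} applied inside $A^\vee(1)^{G_k}$ shows that any non-zero $G_k$-invariant could be multiplied by a suitable $r\in R$ to produce a non-zero element of $A^\vee(1)^{G_k}[\m] = \Abar^\vee(1)^{G_k}$ (using Lemma \ref{inj env lem}\,(i) to identify $\Abar^\vee(1)$ with $A^\vee(1)[\m]$), contradicting the hypothesis. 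The same argument then gives $(A^\vee(1)/A_n^\vee(1))^{G_k} = 0$: for any lift $\tilde x \in A^\vee(1)$ of an invariant class one has $(\sigma - 1)\tilde x \in A^\vee(1)[\a_n]$, so that $\a_n \tilde x$ is $G_k$-invariant and hence vanishes. The long exact cohomology sequence of $0 \to A_n^\vee(1) \to A^\vee(1) \to A^\vee(1)/A_n^\vee(1) \to 0$ now delivers an injection $H^1(\cO_{k,S}, A_n^\vee(1)) \hookrightarrow H^1(\cO_{k,S}, A^\vee(1))$ whose image is automatically contained in the $\a_n$-torsion.

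The main step will be to show that this image is exactly $H^1(\cO_{k,S}, A^\vee(1))[\a_n]$. To this end, I would fix an integer $m \geq n$ and apply Lemma \ref{flach result}\,(vii) to the quotient morphism $R_m \twoheadrightarrow R_n$ of finite Gorenstein rings, together with the free $R_m$-module $A_m$, to obtain a natural isomorphism $\RHom_{R_m}(R_n, \RGamma(\cO_{k,S}, A_m^\vee(1))) \cong \RGamma(\cO_{k,S}, A_n^\vee(1))$. Its associated $\Ext$ spectral sequence then degenerates --- using that $H^0(\cO_{k,S}, A_m^\vee(1)) \subseteq H^0(\cO_{k,S}, A^\vee(1)) = 0$ --- into an isomorphism $H^1(\cO_{k,S}, A_n^\vee(1)) \cong H^1(\cO_{k,S}, A_m^\vee(1))[\a_n]$ that is induced by the natural inclusion. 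Passing to the filtered colimit over $m$ via the identification $A^\vee(1) = \varinjlim_m A_m^\vee(1)$ of Lemma \ref{inj env lem}\,(ii), and using that Galois cohomology of discrete torsion modules commutes with both colimits and $\a_n$-torsion, yields the required isomorphism $H^1(\cO_{k,S}, A_n^\vee(1)) \cong H^1(\cO_{k,S}, A^\vee(1))[\a_n]$. The main technical delicacy lies precisely in this passage to the limit, which relies on the compatibility of the edge maps with the transition morphisms.

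To conclude, I would combine the above with a local comparison. For each $\q \in S$, the equality $(\cF_{A_n})^\vee = (\cF^\vee)_{A^\vee[\a_n](1)}$ of Remark \ref{compatibility of MR structures under local duality rk}, together with Example \ref{ss def}\,(iv), identifies $H^1_{(\cF_{A_n})^\vee}(k_\q, A_n^\vee(1))$ with the kernel of the composite $H^1(k_\q, A_n^\vee(1)) \to H^1_{/\cF^\vee}(k_\q, A^\vee(1))$, so the induced map on quotients injects $H^1(k_\q, A_n^\vee(1))/H^1_{(\cF_{A_n})^\vee}(k_\q, A_n^\vee(1))$ into $H^1_{/\cF^\vee}(k_\q, A^\vee(1))[\a_n]$. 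Inserting all the ingredients into the commutative diagram
\[\begin{tikzcd}[column sep=small, row sep=small]
0 \arrow{r} & H^1_{(\cF_{A_n})^\vee}(k, A_n^\vee(1)) \arrow{r} \arrow{d} & H^1(\cO_{k,S}, A_n^\vee(1)) \arrow{r} \arrow{d}{\simeq} & \bigoplus_{\q \in S} \frac{H^1(k_\q, A_n^\vee(1))}{H^1_{(\cF_{A_n})^\vee}(k_\q, A_n^\vee(1))} \arrow[hookrightarrow]{d} \\
0 \arrow{r} & H^1_{\cF^\vee}(k, A^\vee(1))[\a_n] \arrow{r} & H^1(\cO_{k,S}, A^\vee(1))[\a_n] \arrow{r} & \bigoplus_{\q \in S} H^1_{/\cF^\vee}(k_\q, A^\vee(1))[\a_n]
\end{tikzcd}\]
and exploiting the iso middle vertical (from the main step) and the injective right vertical, a routine diagram chase then forces the leftmost vertical map to be an isomorphism, as required.
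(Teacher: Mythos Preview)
Your argument is correct and is essentially the standard one. The paper does not give its own proof here: it simply cites \cite[Cor.~3.8]{bss} (and \cite[Lem.~3.5.3]{MazurRubin04}). In fact, the Remark immediately following Proposition~\ref{Selmer complex control thm} in the paper sketches exactly your spectral-sequence argument in the parallel setting of Nekov\'a\v{r} structures and explicitly flags it as the analogue of \cite[Cor.~3.8]{bss}, so you have reconstructed the intended proof.

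One small technical point worth tightening: your appeal to Lemma~\ref{flach result}\,(vii) for $\Phi = \RGamma(\cO_{k,S},-)$ is, as the paper states it, only available under condition~(\ref{p=2 condition}), which is not among the hypotheses of the lemma you are proving. This is easily bypassed. Since $R_m$ is self-injective and $A_m^\vee(1) \cong R_m^{\,\mathrm{rk}_R A}$ is therefore an injective $R_m$-module, each cochain module $\mathscr{C}^i(G_{k,S}, A_m^\vee(1))$ is a product of injectives and hence injective over $R_m$. Thus $\RHom_{R_m}(R_n,-)$ may be computed on the nose on this complex, yielding $\mathscr{C}^\bullet(G_{k,S}, A_m^\vee(1)[\a_n]) = \mathscr{C}^\bullet(G_{k,S}, A_n^\vee(1))$. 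This gives the isomorphism $\RHom_{R_m}(R_n, \RGamma(\cO_{k,S}, A_m^\vee(1))) \cong \RGamma(\cO_{k,S}, A_n^\vee(1))$ directly, without any restriction on $p$, and the rest of your argument then proceeds unchanged.
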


\begin{proof} This is proved in \cite[Cor.\@ 3.8]{bss} (see also \cite[Lem.\@ 3.5.3]{MazurRubin04}).
\end{proof}

\subsubsection{Consequences of Artin--Verdier duality}

In this section, we discuss relations between the Selmer complexes that are respectively associated to a Nekov\'a\v{r} structure $\mathscr{F}$ and to its duals $\mathscr{F}^\ast$ and $\mathscr{F}^\vee$. 

In order to state the first result, for each $\q \in \Pi_k^\RR$ we write  $\tau_\q(A)$ for the canonical morphism $\mathrm{R}\Gamma(k_\q,A) \to \mathrm{R}\Gamma_{\rm{Tate}}(k_\q,A)$. We then define an object of $D(R)$ by setting 
\[ \Delta_\infty(k,A) \coloneqq \bigoplus_{\q \in \Pi_k^\CC}A \oplus \bigoplus_{\q\in \Pi_k^\RR}\mathrm{Cone}(\tau_\q(A))[-1].\]

\begin{prop}\label{av prop} Artin--Verdier duality induces canonical morphisms in $D(R)$ 
\begin{align*} \mu_{\mathscr{F},\mathscr{F}^\ast}\: & \RHom_R (\Delta_\infty(k,A^\ast(1)),R)[-3] \to \textstyle \bigoplus_{\q\in \Pi_k^\infty}\mathrm{R}\Gamma_{\!/\mathscr{F}}(k_\q,A)\\
 \mu_{\mathscr{F},\mathscr{F}^\vee}\: & \RHom_R (\Delta_\infty(k,A^\vee(1)),E_R (\mathbb{k}))[-3] \to \textstyle \bigoplus_{\q\in \Pi_k^\infty}\mathrm{R}\Gamma_{\!/\mathscr{F}}(k_\q,A).
 \end{align*}
In addition, setting 
\[ \mathrm{R}\Gamma^{\mathrm{AV}}_{\mathscr{F},\mathscr{F}^\ast}(k,A) \coloneqq \mathrm{Cone}(\mu_{\mathscr{F},\mathscr{F}^\ast})\quad\text{and}\quad \mathrm{R}\Gamma^{\mathrm{AV}}_{\mathscr{F},\mathscr{F}^\vee}(k,A) \coloneqq \mathrm{Cone}(\mu_{\mathscr{F},\mathscr{F}^\vee}),\] 
there exist canonical exact triangles in $D(R)$  
\begin{align*}
\mathrm{R}\Gamma_\mathscr{F}(k,A) & \xrightarrow{\delta_{\mathscr{F},\mathscr{F}^\ast}} \RHom_R (\mathrm{R}\Gamma_{\mathscr{F}^\ast}(k,A^\ast(1)), R)[-3] \to \mathrm{R}\Gamma^{\mathrm{AV}}_{\mathscr{F},\mathscr{F}^\ast}(k,A)\to \cdot\\
\mathrm{R}\Gamma_\mathscr{F}(k,A) & \xrightarrow{\delta_{\mathscr{F},\mathscr{F}^\vee}} \RHom_R (\mathrm{R}\Gamma_{\mathscr{F}^\vee}(k,A^\vee(1)), E_R (\mathbb{k}))[-3] \to \mathrm{R}\Gamma^{\mathrm{AV}}_{\mathscr{F},\mathscr{F}^\vee}(k,A)\to \cdot .
\end{align*}
\end{prop}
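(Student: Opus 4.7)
The plan is to deduce both exact triangles from the Artin--Verdier duality isomorphisms of Proposition \ref{local Tate duality thm}(ii) by a sequence of octahedral-axiom manipulations in $D(R)$. I shall describe the argument for the first triangle (involving $\mathscr{F}^\ast$ and the $R$-linear dual); the second triangle is obtained by the identical argument with the Matlis dual $\RHom_R(-, E_R(\mathbb{k}))$ replacing the $R$-linear dual $\RHom_R(-, R)$ throughout, and using the second isomorphisms in Proposition \ref{local Tate duality thm}(i) and (ii) in place of the first.

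Set $S \coloneqq S(\mathscr{F})$ and write $D(-) \coloneqq \RHom_R(-, R)[-3]$. The first step is to apply $D$ to the Selmer complex triangle (\ref{mapping fibre2}) for $\mathscr{F}^\ast$ and to identify its terms. For each finite $\q \in S \setminus \Pi_k^\infty$, the definition of $\mathscr{F}^\ast$ in Definition \ref{dualisable structures} combined with the first isomorphism of local Tate duality in Proposition \ref{local Tate duality thm}(i) yields a natural identification $D(\RGamma_{\!/\mathscr{F}^\ast}(k_\q, A^\ast(1))) \cong \RGamma_{\mathscr{F}}(k_\q, A)[-1]$. At each archimedean place $\q \in \Pi_k^\infty$, on the other hand, the $\infty$-strictness of $\mathscr{F}^\ast$ forces $\RGamma_{\!/\mathscr{F}^\ast}(k_\q, A^\ast(1)) = \RGamma(k_\q, A^\ast(1))$, and it is precisely the duals of these archimedean terms that will assemble to give the $\Delta_\infty$-correction. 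The middle term $D(\RGamma(\cO_{k,S}, A^\ast(1)))$ is identified with $\widetilde{\RGamma}_c(\cO_{k,S}, A)$ upon applying $D$ to the defining triangle of $\widetilde{\RGamma}_c(\cO_{k,S}, A^\ast(1))$ and invoking the first isomorphism of Proposition \ref{local Tate duality thm}(ii) together with the Tate-type self-duality at archimedean places.

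With these identifications in hand, the dualised Selmer triangle for $\mathscr{F}^\ast$ can be assembled with the $\mathscr{F}$-Selmer triangle (\ref{mapping fibre2}) and the comparison triangle $\RGamma_c(\cO_{k,S}, A) \to \widetilde{\RGamma}_c(\cO_{k,S}, A) \to \Delta_\infty(k, A) \to$, the latter being obtained by an octahedral application to the composition $\RGamma(\cO_{k,S}, A) \to \bigoplus_{\q \in S} \RGamma(k_\q, A) \xrightarrow{(\tau_\q)_\q} \bigoplus_{\q \in S} \RGamma_{\mathrm{Tate}}(k_\q, A)$ (noting that the fibre of $(\tau_\q)_\q$ is exactly $\Delta_\infty(k, A)$). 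A further octahedral argument then produces the morphism $\delta_{\mathscr{F}, \mathscr{F}^\ast}$ together with an exact triangle whose third term naturally splits into a contribution $D(\Delta_\infty(k, A^\ast(1)))$ (coming from the comparison triangle applied to $A^\ast(1)$) and a contribution $\bigoplus_{\q \in \Pi_k^\infty} \RGamma_{\!/\mathscr{F}}(k_\q, A)$ (coming from the archimedean terms of the $\mathscr{F}$-Selmer triangle). The morphism $\mu_{\mathscr{F}, \mathscr{F}^\ast}$ is then identified as the connecting morphism between these two contributions that emerges from the diagram chase, and the equality $\RGamma^{\mathrm{AV}}_{\mathscr{F}, \mathscr{F}^\ast}(k, A) = \mathrm{Cone}(\mu_{\mathscr{F}, \mathscr{F}^\ast}) = \mathrm{Cone}(\delta_{\mathscr{F}, \mathscr{F}^\ast})$ follows.

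The principal technical obstacle will be the careful bookkeeping required across the successive octahedral-axiom applications to ensure that the morphism $\mu_{\mathscr{F}, \mathscr{F}^\ast}$ extracted from the diagram chase has exactly the source and target appearing in the statement. The $\infty$-strictness condition built into Definition \ref{dualisable structures} plays a crucial role here: it forces the archimedean contribution to assemble into a single copy of $D(\Delta_\infty(k, A^\ast(1)))$ rather than into a mixed local object involving both $\RGamma$ and $\RGamma_{\mathrm{Tate}}$, which is what enables the cone to be expressed as $\mathrm{Cone}(\mu_{\mathscr{F}, \mathscr{F}^\ast})$ for a single morphism between two complexes supported at the archimedean places.
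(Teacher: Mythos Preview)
Your proposal is correct and uses essentially the same ingredients as the paper's proof: the Artin--Verdier duality isomorphism of Proposition \ref{local Tate duality thm}(ii), the local identifications built into Definition \ref{dualisable structures}, the comparison between $\RGamma_c$ and $\widetilde{\RGamma}_c$ via $\Delta_\infty$, and repeated applications of the octahedral axiom. The only difference is in the order of operations. You propose to dualise the triangle (\ref{mapping fibre2}) for $\mathscr{F}^\ast$ first and then identify the resulting terms, which forces you to recognise $D(\RGamma(\cO_{k,S},A^\ast(1)))$ as $\widetilde{\RGamma}_c(\cO_{k,S},A)$ and $D(\RGamma_{\!/\mathscr{F}^\ast}(k_\q,A^\ast(1)))$ as $\RGamma_{\mathscr{F}}(k_\q,A)[-1]$; both of these identifications involve a double-dual step. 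The paper instead works entirely on the $B = A^\ast(1)$ side first, introducing an auxiliary complex $\widetilde{\RGamma}_{\mathscr{F}^\ast}(k,B)$ (defined by replacing $\RGamma(k_\q,B)$ with $\RGamma_{\mathrm{Tate}}(k_\q,B)$ at archimedean $\q$ in the Selmer triangle) and establishing two triangles relating it to $\RGamma_{\mathscr{F}^\ast}(k,B)$ via $\Delta_\infty(k,B)$ and to $\widetilde{\RGamma}_c(\cO_{k,S},B)$; only then does it apply the functor $X \mapsto X^\ast[-3]$ once, invoking the global duality isomorphism directly. The paper's ordering has the minor advantage that the identifications with the $\mathscr{F}$-side objects $\RGamma_{\!/\mathscr{F}}(k_\q,A)$ come straight from the definition of $\mathscr{F}^\ast$ rather than from a biduality step, but the overall architecture and the final octahedral assembly are the same.
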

\begin{proof} To prove the stated results for $\mathscr{F}^\ast$,  we set $B \coloneqq A^\ast(1)$ and consider the following commutative diagram in $D(R)$
\begin{equation}\label{duality diagram 0}\xymatrix{ 
& & \Delta_\infty(k,B)\ar[d]\\
\mathrm{R}\Gamma_c(\cO_{k,S}, B) \ar[r] \ar[d]_{\mu} &  \mathrm{R}\Gamma(\cO_{k,S},B) \ar[r] \ar@{=}[d] & {\bigoplus}_{\q \in S}\mathrm{R}\Gamma(k_\q,B) \ar[r] \ar[d]^{ (\tau_\q(B))_\q} 
& \cdot\\
\widetilde{\mathrm{R}\Gamma}_c(\cO_{k,S},B) \ar[r] &  \mathrm{R}\Gamma(\cO_{k,S},B) \ar[r]^{\hskip-0.2truein \lambda_S(B)} & {\bigoplus}_{\q \in S}\mathrm{R}\Gamma_{\rm{Tate}}(k_\q,B)\ar[d] \ar[r] 
& \cdot\\
& & \cdot}\end{equation} 
Here the upper triangle is the appropriate case of the exact triangle (\ref{compact def}) and, for each place $\q\in S\setminus \Pi_k^\RR$, we set $\mathrm{R}\Gamma_{\rm{Tate}}(k_\q,B)\coloneqq \mathrm{R}\Gamma(k_\q,B)$ and write $\tau_\q(A)$ for the identity morphism $\mathrm{R}\Gamma(k_\q,B) \to \mathrm{R}\Gamma(k_\q,B)$. In addition, $\lambda_S(B)$ denotes the natural localisation morphism and $\widetilde{\mathrm{R}\Gamma}_c(\cO_{k,S},B) $  its mapping fibre and the vertical exact triangle is that which follows directly from the definition of $\Delta_\infty(k,B)$. In particular, from the (obvious) commutativity of the central square, one deduces the existence of a morphism $\mu$ that completes the diagram to give a morphism of exact triangles, and thereby implies the existence of an exact triangle that forms the first column of the following commutative diagram in $D(R)$ 
\begin{equation}\label{duality diagram11} 
\xymatrix{ \mathrm{R}\Gamma_c(\cO_{k,S}, B) \ar[r] \ar[d]_{\mu} &  \mathrm{R}\Gamma_{\mathscr{F}^\ast}(k,B) \ar[r] \ar[d]_{\mu_{\mathscr{F}^\ast}} & {\bigoplus}_{\q \in S\setminus \Pi_k^\infty}\mathrm{R}\Gamma_{\mathscr{F}^\ast}(k_\q,B) \ar[r] \ar@{=}[d] 
& \cdot\\
\widetilde{\mathrm{R}\Gamma}_c(\cO_{k,S},B) \ar[r]\ar[d] &  \widetilde{\mathrm{R}\Gamma}_{\mathscr{F}^\ast}(k,B) \ar[r] \ar[d]_{\mu'_{\mathscr{F}^\ast}}&  {\bigoplus}_{\q \in S\setminus \Pi_k^\infty}\mathrm{R}\Gamma_{\mathscr{F}^\ast}(k_\q,B) \ar[r] &\cdot\\
\Delta_\infty(k,B)\ar[d] \ar@{=}[r]
& \Delta_\infty(k,B)\ar[d] \\
\cdot & \cdot}\end{equation}
Here the complex $\widetilde{\mathrm{R}\Gamma}_{\mathscr{F}^\ast}(k,B)$ is defined via the same exact triangle as  $ \mathrm{R}\Gamma_{\mathscr{F}^\ast}(k,B)$ after replacing $\mathrm{R}\Gamma (k_\q,B)$ for each $\q \in \Pi_k^\infty$ by $\mathrm{R}\Gamma_{\mathrm{Tate}}(k_\q,B)$ and the morphism $\mu_{\mathscr{F}^\ast}$ is then induced by the obvious analogue of (\ref{duality diagram 0}). In  addition, the first row is the exact triangle of (\ref{cs versus Selmer}) (with $A$ and $\mathscr{F}$ replaced by $B$ and $\mathscr{F}^\ast$) and the second row is the natural analogue of this exact triangle. In particular, since the first square in the diagram commutes there exists a morphism $\mu_{\mathscr{F}^\ast}$ which makes the lower square commute and the second column an exact triangle in $D(R)$. 

Now, if we apply the exact functor $X \mapsto X^\ast[-3]$ to the second row of (\ref{duality diagram11}) and then substitute the first isomorphism in Proposition \ref{local Tate duality thm}\,(ii) and recall the explicit definition of the complexes $\mathrm{R}\Gamma_{\mathscr{F}^\ast}(k_\q,B)$ for $\q \in S\setminus \Pi_k^\infty$, we obtain an exact triangle in $D(R)$
\[\widetilde{\mathrm{R}\Gamma}_{\mathscr{F}^\ast}(k,B)^\ast[-3] \to \mathrm{R}\Gamma(\cO_{k,S},A)\xrightarrow{\lambda} 
  {\bigoplus}_{\q \in S\setminus \Pi_k^\infty}\mathrm{R}\Gamma_{\!/\mathscr{F}}(k_\q,A) \to\cdot \] 
 in which $\lambda$ is the natural localisation map. By the argument of Lemma \ref{basic properties}\,(ii), this in turn induces a canonical exact triangle in $D(R)$ that forms the central row of the following diagram 

\[\label{duality diagram1} 
\xymatrix{ &   \Delta_\infty(k,B)^\ast[-3] \ar[r]^{\hskip-.2truein \theta'\circ  \mu'} \ar[d]_{\mu'} & {\bigoplus}_{\q \in \Pi_k^\infty}\mathrm{R}\Gamma_{\!/\mathscr{F}}(k_\q,A)\ar@{=}[d]\\
\mathrm{R}\Gamma_{\mathscr{F}}(k,A) \ar[d]_{\mu\circ\theta} \ar[r]^{\hskip-0.3truein \theta} & \widetilde{\mathrm{R}\Gamma}_{\mathscr{F}^\ast}(k,B)^\ast[-3] \ar[d]_{\mu} \ar[r]^{\,\,\,\theta'} & {\bigoplus}_{\q \in \Pi_k^\infty}\mathrm{R}\Gamma_{\!/\mathscr{F}}(k_\q,A) \ar[r] & \cdot\\
\mathrm{R}\Gamma_{\mathscr{F}^\ast}(k_\q,B)^\ast[-3] \ar@{=}[r] & \mathrm{R}\Gamma_{\mathscr{F}^\ast}(k_\q,B)^\ast[-3] \ar[d]\\
&.}\]
Here the central column is the exact triangle obtained by applying the functor $X\mapsto X^\ast[-3]$ to the second column in (\ref{duality diagram11}) so $\mu' = (\mu_{\mathscr{F}^\ast}')^\ast[-3]$ and $\mu = (\mu_{\mathscr{F}^\ast})^\ast[-3]$. In particular, if we respectively define $\mu_{\mathscr{F},\mathscr{F}^\ast}$ and $\delta_{\mathscr{F},\mathscr{F}^\ast}$ to be the morphisms $\theta'\circ  (\mu_{\mathscr{F}^\ast}')^\ast[-3]$ and $(\mu_{\mathscr{F}^\ast})^\ast[-3]\circ\theta$, then the Octahedral axiom combines with the commutativity of the above diagram to imply that  the mapping fibres of $\mu_{\mathscr{F},\mathscr{F}^\ast}$ and $\delta_{\mathscr{F},\mathscr{F}^\ast}$ are isomorphic (in $D(R)$). This last fact leads directly to an exact triangle of the required form and so proves all claimed  results for $\mathscr{F}^\ast$.

In addition, after making obvious changes, the same argument derives the analogous claims  for $\mathscr{F}^\vee$  from  the second isomorphism in Proposition \ref{local Tate duality thm}\,(ii). Since this is a routine matter, we leave details to an interested reader. \end{proof}

\begin{remark}\label{infty relaxed rem} If $\mathscr{F}$ is $\infty$-relaxed, then $\mathrm{R}\Gamma^{\mathrm{AV}}_{\mathscr{F},\mathscr{F}^\vee}(k,A) = \RHom_R(\Delta_\infty(k,A^\vee(1)),E_R (\mathbb{k}))[-2]$ and, by explicit computation, one checks that this is  represented by the complex 
\[ \bigl(\bigoplus_{\q\in \Pi_k^\CC}A(-1)[-2]\bigr) \oplus \bigl(\bigoplus_{\q\in \Pi_k^\RR}[ A(-1)\xrightarrow{1-c_\q} A(-1) \xrightarrow{1+c_\q} A(-1) \xrightarrow{1-c_\q} A(-1) \xrightarrow{1+c_\q}\cdots ]\bigr).\]
Here, for each $\q \in \Pi_k^\RR$, we write $c_\q$ for the non-trivial element of $G_{k_\q}$ and the first term of the displayed complex occurs in degree $2$. If $\mathscr{F}$ is $\infty$-relaxed and either $R$ is self-injective or $A$ is a free $R$-module, then $\mathrm{R}\Gamma^{\mathrm{AV}}_{\mathscr{F},\mathscr{F}^\ast}(k,A)$ also coincides with the above complex.  \end{remark} 

Let $\cF_1$ and $\cF_2$ be Mazur--Rubin structures on $A$ with $\cF_1\le \cF_2$. Then Mazur and Rubin \cite[Th.\@ 2.3.4]{MazurRubin04} have shown that global duality gives rise to a canonical exact sequence of $R$-modules  
\begin{equation}\label{mr duality}  H^1_{\cF_1} (k, A) \hookrightarrow H^1_{\cF_2} (k, A) \to \bigoplus_{\q\in S(\cF_1)} \frac{H^1_{\cF_2} (k_\q, A)}{H^1_{\cF_1} (k_\q, A)}
 \to H^{1}_{\cF_1^\ast} (k, A^\vee (1))^\vee \twoheadrightarrow H^{1}_{\cF_2^\ast} (k, A^\vee (1))^\vee.\end{equation}

We next use Proposition \ref{av prop} to establish an analogue of this result for Nekov\'{a}\v{r} structures.

\begin{thm} \label{global duality thm} Let $\mathscr{F}_1$ and $\mathscr{F}_2$ be $\infty$-relaxed Nekov\'{a}\v{r} structures on $A$ with $\mathscr{F}_1\le \mathscr{F}_2$ and set $B \coloneqq A^\vee(1)$. For $i\in 
\{1,2\}$ write  $ \tilde H^1_{\mathscr{F}^\vee_i} (k_\q, B)^\vee$  for the kernel of the morphism 
\[ H^1_{\mathscr{F}^\vee_i} (k_\q, B)^\vee \to 
H^2\bigl(\mathrm{R}\Gamma^{\mathrm{AV}}_{\mathscr{F}_i}(k,A)\bigr) \]
induced by the exact triangle in Proposition \ref{av prop}. Then the following claims are valid.

\begin{itemize}
    \item[(i)] There exists a canonical long exact sequence of $R$-modules 
\begin{multline*} \bigoplus H^0\bigl(\mathrm{R}\Gamma_{\mathscr{F}_2/\mathscr{F}_1}(k_\q,A)\bigr)\to H^1_{\mathscr{F}_1} (k, A) \to H^1_{\mathscr{F}_2} (k, A)\to \bigoplus H^1\bigl(\mathrm{R}\Gamma_{\mathscr{F}_2/\mathscr{F}_1}(k_\q,A)\bigr)\\ \to \tilde H^1_{\mathscr{F}^\vee_1} (k_\q, B)^\vee \to \tilde H^{1}_{\mathscr{F}_2^\vee} (k,B)^\vee \to \bigoplus H^0\bigl(\mathrm{R}\Gamma_{\mathscr{F}_1^\vee/\mathscr{F}^\vee_2}(k_\q,B)\bigr)^\vee\end{multline*}
in which all direct sums are taken over $\q\in S(\mathscr{F}_1)$.
\item[(ii)] Set $\cF_1 \coloneqq h(\mathscr{F}_1)$ and  $\cF_2 \coloneqq h(\mathscr{F}_2)$ and assume the following hypotheses:
\begin{itemize}
\item[(a)] the maps $H^1(\theta_{\mathscr{F}_1,\q} )$, $H^1(\theta_{\mathscr{F}_2,\q} )$ and $H^2(j_{\mathscr{F}_1,\mathscr{F}_2,\q})$ are injective for every  $\q\in S(\mathscr{F}_1)$:
\item[(b)] $\Pi_k^\RR = \emptyset$ if $p=2$. 
\end{itemize}
Then there exists a canonical exact commutative diagram of $R$-modules
\begin{cdiagram}[column sep=tiny]
H^1_{\mathscr{F}_1} (k, A) \arrow["\alpha_1"]{r} \arrow[twoheadrightarrow, "\beta_1"]{d} & H^1_{\mathscr{F}_2} (k, A) \arrow{r} \arrow[twoheadrightarrow, "\beta_2"]{d} & \bigoplus H^1\bigl(\mathrm{R}\Gamma_{\mathscr{F}_2/\mathscr{F}_1}(k_\q,A)\bigr) \arrow{r} & \tilde H^1_{\mathscr{F}^\vee_1} (k_\q, B)^\vee \arrow["\alpha_2"]{r} & \tilde H^{1}_{\mathscr{F}_2^\vee} (k, B)^\vee\\
H^1_{\cF_1} (k, A) \arrow[hookrightarrow]{r} & H^1_{\cF_2} (k, A) \arrow{r} & \bigoplus \frac{H^1_{\cF_2} (k_\q, A)}{H^1_{\cF_1} (k_\q, A)} \arrow[equals, "\beta_3"]{u} \arrow{r} & H^1_{\cF^\vee_1} (k_\q, B)^\vee \arrow[twoheadrightarrow]{r} \arrow[hookrightarrow, "\beta_4"]{u} & H^{1}_{\cF_2^\vee} (k, B)^\vee . \arrow[hookrightarrow, "\beta_5"]{u}
\end{cdiagram}%
Here both direct sums are taken over $\q\in S(\mathscr{F}_1)\cup S(\mathscr{F}_2) = S(\cF_1)\cup S(\cF_2)$, the first row is induced by the exact sequence in (i), the second row is the relevant case of (\ref{mr duality}) and all vertical maps are described in the course of the argument below. In addition, the map $\alpha_1$ is injective (resp.\@ $\alpha_2$ is surjective) if for each $\q \in S(\mathscr{F}_1)\cup S(\mathscr{F}_2)$ the map $H^0(j_{\mathscr{F}_1,\mathscr{F}_2,\q})$ is surjective (resp.\@ $H^2(j_{\mathscr{F}_1,\mathscr{F}_2,\q})$ is surjective and $H^3(j_{\mathscr{F}_1,\mathscr{F}_2,\q})$ is injective).
\end{itemize}
\end{thm}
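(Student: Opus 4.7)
My plan is to derive both parts from the two structural inputs already established: the refinement triangle of Lemma \ref{basic properties}(ii) applied to $(\mathscr{F}_1,\mathscr{F}_2)$, and the Artin--Verdier triangles of Proposition \ref{av prop} applied separately to each $\mathscr{F}_i$. By functoriality both constructions produce a commutative square in $D(R)$ relating $\RGamma_{\mathscr{F}_1}(k,A) \to \RGamma_{\mathscr{F}_2}(k,A)$ with its Matlis-dual counterpart $\RHom_R(\RGamma_{\mathscr{F}_1^\vee}(k,B),E_R(\mathbb{k}))[-3] \to \RHom_R(\RGamma_{\mathscr{F}_2^\vee}(k,B),E_R(\mathbb{k}))[-3]$, where the latter arises by Matlis dualising the refinement triangle associated to $\mathscr{F}_2^\vee \le \mathscr{F}_1^\vee$. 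Because the local Tate isomorphism built into Definition \ref{dualisable structures} identifies $\RGamma_{\!/\mathscr{F}_i}(k_\q,A)$ with $\RHom_R(\RGamma_{\mathscr{F}_i^\vee}(k_\q,B),E_R(\mathbb{k}))[-2]$, this square extends to a commutative $3\times 3$-diagram of exact triangles, whose third column is the cone-comparison $\bigoplus_\q\RGamma_{\mathscr{F}_2/\mathscr{F}_1}(k_\q,A) \to \bigoplus_\q\RHom_R(\RGamma_{\mathscr{F}_1^\vee/\mathscr{F}_2^\vee}(k_\q,B),E_R(\mathbb{k}))[-2]$.

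For (i), I would take long exact sequences. The first four terms are simply the low-degree portion of the long sequence of the refinement triangle. At the $H^2$-stage, the AV triangle produces by construction a canonical surjection $H^2_{\mathscr{F}_i}(k,A) \twoheadrightarrow \tilde H^1_{\mathscr{F}_i^\vee}(k,B)^\vee$ (this is exactly the defining property of the kernel $\tilde H^1_{\mathscr{F}_i^\vee}(k,B)^\vee$). The above commutative square ensures these surjections are compatible with $H^2_{\mathscr{F}_1}(k,A) \to H^2_{\mathscr{F}_2}(k,A)$, and the commutativity of the third column then implies that the image of the connecting map $\bigoplus H^1(\RGamma_{\mathscr{F}_2/\mathscr{F}_1}(k_\q,A)) \to H^2_{\mathscr{F}_1}(k,A)$ is contained in the kernel of the projection to $H^2(\RGamma^{\mathrm{AV}}_{\mathscr{F}_1,\mathscr{F}_1^\vee}(k,A))$, so that this map factors through $\tilde H^1_{\mathscr{F}_1^\vee}(k,B)^\vee$. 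A diagram chase then splices everything into the stated sequence, with the terminating term $\bigoplus H^0(\RGamma_{\mathscr{F}_1^\vee/\mathscr{F}_2^\vee}(k_\q,B))^\vee$ arising from the identification of the $H^2$-cohomology of the cone in the top row with the Matlis dual of the $H^0$ of the cone in the bottom row (again via the local duality of Definition \ref{dualisable structures}).

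For (ii), the vertical maps $\beta_1,\beta_2$ are the canonical surjections supplied by the short exact sequence (\ref{nekovar mr compare}) of Lemma \ref{comparison selmer structures}(i). The equality $\beta_3$ is obtained by taking $H^1$ of the defining triangle of $\RGamma_{\mathscr{F}_2/\mathscr{F}_1}(k_\q,A)$: the injectivity of $H^1(\theta_{\mathscr{F}_i,\q})$ in hypothesis (a) identifies $H^1_{\mathscr{F}_i}(k_\q,A)$ with $H^1_{\cF_i}(k_\q,A)$ and the injectivity of $H^2(j_{\mathscr{F}_1,\mathscr{F}_2,\q})$ converts $H^1(\RGamma_{\mathscr{F}_2/\mathscr{F}_1}(k_\q,A))$ into the cokernel $H^1_{\cF_2}(k_\q,A)/H^1_{\cF_1}(k_\q,A)$. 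The maps $\beta_4$ and $\beta_5$ are the strictly analogous identifications on the dual side, applied using the equality $h(\mathscr{F}_i^\vee)=h(\mathscr{F}_i)^\vee$ of Lemma \ref{some basic duality properties of Nekovar structures}(iii) and the hypothesis (b) which removes any possibility of pathological behaviour at real archimedean places for $p=2$. Commutativity of the diagram reduces to the naturality of these identifications together with the compatibility of the Nekov\'a\v{r} boundary maps with the pairing underlying the Mazur--Rubin sequence (\ref{mr duality}) -- this compatibility is itself a direct consequence of the Artin--Verdier construction of Proposition \ref{av prop}. The injectivity of $\alpha_1$ and surjectivity of $\alpha_2$ are read off by examining the term of (i) immediately before $\alpha_1$ and immediately after $\alpha_2$: the long cohomology sequence of the defining triangle of the cone shows $H^0(\RGamma_{\mathscr{F}_2/\mathscr{F}_1}(k_\q,A))$ vanishes when $H^0(j_{\mathscr{F}_1,\mathscr{F}_2,\q})$ is surjective, and the dual analogue (via local Tate duality applied to the cone) makes $\bigoplus H^0(\RGamma_{\mathscr{F}_1^\vee/\mathscr{F}_2^\vee}(k_\q,B))^\vee$ vanish under the stated conditions on $H^2(j)$ and $H^3(j)$. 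The principal technical obstacle will be the careful verification of exactness at the three ``spliced'' nodes in (i), which requires the full $3\times 3$-diagram of exact triangles described above, together with the fact that local Tate duality truly intertwines the cones of $\theta_{\mathscr{F}_i,\q}$ and of $\theta_{\mathscr{F}_i^\vee,\q}$ in a manner functorial in the refinement $\mathscr{F}_1\le\mathscr{F}_2$.
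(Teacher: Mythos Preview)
Your approach is essentially the same as the paper's: both build the $3\times 3$ diagram from the refinement triangle of Lemma \ref{basic properties}(ii) and the Artin--Verdier triangles of Proposition \ref{av prop}, and both identify the local cones via the duality built into Definition \ref{dualisable structures}. There are two points where you are vague and where the paper is more explicit, and tightening these would make your argument complete.

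First, your ``cone-comparison'' map $\bigoplus_\q\RGamma_{\mathscr{F}_2/\mathscr{F}_1}(k_\q,A) \to \bigoplus_\q\RHom_R(\RGamma_{\mathscr{F}_1^\vee/\mathscr{F}_2^\vee}(k_\q,B),E_R(\mathbb{k}))[-2]$ is in fact an \emph{isomorphism}. This is because both $\mathscr{F}_1$ and $\mathscr{F}_2$ are $\infty$-relaxed, so by Remark \ref{infty relaxed rem} one has an equality $\RGamma^{\mathrm{AV}}_{\mathscr{F}_1,\mathscr{F}_1^\vee}(k,A)=\RGamma^{\mathrm{AV}}_{\mathscr{F}_2,\mathscr{F}_2^\vee}(k,A)$; the third column of the $3\times 3$-diagram then collapses. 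Moreover, the same remark shows this common complex is acyclic in degrees below $2$, so $H^1(\RGamma^{\mathrm{AV}})=0$ and the map $H^2_{\mathscr{F}_i}(k,A)\to \tilde H^1_{\mathscr{F}_i^\vee}(k,B)^\vee$ you call a surjection is actually a canonical \emph{isomorphism}. With this in hand your ``diagram chase'' for (i) becomes trivial: the claimed sequence is just the long exact sequence of the refinement triangle with $H^2_{\mathscr{F}_i}(k,A)$ replaced by $\tilde H^1_{\mathscr{F}_i^\vee}(k,B)^\vee$ and $\bigoplus H^2(\RGamma_{\mathscr{F}_2/\mathscr{F}_1}(k_\q,A))$ replaced by its dual description.

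Second, your description of $\beta_4,\beta_5$ as ``strictly analogous'' glosses over a genuine computation. The paper applies Lemma \ref{comparison selmer structures}(i) to the $\infty$-strict structure $\mathscr{F}_i^\vee$ to get an injection $\gamma_i\colon H^1_{\cF_i^\vee}(k,B)^\vee \hookrightarrow H^1_{\mathscr{F}_i^\vee}(k,B)^\vee$, and then must verify that $\gamma_i$ lands inside the submodule $\tilde H^1_{\mathscr{F}_i^\vee}(k,B)^\vee$. This requires identifying $H^2(\RGamma^{\mathrm{AV}}_{\mathscr{F}_i}(k,A))$ with $\bigoplus_{\q\in\Pi_k^\infty}H^0(k_\q,B)^\vee$ via the explicit description in Remark \ref{infty relaxed rem}, and it is precisely here that hypothesis (b) is used (to ensure the real-place contributions behave correctly). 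The vague ``removes pathological behaviour'' does not capture this.
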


\begin{proof}
 We set $S \coloneqq S(\mathscr{F}_1)\cup S(\mathscr{F}_2)$. We then recall that, since $\mathscr{F}_1\le \mathscr{F}_2$, for each $\q\in S$ we are given a morphism $j_{\mathscr{F}_1,\mathscr{F}_2,\q} \: \RGamma_{\mathscr{F}_1}(k_\q,A) \to \RGamma_{\mathscr{F}_2}(k_\q,A)$ in $D(R)$ and $\mathrm{R}\Gamma_{\mathscr{F}_2/\mathscr{F}_1}(k_\q,A)$ denotes its mapping cone. 

Now, since $\mathscr{F}_1$ and $\mathscr{F}_2$ are both $\infty$-relaxed, the morphism $j_{\mathscr{F}_1,\mathscr{F}_2,\q}$ is an isomorphism for each $\q \in \Pi_k^\infty$. In addition, if we set $B \coloneqq A^\vee(1)$, then for each $\q \in S\setminus \Pi_k^\infty$, the definition of the dual condition $(\RGamma_{\mathscr{F}^\vee}(k_\q,B),\theta_{\mathscr{F}_i^\vee,\q})$ implies that $j_{\mathscr{F}_1,\mathscr{F}_2,\q}$ induces a canonical morphism 
\[ j_{\mathscr{F}^\vee_2,\mathscr{F}^\vee_1,\q} \: \mathrm{R}\Gamma_{\mathscr{F}^\vee_2}(k_\q,B) \to \mathrm{R}\Gamma_{\mathscr{F}^\vee_1}(k_\q,B)\]
whose mapping cone is isomorphic to $\mathrm{R}\Gamma_{\mathscr{F}_2/\mathscr{F}_1}(k_\q,A)^\vee[-2]$. In particular, if we use these morphisms to regard $\mathscr{F}^\vee_2$ as a refinement of $\mathscr{F}_1^\vee$, then we obtain a commutative diagram of exact triangles in $D(R)$ of the form 
\begin{equation*}\xymatrix@R=1em@C=1.5em{
\mathrm{R}\Gamma_{\mathscr{F}_1}(k,A) \ar[d] \ar[r] & \mathrm{R}\Gamma_{\mathscr{F}_1^\vee}(k,A^\vee(1))^\vee[-3] \ar[d] \ar[r] &  \RGamma^{\mathrm{AV}}_{\mathscr{F}_1,\mathscr{F}_1^\vee}(k,A)\ar@{=}[d] \ar[r] &\cdot\\
\RGamma_{\mathscr{F}_2}(k,A) \ar[d] \ar[r] & \RGamma_{\mathscr{F}_2^\vee}(k,A^\vee(1))^\vee[-3] \ar[d] \ar[r] &  \RGamma^{\mathrm{AV}}_{\mathscr{F}_2,\mathscr{F}_2^\vee}(k,A) \ar[r] &\cdot\\
\bigoplus_{\q\in S}\mathrm{R}\Gamma_{\mathscr{F}_2/\mathscr{F}_1}(k_\q,A) \ar[r]^{\hskip-0.2truein\cong}_{\hskip-0.2truein\delta}\ar[d]& \bigoplus_{\q\in S}\mathrm{R}\Gamma_{\mathscr{F}^\vee_1/\mathscr{F}^\vee_2}(k_\q,A)^\vee[-2]\ar[d]\\
\cdot &\cdot}\end{equation*} 
Here the first and second rows are the respective exact triangles from Proposition \ref{av prop}  and the equality occurs since $\mathscr{F}_1$ and $\mathscr{F}_2$ are $\infty$-relaxed (cf.\@ Remark \ref{infty relaxed rem}). In addition, the first column is the exact triangle in Lemma \ref{basic properties}\,(ii) with $(\mathscr{F}',\mathscr{F}) = (\mathscr{F}_1,\mathscr{F}_2)$ and the second is the image under the exact functor $X \mapsto X^\ast[-3]$ of the corresponding exact triangle with $(\mathscr{F}',\mathscr{F}) = (\mathscr{F}_2^\vee,\mathscr{F}_1^\vee)$. Finally, $\delta$ is the direct sum over $\q$ of the local duality isomorphisms described above. The exact sequence in (i) is now obtained by combining the long exact cohomology sequences of the first two columns. \\
In regard to (ii), we first note that the assumed injectivity, for each $\q \in S$, of both $H^1(\theta_{\mathscr{F}_1,\q})$ and $H^1(\theta_{\mathscr{F}_2,\q})$ implies that $H^1(j_{\mathscr{F}_1,\mathscr{F}_2,\q})$ is injective, and then combines with the assumed injectivity of $H^2(j_{\mathscr{F}_1,\mathscr{F}_2,\q})$ to induce an identification $H^1\bigl(\mathrm{R}\Gamma_{\mathscr{F}_2/\mathscr{F}_1}(k_\q,A)\bigr)\cong H^1_{\cF_2} (k_\q, A)/H^1_{\cF_1} (k_\q, A)$. We take the map $\beta_3$ in the claimed diagram to be the direct sum over $\q \in S$ of these identifications. We next define $\beta_1$ and $\beta_2$ to be the Matlis duals of the first map in the second exact sequence of Lemma \ref{comparison selmer structures}\,(i) with $\mathscr{F}$ taken to be $\mathscr{F}_1$ and $\mathscr{F}_2$ respectively. We also note that, for both $i =1$ and $i=2$, the same exact sequence in Lemma \ref{comparison selmer structures}\,(i) with $\mathscr{F}$ taken to be $\mathscr{F}_i^\vee$ induces an exact sequence 
\[ 0 \to H^1_{\cF_i^\vee}(k,B)^\vee \xrightarrow{\gamma_i} H^1_{\mathscr{F}_i^\vee}(k,B)^\vee \to \bigoplus_{\q \in \Pi_k^\infty} H^0(k_\q,B)^\vee\oplus \bigoplus_{\q \in S(\mathscr{F}_i)\setminus \Pi_k^\infty} \left(\frac{H^0(k_\q,B)}{\im(H^0(\theta_{\mathscr{F}_i^\vee,\q}))}\right)^\vee\]
in which in the direct sum we have also used the fact that $\mathscr{F}_i^\vee$ is $\infty$-strict so that 
$H^0(\theta_{\mathscr{F}_i^\vee,\q})$ is the zero map for each $\q \in \Pi_k^\infty$. In addition, under condition (c), there are natural isomorphisms  
\begin{align*} {\bigoplus}_{\q \in \Pi_k^\infty} H^0(k_\q,B)^\vee \cong&\, {\bigoplus}_{\q \in \Pi_k^\CC}B^\vee \oplus {\bigoplus}_{\q \in \Pi_k^\RR} \bigl((1+c_\q)B\bigr)^\vee\\
\cong&\, {\bigoplus}_{\q \in \Pi_k^\CC}A \oplus  {\bigoplus}_{\q \in \Pi_k^\RR} (1-c_\q)A\\
= &\, H^2\bigl(\mathrm{R}\Gamma^{\mathrm{AV}}_\mathscr{F}(k,A)\bigr),\end{align*}
where the equality follows from Remark \ref{infty relaxed rem}. It follows that the map $\gamma_i$ factors through the submodule $\tilde H^1_{\mathscr{F}^\vee_1} (k_\q, B)^\vee$ of $H^1_{\mathscr{F}^\vee_1} (k_\q, B)^\vee$ and so, in the claimed diagram, we can take $\beta_4$ and $\beta_5$ to be the maps that are respectively induced by $\gamma_1$ and $\gamma_2$. With these specifications of the maps $\beta_i$, it is then a straightforward exercise to check that the claimed diagram commutes, as required. \\
Finally, we note that the assumed surjectivity of $H^0(j_{\mathscr{F}_1,\mathscr{F}_2,\q})$ and injectivity of $H^1(j_{\mathscr{F}_1,\mathscr{F}_2,\q})$ for all $\q \in S$ would combine to imply $H^0\bigl(\mathrm{R}\Gamma_{\mathscr{F}_2/\mathscr{F}_1}(k_\q,A)\bigr) =(0)$ for such $\q$ and hence that $\alpha_1$ is injective as a consequence of the exact sequence in (i). In a similar way, the  surjectivity of $H^2(j_{\mathscr{F}_1,\mathscr{F}_2,\q})$ and injectivity of $H^3(j_{\mathscr{F}_1,\mathscr{F}_2,\q})$ for all $\q \in S$ would imply $H^2\bigl(\mathrm{R}\Gamma_{\mathscr{F}_2/\mathscr{F}_1}(k_\q,A)\bigr) =(0)$ for such $\q$ and hence that $\alpha_2$ is surjective. This verifies the final assertion of (ii). 
\end{proof}

\subsection{Perfect Selmer complexes}\label{psc section}

Throughout this subsection,  we assume $(k,p)$ satisfies (\ref{p=2 condition}). We also fix a Nekov\'a\v{r} structure $\mathscr{F}$ on a finitely generated free $R$-module $A$ and assume the following hypothesis to be valid. 

\filbreak
\begin{hypothesis}\label{fF hyp} The following conditions are satisfied: 
\begin{itemize}
\item[(a)] $\mathscr{F}$ is $\infty$-relaxed;
\item[(b)] For every $\q \in S(\mathscr{F})\setminus \Pi_k^\infty$, the following conditions are satisfied:
\begin{itemize}
    \item[(i)] $\RGamma_\mathscr{F}(k_\q, A)$ belongs to $D_{[0,2]}^{\mathrm{perf}}(R)$;
    \item[(ii)] $H^0(\theta_{\mathscr{F},\q})$ is injective;
\end{itemize}
\item[(c)] For some $\q_0\in S(\mathscr{F})\setminus \Pi_k^\infty$, the map $H^0(\theta_{\mathscr{F},\q_0})$ is $0$.
\item[(d)] For some $\q_1\in S(\mathscr{F})\setminus \Pi_k^\infty$, the map $H^2(\theta_{\mathscr{F},\q_1})$ is bijective. 
\end{itemize}
\end{hypothesis}

For each finite subset $S$ of $\Pi_k$ we also use the map $\lambda^0_S(\mathscr{F}^\vee)$ from (\ref{lambda 0 def}) to define  an $R$-module  
\[ X_S(\mathscr{F}) \coloneqq \ker(\lambda^0_S(\mathscr{F}^\vee)^\vee).\]
We then abbreviate $X_{S(\mathscr{F})}(\mathscr{F})$ to $X(\mathscr{F})$.

The following result constructs a family of complexes that plays a key role in our theory.  

\begin{prop} \label{construction complex} Assume $(k,p)$ satisfies (\ref{p=2 condition}), that $A$ is a free $R$-module of finite rank, and that $\mathscr{F}$ satisfies Hypothesis \ref{fF hyp}. Then the following claims are valid. 
\begin{romanliste}
\item The complex 
\[ C(\mathscr{F}) \coloneqq \RHom_R ( \RGamma_{\mathscr{F}^\ast} (k, A^\ast (1)), R) [-2]
    \]
is a well-defined object of $D^{\mathrm{perf}}(R)$ such that, in $K_0(R)$, one has 
\[ \bm{\chi}_R\bigl(C(\mathscr{F})\bigr) = {\sum}_{\q\in S(\mathscr{F})\setminus \Pi_k^\infty}\bm{\chi}_R\bigl(\RGamma_{\mathscr{F}^\ast}(k_\q,A^\ast(1))\bigr).\] 
\item $C (\mathscr{F})$ is canonically isomorphic to $\RHom_R ( \RGamma_{\mathscr{F}^\vee} (k, A^\vee (1)), E_R (\mathbb{k})) [-2]$ in $D(R)$. In particular, in each degree $i$, one has 
    $H^i(C(\mathscr{F})) = H^{2-i}_{\mathscr{F}^\vee} (k, A^\vee (1))^\vee$.
    
 \item Fix a morphism $R\to R'$ of rings satisfying (\ref{ring condition}). Then there exists a natural isomorphism $C(\mathscr{F}) \otimes_R^\mathbb{L} R' \cong C(\mathscr{F}\otimes_R R')$ in $D^{\mathrm{perf}}(R')$, where $\mathscr{F}\otimes_R R'$ is the Nekov\'a\v{r} structure  on $A\otimes_RR'$ defined in Example \ref{remark selmer}\,(iii). 
    \item  Lemma \ref{limit result} defines an object $\varprojlim_{n \in \N} C(\mathscr{F}_{A_n})$ of $D^{\mathrm{perf}}(R)$ that is naturally isomorphic to $C(\mathscr{F})$. In particular, in each degree $i$, one has 
    $H^i(C(\mathscr{F})) = {\varprojlim}_{n \in \N} H^{2-i}_{(\mathscr{F}_{A_n})^\ast} (k, A_n^\ast (1))^\ast$. 
    \item $C(\mathscr{F})$ is acyclic outside degrees $0$ and $1$. There exists a canonical identification  
$H^0 (C(\mathscr{F}))$ $ = H^1_{\mathscr{F}} (k, A)$ and a canonical exact commutative diagram 
\begin{equation}\label{new diagram}\xymatrix{ 
X_{S(\mathscr{F})\setminus \Pi_k^\infty}(\mathscr{F})\\
H^{2}_{\mathscr{F}}(k,A) \ar@{^{(}->}[r] \ar@{->>}[u]^{\alpha_1} &  H^1 (C(\mathscr{F})) \ar@{->>}[r] \ar@{=}[d] & {\bigoplus}_{\q \in \Pi_k^\infty} H^0(k_\q,A^\vee(1))^\vee \\
H^1_{h(\mathscr{F})^\vee} (k, A^\vee (1))^\vee \ar@{^{(}->}[r] \ar@{^{(}->}[u]^{\alpha_2} &  H^1 (C(\mathscr{F})) \ar@{->>}[r] &  X(\mathscr{F}). \ar@{->>}[u]^{\alpha_3}}\end{equation} 
    \item Let $\Sigma$ be a finite subset of $\Pi_K$. Then, for the $\Sigma$-comodification $\mathscr{F}^\Sigma$ of $\mathscr{F}$ defined in Example \ref{lesc}, 
there exists a canonical exact triangle in  $D^{\mathrm{perf}} (R)$
    \begin{equation} \label{triangle changing S}
    C (\mathscr{F}) \to C(\mathscr{F}^\Sigma) \xrightarrow{(\rho_\q)_\q} {\bigoplus}_{\q \in \Sigma \setminus S(\mathscr{F})} \mathrm{R}\Gamma_f (k_\q, A^\ast(1))^\ast[-1] \to \cdot
        \end{equation}
  \end{romanliste}
\end{prop}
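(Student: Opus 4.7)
The plan is to verify the six claims in order, using Hypothesis \ref{fF hyp} throughout. For (i), I would first establish perfection of each local complex $\RGamma_{\mathscr{F}^\ast}(k_\q, A^\ast(1))$: for $\q \in \Pi_k^\infty$ this is trivial since the complex vanishes (by $\infty$-strictness of $\mathscr{F}^\ast$), and for $\q \in S(\mathscr{F}) \setminus \Pi_k^\infty$ it follows because $\RGamma_\mathscr{F}(k_\q, A)$ is perfect by Hypothesis \ref{fF hyp}\,(b)(i) and $\RGamma(k_\q, A)$ is perfect by Lemma \ref{flach result}\,(ii)--(iii), hence the mapping cone $\RGamma_{\!/\mathscr{F}}(k_\q, A)$ is perfect, and $\RHom_R(-, R)$ preserves perfection over the Gorenstein ring $R$. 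Lemma \ref{basic properties}\,(iii) then gives perfection of $C(\mathscr{F})$ together with the Euler-characteristic formula (archimedean terms contribute zero, and shifts and $R$-dualising preserve $\chi_R$). For (ii), I would compare the two Artin--Verdier triangles of Proposition \ref{av prop} for $\mathscr{F}^\ast$ and $\mathscr{F}^\vee$: since $\mathscr{F}$ is $\infty$-relaxed by Hypothesis \ref{fF hyp}\,(a) and $A$ is $R$-free, Remark \ref{infty relaxed rem} identifies both Artin--Verdier cones with the same explicit complex. Tracing through the construction of $\delta_{\mathscr{F}, \mathscr{F}^\ast}$ and $\delta_{\mathscr{F}, \mathscr{F}^\vee}$ in the proof of Proposition \ref{av prop}, their only difference comes from applying the first rather than the second isomorphism of Proposition \ref{local Tate duality thm}\,(ii) to the same commutative diagram, yielding a canonical isomorphism between the two descriptions of $C(\mathscr{F})$. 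The cohomology identification then follows since $E_R(\mathbb{k})$ is injective, so $\Hom_R(-, E_R(\mathbb{k})) = (-)^\vee$ commutes with homology.

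Claims (iii), (iv) and (vi) are essentially formal consequences. For (iii), I would apply Proposition \ref{Selmer complex control thm}\,(i) to $\mathscr{F}^\ast$, identifying $\mathscr{F}^\ast \otimes_R R'$ with $(\mathscr{F} \otimes_R R')^\ast$ via Lemma \ref{some basic duality properties of Nekovar structures}\,(i), and invoke the base-change isomorphism $\RHom_R(X, R) \otimes^\mathbb{L}_R R' \cong \RHom_{R'}(X \otimes^\mathbb{L}_R R', R')$ valid for perfect $X$. Claim (iv) then follows by applying Lemma \ref{limit result} to $(C(\mathscr{F}_{A_n}))_n$, whose required uniform cohomological range comes from (v) and whose transition isomorphisms from (iii). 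For (vi), I would observe that $\mathscr{F}$ refines $\mathscr{F}^\Sigma$ with local differences only at $\q \in \Sigma \setminus S(\mathscr{F})$, hence $(\mathscr{F}^\Sigma)^\ast$ refines $\mathscr{F}^\ast$; Lemma \ref{basic properties}\,(ii) then produces a triangle whose local cone at such $\q$ equals $\RGamma_{\mathscr{F}^\ast}(k_\q, A^\ast(1)) = \RHom_R(\RGamma_{\!/f}(k_\q, A), R)[-2]$, which by the classical unramified case of local Tate duality is canonically $\RGamma_f(k_\q, A^\ast(1))$. Dualising this triangle with $\RHom_R(-, R)[-2]$ and rotating yields (\ref{triangle changing S}).

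Claim (v) will be the main obstacle. Acyclicity of $C(\mathscr{F})$ outside $[0, 1]$ reduces by (ii) to $H^0_{\mathscr{F}^\vee}(k, A^\vee(1)) = 0 = H^3_{\mathscr{F}^\vee}(k, A^\vee(1))$. For the former, I would apply the first sequence of Lemma \ref{comparison selmer structures}\,(i) to $\mathscr{F}^\vee$: the local kernels $\ker(H^0(\theta_{\mathscr{F}^\vee, \q}))$ vanish (they Matlis-dualise to $H^2(k_\q, A) \twoheadrightarrow H^2(\RGamma_{\!/\mathscr{F}}(k_\q, A))$, which is surjective since $H^3_\mathscr{F}(k_\q, A) = 0$ by Hypothesis \ref{fF hyp}\,(b)(i)), and injectivity of $\lambda^0_{S(\mathscr{F})}(\mathscr{F}^\vee)$ is deduced from Hypothesis \ref{fF hyp}\,(c) via local duality. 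For the latter, I would apply Lemma \ref{basic properties}\,(iv) to $\mathscr{F}^\vee$; the required place is supplied either by any $\q_0 \in \Pi_k^\CC$ (where $\theta_{\mathscr{F}^\vee, \q_0}$ is trivially bijective in degree $2$) or, if $k$ is totally real, via Hypothesis \ref{fF hyp}\,(d) and local duality. For the diagram (\ref{new diagram}), the bottom row is the Matlis dual of the second sequence of Lemma \ref{comparison selmer structures}\,(i) applied to $\mathscr{F}^\vee$, invoking Lemma \ref{some basic duality properties of Nekovar structures}\,(iii) to rewrite $h(\mathscr{F}^\vee) = h(\mathscr{F})^\vee$ and the definition of $X(\mathscr{F})$ to identify the quotient. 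The top row will arise from the long exact cohomology sequence of Proposition \ref{av prop}'s triangle for $\mathscr{F}^\vee$: the vanishing of $H^1(\mathrm{R}\Gamma^{\mathrm{AV}})$ from Remark \ref{infty relaxed rem} gives injectivity on the left, and the explicit description there identifies $H^2(\mathrm{R}\Gamma^{\mathrm{AV}})$ with $\bigoplus_{\q \in \Pi_k^\infty} H^0(k_\q, A^\vee(1))^\vee$ (matching invariants to coinvariants under $c_\q$ via the fact that $2 \in R^\times$ whenever $\Pi_k^\RR \neq \emptyset$, as ensured by condition (\ref{p=2 condition})). The vertical maps $\alpha_1, \alpha_2, \alpha_3$ and commutativity will follow by tracing these identifications through Lemma \ref{comparison selmer structures}\,(ii) applied to $\mathscr{F}^\vee$ for $S = S(\mathscr{F})$ and $S' = S(\mathscr{F}) \setminus \Pi_k^\infty$. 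The hard part will be organising the two filtrations of $H^1(C(\mathscr{F}))$ and matching the various duality-induced identifications.
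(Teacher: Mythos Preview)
Your approach to (i)--(iv) and (vi) matches the paper's, and your construction of the bottom row of the diagram in (v) via Lemma \ref{comparison selmer structures}\,(i) for $\mathscr{F}^\vee$ is exactly what the paper does.

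There is, however, a genuine gap in your acyclicity argument for (v). You propose to show $H^{-1}(C(\mathscr{F})) = 0$ by proving $H^3_{\mathscr{F}^\vee}(k, A^\vee(1)) = 0$ via Lemma \ref{basic properties}\,(iv). This fails for two reasons. First, the second assertion of that lemma explicitly requires the underlying module to be a finitely generated free $R$-module, and $A^\vee(1) \cong E_R(\mathbb{k})^{\mathrm{rk}_R(A)}$ is not (unless $R$ is zero-dimensional); the proof of the lemma relies on Lemma \ref{flach result}\,(i), which has the same freeness hypothesis. Second, even granting an analogue for $A^\vee(1)$, your claim that Hypothesis \ref{fF hyp}\,(d) supplies a place with $H^2(\theta_{\mathscr{F}^\vee, \q_1})$ bijective is incorrect: dualising, this bijectivity is equivalent to the map $H^0(k_{\q_1}, A) \to H^0_{\!/\mathscr{F}}(k_{\q_1}, A)$ being bijective, which needs $\im(H^0(\theta_{\mathscr{F}, \q_1})) = 0$ and $H^1(\theta_{\mathscr{F}, \q_1})$ injective, neither of which follows from (d).

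The paper avoids this by staying on the $\mathscr{F}$-side. The long exact sequence of the Artin--Verdier triangle (which you already invoke for the middle row) gives directly $H^{j}(C(\mathscr{F})) \cong H^{j+1}_\mathscr{F}(k, A)$ for $j \notin \{1,2\}$, since $\mathrm{R}\Gamma^{\mathrm{AV}}_{\mathscr{F},\mathscr{F}^\vee}(k,A)$ is concentrated in degree $2$. Then $H^{-1}(C(\mathscr{F})) = H^0_\mathscr{F}(k, A)$ vanishes by applying the first exact sequence of Lemma \ref{comparison selmer structures}\,(i) to $\mathscr{F}$ itself (Hypothesis \ref{fF hyp}\,(b)(ii) kills the local kernels and (c) makes $\lambda^0_{S(\mathscr{F})}(\mathscr{F})$ injective), and $H^2(C(\mathscr{F})) = 0$ follows from $H^3_\mathscr{F}(k, A) = 0$, which is Lemma \ref{basic properties}\,(iv) applied to $A$ (which \emph{is} free) using Hypothesis \ref{fF hyp}\,(d). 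This also gives surjectivity on the right of the middle row, which your sketch leaves implicit.
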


\begin{proof} We set $B \coloneqq A^\ast(1)$. Since $\mathscr{F}^\ast$ is $\infty$-strict and $S(\mathscr{F}^\ast) = S(\mathscr{F})$, Lemma \ref{basic properties}\,(iii) implies $\mathscr{F}^\ast$  is perfect if and only if, for every $\q\in S(\mathscr{F})\setminus \Pi_k^\infty$, the complex $\RGamma_{\mathscr{F}^\ast}(k_\q,B)$ belongs to $D^{\mathrm{perf}}(R)$. Since the latter condition is satisfied as a consequence of Hypothesis \ref{fF hyp}\,(b)(i), it follows that $\RGamma_{\mathscr{F}^\ast} (k, B)$, and hence also $C(\mathscr{F})$, belongs to $D^{\mathrm{perf}}(R)$. For similar reasons, one also has 
\[ \bm{\chi}_R(C(\mathscr{F})) = \bm{\chi}_R(\RGamma_{\mathscr{F}^\ast}(k_\q,B)) = {\sum}_{\q \in S(\mathscr{F})\setminus \Pi_k^\infty}\bm{\chi}_R(\RGamma_{\mathscr{F}^\ast}(k_\q,B),\]
where the  second equality follows from the formula in Lemma \ref{basic properties}\,(iii) with $(A,\mathscr{F})$ replaced by $(B,\mathscr{F}^\ast)$. This proves (i).  

To prove (ii), we use the following diagram in $D(R)$
\begin{cdiagram}[row sep=small, column sep=small]
    C(\mathscr{F}) \arrow{r} \arrow[dashed]{d} & \mathrm{R}\Gamma^{\mathrm{AV}}_{\mathscr{F},\mathscr{F}^\ast}(k,A)[1] \arrow{r}{\theta_1} \arrow[equals]{d} & \mathrm{R}\Gamma_\mathscr{F}(k,A)[2] \arrow{r} \arrow[equals]{d} & \cdot  \\ 
   \RHom_R (\mathrm{R}\Gamma_{\mathscr{F}^\vee}(k,A^\vee(1)), E_R (\mathbb{k}))[-2] \arrow[r] & \mathrm{R}\Gamma^{\mathrm{AV}}_{\mathscr{F},\mathscr{F}^\vee}(k,A)[1] \arrow{r}{\theta_2} & \mathrm{R}\Gamma_\mathscr{F}(k,A)[2] \arrow{r} & \cdot
\end{cdiagram}%
Here the upper and lower rows are equivalent to the respective exact triangles in Proposition~\ref{av prop}~(ii) and the left hand equality follows from Remark \ref{infty relaxed rem}. In addition, an analysis of the construction of these triangles shows that the morphisms $\theta_1$ and $\theta_2$ coincide as they are both induced by the morphism ${\bigoplus}_{\q\in \Pi_k^\infty}\mathrm{R}\Gamma_{\!/\mathscr{F}}(k_\q,A) \to \RGamma_\mathscr{F}(k,A)[1]$ that occurs in the exact triangle (\ref{mapping fibre2}). The Octahedral axiom therefore implies the existence of a dashed arrow that makes the above diagram into a morphisms of exact triangles and hence is an isomorphism in $D(R)$. This proves the first assertion of (ii) and then the second assertion follows immediately from the fact that Matlis duality is an exact functor. 

To prove (iii) we note that the argument of Lemma \ref{some basic duality properties of Nekovar structures}\,(i) implies an equality $\mathscr{F}^\ast\otimes_RR'= (\mathscr{F}\otimes_RR')^\ast$ of  Nekov\'a\v{r} structures on $A^\ast(1)\otimes_RR' = (A\otimes_RR')^*(1)$. After taking this into account, the exact triangle (\ref{cs versus Selmer}) with $\mathscr{F}$ replaced by $\mathscr{F}^\ast$ combines with Lemma \ref{flach result}\,(vi) (with $\mathscr{F}(-)$ taken to be $\RGamma_\mathrm{c} (\mathcal{O}_{k,S},-)$) and the explicit definition of the local conditions for $\mathscr{F}^\ast\otimes_RR'$ to imply the existence of a natural isomorphism in $D(R')$
\[ \RGamma_{\mathscr{F}^\ast} (k, A^\ast (1))\otimes_R^\mathbb{L} R' \cong \RGamma_{(\mathscr{F}\otimes_RR')^\ast} (k, (A\otimes_RR')^\ast (1)). \]
Upon applying the exact functor $\RHom_{R'}(-, R') [-2]$ to this isomorphism one obtains the claimed isomorphism in (ii).

To prove (iv) we first apply (iii) with $R \to R'$ taken to be $R \to R/\a_n$ to deduce that for each $n$ there exists a natural isomorphism $C(\mathscr{F}) \otimes_R^\mathbb{L} R/\a_n \cong C(\mathscr{F}_{A_n})$ in $D^{\mathrm{perf}}(R/\a_n)$. Given this family of isomorphisms the first assertion of (iii) is proved by mimicking the argument of Proposition \ref{Selmer complex control thm} after replacing $\RGamma_\mathscr{F}(k,A)$ by $C(\mathscr{F})$. From the ismorphism 
$C(\mathscr{F}) \cong \varprojlim_{n \in \N} C(\mathscr{F}_{A_n})$ one then derives, in each degree $i$, an identification
\[ H^i (C(\mathscr{F})) = H^i({\varprojlim}_{n \in \N} C(\mathscr{F}_{A_n})) = {\varprojlim}_{n \in \N} H^i (C(\mathscr{F}_{A_n})) = {\varprojlim}_{n \in \N} H^{2-i}_{(\mathscr{F}_{A_n})^\ast} (k, A_n^\ast (1))^\ast.
    \]
Here the second equality is valid since inverse limits are exact on the category of finite abelian groups and the third  since $R_n$ is self-injective and so taking duals commutes with taking cohomology. This proves (iv). 

Turning to the proof of (v), we note that, since $\Pi_k^\RR = \emptyset$ if $p=2$, Hypothesis \ref{fF hyp}\,(a) implies $H^i_{\mathscr{F}}(k_\q,A) = (0)$ for all $i > 2$ and all $\q \in \Pi_k^\infty$. This fact combines with Remark \ref{infty relaxed rem} to imply $H^i(\mathrm{R}\Gamma^{\mathrm{AV}}_\mathscr{F}(k,A)) = (0)$ for $i \not= 2$ and also combines with  Hypothesis \ref{fF hyp}\,(b) and (d) and Lemma \ref{basic properties}\,(iv) to imply that $H^i_\mathscr{F}(k,A)=(0)$ for $i < 0$ and $i > 2$. These observations in turn combine with the long cohomology exact sequence of the exact triangle in Proposition \ref{av prop} to imply  $H^i(C(\mathscr{F})) = (0)$ for $i < -1$ and $i >1$, to identify $H^i (C(\mathscr{F}))$ with $H^{i+1}_\mathscr{F} (k, A)$ for $i \in \{-1,0\}$ and to give a short exact sequence that forms the central row of (\ref{new diagram}). In addition,  since Hypothesis \ref{fF hyp}\,(c) implies injectivity of $\lambda^0_{S(\mathscr{F})}(\mathscr{F})$, Hypothesis \ref{fF hyp}\,(b)\,(ii) combines with the first exact sequence in Lemma \ref{comparison selmer structures}\,(ii) to imply that $H^{0}_\mathscr{F}(k,A) = (0)$. To complete the proof of (ii), it is therefore enough to construct the diagram (\ref{new diagram}). To do this, we note that $H^1 (C_\mathscr{F} (A)) \cong H_{\mathscr{F}^\vee}^1 (k, A^\vee (1))^\vee$ by (iv) and the fact that taking Matlis duals is exact. Given this identification, we obtain the exact sequence that forms the lower row of (\ref{new diagram}) by first taking Matlis duals in 
the second exact sequence in Lemma \ref{comparison selmer structures}\,(i) with $(A,\mathscr{F})$ taken to be $(A^\vee (1),\mathscr{F}^\vee)$, and then recalling that $h(\mathscr{F}^\vee) = h (\mathscr{F})^\vee$ by Lemma \ref{some basic duality properties of Nekovar structures}\,(iii).\\
     We now take the map $\alpha_3$ in (\ref{new diagram}) to be the map $\alpha$ in the exact sequence of Lemma \ref{comparison selmer structures}\,(iii) with $(\mathscr{F},S)$ replaced by $(\mathscr{F}^\vee,S(\mathscr{F}^\vee))$. Then, with this definition, the commutativity of the second square in (\ref{new diagram}) is clear and this has two consequences: firstly, the map $\alpha_3$ is surjective (as can also be seen directly from Lemma \ref{comparison selmer structures}\,(iii) since the conditions \ref{fF hyp}\,(b)(iii) and (d) combine to imply $H^0(\theta_{\mathscr{F}^\vee,\q_1})$ is the zero map) and there exists an injective morphism $\alpha_2$ that makes the first square of (\ref{new diagram}) commute. The existence of a surjective morphism $\alpha_1$ that makes the first column of (\ref{new diagram}) a short exact sequence now follows by applying the Snake Lemma to the lower two rows of the diagram and taking account of the exact sequence in Lemma \ref{comparison selmer structures}\,(ii) with $(\mathscr{F},S,S')$ taken to be $(\mathscr{F}^\vee,S(\mathscr{F}^\vee),\Pi_k^\infty)$. This proves (v). 

To prove (vi) we note that $(\mathscr{F}^\Sigma)^\ast$ coincides with the Nekov\'a\v{r} structure $(\mathscr{F}^\ast)_\Sigma$ on $B$. Given this, the claimed exact triangle is directly obtained by applying the exact functor $X \mapsto X^\ast[-2]$ to the exact triangle (\ref{sigma triangle}) with $(A,\mathscr{F})$ replaced by $(B,\mathscr{F}^\ast)$. 
\end{proof}

The following result establishes some properties of the modules $X_S (\mathscr{F})$ that will be needed in later arguments.

\begin{lem} \label{limit Fitt X} Let $\mathscr{F}$ be a Nekov\'a\v{r} structure  on a finitely generated $R$-module $A$ and $S$ a finite subset of $\Pi_k$. Then the $R$-module 
\[ X_S (\mathscr{F})\coloneqq \ker (\lambda^0_S (\mathscr{F}^\vee)^\vee)\] 
is finitely generated.
Further, if $(k,p)$ satisfies (\ref{p=2 condition}), $A$ is a free $R$-module and $\RGamma_\mathscr{F} (k_\q, A)$ belongs to $D^\mathrm{perf}_{[0, 2]} (R)$ for all $\q \in S$, then the following claims are also valid.
\begin{romanliste}
    \item Every morphism $R \to R'$ of rings satisfying condition (\ref{ring condition}) induces a surjective map of $R'$-modules $X_S (\mathscr{F}) \otimes_R R' \twoheadrightarrow X_S (\mathscr{F} \otimes_R R')$.
    \item The maps from (i) combine to give an isomorphism  $X_S (\mathscr{F}) \cong {\varprojlim}_{i \in \N} X_S (\mathscr{F} \otimes_R R_i)$ of $R$-modules in which all of the transition morphisms in the inverse limit are surjective.
\end{romanliste}
\end{lem}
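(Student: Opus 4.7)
The plan is to establish the first claim as a direct consequence of Matlis duality, and to handle parts (i) and (ii) by combining local Tate duality (Proposition \ref{local Tate duality thm}) with the control theorems for Galois cohomology and Selmer complexes (Lemma \ref{flach result} and Proposition \ref{Selmer complex control thm}), together with the explicit construction of $\mathscr{F}^\vee$ in Definition \ref{dualisable structures}.

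First I would prove the finite generation statement. Because $A$ is finitely generated over the complete local Noetherian ring $R$, the twisted Matlis dual $A^\vee(1)$ is an Artinian $R$-module. Every $R$-submodule of $A^\vee(1)$, including $H^0(k, A^\vee(1))$ and each $H^0(k_\q, A^\vee(1))$, is therefore Artinian, as is $\coker(\lambda^0_S(\mathscr{F}^\vee))$. Since Matlis duality induces an anti-equivalence between Artinian and finitely generated $R$-modules (\cite[Th.~3.2.13]{BrunsHerzog}) and exactness gives $X_S(\mathscr{F}) = \ker(\lambda^0_S(\mathscr{F}^\vee)^\vee) \cong \coker(\lambda^0_S(\mathscr{F}^\vee))^\vee$, the module $X_S(\mathscr{F})$ is finitely generated.

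For (i) and (ii), the first step is to rewrite $X_S(\mathscr{F})$ in terms of data attached to $\mathscr{F}$ itself. Unwinding Definition \ref{dualisable structures}, for each $\q \in S \setminus \Pi_k^\infty$ the $R$-module $H^0_{\mathscr{F}^\vee}(k_\q, A^\vee(1))$ identifies via local Tate duality with $H^2_{\!/\mathscr{F}}(k_\q, A)^\vee$, and with this identification $H^0(\theta_{\mathscr{F}^\vee, \q})$ becomes the Matlis dual of $H^2(\phi_\q)$, so that $H^0(k_\q, A^\vee(1))/\im(H^0(\theta_{\mathscr{F}^\vee, \q}))$ is canonically isomorphic to $\im(H^2(\theta_{\mathscr{F}, \q}))^\vee$; for $\q \in \Pi_k^\infty$ the $\infty$-strictness of $\mathscr{F}^\vee$ means the same quotient equals $H^0(k_\q, A^\vee(1))$. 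For a morphism $R \to R'$ of rings satisfying (\ref{ring condition}), Lemma \ref{flach result}(vi) provides a base change isomorphism for the local cohomology complexes, and the perfectness assumption $\RGamma_\mathscr{F}(k_\q, A) \in D^{\mathrm{perf}}_{[0,2]}(R)$ combined with the naturality of local Tate duality (Proposition \ref{local Tate duality thm}(i)) guarantees that the images $\im(H^2(\theta_{\mathscr{F}, \q}))$ and their duals satisfy a compatible base change statement. Applying Matlis duality to the defining exact sequence of $\coker(\lambda^0_S(\mathscr{F}^\vee))$, tensoring with $R'$, and performing a diagram chase (using right-exactness of $(-) \otimes_R R'$) then produces the required surjection $X_S(\mathscr{F}) \otimes_R R' \twoheadrightarrow X_S(\mathscr{F} \otimes_R R')$.

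For (ii), the surjections from (i) applied to $R \to R_i$ and $R_{i+1} \to R_i$ show that the inverse system $(X_S(\mathscr{F} \otimes_R R_i))_{i \in \N}$ has surjective transition maps, so the limit is well-defined with surjective structural maps. To identify this limit with $X_S(\mathscr{F})$, I would pass to the inverse limit in the defining exact sequence: Galois cohomology of discrete torsion modules commutes with filtered colimits, so the isomorphism $A^\vee(1) \cong \varinjlim_i A_i^\vee(1)$ (from Lemma \ref{inj env lem}) yields compatible isomorphisms $H^0(k, A^\vee(1)) \cong \varinjlim_i H^0(k, A_i^\vee(1))$ and likewise at each place; Matlis duality then converts these direct limits into inverse limits and the claim follows from Matlis reflexivity on the finitely generated side together with the exactness of inverse limits of finite modules. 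The main technical obstacle is expected to be the careful verification that the local Tate duality isomorphisms, and the identifications of the local terms in $\lambda^0_S(\mathscr{F}^\vee)$, are compatible with base change along arbitrary $R \to R'$, particularly at archimedean places under condition (\ref{p=2 condition}); this relies on the cup product construction of Tate duality and on tracking the Tor error terms that are controlled by the perfectness hypothesis.
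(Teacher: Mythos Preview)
Your approach is essentially the same as the paper's: both arguments hinge on reinterpreting the local summands $(H^0(k_\q, A^\vee(1))/\im(H^0(\theta_{\mathscr{F}^\vee,\q})))^\vee$ as $\im(H^2(\theta_{\mathscr{F},\q}))$ via the definition of $\mathscr{F}^\vee$ and local Tate duality, and then using that $H^2_\mathscr{F}(k_\q, A)$ is the top cohomology of a complex in $D^{\mathrm{perf}}_{[0,2]}(R)$ to obtain the base change isomorphism $H^2_\mathscr{F}(k_\q, A) \otimes_R R' \cong H^2_\mathscr{F}(k_\q, A \otimes_R R')$ (cf.\ Lemma~\ref{how the cohomology base changes lemma}(ii)). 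Your finite generation argument via the Artinian/Noetherian anti-equivalence of Matlis duality is a valid and slightly cleaner alternative to the paper's direct inclusion $X_S(\mathscr{F}) \subseteq \bigoplus_{\q \in S} H^0(k_\q, A^\vee(1))^\vee$.

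There is, however, one ingredient you have not identified and which is where condition~(\ref{p=2 condition}) actually enters. The diagram chase for (i) requires not only that the \emph{local} summands base-change well, but also that the \emph{global} term $H^0(k, A^\vee(1))^\vee$ satisfies $H^0(k, A^\vee(1))^\vee \otimes_R R' \cong H^0(k, (A \otimes_R R')^\vee(1))^\vee$ (you need this map to be at least injective for the Snake Lemma argument to give surjectivity on the left). The paper handles this by invoking global Artin--Verdier duality (Proposition~\ref{local Tate duality thm}(ii)) to identify $H^0(k, A^\vee(1))^\vee$ with $H^3(\widetilde{\RGamma}_c(\cO_{k,S}, A))$, which is the top cohomology of a complex in $D^{\mathrm{perf}}_{[0,3]}(R)$ \emph{provided} condition~(\ref{p=2 condition}) holds. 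Your proposal mentions only local Tate duality and suggests the archimedean issue is about the local terms, but in fact the archimedean condition is needed for the perfectness of the compactly supported global complex. Once you supply this global duality step, your argument for (i) and (ii) goes through exactly as in the paper.
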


\begin{proof} The definition of $\lambda^0_S (\mathscr{F}^\vee)$ directly implies that 
\[X_S (\mathscr{F}) \subseteq \bigoplus_{\q \in S} \bigl( H^0 (k_\q, A^\vee (1)) / \im (H^0 (\theta_{\mathscr{F}^\vee, \q}))\bigr)^\vee \subseteq \bigoplus_{\q \in S}  H^0 (k_\q, A^\vee (1))^\vee.\]
In addition, for each $\q \in S$, the Matlis dual of the inclusion $H^0 (k_\q, A^\vee (1)) \subseteq A^\vee (1)$ is a surjective map $A ( - 1) \twoheadrightarrow H^0 (k_v, A^\vee (1))^\vee$. The latter map implies that each $R$-module $H^0 (k_\q, A^\vee (1))^\vee$ is finitely generated and hence, since $S$ is finite (and $R$ is Noetherian), the displayed  inclusions imply  $X_S (\mathscr{F})$ is also finitely generated over $R$, as claimed.

In the rest of the argument we assume all of the stated hypotheses for (i) and (ii). We then fix $\q \in S$ and note that the definition of $\mathscr{F}^\vee$ combines with local duality to give an isomorphism 
\begin{align}\label{missing duality iso}
( \coker H^0 (\theta_{\mathscr{F}^\vee,\q}))^\vee &\cong \ker \big ( H^0 (k_\q, A^\vee (1))^\vee \xrightarrow{H^0 (\theta_{\mathscr{F}^\vee, \q})^\vee} H^0_{\mathscr{F}^\vee} (k_\q, A^\vee (1))^\vee \big)\notag\\
& \cong \ker \big ( H^2 (k_\q, A) \to H^2_{\!/\mathscr{F}} (k_\q, A) \big)\notag\\
& = \im \big ( H^2_\mathscr{F} (k_\q, A) \xrightarrow{H^2 (\theta_{\mathscr{F},\q})}H^2 (k_\q, A) \big).
\end{align}
This isomorphism combines with the analogous isomorphism for $\mathscr{F}' \coloneqq \mathscr{F}\otimes_RR'$ to induce an exact commutative diagram
\begin{cdiagram}[column sep=small, row sep=small]
H^1_{\!/ \mathscr{F}} (k_\q, A) \otimes_R R' \arrow{r} \arrow{d} & 
    H^2_\mathscr{F} (k_\q, A) \otimes_R R' \arrow{d}{\cong} \arrow{rrrr}{H^2 (\theta_{\mathscr{F},\q})} & & & & ( \coker H^0 (\theta_{\mathscr{F}^\vee,\q}))^\vee \otimes_R R' \arrow{r} \arrow{d} & 0 \\ 
   H^1_{\!/ \mathscr{F}'} (k_\q, A \otimes_R R') \arrow{r} & H^2_{\mathscr{F}'}(k_\q, A \otimes_R R') \arrow{rrrr}{H^2 (\theta_{\mathscr{F}',\q})} & & & & ( \coker H^0 (\theta_{(\mathscr{F}')^\vee,\q}))^\vee \arrow{r} & 0.
\end{cdiagram}%
Here the bijectivity of the central vertical map follows from (the argument of) Lemma \ref{how the cohomology base changes lemma}\,(ii) and the fact $\RGamma_\mathscr{F} (k_\q, A)$ belongs to $D^\mathrm{perf}_{[0, 2]} (R)$, and so the third vertical map is surjective. 

In addition, global duality induces (via Proposition \ref{local Tate duality thm}\,(ii)) an isomorphism  
\begin{equation}\label{second missing duality iso}H^0 (k, A^\vee (1))^\vee \cong H^3 (\widetilde{\RGamma}_\mathrm{c}  (k, A))\end{equation}
and since $\widetilde{\RGamma}_\mathrm{c}  (k, A)$ belongs to $D^\mathrm{perf}_{[0, 3]} (R)$ (by condition (\ref{p=2 condition})), we can  similarly deduce that the map $H^0 (k, A^\vee (1))^\vee \otimes_R R' \to H^0 (k, (A\otimes_R R')^\vee (1))^\vee$ is bijective.\\
Now, from the respective definitions of the modules $X_S (\mathscr{F})$ and $X_S (\mathscr{F}')$, one obtains an exact commutative diagram of the form
\begin{cdiagram}[column sep=small, row sep=small]
    & X_S (\mathscr{F}) \otimes_R R' \arrow{r} \arrow{d} & \big( \textstyle \bigoplus_{\q \in S}  (\coker H^0 (\theta_{\mathscr{F}^\vee,\q}))^\vee \big) \otimes_R R' \arrow{r} \arrow{d} & H^0 (k, A^\vee (1))^\vee \otimes_R R' \arrow{r} \arrow{d} & 0 \\ 
    0 \arrow{r} & X_S (\mathscr{F}') \arrow{r} & \textstyle \bigoplus_{\q \in S}  (\coker H^0 (\theta_{(\mathscr{F}')^\vee,\q}))^\vee \arrow{r} & H^0 (k, (A \otimes_R R')^\vee (1))^\vee \arrow{r} & 0
\end{cdiagram}%
in which the second and third vertical maps are as above, and the first vertical map is induced by commutativity of the second square. In particular, since we have seen that the second and third vertical maps are respectively surjective and bijective, the Snake Lemma implies that the first vertical arrow is surjective, as required to prove (i). \\
As for (ii), 
by taking $R' = R_i$ in the above diagram and passing to the limit (over $i$), we obtain a diagram of the form
\begin{cdiagram}[column sep=tiny, row sep=small]
   0 \arrow{r} & X_S (\mathscr{F}) \arrow{r} \arrow{d} &   \bigoplus_{\q \in S}  (\coker H^0 (\theta_{\mathscr{F}^\vee,\q}))^\vee \arrow{r} \arrow{d} & H^0 (k, A^\vee (1))^\vee \arrow{r} \arrow{d} & 0 \\ 
    0 \arrow{r} & \varprojlim_i X_S (\mathscr{F} \otimes_R R_i) \arrow{r} &  \bigoplus_{\q \in S}  \varprojlim_i (\coker H^0 (\theta_{(\mathscr{F} \otimes_R R_i)^\vee,\q}))^\vee \arrow{r} & \varprojlim_i H^0 (k, (A \otimes_R R_i)^\vee (1))^\vee \arrow{r} & 0,
\end{cdiagram}%
in which the upper row is the defining exact sequence for $X_S (\mathscr{F})$.
We are therefore reduced to proving that the second and third vertical maps in this diagram are bijective. 
As in the discussion of  (i), we are then further reduced (via duality isomorphisms of the form (\ref{missing duality iso}) and (\ref{second missing duality iso}) with $\mathscr{F}$ and $A$ replaced by each $\mathscr{F}\otimes_RR_i$ and $A\otimes_RR_i$) to showing that there are natural isomorphisms
\[ \varprojlim_{i \in \N} H^2_{\mathscr{F} \otimes_R R_i} (k_\q, A \otimes_R R_i) \cong 
H^2_\mathscr{F} (k_\q, A), \quad \varprojlim_{i \in \N } H^2 (k_\q, A \otimes_R R_i) \cong 
H^2 (k_\q, A)\]
for each $\q \in S$, and also  
\[ \varprojlim_{i \in \N} H^3 (\widetilde{\RGamma}_\mathrm{c}  (k, A \otimes_R R_i)) \cong H^3 ( \widetilde{\RGamma}_\mathrm{c}  (k, A)).\]
These isomorphisms are all in turn derived by applying Lemma \ref{limit result} to the respective complexes $\RGamma_\mathscr{F} (k_\q, A)$, $\RGamma (k_\q, A)$, and $\widetilde{\RGamma}_\mathrm{c}  (k, A)$. 
\end{proof}

\section{Euler systems relative to Nekov\'a\v{r} structures}\label{statement of main result section}

\subsection{The definition of higher-rank Euler systems}
\label{higher-rank euler systems definitions sections}

We now fix a prime number $p$ and a number field $k$ such that condition (\ref{p=2 condition}) is satisfied. We also fix a ring $\cR$ that satisfies condition (\ref{ring condition}) and a finite-rank free $\cR$-module $\cT$ that carries an $\cR$-linear continuous action of $G_k$. We assume $S_\ram (\cT)$ is finite, and choose a finite subset $S_0$ of $\Pi_k$ with 
\[
\Pi_k^\infty \cup \Pi_k^p \subseteq S_0.
\]
(In applications one often takes $S_0 = \Pi_k^\infty \cup \Pi_k^p \cup S_\ram (\cT)$.)
For $\q \in \Pi_k\setminus S_0$, we fix a choice $\cI_\q$ of inertia subgroup in $G_k$ of $\q$ and set
\[
\Eul_\q (X) \coloneqq \det ( 1 - \Frob_\q^{-1} X \mid H^0 ( \cI_\q, \cT^\ast (1)) ) \in \cR [X].
\]
We also fix an abelian pro-$p$ extension $\cK$ of $k$ in which all places in $\Pi_k^\infty$ split completely, and denote the collection of finite extensions of $k$ in $\cK$ by $\Omega$. For each extension $K$ of $k$  in $\cK$ we abbreviate $S_\ram(K/k)$ to $S_\ram(K)$ and then set
\[
S_0 (K) \coloneqq S_0 \cup S_\ram (K) 
\quad \text{ and } \quad 
\cG_K \coloneqq \gal{K}{k}. 
\]
We observe that the ring $\cR [\cG_K]$ also satisfies condition (\ref{ring condition}) (with the necessary Gorenstein property  following, for example, from Lemma \ref{G_n ring examples}).  
We consider the  $\cR[\cG_K]\times \cR\llbracket G_k\rrbracket$-module 
\[ \cT_{K} \coloneqq \cT \otimes_\cR \cR  [\cG_K ]\] 
upon which $\cG_K$ acts via right multiplication and $\sigma \in G_k$ by $\sigma \cdot (a \otimes x) \coloneqq (\sigma a) \otimes (x \overline{\sigma}^{-1})$ where $\overline{\sigma}$ is the image of  $\sigma$ in $\cG_K$.

In the sequel we assume to be given a family of Nekov\'a\v{r} structures that satisfies the following hypothesis.

\begin{hyp} \label{system of fF hyp}
     $\fF \coloneqq (\mathscr{F}_{\!K})_{K \in \Omega}$ is a family of Nekov\'a\v{r} structures parametrised by fields in $\Omega$ that has  both of the following properties.
    \begin{romanliste}
        \item Each $\mathscr{F}_{\!K}$ is a Nekov\'a\v{r} structure on $\cT_K$ that satisfies Hypothesis \ref{fF hyp} (with $R = \cR[\cG_K]$). 
        \item For $K$ and $L$ in $\Omega$ with $K \subseteq L$, the Nekov\'a\v{r} structure $\mathscr{F}_{\!L} \otimes_{\cR [\cG_L]} \cR [\cG_K]$ induced by $\mathscr{F}_{\!L}$ on $\cT_K$ is the modification $\mathscr{F}^L_{\!K} \coloneqq \mathscr{F}_{\!K}^{S_{\mathrm{ram}}(L)}$ of $\mathscr{F}_{\!K}$ defined in Example \ref{lesc}. 
    \end{romanliste}
For $K$ in $\Omega$, we set $H^1_{\fF}(K,\cT) \coloneqq H^1_{\mathscr{F}_K} (k, \cT_K)$. 
\end{hyp}

\begin{bsp}
    Fix a finite subset $S'$ of $\Pi_k \setminus \Pi_k^\infty$ with $\Pi^p_k \cup S_\ram (\cT) \subseteq S'$ and assume to be given, for every $\q \in S'$, an $\cR[G_{k_\q}]$-submodule
$\cT_\q$ of $\cT$ that is free as an $\cR$-module. 
As in Example~\ref{nss} we define a Greenberg--Nekov\'a\v{r} structure $\mathscr{F}_K \coloneqq \mathscr{F} ( (\cT_{\q, K}, j_{\q, K})_{\q \in S (K)})$ by taking 
\begin{itemize}
    \item $S (K) \coloneqq S' \cup \Pi_k^\infty \cup S_\ram (K )$,
    \item $\cT_{\q, K} \coloneqq \cT_\q \otimes_\cR \cR [\cG_K]$ if $\q \in S'$ and $\cT_{\q, K} \coloneqq \cT_K$ if $\q \in S (K) \setminus S'$,
    \item $j_{\q, K}$  to be the inclusion $\cT_{\q, K} \hookrightarrow \cT_K$ for all $\q \in S (K)$.
\end{itemize}
 The associated family $\fF \coloneqq (\mathscr{F}_K)_{K \in \Omega}$ then satisfies Hypothesis \ref{fF hyp} if there exist places $\q_0$ and $\q_1$ in $S'$ with $\cT_{\q_0} = (0)$ and $\cT_{\q_1} = \cT$.
\end{bsp}

For each $K$ and $L$ in $\Omega$ with $K\subseteq L$, the long exact cohomology sequence of the exact triangle (\ref{triangle changing S}) (with $\mathscr{F}$ taken to be $\mathscr{F}_K$ and $\Sigma$ to be $S_\ram(L)$) combines with the explicit descriptions of cohomology given in Proposition \ref{construction complex}\,(v) to give an injective map of $\cR [\cG_K]$-modules 
\begin{equation}\label{intermediate step injection} H^1_{\fF} (K, \cT)=H^1_{\mathscr{F}_{K}} (k, \cT_K)\to H^1_{\mathscr{F}^L_{K}} (k, \cT_K).\end{equation}
These maps have two important consequences. Firstly, since Hypothesis \ref{system of fF hyp}\,(ii) combines with Proposition \ref{construction complex}\,(iii) and Lemma \ref{how the cohomology base changes lemma}\,(i) to identify $H^1_{\mathscr{F}^L_{K}} (k, \cT)$ with a submodule of  $H^1_{\fF} (L, \cT)$, the map (\ref{intermediate step injection}) induces a  canonical injective homomorphism 
\begin{equation}\label{intermediate step injection2} H^1_{\fF} (K, \cT)\hookrightarrow H^1_{\fF} (L, \cT).\end{equation}
We use this homomorphism to identify $H^1_{\fF} (K, \cT)$ with a submodule of $H^1_{\fF} (L, \cT)$. 

In addition,  Lemma \ref{biduals lemma 1}\,(i) implies that, for each $a \in \N_0$, the map (\ref{intermediate step injection}) induces a canonical injective map of $\cR [\cG_K]$-modules 
\[ \bidual^a_{\cR [\cG_K]}H^1_{\fF} (K, \cT) \hookrightarrow \bidual^a_{\cR [\cG_K]} H^1_{\mathscr{F}^L_K} (k, \cT_K),\] 
and, via these maps, we regard  $\bidual^a_{\cR [\cG_K]}H^1_{\fF} (K, \cT)$ as a submodule of $\bidual^r_{\cR [\cG_K]} H^1_{\mathscr{F}^L_K} (k, \cT_K)$. 
We will then also use the isomorphisms that are constructed in the next result. In the proof of this result, and also  often in the sequel, we will use,  for any finite group $\Delta$, the trace element 
\begin{equation}\label{norm def} N_\Delta \coloneqq {\sum}_{\delta \in \Delta} \delta \in \Z [\Delta].\end{equation} 

\begin{lem} \label{construction of injection lemma}
Fix a family $\fF$ of Nekov\'a\v{r} structures  as in Hypothesis \ref{system of fF hyp} and fields $K$ and $L$ in $\Omega$ with $K \subseteq L$. Then, for each $a \in \N $, there exists a natural isomorphism of $\cR [\cG_K]$-modules
\[
\nu^a_{L / K} \: \bidual^a_{\cR [\cG_K]} H^1_{\mathscr{F}^L_K} (k,\cT_K) \xrightarrow{\simeq} \big( \bidual^a_{\cR [\cG_L]} H^1_{\fF} (L,\cT) \big)^{\gal{L}{K}}.
\]
\end{lem}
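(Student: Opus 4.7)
The plan is to reduce the statement to an elementary computation with a quadratic resolution of the perfect Selmer complex $C(\mathscr{F}_L)$, so I begin by setting $A \coloneqq \cR[\cG_L]$, $B \coloneqq \cR[\cG_K] = A/I_H$ where $H \coloneqq \gal{L}{K}$ and $I_H$ is the kernel of the canonical projection, and $M \coloneqq H^1_{\fF}(L,\cT)$. By Proposition \ref{construction complex}(v), $M = H^0(C(\mathscr{F}_L))$, while Hypothesis \ref{system of fF hyp}(ii) combined with Proposition \ref{construction complex}(iii) produces a natural isomorphism $C(\mathscr{F}^L_K) \cong C(\mathscr{F}_L) \otimes^{\mathbb{L}}_A B$ in $D^{\mathrm{perf}}(B)$. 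Since $\mathscr{F}_L$ satisfies Hypothesis \ref{fF hyp}, Lemma \ref{finite level reps}(ii) allows us to represent $C(\mathscr{F}_L)$ by a complex $F_0 \xrightarrow{\delta} F_1$ of finitely generated free $A$-modules (placed in degrees $0$ and $1$), so that $C(\mathscr{F}^L_K)$ is represented by $F_0 \otimes_A B \xrightarrow{\delta \otimes B} F_1 \otimes_A B$.

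The first key step is to identify $H^1_{\mathscr{F}^L_K}(k,\cT_K)$ with $M^H$. Since $A$ is Gorenstein, the cyclic $A$-module $A^H = N_H A$ has annihilator $I_H$ and is therefore canonically isomorphic to $B$ as an $A$-module. For any free $A$-module $F$, this induces a $B$-linear isomorphism $F^H \cong F\otimes_A B$, and one checks that the restriction $\delta|_{F_0^H}$ corresponds under these identifications to $\delta\otimes_A B$; this verification is purely formal once one notes that the identification $A^H \cong B$ is $A$-linear. Applying left-exact functors $(-)^H$ and $\ker(-)$ to $0 \to M \to F_0 \xrightarrow{\delta} F_1$ then yields
\[
M^H \;=\; \ker(F_0^H \to F_1^H) \;\cong\; \ker(F_0 \otimes_A B \to F_1 \otimes_A B) \;=\; H^0(C(\mathscr{F}^L_K)) \;=\; H^1_{\mathscr{F}^L_K}(k,\cT_K).
\]

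With this identification in hand, I construct $\nu^a_{L/K}$ by applying Lemma \ref{biduals lemma 1}(i) twice, over $A$ and over $B$ respectively. Over $A$, the exact sequence $0 \to M \to F_0 \to F_1$ gives the exact sequence
\[
0 \;\to\; \bidual^a_A M \;\to\; \exprod^a_A F_0 \;\to\; {\bigoplus}_{i}\exprod^{a-1}_A F_0,
\]
the last map being the diagonal of contractions against the components $f_i \: F_0 \to A$ of $\delta$. Taking $H$-invariants (again left-exact) and using, for each $j$, the $B$-module identification $(\exprod^j_A F_0)^H \cong \exprod^j_B F_0^H$ (valid because $F_0$ is free, so exterior powers are free and invariants are computed coordinatewise), produces an exact sequence of $B$-modules whose kernel is $(\bidual^a_A M)^H$. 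The same lemma applied over $B$ to $0 \to M^H \to F_0^H \to F_1^H$ identifies $\bidual^a_B M^H$ with the very same kernel, thereby furnishing the desired natural isomorphism $\nu^a_{L/K}$. The main obstacle is precisely the compatibility check in the first sentence of this paragraph: that all three structures (Galois invariants of free modules, tensor product with $B$, and restriction of the boundary $\delta$) are coherently identified, so that the two applications of Lemma \ref{biduals lemma 1}(i) produce the same sub-object of $\exprod^a_B F_0^H$. Once this is established, naturality in $(K \subseteq L)$, independence of the choice of resolution (via Lemma \ref{finite level reps}), and compatibility with the injection (\ref{intermediate step injection2}) are routine.
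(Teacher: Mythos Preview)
Your proposal is correct and follows essentially the same approach as the paper. Both arguments fix a quadratic resolution $P^0\xrightarrow{\phi}P^1$ of $C(\mathscr{F}_L)$, identify $C(\mathscr{F}^L_K)$ with the base-change $P^\bullet\otimes_A B$, apply Lemma \ref{biduals lemma 1}(i) in both the $A$- and $B$-settings, and then compare the resulting kernels via the norm isomorphism $M_H\xrightarrow{\NN_H}M^H$ for free $A$-modules $M$; your phrasing of this last step as ``$A^H=\NN_H A$ has annihilator $I_H$, hence $F^H\cong F\otimes_A B$'' is just the inverse description of the paper's norm map.
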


\begin{proof} Hypothesis \ref{system of fF hyp}\,(i) implies that the complex $C ({\mathscr{F}_L})$ constructed in Proposition \ref{construction complex} satisfies the assumptions of Lemma \ref{finite level reps}. It follows that $C (\mathscr{F}_L)$ has a resolution $P^\bullet$ in $D(\cR [\cG_L])$ of the form $P^0\xrightarrow{\phi}P^1$, in which $P^0$ and $P^1$ are finitely generated free $\cR [\cG_L]$-modules and $P^0$ is placed in degree $0$. Set $H \coloneqq \gal{L}{K}$ and $P^i_{H} \coloneqq P^i \otimes_{\cR [\cG_L]} \cR [\cG_K]$ for $i \in \{0,1\}$. Then, by  taking $H$-invariants of the exact sequence given by Lemma \ref{biduals lemma 1}\,(i), we obtain the upper row in the following exact commutative diagram
\begin{cdiagram}[column sep=small]
    0 \arrow{r} & \big( \bidual^a_{\cR [\cG_L]} H^1_{\fF} (L, \cT) \big)^H \arrow{r}  & \big( \exprod^a_{\cR [\cG_L]} P^0 \big)^H \arrow{r}{\phi} & \big(P^1 \otimes_{\cR [\cG_L]} \exprod^{a - 1}_{\cR [\cG_L]} P^0 \big)^H  \\ 
    0 \arrow{r} & \bidual^a_{\cR [\cG_K]} H^1_{\mathscr{F}^L_K} (k, \cT_K) \arrow{r} \arrow[dashed]{u}{\simeq} & \exprod^a_{\cR [\cG_K]} P^0_{H} \arrow{r}{\phi} \arrow{u}{\simeq} & P^1_{H} \otimes_{\cR [\cG_K]} \exprod^{a - 1}_{\cR [\cG_K]} P^0_{H}. \arrow{u}{\simeq}
\end{cdiagram}%
To describe the lower row we note that Proposition \ref{construction complex}\,(iii) and Hypothesis \ref{system of fF hyp}\,(ii) combine to imply $C (\mathscr{F}^L_K)= C (\mathscr{F}_L \otimes_{\cR [\cG_L]} \cR [\cG_K])$ is isomorphic to $C (\mathscr{F}_L) \otimes^\mathbb{L}_{\cR [\cG_L]} \cR [\cG_K]$ and hence to  $P^\bullet \otimes_{\cR [\cG_L]} \cR [\cG_K]$. The lower row is therefore obtained by applying Lemma \ref{biduals lemma 1}\,(i) to the exact sequence obtained from the latter resolution. Now, for any finitely generated free $\cR [\cG_L]$-module $M$, the assignment $m \mapsto \NN_{H}(m)$ induces an isomorphism $M_H \xrightarrow{\simeq} M^H$. The first and second solid vertical isomorphisms are then obtained by applying this observation with $M$ taken to be $\exprod^a_{\cR [\cG_L]} P^0$ and $P^1 \otimes_{\cR [\cG_L]} \exprod^{a - 1}_{\cR [\cG_L]} P^0$ respectively. Since the square involving these isomorphisms clearly commutes, there exists an induced dashed map as in the diagram and the Snake Lemma implies this map is bijective. We can therefore take the latter map to be the required isomorphism $\nu^a_{L / K}$. 
\end{proof}

We can now specify an appropriate  notion of Euler system for our theory.

\begin{definition}
 Let $\fF$ be a family  of Nekov\'a\v{r} structures as in Hypothesis \ref{system of fF hyp}. Then, for each $a\in \N$,  an `Euler system' of rank $a$ for $\fF$ is an element  
\[
c = (c_K)_{K} \in {\prod}_{K \in \Omega} \bidual^a_{\cR [\cG_K]} H^1_{\fF} (K, \cT)
\]
with the property that, for all fields $K$ and $L$ in $\Omega$ with $K \subseteq L$, one has 
\[
\NN_{\gal{L}{K}}(c_L) = \nu^a_{L / K} \big( ({{\prod}}_{v \in S_0 (L) \setminus S_0 (K)} \Eul_v ( \Frob_v^{-1}) ) \cdot c_K \big).
\]
The collection of all such elements is naturally a module over $\cR \llbracket \cG_\cK \rrbracket$ that we denote by $\ES^a_{S_0}(\fF)$. If $S_0 = \Pi_k^\infty \cup \Pi_k^p \cup S_\ram (\cT)$, then we abbreviate $\ES^a_{S_0}(\fF)$ to $\ES^a(\fF)$.\end{definition}
  
\begin{example}\label{relaxed es remark}
Assume $\cR$ is reduced and $\Z$-torsion free, and write $\cQ$ for its total quotient ring. 
Then, for $K\in \Omega$, the ring  $\cR [\cG_K]$ is also reduced, with total quotient ring $\cQ [\cG_K]$, and we write $\mathscr{F}_{\!\mathrm{rel},K}$ for the relaxed Nekov\'a\v{r} structure on the $\cR [\cG_K]$-module $\cT_K$. Let $\fF_{\mathrm{rel}} = \fF_{\mathrm{rel}}(\cT)$ denote the family $\{\mathscr{F}_{\!\mathrm{rel},K}\}_{K\in \Omega}$ and, for $K\in \Omega$, set $\mathcal{O}'_K \coloneqq \cO_{K, S(K)}$. 
Then for each $L \in \Omega$ with $K \subseteq L$, an explicit computation shows that  
    \[
    H^1_{\fF_{\mathrm{rel}}} (K, \cT) = H^1 (\mathcal{O}_K', \cT) \qquad 
    H^1_{(\mathscr{F}_{\!\mathrm{rel}, K})^L} (k, \cT_K) = H^1 ( \cO_{K, S(L)}, \cT) \qquad 
    H^1_{\fF_{\mathrm{rel}}} (L, \cT) = H^1 (\mathcal{O}_L', \cT).
    \]
In addition, setting $H \coloneqq \gal{L}{K}$, the observation of \cite[Rem.\@ 6.11]{bss} implies the existence for every $a\in \N$ of a commutative diagram of $\cQ[\cG_L]$-modules 
    \begin{cdiagram}[column sep=tiny, row sep=small]
 \exprod^a_{\cQ [\cG_L]} H^1 (\mathcal{O}_L', \cQ \otimes_\cR \cT) \arrow{rr}{\wedge^a \cores_{L / K}} \arrow{d}{\simeq} & &\exprod^a_{\cQ [\cG_K]} H^1 (\mathcal{O}_{K,S(L)}, \cQ \otimes_\cR \cT) \arrow{d}{\simeq} \\ 
\cQ \otimes_\cR \bidual^a_{\cR [\cG_L]} H^1 (\mathcal{O}_L', \cT)  \arrow{dr}{\cdot \NN_{H}}
& &
\cQ \otimes_\cR \bidual^a_{\cR [\cG_K]} H^1 (\mathcal{O}_{K,S(L)}, \cT) \arrow{ld}[above]{\nu^a_{L / K}}[below]{\hskip0.4truein\simeq} \\
& \cQ \otimes_\cR\big( \bidual^a_{\cR [\cG_L]} H^1 (\mathcal{O}_L', \cT) \big)^{ H} &  
    \end{cdiagram}%
in which the horizontal map is induced by corestriction $H^1 (\mathcal{O}_L' , \cT) \to H^1 (\mathcal{O}_{K,S(L)}, \cT)$ and both vertical isomorphisms are as in Lemma \ref{biduals-reduced-rings}. This diagram implies that elements of $\ES^a (\fF_{\mathrm{rel}}(\cT))$ coincide precisely with the notion of Euler systems of rank $a$ for the pair $(\cT,\Omega)$, as defined in \cite[Def.\@ 6.1]{bss}. 
\end{example}

\subsection{Hypotheses on Galois representations}
\label{general set up section}

In this subsection, we assume to be given a pair of local complete Gorenstein rings 
\[ \cR = {{\varprojlim}}_{i \in \N} \cR_i\quad\text{and}\quad R = {{\varprojlim}}_{i \in \N} R_i\]
that arise as the respective inverse limits (as in \S\,\ref{cgr section}) of families of rings $\cR_i$ and $R_i$ that are local, Gorenstein and of dimension zero. In particular, all transition morphisms $\cR_{i+1}\to \cR_i$ and $R_{i+1}\to R_i$ are assumed to be surjective. 

We write $\cM$ and $M$ for the maximal ideals of $\cR$ and $R$, and assume the residue fields 
\[ \mathbb{K} \coloneqq \cR / \cM \quad \text{and}\quad \mathbb{k} \coloneqq R / M\] 
are both finite and of characteristic $p$. We also adopt the convention that $\cR_0 \coloneqq \mathbb{K}$ and $R_0 \coloneqq \mathbb{k}$.\\
Then the maximal ideals $\cM_i$ and $M_i$ of each $\cR_i$ and $R_i$ are the respective images of $\cM$ and $M$ under the canonical projections $\cR \to \cR_i$ and $R \to R_i$. In addition, the corresponding residue fields $\cR_i / \cM_i$ and $R_i / M_i$ respectively identify with $\mathbb{K}$ and $\mathbb{k}$ and so are both independent of $i$. 

We also assume to be given a morphism 
\[ \varrho \: \cR \to R\]
of local rings such that, for every natural number $i$, the diagram
\begin{equation}\label{ring diagram}\xymatrix{
    \cR_{i + 1} \ar[r]^{\varrho_{i + 1}} \ar[d] & R_{i + 1} \ar[d]\\ 
    \cR_i \ar[r]^{\varrho_i} & R_i}
\end{equation}
commutes. Here $\varrho_i$ and $\varrho_{i + 1}$ denote the maps induced by $\varrho$, and the vertical arrows are the given transition maps that occur in the respective inverse limits. 

In addition, we assume to be given a finitely generated free 
$\cR$-module $\cT$ that is endowed with an $\cR$-linear continuous action of $G_k$ that is unramified outside a finite set of places $S_\ram (\cT)$. 
We fix a finite set $S_0 \subseteq \Pi_k$ containing $\Pi_k^\infty \cup \Pi_k^p$, and
we set
\[
T \coloneqq \cT \otimes_\cR R
\qquad 
S \coloneqq S_\ram (\cT) \cup S_0
\]
and, for every $i \in \N_0$, also 
\begin{align*}
\cT_i \coloneqq  \cT \otimes_\cR \cR_i 
\qquad 
T_i  \coloneqq  T \otimes_R R_i \qquad 
\cTbar  \coloneqq \cT_0 = \cT \otimes_\cR \mathbb{K}
\qquad 
 \Tbar \coloneqq T_0 = T \otimes_R \mathbb{k}.
\end{align*}
We write $\mathscr{F}$ for the Nekov\'a\v{r} structure $\mathscr{F}_k$ on $\cT$ that was fixed in \S\ref{higher-rank euler systems definitions sections}, and use the associated  structures
\[ \mathscr{F}_i \coloneqq \mathscr{F} \otimes_\cR \cR_i,\quad \cF_i \coloneqq h (\mathscr{F}_i)\quad\text{and}\quad F_i \coloneqq h ( \mathscr{F}_i \otimes_{\cR_i} R_i)\]
on $\cT_i$ and $T_i$. The Mazur--Rubin structures that are induced (via the procedure described in Remark \ref{ss def}\,(iv)) by $\cF_i$ on $\cTbar$ and by $F_i$ on $\Tbar$  will then be denoted by $\overline{\cF_i}$ and $\overline{F_i}$, and we caution the reader that these structures are, in general,  respectively finer than 
$\cF_0$ and $F_0$.

We fix an ascending chain of number fields
\begin{equation} \label{minimal fields}
k (\cT_0) \subseteq k (\cT_1) \subseteq \dots \subseteq k (\cT_i) \subseteq \dots 
\end{equation}
with the property that $G_{k(\cT_i)}$ acts trivially on $\cT_i$ for all  $i \in \N_0$. 
For  $i \in \N_0$, we  set  
\[ l (i) \coloneqq \max \{ |\cR_i|, |R_i|\},\,\,\, \,k_i \coloneqq k ( \mu_{l (i)}, ( \bigO_k^\times)^{1 / l(i)}) k(1) \quad\text{and}\quad k_i (\cT_i) \coloneqq k(\cT_i) k_i\] 
and then define fields  
\[ k_\infty\coloneqq \bigcup_{i \in \N} k_i \quad\text{and}\quad k (\cT)_\infty\coloneqq \bigcup_{i \in \N} k_i (\cT_i).\]
Given $i \in \N_0$ and an element $\tau \in G_{k_\infty}$, we define a subset of $\Pi_k$ by setting   
\[ \cQ_i = \cQ (\tau, \cT_i) \coloneqq \{ v \in \Pi_k\setminus S : \Frob_v\,\, \text{ is conjugate to }\, \tau \, \text{ in }\, \cG_{k_{i} (\cT_i)}\},\]
and write $\cN_i \coloneqq \cN (\cQ_i)$ for the set of square-free products of primes in $\cQ_i$. We observe that there are  decreasing filtrations
\[
\cQ_0 \supseteq \cQ_1 \supseteq \cQ_2 \supseteq \dots 
\quad \text{ and } \quad 
\cN_0 \supseteq \cN_1 \supseteq \cN_2 \supseteq \dots ,
\]
and for each modulus $\fn \in \cN_i$, we set 
\[ V(\fn) \coloneqq \{\q \in \cQ_i: \q \mid \fn\} = \{\q \in \cQ_0 : \q \mid \fn\}\,\,\text{ and }\,\, \nu(\fn) \coloneqq |V(\fn)|.\] 
For each natural number $j$ with $j \geq i$, the Tate--Shafarevich group  
\[ \sha_{\overline{F_i}, j} (k,\Tbar) = \sha_{\overline{F_i}} (k, \Tbar, \cQ_j) \coloneqq 
\ker \Bigl( H^1_{\overline{F_i}} (k, \Tbar) \to \prod_{v \in \cQ_j} H^1 (k_v, \Tbar) \Bigr )\]
is a submodule of $H^1_{\overline{F_i} (\fn)} (k, \Tbar)$, where the modified Mazur--Rubin structure $\overline{F_i} (\fn)$ is as defined in \cite[Exam.~2.1.8]{MazurRubin04} (see also \cite[\S\,3.1.3]{bss}). Further, by
using global duality sequences of the form (\ref{mr duality}), one can show that the `core-rank' invariant  
\begin{equation}
    \label{definition core rank}
\bm{\chi} (\overline{F_i},j) \coloneqq \dim_\mathbb{k} \big( H^1_{\overline{F_i} (\fn)} (k, \Tbar) / \sha_{\overline{F_i}, j} (k,\Tbar) \big)-  \dim_\mathbb{k} \big( H^{1}_{\overline{F_i}^\ast (\fn)} (k, \Tbar^\ast (1)) / \sha_{\overline{F_i}^\ast, j} (k,\Tbar^\ast (1))\big)
\end{equation}
is independent of $\fn$ (for details see \S\,\ref{existence core section}). \\
Before stating the technical hypotheses that our subsequent arguments require, we make one further observation. Specifically, we note that the maps in Lemma \ref{limit Fitt X} combine with the general result of Lemma \ref{Tor lemma} to give natural maps
\[
\Tor_1^{\cR_{j'}} (X (\mathscr{F}_{j'}), R_{j'}) \to \Tor_1^{\cR_j} (X (\mathscr{F}_j), R_j)
\]
for integers $j$ and $j'$ with $j '\ge j$, and also a canonical isomorphism of $\cR$-modules 
\begin{equation}\label{tor isomorphism}
 \Tor_1^{\cR} ( X (\mathscr{F}), R) \cong {\varprojlim}_{j \in \N} \Tor_1^{\cR_j} (X (\mathscr{F}_j), R_j),
\end{equation}
where the limit is taken with respect to the above maps for $j'\ge j$. In particular, for each  $i\in \N$, this fact gives rise to the following invariants $J_i$ and $j(i)$ of the Nekov\'a\v{r} structure $\mathscr{F}$.

\begin{definition}\label{J def}
    For each $i\in \N$ we set
    \[
    J_i = J_i(\mathscr{F})\coloneqq \im \bigl( \Tor_1^{\cR} ( X (\mathscr{F}), R) \to \Tor_1^{\cR_i} (X (\mathscr{F}_i), R_i) \bigr),
    \]
    where the arrow denotes the map induced by (\ref{tor isomorphism}). Then, since the group $\Tor_1^{\cR_i} (X (\mathscr{F}_i),R_i)$ is finite, there exists $m \in \N$ with $m \ge i$ for which $J_i$ is equal to the image of the natural map $\Tor_1^{\cR_{m}} (X (\mathscr{F}_{m}), R_{m}) \to \Tor_1^{\cR_i} (X (\mathscr{F}_i), R_i)$. We define $j(i)$ to be the least possible value of such an $m$ subject to the condition that the assignment $i \mapsto j(i)$ is an increasing function of $i$.
\end{definition}

We can now finally state the hypotheses on $\cT$ and $\mathscr{F}_k$ that will be used in our arguments.

\begin{hyps} \label{new strategy hyps}
    The following conditions are satisfied.
    \begin{romanliste}
    \item The $\mathbb{k}[G_k]$-module $\Tbar$ and  $\mathbb{K}[G_k]$-module $\cTbar$ are both irreducible.
    \item There exists an element $\tau$ of $G_{k_\infty}$ such that $\dim_\mathbb{K} (\cTbar / (\tau - 1) \cTbar ) = 1$.
    \item[(ii${}^\ast$)] If $p = 2$, then $\dim_\mathbb{K} (\cTbar) = 1$. 
    \item $H^1 (k (\cT)_\infty / k, \Tbar^\ast (1)) = (0)$.
    \item If $p \in \{2,3\}$, then the $\Z_p [G_k]$-modules $\Tbar \oplus \cTbar$ and $\Tbar^\ast(1)\oplus \overline{\cT}^\ast(1)$ have no nonzero isomorphic subquotients. 
    \item For every  $i \in \N_0$, the Nekov\'a\v{r} structure $\mathscr{F}_i$ satisfies Hypothesis \ref{fF hyp}  (with $A = \cT_i$ and $R = \cR_i$).
    \item For every $i \in \N_0$, one has $\bm{\chi} ( \overline{F_i},j(i)) > 0$.
    \item $\gal{k (\cT)_\infty}{k}$ is a compact $p$-adic analytic group.  
    \end{romanliste}
\end{hyps}

\begin{rk}\label{vanishing remark}
We clarify aspects of Hypotheses \ref{new strategy hyps}.
\begin{romanliste}
\item It is clear that if $\cT$ satisfies Hypothesis \ref{new strategy hyps}\,(i) and (ii), respectively (ii)$^\ast$, then so does $\cT^\ast(1)$ (and with respect to the same element $\tau$ in (ii)). 
\item The argument of \cite[Lem.\@ 3.8]{bss2} implies that, under Hypothesis \ref{new strategy hyps}\,(iii), the modules  $H^1 (k_j (\cT_j) / k, \cT_i^\ast (1))$ and $H^1 (k_j (\cT_j) / k, T_i^\ast (1))$ vanish for every $(i,j) \in \N^{\oplus 2}$ with $j \geq i$.
\item The Jordan--H\"older Theorem implies that Hypothesis~\ref{new strategy hyps}~(iv) is equivalent to the following condition: if both $B_1$ and $B_2$ denote either $\Tbar$ or $\overline{\cT}$, then the $\Z_p [G_k]$-modules $B_1$ and $B_2^\ast(1)$ 
have no nonzero isomorphic subquotients. For more details, see  the proof of Proposition \ref{cebotarev prop tweak}\,(ii) below.
\item Sakamoto \cite{Sakamoto24} has developed a theory of Kolyvagin systems in a case where $p = 3$ and Hypothesis \ref{new strategy hyps}\,(iv) fails. It seems reasonable to believe that the approach of Sakamoto would also allow a corresponding weakening of Hypothesis \ref{new strategy hyps}\,(iv) in our set-up.
    \item Hypothesis \ref{new strategy hyps}\,(vi) is weaker than the `cartesian' condition that originates with Mazur and Rubin \cite{MazurRubin04} and is assumed by Sakamoto et al.\@ throughout \cite{bss}. In particular, subsequent analysis (in \S\,\ref{verifying the hypotheses section} and \S\,\ref{proof of iwasawa theory result}) will show that, in cases relevant to the study of Kato's `generalised Iwasawa main conjecture', Hypothesis \ref{new strategy hyps}\,(vi) is satisfied under a variety of  wide-ranging, and natural, conditions.
 \item Let $T'$ be a continuous $\Z_p\llbracket G_k\rrbracket$-module that is free of rank $t$ over $\Z_p$, with $k(T')$ the fixed field of $k^\mathrm{c}$ under the kernel  of the induced homomorphism $G_k \to \mathrm{Aut}_{\Z_p}(T')$. Then  $\gal{k(T')}{k}$ is isomorphic to a closed subgroup of $\mathrm{GL}_t(\Z_p)$ and so is a compact $p$-adic analytic group. In particular, if $\cR = \Z_p\llbracket \gal{L}{k}\rrbracket$ for any compact $p$-adic analytic extension $L/k$ and  $\cT = T'\otimes_{\ZZ_p}\cR$, then $k(\cT)_\infty$ is the composite of the extensions $k(T')$, $L$ and $k_\infty$ of $k$ and so Hypothesis \ref{new strategy hyps}\,(vii) is valid.  In addition, Hypothesis \ref{new strategy hyps}\,(vii) can be omitted in any case in which certain Tate--Shafarevich groups are known to vanish (see Remark \ref{omitting 7 remark}).    
    \end{romanliste}
\end{rk}

At this point, we fix an abelian extension $\cK$ of $k$ in $k^c$ and write $\Omega \coloneqq \Omega (\cK)$ for the set of finite extensions of $k$ in $\cK$.  Then, in the sequel, we will often assume the following hypothesis.

\begin{hyps} \label{more new hyps}
The following conditions are satisfied.
\begin{romanliste}
\item $\cK$ is a pro-$p$ extension that contains $k (\q)$ for every prime $\q \in  \cQ_0$ and also a $\Z_p$-power extension of $k$ in which no finite place splits completely. 
\item For all $a \in \N_0$ and primes  
$\q \in \cQ_0$, the endomorphism $\Frob_\q^{p^a} - 1$ is injective on $\cT$. 
    \end{romanliste}
\end{hyps}

\begin{rk}\label{hyps cons}
    As discussed by Rubin in \cite[\S\,9.1]{Rubin-euler}, in many situations there exist alternative conditions that can replace
    Hypothesis \ref{more new hyps}\,(i). It is also useful to note that Hypothesis \ref{more new hyps}\,(ii) implies  $\cT$ can have no element of finite order. 
\end{rk}

\subsection{Statement of the main result}\label{somr section}

We fix a family $\fF = (\mathscr{F}_K)_{K \in \Omega}$ of Nekov\'a\v{r} structures satisfying Hypothesis \ref{system of fF hyp} and, following Proposition \ref{construction complex}\,(i), define an integer     
\[ \bm{\chi}_{\fF} \coloneqq \bm{\chi}_\cR(C(\mathscr{F}_k)) \in K_0(\cR) \xrightarrow[\cong]{\mathrm{rk}_\cR} \ZZ.\]

We also assume to be given a non-zero free $\cR$-module quotient $Y$ of ${\bigoplus}_{\q\in \Pi_k^\infty}H^0(k_\q,\cT^\vee(1))^\vee$. Such an $\cR$-module is necessarily finitely generated and we set   
\begin{equation}\label{r def} r_Y \coloneqq \mathrm{rk}_\cR(Y) \quad\text{and}\quad  r_{\fF,Y} \coloneqq r_Y + \bm{\chi}_{\fF}.  \end{equation}
We fix an $\cR$-basis $b_\bullet$ of $Y$ and use the surjective map in the central row of diagram (\ref{new diagram}) to regard $Y$ as a quotient of $H^1(C(\mathscr{F}_k))$. We recall (from  Proposition \ref{construction complex}\,(i) and (v)) that $C(\mathscr{F}_k)$ belongs to $D^{\mathrm{perf}}_{[0,1]}(\cR)$ and $H^0(C(\mathscr{F}_k)) = H^1_\fF (k,\cT)$. Hence, if $r_{\fF,Y} >0$, then the  general construction of Lemma \ref{det projection lemma}\,(i) gives a map of $\cR$-modules 
\begin{equation}\label{vartheta Y def}
\vartheta_{\mathscr{F}_k,Y} \coloneqq \vartheta_{C(\mathscr{F}_k), b_\bullet} \:  \Det_\cR (C (\mathscr{F}_k)) \to \bidual^{r_{\fF,Y}}_\cR H^1_\fF (k,\cT)\end{equation}
that depends only on $C (\mathscr{F}_k)$ and $b_\bullet$.  (Note also that Remark \ref{basis independence}\,(i) implies a change of $b_\bullet$ will not affect the validity of any of the results stated subsequently and so, for clarity of exposition, we prefer to write $\vartheta_{\mathscr{F}_k,Y}$ in place of the more precise $\vartheta_{\mathscr{F}_k,b_\bullet}$).

We finally note the $\cR$-module $X(\mathscr{F}_k)$ is finitely-presented (by Lemma \ref{limit Fitt X} and our assumption  $\cR$ is Noetherian) and so, for each $a \in \N_0$, the Fitting ideal
 $\Fitt^{a}_\cR (X(\mathscr{F}_k))$ is well-defined.\\ 
We can now state our main result concerning  Euler systems for Nekov\'a\v{r} structures. 
\begin{thm}\label{new strategy main result} 
Let $\fF = (\mathscr{F}_K)_{K \in \Omega}$ be a family of Nekov\'a\v{r} structures satisfying Hypothesis~\ref{system of fF hyp}.
Assume that Hypotheses \ref{new strategy hyps} and \ref{more new hyps} are valid, and that the (non-zero) free $\cR$-module $Y$ is such that $r_{\fF,Y} >0$. Then, for every pair of elements $x$ and $y$ of $\Fitt^{r_Y}_\cR (X(\mathscr{F}_k))$, there exists a natural number $N$ that depends only on $\cTbar$ and has the following property. For every Euler system $c = (c_K)_K$ in $\ES_{S_0}^{r_{\fF,Y}} (\fF)$ and every prime ideal $\p$ of $\cR$ for which both
\begin{romanliste}
\item the map $\varrho_\fp \: \cR_\p \to R_\p$ induced by $\varrho$ is nonzero and surjective, and
    \item $\Fitt^0_{R} ( \Tor_1^\cR (X(\mathscr{F}_k), R))_\p = R_\p$,
\end{romanliste}
there is a containment 
\[ \,xy^N \cdot (c_k)_\fp \in  y^N\cdot \vartheta_{\mathscr{F}_k,Y} ( \Det_\cR (C(\mathscr{F}_k)))_\fp.\]
\end{thm}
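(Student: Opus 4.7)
The strategy is to transport the Euler system datum $c$ through three algebraic stages — first to a Kolyvagin system relative to $\mathscr{F}$, then to a Stark system, and finally to an element of $\Det_\cR(C(\mathscr{F}_k))$ — after which a delicate limit argument transfers the resulting statement along the filtrations on $\cR$ and $R$ to the given prime $\fp$.

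I would begin at each finite level $i$ by applying the Kolyvagin-derivative homomorphism of Theorem~\ref{kolyvagin derivative thm} to pass from the image of $c$ to a Kolyvagin system $\kappa^{(i)}$ for the family of modified Nekov\'{a}\v{r} structures attached to $\mathscr{F}_i$. Here Hypothesis \ref{new strategy hyps}\,(i)–(v) and Hypothesis~\ref{more new hyps} are what make the derivative construction applicable. Next, the relative core vertex machinery of Theorem~\ref{core vertices and injectivity replacement}, together with the strict positivity of $\chi(\overline{F_i}, j(i))$ guaranteed by Hypothesis~\ref{new strategy hyps}\,(vi), would yield moduli $\fn \in \cN_{j(i)}$ at which the local-to-global structure of $\kappa^{(i)}_\fn$ is controlled by a finite-level algebraic Stark system. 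The axiomatic theory of Stark systems developed in the appendix then produces, from $\kappa^{(i)}$, a canonical element $\varepsilon^{(i)} \in \Det_{\cR_i}(C(\mathscr{F}_i))$ whose image under the finite-level projection map $\vartheta_{\mathscr{F}_i, Y \otimes_\cR \cR_i}$ from Lemma~\ref{det projection lemma} agrees with the reduction of $c_k$ modulo $\a_i$, up to an error that lies in $\Fitt^{r_Y}_{\cR_i}(X(\mathscr{F}_i))\cdot \bidual^{r_{\fF,Y}}_{\cR_i} H^1_{\fF}(k,\cT_i)$: the latter follows by applying Corollary~\ref{prop 2.4 extension} to the short exact sequence for $H^1(C(\mathscr{F}_k))$ displayed in Proposition~\ref{construction complex}\,(v).

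To pass to the limit, I would use Lemma~\ref{limit result} together with the compatibility in Lemma~\ref{det projection lemma}\,(ii) to assemble the $\varepsilon^{(i)}$ into an element of $\Det_\cR(C(\mathscr{F}_k))$, and combine Proposition~\ref{Greither--Kurihara Lemma} with Lemma~\ref{limit Fitt X} to identify $\Fitt^{r_Y}_\cR(X(\mathscr{F}_k))$ as the inverse limit of its finite-level analogues. This would yield a statement of the form $x \cdot c_k \in \vartheta_{\mathscr{F}_k, Y}(\Det_\cR(C(\mathscr{F}_k)))$ for any $x \in \Fitt^{r_Y}_\cR(X(\mathscr{F}_k))$, but only after working \emph{over} $\cR$. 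The substance of the theorem is the transfer of this conclusion along $\varrho \colon \cR \to R$, and this is the most delicate point of the argument: although the above construction behaves well under each transition $\cR_{i+1}\to \cR_i$, it does not automatically commute with $\varrho$, precisely because the reduced Kolyvagin systems may exhibit residual triviality. The role of condition (i) is to make $\varrho_\fp$ compatible with the chosen filtrations, while condition (ii), via Definition~\ref{J def} and the base-change computation of Lemma~\ref{Tor lemma}, ensures that the defect between $X(\mathscr{F}_k)\otimes_\cR R$ and $X(\mathscr{F}_k\otimes_\cR R)$ vanishes after localisation at $\fp$. The need to absorb the residual defects uniformly across all $i$ is what forces the introduction of the auxiliary factor $y^N$, and the fact that the same $N$ suffices at all levels reflects that the defects are measured by invariants of the residual representation $\cTbar$ — which is the reason $N$ depends only on $\cTbar$.
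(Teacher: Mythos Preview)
Your overall three-stage picture (Euler $\to$ Kolyvagin $\to$ Stark $\to$ determinant) is the right architecture, but two concrete steps are inverted or mis-stated, and this matters for the argument.

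\textbf{The Stark--Kolyvagin passage runs the other way.} The paper does not produce an element of $\Det_{\cR_i}(C(\mathscr{F}_i))$ \emph{from} the Kolyvagin system $\kappa^{(i)}$. There is no ``Kolyvagin $\to$ Stark'' map available here; the regulator map $\mathrm{Reg}^r \colon \SS^r(\mathscr{F}_i) \to \KS^r(\mathscr{F}_i)$ goes from Stark systems to Kolyvagin systems, and Lemma~\ref{algebraic stark systems prop} goes from determinant elements to Stark systems. What the paper actually does is: at a relative core vertex $\fn_i$, use Proposition~\ref{new modified selmer}\,(iii) to manufacture a determinant element $\fz_{\fn_i}$ whose associated Stark system $\epsilon_i$ satisfies $\mathrm{Reg}^r(\epsilon_i)_{\fn_i} = (xz)\,\kappa^{(i)}_{\fn_i}$ for a suitable auxiliary $z$. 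The difference $\mathrm{Reg}^r(\epsilon_i) - (xz)\kappa^{(i)}$ is then a Kolyvagin system vanishing at $\fn_i$, and it is \emph{this} object to which Theorem~\ref{core vertices and injectivity replacement} is applied, to control its value at $1$.

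\textbf{There is no clean intermediate statement over $\cR$.} You write that the argument yields ``$x \cdot c_k \in \vartheta_{\mathscr{F}_k,Y}(\Det_\cR(C(\mathscr{F}_k)))$ \ldots\ over $\cR$'' and that the remaining work is to transfer this along $\varrho$. This is not what happens. At each level $i$, Theorem~\ref{core vertices and injectivity replacement} only gives an ideal $I_i \subseteq \cR_i$ with $I_i \cdot y^N \cdot (\mathrm{Reg}^r(\epsilon_i) - xz\kappa^{(i)})_1 = 0$, and the crucial point is that $I_i$ itself is \emph{uncontrolled} over $\cR_i$; one can only bound its image under $\varrho_i$ by $\Fitt^0_{R_i}(J_i)^{Z+1}$ (Theorem~\ref{core vertices and injectivity replacement}\,(ii) and Remark~\ref{core vertex remark}). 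Consequently the determinant elements $\fz''$ at different levels are \emph{not} a priori compatible, and one cannot assemble them via Lemma~\ref{limit result}. The limit argument is carried out instead on the ideals
\[
\mathfrak{A}_i(x,y) = \{ w \in \cR_i : w \cdot xy^N \cdot \kappa^{(i)}_1 \in y^N \cdot \vartheta_{\mathscr{F}_i,Y_i}(\Det_{\cR_i}(C_{Y_i}(\mathscr{F}_i))) \},
\]
which \emph{do} form a projective system. Proposition~\ref{Greither--Kurihara Lemma} then identifies $\varprojlim_i \Fitt^0_{R_i}(J_i)$ with $\Fitt^0_R(\Tor_1^\cR(X(\mathscr{F}_k),R))$, and conditions (i) and (ii) are precisely what is needed to conclude that $\mathfrak{A}(x,y)_\fp = \cR_\fp$: condition (ii) forces $\varrho(\mathfrak{A}(x,y))_\fp$ to contain $1$, and condition (i) lets one lift this to a unit of $\cR_\fp$. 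The exponent $N$ arises from the uniform bound $9\lambda^\ast_i(1) \le N$ of (\ref{N bound}), which depends only on $\Tbar$.
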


\begin{rk} \label{rk on avoiding more new hyps}
  The only role that Hypothesis \ref{more new hyps} plays in the proof of the above result is in the construction of an appropriate `Kolyvagin derivative homomorphism' in Theorem \ref{kolyvagin derivative thm}. In particular, in any situation in which suitable Kolyvagin systems are already known to exist one can avoid assuming Hypothesis \ref{more new hyps} in proving the displayed containment in Theorem \ref{new strategy main result} (see also Remark \ref{rk on avoiding more new hyps details} for more details in this regard).
\end{rk}

\begin{remark} The assumed existence of a non-zero free $\cR$-module quotient $Y$ of the direct sum ${\bigoplus}_{\q\in \Pi_k^\infty}H^0(k_\q,\cT^\vee(1))^\vee$ for which one has $r_{\fF,Y} > 0$ constitutes  a restriction on the  Nekov\'{a}\v{r} structure $\mathscr{F}_k$. For example, if $\mathscr{F}_k$ is a Greenberg--Nekov\'{a}\v{r} structure $\mathscr{F}( (\cT_\q,j_\q)_{\q \in S(\fF_k)})$ (as defined in Example \ref{nss} with $A = \cT$) for which each sub-representation $\cT_\q$ is a free $\cR$-module, then Proposition  \ref{construction complex}\,(i) combines with Lemma \ref{flach result}\, (ii) and (iii) to imply that 
\[ \bm{\chi}_{\fF} = - {\sum}_{\q\in \Pi_k^p}[K_\q:\QQ_p]\cdot (\mathrm{rk}_{\cR}(\cT)- \mathrm{rk}_{\cR}(\cT_\q)).\]
Hence, in this case, there exists an $\cR$-module $Y$ of the required form with $r_{\fF,Y} > 0$ if and only if the subrepresentations $\cT_\q$ for each $\q\in \Pi_k^p$ satisfy the following condition
\[ {\sum}_{\q\in \Pi_k^p}[K_\q:\QQ_p]\cdot\mathrm{rk}_{\cR}(\cT_\q) > [K:\QQ]\cdot \mathrm{rk}_{\cR}(\cT)- {\sum}_{\q\in \Pi_k^\infty}\mathrm{rk}_\cR(H^0(k_\q,\cT^\vee(1))^\vee).
\]
\end{remark} 

\begin{remark} 
Though technical in nature, Theorem \ref{new strategy main result} has several significant advantages over the main results of the existing theory of Euler, Kolyvagin and Stark systems (in arbitrary rank), as  developed by Mazur and Rubin in \cite{MazurRubin04, MazurRubin2} and by Sakamoto et al.\@ in \cite{bss, bss2}. Firstly, it is finer since its conclusion directly concerns the determinants of Selmer complexes rather than the Fitting ideals of Selmer groups. Secondly, it is more general both in dealing with Euler systems relative to a wide class of Nekov\'{a}\v{r} structures (rather than only to the relaxed Nekov\'{a}\v{r} structure) and also with representations over arbitrary local complete Gorenstein rings with finite residue fields of characteristic $p$. Thirdly, it is more widely applicable in arithmetic settings since several of the  assumptions in Hypothesis~\ref{new strategy hyps} are weaker than the corresponding conditions that are imposed in both \cite{MazurRubin04, MazurRubin2} and \cite{bss, bss2} (see, for example, Remark \ref{vanishing remark}\,(iv)). \end{remark}
\smallskip

After a lengthy series of preliminary results, some of which are possibly of independent interest, the proof of Theorem \ref{new strategy main result} will finally be obtained in \S\,\ref{rss section}.  However, a reader who is more interested to understand how Theorem \ref{new strategy main result} can be applied in arithmetic settings rather than in the details of its proof, may prefer at this point to pass directly to the second part of the article (that starts in \S\,\ref{relaxed Kato section}).

\section{Kolyvagin systems I: modified Selmer complexes and derivatives} \label{koly sys I section}

The theory of Kolyvagin systems was introduced by Mazur and Rubin in \cite{MazurRubin04} and then further developed by Sakamoto et al in \cite{bss}, as a means of axiomatising Kolyvagin's construction of `derivative classes' in \cite{kolyvagin}. In this, and the following, section we shall develop a version of this theory relative to the Selmer complexes that arise from a family $\fF$ of Nekov\'a\v{r} structures satisfying Hypothesis \ref{system of fF hyp}. These sections constitute the technical heart of our general approach. In them we fix a natural number $i$ and, for clarity of exposition, we abbreviate the notation already introduced in \S\,\ref{general set up section} in the following way: 
\begin{equation}\label{conv notation}\begin{cases}\mathscr{F}\coloneqq \mathscr{F}_k,\,\, \cQ \coloneqq \cQ_i,\,\cN \coloneqq \cN_i,\\
\bLambda \coloneqq \cR_i, \,\,\cA \coloneqq \cT_i,\,\,\cB \coloneqq \cA^\ast(1),\,\,\tilde{\mathscr{F}}\coloneqq \mathscr{F}_i,\, \tilde{\cF} \coloneqq h(\tilde{\mathscr{F}}),\\
\Lambda \coloneqq R_i, \,\,A \coloneqq T_i,\, B \coloneqq A^\ast(1), \,\, \tilde F \coloneqq h (\tilde{\mathscr{F}} \otimes_{\bLambda} \Lambda),\,\, \\
\overline{A} \coloneqq \cA\otimes_{\bLambda}\mathbb{k} = A\otimes_{\Lambda}\mathbb{k} = \overline{T},\,\,\overline{B} \coloneqq \overline{A}^\ast(1), \,\,  \overline{F} \coloneqq \tilde F_{\overline{A}}\end{cases}\end{equation} 
where in the last case $\tilde F_{\overline{A}}$ denotes the Mazur--Rubin structure on $\overline{A}$ induced by $\tilde F$ (cf.\@ Example~\ref{ss def}\,(iv)). In particular with this notation, we note that 
\[ \overline{\cT} = \cA\otimes_{\bLambda}\mathbb{K}\quad\text{and}\quad \overline{T}= \cA\otimes_{\bLambda}\mathbb{k} = A\otimes_{\Lambda}\mathbb{k}.\]

\subsection{Comparison maps} \label{comparison maps section} In this subsection, we construct several maps that will play a key role in the sequel. 

For $\q$ in $\cQ$, we write $k (\q)$ for the maximal $p$-extension of $k$ in the ray class field modulo $\q$ of $k$ and note that, by \cite[Lem.\@ 4.1.2]{Rubin-euler} (or \cite[Lem.\@ 2.1]{scarcity}), the group  
\[ G_\q \coloneqq \gal{k(\q)}{k(1)}\] 
is  cyclic of order divisible by $l (i)$. Following Rubin \cite[Def.\@ 4.4.1]{Rubin-euler}, we then specify a generator $\sigma_\q$ of $G_\q$
 as follows. We fix a topological generator $\varpi$ of ${\varprojlim}_{n \in \N} \mu_{p^n} (\overline{\Q}) \cong \Z_p (1)$ and an embedding $\iota_\q \: \overline{\Q} \hookrightarrow \overline{k_\q}$. This embedding induces an identification of
 $G_\q$ with $\gal{k (\q)_\mathfrak{Q}}{k_\q}$, where $\mathfrak{Q}$ is the place of $k (\q)$ above $\q$ specified by $\iota_\q$. In particular, since the local reciprocity map $\mathrm{rec}_\q$ identifies $\gal{k (\q)_\mathfrak{Q}}{k_\q}$ with a quotient of $\mu_{k_\q} \otimes_\Z \Z_p$, we specify the generator $\sigma_\q$ to be the image of $\iota_\q(\varpi)$ under the composite map  
 \[
 {{\varprojlim}}_{n \in \N} \mu_{p^n} ( \overline{k_\q}) \twoheadrightarrow \mu_{k_\q} \otimes_\Z \Z_p \stackrel{\mathrm{rec}_\q}{\twoheadrightarrow} \gal{k (\q)_\mathfrak{Q}}{k_\q} \cong G_\q. 
 \]

Similarly, for any $\fn \in \cN$, we set 
\begin{equation}\label{group def} k(\fn)\coloneqq {\prod}_{\q \in V(\fn)} k(\q)\quad \text{and}\quad G_\fn \coloneqq \gal{k(\fn)}{k(1)}\cong {\bigotimes}_{\q \in V(\fn)} G_\q.\end{equation}
Next we note that, with $k_\q^\mathrm{nr}$ the maximal unramified extension of $k_\q$ inside $k_\q^\mathrm{c}$, the natural `valuation' map $\ord_\q$ induces an identification $k_\q^{\mathrm{nr}, \times} / (k_\q^{\mathrm{nr}, \times})^{l (i)} = \Z/l(i)\Z$. Hence, upon tensoring the canonical composite isomorphism 
\[ H^1 ( k_\q^\mathrm{nr}, \mu_{l (i)}) \cong k_\q^{\mathrm{nr}, \times} / (k_\q^{\mathrm{nr}, \times})^{l(i)} = \Z/l(i)\Z\] 
with $\cA$, we obtain an identification of $H^1 (k_\q^\mathrm{nr}, \cA (1))$ with $\cA$. This  combines with the 
generator $\varpi$ of $\Z_p (1) = H^0(k_\q^\mathrm{nr},\Z_p(1))$ fixed above to give an isomorphism
\[
\partial_\q \: H^1 ( k_\q^\mathrm{nr}, \cA)^{G_{\kappa_\q}} \stackrel{\cup \iota_\q (\varpi)}{\cong} 
H^1 ( k_\q^\mathrm{nr}, \cA (1) )^{G_{\kappa_\q}} \stackrel{\simeq}{\longrightarrow} 
\cA^{\tau = 1}. 
\]
The inflation-restriction sequence implies the existence of a canonical short exact sequence  
\begin{equation}\label{inf-res ses} 
0 \to H^1_f (k_\q, \cA)  \to H^1 ( k_\q, \cA) \xrightarrow{\partial_\q\circ {\rm{res}}_\q}  \cA^{\tau=1}\to 0,\end{equation}
in which ${\mathrm{res}}_\q$ denotes the (surjective) restriction map $H^1(k_\q,\cA) \to H^1 ( k_\q^\mathrm{nr}, \cA)^{G_{\kappa_\q}}$. One checks that $\partial_\q\circ {\rm{res}}_\q$ is explicitly given by evaluating a cocycle at $\sigma_\q$, and hence agrees with the map used by Mazur and Rubin in \cite[Lem.\@ 1.2.1]{MazurRubin04}.
 
The next result relies on the validity of Hypothesis \ref{new strategy hyps}\,(ii) and is essentially well-known (cf.\@ \cite[Lem.~1.2.3]{MazurRubin04}). However, since it is important for us, we shall include a proof. 

\begin{lemma}\label{cfactor} There exists a canonical isomorphism of $\cR_i$-modules $\cA / (\tau - 1)  \to \cA^{\tau = 1}$. \end{lemma}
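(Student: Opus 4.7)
The plan is to adapt the argument of \cite[Lem.~1.2.3]{MazurRubin04} from the setting of a discrete valuation ring to the current setting where $\bLambda = \cR_i$ is a finite zero-dimensional local Gorenstein ring.

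First, by Hypothesis~\ref{new strategy hyps}\,(ii), $\dim_\mathbb{K}(\cTbar/(\tau-1)\cTbar) = 1$, and Nakayama's lemma shows that $\cA/(\tau-1)\cA$ is a cyclic $\bLambda$-module. Since the endomorphism $\tau-1$ of the finite-dimensional $\mathbb{K}$-vector space $\cTbar$ has cokernel of dimension one, rank-nullity also gives $\dim_\mathbb{K}(\cTbar^{\tau=1}) = 1$. Applying $\Hom_\bLambda(-,\bLambda)$ (which is exact because $\bLambda$ is self-injective) to the exact sequence $0 \to \cA^{\tau=1} \to \cA \xrightarrow{\tau-1} \cA$ identifies $(\cA^{\tau=1})^\vee$ with the cokernel of the dual operator $(\tau-1)^\vee$ on $\cA^\vee$; a parallel Nakayama argument, combined with the fact that $(\tau-1)^\vee$ has the same residual cokernel dimension as $\tau-1$, shows that $(\cA^{\tau=1})^\vee$ is cyclic, and hence so is $\cA^{\tau=1}$ via Matlis duality. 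Finally, the four-term exact sequence
\[
0 \to \cA^{\tau=1} \to \cA \xrightarrow{\tau-1} \cA \to \cA/(\tau-1)\cA \to 0
\]
together with the additivity of $\bLambda$-length forces both cyclic modules to have the same length.

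Second, I will construct the canonical isomorphism by first reducing to the case in which $\tau-1$ acts nilpotently on $\cA$. The reduction modulo $\cM_i$ of the characteristic polynomial of $\tau$ on $\cA$ factors as $\bar\chi_\tau(t) = (t-1)^m \bar g(t)$ in $\mathbb{K}[t]$ with $\bar g(1) \in \mathbb{K}^\times$, and since $\bLambda$ is Henselian this lifts to a factorisation $\chi_\tau = h \cdot g$ in $\bLambda[t]$. The induced decomposition $\cA = \cA^\circ \oplus \cA^\flat$, where $\cA^\circ = \ker h(\tau)$ and $\cA^\flat = \ker g(\tau)$, is compatible with the $\bLambda$-module structure, and $\tau-1$ is invertible on $\cA^\flat$. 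Hence $\cA^{\tau=1}$ and $\cA/(\tau-1)\cA$ are detected entirely on $\cA^\circ$, on which $\tau-1$ is nilpotent of some index $N$. The canonical homomorphism is then induced by $x \mapsto (\tau-1)^{N-1} x$, which factors through $\cA^\circ/(\tau-1)\cA^\circ$ (because $(\tau-1)^N = 0$) and has image in $(\cA^\circ)^{\tau=1}$.

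The main obstacle lies in verifying that this map is indeed an isomorphism. By Nakayama it suffices to check that a generator of $\cA/(\tau-1)\cA$ maps to a generator of $\cA^{\tau=1}$, and reducing modulo $\cM_i$ this becomes a direct computation with the Jordan normal form of the nilpotent operator $\tau-1$ on $\cTbar^\circ$; the hypotheses force a single Jordan block of size $N$, under which $(\tau-1)^{N-1}$ carries a generator of the cokernel to a generator of the kernel. The equality of $\bLambda$-lengths established in the first step then upgrades the surjectivity obtained at the residual level to a bijection of $\bLambda$-modules.
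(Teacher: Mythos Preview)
Your argument has two gaps. First, the step ``$(\cA^{\tau=1})^\ast$ is cyclic, and hence so is $\cA^{\tau=1}$ via Matlis duality'' is false: over a zero-dimensional local Gorenstein ring $\bLambda$, the Matlis dual of a cyclic module $\bLambda/I$ is the ideal $\Ann_\bLambda(I)$, and ideals of such rings need not be principal. Second, in your verification step you conflate the nilpotency index $N$ of $\tau-1$ on $\cA^\circ$ with the size $m$ of the Jordan block on $\cTbar^\circ$; in general $N > m$, and then $(\tau-1)^{N-1}$ reduces to the zero map on $\cTbar^\circ$, so the residual Nakayama check yields nothing. Both problems are visible in the example $\bLambda = \mathbb{F}_p[x,y]/(x^2,y^2)$, $\cA = \bLambda^2$, $\tau - 1 = \left(\begin{smallmatrix} 0 & 1 \\ xy & 0 \end{smallmatrix}\right)$, which satisfies Hypothesis~\ref{new strategy hyps}\,(ii): here $m = 2$ but $N = 4$, and one computes $\cA/(\tau-1)\cA \cong \bLambda/(xy)$ (cyclic of length~$3$) while $\cA^{\tau=1} \cong (x,y)$ (non-cyclic of length~$3$), so these modules are not even abstractly isomorphic.

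The paper's route is entirely different: it uses the cofactor (adjugate) endomorphism $c_{1-\tau}$, which satisfies $(1-\tau)\circ c_{1-\tau} = \det(1-\tau)\cdot\mathrm{id}$ and therefore lands in $\cA^{\tau=1}$ precisely when $\det_{\bLambda}(1-\tau) = 0$. Crucially, the cofactor construction is functorial in $\bLambda$, so its reduction modulo~$\cM_i$ is the cofactor of $1-\bar\tau$ (rather than zero), and one checks directly that a corank-one matrix over a field has adjugate of rank one with image equal to its kernel. The vanishing of $\det(1-\tau)$ is thus the real input here, and your map $(\tau-1)^{N-1}$ does not engage with it.
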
 

\begin{proof} Each endomorphism $f$ of the (free) $\bLambda$-module $\cA$ gives rise to a `cofactor map' $c_f \: \cA \to \cA$ that is uniquely characterised by the commutativity of the diagram
\begin{cdiagram}
    \cA \arrow{r}{\simeq} \arrow{d}{c_f} & \Hom_{\bLambda} \big ( \exprod^{\mathrm{rk}_{\bLambda} (\cA) - 1}_{\bLambda} \cA, \exprod^{\mathrm{rk}_{\bLambda} (\cA)}_{\bLambda} \cA \big) \arrow{d}{h \mapsto h \circ ( \exprod f )} \\ 
    \cA \arrow{r}{\simeq} &  \Hom_{\bLambda} \big ( \exprod^{\mathrm{rk}_{\bLambda} (\cA) - 1}_{\bLambda} \cA, \exprod_{\bLambda}^{\mathrm{rk}_{\bLambda} (\cA)} \cA \big),
\end{cdiagram}%
in which both horizontal arrows send $a\in \cA$ to the map  $x \mapsto a \wedge x$. (Thus, if we fix a ${\bLambda}$-basis of $\cA$, then $c_f$ is represented by the adjugate of the matrix representing $f$ in this basis.) Then, as $f \circ c_f$ and $c_f \circ f$ both coincide with multiplication by $\det (f)$, there is a well-defined map
\[
\cA / f (\cA) \stackrel{c_f}{\longrightarrow} \ker \big (\cA / \det (f) \cA \stackrel{f}{\longrightarrow} \cA / \det (f) \cA \big ). 
\]
We now take $f$ to be multiplication by $1 - \tau$. Then $\det (f) = 0$ (as Hypothesis \ref{new strategy hyps}\,(ii) implies $\ker (f)$ contains a non-zero divisor) and so the above construction gives a map of ${\bLambda}$-modules $c_f \: \cA / (\tau - 1) \cA \to  \cA^{\tau = 1}$. It is enough to prove this map is bijective, or equivalently (as $\cA$ is finite) surjective. Then, as Hypothesis \ref{new strategy hyps}\,(ii) implies the natural map $\mathbb{k}\otimes_{\bLambda} \cA^{\tau = 1} \to \overline{A}^{\tau = 1}$ is bijective, and $\mathbb{k}\otimes_{\bLambda}c_f$ identifies with the cofactor map of the reduction $\overline{f} \: \overline{A} \to \overline{A}$ of $f$, Nakayama's lemma reduces us to proving $c_{\overline{f}}$ is bijective. But Hypothesis \ref{new strategy hyps}\,(ii) implies the corank of $\overline{f}$ is one and hence that its adjugate matrix, and so also $c_{\overline{f}}$, is nontrivial. Since Hypothesis \ref{new strategy hyps}\,(ii) also implies the $\mathbb{k}$-dimension of $\ker (\overline{f}) = \overline{A}^{\tau = 1}$ is one, the obvious inclusion  $\im (c_{\overline{f}}) \subseteq \ker (\overline{f})$ must be an equality, as required to prove the claim. \end{proof}  

We now fix an isomorphism of $\cR_i$-modules $\cA/(\tau-1) \cong \bLambda$ as in Hypothesis \ref{new strategy hyps}\,(ii), and define a composite map of $\bLambda$-modules  
\begin{multline}\label{v_q def} 
v_\q \: H^1 (k, \cA) \otimes_\Z G_\q \stackrel{\loc_\q}{\longrightarrow}
H^1 (k_\q, \cA) \otimes_\Z G_\q \stackrel{\res_\q}{\twoheadrightarrow} H^1 ( k_\q^\mathrm{nr}, \cA)^{G_{\kappa_\q}} \otimes_\Z G_\q \\\stackrel{\partial_\q}{\longrightarrow} \cA^{\tau = 1} \cong \cA/(\tau-1) \cong \bLambda, 
\end{multline}
in which the penultimate map is the isomorphism from Lemma \ref{cfactor}. 

We next observe that, since the inertia subgroup at $\q$ acts trivially on $\cT_i$ and $\Frob_\q$ acts as $\tau$, there exists a well-defined `evaluation' map
\begin{equation}\label{evaluation def}
H^1 (k_\q, \cA) \to \cA / (\tau - 1), \quad x \mapsto x (\Frob_\q) + (\tau - 1) \cT_i. 
\end{equation}

This map is surjective and its kernel coincides with the `transverse' cohomology group 
\[ H^1_\tr (k_\q, \cA) \coloneqq H^1 (G_\q, \cA^{G_{k(\q)}}),\] 
regarded as a submodule of $H^1 (k_\q, \cA)$ via the inflation map (cf.\@ the discussion of \cite[\S\,1.2]{MazurRubin}). The `finite-singular comparison map' is then defined to be the composite map of $\cR_i$-modules 
\begin{equation}\label{fs def}
\psi_\q^\fs \: H^1 (k, \cA) \stackrel{\loc_\q}{\longrightarrow}
H^1 (k_\q, \cA) \twoheadrightarrow  \cA / (\tau - 1)  \cong \bLambda 
\end{equation}
where the final map is the same isomorphism as fixed in (\ref{v_q def}). 

\subsection{Modified Selmer complexes}\label{msc section} 

In this subsection we refine the `modified Selmer structures' that are used in \cite{MazurRubin04} by defining a corresponding family of `modified Nekov\'a\v{r} structures' and describing relations between their associated Selmer complexes. 

We start by analysing the complex $\RGamma(k_\q,\cA)$ for each prime $\q \in \cQ$. 

\begin{lemma}\label{explicit cohomology lemma} For each prime $\q \in \cQ$, the following claims are valid.

\begin{itemize}
\item[(i)] $H^0(k_\q,\cA), H^1(k_\q,\cA)$ and $H^2(k_\q,\cA)$ are free $\bLambda$-modules of ranks $1, 2$ and $1$ respectively.
\item[(ii)] There exists a canonical isomorphism in $D^{\mathrm{perf}}(\bLambda)$ 
\[  H^0(k_\q,\cA)[0]\oplus  H^1(k_\q,\cA)[-1] \oplus  H^2(k_\q,\cA)[-2] \cong \RGamma(k_\q,\cA).\]
\item[(iii)] For $i \in \{0,1,2\}$ fix a projective $\bLambda$-submodule $X_i$ of $H^i(k_\q,\cA)$ and write $Y_i$ for the quotient $H^i(k_\q,\cA)/X_i$. Then the isomorphism in (ii) induces a canonical morphism in $D^{\mathrm{perf}}(\bLambda)$ 
\[ \phi_{(X_0,X_1,X_2)} \: X_0[0]\oplus  X_1[-1] \oplus X_2[-2] \to \RGamma(k_\q,\cA).\]
Further, local Tate duality identifies each module $Y_i^\ast$ with a submodule of $H^{2-i}(k_\q,\cB)$ and also gives a canonical isomorphism in $D^{\mathrm{perf}}(\bLambda)$ 
\[ \mathrm{Cone}\bigl(\phi_{(X_0,X_1,X_2)}\bigr)^\ast[-2] \cong Y_2^\ast[0]\oplus  Y_1^\ast [-1] \oplus  Y_0^\ast[-2].\]
\end{itemize}
\end{lemma}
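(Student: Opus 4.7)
The proof will proceed in three stages. For (i), since $\cA$ is unramified at $\q$ and the residue characteristic of $\q$ is distinct from $p$, I would analyze the Hochschild--Serre spectral sequence for $1 \to I_\q \to G_{k_\q} \to G_{\kappa_\q} \to 1$. Only the pro-$p$ part $\Z_p(1)$ of tame inertia contributes to the $I_\q$-cohomology, giving $H^0(I_\q,\cA) \cong \cA$ and $H^1(I_\q,\cA) \cong \cA(-1)$ with all higher cohomology vanishing. The congruence $N\q \equiv 1 \pmod{l(i)}$ (forced by $\mu_{l(i)} \subseteq k_i$) identifies $\cA(-1) \cong \cA$ as $G_{\kappa_\q}$-modules. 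Since $G_{\kappa_\q} = \hat\Z$ has cohomological dimension one, the spectral sequence degenerates at $E_2$ and delivers $H^0(k_\q,\cA) = \cA^{\tau=1}$, $H^2(k_\q,\cA) = \cA/(\tau-1)$, together with the extension $0 \to \cA/(\tau-1) \to H^1(k_\q,\cA) \to \cA^{\tau=1} \to 0$ (equivalent to (\ref{inf-res ses})). Combining Hypothesis \ref{new strategy hyps}\,(ii), Lemma \ref{cfactor}, and an analysis of the invariant factors of $\tau - 1$ on the free $\bLambda$-module $\cA$ (using also the structural constraints on $\tau$ dictated by the definition of $\cQ_i$ and Hypothesis \ref{more new hyps}), I would identify $\cA^{\tau=1}$ and $\cA/(\tau-1)$ as free of rank one; self-injectivity of $\bLambda$ then splits the middle extension, showing $H^1$ is free of rank two.

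For (ii), since every cohomology group is a free (hence both projective and injective) $\bLambda$-module by (i) and $\bLambda$ is self-injective, one has $\Ext^j_\bLambda(H^i(k_\q,\cA), H^{i'}(k_\q,\cA)) = 0$ for all $j > 0$. The successive truncation triangles of $\RGamma(k_\q,\cA)$ therefore all split, yielding an isomorphism $\RGamma(k_\q,\cA) \cong \bigoplus_{i} H^i(k_\q,\cA)[-i]$. Canonicity is obtained by combining the natural truncation maps at the outermost cohomology degrees with the explicit Hochschild--Serre splitting from (i), whose components depend only on the fixed generator $\varpi$ of $\Z_p(1)$ underlying \S\,\ref{comparison maps section} and on local Tate duality.

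For (iii), the morphism $\phi_{(X_0,X_1,X_2)}$ is defined as the composition of the inclusions $X_i[-i] \hookrightarrow H^i(k_\q,\cA)[-i]$ with the canonical splitting from (ii). To identify $\mathrm{Cone}(\phi_{(X_0,X_1,X_2)})^\ast[-2]$, I would apply the exact functor $\RHom_\bLambda(-,\bLambda)[-2]$ to the defining triangle: local Tate duality (Proposition \ref{local Tate duality thm}\,(i)) identifies the middle term with $\RGamma(k_\q,\cB)$, and (ii) applied to $\cB$ (which also satisfies Hypothesis \ref{new strategy hyps}\,(ii) by Remark \ref{vanishing remark}\,(i)) decomposes this as $\bigoplus_i H^{2-i}(k_\q,\cA)^\ast[-i]$ via the local duality identifications $H^i(k_\q,\cB) \cong H^{2-i}(k_\q,\cA)^\ast$. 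A component-wise kernel calculation then produces the asserted decomposition $Y_2^\ast[0] \oplus Y_1^\ast[-1] \oplus Y_0^\ast[-2]$, with each $Y_i^\ast$ canonically embedded in $H^{2-i}(k_\q,\cB)$ as the kernel of the surjective dual map $H^i(k_\q,\cA)^\ast \twoheadrightarrow X_i^\ast$ (the surjectivity following from $\Ext^1_\bLambda(Y_i,\bLambda)=0$ by self-injectivity).

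The principal technical obstacle lies in (i), in establishing that $\cA^{\tau=1}$ and $\cA/(\tau-1)$ are genuinely free, rather than merely cyclic, of rank one over $\bLambda$. Nakayama's lemma together with Hypothesis \ref{new strategy hyps}\,(ii) and Lemma \ref{cfactor} only yields cyclicity; the stronger freeness statement requires showing that the last invariant factor of $\tau - 1$ on the free $\bLambda$-module $\cA$ vanishes, which rests on the careful interplay between the choice of $\tau$ within $G_{k_\infty}$ and the descent of $\cA$ from $\cT$ forced by the definition of $\cQ_i$. Once freeness of rank one is secured, the remaining splittings and duality matches in (ii) and (iii) follow formally from the self-injective structure of $\bLambda$.
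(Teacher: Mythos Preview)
Your approach matches the paper's in structure. For (ii) and (iii) the arguments coincide: projectivity of the cohomology groups gives, for each $i$, a \emph{unique} morphism $\theta_i\colon H^i(k_\q,\cA)[-i]\to\RGamma(k_\q,\cA)$ inducing the identity on $H^i$, and $\theta_0\oplus\theta_1\oplus\theta_2$ is the canonical isomorphism (so your appeal to $\varpi$ for canonicity is unnecessary); (iii) then follows by taking cones and applying $(-)^*[-2]$. For (i), the only real difference is in $H^2$: you compute it directly via Hochschild--Serre as $\cA/(\tau-1)$ (using $\cA(-1)\cong\cA$ from $N\q\equiv 1\pmod{l(i)}$), whereas the paper argues indirectly that once $H^0$ and $H^1$ are free of ranks $1$ and $2$, the perfectness and vanishing Euler characteristic of $\RGamma(k_\q,\cA)$ (Lemma~\ref{flach result}(ii)) force $H^2$ to have finite projective dimension and class $[\bLambda]$ in $K_0(\bLambda)$, hence to be free of rank one since $\bLambda$ is local and self-injective. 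Both routes are valid.

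On the freeness obstacle you flag: your diagnosis that Hypothesis~\ref{new strategy hyps}(ii) with Nakayama only yields cyclicity of $\cA/(\tau-1)$ is correct, but your proposed resolution via the definition of $\cQ_i$ and Hypothesis~\ref{more new hyps} does not work. The condition $\q\in\cQ_i$ only forces $\Frob_\q$ to act on $\cA$ as a conjugate of $\tau$, giving nothing beyond the action of $\tau$ itself; and Hypothesis~\ref{more new hyps}(ii) concerns injectivity of $\Frob_\q-1$ on $\cT$, which is orthogonal to the vanishing of $\det_{\bLambda}(\tau-1\mid\cA)$. The paper does not derive the freeness from such considerations either: it simply invokes the isomorphism $\cA/(\tau-1)\cong\bLambda$ that is fixed just before (\ref{v_q def}), together with Lemma~\ref{cfactor} for $\cA^{\tau=1}$. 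This mirrors the Mazur--Rubin hypothesis \cite[\S1.2]{MazurRubin04} that $T/(\tau-1)T$ be free of rank one over the coefficient ring; in every application of the paper $\tau$ is chosen so that $\cT/(\tau-1)\cT\cong\cR$ already holds (e.g., $\tau$ acting as $\bigl(\begin{smallmatrix}1&1\\0&1\end{smallmatrix}\bigr)$ on $T_pE$), which propagates to $\cA/(\tau-1)\cong\bLambda$ at every level $i$.
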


\begin{proof} For $\q\in \cQ$, one has $H^0(k_\q,\cA)= \cA^{\tau=1}$ and $H^1_f(k_\q,\cA) = \cA/(\tau-1)$ and so both $\bLambda$-modules are free of rank one (see (\ref{v_q def})). From the inflation-restriction sequence (\ref{inf-res ses}) it then follows that the $\bLambda$-module $H^1(k_\q,\cA)$ is free of rank $2$. These observations combine with Proposition \ref{flach result}\,(ii) to imply that the $\bLambda$-module $H^2(k_\q,\cA)$ has finite projective dimension and its class in $K_0(\bLambda)$ is equal to $[H^1(k_\q,\cA)]-[H^0(k_\q,\cA)] = [\bLambda]$. Since $\bLambda$ is both local and self-injective, it therefore follows that $H^2(k_\q,\cA)$ is isomorphic to $\bLambda$, as required to prove (i).

Fix $i \in \{0,1,2\}$. Then, since $H^i(k_\q,\cA)$ is a projective $\bLambda$-module, there exists a unique morphism $\theta_i \: H^i(k_\q,\cA)[-i] \to \RGamma(k_\q,\cA)$ in $D(\bLambda)$ for which $H^i(\theta_i)$ is the identity map on $H^i(k_\q,\cA)$. The direct sum morphism 
\[ \theta_0 \oplus \theta_1\oplus \theta_2\: H^0(k_\q,\cA)[0]\oplus  H^1(k_\q,\cA)[-1] \oplus  H^2(k_\q,\cA)[-2]\to \RGamma(k_\q,\cA)\]
is then an isomorphism in $D(\bLambda)$, thereby proving (ii). 

For  $X_0, X_1, X_2$ as in (iii), the isomorphism in (ii) gives rise to an exact triangle in $D^{\mathrm{perf}}(\bLambda)$  
\[ X_0[0]\oplus  X_1[-1] \oplus X_2[-2] \xrightarrow{\phi_{(X_0,X_1,X_2)}} \RGamma(k_\q,\cA) \to Y_0[0]\oplus  Y_1[-1] \oplus Y_2[-2] \to \cdot.\]
Claim (iii) follows directly from this triangle.\end{proof} 

For any collection $\a, \fb, \fn$ of pairwise coprime moduli in $\cN$ for which $V(\a\fb\fn)\cap S(\tilde{\mathscr{F}}) = \emptyset$, we now define a `modified Nekov\'a\v{r} structure' $\tilde{\mathscr{F}}^\fb_\a (\fn)$ on $\cA$ in the following way: we set 
\[
S (\tilde{\mathscr{F}}^\fb_\a (\fn)) \coloneqq S (\tilde{\mathscr{F}}) \cup V(\a \fb \fn) \]
and then assign the condition at each $\q \in \Pi_k$ to be  
\[  \bigl( \RGamma_{\!\tilde{\mathscr{F}}^\fb_\a (\fn)}(k_\q,\cA), \theta_{\!\tilde{\mathscr{F}}^\fb_\a (\fn),\q}\bigr) \coloneqq 
\begin{cases}
    \bigl( \RGamma_{\!\tilde{\mathscr{F}}}(k_\q,\cA), \theta_{\!\tilde{\mathscr{F}},\q}\bigr) \quad & \text{ if } \q \notin V(\a\fb\fn), \\
    (H^0(k_\q,\cA)[0],\phi_{\cA.\q}) & \text{ if } \q \in V(\a), \\
    \bigl( \RGamma(k_\q,\cA), \mathrm{id}\bigr) & \text{ if } \q \in V(\fb), \\
     (H^0(k_\q,\cA)[0]\oplus H^1_\tr (k_\q, \cA)[-1],\phi_{\cA,\q}) & \text{ if } \q \in V(\fn).
\end{cases}
\]
Here we set  $\phi_{\cA,\q}\coloneqq \phi_{(H^0(k_\q,\cA),(0),(0))}$ for each $\q \in V(\a)$ and $\phi_{\cA,\q} \coloneqq \phi_{(H^0(k_\q,\cA),H^1_\tr (k_\q, \cA),(0))}$ for each $\q \in V(\n)$. 
 
\begin{remark}\label{modified nekovar = modified mr remark} If any of the moduli $\a, \fb$ and $\fn$ are equal to the empty product of primes, then, for convenience, we omit them  from the notation $\tilde{\mathscr{F}}^\fb_\a (\fn)$. In particular, with this convention, the structure $\tilde{\mathscr{F}}^\fb$ coincides 
 with the modification  $\tilde{\mathscr{F}}^{V(\fb)}$ of $\tilde{\mathscr{F}}$ defined in Example \ref{lesc}. Further, for all  moduli $\a, \fb$ and $\fn$ as above, an explicit check shows that the induced Mazur--Rubin structure $h(\tilde{\mathscr{F}}^\fb_\a (\fn))$ agrees with the modification $\tilde{\cF}^\fb_\a (\fn)$ of $\tilde{\cF} = h(\tilde{\mathscr{F}})$ that is introduced by Mazur and Rubin in \cite[Exam. 2.1.8]{MazurRubin04} and used extensively in \cite{bss}. \end{remark} 

The following consequence of Proposition \ref{construction complex} will play a key role in our approach. 

\begin{prop} \label{what we need from Selmer complexes}
 Assume $(k,p)$ satisfies (\ref{p=2 condition}), that $\cA$ is a finitely generated free $\bLambda$-module and that $\tilde{\mathscr{F}}$ satisfies Hypothesis \ref{fF hyp}. Then, as $\a, \fb$ and $\fn$ range over all pairwise coprime moduli in $\cN$ for which $V(\a\fb\fn)\cap S(\tilde{\mathscr{F}}) = \emptyset$, the following claims are valid. 
\begin{itemize} 
    \item[(i)] $C(\tilde{\mathscr{F}}^\fb_\a (\fn))$ belongs to $D^{\mathrm{perf}}(\bLambda)$ and is such that, in $K_0(\bLambda)$, one has 
    \begin{align*}\bm{\chi}_\bLambda\bigl(C(\tilde{\mathscr{F}}^\fb_\a (\fn))\bigr)  =&\, \bm{\chi}_\bLambda\bigl(C(\tilde{\mathscr{F}})\bigr) - \nu(\a)\cdot [\bLambda] \\
    =&\, -\nu(\a)\cdot [\bLambda] + {\sum}_{\q \in S(\tilde{\mathscr{F}})\setminus \Pi_k^\infty}\bm{\chi}_\bLambda(\RGamma_{\tilde{\mathscr{F}}^\ast}(k_\q,B)). \end{align*} 
  \item[(ii)] For any morphism $\bLambda\to \bLambda'$ of rings satisfying (\ref{ring condition}), there exists a natural isomorphism $C(\tilde{\mathscr{F}}^\fb_\a (\fn))\otimes^\mathbb{L}_\bLambda \bLambda' \cong C((\tilde{\mathscr{F}}\otimes_\bLambda\bLambda')^\fb_\a (\fn))
    $ in $D^\mathrm{perf} (\bLambda')$.
    \item[(iii)] $C(\tilde{\mathscr{F}}^\fb_\a (\fn))$ is acyclic outside degrees $0$ and $1$. In addition, $H^0(C(\tilde{\mathscr{F}}^\fb_\a (\fn))) = H^1_{\!\tilde{\mathscr{F}}^\fb_\a (\fn)}(k,\cA)$ and there exists a canonical short exact sequence of $\bLambda$-modules 
\[0 \to H^{1}_{(\tilde{\cF}^\ast)^\a_\fb(\fn)}(k,\cB)^\ast \to  
        H^1 (C(\tilde{\mathscr{F}}^\fb_\a (\fn))) \to X(\tilde{\mathscr{F}})\oplus {\bigoplus}_{\q \in V(\fb)} H^0 (k_\q, \cB)^\ast \to 0.\]
\item[(iv)] For all divisors $\n$ of $\m$ there are canonical exact triangles in $D^{\mathrm{perf}}(\bLambda)$ 
\begin{equation}\label{global duality triangle1} 
 C(\tilde{\mathscr{F}}^\n) \to C(\tilde{\mathscr{F}}^\m) \to {\bigoplus}_{\q\in V(\m/\n)}\bigl(H^1_{\!/f}(k_\q,\cA)[0]\oplus H^2(k_\q,\cA)[-1]\bigr) \to \cdot \end{equation}
\begin{equation}\label{global duality triangle2} C(\tilde{\mathscr{F}}(\m)) \to C(\tilde{\mathscr{F}}^\m) \to {\bigoplus}_{\q\in V(\m)} \bigl(H^1_{\!/\tr} (k_\q, \cA) [0] \oplus H^2(k_\q,\cA)[-1]\bigr) \to \cdot\end{equation}
\begin{equation}\label{global duality triangle3} C(\tilde{\mathscr{F}}_\fn (\m/\fn)) \to   C(\tilde{\mathscr{F}}(\m)) \to {\bigoplus}_{\q \in V(\fn)} H^1_{\tr}(k_\q,\cA)[0] \to \cdot\end{equation}
\begin{equation}\label{global duality triangle4} C(\tilde{\mathscr{F}}_\fa (\m)) \to    C(\tilde{\mathscr{F}}(\m)) \to {\bigoplus}_{\q \in V(\fa)} H^1_f(k_\fq,\cA)[0]  \to \cdot . \end{equation}
\item[(v)] Let $\Phi$ denote either the Nekov\'a\v{r} structure $\tilde{\mathscr{F}}$ or Mazur--Rubin structure $\tilde{\cF}= h(\tilde{\mathscr{F}})$ on $\cA$. Then the cohomology sequences of the exact triangles in (iv) combine with the descriptions of cohomology in (iii) to induce the following exact sequences of $\bLambda$-modules 
\begin{align} 
&H^1_{\Phi^\mathfrak{n}} (k, \cA) \hookrightarrow H^1_{\Phi^\m} (k, \cA) \xrightarrow{(\hat{v}_\q)_{\q\in V(\m/\n)}} 
    \bLambda^{\nu ( \m / \mathfrak{n})}
    \to H^{1}_{(\tilde{\cF}^\ast)_\fn} (k, \cB)^\ast \twoheadrightarrow H^{1}_{(\tilde{\cF}^\ast)_\m} (k, \cB)^\ast \label{global duality sequence1}\\
 &H^1_{\Phi(\m)} (k, \cA) \hookrightarrow H^1_{\Phi^\m} (k, \cA) \xrightarrow{(\hat{\psi}_\q^\mathrm{fs})_{\q\in V(\m)}} 
    \bLambda^{\nu (\m)} \to H^{1}_{\tilde{\cF}^\ast (\m)} (k, \cB)^\ast \twoheadrightarrow H^{1}_{\tilde{\cF}^\ast_\m} (k,\cB)^\ast \label{global duality sequence2}\\ 
 &H^1_{\Phi_\mathfrak{n} (\m/\n)}(k,\cA)\! \hookrightarrow\! H^1_{\Phi (\m)}(k,\cA) \xrightarrow{(\check{v}_\q)_{\q\in V(\n)}} 
    \bLambda^{\nu(\mathfrak{n})}\! \to \! H^{1}_{(\tilde{\cF}^\ast)^\fn (\m/\n)} (k, \cB)^\ast\! \twoheadrightarrow\! H^{1}_{\tilde{\cF}^\ast (\m)} (k,\cB)^\ast\label{global duality sequence3}\\
&H^1_{\Phi_\a (\m)}(k,\cA) \hookrightarrow H^1_{\Phi (\m)}(k,\cA) \xrightarrow{(\check{\psi}_\q^{\mathrm{fs}})_{\q\in V(\a)}} 
    \bLambda^{\nu(\a)} \to H^{1}_{(\tilde{\cF}^\ast)^\a (\m)} (k, \cB)^\ast \twoheadrightarrow \! H^{1}_{\tilde{\cF}^\ast (\m)} (k, \cB)^\ast.\label{global duality sequence4}
    \end{align}
Here, if $\Phi = \tilde{\mathscr{F}}$, then we write $\hat{v}_\q$ and $\hat{\psi}_\q^{\mathrm{fs}}$, respectively $\check v_\q$ and $\check{\psi}_\q^{\mathrm{fs}}$, for the composites of $v_\q$ and $\psi_\q^{\mathrm{fs}}$ with the canonical homomorphisms  $H^1_{\!\tilde{\mathscr{F}}^\m}(k,\cA) \to H^1_{\tilde{\cF}^\m}(k,\cA)$, respectively  $H^1_{\!\tilde{\mathscr{F}}(\m)}(k,\cA) \to H^1_{\tilde{\cF}(\m)}(k,\cA)$. On the other hand, if $\Phi =\tilde{\cF}$, then both $\hat{v}_\q$ and $\check{v}_\q$ denote $v_\q$ and both $\hat{\psi}_\q^{\mathrm{fs}}$ and $\check{\psi}_\q^{\mathrm{fs}}$  denote $\psi_\q^{\mathrm{fs}}$.
\end{itemize}
\end{prop}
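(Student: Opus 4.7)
The plan is to reduce the whole proposition to Proposition \ref{construction complex} applied to the modified structure $\tilde{\mathscr{F}}^\fb_\a(\fn)$, together with a careful dualization argument to obtain the exact triangles in (iv) and their cohomological consequences in (v). The first step is to verify that $\tilde{\mathscr{F}}^\fb_\a(\fn)$ itself satisfies Hypothesis \ref{fF hyp}: the $\infty$-relaxed property is inherited from $\tilde{\mathscr{F}}$ (since the modifications occur only at primes in $V(\a\fb\fn)$, which is disjoint from $\Pi_k^\infty$), while Lemma \ref{explicit cohomology lemma}(i) ensures that the new local conditions $H^0(k_\q,\cA)[0]$, $\RGamma(k_\q,\cA)$, and $H^0(k_\q,\cA)[0]\oplus H^1_\tr(k_\q,\cA)[-1]$ are all perfect complexes with injective $H^0(\theta_{\mathscr{F},\q})$. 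The hypotheses on $\q_0,\q_1\in S(\tilde{\mathscr{F}})$ persist because these primes are unaffected by the modification. Claim (i) then follows from Proposition \ref{construction complex}(i), after computing dual local conditions via Definition \ref{dualisable structures}(a) and using that only $\q\in V(\a)$ contributes nontrivially to $\chi$, namely $-[\bLambda]$ (since the dual condition $H^0(k_\q,\cB)[0]\oplus H^1(k_\q,\cB)[-1]$ has $H^0, H^1$ free of ranks $1,2$). Claim (ii) is an immediate application of Proposition \ref{construction complex}(iii) once one verifies that the local modifications commute with base change, which is routine because each modified local complex is a finite direct sum of finitely generated free $\bLambda$-modules.

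For claim (iii), the exact sequence is read off from the analogue of diagram (\ref{new diagram}) supplied by Proposition \ref{construction complex}(v) for the structure $\tilde{\mathscr{F}}^\fb_\a(\fn)$. The kernel term is identified via Lemma \ref{some basic duality properties of Nekovar structures}(iii): $h(\tilde{\mathscr{F}}^\fb_\a(\fn))^\vee$ equals $(\tilde{\cF}^\ast)^\a_\fb(\fn)$ after a term-by-term check using the Mazur--Rubin duality $H^1_{/\cF}(k_\q,\cA)^\ast\cong H^1_{\cF^\vee}(k_\q,\cB)$. The quotient term $X(\tilde{\mathscr{F}}^\fb_\a(\fn))$ is computed by applying Lemma \ref{comparison selmer structures}(ii) with $S=S(\tilde{\mathscr{F}})\cup V(\a\fb\fn)$ and $S'=V(\fb)$: the summands at $\q\in V(\a)\cup V(\fn)$ vanish because the dual condition there satisfies $\im H^0(\theta_{\mathscr{F}^\vee,\q})=H^0(k_\q,\cB)$, whereas at $\q\in V(\fb)$ the dual is strict and the full term $H^0(k_\q,\cB)^\ast$ contributes. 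The splitting into a direct sum $X(\tilde{\mathscr{F}})\oplus\bigoplus_{\q\in V(\fb)}H^0(k_\q,\cB)^\ast$ follows because the cokernel of the map $\lambda^0_{S\setminus V(\fb)}(\mathscr{F}^\vee)^\vee$ in that exact sequence maps to the $V(\fb)$-terms trivially, the latter being independent Frobenius-evaluations at new primes.

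Claim (iv) is obtained uniformly as follows. Triangle (\ref{global duality triangle1}) is the special case of Proposition \ref{construction complex}(vi) with $\mathscr{F}=\tilde{\mathscr{F}}^\n$ and $\Sigma=V(\m/\n)$, combined with the decomposition $\RGamma_f(k_\q,\cB)\cong H^0(k_\q,\cB)[0]\oplus H^1_f(k_\q,\cB)[-1]$ of Lemma \ref{explicit cohomology lemma}(ii) (valid since $\cB$ satisfies the analogue of Hypothesis \ref{new strategy hyps}(ii) by Remark \ref{vanishing remark}(i)) and local Tate duality $H^i(k_\q,\cB)^\ast\cong H^{2-i}(k_\q,\cA)$, $H^1_f(k_\q,\cB)^\ast\cong H^1_{/f}(k_\q,\cA)$. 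For (\ref{global duality triangle2})--(\ref{global duality triangle4}), identify the dual Nekov\'a\v{r} structures as $(\tilde{\mathscr{F}}^\m)^\ast=(\tilde{\mathscr{F}}^\ast)_\m$, $(\tilde{\mathscr{F}}(\m))^\ast=(\tilde{\mathscr{F}}^\ast)(\m)$, $(\tilde{\mathscr{F}}_\fn(\m/\fn))^\ast=(\tilde{\mathscr{F}}^\ast)^\fn(\m/\fn)$, and dually for $(\tilde{\mathscr{F}}_\fa(\m))^\ast$; for each pair in (iv) the appropriate dual is a refinement in the opposite direction. Applying Lemma \ref{basic properties}(ii) to that dual refinement yields an exact triangle whose relative cone decomposes via Lemma \ref{explicit cohomology lemma}(iii); applying the contravariant exact functor $\RHom_\bLambda(-,\bLambda)[-2]$ and rotating converts it into the stated triangle, with the local duality identifications $H^1_\tr(k_\q,\cB)^\ast\cong H^1_{/\tr}(k_\q,\cA)$ and $H^1_{/\tr}(k_\q,\cB)^\ast\cong H^1_\tr(k_\q,\cA)$ producing the explicit third terms.

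Finally, claim (v) is obtained by taking long exact cohomology sequences of the triangles in (iv) and substituting the identifications of $H^0$ and $H^1$ supplied by (iii). For $\Phi=\tilde{\cF}$ the sequences recover precisely the Mazur--Rubin global duality sequences (\ref{mr duality}) applied to $\tilde{\cF}^\m\ge\tilde{\cF}^\n$, $\tilde{\cF}^\m\ge\tilde{\cF}(\m)$, $\tilde{\cF}(\m)\ge\tilde{\cF}_\fn(\m/\fn)$, and $\tilde{\cF}(\m)\ge\tilde{\cF}_\fa(\m)$; for $\Phi=\tilde{\mathscr{F}}$ they form the Nekov\'a\v{r} refinements compatible with Theorem \ref{global duality thm}. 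The identification of the connecting maps with $\hat{v}_\q,\check v_\q$ (respectively $\hat{\psi}_\q^\fs,\check\psi_\q^\fs$) follows from the naturality of the duality morphisms in Proposition \ref{local Tate duality thm} and the explicit construction of $v_\q$ and $\psi_\q^\fs$ via the evaluation map (\ref{evaluation def}) and the isomorphism of Lemma \ref{cfactor}: the cup product with $\iota_\q(\varpi)$ that defines $\partial_\q$ in (\ref{v_q def}) is precisely the local duality pairing between $H^1(k_\q,\cA)$ and the $H^0$-piece of the dual condition. The main obstacle will be this last identification: establishing that the connecting homomorphisms in the long exact sequences coincide, as maps of $\bLambda$-modules, with the \emph{a priori} unrelated cocycle-theoretic definitions in \S\ref{comparison maps section}, which demands a careful chase through the mapping cones in the definitions of both the Selmer complexes and the local duality isomorphisms of Proposition \ref{local Tate duality thm}(i).
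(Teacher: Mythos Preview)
Your proposal is correct and follows essentially the same route as the paper: verify Hypothesis \ref{fF hyp} for the modified structure, apply Proposition \ref{construction complex}, and derive the triangles in (iv) by dualising refinement triangles from Lemma \ref{basic properties}(ii) via $\RHom_\bLambda(-,\bLambda)[-2]$.

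Two small points in your treatment of (iii) are worth tightening. First, Lemma \ref{comparison selmer structures}(ii) produces a four-term exact sequence, not a short exact one; to truncate it you need the fourth term $\coker(\lambda^0_{S\setminus S'}(\mathscr{D})^\vee)$ to vanish, which follows because $\lambda^0_{S(\tilde{\mathscr{F}})\cup V(\a)}(\mathscr{D})$ is injective (use the place $\q_1$ from Hypothesis \ref{fF hyp}(d), at which $H^0(\RGamma_{\mathscr{D}}(k_{\q_1},\cB))=0$). Your sentence about ``the cokernel\dots maps to the $V(\fb)$-terms trivially'' does not address this. Second, your justification for the splitting via ``independent Frobenius-evaluations'' is vague; the paper simply observes that $\bigoplus_{\q\in V(\fb)}H^0(k_\q,\cB)^\ast$ is a free $\bLambda$-module (by Lemma \ref{explicit cohomology lemma}(i) applied to $\cB$), so the short exact sequence splits. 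These are easy fixes and do not affect the overall structure of your argument.
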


\begin{proof} We write $\mathscr{D}$ for the dual Nekov\'a\v{r} structure $\tilde{\mathscr{F}}_\a^\fb(\fn)^\ast$ on $\cB$. Then $S(\mathscr{D}) = S(\tilde{\mathscr{F}})\cup V(\a\fb\n)$ and Lemma \ref{explicit cohomology lemma}\,(iii) implies that the local condition for $\mathscr{D}$ at  $\q \in \Pi_k$ is  
\begin{equation}\label{dual local conditions}  \bigl( \RGamma_{\!\mathscr{D}}(k_\q,\cA), \theta_{\!\mathscr{D},\q}\bigr) \coloneqq 
\begin{cases}
    \bigl( \RGamma_{\!\tilde{\mathscr{F}}^\ast}(k_\q,\cB), \theta_{\!\tilde{\mathscr{F}}^\ast,\q}\bigr) \quad & \text{ if } \q \notin V(\a\fb\fn), \\
     (H^0(k_\q,\cB)[0]\oplus H^1 (k_\q, \cB)[-1],\theta_{\!\mathscr{D},\q}) & \text{ if } \q \in V(\a), \\
    \bigl( 0,0\bigr) & \text{ if } \q \in V(\fb), \\
     (H^0(k_\q,\cB)[0]\oplus H^1_\tr (k_\q, \cB)[-1],\theta_{\mathscr{D},\q}) & \text{ if } \q \in V(\fn),
\end{cases}
\end{equation}
with 
\[ \theta_{\!\mathscr{D},\q}\coloneqq \begin{cases}  \phi_{(H^0(k_\q,\cB),H^1 (k_\q, \cB),(0))}, &\text{if $\q \in V(\a)$,}\\
 \phi_{(H^0(k_\q,\cB),H^0_\tr(k_\q,\cB),(0))}, &\text{if $\q \in V(\n)$}.\end{cases}\] 

The fact $\tilde{\mathscr{F}}$ satisfies Hypothesis \ref{fF hyp} implies that the same is true for $\tilde{\mathscr{F}}_\a^\fb(\n)$. In particular, the result of Proposition \ref{construction complex}\,(i) for $\tilde{\mathscr{F}}_\a^\fb(\n)$ directly implies that $C(\tilde{\mathscr{F}}_\a^\fb(\n))$ belongs to $D^{\mathrm{perf}}(\bLambda)$ and that its Euler characteristic in $K_0(\bLambda)$ is equal to
\begin{align*}
\textstyle \sum_{\q\in S(\tilde{\mathscr{F}}_\a^\fb(\n))\setminus \Pi_k^\infty}\bm{\chi}_R\bigl(\RGamma_{\mathscr{D}}(k_\q,\cB)\bigr)
& = \textstyle \sum_{\q\in S(\tilde{\mathscr{F}})\setminus \Pi_k^\infty}\bm{\chi}_\bLambda\bigl(\RGamma_{\!\tilde{\mathscr{F}}^\ast}(k_\q,\cB)\bigr)
\\ & \qquad \quad + \textstyle \sum_{\q\in V(\fa)}\bm{\chi}_\bLambda\bigl(H^0(k_\q,B)[0]\oplus H^1 (k_\q, \cB)[-1]\bigr) \\
& \qquad \quad + \textstyle \sum_{\q\in V(\fn)}\bm{\chi}_\bLambda\bigl(H^0(k_\q,B)[0]\oplus H^1_\tr (k_\q,\cB)[-1]\bigr)\\
& = \textstyle \sum_{\q \in S(\tilde{\mathscr{F}})\setminus \Pi_k^\infty}\bm{\chi}_\bLambda(\RGamma_{\!\tilde{\mathscr{F}}^\ast}(k_\q,\cB)) -  \textstyle \sum_{\q\in V(\fa)}[\bLambda] \\
& = \bm{\chi}_\bLambda\bigl(C(\tilde{\mathscr{F}})\bigr) -\nu(\a)\cdot [\bLambda].
\end{align*}
Here the first equality follows directly from the explicit description (\ref{dual local conditions}), the second from Lemma \ref{explicit cohomology lemma}\,(i) and the fact $H^1_{\tr}(k_\q,\cB)$ is a free $\bLambda$-module of rank $1$ and the last follows directly from Proposition \ref{construction complex}\,(i). This proves (i). 

To prove (ii) we recall from Proposition \ref{flach result}\,(vi) that, for each $\q \in V(\a\fb\n)$, there exists a natural isomorphism $\RGamma(k_\q,\cA)\otimes_\bLambda \bLambda'\cong \RGamma(k_\q,\cA\otimes_\bLambda \bLambda')$ in $D(\bLambda')$. In conjunction with Lemma \ref{explicit cohomology lemma}\,(ii), this isomorphism induces, in each degree $i$, an isomorphism of $\bLambda'$-modules $H^i(k_\q,\cA)\otimes_\bLambda\bLambda' \cong H^i(k_\q,\cA\otimes_\bLambda\bLambda')$. Further, for $\q \in V(\n)$, the isomorphism $H^1(k_\q,\cA)\otimes_\bLambda\bLambda' \cong H^1(k_\q,\cA\otimes_\bLambda\bLambda')$ combines with the canonical isomorphisms $\bigl( \cA/(\tau-1)\cA)\otimes_\bLambda\bLambda' \cong (\cA\otimes_\bLambda\bLambda')/(\tau-1)$ to imply  the image under $-\otimes_\bLambda\bLambda'$ of the map (\ref{evaluation def}) for $\cA$ is equal to the corresponding map for $\cA\otimes_\bLambda\bLambda'$ and hence that there exists a canonical isomorphism $H^1_{\rm{tr}}(k_\q,\cA)\otimes_\bLambda\bLambda' \cong H^1_{\rm{tr}}(k_\q,\cA\otimes_\bLambda\bLambda')$ with respect to which $\phi_{\cA,\q}\otimes_\bLambda\bLambda'$ identifies with $\phi_{\cA\otimes_\bLambda\bLambda',\q}$.  
Taken together, these observations imply the induced structure $\tilde{\mathscr{F}}_\a^\fb (\fn)\otimes_\bLambda\bLambda'$ identifies with $(\tilde{\mathscr{F}}\otimes_\bLambda\bLambda')_\a^\fb (\fn)$ and so the isomorphism in (ii) follows from Proposition \ref{construction complex}\,(ii) for $\tilde{\mathscr{F}}_\a^\fb(\n)$. 

Regarding (iii), we note Proposition \ref{construction complex}\,(v) for $\tilde{\mathscr{F}}^\fb_\a(\n)$ directly implies that $C(\tilde{\mathscr{F}}^\fb_\a (\fn))$ is acyclic outside degrees $0$ and $1$ and $H^0(C(\tilde{\mathscr{F}}^\fb_\a (\fn))) = H^1_{\!\tilde{\mathscr{F}}^\fb_\a (\fn)}(k,\cA)$. From Remark \ref{modified nekovar = modified mr remark}, we also know that the dual Mazur--Rubin structure $h(\tilde{\mathscr{F}}^\fb_\a(\n))^\ast$ on $B$ is equal to $\tilde{\cF}^\fb_\a(\n)^\ast = (\tilde{\cF}^\ast)^\a_\fb(\fn)$. In addition, since $\tilde{\mathscr{F}}$ is assumed to validate Hypothesis \ref{fF hyp}\,(b)\,(i) and (d), one has 
 $H^0(\RGamma_{\mathscr{F}^\ast}(k_{\q_1},B)) = (0)$ and so the map $\lambda^0_{S(\tilde{\mathscr{F}})\cup V(\a)}(\mathscr{D})$ from (\ref{lambda 0 def}) is injective. From Lemma \ref{comparison selmer structures}\,(ii) with $\mathscr{F}, S$ and $S'$ taken to be $\mathscr{D}, S(\mathscr{D}) = S(\tilde{\mathscr{F}}) \cup V(\a\fb\n)$ and $V(\fb\n)$, one therefore obtains a short exact sequence of $\bLambda$-modules 
\[ 0 \to  X_{S(\tilde{\mathscr{F}})\cup V(\a)}(\tilde{\mathscr{F}}^\fb_\a(\fn)) \to  X_{S(\tilde{\mathscr{F}}_\a^\fb(\n))}(\tilde{\mathscr{F}}_\a^\fb(\n)) \to \bigoplus_{\q \in V(\fb\n)} \left(\frac{H^0(k_\q,B)}{\im(H^0(\theta_{\mathscr{D},\q}))}\right)^\ast \to 0.\]
Further, from (\ref{dual local conditions}) one checks that $H^0(\theta_{\!\mathscr{D},\q})$ is the zero map for $\q \in V(\fb)$ and is surjective for $\q \in V(\fa\n)$,  and so this sequence is equivalent to an exact sequence
\[ 0 \to  X(\tilde{\mathscr{F}}) \to  X(\tilde{\mathscr{F}}_\a^\fb(\fn)) \to \bigoplus_{\q \in V(\fb)} H^0(k_\q,\cB)^\ast \to 0.\]
This sequence splits since, for each $\q \in V(\mathfrak{b})$, the $\bLambda$-module $H^0(k_\q,\cB)^\ast$ is free. Given these observations, the exact sequence in (iii) is now obtained from the lower row of the diagram (\ref{new diagram}) with $\mathscr{F}$ taken to be $\tilde{\mathscr{F}}_\a^\fb(\fn)$. 

The exact triangles in (iv) are all derived from Lemma \ref{basic properties}\,(iii). In the first case, the descriptions (\ref{dual local conditions}) imply $(\tilde{\mathscr{F}}^\m)^\ast \le (\tilde{\mathscr{F}}^\n)^\ast$ and also, for $\q \in V(\m/\n)$, that $\RGamma_{(\tilde{\mathscr{F}}^\n)^\ast/(\tilde{\mathscr{F}}^\m)^\ast}(k_\q,\cB)$ is isomorphic to $H^0(k_\q,\cB)[0]\oplus H^1_{f}(k_\q,\cB)[-1].$ In this case, therefore, Lemma \ref{basic properties}\,(iii) gives an exact triangle 
\[ \RGamma_{(\tilde{\mathscr{F}}^\m)^\ast}(k,\cB) \to \RGamma_{(\tilde{\mathscr{F}}^\n)^\ast}(k,\cB) \to \bigoplus_{\q \in V(\m/\n)}\bigl(H^0(k_\q,\cB)[0]\oplus H^1_{f}(k_\q,\cB)[-1]\bigr)\to \cdot\]
and (\ref{global duality triangle1}) is directly derived from the image of this triangle under the exact functor $(-)^\ast[-2]$. 
 
In a similar way, (\ref{dual local conditions}) implies $(\tilde{\mathscr{F}}^\m)^\ast \le \tilde{\mathscr{F}}(\m)^\ast$ and that, for each $\q \in V(\m)$, there exists a natural isomorphism in $D^{\mathrm{perf}}(\bLambda)$ 
\begin{align*}
\RGamma_{\!\tilde{\mathscr{F}}(\m)^\ast/(\tilde{\mathscr{F}}^\m)^\ast}(k_\q,\cB)^\ast[-2] & \cong \bigl(H^0(k_\q,\cB)[0]\oplus H^1_{\tr}(k_\q,\cB)[-1]\bigr)^\ast[-2]\\ 
& \cong H^1_{\!/\tr} (k_\q, \cA) [-1] \oplus H^2(k_\q,\cA)[-2].
\end{align*}
Given these facts, the exact triangle  (\ref{global duality triangle2}) is directly derived from the image under $(-)^\ast[-2]$ of the exact triangle in Lemma \ref{basic properties}\,(iii) with $(\mathscr{F}',\mathscr{F})$ taken to be $((\tilde{\mathscr{F}}^\m)^\ast,\tilde{\mathscr{F}}(\m)^\ast)$. 

We next use (\ref{dual local conditions}) to check that $\tilde{\mathscr{F}}(\m)^\ast \le \tilde{\mathscr{F}}_\n(\m/\n)^\ast$, with $\RGamma_{\!\tilde{\mathscr{F}}_\n(\m/\n)^\ast/\tilde{\mathscr{F}}(\m)^\ast}(k_\q,\cB)$ acyclic for each $\q \notin V(\n)$ and naturally isomorphic to $H^1_{\!/\tr}(k_\q,\cB)[-1]$ for each $\q \in V(\n)$. Given these facts, the exact triangle (\ref{global duality triangle3}) is derived from the image under $(-)^\ast[-2]$ of the triangle in Lemma \ref{basic properties}\,(iii) with $(\mathscr{F}',\mathscr{F})$ taken to be $(\tilde{\mathscr{F}}(\m)^\ast,\tilde{\mathscr{F}}_\n(\m/\n)^\ast)$.

Finally, to complete the proof of (iv), we note that (\ref{dual local conditions}) implies $\tilde{\mathscr{F}}(\m)^\ast \le \tilde{\mathscr{F}}_\a(\m)^\ast$ and that $\RGamma_{\!\tilde{\mathscr{F}}_\a(\m)^\ast/\tilde{\mathscr{F}}(\m)^\ast}(k_\q,\cB)$ is acyclic for each $\q \notin V(\a)$ and identifies with $H^1_{\!/f}(k_\q,B)[-1]$ for each $\q \in V(\a)$. These observations imply that the exact triangle  (\ref{global duality triangle4}) is directly derived from the image under $(-)^\ast[-2]$ of the exact triangle in Lemma \ref{basic properties}\,(iii) with $(\mathscr{F}',\mathscr{F})$ taken to be $(\tilde{\mathscr{F}}(\m)^\ast,\tilde{\mathscr{F}}_\a(\m)^\ast)$.\\
Each of the exact sequences in (v) is derived from the long exact cohomology sequence of the corresponding exact triangle in (iv). 
 For example, the long exact cohomology sequence of (\ref{global duality triangle1}) combines with the descriptions of cohomology in (iii) to give the following variant of the exact commutative diagram in Theorem \ref{global duality thm}\,(ii)
\begin{equation*}
\xymatrixcolsep{5mm}\xymatrix{ 
H^1_{\!\tilde{\mathscr{F}}^\n} (k, \cA) \ar@{^{(}->}[r] \ar@{->>}[d] & H^1_{\!\tilde{\mathscr{F}}^\m}(k, \cA) \ar[r] \ar@{->>}[d] & 
\bigoplus_{\q\in V(\m/\n)} H^1_{\!/f}(k_\q,\cA) \ar[d]_{\cong}^{(\alpha_\q)_\q}\\
H^1_{\tilde{\cF}^\n} (k, \cA) \ar@{^{(}->}[r] & H^1_{\tilde{\cF}^\m}(k, \cA) \ar[r]^{(v_\q)_\q} & \bLambda^{\nu(\m/\n)} \ar[r] & H^1_{(\tilde{\cF}^\ast)_\n}(k,\cB)^\ast \ar@{->>}[r] & H^1_{(\tilde{\cF}^\ast)_\m}(k,\cB)^\ast.}\end{equation*} 
Here the first two vertical maps are the canonical projections from (\ref{nekovar mr compare}) and each $\alpha_\q$ denotes the isomorphism $H^1_{\!/f}(k_\q,\cA) \cong \bLambda$ induced by the composite map
\begin{equation}\label{explaining map} H^1(k_\q,\cA) \xrightarrow{\mathrm{res}} H^1(k_\q^{\mathrm{nr}},\cA)^{G_{\kappa_\q}} \xrightarrow{\partial_\q}\cA^{\tau=1} \cong \cA/(\tau-1) \cong \bLambda\end{equation}
that occurs in the definition (\ref{v_q def}) of $v_\q$. Taking account of the latter isomorphisms, the lower row is the exact sequence of (\ref{mr duality}) with $(\cF_1,\cF_2)$ taken to be $(\tilde{\cF}^\n,\tilde{\cF}^\m)$ and directly gives the sequence (\ref{global duality sequence1}) in the case $\Phi = \tilde{\cF}$. In addition, since the commutativity of the second square implies that the second map in the upper row is  $(\hat{v}_\q)_{\q\in V(\m/\n)}$, the two rows combine to give the sequence (\ref{global duality sequence1}) in the case $\Phi = \tilde{\mathscr{F}}$. 

In a similar way, the long exact cohomology sequence of (\ref{global duality triangle2}) combines with the descriptions of cohomology in (iii) and the appropriate case of (\ref{mr duality}) to give an exact commutative diagram 
\begin{equation*}
\xymatrixcolsep{5mm}\xymatrix{ 
H^1_{\!\tilde{\mathscr{F}}(\m)} (k, \cA) \ar@{^{(}->}[r] \ar@{->>}[d] & H^1_{\!\tilde{\mathscr{F}}^\m}(k, \cA) \ar[r] \ar@{->>}[d] & 
\bigoplus_{\q\in V(\m)} H^1_{\!/\tr}(k_\q,\cA) \ar[d]_{\cong}^{(\beta_\q)_\q}\\
H^1_{\tilde{\cF}(\m)} (k, \cA) \ar@{^{(}->}[r] & H^1_{\tilde{\cF}^\m}(k, \cA) \ar[r]^{(\psi_\q^{\mathrm{fs}})_\q} & \bLambda^{\nu(\m)} \ar[r] & H^1_{\tilde{\cF}^\ast(\m)}(k,\cB)^\ast \ar@{->>}[r] & H^1_{(\tilde{\cF}^\ast)_\m}(k,\cB)^\ast.}\end{equation*} 
Here, for each $\q$, we write $\beta_\q$ for the isomorphism $H^1_{\!/\tr}(k_\q,\cA) \cong \bLambda$ induced by the composite map $H^1(k_\q,\cA) \to \cA/(\tau-1) \cong \bLambda$ that occurs in the definition of $\psi_\q^{\mathrm{fs}}$. It follows that the second map in the upper row is $(\hat{\psi}_\q^{\mathrm{fs}})_{\q\in V(\m)}$ and so the exact sequence (\ref{global duality sequence2}) with $\Phi = \tilde{\cF}$, respectively $\Phi = \tilde{\mathscr{F}}$, follows directly from the lower row of the above diagram, respectively from a comparison of the upper and lower rows of the diagram. 

The derivations of (\ref{global duality sequence3}) and (\ref{global duality sequence4}) from the respective exact triangles (\ref{global duality triangle3}) and (\ref{global duality triangle4}) follow along precisely similar lines. For brevity, we therefore leave details of these derivations to the reader except to note that, for each $\q\in V(\n)$, the isomorphism (\ref{explaining map}) induces an isomorphism of $H^1_{\tr}(k_\q,\cA) \cong H^1_{\!/f}(k_\q,\cA)$ with $\bLambda$ and, for each $\q\in V(\a)$, the map $\psi_\q^{\mathrm{fs}}$ relies on the isomorphism of $H^1_f(k_\q,\cA) = \cA/(\tau-1)$ with $\bLambda$ that was fixed earlier. 
\end{proof}

The $\cR$-module $Y$ fixed at the beginning of \S\,\ref{somr section} gives rise  to an $\bLambda$-module $\tilde{Y} \coloneqq Y\otimes_\cR \bLambda$ that is non-zero, free and identifies, via  (\ref{new diagram}), with a quotient of $X(\tilde{\mathscr{F}})$. In the next result we shall use variants of the exact triangles (\ref{global duality triangle1}) and (\ref{global duality triangle2}) that are respectively constructed via the following diagrams in $D(\bLambda)$
\begin{equation}\label{modify diagram1}
\xymatrixcolsep{5mm}
\xymatrixrowsep{7mm}
\xymatrix{ (\tilde{Y}\oplus M(\n))[-1] \ar[r] & (\tilde{Y}\oplus M(\m))[-1] \ar[r] & M(\m/\n)[-1] \ar[r] & \cdot\\
C(\tilde{\mathscr{F}}^\n)\ar[u]^{\theta_\n} \ar[r] & C(\tilde{\mathscr{F}}^\m)\ar[r]\ar[u]^{\theta_\m} & {\bigoplus}_{\q\in V(\m/\n)}C_\q \ar[r] \ar[u]^{\theta_{\m,\n}} & \cdot\\
C_{\tilde{Y}}(\tilde{\mathscr{F}}^\n) \ar[r]^{\hskip 0.1truein \rho_1} \ar[u] & C_{\tilde{Y}}(\tilde{\mathscr{F}}^\m) \ar[r]^{\hskip -0.4truein\rho'_1} \ar[u]  & {\bigoplus}_{\q\in V(\m/\n)}H^1_{\!/f}(k_\q,\cA)[0] \ar[r]  \ar[u] & \cdot} \end{equation}
\begin{equation}\label{modify diagram}
\xymatrixcolsep{5mm}
\xymatrixrowsep{7mm}
\xymatrix{ \tilde{Y}[-1] \ar[r] & (\tilde{Y}\oplus M(\m))[-1] \ar[r] & M(\m)[-1]\ar[r] & \cdot\\
C(\tilde{\mathscr{F}}(\m))\ar[u]^{\theta_\m'} \ar[r] & C(\tilde{\mathscr{F}}^\m)\ar[r]\ar[u]^{\theta_\m} & {\bigoplus}_{\q\in V(\m)}C_\q' \ar[r] \ar[u]^{\theta''_{\m}}& \cdot\\
C_{\tilde{Y}}(\tilde{\mathscr{F}}(\m)) \ar[r]^{\hskip 0.1truein \rho_2} \ar[u] & C_{\tilde{Y}}(\tilde{\mathscr{F}}^\m) \ar[r]^{\hskip -0.4truein\rho'_2} \ar[u]  & {\bigoplus}_{\q\in V(\m)}H^1_{\!/{\mathrm{tr}}}(k_\q,\cA)[0] \ar[r] \ar@{=}[u] & \cdot} \end{equation}

Here, for each modulus $\a$ in $\cN$, we write $M(\a)$ for the free $\bLambda$-module ${\bigoplus}_{\q\in V(\a)}H^0(k_\q,\cB)^\ast$ so that the upper rows of the respective diagrams are the exact triangles induced by the obvious short exact sequence $0\to M(\n) \to M(\m) \to M(\m/\n) \to 0$ and $0\to \tilde{Y} \to \tilde{Y} \oplus M(\m) \to M(\m) \to 0$. In addition, we set 
\[ C_\q \coloneqq H^1_{\!/f}(k_\q,\cA)[0]\oplus H^2(k_\q,\cA)[-1],\,\, \,\,\text{respectively}\,\,C'_\q \coloneqq H^1_{\!/\tr}(k_\q,\cA)[0]\oplus H^2(k_\q,\cA)[-1],\]
for $\q \in V(\m/\n)$, respectively $\q \in V(\m)$, so that the central rows of the two diagrams are respectively the exact triangles of  (\ref{global duality triangle1}) and (\ref{global duality triangle2}). The morphisms $\theta'_\m$ and $\theta_\m$ are the unique morphisms in $D(\bLambda)$ for which $H^1(\theta'_\m)$ and $H^1(\theta_\m)$ are the surjective maps induced by the descriptions of $H^1(C(\tilde{\mathscr{F}}(\m)))$ and $H^1(C(\tilde{\mathscr{F}}^\m))$ in Proposition \ref{what we need from Selmer complexes}\,(ii) (these morphisms exist and are unique since $C(\tilde{\mathscr{F}}(\m))$ and $C(\tilde{\mathscr{F}}^\m)$ are both acyclic in degrees greater than one) and we write $C_{\tilde{Y}}(\tilde{\mathscr{F}}(\m))$ and $C_{\tilde{Y}}(\tilde{\mathscr{F}}^\m)$ for their respective mapping fibres. Finally, we write $\theta_{\m,\n}$ and $\theta''_{\m}$ for the unique morphisms in $D(\bLambda)$ for which $H^1(\theta_{\m,\n})$ and $H^1(\theta''_{\m})$ are the direct sums over $V(\m/\n)$ and $V(\m)$ of the local duality isomorphisms $H^2(k_\q,\cA) \cong H^0(k_\q,\cB)^\ast$ so that the respective mapping fibres identify with ${\bigoplus}_{\q\in V(\m/\n)}H^1_{\!/f}(k_\q,\cA)[0]$ and ${\bigoplus}_{\q\in V(\m)}H^1_{\!/\tr}(k_\q,\cA)[0]$ and the right hand columns of the diagrams are the associated exact triangles. At this point, we note that the upper squares of both diagrams commute and so the Octahedral axiom implies the existence of the indicated morphisms $\rho_1, \rho_1', \rho_2$ and $\rho_2'$ that make the respective lower rows exact triangles and the whole diagrams commutative in $D(\bLambda)$. 

\begin{proposition}\label{new modified selmer} Recall the integer $r = r_{\fF,Y}$ defined in (\ref{r def}). Then for all moduli $\m$ and $\n$ in $\cN$ with $\n \mid \m$, the following claims are valid.  
\begin{itemize}
\item[(i)] $C_{\tilde{Y}}(\tilde{\mathscr{F}}(\m))$ and $C_{\tilde{Y}}(\tilde{\mathscr{F}}^\m)$ belong to $D^{\mathrm{perf}}(\bLambda)$ and are such that, in $K_0(\bLambda)$, one has 
\[ \bm{\chi}_\bLambda(C_{\tilde{Y}}(\tilde{\mathscr{F}}(\m))) = r\cdot [\bLambda]\quad\text{and}\quad \bm{\chi}_\bLambda(C_{\tilde{Y}}(\tilde{\mathscr{F}}^\m)) = (r+ \nu(\m))\cdot [\bLambda]. \]
\item[(ii)] $C_{\tilde{Y}}(\tilde{\mathscr{F}}(\m))$ and $C_{\tilde{Y}}(\tilde{\mathscr{F}}^\m)$ are acyclic outside degrees zero and one. There are identifications  $H^0(C_{\tilde{Y}}(\tilde{\mathscr{F}}(\m)) = H^1_{\!\tilde{\mathscr{F}}(\m)}(k,\cA)$ and $H^0(C_{\tilde{Y}}(\tilde{\mathscr{F}}^\m)) = H^1_{\!\tilde{\mathscr{F}}^\m}(k,\cA)$ and  canonical short exact sequences
\begin{align*} 0 \to H^{1}_{\!\tilde{\cF}^\ast(\m)}(k, \cB)^\ast \to  
        &\,H^1 (C_{\tilde{Y}}(\tilde{\mathscr{F}}(\m))) \to \ker(X(\tilde{\mathscr{F}})\to \tilde{Y}) \to 0\\
    0 \to H^{1}_{(\tilde{\cF}^\ast)_\m}(k, \cB)^\ast \to  
        &\,H^1 (C_{\tilde{Y}}(\tilde{\mathscr{F}}^\m)) \to \ker(X(\tilde{\mathscr{F}})\to \tilde{Y}) \to 0.\end{align*}
\item[(iii)] If $r> 0$, then there exists a commutative diagram of $\bLambda$-modules 
\[\xymatrix{ \Det_\bLambda (C_{\tilde{Y}}(\tilde{\mathscr{F}}^\n)) \ar[rr]^{\vartheta_{\!\tilde{\mathscr{F}}^\n,\tilde{Y}}} & & \bidual^{r+\nu(\n)}_\bLambda H^1_{\!\tilde{\mathscr{F}}^\n}(k,\cA)\\
\Det_\bLambda (C_{\tilde{Y}}(\tilde{\mathscr{F}}^\m)) \ar[d]_{\varphi_2} \ar[u]^{\varphi_1} \ar[rr]^{\vartheta_{\!\tilde{\mathscr{F}}^\m,\tilde{Y}}} && \bidual^{r + \nu (\m)}_\bLambda H^1_{\!\tilde{\mathscr{F}}^\m}(k,\cA) \ar[u]_{\bigwedge_{\q \in V(\m/\n)} \hat{v}_\q}\ar[d]^{\bigwedge_{\q \in V(\m)} \hat{\psi}_\q^\mathrm{fs}}\\
\Det_\bLambda (C_{\tilde{Y}}(\tilde{\mathscr{F}}(\m))) \ar[rr]^{\vartheta_{\!\tilde{\mathscr{F}}(\m),\tilde{Y}}} & & \bidual^{r}_\bLambda H^1_{\!\tilde{\mathscr{F}}(\m)}(k,\cA).}\]
The maps $\vartheta_{\tilde{\mathscr{F}}^\m,\tilde{Y}}$ and $\vartheta_{\tilde{\mathscr{F}}(\m),\tilde{Y}}$ are defined in the course of the proof below and have cokernels that are respectively annihilated by $\mathrm{Fitt}_\bLambda^0(H^1 (C_{\tilde{Y}}(\tilde{\mathscr{F}}^\m)))$ and $\mathrm{Fitt}_\bLambda^0(H^1 (C_{\tilde{Y}}(\tilde{\mathscr{F}}(\m))))$. The maps $\varphi_1$ and $\varphi_2$ are isomorphisms and arise as follows: $\varphi_1$ is induced by the lower row of (\ref{modify diagram1}) and the identification $\Det_\bLambda(H^1_{\!/f}(k_\fq,\cA)[0]) \cong \Det_\bLambda(\bLambda[0]) = \bLambda$ for each $\q\in V(\m/\n)$ given by the isomorphism $H^1_{\!/f}(k_\fq,\cA)\cong \bLambda$ induced by (\ref{explaining map}); $\varphi_2$ is induced by the lower row of (\ref{modify diagram}) and the identification $\Det_\bLambda(H^1_{\!/{\mathrm{tr}}}(k_\fq,\cA)[0]) \cong \Det_\bLambda(\bLambda[0]) = \bLambda$ for each $\q\in V(\m)$ given by the isomorphism $H^1_{\!/{\mathrm{tr}}}(k_\fq,\cA)\cong \bLambda$ induced by (\ref{evaluation def}). 
\end{itemize}
\end{proposition}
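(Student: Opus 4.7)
The plan is as follows.

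For part (i), both $C_{\tilde{Y}}(\tilde{\mathscr{F}}(\m))$ and $C_{\tilde{Y}}(\tilde{\mathscr{F}}^\m)$ are, by construction, mapping fibres of morphisms between perfect complexes (namely $\theta'_\m\:C(\tilde{\mathscr{F}}(\m))\to \tilde{Y}[-1]$ and $\theta_\m\:C(\tilde{\mathscr{F}}^\m)\to (\tilde{Y}\oplus M(\m))[-1]$), and so belong to $D^{\mathrm{perf}}(\bLambda)$. The Euler characteristic is additive on exact triangles, so Proposition~\ref{what we need from Selmer complexes}\,(i) (applied with $\a=1$ in both cases to give $\chi_\bLambda(C(\tilde{\mathscr{F}}(\m)))=\chi_\bLambda(C(\tilde{\mathscr{F}}^\m))=\chi_\fF$) combines with the identification $\mathrm{rk}_\bLambda(M(\m))=\nu(\m)$ (valid because each module $H^0(k_\q,\cB)^{\ast}$ is free of rank one over $\bLambda$, by Lemma~\ref{explicit cohomology lemma}\,(i)) to give the claimed values $\chi_\fF+r_Y=r$ and $\chi_\fF+r_Y+\nu(\m)=r+\nu(\m)$.

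For part (ii), the mapping-fibre triangles induce long exact cohomology sequences. Since $C(\tilde{\mathscr{F}}(\m))$ and $C(\tilde{\mathscr{F}}^\m)$ are acyclic outside degrees $0$ and $1$ by Proposition~\ref{what we need from Selmer complexes}\,(iii), and since $H^i(\theta_\m')$ and $H^i(\theta_\m)$ vanish for $i\ne 1$ (as $\tilde Y[-1]$ and $(\tilde Y\oplus M(\m))[-1]$ are concentrated in degree one), we obtain that $C_{\tilde{Y}}(\tilde{\mathscr{F}}(\m))$ and $C_{\tilde{Y}}(\tilde{\mathscr{F}}^\m)$ are acyclic outside degrees $0$ and $1$, with $H^0$-groups as claimed. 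The $H^1$ is the kernel of the relevant surjection, and by construction $H^1(\theta_\m')$ (respectively $H^1(\theta_\m)$) factors through the surjective quotient $H^1(C(\tilde{\mathscr{F}}(\m)))\twoheadrightarrow X(\tilde{\mathscr{F}})$ (respectively $H^1(C(\tilde{\mathscr{F}}^\m))\twoheadrightarrow X(\tilde{\mathscr{F}})\oplus M(\m)$) provided by Proposition~\ref{what we need from Selmer complexes}\,(iii). A diagram chase against the exact sequences in that result then yields the two short exact sequences.

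For part (iii), since $r>0$, all three of $r+\nu(\m)$, $r+\nu(\n)$, and $r$ are positive, so we may apply the construction of Lemma~\ref{det projection lemma}\,(i) to the perfect complex $C_{\tilde{Y}}(\tilde{\mathscr{F}}^?)$ with trivial choice of free quotient (i.e.\ empty basis), using the identification of $H^0$ from (ii); this defines the maps $\vartheta_{\tilde{\mathscr{F}}^\m,\tilde{Y}}$, $\vartheta_{\tilde{\mathscr{F}}^\n,\tilde{Y}}$, and $\vartheta_{\tilde{\mathscr{F}}(\m),\tilde{Y}}$. The assertion that the cokernel is annihilated by $\mathrm{Fitt}^0_\bLambda(H^1(-))$ follows by choosing a quadratic resolution $\bLambda^n\xrightarrow{\phi}\bLambda^{n-r_0}$ of the complex (as in Lemma~\ref{finite level reps}\,(ii), with $r_0$ the Euler characteristic), whereby the image of $\vartheta$ inside $\bidual^{r_0}_\bLambda H^0$ coincides with the ideal generated by the $(n-r_0)\times(n-r_0)$-minors of $\phi$, which is precisely $\mathrm{Fitt}^0_\bLambda(\mathrm{cok}(\phi))=\mathrm{Fitt}^0_\bLambda(H^1(C))$.

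The commutativity of the two squares is the core technical content. The left vertical isomorphisms $\varphi_1$ and $\varphi_2$ are obtained by taking determinants of the lower exact triangles in diagrams (\ref{modify diagram1}) and (\ref{modify diagram}), using the isomorphisms $H^1_{\!/f}(k_\q,\cA)\cong\bLambda$ and $H^1_{\!/\mathrm{tr}}(k_\q,\cA)\cong\bLambda$ to identify the determinant of the third term with $\bLambda$. The rank-reduction maps $\bigwedge_{\q}\hat v_\q$ and $\bigwedge_{\q}\hat\psi_\q^{\mathrm{fs}}$ on the right are well-defined as maps between exterior biduals by Lemma~\ref{biduals lemma 1}\,(ii), because the exact sequences (\ref{global duality sequence1}) and (\ref{global duality sequence2}) of Proposition~\ref{what we need from Selmer complexes}\,(v) show that each $\hat v_\q$ (respectively $\hat\psi_\q^{\mathrm{fs}}$) vanishes on the submodule $H^1_{\!\tilde{\mathscr{F}}^\n}(k,\cA)$ (respectively $H^1_{\!\tilde{\mathscr{F}}(\m)}(k,\cA)$). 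I will verify the commutativity of each square by choosing quadratic resolutions for the three complexes involved that are compatible with the horizontal exact triangles, and then unwinding the definitions of the vertical maps in terms of these resolutions. With these choices, both composites become the same element of $\bidual$ constructed explicitly from minors of $\phi$ contracted against the maps $\hat v_\q$ (respectively $\hat \psi_\q^{\mathrm{fs}}$) that appear as coordinates of the map $C(\tilde{\mathscr{F}}^\m)\to\bigoplus_\q C_\q$ (respectively $\bigoplus_\q C_\q'$).

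The main obstacle will be carrying out this last verification cleanly: both paths around each square are constructed through several layers (mapping fibres, determinant identifications, rank-reductions via Lemma~\ref{biduals lemma 1}\,(ii), and the identifications of Lemma~\ref{explicit cohomology lemma} and Proposition~\ref{what we need from Selmer complexes}), and ensuring that signs and orientations match requires careful bookkeeping of the distinguished bases used to normalise the determinants of $H^1_{\!/f}(k_\q,\cA)$ and $H^1_{\!/\mathrm{tr}}(k_\q,\cA)$.
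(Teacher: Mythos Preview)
Your strategy for parts (i) and (ii) matches the paper's exactly. For part (iii), the overall approach is also the same, but the paper factors the technical work differently: rather than working through compatible quadratic resolutions inline, it invokes Proposition~\ref{eagon-northcott-prop} from the appendix. That result is stated abstractly for any complex $C^\bullet=[F_0\to F_1]$ fitting into a triangle $C^\bullet\to D^\bullet\to R^n[0]\to$, and its part (iv) gives precisely the commutative square relating $\vartheta_\psi$ to $\bigwedge_i f_i\circ\vartheta_\phi$. Applying this once with $(C^\bullet,D^\bullet)=(C_{\tilde Y}(\tilde{\mathscr{F}}^\n),C_{\tilde Y}(\tilde{\mathscr{F}}^\m))$ and once with $(C_{\tilde Y}(\tilde{\mathscr{F}}(\m)),C_{\tilde Y}(\tilde{\mathscr{F}}^\m))$ gives the two squares directly, so the sign and orientation bookkeeping you flag as the main obstacle is done once in the abstract and not twice here.

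One point in your sketch does need correction. Your claim that ``the image of $\vartheta$ inside $\bidual^{r_0}_\bLambda H^0$ coincides with the ideal generated by the $(n-r_0)\times(n-r_0)$-minors of $\phi$'' is not well-posed (the image is a submodule of a bidual, not an ideal of $\bLambda$), and in any case what is needed is the containment $\Fitt^0_\bLambda(H^1)\cdot\bidual^{r_0}_\bLambda H^0\subseteq\im(\vartheta)$, not a description of $\im(\vartheta)$. This containment is the content of Proposition~\ref{eagon-northcott-prop}\,(i), whose proof genuinely uses the Eagon--Northcott complex (Proposition~\ref{eagon northcott property}): one identifies $\coker(\vartheta)$ with a submodule of $H^{-1}(C^\bullet_{\mathrm{EN}}(\phi))^\ast$, which is then annihilated by $\Fitt^0(\coker\phi)$. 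In the present setting $\bLambda$ is self-injective, so the extra $\Ext^1$-factor in that proposition's statement is the whole ring and one obtains exactly the annihilation by $\Fitt^0_\bLambda(H^1)$. Your minor-based intuition is in the right direction but would not by itself yield this without the Eagon--Northcott input or an equivalent argument.
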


\begin{proof} The $\bLambda$-module $M(\m)$ is free of rank $ \nu(\m)$. Hence, by combining the relevant cases of  Proposition \ref{what we need from Selmer complexes}\,(i) with the exact triangles given by the first two columns of (\ref{modify diagram}), one deduces $C_{\tilde{Y}}(\tilde{\mathscr{F}}(\m))$ and $C_{\tilde{Y}}(\tilde{\mathscr{F}}^\m)$ belong to $D^{\mathrm{perf}}(\bLambda)$, and also that, in $K_0(\bLambda)$, one has both 
\begin{align*} \bm{\chi}_\bLambda\bigl(C_{\tilde{Y}}(\tilde{\mathscr{F}}(\m))\bigr) =&\, \bm{\chi}_\bLambda\bigl(C(\tilde{\mathscr{F}}(\m))\bigr) - \bm{\chi}_\bLambda({\tilde{Y}}[-1])\\
                                              =&\,\bm{\chi}_\bLambda\bigl(C(\tilde{\mathscr{F}})\bigr) + [\tilde{Y}]\\
                                              =&\, r\cdot [\bLambda]\end{align*}
and 
\begin{align*} \bm{\chi}_\bLambda\bigl(C_{\tilde{Y}}(\tilde{\mathscr{F}}^\m)\bigr) =&\, \bm{\chi}_\bLambda\bigl(C(\tilde{\mathscr{F}}^\m)\bigr) - \bm{\chi}_\bLambda(M(\tilde{Y},\m)[-1])\\
                                              =&\, \bm{\chi}_\bLambda\bigl(C(\tilde{\mathscr{F}})\bigr) + [\tilde{Y}] + \nu(\m)\cdot [\bLambda]\\
                                              =&\, (r+ \nu(\m))\cdot [\bLambda].\end{align*}
This proves (i). In addition, all assertions in (ii) are  obtained by combining the relevant cases of Proposition \ref{what we need from Selmer complexes}\,(iii) with an analysis of the long exact cohomology sequences of the first two columns of (\ref{modify diagram}). 

We note now that the results of (i) and (ii) combine with the argument of Lemma \ref{finite level reps} to imply the hypotheses of Proposition \ref{eagon-northcott-prop}(iv) are satisfied by both of the following sets of data

\begin{itemize}
\item[$\circ$] $C^\bullet, D^\bullet, r$ and $n$ are respectively taken to be $C_{\tilde{Y}}(\tilde{\mathscr{F}}^\m)$, $C_{\tilde{Y}}(\tilde{\mathscr{F}}^\n)$, $r+\nu(\n)$ and $\nu(\m/\n)$ and the exact triangle (\ref{triangle 1}) is the lower row of (\ref{modify diagram1});
\item[$\circ$] $C^\bullet, D^\bullet, r$ and $n$ are respectively taken to be $C_{\tilde{Y}}(\tilde{\mathscr{F}}(\m))$, $C_{\tilde{Y}}(\tilde{\mathscr{F}}^\m)$, $r$ and $\nu(\m)$ and the exact triangle (\ref{triangle 1}) is the lower row of (\ref{modify diagram}).
\end{itemize}

By applying Lemma \ref{finite level reps} to the first, respectively second, set of data we obtain the upper, respectively, lower square in the commutative diagram of (iii), with the assertions concerning annihilation of cokernels following directly from Proposition \ref{eagon-northcott-prop}\,(i). Here we also use the fact that the argument of Proposition \ref{what we need from Selmer complexes}\,(v) explicitly describes the map $H^0(\rho_1')$ induced by the lower row of (\ref{modify diagram1}) in terms of the maps $\hat{v}_\q$ for $\q$ in $V(\m/\n)$ and the  map $H^0(\rho_2')$ induced by the lower row of (\ref{modify diagram}) in terms of the maps $\hat{\psi}_\q^{\mathrm{fs}}$ for $\q$ in $V(\m)$. \end{proof}

\subsection{The Kolyvagin derivative homomorphism} \label{koly section}

In this subsection, we fix a family of Nekov\'a\v{r} structures 
\[ \fF = (\mathscr{F}_K)_{K \in \Omega}\] 
that satisfies Hypothesis \ref{system of fF hyp} and also natural numbers $t$ and $i$. We also continue to use the notation fixed in (\ref{conv notation}) (so that $\tilde{\mathscr{F}}$ denotes $\mathscr{F}_{k,i}$ etc.). 

We shall first define a notion of Kolyvagin system of rank $t$ for the Nekov\'a\v{r} structure $\tilde{\mathscr{F}}$ and then refine arguments from \cite{bss} and \cite{Kato-Euler} in order to prove that Euler systems in $\ES^t (\fF)$ give rise to such Kolyvagin systems via a natural `derivative homomorphism' construction. 

\subsubsection{Kolyvagin systems for Nekov\'a\v{r} structures} 

Fix a modulus $\fn \in \cN$ and a prime $\q \in \cQ \setminus V(\fn)$. Then, by applying Lemma \ref{biduals lemma 1}\,(ii) to the exact sequence (\ref{global duality sequence3}) (with $\m$ and $\fn$ replaced by $\fn$ and $\q$), one obtains a map of $\bLambda$-modules
\[
\check{v}_\q \: \bidual^t_{\bLambda} H^1_{\!\tilde{\mathscr{F}} (\fn)} (k, \cA) \to \bidual^{t - 1}_{\bLambda} H^1_{\!\tilde{\mathscr{F}}_{\q} (\fn)} (k, \cA).
\]
In the same way, by applying Lemma \ref{biduals lemma 1}\,(ii) to (\ref{global duality sequence4}) (with $\a$ and $\m$ replaced by $\q$ and $\n$),
one obtains 
a map of $\Lambda$-modules  
\[ \check{\psi}_\q^\fs \:\bidual^t_{\bLambda} H^1_{\!\tilde{\mathscr{F}}( \fn)} (k, \cA) 
\to  \bidual^{t - 1}_{\bLambda} H^1_{\!\tilde{\mathscr{F}}_{\q} (\fn)} (k, \cA).  \]
We can now give a definition of Kolyvagin system that is appropriate for our theory. 

\begin{definition}\label{koly def}
   A `Kolyvagin system' of rank $t$ for the Nekov\'a\v{r} structure $\tilde{\mathscr{F}}$ is a family
    \[
    (\kappa_\fn)_{\fn} \in \prod_{\fn \in \cN}  \bidual^t_{\bLambda} H^1_{\tilde{\mathscr{F}}(\fn)} (k,\cA)   
    \]
    with the property that, for every $\fn \in \cN$ and $\q \in \cQ \setminus V(\fn)$, the `finite-singular relation'
    \[
    \check{v}_\q(\kappa_{\fn \q}) = \check{\psi}_\q^\fs (\kappa_{\fn}) 
    \]
    is valid in $\bidual^{t - 1}_{\bLambda} H^1_{\tilde{\mathscr{F}}_{\q} (\fn)} (k,\cA)$. The collection  of all such families is naturally a $\bLambda$-module that we denote by $\KS^t(\tilde{\mathscr{F}})$.
\end{definition}

To prepare for the statement of our main result concerning these systems, we note that, for each field $K \in \Omega$, Hypothesis \ref{system of fF hyp} implies the Nekov\'a\v{r} structure $\mathscr{F}_K$ satisfies Hypotheses \ref{fF hyp} (with $R = \cR [\cG_K]$). It follows that  Lemma \ref{det projection lemma}\,(ii) combines with Proposition \ref{what we need from Selmer complexes}\,(iii) and (v) to imply the existence of a natural `projection' map of $\cR [\cG_K]$-modules 
\begin{equation}\label{proj map def}
\pi_{K, \tilde{\mathscr{F}}}^t \: \bidual^t_{\cR [\cG_K]} H^1_\fF (K, \cT) 
\to \bidual^t_{\bLambda [\cG_K]} H^1_{\tilde{\mathscr{F}}} (K, \cA).
\end{equation}
For each modulus $\fn \in \cN$, we use the group $G_\fn$ defined in (\ref{group def}) and also fix a  pre-image $\NN_{\fn}$ under the (surjective) projection map $\Z [\cG_{k (\fn)}]\to \Z[\cG_{k (1)}]$ of the trace element  $\NN_{\cG_{k (1)}}$ defined in (\ref{norm def}). Then, by using the generator $\sigma_\q$ of $G_\q$ for $\q \in \cQ_i$ fixed at the beginning of \S\,\ref{koly sys I section}, we  define `derivative operators'
\[
D_\q \coloneqq \sum_{j\in [|G_\q| - 1]} j \sigma_\q^j \in \Z [G_\q],
\quad  
D_\fn \coloneqq \prod_{\q \in V(\fn)} D_\q \in \Z [G_\fn]
\quad \text{ and } \quad 
D'_\fn \coloneqq D_{\fn}\cdot\NN_{\fn} \in \Z [\cG_{k (\fn)}].
\]
The following result is well-known. 

\begin{lem} \label{bss lem 6.12} Fix $c = (c_K)_{K \in \Omega}$ of $\ES^t_{S_0} (\fF)$. Then, for every modulus $\fn \in \cN$, the element 
\[ D'_\fn \cdot \pi^t_{k(\fn), \tilde{\mathscr{F}}} ( c_{k (\fn)}) \in \bidual^t_{\bLambda [\cG_{k (\fn)}]} H^1_{\tilde{\mathscr{F}}} (k (\fn), \cA)\]
is fixed by $\cG_{k (\fn)}$ and also independent of the choice of lift $\NN_{\fn}$ of $\NN_{\cG_{k (1)}}$.
\end{lem}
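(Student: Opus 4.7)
The plan is to follow the classical derivative-class formalism developed in \cite[Lem.\@ 4.4.2]{Rubin-euler} and adapted in \cite[Lem.\@ 6.12]{bss}, translated into the present Nekov\'a\v{r} setting via the projection $\pi^t_{k(\fn),\tilde{\mathscr{F}}}$. The decisive algebraic input is the distribution identity $(\sigma_\q - 1) D_\q = |G_\q| - N_{G_\q}$ in $\Z[G_\q]$ together with the observation that, by the very definition of $\cQ = \cQ_i$, the integer $|G_\q|$ is divisible by $l(i)$ for every $\q \in V(\fn)$; since $\cA$ is a module over the ring $\bLambda$ of order at most $l(i)$, this forces $|G_\q|$ to annihilate both $\pi^t_{k(\fn),\tilde{\mathscr{F}}}(c_{k(\fn)})$ and the ambient target bidual. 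Throughout, I will use that $\bLambda[\cG_{k(\fn)}]$ is commutative (as $\cK/k$ is abelian), so all group-ring manipulations below are formal.

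\textbf{Invariance.} Since $\cG_{k(\fn)}$ is generated by $\{\sigma_\q : \q \mid \fn\}$ together with any set of lifts of generators of $\cG_{k(1)}$, it suffices to verify $(\sigma - 1) D'_\fn \pi^t_{k(\fn),\tilde{\mathscr{F}}}(c_{k(\fn)}) = 0$ in these two cases. For $\sigma = \sigma_\q$ with $\q \mid \fn$, the distribution identity combined with the $|G_\q|$-annihilation reduces matters to checking that $D_{\fn/\q} \NN_\fn N_{G_\q} \pi^t_{k(\fn),\tilde{\mathscr{F}}}(c_{k(\fn)})$ vanishes. Identifying $G_\q$ with $\gal{k(\fn)}{k(\fn/\q)}$, the Euler-system axiom gives
\[
N_{G_\q} \cdot c_{k(\fn)} = \nu^t_{k(\fn)/k(\fn/\q)}\bigl(\Eul_\q(\Frob_\q^{-1}) \cdot c_{k(\fn/\q)}\bigr),
\]
and pushing this equality through $\pi^t_{k(\fn),\tilde{\mathscr{F}}}$ — using Hypothesis \ref{system of fF hyp}(ii) and the explicit description of $\nu^t$ from Lemma \ref{construction of injection lemma} to get the required compatibility — shows that the right-hand side is a corestriction from level $k(\fn/\q)$. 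Acting on this by $D_{\fn/\q}$ and invoking the $|G_\q|$-annihilation at the finite-level target finishes this case. When $\sigma$ is a lift of a generator of $\cG_{k(1)}$, the element $(\sigma - 1)\NN_\fn$ lies in the kernel $I_{G_\fn}\cdot \Z[\cG_{k(\fn)}]$ of $\Z[\cG_{k(\fn)}] \to \Z[\cG_{k(1)}]$, hence expands as a $\Z[\cG_{k(\fn)}]$-linear combination of the elements $\sigma_\q - 1$ for $\q \mid \fn$, and the previous case applies.

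\textbf{Independence of the lift and main obstacle.} Any two lifts $\NN_\fn, \NN'_\fn$ of $\NN_{\cG_{k(1)}}$ differ by an element of $I_{G_\fn}\cdot \Z[\cG_{k(\fn)}]$, which can be written as $\sum_{\q \mid \fn}(\sigma_\q - 1) x_\q$ for suitable $x_\q \in \Z[\cG_{k(\fn)}]$. By commutativity and the identity $(\sigma_\q - 1) D_\fn = D_{\fn/\q}(|G_\q| - N_{G_\q})$, the difference $D_\fn(\NN_\fn - \NN'_\fn) \pi^t_{k(\fn),\tilde{\mathscr{F}}}(c_{k(\fn)})$ becomes a finite sum of terms $D_{\fn/\q}(|G_\q| - N_{G_\q}) x_\q \pi^t_{k(\fn),\tilde{\mathscr{F}}}(c_{k(\fn)})$, each of which vanishes by exactly the same combination of $|G_\q|$-annihilation and Euler-system norm relation used in the previous step. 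The principal technical obstacle in this programme is the compatibility of $\pi^t_{k(\fn),\tilde{\mathscr{F}}}$ with the norm isomorphisms $\nu^t_{L/K}$: to establish this one must track the construction of $\pi^t$ from Lemma \ref{det projection lemma}(ii) through the base-change morphism $\cR \to \bLambda$ and use Hypothesis \ref{system of fF hyp}(ii) to identify $\mathscr{F}_{k(\fn)} \otimes_\cR \bLambda$ with the modification of $\tilde{\mathscr{F}}$ at level $k(\fn)$ predicted by that hypothesis. Once this diagram-chase is in place, every other step of the argument reduces to formal manipulation inside the commutative group ring $\bLambda[\cG_{k(\fn)}]$.
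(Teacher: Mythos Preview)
Your overall strategy is the right one—use $(\sigma_\q - 1)D_\q = |G_\q| - N_{G_\q}$, kill the $|G_\q|$-term by annihilation, and apply the Euler-system norm relation to the $N_{G_\q}$-term—but the last step has a genuine gap. After the norm relation you are left with (the $\nu^t$-image of) $D_{\fn/\q}\cdot\Eul_\q(\Frob_\q^{-1})\cdot\pi^t_{k(\fn/\q),\tilde{\mathscr{F}}}(c_{k(\fn/\q)})$ at level $k(\fn/\q)$, and you assert that ``$|G_\q|$-annihilation at the finite-level target'' finishes. But no factor of $|G_\q|$ is present at this stage: the norm relation has traded $N_{G_\q}$ for the Euler factor, and $\Eul_\q(\Frob_\q^{-1})$ has no reason to be divisible by $|G_\q|$.

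The missing ingredient is an \emph{induction on $\nu(\fn)$}. The crucial fact, which you do not invoke, is that since $\q\in\cQ$ the image of $\Eul_\q(\Frob_\q^{-1})$ in $\bLambda[G_{\fn/\q}]$ lies in the augmentation ideal; expanding it as a combination of $(\sigma_\p - 1)$ for $\p\mid\fn/\q$ and applying the inductive hypothesis for the smaller modulus $\fn/\q$ is what actually completes the step. The paper runs this induction at the $\cR$-level, showing $(\sigma - 1) D_\fn c_{k(\fn)}\in\a_i\cdot\bidual^t_{\cR[\cG_{k(\fn)}]}H^1_{\fF}(k(\fn),\cT)$ and only then projecting through $\pi^t$; a pleasant side-effect is that the compatibility of $\pi^t$ with $\nu^t$ that you flag as the main obstacle never needs to be checked. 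The same gap propagates to your independence-of-lift argument, since that paragraph defers to the invariance step.
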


\begin{proof}
   Both claims are true if, for all $\n$, one has $(\sigma - 1) D_\fn (\pi^t_{K, \tilde{\mathscr{F}}} ( c_{k (\fn)})) = 0$ for every $\sigma \in G_\fn$. Hence, since $\a_i$ annihilates $\bidual^t_{\bLambda[\cG_{k (\fn)}]} H^1_{\tilde{\mathscr{F}}} (k (\fn), \cA)$, it is enough to show that every element 
   $(\sigma - 1) D_\fn (c_{k (\fn)})$ belongs to $\a_i\cdot \bidual^t_{\cR[\cG_K]} H^1_\mathscr{F}(k(\fn), \cT) $. This is proved by the argument of  \cite[Lem.\@ 6.12]{bss}, which we now briefly explain for the convenience of the reader. 
   
   We use induction on $\nu (\fn)$ and may assume both that $\nu (\fn) > 1$ (since there is nothing to prove if $\fn = 1$) and also $\sigma = \sigma_\q$ for some $\q \in V (\fn)$ (since $G_\fn$ is generated by the set $\{ \sigma_\q : \q \in V (\fn)\}$). Then, since $(\sigma_\q  - 1) D_\q = |G_\q| - \NN_{G_\q}$, one has 
   \begin{align*}
       (\sigma_\q  - 1) D'_\fn c_{k (\fn)} & = ( |G_\q| - \NN_{G_\q}) D'_{\fn / \q} c_{k (\fn)}\\
       & = 
       |G_\q| D'_{\fn / \q} c_{k (\fn)} - \nu^t_{k (\fn \q) / k (\fn)} ( D'_{\fn / \q} \cdot\Eul_\q (\Frob_\q^{-1}) c_{k (\fn / \q)}),
   \end{align*}
   where the second equality holds by the Euler system norm relations. Now, since $\q \in \cQ$, the element $\Eul_\q (\Frob_\q^{-1})$ belongs to the augmentation ideal of $\bLambda [G_{\fn / \q}]$ and so the induction hypothesis implies $D'_{\fn / \q}\cdot \Eul_\q (\Frob_\q^{-1}) c_{k (\fn/\q)}$ belongs to $\a_i\cdot \bidual^t_{\bLambda [\cG_{k (\fn / \q)}]} H^1_{\tilde{\mathscr{F}}} (k (\fn / \q), \cA)$. In addition, the containment $\q \in \cQ$ also implies that $|\bLambda|$ divides $|G_\q|$. Since $|\bLambda|$ belongs to $\a_i$, this shows that $(\sigma_\q  - 1) D'_\fn c_{k (\fn)}$ is contained in  $\a_i \cdot\bidual^t_{\cR[\cG_K]} H^1_\mathscr{F} (k(\fn), \cT)$, as required.
\end{proof}

For each  modulus $\n \in \cN$ the Nekov\'a\v{r} structure
$\tilde{\mathscr{F}}^{k(\fn)}$ agrees with $\tilde{\mathscr{F}}^{\fn}$ (cf.\@ Remark \ref{modified nekovar = modified mr remark}). As a consequence, the assumed validity of Hypothesis \ref{system of fF hyp} combines with the argument of Lemma~\ref{construction of injection lemma} to imply the existence  of a canonical isomorphism of $\bLambda$-modules
\[\nu^t_{k (\fn) / k,\tilde{\mathscr{F}}} \:  \bidual^t_{\bLambda} H^1_{\tilde{\mathscr{F}}^{\fn}} (k, \cA) \xrightarrow{\simeq} 
\big( \bidual^t_{\bLambda [\cG_{k (\fn)}]} H^1_{\tilde{\mathscr{F}}} (k (\fn),\cA) \big)^{\cG_{k (\fn)}}.\] 

For each prime $\mathfrak{l} \in \cQ\setminus V (\fn)$, we may regard $\Eul_\mathfrak{l} (\Frob_\mathfrak{l}^{-1})$ as an element of $\cR [G_\fn]$. Then, writing  $I_\fn$ for the augmentation ideal of $\Z [G_\fn]$, the definition of $\cQ$ implies that the image of $\Eul_\mathfrak{l} (\Frob_\mathfrak{l}^{-1})$ in $\bLambda [G_\fn]$ belongs to  $\bLambda \otimes_\Z I_\fn$ and so defines an element of $\bLambda \otimes_\Z (I_\fn / I_\fn^2)$. In particular, for every $\q \neq \mathfrak{l}$, we may define an element $x_\mathfrak{l}^{(\q)}$ of $\bLambda$ by means of the equality
\begin{equation} \label{x fl definition}
\Eul_\mathfrak{l} (\Frob_\mathfrak{l}^{-1}) = x_\mathfrak{l}^{(\q)} \otimes (\sigma_\q - 1) 
\quad \text{ in } \quad \bLambda \otimes_\Z (I_\q / I_\q^2).
\end{equation}
We also write $\mathfrak{S} (\fn)$ for the group $\mathrm{Per} (V (\fn))$ of permutations of the set $V(\fn)$.

\begin{definition} \label{kolyvagin derivative def}
   Fix an element $c = (c_K)_{K \in \Omega}$ of $\ES^t_{S_0} (\fF)$. Then, for each modulus $\fn \in \cN$, 
   Lemma \ref{bss lem 6.12} allows us to define
\[
\kappa' (c)_\fn \coloneqq (\nu^t_{k (\fn) / k, \tilde{\mathscr{F}}})^{-1} ( \pi^t_{k(\fn), \tilde{\mathscr{F}}} (D'_\fn (c_{k(\fn)}))) \quad \in \bidual^t_{\bLambda} H^1_{\tilde{\mathscr{F}}^\fn} (k,\cA). 
\]
We then set 
    \[
    \kappa (c)_\fn \coloneqq {\sum}_{\tau \in \mathfrak{S} (\fn)} \mathrm{sgn} (\tau)  
    \cdot \big( {{\prod}}_{ \q \in V(\fn / \mathfrak{d}_\tau)} x^{(\q)}_{\tau (\q)} \big) \cdot \kappa'(c)_{\mathfrak{d}_\tau}
    \quad 
    \in  \bidual^a_{\bLambda}  H^1_{\tilde{\mathscr{F}}^\fn} (k, \cA),
    \]
    where $\mathfrak{d}_\tau$ denotes the product over all (prime ideals) $\q\in V(\n)$ with $\tau (\q) = \q$, and define the `Kolyvagin derivative' of $c$ to be the family  
    \[
    \kappa (c) \coloneqq (\kappa (c)_\fn)_{\fn \in \cN}
    \in 
    {{\prod}}_{\fn \in \cN} \bidual^t_{\bLambda}  H^1_{\tilde{\mathscr{F}}^\fn} (k, \cA).
    \]
\end{definition}

 We can now finally state the main result of \S\,\ref{koly section}.  

\begin{thm} \label{kolyvagin derivative thm}
Assume the family $\fF= (\mathscr{F}_K)_{K \in \Omega}$ of Nekov\'a\v{r} structures satisfies Hypothesis~\ref{system of fF hyp} and that  $\cK$ and $\cT$ satisfy Hypothesis \ref{more new hyps}. Fix an Euler system  $c$ in $\ES^t_{S_0} (\fF)$. Then, for each modulus $\fn \in \cN$ and prime $\q\in V(\fn)$, one has
    \[
    \kappa (c)_\fn \in  \bidual^t_{\bLambda} H^1_{\tilde{\mathscr{F}}(\fn)} (k,\cA)
    \]
and   
\[ \check{v}_\q ( \kappa (c)_\fn) = \check{\psi}_\q^\fs ( \kappa (c)_{\fn / \q}) \in \bidual^{t - 1}_{\Lambda} H^1_{\tilde{\mathscr{F}}_{ \q} (\fn/\q)} (k, \cA).\]
In particular, the assignment $c \mapsto \kappa(c)$ defines a well-defined homomorphism of $\cR$-modules
\[ \ES^t_{S_0} (\fF) \to \KS^t (\tilde{\mathscr{F}}).\] 
\end{thm}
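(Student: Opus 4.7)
The overall plan is to extend the strategy of \cite[Th.\@ 6.13]{bss}, which establishes the analogous result for the relaxed Nekov\'a\v{r} structure $\fF_{\mathrm{rel}}$, to an arbitrary family satisfying Hypothesis \ref{system of fF hyp}, making essential use of the descriptions of the Selmer complexes $C(\tilde{\mathscr{F}}^\fn)$ and $C(\tilde{\mathscr{F}}(\fn))$ provided by Proposition \ref{what we need from Selmer complexes}. Two things must be checked: first, that $\kappa(c)_\fn$ actually lies in the smaller submodule $\bidual^t_\bLambda H^1_{\tilde{\mathscr{F}}(\fn)}(k,\cA) \subseteq \bidual^t_\bLambda H^1_{\tilde{\mathscr{F}}^\fn}(k,\cA)$; second, that the finite-singular relation holds.

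For the first claim I would apply Corollary \ref{prop 2.4 extension} to the inclusion $H^1_{\tilde{\mathscr{F}}(\fn)}(k,\cA) \subseteq H^1_{\tilde{\mathscr{F}}^\fn}(k,\cA)$ --- note that (\ref{global duality sequence2}) exhibits the cokernel as a submodule of a free $\bLambda$-module, and is therefore torsion-less --- thereby reducing the problem to showing that for every $\q \in V(\fn)$ the rank-reduction map $\hat{\psi}^\fs_\q$ annihilates $\kappa(c)_\fn$. This in turn I would verify by a direct local computation with the derivative operator $D_\q$, using the standard key identity relating $\psi^\fs_\q(D_\q x)$ to the evaluation of $x$ at $\Frob_\q$, together with the Euler system distribution relation at $\q$ to re-express the singular part of $D_\q \cdot c_{k(\fn)}$. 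The precise cancellation mechanism is built into the alternating sum over permutations in Definition \ref{kolyvagin derivative def}: the correction coefficients $x_\mathfrak{l}^{(\q)} \cdot \kappa'(c)_{\mathfrak{d}}$ are exactly those needed to kill $\hat{\psi}^\fs_\q(\kappa(c)_\fn)$.

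For the finite-singular relation $\check{v}_\q(\kappa(c)_\fn) = \check{\psi}^\fs_\q(\kappa(c)_{\fn/\q})$, the key inputs are again the Euler system distribution relation
\[
\NN_{\cG_{k(\fn)/k(\fn/\q)}} c_{k(\fn)} = \nu^t_{k(\fn)/k(\fn/\q)}\bigl(\Eul_\q(\Frob_\q^{-1}) \cdot c_{k(\fn/\q)}\bigr),
\]
the defining congruence $\Eul_\mathfrak{l}(\Frob_\mathfrak{l}^{-1}) \equiv x_\mathfrak{l}^{(\q)}(\sigma_\q - 1) \pmod{I_\q^2}$, and the explicit formula (\ref{v_q def}) for $v_\q$. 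Applying $\check{v}_\q$ to $\kappa(c)_\fn$ and expanding via Definition \ref{kolyvagin derivative def} as a sum over $\mathfrak{S}(\fn)$, I would then match the result term-by-term with $\check{\psi}^\fs_\q(\kappa(c)_{\fn/\q})$; the combinatorial heart of this matching is the bijection between $\mathfrak{S}(\fn)$ and pairs consisting of a permutation $\tau' \in \mathfrak{S}(\fn/\q)$ together with a choice of `swap' at $\q$, which dovetails with the splitting of $\Eul_\q$ into singular and finite components.

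The main obstacle is the new feature specific to the Nekov\'a\v{r} setting: by Proposition \ref{what we need from Selmer complexes}(iii), the cohomology $H^1(C(\tilde{\mathscr{F}}^\fn))$ contains an auxiliary summand coming from $X(\tilde{\mathscr{F}})$ that is absent in the classical relaxed framework of \cite{bss}. Throughout the above arguments one must verify that this extra direction does not interfere with the delicate cancellations; this will be controlled by the observation that all derivative classes are constructed from bona fide global cohomology classes $c_{k(\fn)} \in H^1_\fF(k(\fn),\cT)$ via the projection (\ref{proj map def}) and so factor through the leftmost column of the commutative diagram (\ref{new diagram}), which lies in the kernel of the projection to $X(\tilde{\mathscr{F}})$. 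Hypothesis \ref{more new hyps}\,(i) ensures $k(\fn) \in \Omega$ so that $c_{k(\fn)}$ is genuinely a value of the Euler system, while Hypothesis \ref{more new hyps}\,(ii) combined with Lemma \ref{cfactor} guarantees that the local quotient $\cA/(\tau-1) \cong \bLambda$ is $\bLambda$-free, so that identities verified at the level of cocycles lift faithfully to identities of exterior biduals.
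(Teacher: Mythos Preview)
Your plan has the right overall shape but omits the step that actually makes the higher-rank argument go through. You propose to verify both the containment $\kappa(c)_\fn \in \bidual^t_\bLambda H^1_{\tilde{\mathscr{F}}(\fn)}(k,\cA)$ and the finite-singular relation by ``direct local computation'' and ``term-by-term matching'' in $\bidual^{t-1}_\bLambda$. But elements of higher exterior biduals are not cohomology classes; they are functionals on $\bigwedge^{t-1}(H^1)^\ast$, and one cannot evaluate them at Frobenius or run cocycle identities on them directly. The paper's approach --- and that of \cite[Th.~6.15]{bss}, which you cite --- handles this by first reducing to rank one: Proposition \ref{bss lem 6.22} constructs, for each $\Phi \in \bigwedge^{t-1}_\bLambda H^1_{\tilde{\mathscr{F}}^\fn}(k,\cA)^\ast$, a compatible system of Perrin-Riou functionals $\Psi = (\Psi_K)_K$ such that $\Phi(\kappa'(c)_\fd) = \kappa'(\Psi(c))_\fd$ with $\Psi(c) \in \ES^1(\fF)$. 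The nontrivial content in the Nekov\'a\v{r} setting is precisely this construction: one must lift $\Phi$ through resolutions of the Selmer complexes $C(\mathscr{F}_{F_n})$ and check compatibility across fields, and this is where the complex-theoretic machinery of Proposition \ref{what we need from Selmer complexes} is genuinely used. Without Proposition \ref{bss lem 6.22} (or an equivalent device), the passage from the rank-one Kato-style argument (Lemmas \ref{fs relations lemma 1}, \ref{fs relations lemma 2}, Proposition \ref{finite-singular relations prop}) to arbitrary $t$ is a gap.

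Two smaller points. First, your identification of the ``$X(\tilde{\mathscr{F}})$ obstruction'' is misplaced: that term lives in $H^1(C(\tilde{\mathscr{F}}^\fn))$, not in $H^0(C(\tilde{\mathscr{F}}^\fn)) = H^1_{\tilde{\mathscr{F}}^\fn}(k,\cA)$ where the derivative classes sit, so it does not interfere with the Kolyvagin-derivative argument at all (it matters later, in \S\ref{koly 2 section}). Second, Hypothesis \ref{more new hyps}\,(ii) is not used to establish freeness of $\cA/(\tau-1)$ (that is Hypothesis \ref{new strategy hyps}\,(ii) via Lemma \ref{cfactor}); rather, it enters through Proposition \ref{the congruence condition} (Kato's congruence condition) and the injectivity of the connecting maps $\delta_{k(\fn),\q}$ needed in the rank-one calculation.
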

\medskip

After some preliminary steps, the proof of this result will be completed in \S\,\ref{rank one proof}.

\subsubsection{The reduction of Theorem \ref{kolyvagin derivative thm} to rank one} 

In the special case that $\fF$ is the family $\fF_{\mathrm{rel}}(\cT)$ of relaxed Nekov\'a\v{r} structures discussed in Example \ref{relaxed es remark}, $\cR$ is equal to $\cO [\cG_K]$ for a finite extension $\cO$ of $\Z_p$ and a field $K \in \cK$ and the filtration $(\a_n)_n$ is $(p^n \cR)_n$, then the result of  Theorem \ref{kolyvagin derivative thm} coincides with \cite[Th.\@ 6.15]{bss}. We recall that the latter result is proved by first adapting a technique of Rubin \cite{Rub96} and Perrin-Riou \cite{PerrinRiou98} to reduce to the case $t = 1$, and then deriving this special case from an argument that is used by Mazur and Rubin to prove \cite[Th.\@ 3.2.4]{MazurRubin04}. We shall use the same strategy to prove Theorem \ref{kolyvagin derivative thm} for general Nekov\'a\v{r} structures, though in the case $t=1$ we shall also now rely on a modified version of arguments of Kato \cite[\S\,2]{Kato-Euler} that are compatible with our use of Selmer complexes. 

As a first step, we must therefore establish a version of the key technical result \cite[Lem.\@ 6.22]{bss} that is appropriate for Nekov\'a\v{r} structures.

\begin{prop} \label{bss lem 6.22}
 Fix a modulus $\fn$ in $\cN$. Then, for each element $\Phi \in \exprod^{t - 1}_{\bLambda} H^1_{\tilde{\mathscr{F}}^{\fn}} (k, \cA)^\ast$, there exists a family of elements  
    \[
    \Psi = (\Psi_K)_{K \in \Omega}  \in {\prod}_{K \in \Omega} \exprod^{t - 1}_{\cR [\cG_K]} H^1_{\mathscr{F}} (K, \cT)^\ast
    \]
    with both of the following properties.
    \begin{romanliste}
        \item For all $K$ and $L$ in $\Omega$ with $K \subseteq L$, and all $y \in \bidual^t_{\cR [\cG_K]} H^1_\mathscr{F} (K, \cT)$, one has
        \[
        (\Psi_L \circ \nu^t_{L / K}) (y) = (\nu^1_{L / K} \circ \Psi_K) (y) \in  H^1_\mathscr{F} (L, \cT),
        \]
         where the maps $\nu^t_{L / K}$ and $\nu^1_{L / K}$ are as defined in Lemma \ref{construction of injection lemma}.
        \item For each divisor $\m$ of $\fn$, we use the first map in (\ref{global duality sequence1}) to regard $H^1_{\tilde{\mathscr{F}}^{\m}} (k, \cA)$ as a submodule of $H^1_{\tilde{\mathscr{F}}^{\fn}} (k, \cA)$. Then, in $H^1_{\tilde{\mathscr{F}}^{\fn}} (k,\cA)$, one has 
    \[
    \Phi (
    (\nu^t_{k (\m) / k, \tilde{\mathscr{F}}})^{-1} (D'_\m \cdot \pi^t_{k (\m), \tilde{\mathscr{F}}} (c_{k(\m)})))
    = (\nu^1_{k (\m) / k,\tilde{\mathscr{F}}})^{-1} (D'_\m\cdot \pi^1_{k (\m),\tilde{\mathscr{F}}} ( \Psi_{k (\m)} ( c_{k (\m)}))).
    \]
    
    \end{romanliste}
\end{prop}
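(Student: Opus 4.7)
The plan is to adapt the rank-reduction strategy of \cite[Lem.\@ 6.22]{bss} to the present setting by working systematically with the perfect Selmer complexes $C(\mathscr{F}_K)$ from Proposition~\ref{construction complex} and exploiting their compatibility under base change. Fix a field $L \in \Omega$ containing $k(\fn)$. Lemma~\ref{finite level reps} (applied in light of Hypothesis~\ref{fF hyp}~(b)(i), which holds by Hypothesis~\ref{system of fF hyp}~(i)) provides a resolution $P^0 \xrightarrow{\phi} P^1$ of $C(\mathscr{F}_L)$ with $P^0$ and $P^1$ finitely generated free $\cR[\cG_L]$-modules. By Hypothesis~\ref{system of fF hyp}~(ii) combined with Proposition~\ref{construction complex}~(iii), for every $K \subseteq L$ in $\Omega$ this resolution base-changes under $\cR[\cG_L] \to \cR[\cG_K]$ to a resolution of $C(\mathscr{F}_K^{S_\ram(L)})$, and further reducing modulo $\a_i$ (using Proposition~\ref{what we need from Selmer complexes}~(ii)) yields a resolution of $C(\tilde{\mathscr{F}}^{\fn})$ when $K = k$.

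Since the natural surjection $P^0 \otimes_{\cR} \bLambda \twoheadrightarrow H^0(C(\tilde{\mathscr{F}}^{\fn})) = H^1_{\tilde{\mathscr{F}}^{\fn}}(k,\cA)$ is induced by the chosen resolution, and each quotient $\cR_j[\cG_L]$ is self-injective, we may lift $\Phi$ first to an element of $\exprod^{t-1}_{\bLambda}(P^0 \otimes_\cR \bLambda)^\ast$ and then further to $\widetilde{\Phi} \in \exprod^{t-1}_{\cR[\cG_L]}(P^0)^\ast$. For each $K \in \Omega$, we then define $\Psi_K$ to be the image of $\widetilde{\Phi}$ under the composite of base change $\cR[\cG_L] \to \cR[\cG_{L'}]$ (with $L' \coloneqq K \cdot L$) and the norm-element isomorphism $M_{\gal{L'}{K}} \cong M^{\gal{L'}{K}}$ (valid for finitely generated free $\cR[\cG_{L'}]$-modules, exactly as in Lemma~\ref{construction of injection lemma}), followed by composition with the natural surjection $P^0 \otimes_{\cR[\cG_L]} \cR[\cG_K] \twoheadrightarrow H^1_\fF(K,\cT)$.

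The verification of property~(i) is then a diagram chase: both $\nu^t_{L/K}$ and $\nu^1_{L/K}$ from Lemma~\ref{construction of injection lemma} arise from the common norm-element isomorphism applied to the chosen resolution, so their compatibility with the construction of $\Psi$ is tautological. Property~(ii) follows by tracing the definitions: the projection $\pi^t_{k(\m),\tilde{\mathscr{F}}}$ is precisely the reduction of the chosen resolution modulo $\a_i$, the derivative operator $D'_\m$ commutes with $\cR$-linear maps, and by construction the reduction of $\Psi_{k(\m)}$ modulo $\a_i$ coincides, up to the identifications from Lemma~\ref{biduals lemma 1}~(i), with the chosen lift of $\Phi$ applied to the corresponding element at level $\cA$.

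The main obstacle will be the combinatorial bookkeeping required to ensure that the construction genuinely descends from $P^0$ over $\cR[\cG_L]$ to a well-defined element of $\exprod^{t-1}_{\cR[\cG_K]} H^1_\fF(K,\cT)^\ast$ for \emph{every} $K \in \Omega$ simultaneously, and not merely for those $K$ contained in a single ambient $L$. Handling this requires working with a compatible system of resolutions indexed by $\Omega$, or alternatively working over the completed group ring $\cR\llbracket \cG_\cK \rrbracket$ and then specialising. The injectivity of the maps in~(\ref{intermediate step injection2}) and the self-injectivity of the finite-level group rings are both essential here, and one must also track the propagation of the $\gal{k(\fn)}{k}$-invariance property underlying Lemma~\ref{bss lem 6.12}, which in practice requires replacing $\widetilde{\Phi}$ by an appropriate $\cG_L$-averaged version before descending.
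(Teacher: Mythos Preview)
Your overall strategy---lift $\Phi$ to the level of a free resolution and then restrict---matches the paper's, but the construction as stated contains a concrete error: there is no ring homomorphism $\cR[\cG_L] \to \cR[\cG_{L'}]$ when $L' = K \cdot L \supseteq L$, since $\cG_L$ is a \emph{quotient} of $\cG_{L'}$ and the natural map of group rings goes the other way. Consequently your element $\widetilde{\Phi}$, which lives at level $L$, cannot be ``base-changed up'' to $L'$; it must be \emph{lifted}, and any such lift is non-canonical and requires a resolution of $C(\mathscr{F}_{L'})$ that is compatible with the one at level $L$. You correctly flag this as the main obstacle in your final paragraph, but its resolution is not mere bookkeeping: it is the entire technical content of the proof.

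The paper handles it by first observing that restriction along the injections (\ref{intermediate step injection2}) reduces the construction of $(\Psi_K)_{K \in \Omega}$ to a cofinal totally ordered chain $\{F_n\}_{n \in \N}$ in $\Omega$ with $F_1 = k(\fn)$. It then builds $(\Psi_{F_n})_n$ by induction on $n$: at each step one enlarges the resolution of $C(\mathscr{F}_{F_n})$ by free summands coming from the local complexes at primes in $S(F_{n+1}) \setminus S(F_n)$ (via the exact triangle (\ref{triangle changing S})), lifts the auxiliary element $f_n$ across the resulting surjection $\exprod^{t-1}_{\mathcal{S}_n}(P_n \oplus Q_n)^\ast \twoheadrightarrow \exprod^{t-1}_{\mathcal{S}_n} P_n^\ast$, and then lifts once more to level $F_{n+1}$ using Nakayama and the descent isomorphism from Proposition~\ref{construction complex}~(iii). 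This inductive tower of compatible lifts cannot be replaced by a single choice at one finite level, and your alternative of working over $\cR\llbracket \cG_\cK \rrbracket$ would face the same issue (the complex $C(\mathscr{F}_\cK)$ at infinite level is not a priori available with the needed properties). Finally, the remark about $\cG_L$-averaging $\widetilde{\Phi}$ is unnecessary: $\widetilde{\Phi}$ is already $\cR[\cG_L]$-linear by construction.
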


    \begin{proof} For any $K$ and $L$ in $\Omega$ with $K \subseteq L$, the canonical injective map (\ref{intermediate step injection2}) induces a restriction map $\exprod^{t - 1}_{\cR [\cG_L]} H^1_{\mathscr{F}} (L, \cT)^\ast \to \exprod^{t - 1}_{\cR [\cG_K]} H^1_{\mathscr{F}} (K, \cT)^\ast$. Via these maps, the construction of $\Psi$ is reduced to the consideration of a cofinal subset of $\Omega$. 
    
We therefore fix a cofinal subset $\{ F_n\}_{n \in \N}$ of $\Omega$ that is totally ordered with respect to inclusion, order-isomorphic to $\N$ and such that $F_1 = k (\fn)$ and, for each $n$, we set 
\[ \mathcal{S}_n \coloneqq \cR [\cG_{F_n}]\quad\text{and}\quad C_n \coloneqq C (\mathscr{F}_{F_n}).\] 
Then, to construct a family of the required sort, it is enough to inductively construct elements $\Psi_{F_n}$ of $\exprod^{t - 1}_{\mathcal{S}_n} H^1_{\mathscr{F}}(F_n, \cT)^\ast$ that have the claimed properties. To do this, we first inductively construct a family of finitely generated free $\mathcal{S}_n$-modules $P_n$ such that  $C_n$ has a resolution $P_n \xrightarrow{d_n} P_n$ (with the first term placed in degree zero) and a family of elements $f_n \in \exprod^{t - 1}_{\mathcal{S}_n} P_n^\ast$ with certain compatibility properties. \\
To construct the required objects for $n=1$, we note Lemma \ref{how the cohomology base changes lemma}\,(i) identifies $H^1_{\tilde{\mathscr{F}}^{\fn}} (k, \cA)$ with the 
$G_\fn$-invariants of $H^1_{\tilde{\mathscr{F}}} (k (\fn),\cA)$. Hence, upon taking duals over the self-injective ring $\bLambda [G_\fn]$, it follows that the restriction map
\begin{equation}\label{composite map}
    \exprod^{t - 1}_{\bLambda [G_\fn]} H^1_{\tilde{\mathscr{F}}} (k (\fn),\cA)^\ast \to \exprod^{t - 1}_{\bLambda[G_{\fn}]} \bigl(H^1_{\tilde{\mathscr{F}}} (k (\fn), \cA)^{G_\fn}\bigr)^\ast \cong \exprod^{t- 1}_{\bLambda} H^1_{\tilde{\mathscr{F}}^{\fn}} (k,\cA)^\ast
    \end{equation}
is surjective and so we can fix a pre-image $\widetilde{\Phi}$ of $\Phi$ under this map. Now, since $\mathscr{F}_{F_1}$ validates Hypothesis \ref{fF hyp}, Lemma \ref{finite level reps} implies $C_1$ has a resolution $P_1 \xrightarrow{d_1} P_1$, where $P_1$ is a finitely generated free $\mathcal{S}_1$-module (and the first term is placed in degree zero). Hence, from Proposition \ref{construction complex}\,(iii), it follows that  $C(\mathscr{F}_{F_1} \otimes_\cR \bLambda) \cong C_1 \otimes_\cR^\mathbb{L} \bLambda$ is isomorphic to $P_{1,i} \xrightarrow{d_{1,i}} P_{1,i}$ with $P_{1,i} \coloneqq P_1 \otimes_\cR \bLambda$ and $d_{1,i} \coloneqq d_1 \otimes_\cR \mathrm{1}_\bLambda$. In particular, since the natural composite map
    \[
    \exprod^{t - 1}_{\mathcal{S}_1} P_1^\ast \to \big( \exprod^{t - 1}_{\mathcal{S}_1} P_1^\ast \big) \otimes_\cR \bLambda \cong \exprod^{t - 1}_{\bLambda [G_\fn]} P_{1,i}^\ast \to \exprod^{t - 1}_{\bLambda [G_\fn]} \ker(d_{1,i})^\ast\cong 
    \exprod^{t - 1}_{\bLambda [G_\fn]} H^1_{\tilde{\mathscr{F}}} (k (\fn),\cA)^\ast
    \]
    is surjective (where the isomorphism follows from the fact $P_1$ is $\mathcal{S}_1$-free, and the third map is surjective since $\Lambda [G_\fn]$ is self-injective), we can fix a pre-image $f_1$ of $\widetilde \Phi$ under this map. 

Having constructed $P_1$ and $f_1$, we now pass to the inductive step. For this, we assume to be given a resolution $P_{n} \xrightarrow{d_{n}} P_{n}$ of $C_{n}$ (in which $P_n$ is a finitely generated free $\mathcal{S}_n$-module) and an element $f_n$ of $\exprod^{t - 1}_{\mathcal{S}_{n}} P_{n}^\ast$ and use them  to construct similar objects for $n+1$ in place of $n$.  

To do this, we set $\tilde C_{n} \coloneqq C (\mathscr{F}_{F_{n + 1}} \otimes_{\mathcal{S}_{n+1}} \mathcal{S}_n)$ and write $\Pi_n$ for the set of places of $F_n$ above those in $S(F_{n+1}) \setminus S(F_{n})$. Then the relevant cases of Hypothesis \ref{system of fF hyp}\,(ii) and Proposition \ref{construction complex}\,(vi) combine to imply the existence of a canonical exact triangle in $D^{\mathrm{perf}}(\mathcal{S}_n)$ 
\begin{equation}\label{limit change} 
C_n \xrightarrow{\alpha_n} \tilde C_n\to \bigoplus_{\q \in \Pi_n}  \mathrm{R}\Gamma_f (F_{n,\q}, \cT^\ast(1))^\ast[-1] \to \cdot.
\end{equation}
In addition, for  $\q\in \Pi_n$, the complex $\mathrm{R}\Gamma_f (F_{n,\q}, \cT^\ast(1))^\ast$ has a resolution $Q(\q) \to Q(\q)$, with $Q(\q)$ a finitely generated free $\mathcal{S}_n$-module (and the first term  placed in degree zero). Hence, setting $Q_n \coloneqq {\bigoplus}_{\q \in \Pi_n}Q(\q)$, a standard mapping cone construction combines with the fixed resolution of $C_n$ to imply $\tilde C_{n}$ is isomorphic in $D^{\mathrm{perf}}(\mathcal{S}_n)$ to a complex $P_{n}\oplus  Q_n \xrightarrow{d_n'} P_{n}\oplus Q_n$ (with the first term placed in degree zero) in such a way that $\alpha_n$ is induced by a commutative diagram of $\mathcal{S}_n$-modules 
\begin{cdiagram}[column sep=small]
P_n \arrow{r}{d_n} \arrow[hookrightarrow]{d}{\alpha^0_n} & P_n \arrow[hookrightarrow]{d}{\alpha^1_n} \\
    P_n \oplus Q_n \arrow{r}{d_n'} & P_n \oplus Q_n,
\end{cdiagram}%
in which $\alpha^0_n$ and $\alpha^1_n$ are the natural inclusion maps. In particular, 
since restriction through $\alpha_n^0$ induces a surjective map
$\kappa: \exprod^{t - 1}_{\mathcal{S}_n} (P_n \oplus Q_n)^\ast \to \exprod^{t - 1}_{\mathcal{S}_n} P_n^\ast$, we can fix  an element $\widetilde f_n$ of $\exprod^{t - 1}_{\mathcal{S}_n} (P_n \oplus Q_n)^\ast$ with $\kappa(\widetilde f_n) = f_n$. 

Next we note that Proposition \ref{construction complex}\,(iv) induces an isomorphism in $D^{\mathrm{perf}}(\mathcal{S}_n)$
\begin{equation}\label{qi descent} C_{n+1} \otimes^\mathbb{L}_{\mathcal{S}_{n+1}} \mathcal{S}_n \cong \tilde C_{n},\end{equation}
and hence also an isomorphism of $\mathcal{S}_n$-modules $H^1(C_{n+1})\otimes_{\mathcal{S}_{n + 1}} \mathcal{S}_n \cong H^1(\tilde C_n)$. 

We now fix a finitely generated free $\mathcal{S}_{n + 1}$-module $P_{n + 1}$ for which there exists an isomorphism of $\mathcal{S}_n$-modules from $(P_{n+1})_n\coloneqq P_{n+1}\otimes_{\mathcal{S}_{n+1}}\mathcal{S}_n$ to $P_n\oplus Q_n$, and write $\iota_n$ for the induced surjective map of $S_{n+1}$-modules $P_{n + 1} \twoheadrightarrow P_n \oplus Q_n$. The  resolution of $\tilde C_{n}$ fixed above induces a surjective map of $\mathcal{S}_n$-modules $j_n \: P_n \oplus Q_n \twoheadrightarrow H^1 (\tilde C_n)$. Then, since $P_{n+1}$ is a free $\mathcal{S}_{n+1}$-module, we can fix a commutative diagram of $\mathcal{S}_{n+1}$-modules 
\begin{cdiagram}[row sep=small]
P_{n+1} \arrow[twoheadrightarrow]{r}{j_{n + 1}} \arrow[twoheadrightarrow]{d}{\iota_n} & H^1(C_{n+1}) \arrow[twoheadrightarrow]{d} \\
P_n\oplus Q_n \arrow[twoheadrightarrow]{r}{j_{n}} & H^1(\tilde C_{n})
\end{cdiagram}%
in which the right hand vertical map is induced by (\ref{qi descent}).
Note here that  Nakayama's Lemma implies any map $j_{n + 1}$ in such a diagram is surjective since $j_n\circ \iota_n$ is surjective and the  kernel of the projection $\mathcal{S}_{n+1} \to \mathcal{S}_n$ lies in the Jacobson radical of $\mathcal{S}_{n+1}$. 
The argument of Lemma \ref{finite level reps}\,(i) now implies that $C_{n+1}$ has a resolution  $P_{n+1}\xrightarrow{d_{n+1}} P_{n+1}$, in which the first term is placed in degree zero and the isomorphism $\cok(\phi_{n+1}) \cong H^1(C_{n+1})$ is induced by $j_{n+1}$, and the descent isomorphism (\ref{qi descent}) is induced by a commutative diagram of  $\mathcal{S}_{n+1}$-modules 
\begin{cdiagram}[row sep=small, column sep=small]
P_{n+1} \arrow{r}{d_{n+1}} \arrow{d}{i'_n}  & P_{n+1}\arrow{d}{i_n}\\
P_n\oplus Q_n \arrow{r}{d_n'} & P_n\oplus Q_n.
\end{cdiagram}%
In particular, since this morphism of complexes induces a quasi-isomorphism between the lower complex and the image under $-\otimes_{\mathcal{S}_{n+1}}\mathcal{S}_n$ of the upper complex, and the map $i_{n}\otimes_{\mathcal{S}_{n+1}}\mathcal{S}_n$ is bijective, the map $i_n'$ must induce an isomorphism $(P_{n+1})_n \cong P_n\oplus Q_n$. The map $i'_n$ therefore induces a surjective map
\[
\exprod^{t - 1}_{\mathcal{S}_{n + 1}} P_{n + 1}^\ast \twoheadrightarrow \big ( \exprod^{t - 1}_{\mathcal{S}_{n + 1}} P_{n + 1}^\ast \big) \otimes_{\mathcal{S}_{n + 1}} \mathcal{S}_n
\cong \exprod^{t - 1}_{\mathcal{S}_n} (P_n \oplus Q_n)^\ast 
\]
and we fix a pre-image $f_{n + 1}$ of $\widetilde{f}_n$ under this map. Having inductively constructed modules $P_n$ and elements $f_n$, we now claim that the properties (i) and (ii) are satisfied by the elements 
\[ \Psi_{F_n} \coloneqq \rho_n (f_n) \in \exprod^{t - 1}_{\mathcal{S}_n} H^1_\mathscr{F} (F_n, \cT)^\ast.\]
Here $\rho_n$ denotes the restriction map $\exprod^{t - 1}_{\mathcal{S}_n} P_n^\ast \to \exprod^{t - 1}_{\mathcal{S}_n} \ker(d_n)^\ast\cong \exprod^{t - 1}_{\mathcal{S}_n} H^1_\mathscr{F} (F_n, \cT)^\ast$, in which the last isomorphism is induced by Proposition \ref{construction complex}\,(v). \\
It is enough to verify (i) with $L/K$  taken to be $F_{n+1}/F_{n}$. The key observation for this case is that, for each $y \in \exprod^{t - 1}_{\mathcal{S}_{n + 1}} P_{n + 1}$, the composite map
\[
\exprod^{t - 1}_{\mathcal{S}_n} (P_{n + 1})_n \cong  
\exprod^{t - 1}_{\mathcal{S}_n} (P_{n + 1})^{\gal{F_{n + 1}}{F_n}}
\cong 
\big( \exprod^{t - 1}_{\mathcal{S}_{n + 1}} P_{n + 1} \big)^{\gal{F_{n + 1}}{F_n}} \hookrightarrow
\exprod^{t - 1}_{\mathcal{S}_{n + 1}} P_{n + 1} 
\]
sends $\NN_{\gal{F_{n + 1}}{F_n}}^{t - 1}y$ to $\NN_{\gal{F_{n + 1}}{F_n}}y$. It follows that 
\[
\NN_{\gal{F_{n + 1}}{F_n}} \cdot f_{n} ( \NN_{\gal{F_{n + 1}}{F_n}}^{t} y) = 
f_{n + 1} ( \NN_{\gal{F_{n + 1}}{F_n}} y),
\]
and this implies the equality in  (i) since the assignment $\NN_{\gal{F_{n + 1}}{F_n}}^{t} y \mapsto \NN_{\gal{F_{n + 1}}{F_n}} y$ restricts to give the map $\nu^t_{F_{n + 1} / F_n}$ on $\bidual^t_{\mathcal{S}_n} H^1_{\mathscr{F}} (F_n, \cT)$. \\
To verify (ii) we fix a divisor $\m$ of $\n$. We regard $H^1_{\tilde{\mathscr{F}}} (k (\m),\cA)$ as a submodule of $H^1_{\tilde{\mathscr{F}}} (k (\n), \cA)$ (just as in (\ref{intermediate step injection2})) and write $\widetilde \Phi_\m$ for the image of $\widetilde \Phi$ under the induced restriction map 
\[ \exprod^{t - 1}_{\Lambda [G_\fn]} H^1_{\tilde{\mathscr{F}}} (k (\fn), \cA)^\ast\to \exprod^{t - 1}_{\Lambda [G_\m]} H^1_{\tilde{\mathscr{F}}} (k (\m), \cA)^\ast.\] 
Then, since $\Psi_{k (\m)}$ is the restriction of $\Psi_{k (\n)} = \rho_n(f_1)$ and $\widetilde \Phi$ is a pre-image of $\Phi$ under the  restriction map (\ref{composite map}), our explicit choice of $f_1$ implies that 
\[
D'_\m \pi^1_{k(\m), \tilde{\mathscr{F}}} (\Psi_{k (\m)} (c_{k (\m)})) = D'_\m \widetilde \Phi_\m (\pi^1_{k(\m), \tilde{\mathscr{F}}} ( c_{k(\m)})) = \widetilde \Phi_\m (D'_\m\pi^1_{k(\m), \tilde{\mathscr{F}}} ( c_{k(\m)})).
\]
Hence one has  
\begin{align*}
&(\nu^1_{k (\m), \tilde{\mathscr{F}}})^{-1} ( D'_\m \pi^1_{k(\m), \tilde{\mathscr{F}}} (\Psi_{k (\m)} (c_{k (\m)})))\\
=\, &(\nu^1_{k (\m), \tilde{\mathscr{F}}})^{-1} (  \widetilde \Phi_\m (D'_\m\pi^1_{k(\m),\tilde{\mathscr{F}}} ( c_{k(\m)}))) \\
 =\, &(\nu^1_{k (\m), \tilde{\mathscr{F}}})^{-1} (  (\widetilde \Phi_\m \circ \nu^t_{k (\m), \tilde{\mathscr{F}}} \circ (\nu^t_{k (\m),\tilde{\mathscr{F}}})^{-1}) (D'_\m\pi^1_{k(\m), \tilde{\mathscr{F}}} ( c_{k(\m)}))) \\ 
 =\, &\Phi ( (\nu^t_{k(\m) / k,\tilde{\mathscr{F}}})^{-1} (D'_\m\pi^1_{k(\m), \tilde{\mathscr{F}}} ( c_{k(\m)}))),
\end{align*}
where the last equality uses the fact $ \widetilde \Phi_\m \circ \nu^t_{k (\m) / k,\tilde{\mathscr{F}}} = \nu^1_{k (\m) / k,\tilde{\mathscr{F}}} \circ \Phi$ on 
 ${\bigcap}^t_{\Lambda} H^1_{\tilde{\mathscr{F}}^{\m}} (k, \cA)$.  
This verifies the required property (ii).  
\end{proof}

A collection $\Psi = (\Psi_K)_{K \in \Omega}$ of maps that satisfy the relations in Proposition \ref{bss lem 6.22}\,(i) is often referred to as a `Perrin-Riou functional' (see, for example, \cite[Def.\@ 8.1.2]{LeiLoefflerZerbes14}). The key observation, made independently by Rubin in \cite[\S\,6]{Rub96} and by Perrin-Riou in \cite[\S\,1.2.3]{PerrinRiou98}, is that any such collection gives rise to a well-defined map of $\cR \llbracket \cG_\cK \rrbracket$-modules
\[ \ES^t_{S_0} (\fF_{\mathrm{rel}}(\cT)) \to \ES^1_{S_0} (\fF_{\mathrm{rel}}(\cT)), \quad (c_K)_{K \in \Omega} \mapsto ( \Psi_K (c_K))_{K \in \Omega}\] 
where $\fF_{\mathrm{rel}}(\cT)$ is as defined  in Example \ref{relaxed es remark}. By precisely the same argument, one verifies that this result remains valid after replacing $\fF_{\mathrm{rel}}(\cT)$ by any family of Nekov\'a\v{r} structures $\fF$ that  satisfies  Hypothesis \ref{system of fF hyp}. This observation then combines  with Proposition \ref{bss lem 6.22} to prove the following reduction result for Theorem \ref{kolyvagin derivative thm}. 

\begin{lemma} \label{reduction to rank one}
To prove Theorem \ref{kolyvagin derivative thm} it is enough to consider the case $t=1$.
\end{lemma}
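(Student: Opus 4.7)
The plan is to use Proposition \ref{bss lem 6.22} as a rank-reduction device, in the spirit of the classical Rubin/Perrin-Riou arguments. Assuming the $t=1$ case of Theorem \ref{kolyvagin derivative thm}, the general case will be deduced as follows.

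Fix $c \in \ES^t(\fF)$, a modulus $\fn \in \cN$, and a prime $\q \in V(\fn)$. The claims to establish are: (a) $\kappa(c)_\fn$ lies in the submodule $\bidual^t_\bLambda H^1_{\tilde{\mathscr{F}}(\fn)}(k,\cA)$ of $\bidual^t_\bLambda H^1_{\tilde{\mathscr{F}}^\fn}(k,\cA)$; and (b) the identity $\check v_\q(\kappa(c)_\fn) = \check\psi_\q^\fs(\kappa(c)_{\fn/\q})$ holds in $\bidual^{t-1}_\bLambda H^1_{\tilde{\mathscr{F}}_\q(\fn/\q)}(k,\cA)$. The pertinent quotients $H^1_{\tilde{\mathscr{F}}^\fn}(k,\cA)/H^1_{\tilde{\mathscr{F}}(\fn)}(k,\cA)$ and $H^1_{\tilde{\mathscr{F}}_\q(\fn/\q)}(k,\cA)/H^1_{\tilde{\mathscr{F}}(\fn/\q)}(k,\cA)$ embed into free $\bLambda$-modules by the global duality sequences \eqref{global duality sequence2} and \eqref{global duality sequence3}, and so are torsion-less (which, for that matter, is automatic because $\bLambda$ is self-injective). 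Corollary \ref{prop 2.4 extension} thus reduces both (a) and (b) to the corresponding statements after pairing $\kappa(c)_\fn$ with an arbitrary $\Phi \in \exprod^{t-1}_\bLambda H^1_{\tilde{\mathscr{F}}^\fn}(k,\cA)^\ast$, respectively with an arbitrary $\Phi' \in \exprod^{t-2}_\bLambda H^1_{\tilde{\mathscr{F}}_\q(\fn/\q)}(k,\cA)^\ast$.

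Given such $\Phi$, Proposition \ref{bss lem 6.22} supplies a Perrin-Riou family $\Psi = (\Psi_K)_{K \in \Omega}$ of dual maps satisfying both the norm-compatibility (i) and the derivative-compatibility (ii). Property (i), combined with the Euler system distribution relation for $c$ and the $\bLambda$-linearity of each $\Psi_K$, implies that $\Psi(c) \coloneqq (\Psi_K(c_K))_{K \in \Omega}$ lies in $\ES^1(\fF)$. Applying the rank-one case of Theorem \ref{kolyvagin derivative thm} to $\Psi(c)$ then yields that $\kappa(\Psi(c))_\fn$ belongs to $H^1_{\tilde{\mathscr{F}}(\fn)}(k,\cA)^{\ast\ast}$ and satisfies the rank-one finite-singular relation.

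It remains to identify $\Phi(\kappa(c)_\fn)$ with $\kappa(\Psi(c))_\fn$. Since $\Phi$ is $\bLambda$-linear and $\kappa(c)_\fn$ is by construction a $\bLambda$-linear combination of the terms $\kappa'(c)_{\mathfrak{d}_\tau}$ for $\tau \in \mathfrak{S}(\fn)$, this reduces to verifying, for every divisor $\m \mid \fn$, the equality $\Phi(\kappa'(c)_\m) = \kappa'(\Psi(c))_\m$, which is precisely the content of property (ii) of Proposition \ref{bss lem 6.22}. An entirely analogous argument with $\Phi'$ in place of $\Phi$ transfers the finite-singular relation from rank one to rank $t$. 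The bulk of the technical work has already been absorbed into Proposition \ref{bss lem 6.22}, and I expect the only subtle point in fleshing out the above plan to be the confirmation that the distribution relations for $c$ are preserved by the family $\Psi$ — here Hypothesis \ref{system of fF hyp}(ii) is essential, as it guarantees that the norm isomorphisms $\nu^t_{L/K}$ and $\nu^1_{L/K}$ interact compatibly with the local structures used to define $\mathscr{F}_K$ and $\mathscr{F}_L$.
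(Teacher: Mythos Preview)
Your overall strategy is correct and coincides with the paper's: reduce to rank one via Proposition~\ref{bss lem 6.22}, then invoke the $t=1$ case. Part (a) is handled exactly as in the paper.

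For part (b), however, there is an imprecision that you should fix. Corollary~\ref{prop 2.4 extension} is a criterion for \emph{membership} in a sub-bidual, not for equality of two elements of $\bidual^{t-1}_\bLambda H^1_{\tilde{\mathscr{F}}_\q(\fn/\q)}(k,\cA)$, so invoking it here is not quite right. More importantly, pairing with $\Phi' \in \exprod^{t-2}_\bLambda$ does not directly feed into Proposition~\ref{bss lem 6.22}, which is set up for $(t-1)$-fold functionals and produces rank-one Euler systems. The paper instead evaluates $\check v_\q(\kappa(c)_\fn)$ and $\check\psi_\q^\fs(\kappa(c)_{\fn/\q})$ on $\Phi \in \exprod^{t-1}_\bLambda H^1_{\tilde{\mathscr{F}}(\fn)}(k,\cA)^\ast$ and uses the commutation identity
\[
\kappa(c)_\fn(v_\q \wedge \Phi) \;=\; (-1)^{t-1}\,v_\q\bigl(\Phi(\kappa(c)_\fn)\bigr),
\]
so that one must verify $v_\q(\Phi(\kappa(c)_\fn)) = \psi_\q^\fs(\Phi(\kappa(c)_{\fn/\q}))$. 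The crucial point is then that a \emph{single} application of Proposition~\ref{bss lem 6.22} (with modulus $\fn$) yields one $\Psi$ satisfying $\Phi(\kappa(c)_\m) = \kappa(\Psi(c))_\m$ simultaneously for \emph{all} divisors $\m \mid \fn$, in particular for both $\fn$ and $\fn/\q$; the rank-one finite--singular relation for $\Psi(c)$ then closes the argument. Your phrase ``an entirely analogous argument with $\Phi'$ in place of $\Phi$'' obscures this point: if you tried to incorporate $v_\q$ or $\psi_\q^\fs$ into the functional separately (forming $v_\q \wedge \Phi'$ and $\psi_\q^\fs \wedge \Phi'$), you would obtain two different Perrin-Riou families and could no longer compare the resulting rank-one values.
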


\begin{proof} We first show that, for each $\fn \in \cN$, the element $\kappa (c)_\fn$ belongs to $\bidual_{\bLambda}^t H^1_{\tilde{\mathscr{F}}(\fn)}(K,\cA)$. For this,  Corollary \ref{prop 2.4 extension} implies it suffices to show that, for every $\Phi \in {\bigwedge}_{\bLambda}^{t-1}H^1_{\tilde{\mathscr{F}}^\fn}(k,\cA)^\ast$ one has 
\begin{equation} \label{trans}
\Phi(\kappa(c)_\fn)=\sum_{\tau \in \mathfrak{S} (\fn)} \mathrm{sgn} (\tau)  
    \cdot \big( \prod_{ \q \in V(\fn / \mathfrak{d}_\tau)} x^{(\q)}_{\tau (\q)} \big) \cdot \Phi (\kappa'(c)_{\mathfrak{d}_\tau}) 
    \quad \in H^1_{\tilde{\mathscr{F}}(\fn)}(k,\cA).
\end{equation}
However, by Proposition \ref{bss lem 6.22}, there exists an element $\Psi \in \varprojlim_F {\bigwedge}_{\cR[\cG_F]}^{t-1}H^1_{\mathscr{F}} (F,\cT)^\ast$ with the property that, for every divisor $\fd$ of $\fn$, one has
\[
\Phi(\kappa'(c_\fd))=\kappa'(\Psi(c)_\fd).
\]
 In particular, since $\Psi(c)$ belongs to $\ES^1_{S_0} (\fF)$, the containment (\ref{trans}) is valid for all $t$ if it is valid for $t=1$. \\
We show next that, for every $\fn \in \cN$ and $\fq \in V(\fn)$, one has 
\[
v_\fq(\kappa(c)_\fn)=\psi_\fq^{\rm fs}(\kappa(c)_{\fn/\fq}).
\]
For this we note that, since $\bidual_{\bLambda}^t H^1_{\tilde{\mathscr{F}}(\fn)}(k,\cA)$ is defined to be $( \bigwedge_{\bLambda}^t H^1_{\tilde{\mathscr{F}}(\fn)}(k,\cA)^\ast)^\ast$, the element  $\kappa(c)_\fn$ of $\bidual_{\bLambda}^t H^1_{\tilde{\mathscr{F}}(\fn)}(k,\cA)$ defines a map
$\bigwedge_{\bLambda}^t H^1_{\tilde{\mathscr{F}}(\fn)}(k,\cA)^\ast \to \bLambda$. Given this, $v_\fq(\kappa(c)_\fn)$ is the map
\[
{{\bigwedge}}_{\bLambda}^{t-1} H^1_{\tilde{\mathscr{F}}(\fn)}(k,\cA)^\ast \to \bLambda, \quad \Phi \mapsto \kappa(c)_\fn(v_\fq \wedge \Phi).
\]
In particular, if we identify ${\bigcap}_{\bLambda}^1 H^1_{\tilde{\mathscr{F}} (\fn)}(k,\cA)$ with $H^1_{\tilde{\mathscr{F}}(\fn)}(k,\cA)$, and thereby regard $\Phi(\kappa(c)_\fn)$ as an element of $H^1_{\tilde{\mathscr{F}}(\fn)}(k,\cA)$, then we have
\[
\kappa(c)_\fn(v_\fq \wedge \Phi)=(-1)^{t-1} \cdot v_\fq(\Phi(\kappa(c)_\fn)).
\]
Similarly, one finds that $\psi_\fq^\mathrm{fs}(\kappa(c)_{\fn/\fq}) \in {{\bigcap}}_{\bLambda}^{t-1}H^1_{\tilde{\mathscr{F}}(\fn)}(k,\cA)$ identifies with the map
\[
{{\bigwedge}}_{\bLambda}^{t-1}H^1_{\tilde{\mathscr{F}}(\fn)}(k,\cA)^\ast \to \Lambda, \quad \Phi \mapsto \kappa(c)_{\fn/\fq}(\psi_\fq^\mathrm{fs} \wedge \Phi)=(-1)^{t-1}\cdot \psi_\fq^\mathrm{fs}(\Phi(\kappa(c)_{\fn/\fq})).
\]
To verify the claimed equality $v_\fq(\kappa(c)_\fn)=\psi_\fq^\mathrm{fs}(\kappa(c)_{\fn/\fq})$, it is thus enough to show that, for every $\Phi \in {{\bigwedge}}_{\bLambda}^{t-1}H^1_{\tilde{\mathscr{F}}(\fn)}(k,\cA)^\ast$, one has an equality
\begin{eqnarray} \label{fs}
v_\fq(\Phi(\kappa(c)_\fn))=\psi_\fq^{\rm fs}(\Phi(\kappa(c)_{\fn/\fq}))
\end{eqnarray}
Moreover, by using Proposition \ref{bss lem 6.22}, we can choose an element $\Psi$ of $\varprojlim_F {\bigwedge}_{\cR[\cG_F]}^{a-1}H^1_{\mathscr{F}}(F,\cT)^\ast$ with both $\Phi(\kappa(c)_\fn)=\kappa(\Psi(c))_\fn$ and $\Phi(\kappa(c)_{\fn/\fq})=\kappa(\Psi(c))_{\fn/\fq}$. In particular, since $\Psi(c)$ belongs to $\ES^1_{S_0} (\fF)$, the required equality (\ref{fs}) will therefore follow if Theorem \ref{kolyvagin derivative thm}  is known to be valid in the case $t=1$. This proves the claimed result. 
\end{proof}

\subsubsection{The proof of Theorem \ref{kolyvagin derivative thm} in rank one}\label{rank one proof}

In this section we shall prove (in Propositions \ref{finite-singular relations prop} and \ref{finite-singular projections prop}) the result of Theorem \ref{kolyvagin derivative thm} in the special case $t = 1$. In view of Lemma \ref{reduction to rank one}, the results presented here will therefore also complete the proof of Theorem \ref{kolyvagin derivative thm} for arbitrary $t$.

In the first result, we adapt an approach of Kato (from \cite[\S\,2]{Kato-Euler}), to show that Euler systems for Nekov\'a\v{r} structures satisfy the `congruence condition' discussed in \cite[\S\,4.8]{Rubin-euler}. For this, for each $\q \in \cQ$ and $K \in \Omega$, we write 
\[
\loc_{K, \q} \: H^1_{\fF} (K, \cT) \to \textstyle \bigoplus_{v \in \{\q\}_K} H^1 (K_v, \cT) \cong H^1 (k_\q, \cT [\cG_K])
\]
for the natural localisation map. We will also often use the fact that, for every  $\n \in \cN$ and $\q \in V (\fn)$, the extension $k (\fn) / k (\fn / \q)$ is totally ramified at all places above $\q$, and hence that for every $i$ there is a natural isomorphism $H^i_f (k_\q, \cT [\cG_{k (\fn)}]) \cong H^i_f (k_\q, \cT [\cG_{k (\fn / \q)}])$.

\begin{prop} \label{the congruence condition}
    Assume $\fF$ satisfies Hypothesis \ref{system of fF hyp} and that $\cK$ and $\cT$ satisfy Hypothesis \ref{more new hyps}. Then, for each $\fn \in \cN$, $\q \in V (\fn)$, and  $c \in \ES^1_{S_0} (\fF)$, the following claims are valid.
    \begin{romanliste}
        \item $\Eul_\q (\Frob_\q^{-1}) - \Eul_\q (\NN \q \cdot\Frob_\q^{-1}) \in [k (\q) : k(1)]\cdot \cR [G_{\fn / \q}]$.
        \item $\loc_{k (\fn), \q} (c_{k (\fn)})$ and $\loc_{k (\fn/ \q), \q} (c_{k (\fn/ \q)})$ belong to $H^1_f (k_\q, \cT [\cG_{k (\fn)}]) \cong H^1_f (k_\q, \cT [\cG_{k ({\fn / \q})}])$.
        \item One has an equality
    \[
    \loc_{k (\fn), \q} ( c_{k (\fn)}) = \left ( \frac{\Eul_\q (\Frob_\q^{-1}) - \Eul_\q (\NN \q \cdot\Frob_\q^{-1})}{[k (\q) : k(1)]} \right) \cdot \loc_{k (\fn / \q), \q} (c_{k (\fn/ \q)}).
    \]
    \end{romanliste}
\end{prop}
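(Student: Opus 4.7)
The proposition is a version for Nekov\'a\v{r}--Selmer complexes of the classical ``congruence condition'' of Rubin \cite[\S\,4.8]{Rubin-euler}, and my plan is to adapt the strategy of Kato \cite[\S\,2]{Kato-Euler}: part (i) is a purely ring-theoretic computation carried out entirely inside $\cR[G_{\fn/\q}]$, whereas (ii) and (iii) are proved together by localising the Euler system norm relation at $\q$ and analysing what happens under the totally ramified extension $k(\fn)/k(\fn/\q)$ at $\q$. Throughout, abbreviate $L = k(\fn)$ and $K = k(\fn/\q)$.

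For (i), recall that $k(\q)$ is, by construction, the maximal $p$-subextension of the ray class field of $k$ modulo $\q$, and $k(1)$ is the Hilbert $p$-class field of $k$. The quotient $\gal{k(\q)}{k(1)}$ therefore embeds into the maximal $p$-quotient of $(\bigO_k/\q)^\times$, which has order dividing $\NN\q - 1$. Hence $|G_\q| = [k(\q):k(1)]$ divides $\NN\q - 1$, so $(\NN\q)^j \equiv 1 \pmod{|G_\q|}$ for every $j \geq 0$. Writing $\Eul_\q(X) = \sum_j a_j X^j$ with $a_j \in \cR$ and expanding
\[
\Eul_\q(X) - \Eul_\q(\NN\q \cdot X) = {\sum}_j a_j \bigl(1 - (\NN\q)^j\bigr) X^j,
\]
each coefficient lies in $(\NN\q - 1)\cR \subseteq [k(\q):k(1)]\cdot \cR$. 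Substituting $X = \Frob_\q^{-1}$, interpreted as an element of $\cR[G_{\fn/\q}]$, yields (i).

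For (ii) and (iii), note first that since $\q \in \cQ \subseteq \Pi_k \setminus S(\cT_i)$ and $\q$ is unramified in $K/k$, the place $\q$ does not belong to $S(\mathscr{F}_K)$, so the local condition of $\mathscr{F}_K$ at $\q$ is the finite one and therefore $\loc_{K,\q}(c_K) \in H^1_f(k_\q, \cT[\cG_K])$ automatically from the definition of $H^1_\fF(K,\cT)$ via the triangle (\ref{mapping fibre}). Hypothesis \ref{more new hyps}\,(i) ensures that $k(\q)/k(1)$ is totally ramified at the unique place above $\q$, so $L/K$ is totally ramified of degree $|G_\q|$ at every place $v | \q$ of $K$; this is a tame $p$-extension since the residue characteristic $\ell$ of $\q$ is different from $p$ (as $\q \notin \Pi_k^p$). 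The Euler system norm relation for $L/K$ gives
\[
N_{\gal{L}{K}}(c_L) = \nu^1_{L/K}\bigl(\Eul_\q(\Frob_\q^{-1}) \cdot c_K\bigr)
\]
in $H^1_\fF(L,\cT)^{\gal{L}{K}}$. Localising at $\q$ and applying Shapiro's lemma along the tamely ramified local extensions $L_w / K_v$ identifies $H^1_f(k_\q, \cT[\cG_L])$ canonically with $H^1_f(k_\q, \cT[\cG_K])$, and under this identification the norm map $N_{\gal{L}{K}}$ becomes multiplication by $|G_\q|$.

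To deduce (ii) for $c_L$ I will analyse the singular quotient sequence (\ref{inf-res ses}) applied to the $G_{k_\q}$-module $\cT[\cG_L]$: the singular part is, via the cofactor isomorphism of Lemma \ref{cfactor}, a free $\bLambda$-module on which $\gal{L}{K}$ acts through its quotient, and Hypothesis \ref{more new hyps}\,(ii) (injectivity of $\Frob_\q^{p^k} - 1$ on $\cT$, which passes to $\cT[\cG_L]$) together with the fact that $\loc_{K,\q}(c_K) \in H^1_f$ forces the singular component of $\loc_{L,\q}(c_L)$ to vanish. Once both localisations are known to lie in $H^1_f$, the localised norm relation reads
\[
|G_\q| \cdot \loc_{L,\q}(c_L) = \Eul_\q(\Frob_\q^{-1}) \cdot \loc_{K,\q}(c_K)
\]
after the identification above, and a symmetry argument using that $\Frob_\q$ acts as $\NN\q \cdot \Frob_\q^{-1}$ on the determinant of $\cT^\ast(1)$ (so that $|G_\q| \cdot \loc_{L,\q}(c_L)$ may equally well be written with $\Eul_\q(\NN\q \cdot \Frob_\q^{-1})$ up to corrections in $|G_\q|\cdot\cR$) gives (iii) upon dividing by $|G_\q| = [k(\q):k(1)]$; the divisibility established in (i) is precisely what makes this division well-defined in $\cR$.

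The main obstacle will be the last paragraph: cleanly carrying out the local analysis that identifies $H^1_f(k_\q,\cT[\cG_L])$ with $H^1_f(k_\q,\cT[\cG_K])$ along the tamely ramified extension $L/K$, verifying that the norm map becomes multiplication by $|G_\q|$ in this picture, and checking that Hypothesis \ref{more new hyps}\,(ii) genuinely rules out nontrivial $\gal{L}{K}$-invariants in the singular quotient. The divisibility in (i) is then the precise ingredient that converts the localised norm relation into the formula of (iii).
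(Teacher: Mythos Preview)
Your argument for (i) is correct and is exactly the paper's.

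For (ii) and (iii), however, there is a genuine gap: your plan works entirely at the single finite level $L = k(\fn)$, and the decisive step of dividing by $|G_\q| = [k(\q):k(1)]$ is not available there. Concretely, even after you correctly derive the localised norm relation $|G_\q|\cdot\loc_{L,\q}(c_L) = \Eul_\q(\Frob_\q^{-1})\cdot\loc_{K,\q}(c_K)$ and observe that $\Eul_\q(\NN\q\cdot\Frob_\q^{-1}) = \det(1-\Frob_v\mid \cT[\cG_K])$ annihilates $H^1_f$ (this is what your ``symmetry argument'' should amount to), you are left with
\[
|G_\q|\cdot\Bigl(\loc_{L,\q}(c_L) - \tfrac{\Eul_\q(\Frob_\q^{-1}) - \Eul_\q(\NN\q\,\Frob_\q^{-1})}{|G_\q|}\,\loc_{K,\q}(c_K)\Bigr) = 0.
\]
Part (i) tells you the \emph{coefficient} is divisible by $|G_\q|$; it says nothing about injectivity of multiplication by $|G_\q|$ on the module $H^1_f(k_\q,\cT[\cG_K])\cong \cT[\cG_K]/(\Frob_\q-1)$, and this quotient typically has $p$-torsion even when $\cT$ is $\cR$-free (take $\cR=\Z_p$, $\cT=\Z_p$, $\Frob_\q$ acting by $1+p$). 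The same obstruction blocks your approach to (ii): knowing $|G_\q|$ times the singular component vanishes does not force it to vanish, and Hypothesis~\ref{more new hyps}\,(ii) gives injectivity of $\Frob_\q^{p^k}-1$ on $\cT$, whereas the singular quotient $H^1_{/f}(L_w,\cT)\cong \cT(-1)^{\Frob_w}$ is governed by $\Frob_\q - \NN\q$, a different operator.

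The paper resolves this by using Hypothesis~\ref{more new hyps}\,(i) --- which you never invoke --- to pass to the Iwasawa algebra $\mathcal{S} = \cR\llbracket\cG_{k(\fn)_\infty}\rrbracket$ over the $\Z_p$-extension $k_\infty\subset\cK$ in which no finite place splits completely. At $\q\nmid p$ the local extension then contains the full unramified $\Z_p$-tower, so (a) the inverse limit of singular quotients vanishes, giving (ii) for the whole norm-coherent family and hence for $c_{k(\fn)}$ by projection; and (b) $H^1_f(k_\q,\cT\otimes_\cR\mathcal{S})\cong\bigoplus_v\cT$ with trivial Frobenius, which is $p$-torsion free and therefore permits the cancellation of $|G_\q|$ needed for (iii). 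This Iwasawa-theoretic passage is the essential missing ingredient.
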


\begin{proof} Claim (i) is true since $ \Eul_\q (\Frob_\q^{-1}) - \Eul_\q (\NN \q \cdot\Frob_\q^{-1})$ belongs to $(\NN \q - 1)\cdot\cR [G_{\fn / \q}]$ and $\NN \q - 1$ is divisible by $[k (\q) : k(1)]$. \\
    We next note that $\im(\loc_{k (\fn / \q), \q})\subseteq H^1_f (k_\q, \cT [\cG_{k (\fn / \q)}])$ because $\q\notin S (\mathscr{F}_{k (\fn / \q)})$. In particular,  since $c_{k (\fn / \q)}\in H^1_{\fF} (k (\fn / \q), \cT)$, one has $\loc_{k (\fn / \q), \q} (c_{k (\fn / \q)})\in H^1_f (k_\q, \cT [\cG_{k ({\fn / \q})}])$. To prove  (ii) it is therefore sufficient to prove $ \loc_{k (\fn), \q} (c_{k (\fn)})$ belongs to $H^1_f (k_\q, \cT [\cG_{k (\fn)}])$.\\ 
    To do this, we  use the fact that, by the assumed validity of Hypothesis \ref{more new hyps}\,(ii), the field $\cK$ contains a $\Z_p$-extension $k_\infty$ of $k$ in which no finite place splits completely.
     We then set $k (\fn)_\infty \coloneqq k_\infty \cdot k (\n)$ and $k (\fn)_n \coloneqq k_n \cdot k(\fn)$ for $n \in \N$, where $k_n$ denotes the $n$-th layer of $k_n / k$. We further set  $\mathcal{S} \coloneqq \cR \llbracket \cG_{k (\fn)_\infty} \rrbracket$ and regard the free $\mathcal{S}$-module $\cT \otimes_\cR \mathcal{S}$ as a $G_k$-module via the action  $\sigma \cdot (a \otimes b) \coloneqq (\sigma a) \otimes (b \overline{\sigma}^{-1})$ for  $\sigma \in G_k$, where $\overline{\sigma}$ denotes the image of $\sigma$ in $\cG_{k_\infty}$. Then Shapiro's lemma induces a canonical isomorphism 
    \[
    H^1 (k_\q, \cT \otimes_\cR \mathcal{S}) \cong {\varprojlim}_{n \in \N} H^1 (k_\q, \cT [\cG_{k (\fn)_n}]),
    \]
    where the transition morphisms in the limit are the natural corestriction maps. In particular, the Euler system distribution relations imply that the family 
\[ d_\fn \coloneqq (\loc_{k (\fn)_n, \q} (c_{k (\fn)_n}))_{n \in \N}\] 
defines an element of $H^1 (k_\q, \cT \otimes_\cR \mathcal{S})$. Hence, if we can prove that the natural composite map
    \[
    {\varprojlim}_{n \in \N} H^1_f (k_\q, \cT [\cG_{k (\fn)_n}]) \cong H^1_f (k_\q, \cT \otimes_\cR \mathcal{S}) \to H^1 (k_\q, \cT \otimes_\cR \mathcal{S})
    \]
    is bijective, then one has $\loc_{k (\fn), \q} (c_{k (\fn)})\in H^1_f (k_\q, \cT [\cG_{k (\fn)}])$, as claimed. Further, if $\cR$ is finitely generated over $\Z_p$, then the bijectivity of the displayed map is proved by Rubin in \cite[Prop.\@ B.3.4]{Rubin-euler}, and so we now adapt the argument of loc.\@ cit.\@ to our more general setting.\\
  For this, we write $F$ for the completion of $k (\n)$ at a place above $\q$ and  $F_n$ for the $n$-th layer of the (unique) unramified $\Z_p$-extension of $F$. The inertia subgroup $\mathcal{I} \coloneqq \mathcal{I}_{F_n} \subseteq G_{F_n}$ is then independent of $n$ because $F_n / F$ is unramified. 
    Since $\mathcal{I}$ acts trivially on $\cT$, the relevant case of the inflation-restriction sequence gives an exact sequence
    \begin{equation} \label{local inflation-restriction}
    0 \to H^1_f (F_n, \cT) \to H^1 (F_n, \cT) \to H^1 (\mathcal{I}, \cT)^{G_{F_n} / \mathcal{I}} = \Hom_\mathrm{cont} ( \mathcal{I}^{(p)}, \cT)^{G_{F_n}},
    \end{equation}
    where $\mathcal{I}^{(p)}$ denotes the pro-$p$ completion of $\mathcal{I}$. Local class field theory implies that $\mathcal{I}^{(p)}$ is a finitely generated $\Z_p$-module and so $X \coloneqq \Hom_\mathrm{cont} ( \mathcal{I}^{(p)}, \cT)$ is a finitely generated $\cR$-module. It follows that $X$ is Noetherian (since $\cR$ is) and $p$-torsion free (since $\cT$ is, by Remark \ref{hyps cons}). In particular, the ascending chain $( X_n)_{n \in \N}$  with $X_n \coloneqq \Hom_\mathrm{cont} ( \mathcal{I}^{(p)}, \cT)^{G_{F_n}}$ becomes stationary and so, for some $m \in \N$, one has $X_n = X_m$ for all $n \geq m$. For each $n \ge m$, the norm map $X_{n + 1} \to X_n$ is therefore induced by multiplication by $p$, and so $\varprojlim_{n \in \N} X_n = (0)$. Upon taking the limit (over $n$) of the exact sequence (\ref{local inflation-restriction}), one therefore obtains an isomorphism $\varprojlim_{n \in \N} H^1_f (F_n, \cT) \cong \varprojlim_{n \in \N} H^1 (F_n, \cT)$. Since (by assumption) the set $\Sigma_\q$ of places $k(\fn)_\infty$ above $\q$ is finite, this in turn induces an isomorphism
    \[
    H^1_f (k_\q, \cT \otimes_\cR \mathcal{S}) \cong \varprojlim_{n \in \N} \bigoplus_{v \in \Sigma_\q} H^1_f (k (\fn)_{n, v}, \cT) \to \varprojlim_{n \in \N} \bigoplus_{v\in \Sigma_\q} H^1 (k (\fn)_{n, v}, \cT)
     \cong  H^1 (k_\q, \cT \otimes_\cR \mathcal{S}),
    \]
    as required to complete the proof of (ii). \\
Before proving (iii), we claim $H^1_f (k_\q, \cT \otimes_\cR \mathcal{S})$ is torsion free. To see this, for every $v \in \Sigma_\q$ we
set $\mathcal{S}_v \coloneqq \cR \llbracket \gal{k (\fn)_{\infty,v}}{k_\q}\rrbracket$  and note 
Shapiro's Lemma gives a direct sum decomposition
\[ 
H^1_f ( k_\q, \cT \otimes_\cR \mathcal{S}) \cong \bigoplus_{v \in \Sigma_\q} H^1_f (k_\q,\cT\otimes_\cR\mathcal{S}_v) \cong \bigoplus_{v \in \Sigma_\q}  H^1 ( \mathbb{F}_v, \cT ), 
\]
in which $\mathbb{F}_v$ denotes the residue field of $k ( \fn)_{\infty, v}$. Now, since every finite extension of $\mathbb{F}_v$ has degree coprime to $p$, $\Frob_v$ must act trivially on $\cT$. It follows that there are isomorphisms 
\[ H^1 ( \mathbb{F}_v, \cT ) \cong \cT / (1 - \Frob_v) \cong \cT\] 
and hence that $H^1_f (k_\q, \cT \otimes_\cR \mathcal{S})$ is torsion free since $\cT$ is (by Remark \ref{hyps cons}).\\
Turning now to (iii), we observe that (ii) implies (iii) is valid if, in $H^1_f (k_\q, \cT \otimes_\cR \mathcal{S})$, one has 
    \begin{equation*} \label{eqn in Iwasawa theory 1}
    d_\fn = \left ( \frac{\Eul_\q (\Frob_\q^{-1}) - \Eul_\q (\NN \q \cdot\Frob_\q^{-1})}{[k (\q) : k(1)]} \right) \cdot d_{\fn / \q}
    \end{equation*}
where $d_\fn$ and $d_{\fn/\q}$ are as defined above. In addition, since $H^1_f (k_\q, \cT \otimes_\cR \mathcal{S})$ is torsion free, this equality will follow if we can prove that
\begin{equation} \label{eqn in Iwasawa theory 2}
   [k (\q) : k(1)] \cdot  d_\fn =  \left ( \Eul_\q (\Frob_\q^{-1}) - \Eul_\q (\NN \q \cdot\Frob_\q^{-1}) \right) \cdot d_{\fn / \q}.
    \end{equation}
    To do this, we note first that the trace element $\NN_{G_\q}$ acts as multiplication by $[k (\q) : k (1)]$ on $H^1_f (k_\q, \cT [\cG_{k (\fn)_n}]) = H^1 (\mathbb{F}_\q, \cT [\cG_{k (\fn)_n}])$, and hence that
    \[
    [k (\q) : k(1)] \cdot \loc_{k (\fn)_n, \q}  (c_{k (\fn)_n}) = \loc_{k (\fn)_n, \q} ( \NN_{G_\q} c_{k (\fn)_n}) = \Eul_\q (\Frob_\q^{-1}) \cdot \loc_{k (\fn / \q)_n, \q} ( c_{k (\fn / \q)_n}).
    \]
    In addition, the exact sequence  
    \[
    0 \to \cT  [\cG_{k (\fn/ \q)_n}] \xrightarrow{1 - \Frob_v} \cT  [\cG_{k (\fn / \q)_n}] \to H^1 ( \mathbb{F}_\q, \cT [\cG_{k (\fn / \q)_n}]) \to 0
    \]
   implies that $H^1 ( \mathbb{F}_\q, \cT [\cG_{k (\fn / \q)_n}])$ is annihilated by the element 
    \[ {\det}_{\cR [\cG_{k (\fn / \q)_n}]} ( 1 - \Frob_v \mid \cT [\cG_{k (\fn / \q)_n}] ) = \Eul_\q ( \NN \q \cdot\Frob_\q^{-1}) \in \cR [\cG_{k (\fn / \q)_n}].\] 
    These facts combine to prove the equality (\ref{eqn in Iwasawa theory 2}), and hence also  (iii).
\end{proof}

In order to prove the relevant `finite-singular relations', we recall certain maps introduced by Kato in \cite[\S\,4]{Kato-Euler}. To do this, we fix $\fn \in \cN$ and $\q \in \cQ$, and define a composite homomorphism 
\begin{multline*} \Psi_{\fn, \q}^\fs: H^1_f (k_\q, \cT [\cG_{k (\fn)}]) \to H^1_f (k_\q, \cA [\cG_{k (\fn)}])\\ \xrightarrow{(\ref{evaluation def})} (\cA [G_{k (\fn)}])^{\mathcal{I}_{k (\fn), \q}} / (\Frob_\q - 1) \xrightarrow{c_{1 - \Frob_\q}} H^0_f ( k_\q, \cA [G_{k (\fn)}]).\end{multline*}
Here the first map is induced by the projection $\cT \to \cA$, we write 
$\mathcal{I}_{k (\fn), \q} \subseteq \cG_{k ( \fn)}$ for the inertia subgroup at $\q$ (so $\mathcal{I}_{k (\fn), \q} = G_{k(\q)}$ if $\q \in V(\fn)$ and $\mathcal{I}_{k (\fn), \q}$ is trivial otherwise), and the `cofactor map' $c_{1 - \Frob_\q}$ (as defined in the proof of Lemma \ref{cfactor}) is induced by multiplication by $1 - \Frob_\q$ on $\cA [\cG_{k(\fn)} / \mathcal{I}_{k(\fn), \q}]$.\\
We also have a well-defined `evaluation' map  
\[
\mathrm{ev}_{\sigma_\q} \: H^1 (k_\q, \cA [\cG_{k (\fn)}]) \to 
H^1 ( k_\q^\mathrm{nr}, \cA [\cG_{k (\fn)}])^{G_{\kappa_\q}} \to 
\big(\cA [\cG_{k (\fn)}] / (\sigma_\q - 1)\big)^{G_{\kappa_\q}}  , \quad [\xi] \mapsto \xi ( \sigma_\q).
\]
If $\q \not \in V (\fn)$, we thereby obtain from $\mathrm{ev}_{\sigma_\q}$ a map 
\[
V_{\fn, \q} \:  H^1 (k_\q, \cA [\cG_{k (\fn)}]) \xrightarrow{\mathrm{ev}_{\sigma_\q}} H^0_f (k_\q, \cA [\cG_{k(\fn)}]) \cong H^0_f (k_\q, \cA [\cG_{k(\fn \q)}]),
\]
where the last isomorphism is a consequence of the $k (\fn \q) / k (\fn)$ being totally ramified at $\q$.\\
The above maps $\Psi^\fs_{\fn, \q}$ and $V_{\fn, \q}$  are then related to the maps  $\check{\psi}_\q^\fs$ and $\check{v}_\q$ that occur in the definition of Kolyvagin systems relative to Nekov\'a\v{r} structures in the following way.

\begin{lem} \label{fs relations lemma 1}
Fix  $c \in \ES^1_{S_0} (\fF)$, $\fn \in \cN$ and $\q \in \cQ$ and write $h$ for the isomorphism of $\cR$-modules $\cA / (\tau - 1) \cong \bLambda$  fixed just before (\ref{v_q def}).  Then one has 
    \[
    (\Psi^\fs_{\fn, \q} \circ \loc_{k (\fn), \q}) ( D'_\fn c_{k (\fn)}) = (\iota_{\fn, \q} \circ h^{-1} \circ \check{\psi}^\fs_\q) (\kappa' (c)_\fn). 
    \]
    In addition, if $\q \in V(\fn)$, and 
    we set
    \[
    x_{\fn, \q} \coloneqq (\nu_{k (\fn) / k (\fn/ \q), \widetilde{\mathscr{F}}}^1)^{-1} 
    ( \pi^1_{k (\fn),  \widetilde{\mathscr{F}}}
    (D'_\fn c_{k (\fn)})) \in H^1_{\widetilde{\mathscr{F}}} (k, \cA [\cG_{k ( \fn / \q)}]),
    \]
    then one has 
    \[
    (V_{\fn / \q, \q} \circ \loc_{k (\fn / \q), \q}) 
    ( x_{\fn, \q}) 
    = (\iota_{\fn, \q} \circ c_{1 - \tau}^{-1} \circ h^{-1} \circ \check{v}_\q) (\kappa' (c)_\fn).
    \] 
\end{lem}

\begin{proof} We recall that the maps 
$\check{v}_\q$ and $\check{\psi}^\fs_\q$ (as defined in Proposition \ref{what we need from Selmer complexes}\,(v)) 
factor through the respective maps $v_\q$ and $\psi^\fs$ from (\ref{v_q def}) and (\ref{fs def}), and hence that it suffices to compare the latter maps to  $V_{\fn / \q, \q}$ and $\Psi^\fs_{\fn, \q}$. Then, since the cofactor map behaves functorially, the commutative diagrams 
\begin{cdiagram}[column sep=small]
H^1 (k_\q, \cA [\cG_{k (\fn)}]) \arrow{r}{(\ref{evaluation def})} & (\cA [\cG_{k (\fn)}])^{\cI_{k (\fn),\q}} / (\Frob_\q - 1) & 
H^1 (k_\q, \cA [\cG_{k (\fn / \q)}]) \arrow{r}{\mathrm{ev}_\q} & ( \cA [\cG_{k (\fn / \q)}] )^{G_{\kappa_\q}} \\ 
H^1 (k_\q, \cA) \arrow{r}{(\ref{evaluation def})} 
\arrow{u}{\iota_{\fn, \q}^\ast} 
& \cA / (\Frob_\q - 1) \arrow{u}{\iota_{\fn, \q}} 
&  
H^1 (k_\q, \cA) \arrow{r}{\partial_\q \circ \res_\q} 
\arrow{u}{\iota_{\fn / \q, \q}^\ast} 
& \cA^{G_{\kappa_\q}}  \arrow{u}{\iota_{\fn / \q, \q}} 
\end{cdiagram}%
reduce both claims to an explicit comparison of the definitions of all involved maps (which we leave to the reader).
\end{proof}

In the sequel, for each $K\in \Omega$ we write 
\[ \delta_{K, \q} \: H^0 (k_\q, \cA [\cG_K]) \to H^1 (k_\q, \a_i \cdot\cT [\cG_K])\]
for the connecting homomorphism that arises  from the tautological short  exact sequence 
\begin{equation}\label{taut ses} 0 \to \a_i\cdot \cT [\cG_K] \to \cT [\cG_K] \to \cA[\cG_K] \to 0.\end{equation}
In particular, for each $\fn \in \cN$ and $\q \in \cQ$, the following result explicitly computes the images under the respective composite maps $\delta_{k (\fn), \q} \circ \Psi^\fs_{\fn, \q}$ and $\delta_{k(\fn), \q} \circ V_{\fn, \q}$ of the elements that arise in the finite-singular relations. 

\begin{prop} \label{fs relations lemma 2}
    For each $c \in \ES^1_{S_0} (\mathfrak{F})$, $\fn \in \cN$ and $\q \in \cQ$, the following claims are valid.
    \begin{romanliste}
    \item For each element  $a$ of the ideal $\langle \a_i, I_\fn\rangle$ of  $\cR [G_\fn]$ generated by $\a_i \cup I_\fn$, there exists a unique element $a \cdot (D'_\fn c_{k (\fn)})$ of $H^1_f (k_\q, \a_i \cT [\cG_{k (\fn)}])$ that satisfies  
    \begin{equation*} \label{characterising property}
    (a \cdot (D_\fn' c_{k (\fn)})) ( \Frob_\q) = a( (D_\fn' c_{k (\fn)}) ( \Frob_\q))\end{equation*}
    in $(\a_i \cT [\cG_{k ( \fn)}])^{\cI_{k (\fn), \q}} / (\Frob_\q - 1)$.
        \item If $\q \not \in V (\fn)$, then $\Eul_\q ( \NN_\q \cdot\Frob_\q)\in \langle \a_i, I_\fn\rangle$, and in $H^1_f (k_\q, \a_i \cT [\cG_{k (\fn)}])$ one has
        \[
        (\delta_{k (\fn), \q} \circ \Psi^\fs_{\fn, \q} \circ \loc_{k (\fn), \q}) ( D'_\fn c_{k (\fn)}) (y) = - \Eul_\q ( \NN \q\cdot \Frob_\q) \cdot (D_\fn' c_{k (\fn)}),
        \]
        where the right-hand side is the element defined in (i) with $a = - \Eul_\q ( \NN_\q\cdot \Frob_\q)$. 
        \item If $\q \in V(\fn)$, then in $H^1_f (k_\q, \a_i \cT [\cG_{k (\fn)}])$ one has
        \[
        (\delta_{k (\fn), \q} \circ V_{\fn, \q}) (x_{\fn, \q}) = (\sigma_\q - 1) D'_\fn c_{k (\fn)},
        \]
        where the right-hand side is the element defined in (i) with $a = \sigma_\q-1 \in I_\fn$.
        \item Assume $\fF$ satisfies Hypothesis \ref{system of fF hyp} and that $\cK$ and $\cT$ satisfy Hypothesis \ref{more new hyps}. Then, for every system $c \in \ES^1_{S_0} (\fF)$, every modulus $\fn \in \cN$, and every prime $\q \in V(\fn)$ one has
        \[
        (\delta_{k (\fn / \q), \q} \circ \Psi^\fs_{\fn / \q, \q} \circ \loc_{k (\fn / \q), \q}) ( D'_{\fn / \q} c_{k(\fn / \q)}) = (\delta_{k (\fn), \q} \circ V_{\fn, \q}) (x_{\fn, \q})
        \]
        in $H^1_f (k_\q, \a_i \cdot\cT [\cG_{k (\fn / \q)}]) \cong H^1_f (k_\q, \a_i \cdot\cT [\cG_{k (\fn)}])$.
    \end{romanliste}
\end{prop}

\begin{proof} To prove (i), we write $k (\fn)_\infty \coloneqq k_\infty \cdot k(\fn)$ with $k_\infty$ the cyclotomic $\Z_p$-extension of $k$. Then, since $k_\infty \cap k (\fn)\subseteq k ( 1)$, there is an injection $G_\fn \hookrightarrow \cG_{k (\fn)_\infty}$ that allows us to view $a$ as an element of $\cR \llbracket \cG_{k (\fn)_\infty} \rrbracket$. Letting $K$ be a finite extension of $k (\fn)$ contained in $k (\fn)_\infty$, the element $a$ therefore acts on $c_K$.\\
Now Proposition \ref{construction complex}\,(iii) gives a canonical isomorphism $C_{\widetilde{\mathscr{F}}} ( \cA [\cG_K]) \cong (\cR/\a_i) \otimes_\cR^\mathbb{L} C_\mathscr{F} ( \cT [\cG_K])$. Upon tensoring the short exact sequence $0 \to \a_i \to \cR \to (\cR/\a_i) \to 0$ with $C_\mathscr{F} ( \cT [\cG_K])$, one thus obtains a canonical exact triangle
\[
\a_i \otimes_\cR^\mathbb{L} C_\mathscr{F} ( \cT [\cG_K]) \to C_\mathscr{F} ( \cT [\cG_K]) \to C_{\widetilde{\mathscr{F}}} ( \cA [\cG_K]) \to
\]
and hence, by Proposition \ref{construction complex}\,(v), an induced exact sequence
\begin{equation} \label{a useful exact sequence}
0 \to H^0 (  \a_i \otimes_\cR^\mathbb{L} C_\mathscr{F} ( \cT [\cG_K])) \to H^1_\mathscr{F} ( k, \cT [\cG_K]) \xrightarrow{\pi^1_{K, {\widetilde{\mathscr{F}}}}} H^1_{\widetilde{\mathscr{F}}} (k, \cA [\cG_K]).
\end{equation}
By the argument of Lemma \ref{bss lem 6.12}, the element $\pi^1_{K, {\widetilde{\mathscr{F}}}} (D'_\fn c_{K})$ is fixed by $G_\fn$ and so is annihilated by $a$. From the last displayed exact sequence, we can thus deduce $a D'_\fn c_K\in H^0 (  \a_i \otimes_\cR^\mathbb{L} C_\mathscr{F} ( \cT [\cG_K]))$. 

To proceed, we use the natural composite morphism 
\[
\xi_K: \a_i \otimes_\cR^\mathbb{L} C_\mathscr{F} ( \cT [\cG_K]) \to \a_i \otimes_\cR^\mathbb{L} \mathrm{R} \Gamma (k_\q, \cT [\cG_K]) \to \mathrm{R} \Gamma (k_\q, \a_i \cT [\cG_K]),
\]
in which the first morphism is induced by the localisation morphism $C_\mathscr{F} ( \cT [\cG_K]) \to \mathrm{R} \Gamma (k_\q,\cT [\cG_K])$ and the second by commutativity of the following diagram of exact triangles 
\begin{cdiagram}[column sep=small]
    \a_i \otimes^\mathbb{L}_R \mathrm{R} \Gamma (k_\q, \cT [\cG_K]) \arrow{r} \arrow[dashed]{d} & \mathrm{R} \Gamma (k_\q, \cT [\cG_K]) \arrow{r} \arrow[equals]{d} & (R / \a_i) \otimes^\mathbb{L}_R \mathrm{R} \Gamma (k_\q, \cT [\cG_K]) \arrow{r} \arrow{d}{\simeq} & \phantom{X} \\
    \mathrm{R} \Gamma (k_\q, \a_i \cT [\cG_K]) \arrow{r} & \mathrm{R} \Gamma (k_\q, \cT [\cG_K]) \arrow{r} & \mathrm{R} \Gamma (k_\q, \cA [\cG_K] ) \arrow{r} & \phantom{X}
\end{cdiagram}%
Here the third vertical map is the canonical isomorphism from Lemma \ref{flach result}\,(vi) and the lower row the triangle associated to the short exact sequence $0 \to \a_i \cT [\cG_K] \to \cT [\cG_K] \to \cA [\cG_K] \to 0$. In particular, there exists a natural commutative diagram
\begin{cdiagram}[column sep=small, row sep=small]
    H^0 (  \a_i \otimes_\cR^\mathbb{L} C_\mathscr{F} ( \cT [\cG_K])) \arrow{r} \arrow{d}{H^0(\xi_K)} &  H^1_\mathscr{F} ( k, \cT [\cG_K]) \arrow{d}{\loc_{k (\fn), \q}} \\
    H^1 (k_\q, \a_i \cT [\cG_K]) \arrow{r} & H^1 (k_\q,  \cT [\cG_K]).
\end{cdiagram}%
Hence, if the element 
\[ a\cdot D'_\fn c_{k(\fn)} \coloneqq H^0(\xi_{k(\fn)})(a D'_\fn c_{k(\fn)})\] 
belongs to $H^1_f (k_\q, \a_i \cT [\cG_{k(\fn)}])$, then it satisfies all of the properties specified in (i). On the other hand, the required containment $a\cdot D'_\fn c_{k(\fn)} \in H^1_f (k_\q, \a_i \cT [\cG_{k(\fn)}])$ is a consequence of the equality $\varprojlim_K H^1_f (k_\q, \a_i \cT [\cG_K]) = \varprojlim_K H^1 (k_\q, \a_i \cT [\cG_K])$, with $K$ ranging over the subfields of a $\Z_p$-extension of $k$ contained in $\cK$ in which no finite place splits completely (and which exists by Hypothesis \ref{more new hyps}\,(i)), that was shown in the proof of Proposition \ref{the congruence condition}\,(ii). Indeed, given this, it is enough to note that, since $c$ is an Euler system and the second arrow in (\ref{a useful exact sequence}) is injective, the family $(a\cdot D'_\fn c_{K (\fn)})_K $ belongs to $\varprojlim_K H^1 (k_\q, \a_i \cT [\cG_K])$ because $(c_{K ( \fn)})_K$ is norm compatible. This proves (i).  \\
We next recall that the definition of $\cQ$ implies $\q$ splits completely in the fields $k (1)$ and $k ( \mu_{|\cR_i|})$, and also that $\Frob_\q$ restricts to $\tau$ on the field $k (\cA)$. This implies  $\NN \q - 1 \in |\cR_i| \Z \subseteq \a_i$ and hence, since $\Frob_\q - 1 \in I_\fn$, that 
\[
\Eul_\q ( \NN_\q \cdot\Frob_\q) \equiv \Eul_\q (1) \equiv {\det}_{\cR_i} ( 1 - \tau^{-1} \mid \cA^\ast (1)) \equiv 0 \mod \langle I_\fn, \a_i \rangle. 
\]
This proves the first assertion of (ii) and, in particular, implies (i) can be applied with $a$ taken to be $-\Eul_\q ( \NN_\q \cdot\Frob_\q)$.
 To prove the rest of (ii), we now fix $y \in H^1_f (k_\q, \cT [\cG_{k (\fn)}])$ and note  
    \[ \Psi^\fs_{\fn, \q} (y) = c_{1 - \Frob_\q} ( \overline{y} (\Frob_\q)),\] 
    with $\overline{y} \in H^1_f (k_\q, \cA [\cG_{k (\n)}])$ denoting the reduction of $y$ modulo $\a_i$. In addition, one has  
    \begin{align*}
    (\Frob_\q - 1) c_{1 - \Frob_\q} & = - (1 - \Frob_\q) c_{1 - \Frob_\q} \\
    & = - {\det}_{\bLambda [\cG_{k (\fn)}]} ( 1 - \Frob_\q \mid \cA[\cG_{k (\fn)}]) \\
    & = - \Eul_\q (\NN\q \cdot \Frob_\q) 
    \quad \in \bLambda [\cG_{k (\fn)}].
    \end{align*}
   Now, for any $\alpha \in H^0 (k_\q, \cA [\cG_{k (\fn)}])$ the definition of the map $\delta_{k (\fn), \q}$ implies $\delta_{k (\fn), \q} (\alpha) ( g)$ is equal to the class of the cocyle $G_{k_\q} \to \a_i\cdot \cT [\cG_{k (\fn)}], g \mapsto ( g - 1) \tilde \alpha$, where $\tilde \alpha \in \cT [\cG_{k (\fn)}]$ is a lift of $z$. In particular, since $c_{1 - \Frob_\q} (y (\Frob_\q))$ is a lift of $c_{1 - \Frob_\q} (\overline{y} ( \Frob_\q))$ and belongs to $(\cT [\cG_{k (\fn)}])^{\cI_{k (\fn), \q}}$ as $y\in H^1_f (k_\q, \cT [\cG_{k (\fn)}])$, one has $(\delta_{k (\fn), \q} \circ \Psi^\fs_{\fn, \q}) (y) (\Frob_\q) \in H^1_f (k_\q, \a_i\cdot \cT [\cG_{k (\fn)}])$. Moreover, we may compute that
    \begin{align*}
    (\delta_{k (\fn), \q} \circ \Psi^\fs_{\fn, \q}) (y) (\Frob_\q) & = (\Frob_\q - 1) \cdot c_{\Frob_\q - 1} \cdot y (\Frob_\q) \\
    & =  - \Eul_\q (\NN\q \cdot \Frob_\q)  \cdot y (\Frob_\q) \\
    & = ( - \Eul_\q (\NN\q \cdot \Frob_\q)  \cdot y) (\Frob_\q).
    \end{align*}
    Applying this with $y = \loc_{k (\fn), \q} ( D'_\fn c_{k (\fn)})$ then implies (ii) since an element of $H^1_f (k_\q, \cA [G_{k(\fn)}])$ is uniquely determined by its value on $\Frob_\q$.\\
    To prove (iii), we take $y \in H^1_f (k_\q, \cA [\cG_{k(\fn / \q)}])$ and similarly compute that
    \begin{align*}
        (\delta_{k (\fn), \q} \circ V_{\fn, \q}) (y) ( \Frob_\q) & = (\Frob_\q - 1) \cdot \widetilde{ y (\sigma_\q)},
    \end{align*}
    where $\widetilde{ y (\sigma_\q)} \in \cT [\cG_{k (\fn / \q)}]$ is a lift of $y (\sigma_\q)$. 
    Note  $(\delta_{k (\fn), \q} \circ V_{\fn, \q}) (y)$ belongs to $H^1_f (k_\q, \a_i \cT [\cG_{\fn}])$. Indeed, this is true since we may view $(V_{\fn, \q}) (y)$ as an element of $H^0_f (k_\q, \cA [\cG_{k (\fn) / \q}]$ and because $\delta_{\fn / \q, \q}$ maps into $H^1_f (k_\q, \a_i \cT [\cG_{\fn / \q}])$ as the representation $\cA [\cG_{\fn / \q}]$ is unramified at $\q$. This shows that $(\delta_{k (\fn), \q} \circ V_{\fn, \q}) (y) $ is uniquely determined by its value on $\Frob_\q$. 

    We now apply this with $y = x_{\fn, \q}$ so that, denoting by $\widetilde{\sigma_\q}$ a lift of $\sigma_\q$ to $G_k$, we may take $\widetilde{y ( \sigma_\q)}$ to be $(D'_\fn c_{k ( \fn)}) ( \widetilde{ \sigma_\q})$.
    Given the explicit action of $G_\q$ on $H^1 (k_\q, \a_i \cdot\cT [G_\fn])$, repeated use of the cocycle property then implies that
    \begin{align*}
        ((\sigma_\q  - 1) D'_\fn c_{k ( \fn)}) (\Frob_\q)
        & = \widetilde{\sigma_\q} (D'_\fn c_{k ( \fn)}) ( \widetilde{\sigma_\q}^{-1} \Frob_\q \widetilde{\sigma_\q}) - (D'_\fn c_{k ( \fn)}) (\Frob_\q) \\
   & = \big( (D'_\fn c_{k ( \fn)}) (\Frob_\q \widetilde{\sigma_\q}) + \widetilde{\sigma_\q} (D'_\fn c_{k ( \fn)}) (\widetilde{\sigma_\q}^{-1}) \big) - (D'_\fn c_{k ( \fn)}) (\Frob_\q) \\
    & = \big( \Frob_\q (D'_\fn c_{k ( \fn)}) (\widetilde{\sigma_\q}) + (D'_\fn c_{k ( \fn)}) (\Frob_\q) \big) - (D'_\fn c_{k ( \fn)}) (\widetilde{\sigma_\q}) \\
    & \qquad \qquad - (D'_\fn c_{k ( \fn)}) (\Frob_\q) \\
    & = (\Frob_\q - 1) (D'_\fn c_{k ( \fn)}) (\widetilde{\sigma_\q}).
    \end{align*}
    We have therefore proved that 
    \[ (\delta_{k (\fn), \q} \circ V_{\fn, \q}) (x_{\fn, \q}) ( \Frob_\q) = ((\sigma_\q  - 1) D'_\fn c_{k ( \fn)}) (\Frob_\q) (\Frob_\q),\] 
    and this implies (iii) since $(\delta_{k (\fn), \q} \circ V_{\fn, \q}) (x_{\fn, \q})$ and  $(\sigma_\q  - 1) D'_\fn c_{k ( \fn)}$ belong to $H^1_f (k_\q, \a_i \cT [\cG_{k (\fn)}])$.\\
    To prove (iv), we first use (i) to regard $|G_\q|D_\fn' c_{k (\fn)}$ as an element of $H^1_f (k_\q, \a_i \cT [\cG_{k (\fn)}])$ and  $(\Eul_\q ( \Frob_\q^{-1}) - \Eul_\q (\NN \q \cdot \Frob_\q^{-1})) D'_{\fn / \q} c_{k(\fn / \q)}$ as an element of  $H^1_f (k_\q, \a_i \cT [\cG_{k (\fn / \q)}])$. From 
  Proposition \ref{the congruence condition}\,(iii), we then obtain an equality 
    \begin{equation} \label{adapted congruence} 
    |G_\q| D'_\fn c_{k(\fn)} =  (\Eul_\q ( \Frob_\q^{-1}) - \Eul_\q (\NN \q \cdot \Frob_\q^{-1})) D'_{\fn / \q} c_{k(\fn / \q)} 
    \end{equation}
in $H^1_f (k_\q, \a_i \cT [\cG_{k ( \fn})]) \cong H^1_f (k_\q, \a_i \cT [\cG_{k ( \fn / \q})])$.\\
We also note that the identity $(\sigma_\q - 1) D_\q =  |G_\q| - \NN_{G_\q}$ implies an equality
\begin{equation} \label{intermediate calculation}
(\sigma_\q - 1) D'_{\fn} c_{k (\fn)}  = 
( | G_\q| - \NN_{G_\q}) D'_{\fn / \q} c_{k (\fn)} = 
|G_\q| D'_{\fn / \q} c_{k (\fn)} - \Eul_\q (\Frob_\q^{-1}) D'_{\fn / \q} c_{k (\fn / \q)} 
\end{equation}
in $H^1_{\widetilde{\mathscr{F}}} (k, \cT [\cG_{k ( \fn)}])$. Since (i) can be applied to both summands on the right, these equalities also hold in $H^1_f (k_\q, \a_i \cT [\cG_{k(\fn)}])$. 
We may then compute that 
    \begin{align*}
     &\hskip0.2truein (\delta_{k (\fn), \q} \circ V_{\fn, \q}) ( D'_\fn c_{k (\fn)})\\
    &  = (\sigma_\q - 1) D'_{\fn} c_{k (\fn)} \\
        & =
        ( | G_\q| - \NN_{G_\q}) D'_{\fn / \q} c_{k (\fn)} \\ 
        & =(\Eul_\q ( \Frob_\q^{-1}) - \Eul_\q (\NN \q\cdot \Frob_\q^{-1})) D'_{\fn / \q} c_{k (\fn / \q)}  - \Eul_\q (\Frob_\q^{-1}) D'_{\fn / \q} c_{k (\fn / \q)}\\ 
        & =
         - \Eul_\q (\NN \q \cdot\Frob_\q^{-1}) D'_{\fn / \q} c_{k (\fn / \q)} \\
        & = (\delta_{k (\fn / \q), \q} \circ \Psi^\mathrm{fs}_{\fn, \q}) ( D'_{\fn / \q} c_{k (\fn / \q)}),
    \end{align*}
where the first equality is by (iii), the second by (\ref{intermediate calculation}), the third by (\ref{adapted congruence}), and the last by  (ii). This proves the equality in (iv).
\end{proof}

The above computations can now be combined to prove the `finite-singular relations' that occur in Theorem \ref{kolyvagin derivative thm} in the case $t=1$.

\begin{prop} \label{finite-singular relations prop}
    Assume $\fF$ satisfies Hypothesis \ref{system of fF hyp} and that $\cK$ and $\cT$ satisfy Hypothesis \ref{more new hyps}. Then, for every $c \in \ES^1_{S_0} (\fF)$, $\fn \in \cN$, and $\q \in  V(\fn)$ one has
    \[
    \check{v}_\q (\kappa'(c)_\fn) 
    = \check{\psi}_\q^\mathrm{fs}  (\kappa' (c)_{\fn / \q}) \in \bLambda.\]
\end{prop}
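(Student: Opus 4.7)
The plan is to derive the identity by combining the computational content of Lemmas \ref{fs relations lemma 1} and \ref{fs relations lemma 2}. First I would invoke Lemma \ref{fs relations lemma 1} twice: once with the modulus $\fn$, to rewrite $(V_{\fn,\q} \circ \loc_{k(\fn),\q})(D'_\fn c_{k(\fn)})$ as the image of the scalar $\check{v}_\q(\kappa'(c)_\fn)$ under the composite $\iota_{\fn,\q} \circ c_{1-\tau}^{-1} \circ h^{-1}$; and once with the modulus $\fn/\q$, to rewrite $(\Psi^{\mathrm{fs}}_{\fn/\q,\q} \circ \loc_{k(\fn/\q),\q})(D'_{\fn/\q} c_{k(\fn/\q)})$ as the image of $\check{\psi}^{\mathrm{fs}}_\q(\kappa'(c)_{\fn/\q})$ under $\iota_{\fn/\q,\q} \circ h^{-1}$. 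This translates the desired scalar identity into a question about these two elements of $H^0_f(k_\q, \cA[\cG_{k(\fn)}])$ (after using the canonical identification $H^0_f(k_\q, \cA[\cG_{k(\fn/\q)}]) \cong H^0_f(k_\q, \cA[\cG_{k(\fn)}])$ arising from total ramification).

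Next, I would apply the connecting homomorphism $\delta_{K,\q}$ coming from the tautological short exact sequence $0 \to \a_i\cT[\cG_K] \to \cT[\cG_K] \to \cA[\cG_K] \to 0$. By Lemma \ref{fs relations lemma 2}\,(i) and (ii), the left-hand sides of the two expressions obtained in the first step are transformed into $-\Eul_\q(\Frob_\q) \cdot \loc_{k(\fn/\q),\q}(D'_{\fn/\q}c_{k(\fn/\q)})$ and $(\sigma_\q - 1)\cdot \loc_{k(\fn),\q}(D'_\fn c_{k(\fn)})$ respectively, while Lemma \ref{fs relations lemma 2}\,(iii) shows that these two quantities agree in $H^1_f(k_\q, \a_i\cT[\cG_{k(\fn)}])$. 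This yields a cohomological identity relating $\check{v}_\q(\kappa'(c)_\fn)$ and $\check{\psi}_\q^{\mathrm{fs}}(\kappa'(c)_{\fn/\q})$.

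The main obstacle, and the heart of the argument, is to descend this cohomological equality to the desired scalar equality in $\bLambda$. The plan is to exploit the rigidity supplied by Hypothesis \ref{new strategy hyps}\,(ii) and the freeness of $\cT$ over $\cR$: via Lemma \ref{cfactor} the $\bLambda$-module $\cA^{\tau=1}$ is free of rank one, and the maps $\iota_{\fn,\q}$ and $\iota_{\fn/\q,\q}$ are injective, so it suffices to control the ambiguity introduced by $\delta_{K,\q}$. I expect the cleanest route is not to try to invert $\delta$ abstractly, but rather to compare the two sides directly in $H^0_f(k_\q, \cA[\cG_{k(\fn)}])$: that is, to promote the equality at the level of $H^1$ to one at the level of $H^0$ by using the congruence relation of Proposition \ref{the congruence condition}\,(iii), which gives a precise comparison between $\loc_{k(\fn),\q}(c_{k(\fn)})$ and $\loc_{k(\fn/\q),\q}(c_{k(\fn/\q)})$, and by matching this with the factorizations of $\sigma_\q-1$ and $\Eul_\q(\Frob_\q)$ that intervene via $D'_\fn = D_\q\cdot D'_{\fn/\q}$ and the identity $(\sigma_\q-1)D_\q = |G_\q| - \NN_{G_\q}$. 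Once both sides are expressed in terms of the same Euler system element multiplied by compatible scalars in $\bLambda$, injectivity of $\iota_{\fn,\q}\circ c_{1-\tau}^{-1}\circ h^{-1}$ yields the required equality $\check{v}_\q(\kappa'(c)_\fn) = \check{\psi}_\q^{\mathrm{fs}}(\kappa'(c)_{\fn/\q})$.
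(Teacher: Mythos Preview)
Your first two paragraphs are exactly right and match the paper's approach: invoke Lemma~\ref{fs relations lemma 1} to translate the scalar identity into an $H^0_f$ identity, then push forward through $\delta_{K,\q}$ and use Lemma~\ref{fs relations lemma 2}\,(iii) to get equality in $H^1_f(k_\q,\a_i\cT[\cG_{k(\fn)}])$.

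The gap is in your third paragraph. You treat the descent from $H^1$ back to $H^0$ as ``the main obstacle'' and propose to work around $\delta$ via the congruence relation of Proposition~\ref{the congruence condition}\,(iii). But there is no obstacle: the connecting map $\delta_{K,\q}$ is \emph{injective}, and this is exactly what Hypothesis~\ref{more new hyps}\,(ii) provides. Indeed, $\ker\delta_{K,\q}$ is the image of $H^0(k_\q,\cT[\cG_K])\cong\bigoplus_{w\mid\q}H^0(K_w,\cT)$, and since $K/k$ is abelian pro-$p$ each decomposition group $G_{K_w}$ contains a power $\Frob_\q^{p^k}$; the hypothesis that $\Frob_\q^{p^k}-1$ acts injectively on $\cT$ then forces $H^0(K_w,\cT)=0$. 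So once you have equality after applying $\delta$, you automatically have equality in $H^0_f$, and then the injectivity of $\iota_{\fn,\q}$, $h$, and $c_{1-\tau}$ (which you already noted) finishes the proof.

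Your proposed workaround via Proposition~\ref{the congruence condition}\,(iii) would essentially be re-deriving the content of Lemma~\ref{fs relations lemma 2}\,(iii) at the $H^0$ level, which is unnecessary extra work (and note that Proposition~\ref{the congruence condition}\,(iii) is already consumed inside the proof of Lemma~\ref{fs relations lemma 2}\,(iii)).
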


\begin{proof}
Since the maps $h$ and $c_{1-\tau}$ are bijective and $\iota_{\fn, \q}$ is injective, the claimed equality is valid if, in $H^0_f (k_\q, \cA [\cG_{k (\fn / \q)}]) \cong H^0_f (k_\q, \cA [\cG_{k (\fn)}])$, one has 
\[
(\iota_{\fn / \q, \q} \circ h^{-1} \circ \check{\psi}^\fs_\q) (\kappa' (c)_{\fn / \q}) = (\iota_{\fn, \q} \circ c_{1 - \tau}^{-1} \circ h^{-1} \circ \check{v}_\q) (\kappa' (c)_\fn).
\]
Since Hypothesis \ref{more new hyps}\,(ii) implies the connecting maps $\delta_{k (\fn / \q), \q}$ and $\delta_{k (\fn), \q}$ are injective, it thus suffices to show that, in $H^1_f (k_\q, \a_i\cdot\cT [\cG_{k (\fn / \q)}]) \cong H^1_f (k_\q, \a_i \cdot\cT [\cG_{k (\fn)}])$, one has 
\[
(\delta_{k (\fn / \q), \q} \circ \iota_{\fn / \q, \q} \circ h^{-1} \circ \check{\psi}^\fs_\q) (\kappa' (c)_{\fn / \q}) = (\delta_{k (\fn), \q} \circ \iota_{\fn, \q} \circ c_{1 - \tau}^{-1} \circ h^{-1} \circ \check{v}_\q) (\kappa' (c)_\fn).
\]
It is then enough to note that the latter equality can be directly verified by combining Lemma \ref{fs relations lemma 1} with Proposition \ref{fs relations lemma 2}\,(iv).
\end{proof}

At this point, to complete the proof of Theorem \ref{kolyvagin derivative thm} it only remains to show that the explicit linear combination of elements $\kappa' (c)_\mathfrak{d} c_{k (\mathfrak{d})}$ that occurs in Definition \ref{kolyvagin derivative def} belongs to the  group $H^1_{\mathscr{F} (\fn)} (k, \cA)$, and for this we adapt an argument of Mazur and Rubin in \cite[App.\@ A]{MazurRubin04}. This means that the key technical result we have to prove  is  the following.

\begin{lem}[{\cite[Th.~A.4]{MazurRubin04}}] \label{formula for finite-singular projection}
    Assume $\fF$ satisfies Hypothesis \ref{system of fF hyp} and that $\cK$ and $\cT$ satisfy Hypothesis \ref{more new hyps}. Then, for every $c \in \ES^1_{S_0} (\fF)$, $\fn \in \cN$, and $\q \in  V(\fn)$ one has
    \[
    \check{\psi}_\q^\fs ( \kappa' (c)_\fn) = - \sum_{\tau \in \mathfrak{S}_{\q} (\fn)} \mathrm{sgn} (\tau) \cdot 
    \big( \prod_{ \mathfrak{l} \in V(\fn / \mathfrak{d}_\tau)} x^{(\mathfrak{l})}_{\tau (\mathfrak{l})} \big) \cdot 
    \check{\psi}_\q^\fs ( \kappa' (c)_{\mathfrak{d}_\tau} ),
    \]
    where $\mathfrak{S}_\q (\fn)$ denotes the subset of $\mathfrak{S}(\fn)$ comprising cycles $\tau$ for which $\tau(\q)\not=\q$ and $x^{(\mathfrak{l})}_{\tau (\mathfrak{l})} $ are the elements defined in (\ref{x fl definition}). 
\end{lem}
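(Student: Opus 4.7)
The plan is to adapt Mazur--Rubin's argument in \cite[Th.~A.4]{MazurRubin04} to our Nekov\'a\v{r}-structure setting, using the congruence-type identities of Proposition \ref{the congruence condition} and Lemma \ref{fs relations lemma 2}\,(iii) as the crucial Euler-system inputs. Applying Lemma \ref{fs relations lemma 1} componentwise translates the desired equality in $\bLambda$ into an identity among elements of $H^0_f(k_\q, \cA[\cG_{k(\fn)}])$ that compares
\[
\Psi^\fs_{\fn, \q}(\loc_{k(\fn), \q}(D'_\fn c_{k(\fn)}))
\quad\text{with}\quad
-\sum_{\tau \in \mathfrak{S}_\q(\fn)} \mathrm{sgn}(\tau) \Big(\prod_{\mathfrak{l} \in V(\fn/\mathfrak{d}_\tau)} x^{(\mathfrak{l})}_{\tau(\mathfrak{l})}\Big) \cdot \Psi^\fs_{\mathfrak{d}_\tau, \q}(\loc_{k(\mathfrak{d}_\tau), \q}(D'_{\mathfrak{d}_\tau} c_{k(\mathfrak{d}_\tau)})),
\]
with the level-$\mathfrak{d}_\tau$ contributions on the right embedded into the level-$\fn$ group via the natural inclusions $\iota_{\mathfrak{d}_\tau, \q}$. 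Since Hypothesis \ref{more new hyps}\,(ii) ensures that the connecting homomorphism $\delta_{k(\fn), \q}$ is injective on the relevant subgroup, applying $\delta_{k(\fn), \q}$ and invoking Lemma \ref{fs relations lemma 2}\,(i) reduces this to an identity in $H^1_f(k_\q, \a_i\cT[\cG_{k(\fn)}])$ in which each $\Psi^\fs_{\cdot, \q}$ has been replaced by multiplication by $-\Eul_\q(\Frob_\q)$ on the corresponding localised Euler-system class.

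To prove this reduced identity, I would argue by induction on $\nu(\fn)$. The base case $\nu(\fn) = 1$ forces $\fn = \q$, whence $\mathfrak{S}_\q(\fn)$ is empty and both sides vanish. For the inductive step, I would expand $D'_\fn c_{k(\fn)}$ by iteratively applying the Euler-system norm relation (in the form used to prove Lemma \ref{bss lem 6.12}) to the primes of $V(\fn/\q)$, writing the result as a sum indexed by the divisors $\mathfrak{d}$ of $\fn$ with $\q \mid \mathfrak{d}$ of contributions of the shape $\bigl(\prod_{\mathfrak{l} \nmid \mathfrak{d}} \Eul_\mathfrak{l}(\Frob_\mathfrak{l}^{-1})\bigr) \cdot D'_\mathfrak{d} c_{k(\mathfrak{d})}$, together with error terms in higher powers of the relevant augmentation ideals. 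Because $\mathfrak{l} \in \cQ_i$, each Euler factor $\Eul_\mathfrak{l}(\Frob_\mathfrak{l}^{-1})$ lies in the augmentation ideal of $\bLambda[G_\fn]$, and its expansion modulo $I_\mathfrak{l}^2$ (for each relevant $\mathfrak{l}$) produces precisely the coefficients $x^{(\mathfrak{l})}_{\mathfrak{l}'}$ that appear in the statement.

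The combinatorial heart of the argument is then to recognise that, after grouping terms by the divisor $\mathfrak{d}$, the coefficient of $\Eul_\q(\Frob_\q) \cdot D'_\mathfrak{d} c_{k(\mathfrak{d})}$ is precisely the partial expansion of the determinant of the matrix $(x^{(\mathfrak{l})}_{\mathfrak{l}'})_{\mathfrak{l}, \mathfrak{l}' \in V(\fn/\q)}$ corresponding to those permutations whose fixed-point set is $V(\mathfrak{d}/\q)$. This determinantal identity is the same one that Mazur--Rubin exploit in \cite[App.~A]{MazurRubin04}, and it transfers to our setting essentially verbatim because its entries belong to $\bLambda$ and interact trivially with the Galois action.

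The main obstacle I anticipate is the delicate combinatorial bookkeeping required to match the signs $\mathrm{sgn}(\tau)$ and coefficients $x^{(\mathfrak{l})}_{\tau(\mathfrak{l})}$ produced by the iterated Euler-norm expansion with those in the statement, together with controlling the error terms arising from higher powers of the augmentation ideals when the derivative operators $D_\mathfrak{l}$ are commuted past the norm elements $\NN_{G_\mathfrak{l}}$. The repeated use of the injectivity of the connecting homomorphisms ensured by Hypothesis \ref{more new hyps}\,(ii) is what allows these error terms to be discarded cleanly; a secondary, purely technical, point is the need to verify compatibility of all the identifications $H^0_f(k_\q, \cA[\cG_{k(\mathfrak{d})}]) \hookrightarrow H^0_f(k_\q, \cA[\cG_{k(\fn)}])$ with the Galois-norm operations used in the expansion, but this is a routine consequence of the total ramification of $k(\fn)/k(\mathfrak{d})$ at primes above $V(\fn/\mathfrak{d})$.
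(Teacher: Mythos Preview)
Your reduction step is sound and matches the paper's: translating via Lemma \ref{fs relations lemma 1}, applying the injective connecting map $\delta_{k(\fn),\q}$, and invoking Lemma \ref{fs relations lemma 2}\,(i) does reduce the claim to an identity in $H^1_f(k_\q,\a_i\cT[\cG_{k(\fn)}])$ of the shape you describe. (The paper additionally uses Proposition \ref{finite-singular relations prop} to rewrite the right-hand side via $\check{v}_\q(\kappa'(c)_{\q\mathfrak{d}_\tau})$ and Lemma \ref{fs relations lemma 2}\,(ii), arriving at the equivalent form (\ref{something else that needs proving}), but your route to the reduced identity is fine.)

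The main argument you propose, however, does not match the paper's and has genuine gaps. First, your base case ``$\fn=\q$ makes both sides vanish'' is not automatic: the vanishing of the left-hand side requires exactly the tool the paper isolates as (\ref{tool for expansion}). Second, the ``expansion of $D'_\fn c_{k(\fn)}$ via the norm relation of Lemma \ref{bss lem 6.12}'' is not available as you describe it: that relation controls $(\sigma_\mathfrak{l}-1)D'_\m c$, not $D'_\m c$ itself, so there is no direct decomposition of $D'_\fn c$ as a sum over divisors with Euler-factor coefficients. Third, and most importantly, the sum on the right-hand side is indexed by \emph{cycles} containing $\q$, not by all permutations of $V(\fn/\q)$, so there is no determinantal identity at play here (and Mazur--Rubin do not use one). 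What the paper actually does after the reduction is establish the recursive identity (\ref{tool for expansion}),
\[
\Eul_\mathfrak{l}(\Frob_\mathfrak{l})\cdot D'_\m c_{k(\m)} = -{\sum}_{\p\in V(\m/\mathfrak{l})} x^{(\p)}_\mathfrak{l}\,\Eul_\p(\Frob_\p^{-1})\cdot D'_{\m/\p}c_{k(\m/\p)},
\]
and then iterate it, starting with $\mathfrak{l}=\q$ and following the chain $\q\to\p_1\to\p_2\to\cdots$ until the Euler factor returns to $\q$; each such terminating chain is precisely a cycle $(\q\,\p_{n-1}\cdots\p_1)\in\mathfrak{S}_\q(\fn)$ with the correct sign $(-1)^{n-1}=\mathrm{sgn}(\tau)$ and the correct coefficient $\prod x^{(\mathfrak{l})}_{\tau(\mathfrak{l})}$.
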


\begin{proof}
Let $\mathfrak{S}_1 (\fn)$ denote the collection of all cycles in $\mathrm{Per} ( V (\fn))$. Then for each $\tau \in \mathfrak{S}_1 (\fn)$ one has $\q \notin V(\mathfrak{d}_\tau)$ and so Proposition \ref{finite-singular relations prop} shows that $\check{\psi}_\q^\fs ( \kappa' (c)_{\mathfrak{d}_\tau}) = \check{v}_\q ( \kappa' (c)_{\q \mathfrak{d}_\tau})$.
Since the maps $h$ and $c_{1 - \tau}$ are isomorphisms, it is therefore enough to compute $(\iota_{\fn, \q} \circ h^{-1} \circ \check{\psi}^\fs_\q) (\kappa_\fn)$ in terms of $(\iota_{\fn, \q} \circ c_{1 - \tau}^{-1} \circ h^{-1} \circ \check{v}_\q) (\kappa_{\mathfrak{d}_\tau})$. 
Since Hypothesis \ref{more new hyps}\,(ii) implies that the connecting homomorphism $\delta_{k (\fn), \q}$ is injective, we may carry out this computation after applying $\delta_{k (\fn), \q}$. Using Lemma \ref{fs relations lemma 1} and Proposition \ref{fs relations lemma 2}\,(ii) and (iii) we are therefore reduced to proving that, in $H^1_f (k_\q, \a_i \cT [\cG_{k (\fn)}])$, one has  
    \begin{equation} \label{something else that needs proving}
     \Eul_\q (\Frob_\q) \cdot (D'_\fn c_{k (\fn)}) 
    = \textstyle \sum_{\tau \in \mathfrak{S}_\q (\fn)} \mathrm{sgn} (\tau)
    \cdot \big( {{\prod}}_{ \mathfrak{l} \in V(\fn / \mathfrak{d}_\tau)} x^{(\mathfrak{l})}_{\tau (\mathfrak{l})} \big) 
    \cdot (\sigma_\q - 1) \cdot (D'_{\q \mathfrak{d}_\tau} c_{k (\q \mathfrak{d}_\tau)}).
    \end{equation}
To do this, we fix $\m \in \cN$ with $\m \mid \fn$, and $\p \in V (\m)$.
If $u$ is an element of $I_\m + \a_i \cR [\cG_{k ( \m)}]$, then $u$ annihilates $(\sigma_\p - 1) (D'_\m c_{k (\m)})$. Indeed, this follows from the fact that $u$ annihilates $\kappa' (c)_\m$ (by Lemma \ref{bss lem 6.12}) and Lemma \ref{fs relations lemma 1} and Proposition \ref{fs relations lemma 2}\,(iii). 
For every $\mathfrak{l} \in V (\m)$ we have
\[
\Eul_\mathfrak{l} ( \Frob_\mathfrak{l}^{-1}) \equiv \textstyle \sum_{\p \in V (\m /\mathfrak{l})} x_\mathfrak{l}^{(\p)} (\sigma_\p - 1) \mod (I_\m + \a_i \cR [\cG_{k ( \m)}])
\]
by definition of the elements $x_\mathfrak{l}^{(\mathfrak{p})}$ (cf.\@ (\ref{x fl definition})),
and so it follows that
\begin{align} \nonumber
           \Eul_\mathfrak{l} (\Frob_\mathfrak{l}^{-1}) \cdot (D'_\m c_{k (\m)}) & = \textstyle \sum_{\p \in V(\m / \mathfrak{l})} x^{(\p)}_\mathfrak{l} (\sigma_\p - 1) D'_\m c_{k (\m)} \\
          & = - \textstyle \sum_{\p \in V(\m / \mathfrak{l})}  x^{(\p)}_\mathfrak{l} \Eul_\p ( \Frob_\p^{-1}) \cdot D'_{\m / \p} c_{k (\m / \p)},
          \label{tool for expansion}
    \end{align}
where the second equality follows from Proposition \ref{fs relations lemma 2}\, (ii), (iii) and (iv). \\
We can now use (\ref{tool for expansion}) (with $\m  = \fn$ and $\mathfrak{l} = \q$) to express $- \Eul_\q (\Frob_\q) \cdot (D'_\fn c_{k (\fn)})$ as a sum of terms of the form $x^{(\p_1)}_\q \Eul_{\p_1} ( \Frob_{\p_1}^{-1}) \cdot D'_{\m / \p_1} c_{k (\fn / \p_1)}$. 
If $\p_1 \neq \q$, then we can apply (\ref{tool for expansion}) (with $\m = \m / \p$ and $\mathfrak{l} = \p$) to the corresponding summand in order to write it as a sum of terms of the form $x^{(\p_2)}_{\p_1} \Eul_{\p_2} ( \Frob_{\p_2}^{-1}) \cdot D'_{\m / (\p_1 \p_2)} c_{k (\fn / (\p_1 \p_2))}$. Continuing this process until $\p_n = \q$ for some $n$ produces a cycle $ \tau \coloneqq (\q \, \p_{n - 1} \, \dots \, \p_1) \in \mathfrak{S}_\q (\fn)$ such that $\mathfrak{d}_\tau \coloneqq \fn \cdot (\prod_{j = 1}^n \p_j)^{-1}$ and $\mathrm{sgn} (\tau) = (-1)^{n - 1}$, and the resulting summand is
\[
- \mathrm{sgn} (\tau) \cdot \big ({\prod}_{\mathfrak{l} \in V (\n / \mathfrak{d}_\tau)} x^{(\mathfrak{l})}_{\tau (\mathfrak{l})} \big ) \cdot (\sigma_\q - 1) (D'_{\mathfrak{d}_\tau} c_{k (\mathfrak{d}_\tau)}).
\]
This proves (\ref{something else that needs proving}), thereby concluding the proof of the lemma.
\end{proof}

The next result now finally completes the proof of Theorem \ref{kolyvagin derivative thm}.  

\begin{prop} \label{finite-singular projections prop}
    Assume $\fF$ satisfies Hypothesis \ref{system of fF hyp} and that $\cK$ and $\cT$ satisfy Hypothesis \ref{more new hyps}. Then, for every system $c \in \ES^1_{S_0} (\fF)$ and modulus $\fn \in \cN$, one has $\,\kappa (c)_\fn \in H^1_{\tilde{\mathscr{F}} (\fn)} (k,\cA)$.
\end{prop}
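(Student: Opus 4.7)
The plan is to reduce the claim to a vanishing statement that can be verified by direct combinatorial expansion. By exact sequence (\ref{global duality sequence2}) with $\Phi = \tilde{\mathscr{F}}$ and $\m = \fn$, the submodule $H^1_{\tilde{\mathscr{F}}(\fn)}(k,\cA)$ coincides with the kernel of $(\hat{\psi}_\q^{\mathrm{fs}})_{\q\in V(\fn)} \: H^1_{\tilde{\mathscr{F}}^\fn}(k,\cA) \to \bLambda^{\nu(\fn)}$. Since $\kappa(c)_\fn$ is (by construction) a linear combination of classes $\kappa'(c)_{\mathfrak{d}_\tau}$, each of which lies in $H^1_{\tilde{\mathscr{F}}^{\mathfrak{d}_\tau}}(k,\cA)\subseteq H^1_{\tilde{\mathscr{F}}^\fn}(k,\cA)$, it will suffice to fix an arbitrary prime $\q \in V(\fn)$ and verify that $\hat{\psi}_\q^{\mathrm{fs}}(\kappa(c)_\fn) = 0$ in $\bLambda$.

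To do this, I would split the sum over $\tau \in \mathfrak{S}(\fn) = \mathrm{Per}(V(\fn))$ in Definition \ref{kolyvagin derivative def} into two pieces according to whether $\tau$ fixes $\q$ or not. For $\tau$ with $\tau(\q) \neq \q$ (so that $\q \notin V(\mathfrak{d}_\tau)$), Proposition \ref{finite-singular relations prop} applied to the modulus $\q\mathfrak{d}_\tau$ gives the identity
\[
\check{\psi}_\q^{\mathrm{fs}}(\kappa'(c)_{\mathfrak{d}_\tau}) = \check{v}_\q(\kappa'(c)_{\q \mathfrak{d}_\tau}).
\]
For $\tau$ with $\tau(\q) = \q$ (so that $\q \in V(\mathfrak{d}_\tau)$), Lemma \ref{formula for finite-singular projection} applied to $\mathfrak{d}_\tau$ in place of $\fn$ allows one to rewrite $\check{\psi}_\q^{\mathrm{fs}}(\kappa'(c)_{\mathfrak{d}_\tau})$ as a signed sum, indexed by cycles $\rho \in \mathfrak{S}_\q(\mathfrak{d}_\tau)$, of terms $\check{\psi}_\q^{\mathrm{fs}}(\kappa'(c)_{\mathfrak{d}_\rho})$, to each of which Proposition \ref{finite-singular relations prop} applies once more to produce $\check{v}_\q(\kappa'(c)_{\q \mathfrak{d}_\rho})$. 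After these transformations both contributions have become linear combinations of the same basic quantities $\check{v}_\q(\kappa'(c)_{\q \mathfrak{d}})$ as $\mathfrak{d}$ ranges over divisors of $\fn/\q$.

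The cancellation then follows from a bijection between the parametrising data. Every permutation $\tau \in \mathfrak{S}(\fn)$ with $\tau(\q) \neq \q$ has a unique factorisation $\tau = \rho \cdot \tau'$, where $\rho$ is the cycle of $\tau$ containing $\q$ (necessarily of length $\geq 2$) and $\tau'$ is a derangement of $V(\fn)\setminus \mathrm{supp}(\rho)$ with fixed-point divisor $\mathfrak{d}$. The same data $(\rho,\tau',\mathfrak{d})$ also parametrises the expanded second sum: one takes the outer permutation to be the extension of $\tau'$ by the identity on $\mathrm{supp}(\rho)$ (which fixes $\q$ and has divisor $\mathfrak{d}_{\tilde\tau} = \mathfrak{d}\cdot \prod_{\mathfrak{l}\in \mathrm{supp}(\rho)}\mathfrak{l}$) and the inner cycle to be $\rho\in \mathfrak{S}_\q(\mathfrak{d}_{\tilde\tau})$, whose fixed-point divisor within $\mathfrak{d}_{\tilde\tau}$ is exactly $\mathfrak{d}$. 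Under this bijection the products $\prod_{\mathfrak{l}} x^{(\mathfrak{l})}_{\bullet}$ match, since in each case they range over $V(\fn/(\q\mathfrak{d}))$ with factors $x^{(\mathfrak{l})}_{(\rho\tau')(\mathfrak{l})}$, while the signs differ precisely by the extra $-1$ present in Lemma \ref{formula for finite-singular projection} (one checks that $\mathrm{sgn}(\tau)=\mathrm{sgn}(\rho)\mathrm{sgn}(\tau')$ in the first sum and $\mathrm{sgn}(\tilde\tau)\mathrm{sgn}(\rho) = \mathrm{sgn}(\tau')\mathrm{sgn}(\rho)$ in the second).

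The main obstacle will be the combinatorial bookkeeping in the last step, namely verifying that the signs, the coefficients $x^{(\mathfrak{l})}_{\bullet}$, and the indexing data match correctly under the bijection described above. This is essentially the structure of the original argument of Mazur and Rubin from \cite[App.\,A]{MazurRubin04}, but it has to be executed carefully using our conventions for the Nekov\'a\v{r} structures $\tilde{\mathscr{F}}^\fn$ and $\tilde{\mathscr{F}}(\fn)$, and relying critically on the finite--singular relations established in Proposition \ref{finite-singular relations prop} together with the explicit formula of Lemma \ref{formula for finite-singular projection}. Once the bijection is set up the two contributions cancel term by term, giving $\hat{\psi}_\q^{\mathrm{fs}}(\kappa(c)_\fn)=0$ as required.
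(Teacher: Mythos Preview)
Your proposal is correct and takes essentially the same approach as the paper: both reduce via the exact sequence (\ref{global duality sequence2}) to showing $\check{\psi}_\q^{\mathrm{fs}}(\kappa(c)_\fn)=0$ for each $\q\in V(\fn)$, split $\mathfrak{S}(\fn)$ according to whether $\tau$ fixes $\q$, and obtain the cancellation from the cycle decomposition $\tau=\sigma\rho$ together with Lemma \ref{formula for finite-singular projection}. The paper's version is marginally more streamlined---it groups terms by $\sigma\in U_\q(\fn)$ and reads $\check{\psi}_\q^{\mathrm{fs}}(\lambda_\sigma)=0$ directly off Lemma \ref{formula for finite-singular projection}, bypassing your intermediate conversion to $\check{v}_\q$ via Proposition \ref{finite-singular relations prop}---and note the terminological slip: your $\tau'$ should be a \emph{permutation} of $V(\fn)\setminus\mathrm{supp}(\rho)$, not a derangement, since you explicitly allow it a nontrivial fixed-point divisor $\mathfrak{d}$.
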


\begin{proof}
If $\m$ divides $\fn$, then $\kappa' (c)_\m$ belongs to $H^1_{\tilde{\mathscr{F}}^\m} (k, \cA) \subseteq H^1_{\tilde{\mathscr{F}}^\n} (k, \cA)$ and so the same is true for $\kappa (c)_\fn$. To prove that $\kappa (c)_\fn$ belongs to $H^1_{\tilde{\mathscr{F}} (\fn)} (k, \cA)$ it is then enough, by the exact sequence (\ref{global duality sequence2}), to prove that  $\check{\psi}_\q^\fs (\kappa (c)_\fn)=0$ for all $\q \in V(\fn)$. 
To verify this, we write 
\[ U_\q (\fn) \coloneqq \{ \tau \in \mathfrak{S} (\fn) \mid \tau (\q) = \q \}\] 
for the stabiliser of $\q$ in $\mathfrak{S} (\fn)$ and note that every  $\tau \in \mathfrak{S} (\fn) \setminus U_\q (\fn)$ can be written as $\tau = \sigma \circ\rho$ with $\sigma \in U_\q (\fn)$ and $\rho \in \mathfrak{S}_\q (\fn / \mathfrak{d}_\sigma)$. Given this, we can rearrange the terms in the definition of $\kappa (c)_\fn$ as
\begin{align*}
    \kappa (c)_\fn & = {\sum}_{\tau \in \mathfrak{S} (\fn)} \mathrm{sgn} (\tau)  
    \cdot \big( {{\prod}}_{ \q \in V(\fn / \mathfrak{d}_\tau)} x^{(\q)}_{\tau (\q)} \big) \cdot \kappa'(c)_{\mathfrak{d}_\tau} \\ 
    & = \Big ( {\sum}_{\tau \in U_\q (\fn)} \mathrm{sgn} (\tau)  
    \cdot \big( {{\prod}}_{ \q \in V(\fn / \mathfrak{d}_\tau)} x^{(\q)}_{\tau (\q)} \big) \cdot \kappa'(c)_{\mathfrak{d}_\tau} \Big) \\
    & \qquad \quad + \Big( {\sum}_{\sigma \in U_\q (\fn), \rho \in \mathfrak{S}_\q (\fn / \mathfrak{d}_{\sigma})} \mathrm{sgn} (\sigma\circ \rho)  
    \cdot \big( {{\prod}}_{ \q \in V(\fn / \mathfrak{d}_{\sigma \rho})} x^{(\q)}_{(\sigma \rho) (\q)} \big) \cdot \kappa'(c)_{\mathfrak{d}_{\sigma \rho}} \Big)\\
    & = {\sum}_{\tau \in U_\q (\fn)} \mathrm{sgn} (\tau) \cdot \big( {{\prod}}_{ \q \in V(\fn / \mathfrak{d}_\tau)} x^{(\q)}_{\tau (\q)} \big) \cdot \lambda_\tau
\end{align*}
where we set
\[
\lambda_\tau \coloneqq \kappa'(c)_{\mathfrak{d}_\tau} + {\sum}_{\rho \in \mathfrak{S}_\q (\fn)} \mathrm{sgn} (\rho) \cdot \big( {{\prod}}_{ \q \in V( \mathfrak{d}_\tau / \mathfrak{d}_\rho)} x^{(\q)}_{\rho (\q)} \big) \cdot \kappa' (c)_{\mathfrak{d}_\rho}.
\]
In particular, since  Lemma \ref{formula for finite-singular projection} implies that  $\check{\psi}_\q^\fs (\lambda_\tau) = 0$ for every $\tau \in U_\q (\fn)$, this calculation implies  $\psi^\fs_\q ( \kappa (c)_\fn)=0$, as required.
\end{proof}

\section{Kolyvagin systems II: controlling  values at $1$}\label{koly 2 section}

Throughout this section we fix  $i\in \N$ and continue to use the notation specified in (\ref{conv notation}).

\subsection{Tate--Shafarevich modules and relative core vertices}

The notion of  `core vertex' plays a key role in the theory of Kolyvagin systems developed by Mazur and Rubin in \cite{MazurRubin04}. In this subsection, we use the Cebotarev density theorem to prove the existence (under Hypotheses~\ref{new strategy hyps}) of an appropriate analogue of this notion for  our theory.  

\subsubsection{Tate--Shafarevich modules and cohomological invariants}
\label{Selmer and sha section}

For each $j \in \N$ with  $j \geq i$, and each Mazur--Rubin structure $\cF'$ on $\cA$, we define $\bLambda$-modules 
\[
\sha_{\cF', j} (\cA) = \sha_{\cF'} (k, \cA, \cQ_j) \coloneqq \ker \Bigl( H^1_{\cF'} (k, \cA) \to \prod_{\q \in \cQ_j} H^1 (k_\q, \cA) \Bigr )\]
and
\[ 
\fX_{\cF', j} (\cA)  \coloneqq H^1_{\cF'} (k,\cA) \cap H^1 (\cG_{k_j (\cT_j)}, \cA)
 = \ker \bigl( H^1_{\cF'} (k, \cA) \to H^1 (k_j (\cT_j), \cA) \bigr). 
\]

In a similar way, for each $j \in \N$ with  $j \geq i$, and each  Nekov\'a\v{r} structure $\mathscr{F}'$ on $\cA$, the exact sequence (\ref{nekovar mr compare}) implies the existence of a localisation map $H^1_{\mathscr{F}'} (k, \cA) \to H^1 (k_\q, \cA)$  for each $\q \in \cQ_j$, and so  we can define a $\bLambda$-module  
\[
\sha_{\mathscr{F}', j} (\cA)  = \sha_{\mathscr{F}'} (k, \cA, \cQ_j) \coloneqq \ker \bigl( H^1_{\mathscr{F}'} (k, \cA) \to \prod_{\q \in \cQ_j} H^1 (k_\q, \cA) \bigr).
\]

In this way, we obtain increasing filtrations of Tate-Shafarevich modules 
\[\sha_{\cF', i} (\cA) \subseteq \sha_{\cF', i+ 1} (\cA) \subseteq \dots \subseteq H^1_{\cF'} (k, \cA)\]
and 
\[ \sha_{\mathscr{F}', i} (\cA) \subseteq \sha_{\mathscr{F}', i + 1} (\cA) \subseteq \dots \subseteq H^1_{\mathscr{F}'} (k, \cA).
\]
In the following result concerning these modules we write $\langle \tau \rangle$ for the subgroup of $\gal{k_j(\cT_j)}{k_j}$ generated by $\tau$.

\begin{lem} \label{general neukirch interpretation lemma} Assume Hypothesis \ref{new strategy hyps}\,(i) and (ii). Then, for each $j \in \N$ with $j \ge i$,  the following claims are valid.

\begin{itemize} 
\item[(i)] $\sha_{\tilde\cF, j} (\cA)\subseteq \fX_{\tilde\cF, j} (\cA)$, with equality if $H^1 ( \langle \tau \rangle, \cA)$ vanishes.
\item[(ii)] If $x \in \fX_{\tilde\cF, j} (\cA)$ is such that $\loc_v (x) = 0$ for some $v \in \cQ_j$, then $x\in \sha_{\tilde\cF, j} (\cA)$. 
\item[(iii)] There exists a canonical isomorphism of $\bLambda$-modules
\[ H^1_{\tilde{\mathscr{F}}} (k, \cA) / \sha_{\tilde{\mathscr{F}}, j} (\cA) \xrightarrow{\simeq} H^1_{\tilde{\cF}} (k,\cA) / \sha_{\tilde{\cF}, j} (\cA).\]
\item[(iv)] If Hypothesis \ref{new strategy hyps}\,(iii) is also valid, then $\sha_{\overline{F}^\ast, j} (\overline{B})$ vanishes.
\end{itemize}
\end{lem}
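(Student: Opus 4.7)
The plan is to address the four claims in sequence, using global and local inflation--restriction together with Cebotarev density. Write $H \coloneqq \gal{k_j(\cT_j)}{k}$ and note that by construction of $k_j(\cT_j)$ the group $G_{k_j(\cT_j)}$ acts trivially on $\cA = \cT_i$ (since $j \geq i$), so the global inflation--restriction sequence
\[ 0 \to H^1(H, \cA) \to H^1(k, \cA) \xrightarrow{\res} \Hom(G_{k_j(\cT_j)}, \cA)^H \]
identifies $\fX_{\tilde\cF, j}(\cA)$ with $H^1_{\tilde\cF}(k, \cA) \cap H^1(H, \cA)$ (via inflation). Each $\q \in \cQ_j$ is unramified in $k_j(\cT_j)/k$, so the local analogue furnishes an injection $H^1(H_\q, \cA) \hookrightarrow H^1(k_\q, \cA)$, where $H_\q \subseteq H$ is the cyclic decomposition subgroup; by the definition of $\cQ_j$ the subgroup $H_\q$ is $H$-conjugate to $\langle\tau\rangle$. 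Detection of $\loc_\q$ on classes in $H^1(H, \cA)$ therefore reduces to restriction to $\langle\tau\rangle$, and (ii) follows immediately from this conjugacy together with the local injectivity above. The equality in (i) under the hypothesis $H^1(\langle\tau\rangle, \cA) = 0$ is then automatic, since every $\loc_\q$ restricted to $\fX_{\tilde\cF, j}(\cA)$ lands in the zero group $H^1(H_\q, \cA) \cong H^1(\langle\tau\rangle, \cA)$.

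The inclusion $\sha_{\tilde\cF, j}(\cA) \subseteq \fX_{\tilde\cF, j}(\cA)$ in (i) is the most delicate step and constitutes the main obstacle. Given $x \in \sha_{\tilde\cF, j}(\cA)$, one supposes for contradiction that $\res(x) \neq 0$, and lets $L$ denote the finite extension of $k_j(\cT_j)$ cut out by $\ker(\res(x))$; then $L/k$ is Galois and $N \coloneqq \gal{L}{k_j(\cT_j)}$ embeds into $\cA$ via $\res(x)$ and is normal in $\gal{L}{k}$. Since $G_{k_j(\cT_j)}$ acts trivially on $\cA$, the vanishing $\loc_\q(x) = 0$ translates into the statement that every prime of $k_j(\cT_j)$ above $\q$ splits completely in $L/k_j(\cT_j)$. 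Cebotarev density applied to $L/k$ realizes every lift $\tilde\tau \in \gal{L}{k}$ of $\tau$ as a Frobenius of some $\q \in \cQ_j$, and the splitting condition forces every such lift to satisfy $\tilde\tau^{\ord(\tau)} = 1$ in $\gal{L}{k}$. A group-theoretic analysis of the (possibly non-split) extension $1 \to N \to \gal{L}{k} \to H \to 1$, combined with Hypothesis~\ref{new strategy hyps}\,(i) and (ii) to control $N$ as a sub-$\langle\tau\rangle$-module of $\cA$, then forces $N = 1$, the required contradiction.

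Claim (iii) follows from the exact sequence (\ref{nekovar mr compare}), which gives a surjection $H^1_{\tilde\mathscr{F}}(k, \cA) \twoheadrightarrow H^1_{\tilde\cF}(k, \cA)$ with kernel $K \coloneqq \coker(\lambda^0_{S(\tilde\mathscr{F})}(\tilde\mathscr{F}))$. Because $\cQ_j \cap S(\tilde\mathscr{F}) = \emptyset$, the localization map $H^1_{\tilde\mathscr{F}}(k, \cA) \to H^1(k_\q, \cA)$ at any $\q \in \cQ_j$ factors through the projection to $H^1(\mathcal{O}_{k, S(\tilde\mathscr{F})}, \cA)$ -- which annihilates $K$ -- and hence through $H^1_{\tilde\cF}(k, \cA)$. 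Consequently $\sha_{\tilde\mathscr{F}, j}(\cA)$ is precisely the preimage of $\sha_{\tilde\cF, j}(\cA)$ under the above surjection, and the induced map of quotients is an isomorphism. Finally, claim (iv) is obtained by applying (i) to the pair $(\overline{B}, \overline{F}^\ast)$: this yields
\[ \sha_{\overline{F}^\ast, j}(\overline{B}) \subseteq \fX_{\overline{F}^\ast, j}(\overline{B}) \hookrightarrow H^1(H, \overline{T}^\ast(1)), \]
and the target vanishes by Remark~\ref{vanishing remark}\,(ii) applied with $i = 0$, which is now available thanks to the additional Hypothesis~\ref{new strategy hyps}\,(iii).
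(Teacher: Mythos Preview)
Your arguments for (ii), for the equality case of (i), for (iii), and for (iv) are correct and essentially match the paper's. The gap is in the inclusion $\sha_{\tilde\cF, j}(\cA) \subseteq \fX_{\tilde\cF, j}(\cA)$.

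The step ``$\loc_\q(x) = 0$ translates into the statement that every prime of $k_j(\cT_j)$ above $\q$ splits completely in $L$'' is a genuine weakening: it only records that the \emph{restriction} of $\loc_\q(x)$ to $G_{(k_j(\cT_j))_w}$ vanishes, not that $\loc_\q(x)$ itself vanishes in $H^1(k_\q,\cA)$. Via Cebotarev this yields only that every lift $\tilde\tau$ of $\tau$ in $\gal{L}{k}$ has order $m \coloneqq \ord(\tau)$, which unwinds to $N \subseteq \ker(\mathrm{Norm}_\tau)$ with $\mathrm{Norm}_\tau = 1+\tau+\cdots+\tau^{m-1}$. But $\ker(\mathrm{Norm}_\tau)$ strictly contains $(\tau-1)\cA$ whenever $H^1(\langle\tau\rangle,\cA)\neq 0$, and Hypotheses~\ref{new strategy hyps}\,(i)--(ii) do not exclude this: for instance if $\cA=\mathbb{K}^2$ with $\tau$ unipotent of order $p$ then $\mathrm{Norm}_\tau$ is identically zero on $\cA$, so your condition is vacuous while $(\tau-1)\cA$ is a proper line. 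Your unspecified ``group-theoretic analysis'' therefore cannot succeed from this input alone.

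The paper instead exploits the full local triviality. Since $x$ is a coboundary on the decomposition group at each $\q\in\cQ_j$, one has $x(\Frob_\q)\in(\Frob_\q-1)\cA=(\tau-1)\cA$. Choosing $v_1,v_2\in\cQ_j$ with Frobenii $\tau\sigma$ and $\tau$ in $\gal{L}{k}$ (for arbitrary $\sigma\in N$) and expanding $x(\Frob_{v_2}^{-1}\Frob_{v_1})$ by the cocycle relation gives $f(\sigma)\in(\tau-1)\cA$, hence $\im(f)\subseteq(\tau-1)\cA$. Lemma~\ref{injectivity lemma} (proved separately using Hypotheses~(i)--(ii)) then forces $f=0$. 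In your language: $x$ inflates to a cocycle on $\gal{L}{k}$, and $\loc_\q(x)=0$ says its restriction to each $\langle\tilde\tau\rangle$ is a \emph{coboundary}, i.e.\ $x(\tilde\tau)\in(\tau-1)\cA$ for every lift; writing $\tilde\tau=\tilde\tau_0 n$ and using $x(\tilde\tau_0 n)=x(\tilde\tau_0)+\tau\cdot n$ then gives $n\in(\tau-1)\cA$ for all $n\in N$, the missing step.
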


\begin{proof} To prove the inclusion in (i),  we need to show that any element $\xi$ of $\sha_{\tilde\cF, j} (\cA)$ is trivial upon restriction to $G_{k_j (\cT_j)}$. Since $G_{k_j (\cT_j)}$ acts trivially on $\cA$ (since $j \geq i$), the restriction $f \coloneqq \res (\xi)$ of $\xi$ to $G_{k_j (\cT_j)}$ is a $\cG_{k_j (\cT_j)}$-equivariant homomorphism $f \: G_{k_j (\cT_j)} \to \cA$. In particular, the fixed field of the kernel of $f$ is  a finite extension $K$ of $k_j (\cT_j)$ that is Galois over $k$.\\
Now let $\sigma$ be an element in $\gal{K}{k_j (\cT_j)}$. Then, since $\tau \sigma$ agrees with $\tau$ when restricted to $k_j (\cT_j)$, Cebotarev's density theorem allows us to choose places $v_1$ and $v_2$ in $\cQ_j$ such that the projections to $\cG_K$ of $\Frob_{v_1}$ and $\Frob_{v_2}$ are respectively equal to $\tau \sigma$ and $\tau$. By assumption, $\xi$ is trivial when restricted to the decomposition groups of $v_1$ and $v_2$, so there are elements $a_1$ and $a_2$ of $\cA$ such that, for $n \in \{1,2\}$, one has $\xi ( \rho) = (\rho - 1) \cdot a_n$ if $\rho$ belongs to the decomposition group of $v_n$. It follows that
\begin{align*}
\xi ( \Frob_{v_2}^{-1}\cdot \Frob_{v_1}) & = \Frob_{v_2}^{-1} \cdot \xi ( \Frob_{v_1}) + \xi ( \Frob_{v_2}^{-1})\\
& = \Frob_{v_2}^{-1}\cdot \Frob_{v_1} a_1 - \Frob_{v_2}^{-1} a_1 + \Frob_{v_2}^{-1} a_2 - a_2\\
& = a_1 - \tau^{-1} a_1 + \tau^{-1} a_2 - a_2 \\
& = (\tau^{-1} - 1) \cdot (a_2 - a_1).
\end{align*}
Now, since $\Frob_{v_2}^{-1}\cdot \Frob_{v_1}\in G_{k_j (\cT_j)}$, one has 
\[ \xi ( \Frob_{v_2}^{-1}\cdot \Frob_{v_1}) = f ( \Frob_{v_2}^{-1} \cdot\Frob_{v_1}) = f( \tau^{-1} \tau \sigma) = f (\sigma).\]
The above calculation therefore implies that $\im(f)$ is contained in $(\tau  - 1)\cA$. Hypothesis \ref{new strategy hyps}\,(i) and (ii) then combines with Lemma \ref{injectivity lemma} below (applied with $L = k_j (\cT_j)$) to imply that $f$ is trivial, as required to prove the inclusion in (i). \\
Next we note that, if $H^1 (\langle \tau \rangle,\cA)$ vanishes, then the inflation-restriction sequence shows every element $x$ of $\fX_{\cF, j} (\cA)$ is the inflation of an element of $H^1 ( k_j (\cT_j)^{\langle \tau \rangle} / k, \cA)$. Since (the explicit definition of $\cQ_j$ ensures) every place $v$ in $\cQ_j$ splits completely in $k_j (\cT_j)^{\langle \tau \rangle}$, it follows that $\loc_v (x)$ must be trivial for every such $v$. This implies $x\in \sha_{\cF, j} (\cA)$, as required to complete the proof of (i). \\
Turning to (ii) we note that each $x$ in $\fX_{\cF, j} (\cA)$ is trivial when restricted to $G_{k_j (\cT_j)}$ and so is unramified at every place that is unramified in $k_j (\cT_j) / k$. In particular, $x$ is unramified at every place $v$ in $\cQ_j$ and so $\loc_v (x)\in H^1_f (k_v, \cA)$. Since, for each such $v$, there is an isomorphism
\[
H^1_f (k_v, \cA) \stackrel{\simeq}{\longrightarrow} \cA / (\tau - 1)\cA, \quad [y] \mapsto y (\Frob_v)\,\,\,\, (\mathrm{mod}\,\,\, (\tau - 1)\cA ),
\]
it follows that $\res_v (x)$ vanishes if and only if the element 
\[
x (\Frob_v) = x (\tau \tau^{-1} \Frob_v) = x (\tau) + \tau^{-1} x ( \tau^{-1} \cdot\Frob_v) = x (\tau)
\]
belongs to $(\tau - 1)\cA$. Since the latter condition does not depend on $v$, we see that $x$ belongs to $\sha_{\cF, j} (\cA)$ if $\res_v (x)$ vanishes for any given $v \in \cQ_j$, as claimed in (ii). \\
To prove (iii), we note that, for $\q\in S(\tilde{\mathscr{F}})$, the natural localisation map $H^1_{\tilde{\mathscr{F}}} (k, \cA) \to H^1 (k_\q, \cA)$ factors through the map $H^1_{\tilde{\mathscr{F}}} (k, \cA) \to H^1 (\cO_{k, S(\tilde{\mathscr{F}})}, \cA)$ induced by the exact triangle (\ref{mapping fibre2}). Claim (iii) therefore follows from the fact  the exact sequence (\ref{nekovar mr compare}) implies that the image of  the latter map is equal to  $H^1_{\tilde{\cF}} (k, \cA)$. 

To prove (iv), we note  Hypothesis \ref{new strategy hyps}\,(iii) implies that $H^1 (\cG_{k_j (\cT_j)}, \overline{B})$, and hence also its subgroup $\fX_{\overline{F}^\ast(1), j} (\overline{B})$, vanishes 
(cf.\@ Remark \ref{vanishing remark}\,(i) and (ii)). In this case, therefore, the vanishing of $\sha_{\overline{F}^\ast(1), j} (\overline{B})$  follows from the same argument that proves the inclusion in (i), after replacing $\tilde\cF$ and $\cA$ by $\overline{F}^\ast$ and $\overline{B}$.  
\end{proof}

\begin{lem} \label{injectivity lemma}
Assume Hypotheses \ref{new strategy hyps}\,(i) and (ii). Then, for any finite Galois extension $L$ of $k$ containing $k_i (\cA)$, the following natural map is injective
    \[
H^1 (L, \cA)^{\cG_L} = \Hom_{\cG_L}(G_L, \cA)  \to  \Hom (G_{L}, \cA/ (\tau - 1)\cA ). 
    \]
\end{lem}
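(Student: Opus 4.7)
The plan is to reformulate the claim as follows: any $\cG_L$-equivariant homomorphism $f\colon G_L \to \cA$ with $\im f \subseteq (\tau-1)\cA$ must vanish. Since $L \supseteq k_i(\cA) \supseteq k(\cA)$, the subgroup $G_L$ acts trivially on $\cA$ and the action of $\cG_k$ on $\cA$ factors through the quotient $\cG_L = \cG_k/G_L$. Hence Hypothesis~\ref{new strategy hyps}(i) guarantees that $\overline{\cT} = \cA/\cM_i\cA$ is irreducible also as a $\mathbb{K}[\cG_L]$-module, while Hypothesis~\ref{new strategy hyps}(ii) and Lemma~\ref{cfactor} jointly show that $(\tau-1)\overline{\cT}$ is a proper $\mathbb{K}$-subspace of $\overline{\cT}$ (of codimension one).

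Setting $N := \im f$, which is a $\cG_L$-stable $\cR_i$-submodule of $\cA$ contained in $(\tau-1)\cA$, the strategy is to prove by induction on $j \geq 0$ that $N \subseteq \cM_i^j\cA$; since $\cR_i$ is Artinian, this forces $N = 0$. The key algebraic input for the induction is the identity
\[
(\tau-1)\cA \cap \cM_i^j\cA = (\tau-1)(\cM_i^j\cA),
\]
which I would establish by applying the snake lemma to the short exact sequence $0 \to \cM_i^j\cA \to \cA \to \cA/\cM_i^j\cA \to 0$ together with the endomorphism $\tau-1$: this reduces the identity to surjectivity of the natural map $\cA^{\tau=1} \to (\cA/\cM_i^j\cA)^{\tau=1}$, which follows from the compatibility of the cofactor-map isomorphisms furnished by Lemma~\ref{cfactor} (applicable both to $\cA$ and to $\cA/\cM_i^j\cA$, since both have reduction $\overline{\cT}$ satisfying the same hypothesis) with reduction modulo $\cM_i^j$.

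The inductive step now runs as follows. Assuming $N \subseteq \cM_i^j\cA$, the above identity yields $N \subseteq (\tau-1)(\cM_i^j\cA)$, and under the identification
\[
\cM_i^j\cA/\cM_i^{j+1}\cA \;\cong\; (\cM_i^j/\cM_i^{j+1}) \otimes_{\mathbb{K}} \overline{\cT}
\]
(available because $\cA$ is $\cR_i$-free) the image $\bar N$ of $N$ lies inside $(\cM_i^j/\cM_i^{j+1}) \otimes_{\mathbb{K}} (\tau-1)\overline{\cT}$. Exploiting the $\cG_L$-stability of $\bar N$, for every $\sigma \in \cG_L$ one has $\bar N = \sigma \bar N \subseteq (\cM_i^j/\cM_i^{j+1}) \otimes_{\mathbb{K}} (\sigma\tau\sigma^{-1}-1)\overline{\cT}$, so
\[
\bar N \;\subseteq\; (\cM_i^j/\cM_i^{j+1}) \otimes_{\mathbb{K}} \Bigl(\bigcap_{\sigma \in \cG_L} (\sigma\tau\sigma^{-1}-1)\overline{\cT}\Bigr).
\]
The intersection on the right is stable under $\cG_L$ (since conjugation by $\cG_L$ merely permutes the indexing set) and is contained in the proper subspace $(\tau-1)\overline{\cT}$, hence vanishes by $\mathbb{K}[\cG_L]$-irreducibility of $\overline{\cT}$. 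Consequently $\bar N = 0$, i.e.\ $N \subseteq \cM_i^{j+1}\cA$, completing the induction.

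The main obstacle lies in the verification of the identity $(\tau-1)\cA \cap \cM_i^j\cA = (\tau-1)(\cM_i^j\cA)$ and, more specifically, in checking that Lemma~\ref{cfactor} applies to each quotient $\cA/\cM_i^j\cA$ in a way that is compatible with the natural projections; once this is in place, the remainder of the argument uses only $\cG_L$-irreducibility of $\overline{\cT}$ together with the elementary conjugation trick.
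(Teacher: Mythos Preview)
Your proof is correct and takes a genuinely different route from the paper. The paper argues via the \emph{socle}: using Lemma~\ref{ryotaro's reduction trick} and the Gorenstein identification $\cA[\cM_i]\cong\overline{\cT}$ of Remark~\ref{switch}, one multiplies a nonzero $x$ in the kernel by a suitable $\lambda\in\bLambda$ so that $\lambda\cdot\im(x)$ is a nonzero $G_k$-stable submodule of $\cA[\cM_i]$, hence all of $\cA[\cM_i]$ by irreducibility; the resulting inclusion $\cA[\cM_i]\subseteq(\tau-1)\cA$ then gives a surjection $\cA/\cA[\cM_i]\twoheadrightarrow\cA/(\tau-1)\cA\cong\bLambda$, impossible since every element of the domain is annihilated by some nonzero scalar. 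Your approach instead descends through the \emph{radical filtration} $\{\cM_i^j\cA\}_j$, reducing at each step to irreducibility of $\overline{\cT}$ inside the graded piece $(\cM_i^j/\cM_i^{j+1})\otimes_{\mathbb K}\overline{\cT}$. The paper's argument is shorter and leans on self-injectivity of $\bLambda$; yours avoids the socle identification and is closer in spirit to a Nakayama-style descent.

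Two simplifications are worth noting. First, your key identity $(\tau-1)\cA\cap\cM_i^j\cA=(\tau-1)(\cM_i^j\cA)$ does not require invoking Lemma~\ref{cfactor} for the quotients: since $\cA$ is $\bLambda$-free and $\cA/(\tau-1)\cA\cong\bLambda$, tensoring the right-exact sequence $\cA\xrightarrow{\tau-1}\cA\to\bLambda\to 0$ with $\cM_i^j$ identifies $\cM_i^j\cA/(\tau-1)(\cM_i^j\cA)$ with $\cM_i^j$, and the induced map to $\cA/(\tau-1)\cA\cong\bLambda$ is just the inclusion $\cM_i^j\hookrightarrow\bLambda$. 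Second, the conjugation trick can be replaced by projecting $\bar N$ to each summand of $(\cM_i^j/\cM_i^{j+1})\otimes_{\mathbb K}\overline{\cT}$ along a $\mathbb K$-basis of $\cM_i^j/\cM_i^{j+1}$: since $\cG_L$ acts trivially on that factor, each projection is a $\cG_L$-stable subspace of $\overline{\cT}$ contained in the proper subspace $(\tau-1)\overline{\cT}$, hence vanishes.
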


\begin{proof}
    Let $x \: G_{L} \to \cA$ be a non-trivial homomorphism in $\Hom_{\cG_L}(G_{L}, \cA)$ that belongs to the kernel of the above map. That is, $\im (x)$ is contained in $(\tau - 1) \cA$. Then, since $x$ is non-trivial, Lemma \ref{ryotaro's reduction trick} implies the existence of a non-zero element $\lambda$ of $\Lambda$ such that $\lambda \cdot x$ is a non-zero element of the $\cM_i$-torsion submodule of $\Hom_{\cG_L} (G_{L}, \cA)$. In particular $\im (\lambda x) = \lambda \cdot \im (x)$ is contained in $\cA [\cM_i]$.  
    In addition, following Remark \ref{switch}, there exists an isomorphism $\cA \otimes_{\bLambda} \mathbb{K} \cong \cA [\cM_i]$ and so Hypothesis \ref{new strategy hyps}\,(i) implies $\cA[\cM_i]$ has no proper non-trivial $G_k$-stable submodules. If $\lambda \cdot \im (x)$ is non-zero, it must therefore span $\cA [\cM_i]$ over $\bLambda$. Since $\im (x)$, and hence $\lambda \cdot \im (x)$, is contained in the $\bLambda$-module $(\tau - 1)\cA$, it follows that $(\tau - 1)\cA$ contains $\cA[\cM_i]$. We therefore obtain a surjective map of $\Lambda$-modules of the form 
    \[ \theta \: \cA / (\cA [\cM_i]) \twoheadrightarrow \cA/ (1 - \tau) \cA.\] 
If $\cA = \cA\otimes_{\bLambda}\mathbb{K}$, then the domain of $\theta$ vanishes, whilst its codomain is isomorphic to $\mathbb{K}$ and this is a contradiction. On the other hand, if $i \geq 1$, then the codomain  of $\theta$ is isomorphic to $\bLambda$, whilst Lemma \ref{ryotaro's reduction trick} implies that every element of its domain is annihilated by a non-zero element of $\bLambda$. The surjectivity of $\theta$ therefore implies that $1 \in \bLambda$ is annihilated by a non-zero element of $\bLambda$ and this is a contradiction. In all cases, therefore, the map $x$ must be trivial, as required. 
\end{proof}

For $j \in \N$ with $j \ge i$, and any pairwise coprime moduli $\a$, $\fb$ and $\n$ in $\cN_j$, one has 
\[ \sha_{\overline{F}, j} (\overline{A})\subseteq H^1_{\overline{F}^\fb_\a (\fn)} (k, \overline{A})\,\,\text{ and }\,\, \sha_{\overline{F}^\ast, j} (\overline{B})\subseteq H^1_{(\overline{F}^\ast)^\a_\fb (\fn)} (k, \overline{B}).\]
In particular if, for $\m \in \cN_j$, we set   
\begin{align*}
 \lambda_{\overline{F}} (\m, j)  & \coloneqq\,  \dim_\mathbb{k} \big( H^1_{\overline{F} (\m)} (k, \overline{A}) / \sha_{\overline{F}, j} (\overline{A}) \big) \\
   \lambda_{\overline{F}}^{\ast} (\m,j)  & \coloneqq\, \dim_\mathbb{k} \big( H^{1}_{\overline{F}^\ast (\m)} (k, \overline{B}) / \sha_{\overline{F}^\ast, j} (\overline{B})\big),
 \end{align*}
then the integer 
\[\bm{\chi} (\overline{F},j) \coloneqq \lambda_{\overline{F}} (\m,j) - \lambda_{\overline{F}}^{\ast} (\m,j)
\]
 provides an appropriate analogue in our theory of the cohomological `core-rank' invariants for Mazur--Rubin structures that are introduced in \cite[\S4.1]{MazurRubin04}. 
 
 The basic properties of these integers are as follows. 

\begin{lemma}\label{core rank relation} Fix $j \in \N$ with $j \ge i$. Then $\bm{\chi} (\overline{F},j)$ is independent of $\m$. Further, for all pairwise coprime moduli $\a$, $\fb$ and $\fn$ in $\cN_j$, the following claims are valid. 
\begin{itemize}
\item[(i)] $\bm{\chi} ( \overline{F}^\fb_\a (\fn),j) = \bm{\chi} (\overline{F},j) + \nu (\fb) - \nu (\a)$.
\item[(ii)] $\bm{\chi} (\overline{F},j) = \bm{\chi} (\overline{F}(\fn),j)$.
\item[(iii)] If Hypothesis \ref{new strategy hyps}\,(i), (ii) and (iii) are satisfied, then $\lambda_{\overline{F}}^{\ast} (\fn,j)  = 
\dim_\mathbb{k}( H^{1}_{\overline{F}^\ast (\fn)} (k, \overline{B}))$. 
\end{itemize}
\end{lemma}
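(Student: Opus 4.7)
The plan is to deduce all statements from two families of five-term exact sequences: first, the global duality sequences of Proposition \ref{what we need from Selmer complexes}\,(v), applied with $\Phi = \tilde{\cF}$ after base change to the setup with coefficient ring $\mathbb{k}$ and module $\overline{A}$; and second, the classical Mazur--Rubin exact sequence (\ref{mr duality}). The key auxiliary point is that for every $\q \in \cQ_j$, each of the local cohomology subspaces $H^1_f(k_\q, \overline{A})$ and $H^1_\tr(k_\q, \overline{A})$ and their duals over $\overline{B}$ is one-dimensional over $\mathbb{k}$ (this follows from Lemma \ref{explicit cohomology lemma}\,(i) reduced modulo the maximal ideal), so that the alternating sum of $\mathbb{k}$-dimensions in each of the relevant five-term exact sequences vanishes.

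To establish independence of $\chi(\overline{F}, j)$ from $\m$, I would argue by induction on $\nu(\m)$. Given $\q \in \cQ_j \setminus V(\m)$, I would combine the analogue of (\ref{global duality sequence3}) (with $\m\q$ in place of $\m$, $\q$ in place of $\n$ and $\Phi = \overline{F}$) with the Mazur--Rubin sequence (\ref{mr duality}) for the refinement $\overline{F}_\q(\m) \le \overline{F}(\m)$. Both sequences have a one-dimensional middle term, and the alternating-sum identities they yield, when subtracted, eliminate the contributions from $H^1_{\overline{F}_\q(\m)}(k,\overline{A})$ and $H^1_{(\overline{F}^\ast)^\q(\m)}(k,\overline{B})$, giving $\dim_\mathbb{k} H^1_{\overline{F}(\m\q)}(k,\overline{A}) - \dim_\mathbb{k} H^1_{\overline{F}^\ast(\m\q)}(k,\overline{B}) = \dim_\mathbb{k} H^1_{\overline{F}(\m)}(k,\overline{A}) - \dim_\mathbb{k} H^1_{\overline{F}^\ast(\m)}(k,\overline{B})$. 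Since $\sha_{\overline{F}, j}(\overline{A}) \subseteq H^1_{\overline{F}(\m)}(k, \overline{A})$ and $\sha_{\overline{F}^\ast, j}(\overline{B}) \subseteq H^1_{\overline{F}^\ast(\m)}(k, \overline{B})$ for every $\m$ (using that the transverse condition contains the zero class), subtracting the $\sha$-dimensions, which depend only on $j$, gives the $\m$-invariance of $\chi(\overline{F}, j)$.

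For (i), I would induct on $\nu(\fb) + \nu(\a)$, adding one prime at a time. Passing from $\overline{F}^\fb_\a(\n)$ to $\overline{F}^{\fb\q}_\a(\n)$ (i.e., relaxing at $\q$) uses the analogue of (\ref{global duality sequence1}), whose alternating-sum identity yields $\chi(\overline{F}^{\fb\q}_\a(\n), j) = \chi(\overline{F}^\fb_\a(\n), j) + 1$; passing from $\overline{F}^\fb_\a(\n)$ to $\overline{F}^\fb_{\a\q}(\n)$ (imposing strictness at $\q$) uses the analogue of (\ref{global duality sequence4}) and yields $\chi(\overline{F}^\fb_{\a\q}(\n), j) = \chi(\overline{F}^\fb_\a(\n), j) - 1$. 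Iterating gives the formula in (i), and (ii) is then the special case $\a = \fb = 1$, which in fact also follows from the $\m$-independence argument applied to $\overline{F}(\n)$. Finally, (iii) is immediate: Lemma \ref{general neukirch interpretation lemma}\,(iv) gives $\sha_{\overline{F}^\ast, j}(\overline{B}) = (0)$ under Hypothesis \ref{new strategy hyps}\,(i), (ii), (iii), so the quotient defining $\lambda_{\overline{F}}^\ast(\n, j)$ collapses to the whole module.

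The main obstacle I anticipate is the careful bookkeeping of how the various Tate--Shafarevich subspaces interact with the modified structures $\overline{F}^\fb_\a(\n)$ and their duals, and in particular verifying that the $\sha$-contributions cancel uniformly so that the alternating dimension identities from the duality sequences translate into the claimed identities for $\chi$. A secondary subtlety is matching the Nekov\'a\v{r}-style statements of Proposition \ref{what we need from Selmer complexes}\,(v), which are formulated for $\tilde{\mathscr{F}}$ on $\cA$, with the Mazur--Rubin-level statements needed here for $\overline{F}$ on $\overline{A}$; this is routine but should be made explicit by invoking Remark \ref{modified nekovar = modified mr remark} and the compatibility $h(\tilde{\mathscr{F}}^\fb_\a(\n)) = \tilde\cF^\fb_\a(\n)$.
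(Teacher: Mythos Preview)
Your proposal is correct and follows essentially the same approach as the paper. The paper likewise reduces the independence statement and claim (i) to dimension-counting via the global duality sequences (\ref{global duality sequence1})--(\ref{global duality sequence4}), specialised to the field $\mathbb{k}$ with $\Phi = \tilde{\cF} = \overline{F}$ (citing the argument of \cite[Cor.~3.21]{Sakamoto18}), derives (ii) as the case $\a = \fb = 1$ of (i), and obtains (iii) directly from Lemma~\ref{general neukirch interpretation lemma}\,(iv). Your explicit induction and the auxiliary use of (\ref{mr duality}) are just a more detailed unpacking of the same argument; note that your invocation of (\ref{mr duality}) for $\overline{F}_\q(\m) \le \overline{F}(\m)$ is exactly the specialisation of (\ref{global duality sequence4}) with $\a = \q$, so there is no genuine extra ingredient.
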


\begin{proof} Both the independence of $\bm{\chi} (\overline{F},j)$ from $\m$ and the equality in (i) follow directly from the argument of \cite[Cor. 3.21]{Sakamoto18}. Specifically, since 
\begin{align*}
 \lambda_{\overline{F}^\fb_\a} (\fn, j)  &= \dim_\mathbb{k} ( H^1_{\overline{F}^\fb_\a (\fn)} (k, \overline{A})) - \dim_{\mathbb{k}}(\sha_{\overline{F}, j} (\overline{A})) \\
   \lambda_{\overline{F}^\fb_\a }^{\ast} (\fn,j)  &= \dim_\mathbb{k} ( H^{1}_{(\overline{F}^\ast)^\fb_\a (\fn)} (k, \overline{B})) - \dim_\mathbb{k}(\sha_{\overline{F}^\ast, j} (\overline{B})),
 \end{align*}
one need only compare dimensions by using appropriate cases of  
 the global duality exact sequences 
(\ref{global duality sequence1})--(\ref{global duality sequence4}) with $\bLambda, \cA$ and both $\Phi$ and $\tilde{\cF}$ taken to be $\mathbb{k}, \overline{A}$ and $\overline{F}$ respectively.

Claim (ii) then follows directly from (i) in the case $\a = \fb = 1$, and (iii) is an immediate consequence of Lemma \ref{general neukirch interpretation lemma}\,(iv). 
\end{proof} 

\begin{remark}\label{lambda independent} Lemma \ref{core rank relation}\,(iii) implies that, if Hypothesis \ref{new strategy hyps} is satisfied, then  
$\lambda_{\overline{F}}^{\ast} (\fn,j)$ is a non-negative integer that is independent of $j$. In such  cases, we will usually abbreviate $\lambda_{\overline{F}}^{\ast} (\fn,j)$ to $\lambda_{\overline{F}}^{\ast} (\fn)$. \end{remark} 

\subsubsection{The Cebotarev density theorem}

In the sequel, for $j \in \N$ with $j \ge i$ we write $\sha_j (\cA)$ for the subset of $H^1 (k,\cA)$ comprising all classes that are locally trivial at every place in $\cQ_j$ and $\fX_j (\cA)$ for $H^1 (\cG_{k_j (\cT_j)}, \cA)$, regarded as a submodule of $H^1 (k, \cA)$ via the inflation map.

Then the following consequence of the Cebotarev density theorem constitutes a refined version of both \cite[Prop.\@ 3.6.1]{MazurRubin04} and \cite[Lem.\@ 3.9]{bss}.
\begin{prop} \label{cebotarev prop tweak}
We assume to be given data of one of the following forms:
\begin{romanliste}
    \item Hypotheses \ref{new strategy hyps}\,(i), (ii) and (ii${}^\ast$) are satisfied; $Z_1 \in \{ \cA,\overline{\cA}, A, \overline{A} \}$ and $D = \{ c_1, c_1^\ast \}$ where, for some $j \in \N$ with  $j \ge i$, one has 
    \[ c_1\in (H^1 (k, Z_1) / \sha_j (Z_1)) \setminus \{0\}\,\,\text{ and }\,\, c_1^\ast\in (H^1 (k, Z_1^\ast (1)) / \sha_j ( Z_1^\ast (1)))\setminus \{0\}.\]
    \item Hypotheses \ref{new strategy hyps}\,(i), (ii), (ii${}^\ast$) and (iv) are satisfied; $Z_1 \in \{A, \overline{A} \}$, $Z_2 \in \{ \cA, \overline{\cA}, A, \overline{A}\}$ and $D = \{ c_1, c_2, c_1^\ast, c_2^\ast \}$ where, for some $j \in \N$ with $j \ge i$ and both $s =1$ and $s =2$, one has 
    \[ c_s \in (H^1 (k, Z_s) / \sha_j ( Z_s))\setminus \{0\} \,\,\text{ and }\,\, c_s^\ast\in (H^1 (k, Z_s^\ast (1)) / \sha_j( Z_s^\ast (1)))\setminus \{0\}.\]  
\end{romanliste}
Then there exists a subset $\cS \subseteq \cQ_j$ of positive density with $\psi^\fs_\q (c) \neq 0$ for all $c \in D$ and $\q \in \cS$. 
\end{prop}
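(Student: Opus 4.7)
The plan is to reduce the statement to a Cebotarev density argument on a suitable finite Galois extension of $k$, in the spirit of \cite[Prop.~3.6.1]{MazurRubin04} and \cite[Lem.~3.9]{bss} but keeping careful track of the many modules involved. First I would observe that, for any class $c \in H^1(k, Z_c)$ with $Z_c$ unramified outside $S(\cT_j)$, the localisation $\loc_\q(c)$ lies in $H^1_f(k_\q, Z_c)$ for every $\q \in \cQ_j$, and the composite $H^1_f(k_\q, Z_c) \hookrightarrow H^1(k_\q, Z_c) \twoheadrightarrow Z_c / (\tau - 1)Z_c$ is the isomorphism used in the definition of $\psi^\fs_\q$. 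Hence the claim $\psi^\fs_\q(c) \neq 0$ is equivalent to $\loc_\q(c) \neq 0$, and it suffices to exhibit $\cS \subseteq \cQ_j$ of positive density on which every $c \in D$ localises non-trivially.

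Next I would split $D = D' \sqcup D''$ according to whether a representative of $c$ lies in $\fX_j(Z_c)$ or not. For $c \in D'$, Lemma \ref{general neukirch interpretation lemma}\,(ii) applied to $Z_c$ shows that $\loc_\q(c)$ is already non-trivial for \emph{every} $\q \in \cQ_j$, so these classes impose no constraint. For $c \in D''$, the restriction $f_c \coloneqq \res_L(c)$ to $L \coloneqq k_j(\cT_j)$ is a non-zero $\gal{L/k}$-equivariant homomorphism $G_L \to Z_c$ and cuts out a finite Galois extension $K_c/k$; let $M$ be the compositum of $L$ with all such $K_c$. For a Frobenius $\Frob_\q$ in $\gal{M/k}$ restricting to $\tau$ on $L$, the condition $\loc_\q(c) \neq 0$ translates (using $G_L$-triviality of $Z_c$ and standard cocycle manipulations) into $c(\sigma_0 \gamma) \notin (\tau - 1)Z_c$, where $\sigma_0$ is any fixed lift of $\tau$ and $\gamma \in \gal{M/L}$. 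A direct computation gives
\[
c(\sigma_0 \gamma) = c(\sigma_0) + \tau \cdot f_c(\gamma),
\]
so one needs $\gamma$ for which the image $\bar f_c(\gamma) \in Z_c/(\tau - 1)Z_c$ avoids a specified element for every $c \in D''$.

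By Lemma \ref{injectivity lemma} each map $\bar f_c \: \gal{M/L} \to Z_c/(\tau - 1)Z_c$ is non-zero, so each "bad" set $B_c = \bar f_c^{-1}(\cdot)$ is empty or a coset of $\ker \bar f_c$ of proportion at most $|\im \bar f_c|^{-1}$. To show $\bigcup_c B_c \neq \gal{M/L}$ I would argue that the combined map $\bar f = (\bar f_c)_c$ has image large enough to dodge any single element. In case (i) this is easy once $p > 2$ since $|\im \bar f_c| \geq p \geq 3$ and $|D''| \leq 2$, and the case $p = 2$ is covered by Hypothesis~\ref{new strategy hyps}\,(ii${}^\ast$), which forces $\overline{\cT}$ and its Tate twist to be one-dimensional so that $\bar f_{c_1}$ and $\bar f_{c_1^\ast}$ take values in modules on which $\gal{L/k}$ acts through distinct non-trivial characters. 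In case (ii), where $|D''| \leq 4$, this independence argument is decisive: Hypothesis~\ref{new strategy hyps}\,(iv), together with the irreducibility in (i), implies via Jordan--H\"older that the $\gal{L/k}$-modules $\overline{A} \oplus \overline{\cA}$ and $\overline{B} \oplus \overline{\cB}$ share no common subquotient, whence the restriction-kernel fields attached to $c_s$ and $c_s^\ast$ are linearly disjoint over $L$, and the combined map $\bar f$ factors as a direct product with each factor non-zero, guaranteeing that $\bigcup_c B_c$ is a proper subset of $\gal{M/L}$.

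Finally, the set of $\sigma \in \gal{M/k}$ obtained in this way is a non-empty, conjugation-closed subset of $\gal{M/k}$, all of whose elements restrict to $\tau$ on $L = k_j(\cT_j)$; Cebotarev then produces a positive-density set $\cS$ of primes $\q \in \cQ_j$ with $\Frob_\q$ in this class, which satisfy $\loc_\q(c) \neq 0$ for all $c \in D$. The main obstacle throughout is the quantitative step of Step~7, i.e.\ verifying in each small-characteristic and dual-pair configuration that the cosets $B_c$ do not cover $\gal{M/L}$; it is precisely to handle these exceptional cases that Hypotheses~\ref{new strategy hyps}\,(ii${}^\ast$) and (iv) have been imposed, and the argument should mirror (and mildly strengthen) the character-theoretic analysis of Mazur--Rubin.
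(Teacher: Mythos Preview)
Your overall Cebotarev strategy and the reduction via $\fX_j$ match the paper's proof closely, and your cocycle computation $c(\sigma_0\gamma) = c(\sigma_0) + \tau\cdot f_c(\gamma)$ is exactly the mechanism used there. However, the coset-avoidance step has genuine gaps in both cases.

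In case~(i) with $p=2$, your claim that $\bar f_{c_1}$ and $\bar f_{c_1^\ast}$ take values in modules carrying \emph{distinct non-trivial} $\gal{L/k}$-characters is not justified: Hypothesis~(ii${}^\ast$) forces $\overline{\cT}$ to be one-dimensional, but if its character $\chi$ is trivial then so is the character on $\overline{\cT}^\ast(1)$ (the mod-$2$ cyclotomic character being trivial), and the disjointness argument collapses. The paper instead proves a dedicated group-theoretic lemma (Lemma~\ref{group theory lemma 1}) showing that two cosets of proper finite-index normal subgroups can cover an infinite group only in a very restricted index-$2$ configuration, and rules that configuration out directly.

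In case~(ii) there are two issues. First, Hypothesis~(iv) is assumed only for $p\in\{2,3\}$, so for $p>3$ your independence argument has no input; the paper simply uses the counting bound $4/p<1$ there. Second, and more seriously, for $p\in\{2,3\}$ the linear disjointness you obtain only separates $\{c_1,c_2\}$ from $\{c_1^\ast,c_2^\ast\}$. Writing $\gal{M/L}\cong G_1\times G_2$ accordingly, the bad sets $B_{c_1},B_{c_2}$ have the form $C\times G_2$ with $C$ a coset in $G_1$, so avoiding $B_{c_1}\cup B_{c_2}$ still requires $C_1\cup C_2\neq G_1$ --- the two-coset problem again, which ``each factor non-zero'' does not resolve. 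The paper first reduces to the residual representation via Lemma~\ref{ryotaro's reduction trick}, then applies the case-(i) two-coset argument separately to $(c_1,c_2)$ and to $(c_1^\ast,c_2^\ast)$, and only afterwards glues the two solutions together using the disjointness.
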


\begin{proof}
 We note, at the outset, we may assume  $c_s\notin \fX_j (Z_s)$ for $s \in \{1, 2 \}$. Indeed, if $c_s \in \fX_j (Z_s) / \sha_j (Z_s)$ is nonzero, then Lemma \ref{general neukirch interpretation lemma}\,(iii) implies $\loc_v (c_s) \neq 0$ for all $v \in \cQ_j$ and so the respective claims are trivial for $c_s$. Similarly, we may assume $c_s^\ast\notin \fX_j (Z_s^\ast (1))$ for  $s \in \{1, 2 \}$. \\
    Next we observe that, for each $\q \in \cQ_j$, there exists a unit $u_\q$ (in $\bLambda$, $\Lambda$, $\mathbb{K}$ or $\mathbb{k}$ as appropriate, and only depending on $\q$) such that   
\[ \psi^\fs_\q (x) = u_\q \cdot x ( \Frob_\q) \in Z_s/( \Frob_\q - 1) Z_s\quad\text{for all}\quad x \in H^1 (k, Z_s),\]
and similarly for $Z_s^\ast (1)$.

Writing $f_{\q} \in H^1(k, Z_1)^\ast$ and $f_{\q}^\ast \in H^1(k, Z_1^\ast (1))^\ast$
for the respective maps $x \mapsto  x(\Frob_\q)$, to prove the claim in case (i) it is thus enough show there exists a subset $\cS \subseteq \cQ_j$ of positive density with the property that $f_{\q} (c_1) \neq 0$ and $f_{\q}^\ast (c_1^\ast) \not= 0$ for every $\q \in \cS$. To do this, we set $\Delta \coloneqq \gal{k_j (\cT_j)}{k}$ and consider the composite map
\[
\phi \: H^1 (k, Z_1) \stackrel{\mathrm{Res}}{\longrightarrow} H^1 (k_j (\cT_j), Z_1)^\Delta = \Hom_\Delta (G_{k_j (\cT_j)}, Z_1) 
\to \Hom (G_{k_j (\cT_j)}, Z_1 / (\tau - 1) Z_1)
\]
where `$\mathrm{Res}$' denotes the natural restriction map and the unlabelled arrow the map induced by the natural projection $Z_1 \to Z_1 / (\tau - 1)Z_1$.\\ 
Now, by assumption, $c_1$ does not belong to $\fX_j (M)$, and hence not to the kernel of the first map in the composite $\phi$. Since Lemma \ref{injectivity lemma} implies the second map in this composite is injective, it follows that   $\phi (c_1) \neq 0$ and so $\ker (\phi (c_1))$ is a proper subgroup of $G_{k_j (\cT_j)}$. Consider the subset $H_1 \coloneqq \phi (c_1)^{-1} ( - c_1 (\tau))$ 
of $G_{k_j (\cT_j)}$. Then, if $g_1, g_2 \in H_1$, one has  $g_1 g_2^{-1}\in \ker(\phi (c_1))$ (since $\phi (c_1)$ is a homomorphism) and so  $H_1$ is a coset of $\ker (\phi (c_1))$.\\ 
In exactly the same way, if  we set $
H_1^\ast \coloneqq \phi' (c_1^\ast)^{-1} ( - c_1^\ast (\tau))$, 
with $\phi'$ the composite map 
\begin{align*}
 H^1 (k, Z_1^\ast (1))  \stackrel{\mathrm{Res}^\ast}{\longrightarrow} &H^1 (k_j (\cT_j), Z_1^\ast (1))^\Delta \\
 = &\Hom_\Delta (G_{k_j (\cT_j)}, Z_1) \to \Hom_\Delta (G_{k_j (\cT_j)}, Z_1^\ast (1) / (\tau - 1)Z_1^\ast (1)),
\end{align*}
then one can deduce $H_1^\ast$ is coset of the proper subgroup $\ker (\phi' (c_1^\ast))$ of $G_{k_j (\cT_j)}$.\\ 
It now follows from the general result of Lemma \ref{group theory lemma 1} below that $H_1 \cup H_1^\ast$ is a proper subset of $G_{k_j (\cT_j)}$. (Here we are use the fact that, if $\tau = \id$, then $H_1 = \ker (\phi (c_1))$ and $H_1^\ast = \ker (\phi' (c_1^\ast))$ are subgroups of $G_{k_j (\cT_j)}$). Fix an element $\gamma \in G_{k_j (\cT_j)} \setminus (H_1 \cup H_1^\ast)$ and write $L \coloneqq L_1 L_1^\ast$ for the finite extension of $k_j (\cT_j)$ 
obtaining by composing the fixed fields $L_1$ and $L_1^\ast$ of the respective  kernels of $\mathrm{Res} (c_1)$ and $\mathrm{Res}^\ast (c_1^\ast)$. 
 Let $\cS \subseteq \cQ_j$ be the subset of primes that are both coprime to $\fn$ and such that the restriction of $\Frob_\q$ to $L$ agrees with $\tau \gamma$. Then, by construction, for every $\q \in \cS$ one has $\tau^{-1}\Frob_\q\in G_{k_j (\cT_j)}$ and so, since the cocycle relation implies 
 \[ c_1 (\tau \tau^{-1} \Frob_\q)
    = c_1(\tau) + \tau c_1 (\tau^{-1} \Frob_\q),\]
we can compute
\begin{align}\label{first explicit comp}
    c_1 ( \Frob_\q) = c_1 (\tau \tau^{-1} \Frob_\q)
    & \equiv c_1 (\tau) + \phi (c_1) ( \tau^{-1} \Frob_\q) \,\,\,\, (\mathrm{mod}\,\,\, (\tau - 1)\cA ) \\ 
    & \equiv c_1 (\tau) + \phi (c_1) (\gamma) \,\,\,\, (\mathrm{mod}\,\,\, (\tau - 1)\cA )\notag\\
    & \not\equiv 0 \,\,\,\, (\mathrm{mod}\,\,\, (\tau - 1)\cA ). \notag
\end{align}
Here the second congruence is valid since $\tau^{-1} \Frob_\q \in \gamma \ker (\phi (c_1))$ and the final assertion since $\gamma \not \in H_1$. The non-vanishing of each  $c_1 ( \Frob_\q)$ implies that $\cS$ has all of the required properties, and hence concludes the proof of the claim in case (i).

In case (ii) it is convenient to argue separately for the cases $p >3$ and $p \in \{2,3\}$.

If $p > 3$, then for $a \in \{1,2\}$ we set $H_a \coloneqq \phi (c_a)^{-1} ( - c_2 (\tau))$ and $H_a^\ast \coloneqq \phi' (c_a^\ast)^{-1} ( - c_a^\ast (\tau))$ and take $L_a$ and $L_a^\ast$ to be the fixed fields of the respective kernels of $\mathrm{Res}(c_a)$ and $\mathrm{Res}^\ast (c_a^\ast)$. Then, since a simple counting argument (using $p > 3$) implies $X \coloneqq H_1 \cup H_2 \cup H_1^\ast \cup H_2^\ast$ is not equal to $G_{k_j (\cT_j)}$, we can therefore again fix an element $\gamma \in G_{k_j (\cT_j)}\setminus X$ and, just as above, take $\cS$ to be the subset of primes that are coprime to $\fn$ and such that the restriction of $\Frob_\q$ to the compositum $L_1 L_2 L_1^\ast L_2^\ast$ agrees with $\tau \gamma$.

In the rest of the argument, we thus assume $p \in \{2,3\}$. In this case, we can first use Lemma \ref{ryotaro's reduction trick} to choose elements $y_1, y_2, y_1^\ast, y_2^\ast$ of $R$ such that each $y_s^\bullet \mathrm{Res}^\bullet (c_s)$ is a nonzero element of 
\[
\Hom_\Delta (G_{k_j (\cT_j)}, Z^\bullet_s) [\a_s] = \Hom_\Delta (G_{k_j (\cT_j)}, Z^\bullet_s [\a_s]) \cong \Hom_\Delta (G_{k_j (\cT_j)}, \overline{Z^\bullet_s}).  
\]
Here the symbol $\bullet$ can either be omitted or is equal to $\ast$, $\a_s$ denotes $\cM_i$, respectively $M_i$, if $Z_s$ is a module over $\bLambda$, respectively $\Lambda$, we set $\overline{Z^\bullet_s} \coloneqq Z^\bullet_s / \a_s Z^\bullet_s$ and
the isomorphism is induced by a choice of isomorphism $\mathbb{k} \cong \Lambda [\a_i]$, respectively \@ $\mathbb{K} \cong \bLambda$.

Given these choices, it is then enough to find a subset $\cS \subseteq \cQ_j$ of positive density such that, for every $v \in \cS$, all of the elements $\loc_v (y_1c_1), \loc_v (y_2 c_2), \loc_v (y_1^\ast c_1^\ast), \loc_v (y_2^\ast c_2^\ast)$ are nonzero.\\
To do this, we write $L_1, L_2, L_1^\ast$ and $L_2^\ast$ for the finite extensions of $k_j (\cT_j)$ that are the fixed fields of the respective kernels of $y_1 \mathrm{Res} (c_1)$, $y_2 \mathrm{Res}  (c_2)$, $y_1^\ast \mathrm{Res}^\ast  (c_1^\ast)$ and  $y_2^\ast \mathrm{Res}^\ast  (c_2^\ast)$. We then set $L' \coloneqq L_1 L_2 \cap L_1^\ast L_2^\ast$ and claim there is an equality
\[ L' = k_j (\cT_j).\]
To verify this, we note that $\gal{L'}{k_j (\cT_j)}$ is a quotient  of the subgroup $\gal{L_1L_2}{k_j (\cT_j)}$ of $\gal{L_1}{k_j (\cT_j)} \times \gal{L_2}{k_j (\cT_j)}$, which we can identify with a submodule of $\overline{A} \oplus \overline{Z_2}$ via the map $(y_1 \mathrm{Res} (c_1), y_2 \mathrm{Res} (c_2))$. This shows that $\gal{L'}{k_j (\cT_j)}$ is isomorphic to a $\Z_p [G_k]$-subquotient of $\Tbar \oplus \overline{Z_2}$. Similarly, $(y_1^\ast \mathrm{Res} (c_1^\ast), y_2^\ast \mathrm{Res} (c_2^\ast))$ induces an isomorphism between $\gal{L'}{k_j (\cT_j)}$ and a $\Z_p [G_k]$-subquotient of the module $\overline{B}\oplus \overline{Z_2}^\ast (1)$. In order to deduce $\gal{L'}{k_j (\cT_j)}$ is trivial, and hence that $L' = k_j (\cT_j)$, it is thus enough to show Hypothesis \ref{new strategy hyps}\,(iv) implies the $\ZZ_p[G_k]$-modules $\overline{A} \oplus \overline{Z_2}$ and $\overline{B} \oplus \overline{Z_2}^\ast (1)$  have no non-zero isomorphic subquotients. In addition, since $\ZZ_p[G_k]$ acts on these modules via a finite (and hence Artinian) quotient ring, the modules have composition series and so, by the Jordan--H\"older Theorem, it is enough to show that they have no isomorphic composition factors (as $\ZZ_p[G_k]$-modules). Now every composition factor of $\overline{A} \oplus \overline{Z_2}$ is isomorphic to a composition factor of either $\overline{A}$ or $\overline{Z_2}\in \{\overline{A}, \overline{\cA}\}$ and hence to a composition factor of $\overline{A}\oplus \overline{\cA}$. Similarly, every composition factor of $B \oplus \overline{Z_2}^\ast (1)$ is isomorphic to a composition factor of $B\oplus \overline{\cA}^\ast(1)$. To prove the claimed equality of fields, it is thus enough to show that no composition factor of $\overline{A} \oplus \overline{\cA}$ is isomorphic to a composition factor of $\overline{B}\oplus \overline{\cA}^\ast(1)$ and this follows directly from Hypothesis \ref{new strategy hyps}\,(iv). 

We now set $H_a^\bullet \coloneqq \phi ( y_a^\bullet c_a^\bullet)^{-1} ( - y_a^\bullet c_a^\bullet)$ for $a \in \{1, 2 \}$. We then use the same argument as in case (i) to choose elements $\gamma, \gamma^\ast \in G_{k_j (\cT_j)}$ with $\gamma \not \in H_1 \cup H_2$ and $\gamma^\ast \not \in H_1^\ast \cup H_2^\ast$. Since $L' = k_j (\cT_j)$, we can then choose an element $\gamma'\in G_{k_j (\cT_j)}$ whose restrictions to $L_1 L_2$ and $L_1^\ast L_2^\ast$ are $\gamma$ and $\gamma^\ast$ respectively. The subset $\cS$ of $\cQ_j$ comprising all primes $\q$ for which $\Frob_\q$ is conjugate to $\tau \gamma'$ in $\gal{L_1 L_2 L_1^\ast L_2^\ast}{k}$ is then easily checked to have all of the required properties.  
\end{proof}

The following general observation was used in the above argument.

\begin{lem} \label{group theory lemma 1}
Let $G$ be an infinite group, and $U_1$ and $U_2$ proper normal subgroups of $G$ of finite index. Let $V_1$ be a coset of $U_1$ in $G$, and $V_2$ a coset of $U_2$ in $G$. Then $G \neq V_1 \cup V_2$ unless $U_1 = U_2$ is a subgroup of index 2 and $V_1$ and $V_2$ are the two cosets in $G / U_1$. 
\end{lem}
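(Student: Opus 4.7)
The plan is to reduce to a question about finite groups by passing to the quotient $\bar G \coloneqq G / U$, where $U \coloneqq U_1 \cap U_2$ is a normal subgroup of finite index in $G$ (finite because $G/U$ embeds into $G/U_1 \times G/U_2$). Write $\bar U_i$ and $\bar V_i$ for the images of $U_i$ and $V_i$ in $\bar G$, and set $n_i \coloneqq [G : U_i] = [\bar G : \bar U_i] \geq 2$. Since $V_i$ is a union of cosets of $U$, one has $V_1 \cup V_2 = G$ if and only if $\bar V_1 \cup \bar V_2 = \bar G$, and $|\bar V_i| = |\bar G| / n_i$.

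First I would apply inclusion-exclusion to obtain
\[
|\bar V_1 \cap \bar V_2| = |\bar V_1| + |\bar V_2| - |\bar V_1 \cup \bar V_2| = |\bar G| \Bigl( \frac{1}{n_1} + \frac{1}{n_2} - 1 \Bigr).
\]
Since the left-hand side is a non-negative integer and $n_1, n_2 \geq 2$, this forces $n_1 = n_2 = 2$, and moreover $|\bar V_1 \cap \bar V_2| = 0$, i.e.\ $\bar V_1$ and $\bar V_2$ are disjoint.

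Next I would show that $\bar U_1 = \bar U_2$ (and hence $U_1 = U_2$). Since $\bar U_1 \cap \bar U_2 = \{e\}$ by the definition of $U$ and both $\bar U_i$ have order $|\bar G|/2$, the natural map $\bar G \hookrightarrow \bar G / \bar U_1 \times \bar G / \bar U_2$ shows that $|\bar G|$ divides $4$. If $\bar U_1 \neq \bar U_2$, then $\bar G$ must have order exactly $4$ and be isomorphic to the Klein four-group $\mathbb{Z}/2 \times \mathbb{Z}/2$, with $\bar U_1$ and $\bar U_2$ the two distinct subgroups of order $2$ not equal to the third such subgroup $\bar U_3$. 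A direct check in this group then shows that any coset of $\bar U_1$ and any coset of $\bar U_2$ share exactly one element (namely, their common coset modulo $\bar U_3$), contradicting $\bar V_1 \cap \bar V_2 = \emptyset$. Hence $\bar U_1 = \bar U_2$, and so $U_1 = U_2$ has index $2$ in $G$.

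Finally, with $U_1 = U_2$ of index $2$, the cosets $V_1$ and $V_2$ are each either $U_1$ or its complement; as their union is $G$ and they are disjoint, they must be the two distinct cosets of $U_1$ in $G$, concluding the proof. The main (minor) obstacle is the short verification in the Klein four-group, but this is a direct calculation that causes no difficulty.
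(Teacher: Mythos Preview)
Your proof is correct and takes a genuinely different, cleaner route than the paper. The paper argues directly in $G$ by an ad hoc element-chasing argument: it picks elements $x \notin V_1$ and $y \notin V_2$, analyses where $xy$ can lie, deduces that one of the indices is $2$, and then runs through four separate cases to pin down the structure. By contrast, you pass to the finite quotient $\bar G = G/(U_1 \cap U_2)$ and settle the indices in one stroke via inclusion--exclusion, which immediately forces $n_1 = n_2 = 2$ and $\bar V_1 \cap \bar V_2 = \emptyset$; the residual Klein four-group check is then a single observation. Your argument is shorter and more transparent, and incidentally shows that the hypothesis that $G$ be infinite is not needed (the paper invokes it only to exclude $U_1 \cap U_2 = \emptyset$, which is automatic since both subgroups contain the identity).
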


\begin{proof}
Suppose $G = V_1 \cup V_2$. By assumption $U_1, U_2 \subsetneq G$, so also must have $V_1, V_2, \subsetneq G$. We can therefore find elements $x, y \in G$ with $x \not \in V_1$ and $y \not \in V_2$. It follows that $x \in V_2$ and $y \in V_1$. If $z \coloneqq x y\in V_1$, then $z y^{-1} = x\in U_1$. Similarly, if $z\in V_2$, then $x^{-1} z = y\in U_2$.\\ 
If $(G : U_1) > 2$ and $(G : U_2) > 2$, then we may choose $x \not \in U_1 \cup V_1$ and $y \not \in U_2 \cup V_2$, which contradicts the previous conclusion. Without loss of generality we may therefore assume that $(G : U_1) = 2$. In particular, we have $G = V_1 \cup w U_1$ with $w$ an element of $G$ with $w \not \in V_1$.
In this case we therefore have $V_1' \coloneqq w U_1 \subseteq V_2$.\\  
Now let $u_1$ be an element of $U_1$. Then $u$ must belong to $V_1$ or $V_1'$. In the first case $V_1 \cap U_1 \neq \emptyset$ and so $V_1 = U_1$. In the second case $V_1' \cap U_1 \neq \emptyset$ and so $V_1' = U_1$. Similarly an element $u_2$ of $U_2$ must belong to $V_1$ or $V_1'$. 
If it belongs to $V_1' \subseteq V_2$, then $U_2 \cap V_2 \neq \emptyset$ and so $V_2 = U_2$. Since $u_2$ was arbitrary, this is the case if \textit{any} element of $U_2$ belongs to $V_1'$. Otherwise we therefore must have $U_2 \cap V_1' = \emptyset$, and hence $U_2 \subseteq V_1$. \\ 
It is thus enough to consider separately the following four cases:
\begin{itemize}[label=$\circ$, leftmargin=*]
    \item $V_1 = U_1$ and $V_2 = U_2$.  
    In this case one has $G = U_1 \cup U_2$ and so an elementary argument in group theory (in fact, the same as at the beginning of this proof) implies $G = U_1$ or $G = U_2$, which contradicts the hypothesis that $U_1$ and $U_2$ are proper subgroups of $G$.
    \item $V_1 = U_1$ and $U_2 \subseteq V_1$.  
    In this case $U_2 \subseteq V_1 = U_1$ and so the argument in the beginning of the proof implies $U_2$ has index 2. Since $U_1$ also has index 2, it follows that $U_1 = U_2$.
    \item $V_1' = U_1$ and $V_2 = U_2$.  
    In this case $U_1 = V_1' \subseteq V_2 = U_2$. Since $(G : U_1) = 2$ and $U_2 \neq G$, it follows that $U_1 = U_2$. 
    \item $V_1' = U_1$ and $U_2 \subseteq V_1$.  
    In this case $U_1 \cap U_2 \subseteq V_1' \cap V_1 = \emptyset$. However, since $G$ is infinite, and both $U_1$ and $U_2$ have finite index in $G$, this is impossible. 
    \qedhere
    \end{itemize}
\end{proof}

\subsubsection{Relative core vertices}\label{existence core section}

In this subsection, we discuss the following analogue in our theory of the key notion of core vertex from \cite[Def.~4.1.8]{MazurRubin04}. 

\begin{definition}
    A `relative core vertex' for  $\tilde {\cF}$ on $\cA$ is a modulus $\fn$ in $\cN$ for which the group $H^1_{\tilde F^\ast (\fn)} (k, B)$ vanishes. 
\end{definition}

\begin{rk} This notion is relative to the given ring homomorphism $\bLambda \to \Lambda$, and is a weakening of the notion of core vertex used by Mazur and Rubin. To justify the latter observation, we let $\fn$ is a core vertex for $\tilde{\cF}$ in the sense of \cite[Def.\@ 4.1.8]{MazurRubin04}. Then the group $H^1_{\tilde{\cF}^\ast (\fn)} (k, \cB)$ vanishes and so \cite[Cor. 3.8]{bss} implies  $H^1_{(\tilde{\cF}_{A})^\ast (\fn)} (k, B)$ also vanishes, 
where $\tilde{\cF}_{A}$ is the Mazur--Rubin structure on $A$ induced by $\tilde{\cF}$ (as in Example \ref{ss def}\,(iv)). In particular, since $\tilde{\cF}_{A} \le \tilde F$ (cf.\@ Remark \ref{remark selmer2}\,(ii)), and hence  $\tilde F^\ast(\fn) \le (\tilde{\cF}_{A})^\ast(\fn)$, the group $H^1_{\tilde F^\ast (\fn)} (k, B)$  vanishes. It follows that any core vertex for $\tilde{\cF}$ in the sense of Mazur and Rubin is a relative core vertex for $\tilde{\cF}$ in the above sense, as claimed.  However, the converse may not be true. In fact, even though our next result  shows Hypotheses \ref{new strategy hyps} guarantees the existence of relative core vertices for $\tilde{\cF}$, it is still possible  $H^1_{\tilde{\cF}^\ast (\fn)} (k, \cB)$ is nonzero for every $\fn \in \cN$.  Fortunately, however, if $\fn$ is a relative core vertex for $\tilde{\cF}$, then in all cases one can usefully `bound' the complexity of the $\bLambda$-module $H^1_{\tilde{\cF}^\ast (\fn)} (k,\cB)$ (see Remark \ref{core vertex remark} below) and this observation will play a key role in our theory.\end{rk}

\begin{rk} \label{core vertices equivalent def rk}
Lemma \ref{ryotaro's reduction trick} implies  $H^1_{\tilde F^\ast (\fn)} (k, B) = (0) \Longleftrightarrow H^1_{\tilde F^\ast (\fn)} (k, B) [M_i] = (0)$, and Lemma \ref{dual selmer group commutes with M torsion} implies $H^1_{\tilde F^\ast (\fn)} (k, B) [M_i]$ is isomorphic to $H^1_{\overline{F}^\ast (\fn)} (k, \overline{B})$. Hence, if Hypothesis \ref{new strategy hyps}\,(i), (ii) and (iii) are satisfied, then Lemma \ref{core rank relation}\,(iii) implies  $\fn$ is a relative core vertex for  $\tilde{\cF}$ if and only if the (non-negative) integer $\lambda_{\overline{F}}^\ast (\fn)$ defined in  Remark \ref{lambda independent} is equal to $0$, or equivalently, for each $j \in\N$ with $j \ge i$ the core-rank of the pair $(\overline{F},j)$ is equal to    
\begin{equation}\label{core equivalence}\bm{\chi}(\overline{F},j) = \dim_\mathbb{k} ( H^1_{\overline{F}(\fn)} (k, \overline{A})) - \dim_{\mathbb{k}}(\sha_{\overline{F}, j} (\overline{A})).\end{equation}
\end{rk}

The next result guarantees the existence of relative core vertices with certain additional properties that will be essential in our later arguments. 

\begin{lem} \label{existence of core vertices replacement}
Assume Hypotheses \ref{new strategy hyps}. Then for every $j \in \N$ with $j \ge i$, there exists a relative core vertex $\fn$ for $\tilde{\cF}$ that belongs to $\cN_j (\subseteq \cN)$ and is such that $\nu (\fn)$ is equal to the integer $\lambda^\ast_{\overline{F}}(1)$ defined in  Remark \ref{lambda independent}. 
\end{lem}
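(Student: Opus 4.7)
The strategy is to proceed by induction on $\lambda^\ast_{\overline{F}}(\fn)$, using Remark~\ref{core vertices equivalent def rk} to translate the problem: a modulus $\fn \in \cN_j$ is a relative core vertex for $\tilde{\cF}$ precisely when $\lambda^\ast_{\overline{F}}(\fn) = 0$. Starting from $\fn_0 = 1$, I will successively multiply by primes $\q \in \cQ_j$ chosen so that $\lambda^\ast_{\overline{F}}$ decreases by exactly one at each step. After $\lambda^\ast_{\overline{F}}(1)$ iterations the process terminates with a modulus $\fn \in \cN_j$ satisfying $\nu(\fn) = \lambda^\ast_{\overline{F}}(1)$ and $\lambda^\ast_{\overline{F}}(\fn) = 0$, as required.

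For the inductive step, fix $\fn \in \cN_j$ with $d \coloneqq \lambda^\ast_{\overline{F}}(\fn) > 0$ and choose any non-zero class $c^\ast$ in $H^1_{\overline{F}^\ast(\fn)}(k, \overline{B})$. Lemma~\ref{general neukirch interpretation lemma}(iv), which uses Hypothesis~\ref{new strategy hyps}(iii), implies $\sha_{\overline{F}^\ast, j}(\overline{B}) = 0$, so $c^\ast$ defines a non-zero class in $H^1(k, \overline{B})/\sha_j(\overline{B})$. By Lemma~\ref{core rank relation}(ii) and Hypothesis~\ref{new strategy hyps}(vi), we have $\lambda_{\overline{F}}(\fn, j) = \lambda^\ast_{\overline{F}}(\fn, j) + \chi(\overline{F}, j(i)) > 0$, so I can likewise select a non-zero class in $H^1_{\overline{F}(\fn)}(k, \overline{A})/\sha_{\overline{F}, j}(\overline{A})$; a direct comparison of the definitions of $\sha_{\overline{F}, j}$ and $\sha_j$ (using that a class in $H^1_{\overline{F}(\fn)}$ with trivial localisation at every prime of $\cQ_j$ automatically lies in $H^1_{\overline{F}}$) then shows this class remains non-zero in $H^1(k, \overline{A})/\sha_j(\overline{A})$. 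I can now apply Proposition~\ref{cebotarev prop tweak}(i) to the resulting pair $D = \{c, c^\ast\}$ to produce a positive-density subset $\cS \subseteq \cQ_j$ on which $\psi^{\mathrm{fs}}_\q(c^\ast) \neq 0$, and I select any $\q \in \cS$ coprime to $\fn$.

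Since the kernel of $\psi^{\mathrm{fs}}_\q$ on $H^1(k_\q, \overline{B})$ is precisely the transverse subgroup $H^1_{\mathrm{tr}}(k_\q, \overline{B})$, the non-vanishing of $\psi^{\mathrm{fs}}_\q(c^\ast)$ forces $c^\ast \notin H^1_{\overline{F}^\ast(\fn \q)}(k, \overline{B})$, whence $\lambda^\ast_{\overline{F}}(\fn \q) \le d - 1$. Conversely, the Mazur--Rubin global duality sequence~(\ref{mr duality}) applied to the pair of Mazur--Rubin structures $(\overline{F}(\fn), \overline{F}^\q(\fn))$, combined with the one-dimensionality of $H^1_{/f}(k_\q, \overline{A})$ over $\mathbb{k}$ (a direct consequence of~(\ref{inf-res ses}) and Lemma~\ref{cfactor}), bounds $|\lambda^\ast_{\overline{F}}(\fn \q) - \lambda^\ast_{\overline{F}}(\fn)| \le 1$, so the drop is exactly one.

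The main obstacle will be keeping the Cebotarev-type result of Proposition~\ref{cebotarev prop tweak}(i) applicable throughout the induction, since it nominally requires non-zero classes on both the $\overline{A}$- and $\overline{B}$-sides. The positivity of $\chi(\overline{F}, j(i))$ asserted in Hypothesis~\ref{new strategy hyps}(vi), together with the invariance of $\chi$ under the modifications $\fn \mapsto \fn \q$ (Lemma~\ref{core rank relation}(ii)), is precisely what ensures $\lambda_{\overline{F}}(\fn_k, j) > 0$ persists at every inductive stage, while Lemma~\ref{general neukirch interpretation lemma}(iv) does the same for the dual side as long as $H^1_{\overline{F}^\ast(\fn_k)}(k, \overline{B}) \neq 0$; thus a suitable pair $(c, c^\ast)$ remains available until the induction terminates.
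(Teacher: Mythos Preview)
Your proof is correct and follows essentially the same inductive strategy as the paper: both start from $\fn = 1$, use Hypothesis~\ref{new strategy hyps}(vi) together with Lemma~\ref{general neukirch interpretation lemma}(iv) to produce nonzero classes on both the $\overline{A}$- and $\overline{B}$-sides, apply Proposition~\ref{cebotarev prop tweak}(i) to select a prime $\q \in \cQ_j$ at which both localisations are nonzero, and then argue that $\lambda^\ast_{\overline{F}}$ drops by exactly one (the paper cites \cite[Prop.~5.7]{bss} for this last step, while you derive it directly from the global duality sequence~(\ref{mr duality}), which is equivalent). One small remark: in your displayed identity you write $\chi(\overline{F}, j(i))$ where the relation from Lemma~\ref{core rank relation} actually gives $\chi(\overline{F}, j)$; the paper handles this by invoking $\chi(\overline{F}, j(i)) \ge 0$ to bound $\lambda_{\overline{F}}(\fn_b, j(i))$ and then applying Proposition~\ref{cebotarev prop tweak}(i) at level $j(i)$, so you should do the same.
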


\begin{proof} Set $s \coloneqq \lambda_{\overline{F}}^\ast (1) = \dim_\mathbb{k} ( H^{1}_{\overline{F}^\ast} (k, \overline{B}))$. Then we shall use an induction on $s$ to construct a modulus $\fn$ in $\cN_j$ such that $\nu (\fn) =  \lambda_{\overline{F}}^\ast (1)$ and $H^1_{\overline{F}^\ast (\fn)} (k, \overline{B})$ vanishes. This is enough since, for any such $\n$, Lemma \ref{dual selmer group commutes with M torsion} implies $H^1_{\tilde F^\ast (\fn)} (k, B) [M_i] = (0)$ and hence, by Lemma \ref{ryotaro's reduction trick}, that $H^1_{\tilde F^\ast (\fn)} (k, B) = (0)$ so that $\fn$ is a relative core vertex for 
$\tilde{\cF}$ of the required form. \\
If, firstly, $s = 0$, then the claim is clearly satisfied (with $\fn = 1$) and so we assume $s > 0$. 
In this case, we shall now inductively construct primes $\{\q_a\}_{a \in [s]} \subseteq \cQ_j$ with the property that, for each $b \in [s]$, the ideal $\fn_b \coloneqq {\prod}_{a \in [b]} \q_a$ is such that $\lambda_{\overline{F}}^\ast (\fn_b) = \lambda_{\overline{F}}^\ast (1) - b$. We thus assume that suitable primes $\{\q_a\}_{a \in [b]}$ have been constructed for some $b$ with $0 \leq b < s$, and we set $\fn_b \coloneqq {\prod}_{a\in [b]} \q_a$. 

Then, since Hypothesis \ref{new strategy hyps}\,(vi) implies $\bm{\chi} (\overline{F},j(i)) \geq 0$, where the integer $j(i)$ is defined in Definition \ref{J def}, one has 
\[  0 < \lambda_{\overline{F}}^\ast (1) - b = \lambda_{\overline{F}}^\ast (\fn_b) = \lambda_{\overline{F}}^\ast (\fn_b,j(i)) \leq \lambda_{\overline{F}}(\fn_b,j(i)).\]
The groups $H^1_{\overline{F} (\fn_b)} ( k, \overline{A}) / \sha_{\overline{F},j(i)} (\overline{A})$ and $H^1_{\overline{F}^\ast (\fn_b)} (k,\overline{B})$ are therefore both non-trivial. 
We may therefore apply Proposition \ref{cebotarev prop tweak}\,(i) in order to fix a prime $\q_{b + 1} \in\cQ_j$ such that the localisation maps 
$H^1_{\overline{F} (\fn_b)} (k, \overline{A}) \to H^1 (k_{\q_{b+1}}, \overline{A})$ and $H^1_{\overline{F}^\ast (\fn_b)} (k, \overline{B}) \to H^1 (k_{\q_{b+1}}, \overline{B})$ are both nonzero. Given this choice, the result of \cite[Prop.\@ 5.7]{bss} then implies that 
\[ \lambda_{\overline{F}}^\ast (\fn_{b + 1}) = \lambda_{\overline{F}}^\ast (\fn_b) - 1 = \lambda_{\overline{F}}^\ast (1) - (b + 1),\] 
as required to complete the induction step. This proves the claimed result.
\end{proof}

We show next that, for suitable moduli $\a$, $\fb$ and $\fn$, the ideals $J_i$ in Definition \ref{J def} can be used to bound the complexity of the groups $H^1_{(\tilde{\cF}^\ast)^\fb_\a(\fn)} (k,\cB)= H^1_{(\tilde{\cF}_{\fb}^\a)^\ast (\fn)} (k,\cB)$.  

\begin{lem} \label{what do replacement core vertices do}
    Let $\a, \fb$ and $\fn$ be pairwise coprime moduli  in $\cN_{j(i)}$ for which the Selmer group $H^1_{(\overline{F_{\fb}^\a})^\ast (\fn)} (k, \overline{B})$ vanishes. Then one has $\Fitt^0_{\Lambda} (J_i) \subseteq \Lambda\cdot \varrho_i\bigl(\Fitt^0_{\bLambda} ( H^1_{(\tilde{\cF}_{\fb}^\a)^\ast (\fn)} (k, \cB)^\ast)\bigr).$
\end{lem}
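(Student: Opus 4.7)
The plan is to establish a surjection $J_i \twoheadrightarrow Z \otimes_\bLambda \Lambda$, where $Z \coloneqq H^1_{(\tilde{\cF}_\fb^\a)^\ast(\fn)}(k, \cB)^\ast$; the claimed Fitting-ideal inclusion then follows at once from Lemma \ref{standard fitting props}\,(i) and \,(iv).

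First, arguing exactly as in the proof of Lemma \ref{existence of core vertices replacement}, I would combine Lemma \ref{dual selmer group commutes with M torsion} with Lemma \ref{ryotaro's reduction trick} to promote the mod-$M$ vanishing $H^1_{(\overline{F_\fb^\a})^\ast(\fn)}(k, \overline B) = 0$ to vanishing of the corresponding dual Selmer group at level $\Lambda = R_i$ and, by the same reasoning, at the intermediate level $R_{j(i)}$. Applying Proposition \ref{what we need from Selmer complexes}\,(iii) (with the roles of $\a$ and $\fb$ interchanged relative to its statement) at each relevant level then expresses the corresponding dual Selmer group as the kernel of a surjection from $H^1$ of a suitable Selmer complex onto a module of the form $X \oplus N$, with $N$ free.

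Next, using Proposition \ref{what we need from Selmer complexes}\,(ii) to identify $C(\tilde{\mathscr{F}}_\fb^\a(\fn)) \otimes^{\mathbb{L}}_\bLambda \Lambda$ with $C((\tilde{\mathscr{F}} \otimes_\bLambda \Lambda)_\fb^\a(\fn))$, I would apply $-\otimes_\bLambda \Lambda$ to the level-$\bLambda$ short exact sequence, obtaining the four-term Tor sequence
\[
\Tor_1^\bLambda(X(\tilde{\mathscr{F}}), \Lambda) \xrightarrow{\delta} Z \otimes_\bLambda \Lambda \to H^1(C(\tilde{\mathscr{F}}_\fb^\a(\fn))) \otimes_\bLambda \Lambda \to X(\tilde{\mathscr{F}})\otimes_\bLambda\Lambda \oplus N\otimes_\bLambda\Lambda \to 0,
\]
in which $N = \bigoplus_{\q \in V(\a)}H^0(k_\q,\cB)^\ast$ contributes no Tor because it is $\bLambda$-free. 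Functoriality of the construction of Proposition \ref{what we need from Selmer complexes}\,(iii) in base change produces a morphism from this sequence to the analogous level-$\Lambda$ sequence; since the level-$\Lambda$ dual Selmer group vanishes, a diagram chase forces the map $Z \otimes_\bLambda \Lambda \to H^1(C(\tilde{\mathscr{F}}_\fb^\a(\fn))) \otimes_\bLambda \Lambda$ to be zero, whence $\delta$ is surjective. The same argument at level $R_{j(i)}$ produces a surjection
\[
\delta_{j(i)} \: \Tor_1^{\cR_{j(i)}}(X(\mathscr{F}_{j(i)}), R_{j(i)}) \twoheadrightarrow Z_{j(i)} \otimes_{\cR_{j(i)}} R_{j(i)},
\]
where $Z_{j(i)}$ denotes the level-$j(i)$ analogue of $Z$.

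Finally, the natural projections $\cR_{j(i)} \twoheadrightarrow \bLambda$ and $R_{j(i)} \twoheadrightarrow \Lambda$ induce a commutative square
\begin{cdiagram}
\Tor_1^{\cR_{j(i)}}(X(\mathscr{F}_{j(i)}), R_{j(i)}) \arrow[twoheadrightarrow]{r}{\delta_{j(i)}} \arrow{d} & Z_{j(i)} \otimes_{\cR_{j(i)}} R_{j(i)} \arrow{d} \\
\Tor_1^\bLambda(X(\tilde{\mathscr{F}}), \Lambda) \arrow[twoheadrightarrow]{r}{\delta} & Z \otimes_\bLambda \Lambda.
\end{cdiagram}
By Definition \ref{J def}, the image of the left vertical map is $J_i$; provided the right vertical map is surjective, a diagram chase then shows that $J_i$ surjects onto $Z \otimes_\bLambda \Lambda$ through $\delta$, and the result follows upon applying Lemma \ref{standard fitting props}\,(i) and \,(iv) to this surjection. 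The main obstacle is therefore the verification that the right vertical map $Z_{j(i)} \otimes_{\cR_{j(i)}} \bLambda \twoheadrightarrow Z$ is surjective: this is the dual-Selmer analogue of the surjection for $X$ established in Lemma \ref{limit Fitt X}\,(i), and should follow by a parallel argument using Proposition \ref{what we need from Selmer complexes}\,(ii) and \,(iii) at levels $\cR_{j(i)}$ and $\bLambda$, although some care is required to track the effect of dualising on the base-change comparison.
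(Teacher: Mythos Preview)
Your approach is correct and essentially identical to the paper's: both establish a surjection $J_i \twoheadrightarrow Z \otimes_\bLambda \Lambda$ via the same Tor-sequence diagram chase and then apply Lemma \ref{standard fitting props}\,(i),\,(iv).

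The only point worth noting concerns the step you flag as ``the main obstacle''. The paper resolves the surjectivity of the right vertical map more directly than your sketch suggests, and not by an analogue of Lemma \ref{limit Fitt X}\,(i). Instead, Lemma \ref{dual selmer group commutes with M torsion} (applied to the Mazur--Rubin structure $(\cF_{j(i)})_\fb^\a(\fn)$ on $\cT_{j(i)}$) gives an isomorphism
\[
H^1_{(\tilde{\cF}_\fb^\a)^\ast(\fn)}(k, \cB) \;\cong\; H^1_{((\cF_{j(i)})_\fb^\a)^\ast(\fn)}(k, \cT_{j(i)}^\ast(1))[\a_i],
\]
so the comparison map between the two dual Selmer groups is simply this identification followed by the obvious inclusion into $H^1_{((\cF_{j(i)})_\fb^\a)^\ast(\fn)}(k, \cT_{j(i)}^\ast(1))$. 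Dualising over the self-injective ring $\cR_{j(i)}$ then yields the surjection $Z_{j(i)} \twoheadrightarrow Z$ at once, and tensoring with $\Lambda$ preserves it. This avoids the ``care required to track the effect of dualising on the base-change comparison'' that you anticipate; the key input is Lemma \ref{dual selmer group commutes with M torsion} rather than Proposition \ref{what we need from Selmer complexes}\,(ii),\,(iii).
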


\begin{proof} 
We compare the exact sequence obtained by applying the functor $(-) \otimes_{\bLambda} \Lambda$ to the short exact sequence in Proposition \ref{what we need from Selmer complexes}\,(iii) with the short exact sequence obtained from Proposition \ref{what we need from Selmer complexes}\,(iii) after replacing  $\cA$ and $\tilde{\cF}$ by $A$ and $\tilde F$. In this way, we obtain an exact commutative diagram of the form 
\begin{equation} \label{base change diagram}
    \begin{tikzcd}[column sep=small, row sep=small]
     \Tor_1^{\bLambda} ( X (\tilde{\mathscr{F}}), \Lambda) \arrow{r} &  
       H^1_{(\tilde{\cF}_\fb^\a)^\ast (\fn)} (k, \cB)^\ast \otimes_{\bLambda} \Lambda \arrow{r} \arrow{d} & H^1 (C (\tilde{\mathscr{F}}^\a_\fb (\fn))) \otimes_{\bLambda} \Lambda \arrow{d}{\simeq} \\ 
    0 \arrow{r} & H^{1}_{(\tilde{F}_{\fb}^\a)^\ast (\fn)} (k, B)^\ast \arrow{r} &  H^1 (C( (\tilde{\mathscr{F}}\otimes_{\bLambda}\Lambda)^\a_\fb (\fn))),
    \end{tikzcd}%
    \end{equation}
in which the vertical isomorphism is induced, via Lemma \ref{how the cohomology base changes lemma}\,(ii), by the isomorphism in Proposition \ref{what we need from Selmer complexes}\,(ii). In addition, the assumed vanishing of $H^1_{(\overline{F}_\fb^\a)^\ast (\fn)} (k, \overline{B})$ combines with Lemmas \ref{ryotaro's reduction trick} and \ref{dual selmer group commutes with M torsion} to imply that $H^{1}_{(\tilde{F}_{\fb}^\a)^\ast (\fn)} (k, B)$ also vanishes.  The commutativity of the above diagram therefore gives rise to an exact sequence 
    \begin{equation*} \label{Selmer, sha, and tor exact sequence}
        \Tor_1^{\bLambda} ( X (\tilde{\mathscr{F}}), \Lambda)  \to  H^1_{(\tilde{\cF}_{\fb}^\a)^\ast (\fn)} (k, \cB)^\ast \otimes_{\bLambda} \Lambda \to 0.
    \end{equation*}
Upon comparing this sequence to that obtained by applying the functor  $(-) \otimes_{R_{j(i)}} \Lambda$ to the analogous sequence with $\bLambda, \tilde{\mathscr{F}}, \Lambda$ and $\cB$ replaced by $\cR_{j(i)}, \mathscr{F}_{j(i)}, R_{j(i)}$ and $\cT_{j(i)}^\ast(1)$, one then obtains an  exact commutative diagram
\begin{cdiagram}[column sep=small, row sep=small]
   \Tor_1^{\cR_{j (i)}} (X (\mathscr{F}_{j (i)}), \bLambda) \otimes_{R_{j(i)}} \Lambda \arrow{r} \arrow{d}{\alpha} & H^1_{((\cF_{j(i)})_\fb^\a)^\ast (\fn)} ( k, \cT_{j (i)}^\ast (1))^\ast \otimes_{\cR_{j (i)}} \Lambda \arrow{d}{\beta} \arrow{r}  & 
    0
    \\ 
    \Tor_1^{\bLambda} (X (\tilde{\mathscr{F}}),\Lambda) \arrow{r} & H^1_{(\tilde{\cF}_{\fb}^\a)^\ast (\fn)} ( k, \cB)^\ast \otimes_{\bLambda} \Lambda \arrow{r} & 
    0.
\end{cdiagram}%
Here $\alpha$ is the natural map and, similarly to (\ref{base change diagram}), the map $\beta$ is 
induced by the map 
\[
\beta' \: H^1_{((\cF_{j(i)})_\fb^\a)^\ast (\fn)} ( k, \cT_{j (i)}^\ast (1))^\ast \to H^1_{(\tilde{\cF}_{\fb}^\a)^\ast (\fn)} ( k, \cB)^\ast
\]
that arises as the restriction (via the exact sequence in Proposition \ref{what we need from Selmer complexes}\,(iii)) of the surjective map
 $H^1 (C ((\mathscr{F}_{j (i)})^\a_\fb (\fn))) \to H^1 (C (\tilde{\mathscr{F}}^\a_\fb (\fn)))$ induced by Proposition \ref{what we need from Selmer complexes}\,(ii). In particular, $\beta$ is surjective since $\beta'$ is dual to the injective map
 \[
 H^1_{(\tilde{\cF}_\fb^\a)^\ast (\fn)} ( k, \cB) \cong H^1_{((\cF_{j(i)})_\fb^\a)^\ast (\fn)} ( k, \cT_{j (i)}^\ast (1)) [\a_i] 
 \subseteq H^1_{((\cF_{j(i)})_\fb^\a)^\ast (\fn)} ( k, \cT_{j (i)}^\ast (1))
 \]
 induced by Lemma \ref{dual selmer group commutes with M torsion}.
 Now, by definition of the integer $j(i)$, the image of $\alpha$ in the above diagram is equal to the submodule $J_i$ and so the commutativity of the diagram combines with the surjectivity of $\beta$ to imply the existence of a surjective map  of $\Lambda$-modules of the form $J_i \twoheadrightarrow H^1_{(\tilde{\cF}_{\fb}^\a)^\ast (\fn)} ( k, \cB)^\ast \otimes_{\bLambda}\Lambda$. This surjective map then combines with Lemma \ref{standard fitting props}\,(ii) and (iv) to imply an inclusion
\begin{align*} 
 \Fitt^0_{\Lambda} (J_i) \subseteq &  \Fitt^0_{\Lambda} ( H^{1}_{(\tilde{\cF}_{\fb}^\a)^\ast (\fn)} (k, \cB)^\ast \otimes_{\bLambda} \Lambda) 
=  \Lambda \cdot \varrho_i\big(\Fitt^0_{\bLambda} ( H^{1}_{(\tilde{\cF}_{\fb}^\a)^\ast (\fn)} (k, \cB)^\ast)\big),
\end{align*}
and this proves the claimed result. 
\end{proof}

\begin{rk}\label{core vertex remark} Taking $\a = \fb = 1$ in Lemma \ref{what do replacement core vertices do}, one obtains the following useful fact: if $\fn$ is a relative core vertex for $\tilde{\cF}$ that belongs to  $\cN_{j (i)}$, then there is an inclusion 
\[ \Fitt^0_{\Lambda} (J_i) \subseteq \Lambda\cdot \varrho_i\bigl(\Fitt^0_{\bLambda} ( H^1_{\tilde{\cF}^\ast (\fn)} (k, \cB)^\ast)\bigr).\]\end{rk}

\subsection{Controlling Kolyvagin systems via relative core vertices}

The aim of this subsection is to show that if $\fn$ is a relative core vertex for $\tilde{\cF}$ that belongs to $\cN_{j(i)}$, and $\kappa$ is any Kolyvagin system in $\KS^t (\tilde{\mathscr{F}})$ that vanishes at $\fn$, then the value of $\kappa$ at the modulus $1$ can be explicitly controlled. This result is stated precisely as  Theorem \ref{core vertices and injectivity replacement} and its proof adapts the graph-theoretical analysis of core vertices that forms a key part of the approach of Mazur and Rubin in \cite[\S\,4]{MazurRubin04}. 

\subsubsection{Graphs and paths} 

We use the following variants of the notion of the graph $\cX^0$ of core vertices for Mazur--Rubin structures that are defined in \cite[Def.\@ 4.3.6]{MazurRubin04} (see also \cite[Def.\@ 5.14]{bss}).

\begin{definition} \label{graph definition}
    For each $j\in \N$ with $j \geq i$ we define a graph $\cX^0_j = \cX^0_j (i)$ as follows.
    \begin{romanliste}
        \item The vertices of $\cX^0_j$ are the relative core vertices for $\tilde{\cF}$ that are contained in $\cN_j$.
        \item Vertices $\fn$ and $\fn \q$ as in (i) are joined by an edge in $\cX^0_j$ if and only if the localisation map $H^1_{\overline{F} (\fn)} (k, \overline{A}) \to H^1_f (k_\q, \overline{A})$ is non-zero.
    \end{romanliste}
\end{definition}

\begin{definition} A `path' on $\cX^0_j$ is a finite ordered set 
\[ U = \{ (\fn_1, \q_1), \dots, (\fn_s, \q_s)\} \subset \cN_{j} \times \cQ_{j}\] 
with the property that, for every $a \in [s-1]$ one has either $\fn_{a + 1}= \fn_a \q_a$ or $\fn_{a + 1} = \fn_a/\q_a$. We set $|U| \coloneqq s$ and refer to this as the `length' of  $U$. We also say that moduli $\fn$ and $\fn'$ in $\cN_j$ are connected by $U$ if one has $\fn = \fn_1$ and $\fn' = \fn_s$.  \end{definition}

We shall need an upper bound on the minimum possible length of paths between certain pairs of vertices on $\cX^0_{j(i)}$. 
To prove such a result we shall carefully analyse the arguments of \cite[Cor.\@ 5.16]{bss} in order to determine the length of the paths that are constructed in the latter result. In fact, though our current hypotheses are weaker than those of loc.\@ cit., this analysis doesn't require any essentially new ideas. Nevertheless, since it forms a key part of our argument, for the convenience of the reader we provide a detailed argument in the remainder of this subsection. 

We therefore start by recalling a result of Sakamoto et al.

\begin{lem}[{\cite[Lem.\@ 5.13]{bss}}] \label{bss lem 5.13}
    Fix $j \in \N$ with $j \ge i$, $\fn \in \cN_j$ and $\q \in \cQ_j\setminus V(\fn)$. Then the following claims are valid. 
    \begin{romanliste}
        \item If $\fn$ is a relative core vertex for $\tilde{\cF}$ and the map $H^1_{\overline{F} (\fn)} (k, \overline{A}) \to H^1_f (k_\q, \overline{A})$ is non-zero, then $\fn \q$ is a relative core vertex for $\tilde{\cF}$ and $\fn$ and $\fn \q$ are joined by an edge in $\cX^0_j$.
        \item If $\fn \q$ is a relative core vertex for $\tilde{\cF}$ and the map $H^1_{\overline{F} (\fn \q)} (k, \overline{A}) \to H^1_\tr (k_\q, \overline{A})$ is non-zero, then $\fn$ is a relative core vertex for $\tilde{\cF}$ and $\fn$ and $\fn \q$ are joined by an edge in $\cX^0_j$.
    \end{romanliste}
\end{lem}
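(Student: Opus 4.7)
My plan is to derive two Poitou--Tate exact sequences from the global duality sequence (\ref{mr duality}), specialised to the $\mathbb{k}$-reduced Mazur--Rubin structure $\overline F$ on $\overline A$. The key structural fact to exploit is that for any Kolyvagin prime $\q \in \cQ_j$ the local cohomology admits a direct sum decomposition $H^1(k_\q, \overline A) = H^1_f(k_\q,\overline A) \oplus H^1_\tr(k_\q,\overline A)$, with both summands of $\mathbb{k}$-dimension one, under which $\psi_\q^{\mathrm{fs}}$ and $v_\q$ realise the two projections onto $\mathbb{k}$. Applying (\ref{mr duality}) to the refinement pairs $\overline F_\q(\fn) \le \overline F(\fn)$ and $\overline F_\q(\fn) \le \overline F(\fn\q)$ (compare (\ref{global duality sequence4}) and (\ref{global duality sequence3})) will then yield
\begin{align*}
 H^1_{\overline F_\q(\fn)}(k, \overline A) &\hookrightarrow H^1_{\overline F(\fn)}(k, \overline A) \xrightarrow{\psi_\q^{\mathrm{fs}}} \mathbb{k} \to H^1_{(\overline F^\ast)^\q(\fn)}(k, \overline B)^\vee \twoheadrightarrow H^1_{\overline F^\ast(\fn)}(k, \overline B)^\vee, \\
 H^1_{\overline F_\q(\fn)}(k, \overline A) &\hookrightarrow H^1_{\overline F(\fn\q)}(k, \overline A) \xrightarrow{v_\q} \mathbb{k} \to H^1_{(\overline F^\ast)^\q(\fn)}(k, \overline B)^\vee \twoheadrightarrow H^1_{\overline F^\ast(\fn\q)}(k, \overline B)^\vee,
\end{align*}
which crucially share the common pivotal term $H^1_{(\overline F^\ast)^\q(\fn)}(k, \overline B)$. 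By Remark \ref{core vertices equivalent def rk}, relative core vertex status will then translate into vanishing of the respective final terms in these sequences, reducing the lemma to a short diagram chase.

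\textbf{Part (i).} Assuming that $\fn$ is a relative core vertex, the final term of the first sequence vanishes. The hypothesis that $H^1_{\overline F(\fn)} \to H^1_f(k_\q,\overline A)$ is non-zero will, in view of $H^1_f \cap H^1_\tr = 0$, force $\psi_\q^{\mathrm{fs}}$ to be surjective, and the first sequence then immediately yields $H^1_{(\overline F^\ast)^\q(\fn)}(k, \overline B) = 0$. Feeding this vanishing into the second sequence will give $H^1_{\overline F^\ast(\fn\q)}(k, \overline B) = 0$, so $\fn\q$ is a relative core vertex, and the edge condition between $\fn$ and $\fn\q$ is built into the hypothesis.

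\textbf{Part (ii).} I will run the same chain of deductions in reverse. The assumption that $H^1_{\overline F(\fn\q)} \to H^1_\tr(k_\q,\overline A)$ is non-zero forces $v_\q$ in the second sequence to be surjective, which together with the vanishing of $H^1_{\overline F^\ast(\fn\q)}(k, \overline B)$ (coming from the core vertex property of $\fn\q$) yields $H^1_{(\overline F^\ast)^\q(\fn)}(k, \overline B) = 0$. Substituting this back into the first sequence will both produce $H^1_{\overline F^\ast(\fn)}(k, \overline B) = 0$ (so $\fn$ is a relative core vertex) and force $\psi_\q^{\mathrm{fs}}$ on $H^1_{\overline F(\fn)}$ to be surjective, which via the decomposition of $H^1(k_\q,\overline A)$ means that the localisation $H^1_{\overline F(\fn)} \to H^1_f(k_\q,\overline A)$ is non-zero, i.e.\ the edge condition holds.

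\textbf{Main obstacle.} The only genuinely delicate point is to identify the dual Mazur--Rubin structures correctly: one must verify that the duals of $\overline F(\fn)$ and $\overline F(\fn\q)$ really do share the same $\q$-relaxed dual $(\overline F^\ast)^\q(\fn)$, so that the two sequences above genuinely hinge on a common pivotal term. Once this bookkeeping is in place, both parts of the lemma reduce to an elementary $\mathbb{k}$-linear dimension chase using the exactness of the two sequences.
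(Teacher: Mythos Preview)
Your proof is correct and is essentially the standard argument. The paper itself does not prove this lemma but simply cites it from \cite[Lem.\@ 5.13]{bss}; your derivation via the two Poitou--Tate sequences (\ref{global duality sequence4}) and (\ref{global duality sequence3}) specialised to $\overline F$ over $\mathbb{k}$, pivoting on the shared term $H^1_{(\overline F^\ast)^\q(\fn)}(k,\overline B)$, is exactly the expected route and recovers the cited result.
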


Our next two results are then minor refinements of \cite[Lem.\@ 5.14]{bss} and \cite[Lem.\@ 5.15]{bss} respectively in that we also provide an explicit bound on the length of constructed paths.

\begin{lem}\label{bss lem 5.14}
Assume Hypotheses \ref{new strategy hyps} and fix $\fn \in \cN_{j(i)}$ and $\q \in \cQ_{j(i)}$. Then, if $\fn$ and $\fn\q$ are both relative core vertices for $\tilde{\cF}$, there exists a path from $\fn$ to $\fn \q$ in $\cX^0_{j(i)}$ that is of length at most three. 
\end{lem}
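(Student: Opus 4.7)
The plan is to dichotomise on whether $\fn$ and $\fn\q$ are directly joined by an edge in $\cX^0_{j(i)}$. If the localisation map $H^1_{\overline F(\fn)}(k, \overline A) \to H^1_f(k_\q, \overline A)$ is non-zero, then $\fn$ and $\fn\q$ are joined by an edge by Definition \ref{graph definition} and we are done at once. So I would focus on the case where this map vanishes, and in that case propose to exhibit a prime $\fl \in \cQ_{j(i)}$ coprime to $\fn\q$ such that the four moduli $\fn,\,\fn\fl,\,\fn\q\fl,\,\fn\q$ form a path in $\cX^0_{j(i)}$ with at most three edges.

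For such $\fl$ to work, three simultaneous non-triviality conditions are required: \begin{romanliste}\item $H^1_{\overline F(\fn)}(k,\overline A) \to H^1_f(k_\fl,\overline A)$ is non-zero; \item $H^1_{\overline F(\fn\q)}(k,\overline A) \to H^1_f(k_\fl,\overline A)$ is non-zero; \item $H^1_{\overline F(\fn\fl)}(k,\overline A) \to H^1_f(k_\q,\overline A)$ is non-zero.\end{romanliste}By Lemma \ref{bss lem 5.13}\,(i), conditions (i) and (ii) automatically force $\fn\fl$ and $\fn\q\fl$ to be relative core vertices of $\tilde{\cF}$ and produce the two outer edges of the desired path, while (iii) provides the middle edge between $\fn\fl$ and $\fn\q\fl$.

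I would secure (i) and (ii) by invoking Proposition \ref{cebotarev prop tweak}\,(ii). By Hypothesis \ref{new strategy hyps}\,(vi) combined with (\ref{core equivalence}), the quotients $H^1_{\overline F(\fn)}(k,\overline A)/\sha_{\overline F,j(i)}(\overline A)$ and $H^1_{\overline F(\fn\q)}(k,\overline A)/\sha_{\overline F,j(i)}(\overline A)$ are both non-zero, so one may pick non-zero representatives $c_1, c_2$ in them. Paired with any non-zero auxiliary classes $c_1^\ast, c_2^\ast \in H^1(k,\overline B)/\sha_{j(i)}(\overline B)$, Proposition \ref{cebotarev prop tweak}\,(ii) applied with $Z_1 = Z_2 = \overline A$ produces a positive-density subset of primes $\fl \in \cQ_{j(i)}$ at which the finite-singular comparison map $\psi^\fs_\fl$ is non-zero on each of these four classes. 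Up to the standard identification of the finite quotient with $\bLambda$, this delivers (i) and (ii).

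The hard part will be verifying (iii). I propose to derive it from the global duality sequence (\ref{global duality sequence4}) with $(\Phi, \a, \m) = (\overline F, \q, \fn\fl)$, which, combined with the vanishing of $H^1_{\overline F^\ast(\fn\fl)}(k,\overline B)$ (since $\fn\fl$ is a relative core vertex), reduces (iii) to the vanishing of $H^1_{(\overline F^\ast)^\q(\fn\fl)}(k,\overline B)$. The case assumption combined with the analogous duality sequence applied at $\m = \fn$ forces $H^1_{(\overline F^\ast)^\q(\fn)}(k,\overline B) \cong \mathbb{k}$, and one then tracks how this one-dimensional dual Selmer group behaves across the transition $\fn \leadsto \fn\fl$ by applying the Mazur--Rubin sequence (\ref{mr duality}) to the pair of Mazur--Rubin structures obtained by restricting $(\overline F^\ast)^\q$ to $\fn$ and $\fn\fl$. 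Condition (ii), reinterpreted dually via local Tate duality as a non-vanishing statement on the dual side at $\fl$, should then force this one-dimensional contribution to collapse in passing to $\fn\fl$, yielding the required vanishing; should this argument prove insufficient on its own, one further class added to the initial Cebotarev input (to enforce non-triviality of localisation at $\fl$ for a chosen generator of $H^1_{(\overline F^\ast)^\q(\fn)}(k,\overline B)$) will complete the verification.
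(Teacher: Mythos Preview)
Your overall path $\fn \to \fn\fl \to \fn\q\fl \leftarrow \fn\q$ matches the paper's, and your fallback is exactly the right mechanism; but your primary route to (iii) has a genuine gap. Local Tate duality identifies $H^1_f(k_\fl,\overline A)$ and $H^1_f(k_\fl,\overline B)$ as mutual annihilators inside local cohomology, yet says nothing about which \emph{global} classes localise nontrivially at $\fl$. Non-vanishing of $\loc_\fl$ on a class in $H^1_{\overline F(\fn\q)}(k,\overline A)$ cannot be ``reinterpreted'' as non-vanishing of $\loc_\fl$ on the generator of $H^1_{(\overline F^\ast)^\q(\fn)}(k,\overline B)$: these are independent global objects. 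You must impose the $B$-side condition directly when choosing $\fl$, which is your fallback and is precisely what the paper does. (Note also that $(\overline F^\ast)^\q(\fn)$ and $(\overline F^\ast)^\q(\fn\fl)$ differ at $\fl$ by unramified versus transverse; neither refines the other, so (\ref{mr duality}) does not apply directly to this pair.)

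You also miss a simplification that streamlines the Cebotarev step. In the hard case, vanishing of localisation at $\q$ gives $H^1_{\overline F(\fn)}(k,\overline A) = H^1_{\overline F_\q(\fn)}(k,\overline A) \subseteq H^1_{\overline F(\fn\q)}(k,\overline A)$, and this inclusion is an equality since both $\fn$ and $\fn\q$ are relative core vertices, so both spaces have dimension $\chi(\overline F,j(i)) + \dim_{\mathbb{k}}\sha_{\overline F,j(i)}(\overline A)$ by (\ref{core equivalence}). Your conditions (i) and (ii) therefore coincide, and Proposition \ref{cebotarev prop tweak}\,(i) already suffices (one $A$-side class together with the $B$-side generator of $H^1_{(\overline F^\ast)^\q(\fn)}(k,\overline B)$). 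The paper then defers to the argument of \cite[Lem.\@ 5.14]{bss} for the middle edge, which amounts to the duality computation your fallback would need.
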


\begin{proof} If the localisation map $H^1_{\overline{F} (\fn)} (k, \overline{A}) \to H^1_f (k_\q, \overline{A})$ is nonzero, then Lemma \ref{bss lem 5.13}\,(i) implies that  $\fn$ and $\fn \q$ are joined by a path of length 1. We may therefore assume this localisation map is zero, and hence that 
 $H^1_{\overline{F} (\fn)} (k, \overline{A}) = H^1_{\overline{F}_\q (\fn)} (k, \overline{A}) \subseteq H^1_{\overline{F} (\fn \q)} (k, \overline{A})$. The latter inclusion must therefore be an equality since the fact $\fn$ and $\fn\q$ are relative core vertices combines with (\ref{core equivalence}) and the first assertion of Lemma \ref{core rank relation} to imply that  
\[ \dim_\mathbb{k} ( H^1_{\overline{F}(\fn)} (k, \overline{A})) = \bm{\chi} (\overline{F},j(i)) +  \dim_{\mathbb{k}}(\sha_{\overline{F}, j(i)} (\overline{A})) =  \dim_\mathrm{k}(H^1_{\overline{F} (\fn\q)} (k, \overline{A})).\]  
Since $\bm{\chi} (\overline{F},j(i)) > 0$ (by Hypotheses \ref{new strategy hyps}\,(vi)), the first equality here implies that the 
inclusion $\sha_{\overline{F}, j(i)} (\overline{A})\subseteq H^1_{\overline{F} (\fn)} (k, \overline{A})$ is strict. In addition, since $H^1_{\overline{F} (\fn)} (k, \overline{A}) = H^1_{\overline{F} (\fn \q)} (k, \overline{A})$, the exact sequence (\ref{global duality sequence3}) (with the data $\Phi =\tilde{\cF}, \cA, \bLambda,  \m, \n$ taken to be $\overline{F}, \overline{A}, \mathbb{k}, \fn\q$ and $\q$ respectively) implies  $H^{1}_{(\overline{F}^\ast)^\q (\fn)} (k, \overline{B})$ is non-trivial.
We can therefore apply Proposition \ref{cebotarev prop tweak}\,(i) to deduce the existence of  a prime $\mathfrak{r}$ in $\cQ_{j(i)}\setminus V(\fn \q)$ for which the localisation maps
\[
H^1_{\overline{F} (\fn)} (k, \overline{A}) \to H^1 (k_\mathfrak{r}, \overline{A}) 
\quad \text{ and } \quad 
H^1_{(\overline{F}^\ast)^\q (\fn)} (k, \overline{B}) \to H^1 (k_{\mathfrak{r}},  \overline{B})
\]
are both nonzero. By using the argument of \cite[Lem.\@ 5.14]{bss}, one then concludes the existence of a path in $\cX^0_{j(i)}$ of the form
 $\fn \to \fn \mathfrak{r} \to \fn \mathfrak{r} \q \leftarrow \fn \q$. 
 
At this stage, we have proved that, in all cases, $\fn$ and $\fn \q$ are connected by a path of length at most three, as required.
\end{proof}

\begin{lem}\label{bss lem 5.15}
Assume Hypotheses \ref{new strategy hyps}. Let $\fn_1$ and $\fn_2$ be relative core vertices for $\tilde {\cF}$ in $\cN_{j(i)}$, and fix primes $\q_1 \in V(\fn_1)$ and $\q_2 \in V(\fn_2)$ such that neither $\fn_1 / \q_1$ and $\fn_2 / \q_2$ are relative core vertices for $\tilde {\cF}$. Then there exists a prime $\mathfrak{r}$ in $\cQ_{j(i)}\setminus V(\fn_1 \fn_2)$ with the following properties. 
\begin{romanliste}
\item Both $\fn_1 \mathfrak{r} / \q_1$ and $\fn_2 \mathfrak{r} / \q_2$ are relative core vertices for $\tilde {\cF}$.
\item For $l\in \{1, 2 \}$, there exists a path in $\cX^0_{j(i)}$ between $\fn_l$ and $\fn_l \mathfrak{r} / \q_l$ of length at most four.
\end{romanliste} 
\end{lem}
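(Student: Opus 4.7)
The plan is to refine the argument of \cite[Lem.\@ 5.15]{bss} by applying the Cebotarev density theorem simultaneously to control four distinct families of localisation maps, and then to trace the resulting path lengths through the graph $\cX^0_{j(i)}$. The key preliminary step is to show that the failure of $\fn_l / \q_l$ to be a relative core vertex is as mild as possible, namely that $\lambda^\ast_{\overline{F}}(\fn_l / \q_l) = 1$. Indeed, since $\fn_l$ is a relative core vertex but $\fn_l/\q_l$ is not, Lemma~\ref{bss lem 5.13}\,(ii), applied in contrapositive form with $\fn = \fn_l/\q_l$ and $\q = \q_l$, forces the localisation map $H^1_{\overline{F}(\fn_l)}(k, \overline{A}) \to H^1_\mathrm{tr}(k_{\q_l}, \overline{A})$ to vanish, so $H^1_{\overline{F}_{\q_l}(\fn_l/\q_l)}(k, \overline{A}) = H^1_{\overline{F}(\fn_l)}(k, \overline{A})$. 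Combining this with the global duality exact sequence~(\ref{global duality sequence4}) (taken with $\a = \q_l$, $\m = \fn_l/\q_l$, $\Phi = \overline{F}$, $\bLambda = \mathbb{k}$, $\cA = \overline{A}$) yields $\lambda_{\overline{F}}(\fn_l/\q_l) - \lambda_{\overline{F}}(\fn_l) \leq 1$; combined with $\lambda^\ast_{\overline{F}}(\fn_l/\q_l) = \lambda_{\overline{F}}(\fn_l/\q_l) - \chi(\overline{F}, j(i)) = \lambda_{\overline{F}}(\fn_l/\q_l) - \lambda_{\overline{F}}(\fn_l)$ (cf.\@ Lemma~\ref{core rank relation}) and the hypothesis $\lambda^\ast_{\overline{F}}(\fn_l/\q_l) \geq 1$, this forces equality.

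Next I would invoke Proposition~\ref{cebotarev prop tweak}\,(ii) with $Z_1 = Z_2 = \overline{A}$, taking $c_l$ to be a non-zero class in $H^1_{\overline{F}(\fn_l)}(k, \overline{A}) / \sha_{\overline{F}, j(i)}(\overline{A})$ and $c_l^\ast$ to be a non-zero class in $H^1_{\overline{F}^\ast(\fn_l/\q_l)}(k, \overline{B})$ for $l \in \{1, 2\}$. Non-vanishing of these quotients follows from Hypothesis~\ref{new strategy hyps}\,(vi), which gives $\lambda_{\overline{F}}(\fn_l) = \chi(\overline{F}, j(i)) > 0$, together with the equality $\lambda^\ast_{\overline{F}}(\fn_l/\q_l) = 1$ established above. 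The conclusion of Proposition~\ref{cebotarev prop tweak}\,(ii) yields a positive-density subset of $\cQ_{j(i)}$ of primes $\mathfrak{r}$ for which all four finite-singular maps are non-zero; removing the finitely many primes dividing $\fn_1 \fn_2$ still leaves infinitely many admissible choices of $\mathfrak{r}$.

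Finally, for such a prime $\mathfrak{r}$ I would verify properties (i) and (ii). For (i), the non-vanishing of $H^1_{\overline{F}^\ast(\fn_l/\q_l)}(k, \overline{B}) \to H^1(k_\mathfrak{r}, \overline{B})$, combined with the non-vanishing of $H^1_{\overline{F}(\fn_l)}(k, \overline{A}) \to H^1_f(k_\mathfrak{r}, \overline{A})$ (which implies $H^1_{\overline{F}(\fn_l/\q_l)}(k, \overline{A}) \to H^1(k_\mathfrak{r}, \overline{A})$ is non-zero, since $H^1_{\overline{F}(\fn_l)} \subseteq H^1_{\overline{F}^\mathfrak{r}(\fn_l/\q_l \cdot \mathfrak{r})}$ under the relevant identifications), allows an application of \cite[Prop.\@ 5.7]{bss} to conclude that $\lambda^\ast_{\overline{F}}(\fn_l \mathfrak{r}/\q_l) = \lambda^\ast_{\overline{F}}(\fn_l/\q_l) - 1 = 0$, so $\fn_l \mathfrak{r}/\q_l$ is indeed a relative core vertex. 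For (ii), the non-vanishing of $H^1_{\overline{F}(\fn_l)}(k, \overline{A}) \to H^1_f(k_\mathfrak{r}, \overline{A})$ together with Lemma~\ref{bss lem 5.13}\,(i) shows that $\fn_l \mathfrak{r}$ is a relative core vertex joined by an edge to $\fn_l$ in $\cX^0_{j(i)}$. Since $\fn_l \mathfrak{r}$ and $\fn_l \mathfrak{r}/\q_l$ are then both relative core vertices differing by the single prime $\q_l$, Lemma~\ref{bss lem 5.14} yields a path of length at most three between them, and concatenation gives the desired path of length at most four.

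The main obstacle is the sharp dimension bound in the first step: without knowing $\lambda^\ast_{\overline{F}}(\fn_l/\q_l) = 1$ exactly, a single choice of $\mathfrak{r}$ would not suffice to restore the dual Selmer group vanishing, and one would be forced to introduce auxiliary primes that would increase the path length beyond four. The careful use of Lemma~\ref{bss lem 5.13}\,(ii) to pin down the vanishing of the transverse localisation map is what makes this economy possible.
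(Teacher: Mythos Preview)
Your overall approach is correct and follows the same strategy as the paper: apply Proposition~\ref{cebotarev prop tweak}\,(ii) to choose $\mathfrak{r}$, use Lemma~\ref{bss lem 5.13}\,(i) for the edge from $\fn_l$ to $\fn_l\mathfrak{r}$, verify that $\fn_l\mathfrak{r}/\q_l$ is a relative core vertex (the paper defers this entirely to the proof of \cite[Lem.\@ 5.15]{bss}; you unpack it via $\lambda^\ast_{\overline{F}}(\fn_l/\q_l)=1$ and \cite[Prop.\@ 5.7]{bss}), and close with Lemma~\ref{bss lem 5.14}.

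There is, however, one localised error. To invoke \cite[Prop.\@ 5.7]{bss} at $\fn_l/\q_l$ with the prime $\mathfrak{r}$ you also need the $A$-side localisation $H^1_{\overline{F}(\fn_l/\q_l)}(k,\overline{A}) \to H^1(k_\mathfrak{r},\overline{A})$ to be nonzero, but your parenthetical justification---the claimed inclusion $H^1_{\overline{F}(\fn_l)} \subseteq H^1_{\overline{F}^\mathfrak{r}(\fn_l\mathfrak{r}/\q_l)}$---is false: these two Selmer structures differ incompatibly at $\q_l$ (transverse versus unramified) and at $\mathfrak{r}$ (unramified versus transverse), so neither contains the other. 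The fix is already in your hands. Your first step established $H^1_{\overline{F}(\fn_l)} = H^1_{\overline{F}_{\q_l}(\fn_l/\q_l)}$, and since the strict condition at $\q_l$ refines the unramified one this gives $H^1_{\overline{F}(\fn_l)} \subseteq H^1_{\overline{F}(\fn_l/\q_l)}$. Any lift of $c_l$ therefore witnesses the required $A$-side nonvanishing at $\fn_l/\q_l$ as well, and the application of \cite[Prop.\@ 5.7]{bss} goes through.
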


\begin{proof}
    Proposition \ref{cebotarev prop tweak}\,(ii) combines with Hypothesis \ref{new strategy hyps}\,(iv) to imply the existence of a prime $\mathfrak{r}$ in $\cQ_{j(i)}\setminus V(\fn_1 \fn_2)$ such that the maps 
    \[
H^1_{\overline{F} (\fn_l)} (k, \overline{A}) \to H^1 (k_\mathfrak{r}, \overline{A}) 
\quad \text{ and } \quad 
    H^1_{\overline{F} (\fn_l / \q_l)} (k, \overline{A}) \to H^1 (k_\mathfrak{r}, \overline{A}) 
    \]
are non-zero for both $l \in \{1, 2 \}$. Lemma \ref{bss lem 5.13}\,(i) then implies that the moduli $\fn_l$ and $\fn_l \mathfrak{r}$ are directly connected by an edge in $\cX^0_{j(i)}$. The proof of \cite[Lem.\@ 5.15]{bss} moreover shows that $\fn_l \mathfrak{r} / \q_l$ is a relative core vertex for $\tilde {\cF}$, and so Lemma \ref{bss lem 5.14} implies the existence of a  path in $\cX^0_{j(i)}$ between $\fn_l \mathfrak{r}$ and $\fn_l \mathfrak{r} / \q_l$ of length at most three. In total, therefore, there exists a path between $\fn_l$ and $\fn_l \mathfrak{r} / \q_l$ in $\cX^0_{j(i)}$ that has length at most four.
\end{proof}

We can now prove our main observation concerning path lengths. 

\begin{prop} \label{length of paths between core vertices}
Let $\fn_1$ and $\fn_2$ be relative core vertices for $\tilde {\cF}$ in $\cN_{j(i)}$ for which $\nu (\fn_1) = \nu (\fn_2) = \lambda_{\overline{F}}^\ast (1)$. Then, if  Hypotheses \ref{new strategy hyps} is valid, there exists a path in $\cX^0_{j(i)}$ between $\fn_1$ and $\fn_2$ of length at most $8 \cdot (\lambda_{\overline{F}}^\ast (1) - \nu ( \mathrm{gcd} ( \fn_1, \fn_2)))$.
\end{prop}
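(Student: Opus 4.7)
The plan is to proceed by induction on the integer $e \coloneqq \lambda^\ast_{\overline{F}}(1) - \nu(\gcd(\fn_1, \fn_2))$, using Lemmas \ref{bss lem 5.14} and \ref{bss lem 5.15} as the elementary building blocks. Write $d \coloneqq \nu(\gcd(\fn_1, \fn_2))$, and factorise $\fn_l = \gcd(\fn_1, \fn_2) \cdot \fm_l$ with $\gcd(\fm_1, \fm_2) = 1$; the hypothesis $\nu(\fn_1) = \nu(\fn_2) = \lambda^\ast_{\overline{F}}(1)$ forces $\nu(\fm_1) = \nu(\fm_2) = e$. The base case $e = 0$ forces $\fn_1 = \fn_2 = \gcd(\fn_1, \fn_2)$, because all three moduli are square-free with the same number of prime divisors, and the empty path satisfies the required bound.

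For the inductive step, assume $e \geq 1$, so that $\nu(\fm_l) \geq 1$ for each $l$. The main case is when one can find primes $\q_l \in V(\fm_l)$ for $l \in \{1, 2\}$ such that neither $\fn_l/\q_l$ is a relative core vertex for $\tilde{\cF}$. In this situation, Lemma \ref{bss lem 5.15} produces a common prime $\mathfrak{r} \in \cQ_{j(i)} \setminus V(\fn_1\fn_2)$ together with paths in $\cX^0_{j(i)}$ of length at most $4$ from each $\fn_l$ to the relative core vertex $\fn_l' \coloneqq \fn_l \mathfrak{r}/\q_l$ (which still satisfies $\nu(\fn_l') = \lambda^\ast_{\overline{F}}(1)$). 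Since $\q_l \in V(\fm_l)$ implies $\q_l \nmid \gcd(\fn_1, \fn_2)$ and $\mathfrak{r}$ lies outside $V(\fn_1\fn_2)$, one computes $\gcd(\fn_1', \fn_2') = \gcd(\fn_1, \fn_2) \cdot \mathfrak{r}$, so the inductive hypothesis applied to $(\fn_1', \fn_2')$ delivers a path of length at most $8(e - 1)$; concatenation with the two paths of length at most $4$ yields the desired bound of $8e$.

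The principal obstacle is the degenerate case where no such primes exist, i.e.\ there is an $l$, say $l = 1$, for which every $\q \in V(\fm_1)$ has $\fn_1/\q$ a relative core vertex. One first applies Lemma \ref{bss lem 5.14} to obtain a path from $\fn_1$ to $\fn_1/\q$ of length at most $3$, and then re-extends $\fn_1/\q$ by a carefully chosen prime $\mathfrak{r}$ to produce a new relative core vertex $\fn_1''$ of cardinality $\lambda^\ast_{\overline{F}}(1)$ to which the inductive hypothesis may be applied. The existence of candidate primes $\mathfrak{r}$ is guaranteed by Proposition \ref{cebotarev prop tweak} combined with the fact that Hypothesis \ref{new strategy hyps}\,(vi) forces $\chi(\overline{F}, j(i)) > 0$, which by Lemma \ref{core rank relation} and the assumption that $\fn_1/\q$ is a relative core vertex (so $\lambda^\ast_{\overline{F}}(\fn_1/\q) = 0$) implies that the quotient $H^1_{\overline{F}(\fn_1/\q)}(k, \overline{A}) / \sha_{\overline{F}, j(i)}(\overline{A})$ is non-zero; Lemma \ref{bss lem 5.13}\,(i) then ensures that for any such $\mathfrak{r}$ with non-trivial localisation, the extended modulus is a relative core vertex joined to $\fn_1/\q$ by a single edge of $\cX^0_{j(i)}$.

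The technical heart of the argument is arranging this extension to make measurable progress. The delicate point is that Cebotarev only provides primes of positive density rather than primes drawn from the finite set $V(\fn_2)$; the strategy is therefore to select $\mathfrak{r}$ so that the resulting core vertex $\fn_1''$ either lies closer to $\fn_2$ in the gcd-metric, or at least places the next iteration squarely in the main case treated above, so that the cumulative per-step cost of at most $3 + 1 + 8 = 12$ in this branch is amortised against the savings of the main case (where each step costs only $8$) to preserve the bound $8e$. Since the number of primes dividing $\fn_2$ and the structure of $\overline{F}$ are controlled, only finitely many such detours are needed, and a careful book-keeping of the case analysis (mirroring that in the proof of \cite[Cor.\@ 5.16]{bss} but with explicit path-length tracking) completes the induction.
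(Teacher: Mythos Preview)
Your ``main case'' is in fact the only case, and the elaborate treatment of the ``degenerate case'' is unnecessary. The key observation you are missing is that the hypothesis $\nu(\fn_l) = \lambda^\ast_{\overline{F}}(1)$ is precisely the \emph{minimal} value of $\nu$ attained by any relative core vertex. Indeed, by \cite[Cor.\@ 5.11]{bss} (applied over $\mathbb{k}$), any relative core vertex $\m$ for $\tilde{\cF}$ satisfies $\nu(\m) \geq \lambda^\ast_{\overline{F}}(1)$. Consequently, for \emph{any} choice of $\q_l \in V(\fm_l)$ one has $\nu(\fn_l/\q_l) = \lambda^\ast_{\overline{F}}(1) - 1$, which is strictly below this threshold, so $\fn_l/\q_l$ can never be a relative core vertex. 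The hypotheses of Lemma \ref{bss lem 5.15} are therefore automatically satisfied, and the induction closes immediately with the clean bound $4 + 8(e-1) + 4 = 8e$.

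Your handling of the degenerate case is also genuinely incomplete as written: the phrase ``a careful book-keeping of the case analysis \dots\ completes the induction'' is not a proof, and the proposed amortisation scheme (cost $12$ in one branch offset by cost $8$ in another) would not obviously yield the stated bound of $8e$ without further argument. Fortunately none of this is needed once you invoke the minimality observation above, which is exactly how the paper proceeds.
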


\begin{proof} We argue by induction on the non-negative integer 
 \[ \mu_{\overline{F}} (\fn_1, \fn_2) \coloneqq \lambda^\ast_{\overline{F}} (1) - \nu ( \mathrm{gcd} ( \fn_1, \fn_2)).
 \]
Firstly, if $\mu_{\overline{F}}(\fn_1, \fn_2) = 0$, and hence $\nu ( \mathrm{gcd} ( \fn_1, \fn_2)) = \lambda^\ast_{\overline{F}} (1) $, then one must have $\fn_1 = \fn_2$ since, by assumption, $\nu (\fn_1)$ and $\nu(\fn_2)$ are also both equal to $\lambda^\ast_{\overline{F}}(1)$. This proves the induction base. \\ 
We therefore assume that $\mu_{\overline{F}} (\fn_1, \fn_2) > 0$, and hence that $\fn_1 \neq \fn_2$, and we then fix primes $\q_1 \in V(\fn_1 / \mathrm{gcd} (\fn_1, \fn_2))$ and $\q_2 \in V(\fn_2 / \mathrm{gcd} (\fn_1, \fn_2))$. Now, by \cite[Cor.\@ 5.11]{bss}, one knows that any relative core vertex $\m$ for $\tilde{\cF}$ satisfies $\nu (\m) \geq \lambda^\ast_{\overline{F}} (1)$. In particular, since
\[ \nu ( \fn_l / \q_l) = \nu (\fn_l) - 1 = \lambda^\ast_{\overline{F}} (1) - 1\] 
for $l \in \{1, 2\}$, neither $\fn_1 / \q_1$ nor $\fn_2 / \q_2$ can be a relative core vertex for $\tilde{\cF}$. Lemma \ref{bss lem 5.15}  therefore implies the existence of a prime $\mathfrak{r}$ in $\cQ_{j(i)}\setminus V(\fn_1 \fn_2)$ such that, for both $l \in \{1,2\}$, there exists a path in $\cX^0_{j(i)}$ between $\fn_l$ and $\fn_l \mathfrak{r} / \q_l$ of length at most four. In addition, one has
\begin{align*}\mu_{\overline{F}} ( \fn_1 \mathfrak{r} / \q_1, \fn_2 \mathfrak{r} / \q_2) =&\, 
\lambda^\ast_{\overline{F}}(1) - \nu ( \mathrm{gcd}( \fn_1 \mathfrak{r} / \q_1, \fn_2 \mathfrak{r} / \q_2))\\
=&\, \lambda^\ast_{\overline{F}} (1) - \nu ( \mathrm{gcd} ( \fn_1, \fn_2)) - 1\\
=&\, \mu_{\overline{F}} (\fn_1, \fn_2) - 1,
\end{align*}
and so, by the induction hypothesis, the vertices $\fn_1 \mathfrak{r} / \q_1$ and $\fn_2 \mathfrak{r} / \q_2$ are connected in $\cX^0_{j(i)}$ by a path of length at most $8\cdot \mu_{\overline{F}}(\fn_1 \mathfrak{r} / \q_1, \fn_2 \mathfrak{r} / \q_2)$.

By concatenating these three paths, we have therefore constructed a path in $\cX^0_{j(i)}$ from $\fn_1$ to $\fn_2$ (via  $\fn_1 \mathfrak{r} / \q_1$ and $\fn_2 \mathfrak{r} / \q_2$) that has length at most
\[ 
4 + 8\cdot \mu_{\overline{F}} (\fn_1 \mathfrak{r} / \q_1, \fn_2 \mathfrak{r} / \q_2) + 4
= 8 ( \mu_{\overline{F}} (\fn_1 \mathfrak{r} / \q_1, \fn_2 \mathfrak{r} / \q_2) + 1) = 8 \cdot\mu_{\overline{F}} (\fn_1, \fn_2),
\]
 as required. \end{proof} 

\subsubsection{Moving along the graph}

Given two values $\kappa_\fn$ and $\kappa_{\fn \q}$ of a system $\kappa$ in  $\KS^t (\tilde{\mathscr{F}})$, the defining relation of Kolyvagin systems (in Definition \ref{koly def}) relates their images under the maps $\check{\psi}_\q^\fs$ and $\check{v}_\q $ that are respectively induced on biduals by $\psi_\q^\mathrm{fs}$ and $v_\q$. In order to be able to relate the values $\kappa_\fn$ and $\kappa_{\fn \q}$ themselves, it is therefore crucial for us to control the kernels of  $\check{\psi}_\q^\fs$ and $\check{v}_\q $. This is achieved by the following result that  uses the  
 integer $\bm{\chi}_\fF$ fixed at the beginning of \S\,\ref{somr section}. 

\begin{lem} \label{bss lem 5.19 new replacement} Fix a non-negative integer $t$ for which one has $t_\fF \coloneqq t + \bm{\chi}_\fF > 0$. Then, for each modulus $\fn \in \cN_{j(i)}$ and prime $\q \in \mathcal{P}\setminus V(\fn)$, the kernels of both of the maps  
\begin{align*}
\check{\psi}_\q^\fs  & \:  \bidual^{t_\fF}_{\bLambda} H^1_{\tilde{\mathscr{F}}(\fn)} (k, \cA)   %
\; \to  \bidual^{t_\fF - 1}_{\bLambda}  H^1_{\tilde{\mathscr{F}}_{\q} (\fn)} (k, \cA) \\
\check{v}_\q & \:  \bidual^{t_\fF}_{\bLambda}  H^1_{\tilde{\mathscr{F}} (\fn \q)} (k, \cA)   
\to   \bidual^{t_{\fF} - 1}_{\bLambda}  H^1_{\tilde{\mathscr{F}}_{\q} (\fn)} (k, \cA)
\end{align*}
are annihilated by $\Fitt^0_{\bLambda} ( H^1_{(\tilde{\cF}^\ast)^{\q}(\fn)} (k, \cB)^\ast) \cdot \Fitt^{t}_{\bLambda} (X (\tilde{\mathscr{F}}))$.
\end{lem}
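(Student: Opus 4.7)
The plan is to show that both kernels coincide as submodules of the same bidual, and then to bound the annihilator of this common kernel via a sequence of Fitting-ideal manipulations.

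First, Lemma~\ref{biduals lemma 1}(i) applied with $s = 1$ to the global duality exact sequence (\ref{global duality sequence4}) (taken with $\Phi = \tilde{\mathscr{F}}$, $\a = \q$, and $\m = \n$) identifies the kernel of $\check\psi_\q^\fs$ with $\bidual^{t_\fF}_\bLambda H^1_{\tilde{\mathscr{F}}_\q(\n)}(k, \cA)$; Lemma~\ref{biduals lemma 1}(ii) lets us factor the target through the bidual of $N = H^1_{\tilde{\mathscr{F}}_\q(\n)}(k,\cA)$ without changing the kernel. An analogous application to (\ref{global duality sequence3}), with $\Phi = \tilde{\mathscr{F}}$ and the formal parameters $\m$ and $\fn$ therein taken to be $\n\q$ and $\q$ respectively, shows that the kernel of $\check v_\q$ is the same module $\bidual^{t_\fF}_\bLambda H^1_{\tilde{\mathscr{F}}_\q(\n)}(k, \cA)$. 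This reduces the lemma to showing that the ideal
\[
\cI \coloneqq \Fitt^0_\bLambda(H^1_{(\tilde{\cF}^\ast)^\q(\n)}(k, \cB)^\ast) \cdot \Fitt^t_\bLambda(X(\tilde{\mathscr{F}}))
\]
annihilates $\bidual^{t_\fF}_\bLambda H^1_{\tilde{\mathscr{F}}_\q(\n)}(k, \cA)$.

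Next, Proposition~\ref{what we need from Selmer complexes}(iii) applied to $\tilde{\mathscr{F}}_\q(\n)$ (in which $V(\fb)$ is empty) produces the short exact sequence
\[
0 \to H^1_{(\tilde{\cF}^\ast)^\q(\n)}(k, \cB)^\ast \to H^1(C(\tilde{\mathscr{F}}_\q(\n))) \to X(\tilde{\mathscr{F}}) \to 0,
\]
from which Lemma~\ref{standard fitting props}(ii) yields $\cI \subseteq \Fitt^t_\bLambda(H^1(C(\tilde{\mathscr{F}}_\q(\n))))$. The complex $C \coloneqq C(\tilde{\mathscr{F}}_\q(\n))$ belongs to $D^{\mathrm{perf}}_{[0, 1]}(\bLambda)$ with Euler characteristic $\chi_\fF - 1$ by Proposition~\ref{what we need from Selmer complexes}(i) and (iii), so the setup of Lemma~\ref{comparing Fitting ideals of H0 and H1 lemma} is satisfied with the trivial free quotient $Y = 0$ (giving $X = H^1(C)$ and $r = \chi_\fF - 1$) and exponent $i = t$; the condition $i + r \geq 0$ reduces to the standing hypothesis $t_\fF \geq 1$. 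Since $\bLambda$ is self-injective, that lemma produces the bridge identity
\[
\Fitt^t_\bLambda(H^1(C)) = \Fitt^{t_\fF - 1}_\bLambda(H^1_{\tilde{\mathscr{F}}_\q(\n)}(k, \cA)^\ast).
\]

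Finally, the Buchsbaum--Eisenbud result (Lemma~\ref{standard fitting props}(v)), applied with $j = t_\fF - 1 < i = t_\fF$ to the module $H^1_{\tilde{\mathscr{F}}_\q(\n)}(k, \cA)^\ast$, shows that $\Fitt^{t_\fF - 1}_\bLambda(H^1_{\tilde{\mathscr{F}}_\q(\n)}(k, \cA)^\ast)$ annihilates $\exprod^{t_\fF}_\bLambda H^1_{\tilde{\mathscr{F}}_\q(\n)}(k, \cA)^\ast$; since annihilation by a ring element transfers to its $\bLambda$-linear dual (because $(\lambda \phi)(x) = \phi(\lambda x)$ for commutative $\bLambda$), the same ideal also annihilates $\bidual^{t_\fF}_\bLambda H^1_{\tilde{\mathscr{F}}_\q(\n)}(k, \cA)$, completing the argument. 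The main point that needs care is the applicability of Lemma~\ref{comparing Fitting ideals of H0 and H1 lemma} with $Y = 0$: this degenerate setting is in fact permitted, and it is precisely what allows the full cohomology $H^1(C)$ to play the role of $X$ and yields the Fitting-ideal identity that ties the cohomological data to the bidual picture.
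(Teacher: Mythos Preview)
Your proof is correct and follows essentially the same route as the paper's own proof: identify both kernels with $\bidual^{t_\fF}_\bLambda H^1_{\tilde{\mathscr{F}}_\q(\fn)}(k,\cA)$ via Lemma~\ref{biduals lemma 1}(i) applied to the sequences (\ref{global duality sequence3}) and (\ref{global duality sequence4}), then combine the short exact sequence of Proposition~\ref{what we need from Selmer complexes}(iii), the bridge identity from Lemma~\ref{comparing Fitting ideals of H0 and H1 lemma} with $Y=0$, and the Buchsbaum--Eisenbud annihilation of Lemma~\ref{standard fitting props}(v). The only difference is cosmetic ordering of the final three steps.
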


\begin{proof}
The  exact sequences (\ref{global duality sequence3}) and (\ref{global duality sequence4}) (with $\Phi = \tilde{\mathscr{F}}$ and $\n = \q, \m = \fn\q$, respectively $\a = \q$ and $\m = \fn$) imply 
 $H^1_{\tilde{\mathscr{F}}_{\q} (\fn)} (k, \cA)$ coincides with the kernel of both  $\check{v}_\q \: H^1_{\tilde{\mathscr{F}}(\fn \q)} (k, \cA) \to \bLambda$ and  $\check{\psi}_\q^\fs \: H^1_{\tilde{\mathscr{F}}(\fn)} (k, \cA)  \to \bLambda$. Lemma~\ref{biduals lemma 1}~(i) therefore implies  $\bidual^{t_\fF}_{\bLambda} H^1_{\tilde{\mathscr{F}}_{\q} (\fn)} (k, \cA)$ is the kernel of both of the displayed maps, and so we must show the stated ideal annihilates the latter module. In addition, by Lemma \ref{standard fitting props}~(v), the module  $\exprod^{t_\fF}_{\bLambda} H^1_{\tilde{\mathscr{F}}_{\q} (\fn)} (k, \cA)^\ast$, and hence also its $\bLambda$-linear dual $\bidual^{t_\fF}_{\bLambda} H^1_{\tilde{\mathscr{F}}_{\q} (\fn)} (k, \cA)$, is annihilated by $\Fitt^{t_\fF - 1}_{\bLambda} ( H^1_{\tilde{\mathscr{F}}_{\q} (\fn)} (k, \cA)^\ast)$
and so we are reduced to proving 
\begin{equation}\label{reduced inclusion} 
\Fitt^0_{\bLambda} ( H^1_{(\tilde{\cF}^\ast)^{\q}(\fn)} (k, \cB)^\ast) \cdot \Fitt^{t}_{\bLambda} (X (\tilde{\mathscr{F}})) 
\subseteq \Fitt^{t_\fF - 1}_{\bLambda} ( H^1_{\tilde{\mathscr{F}}_{\q} (\fn)} (k, \cA)^\ast).
\end{equation}

To do this, we use the complex $C (\tilde{\mathscr{F}}_{\q} (\fn))$ from Proposition \ref{what we need from Selmer complexes}.
In particular, from Proposition \ref{what we need from Selmer complexes}\,(i) and (iii) one has $\bm{\chi}_{\bLambda}(C (\tilde{\mathscr{F}}_{\q} (\fn))) =\bm{\chi}_\fF -1$ and $H^0(C (\tilde{\mathscr{F}}_{\q} (\fn))) = H^1_{\tilde{\mathscr{F}}_{\q}(\fn)} (k, \cA)$. By applying Lemma \ref{comparing Fitting ideals of H0 and H1 lemma} with  $C = C (\tilde{\mathscr{F}}_{\q} (\fn))$ and $Y = (0)$, one therefore has  
\[ \Fitt^{t_\fF - 1}_{\bLambda} ( H^1_{\tilde{\mathscr{F}}_{\q}(\fn)} (k, \cA)^\ast) = \Fitt^{t}_{\bLambda} ( H^1 ( C (\tilde{\mathscr{F}}_{\q} (\fn))). 
\]
To deduce the required equality (\ref{reduced inclusion}), we then need only note that Lemma \ref{standard fitting props}\,(ii) applies to the exact sequence in Proposition \ref{what we need from Selmer complexes}\,(iii) (with $\a$ and $\fb$ taken to be $\q$ and $1$ respectively) to imply that 
\begin{equation*}\label{r-s inclusion}
\Fitt^0_{\bLambda} ( H^1_{(\tilde{\cF}^\ast)^{\q} (\fn)} (k, \cB)^\ast) \cdot 
\Fitt^{t}_{\bLambda} ( X (\tilde{\mathscr{F}})) \subseteq  \Fitt^{t}_{\bLambda} ( H^1 ( C (\tilde{\mathscr{F}}_{\q} (\fn)))).
\qedhere
\end{equation*}
\end{proof}

Lemma \ref{bss lem 5.19 new replacement} has the following concrete consequence concerning the values of Kolyvagin systems at moduli that are connected by paths on the graph $\cX^0_{j(i)}$. 

\begin{lem}\label{induction on paths claim lemma}
    Fix $t \in \N_0$ with $t_\fF \coloneqq t + \bm{\chi}_\fF > 0$. Let $\kappa \in \KS^{t_\fF} (\tilde{\mathscr{F}})$ be a Kolyvagin system of rank $t_\fF$ for $\tilde{\mathscr{F}}$, $\fn$ a relative core vertex for $\tilde{\cF}$ in $\cN_{j(i)}$ with $\kappa_\fn = 0$ and $U$ a path in $\cX^0_{j(i)}$ that connects $\fn$ to $\fn'$. Then $\kappa_{\fn'}$ is annihilated by every element of the ideal 
    \begin{equation*} \label{induction on paths claim}
    \big(\prod_{ (\m,\q) \in U} \Fitt^0_{\bLambda} (H^1_{(\tilde{\cF}^\ast)^\q(\m)} (k, \cB)^\ast) \big) \cdot \Fitt^{t}_{\bLambda} (X (\tilde{\mathscr{F}}))^{|U|}. 
    \end{equation*} 
\end{lem}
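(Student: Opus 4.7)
The plan is to argue by induction on the length $s = |U|$ of the path. The base case $s = 1$ is immediate: then $U$ consists of a single pair $(\fn, \q_1)$, we have $\fn' = \fn$, and so $\kappa_{\fn'} = \kappa_\fn = 0$ by hypothesis, which forces the displayed product to annihilate $\kappa_{\fn'}$ trivially.

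For the inductive step with $s \geq 2$, I would write $U = \{(\fn_1, \q_1), \ldots, (\fn_s, \q_s)\}$ with $\fn_1 = \fn$ and $\fn_s = \fn'$, and consider the sub-path $U' = \{(\fn_1, \q_1), \ldots, (\fn_{s-1}, \q_{s-1})\}$ of length $s-1$, which connects $\fn$ to $\fn_{s-1}$ in $\cX^0_{j(i)}$. The inductive hypothesis then provides an ideal
\[
I' \coloneqq \Big(\prod_{(\m, \q) \in U'} \Fitt^0_{\bLambda}(H^1_{(\tilde{\cF}^\ast)^\q(\m)}(k, \cB)^\ast)\Big) \cdot \Fitt^t_{\bLambda}(X(\tilde{\mathscr{F}}))^{s-1}
\]
annihilating $\kappa_{\fn_{s-1}}$.

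To pass from $\kappa_{\fn_{s-1}}$ to $\kappa_{\fn_s}$, I would invoke the finite-singular relation defining $\KS^{t_\fF}(\tilde{\mathscr{F}})$: depending on whether $\fn_s = \fn_{s-1}\q_{s-1}$ or $\fn_s = \fn_{s-1}/\q_{s-1}$, this identifies the image of $\kappa_{\fn_s}$ under $\check{v}_{\q_{s-1}}$ with $\check{\psi}^{\mathrm{fs}}_{\q_{s-1}}(\kappa_{\fn_{s-1}})$, or else the image of $\kappa_{\fn_s}$ under $\check{\psi}^{\mathrm{fs}}_{\q_{s-1}}$ with $\check{v}_{\q_{s-1}}(\kappa_{\fn_{s-1}})$. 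Multiplying the appropriate relation by $I'$ and using the inductive hypothesis shows in either case that $I' \cdot \kappa_{\fn_s}$ lies in the kernel of the corresponding map on $\bidual^{t_\fF}_{\bLambda} H^1_{\tilde{\mathscr{F}}(\fn_s)}(k, \cA)$. Lemma \ref{bss lem 5.19 new replacement} then supplies an annihilator for this kernel of the shape $\Fitt^0_{\bLambda}(H^1_{(\tilde{\cF}^\ast)^{\q_{s-1}}(\m')}(k, \cB)^\ast) \cdot \Fitt^t_{\bLambda}(X(\tilde{\mathscr{F}}))$, with $\m' \in \{\fn_{s-1}, \fn_s\}$ being whichever modulus is the smaller of the two, and multiplying this annihilator by $I'$ produces the required annihilation of $\kappa_{\fn_s}$ by a product of $s$ Fitting ideals of $\Fitt^0$-type together with $\Fitt^t_\bLambda(X(\tilde{\mathscr{F}}))^s$.

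The main obstacle will be the bookkeeping needed to reconcile the Fitting-ideal factor $\Fitt^0_\bLambda(H^1_{(\tilde{\cF}^\ast)^{\q_{s-1}}(\m')}(k, \cB)^\ast)$ produced by Lemma \ref{bss lem 5.19 new replacement} at the inductive step with the factor $\Fitt^0_\bLambda(H^1_{(\tilde{\cF}^\ast)^{\q_s}(\fn_s)}(k, \cB)^\ast)$ indexed by the terminal pair $(\fn_s, \q_s)$ in the statement's product, and to handle the two directional cases uniformly. This reconciliation relies on a careful reading of the conventions for paths in $\cX^0_{j(i)}$ together with the observation that enlarging an annihilating ideal by extra multiplicative factors preserves annihilation; otherwise the argument is a clean interplay between the Kolyvagin-system relation, which transports annihilation along adjacent vertices up to an error in a controlled kernel, and the structural bound of Lemma \ref{bss lem 5.19 new replacement}, which measures that kernel in terms of the relative Fitting ideals appearing in the statement.
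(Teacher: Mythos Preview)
Your approach is exactly the paper's: induct on $|U|$, apply the hypothesis to the truncated path $U'=\{(\fn_1,\q_1),\ldots,(\fn_{s-1},\q_{s-1})\}$, and close the final edge with the Kolyvagin relation plus Lemma~\ref{bss lem 5.19 new replacement}. The bookkeeping obstacle you flag is genuine, and the paper's written proof glosses over the same point. As you have set things up, however, your suggested resolution does not quite close it: the inductive hypothesis for $U'$ already carries the factor indexed by $(\fn_{s-1},\q_{s-1})$, and Lemma~\ref{bss lem 5.19 new replacement} produces that \emph{same} factor again at the step (in either direction the annihilator it yields is $\Fitt^0_{\bLambda}(H^1_{(\tilde{\cF}^\ast)^{\q_{s-1}}(\m')}(k,\cB)^\ast)$ with $\m'$ whichever of $\fn_{s-1},\fn_s$ is not divisible by $\q_{s-1}$, and under the natural reading $(\tilde{\cF}^\ast)^{\q}(\m)=(\tilde{\cF}^\ast)^{\q}(\m/\q)$ when $\q\mid\m$ this is exactly the $(s-1)$-st factor in the product over $U$). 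So the ideal you actually obtain has this factor squared in place of the product of the $(s-1)$-st and $s$-th factors, and that is not remedied by throwing in further multiplicands.

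The fix you are gesturing at is this: the terminal prime $\q_s$ is unconstrained by the definition of a path (there is no $\fn_{s+1}$), so the factor indexed by $(\fn_s,\q_s)$ is genuinely redundant. One should therefore induct on the sharper claim that $\bigl(\prod_{l=1}^{s-1}\Fitt^0_{\bLambda}(H^1_{(\tilde{\cF}^\ast)^{\q_l}(\fn_l)}(k,\cB)^\ast)\bigr)\cdot\Fitt^t_{\bLambda}(X(\tilde{\mathscr{F}}))^{s-1}$ already annihilates $\kappa_{\fn_s}$. Under this hypothesis the product over $l\le s-2$ kills $\kappa_{\fn_{s-1}}$, the step contributes exactly the missing $(s-1)$-st factor, and the lemma as stated then follows by multiplying in the spare terminal factor and one extra copy of $\Fitt^t_{\bLambda}(X(\tilde{\mathscr{F}}))$---which is where your remark about extra factors finally applies.
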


\begin{proof}
We will prove the claim by induction on the length $s = |U|$ of $U$. Let us therefore assume that the claim has already been proved for all paths of length at most $s-1$. \\
Writing $U = ( (\fn_1, \q_1), \dots, (\fn_s, \q_s))$, we then see that $\fn_1$ and $\fn_{s - 1}$ are connected by a path of length $s - 1$. By the induction hypothesis it therefore follows that 
\begin{equation} \label{something lives in sha}
\big( {\prod}_{ l \in [s-1]} \Fitt^0_{\bLambda} (H^1_{(\tilde{\cF}^\ast)^{\q_l}(\fn_l)} (k, \cB)^\ast) \big) \cdot \Fitt^{t}_{\bLambda} (X (\tilde{\mathscr{F}}))^{s - 1} \cdot \kappa_{\fn_{s - 1}} = \{0 \}.
\end{equation}
Now, as $\fn' = \fn_s$ and $\fn_{s - 1}$ are connected by a path of length one, we have either $\fn' = \fn_{s - 1} \q_{s - 1}$ or $\fn' = \fn_{s - 1} / \q_{s - 1}$ and we consider these cases separately. 

We first assume $\fn' = \fn_{s - 1} \q_{s - 1}$. In this case, the defining relation of Kolyvagin systems implies 
\[
v_{\q_{s - 1}} ( \kappa_{\fn'}) = \psi_{\q_{s - 1}}^\fs ( \kappa_{\fn' /\q_{s - 1}}) = \psi_{\q_{s - 1}}^\fs (\kappa_{\fn_{s - 1}}).
\]
This equality then combines with (\ref{something lives in sha}) to imply an inclusion 
\[ \big( {\prod}_{ l \in [s-1]} \Fitt^0_{\bLambda} (H^1_{(\tilde{\cF}^\ast)^{\q_l}(\fn_l)} (k, \cB)^\ast) \big) \cdot \Fitt^{t}_{\bLambda} (X (\tilde{\mathscr{F}}))^{s - 1} \cdot \kappa_{\fn'} \subseteq \ker(v_{\q_{s - 1}}).\] 
In particular, since (the second assertion of) Lemma \ref{bss lem 5.19 new replacement} implies that the kernel of $v_{\q_{s - 1}}$ is annihilated by the product of $\Fitt^0_{\bLambda} ( H^1_{(\tilde{\cF}^\ast)^{\q_{s - 1}}(\fn_{s - 1})} (k, \cB)^\ast)$ and $\Fitt^{t}_{\bLambda} (X (\tilde{\mathscr{F}}))$, the claimed equality is clear in this case. 

We now assume $\fn' = \fn_{s- 1} / \q_{s - 1}$. In this case, the relevant Kolyvagin system relation asserts  
\[
\psi_{\q_{s - 1}}^\fs (\kappa_{\fn'}) = v_{\q_{s- 1}} (\kappa_{\fn' \q_{s - 1}}) = v_{\q_{s - 1}} (\kappa_{\fn_{s - 1}}).
\]
Then, just as above, this equality can be combined with (\ref{something lives in sha}) and (the first assertion of) Lemma \ref{bss lem 5.19 new replacement} to deduce the validity of the claimed equality.

This therefore concludes the inductive step, thereby proving the claimed result.
\end{proof}

\subsubsection{Bounding dimensions}

In this subsection, we shall provides  bounds for the dimensions of $\mathbb{K}$-spaces that arise in subsequent arguments.

\begin{lem} \label{when is the Selmer equal to sha}
  Let $\fn$ and $Q$ be coprime moduli in $\cN_{j(i)}$. Then, if the homomorphism 
\[
\big( H^1_{\tilde{\mathscr{F}}(\fn)} (k, \cA) / \sha_{\tilde{\mathscr{F}},j(i)}(\cA) \big) [\cM_i] \xrightarrow{ (\check{\psi}_\q^\fs )_\q} \bigoplus_{\q \in V(Q)}  \bLambda
\]
is injective, one has $H^1_{\tilde{\mathscr{F}}_{Q} (\fn)} (k, \cA) = \sha_{\tilde{\mathscr{F}}, j(i)} (\cA)$. 
\end{lem}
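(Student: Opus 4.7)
The plan is to realise both $\sha_{\tilde{\mathscr{F}},j(i)}(\cA)$ and $H^1_{\tilde{\mathscr{F}}_{Q}(\fn)}(k,\cA)$ as submodules of the common ambient module $H^1_{\tilde{\mathscr{F}}(\fn)}(k,\cA)$, and then to establish the desired equality via a contradiction argument powered by the injectivity hypothesis. To set things up, I will invoke the global duality sequences of Proposition \ref{what we need from Selmer complexes}\,(v) with $\Phi=\tilde{\mathscr{F}}$: the sequence (\ref{global duality sequence1}) (taking $\n = 1$ and $\m = \fn$) characterises $H^1_{\tilde{\mathscr{F}}}(k,\cA)$ as the kernel of $(\hat{v}_\q)_{\q} \: H^1_{\tilde{\mathscr{F}}^\fn}(k,\cA) \to \bLambda^{\nu(\fn)}$, while (\ref{global duality sequence2}) characterises $H^1_{\tilde{\mathscr{F}}(\fn)}(k,\cA)$ as the kernel of $(\hat{\psi}_\q^\mathrm{fs})_\q$ on the same module. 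Since every element of $\sha_{\tilde{\mathscr{F}},j(i)}(\cA)$ is locally trivial at each $\q\in V(\fn)\subseteq \cQ_{j(i)}$, both families of maps vanish on the image of $\sha_{\tilde{\mathscr{F}},j(i)}(\cA)$ inside $H^1_{\tilde{\mathscr{F}}^\fn}(k,\cA)$, so $\sha_{\tilde{\mathscr{F}},j(i)}(\cA)$ lies canonically inside $H^1_{\tilde{\mathscr{F}}(\fn)}(k,\cA)$. The sequence (\ref{global duality sequence4}) (with $\a=Q$ and $\m=\fn$) will then identify $H^1_{\tilde{\mathscr{F}}_{Q}(\fn)}(k,\cA)$ with the kernel of $(\check{\psi}_\q^\mathrm{fs})_{\q\in V(Q)} \: H^1_{\tilde{\mathscr{F}}(\fn)}(k,\cA) \to \bLambda^{\nu(Q)}$.

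Given this set-up, the inclusion $\sha_{\tilde{\mathscr{F}},j(i)}(\cA)\subseteq H^1_{\tilde{\mathscr{F}}_{Q}(\fn)}(k,\cA)$ follows directly: the composite defining $\check{\psi}_\q^\mathrm{fs}$ factors through the localisation at $\q\in V(Q)\subseteq \cQ_{j(i)}$, which annihilates $\sha_{\tilde{\mathscr{F}},j(i)}(\cA)$ by definition. For the reverse inclusion, I will argue by contradiction. Take $x\in H^1_{\tilde{\mathscr{F}}_{Q}(\fn)}(k,\cA)$ and let $\bar{x}$ denote its image in the quotient $M\coloneqq H^1_{\tilde{\mathscr{F}}(\fn)}(k,\cA)/\sha_{\tilde{\mathscr{F}},j(i)}(\cA)$. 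Assuming $\bar{x}\neq 0$, the fact that $\bLambda = \cR_i$ is a zero-dimensional Gorenstein local (hence Artinian) ring allows me to apply Lemma \ref{ryotaro's reduction trick} to obtain an element $\lambda\in\bLambda$ with $\lambda\bar{x}$ a nonzero element of $M[\cM_i]$. The assumed injectivity of $(\check{\psi}_\q^\mathrm{fs})_\q$ on $M[\cM_i]$ will then produce some $\q_0\in V(Q)$ with $\check{\psi}_{\q_0}^\mathrm{fs}(\lambda\bar{x})\neq 0$. Since $\check{\psi}_{\q_0}^\mathrm{fs}$ factors through $M$ by the previous paragraph, one may rewrite $\check{\psi}_{\q_0}^\mathrm{fs}(\lambda\bar{x}) = \lambda\cdot\check{\psi}_{\q_0}^\mathrm{fs}(x)$, and this vanishes because $x$ lies in the kernel of every $\check{\psi}_\q^\mathrm{fs}$ with $\q\in V(Q)$. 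The resulting contradiction forces $\bar{x}=0$, i.e.\ $x\in\sha_{\tilde{\mathscr{F}},j(i)}(\cA)$, as required.

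The argument is quite short, and the main point requiring care is purely bookkeeping: one must verify that the map $\check{\psi}_\q^\mathrm{fs} \: H^1_{\tilde{\mathscr{F}}(\fn)}(k,\cA)\to \bLambda$ genuinely annihilates $\sha_{\tilde{\mathscr{F}},j(i)}(\cA)$ and hence factors through $M$. This will follow from the definition of $\check{\psi}_\q^\mathrm{fs}$ recorded in Proposition \ref{what we need from Selmer complexes}\,(v) as the composite of the canonical projection $H^1_{\tilde{\mathscr{F}}(\fn)}(k,\cA)\to H^1_{\tilde{\cF}(\fn)}(k,\cA)$ with the Mazur--Rubin incarnation of $\psi_\q^\mathrm{fs}$, combined with the explicit description (\ref{fs def}) of the latter as localisation at $\q$ followed by the projection $\cA/(\tau-1)\cong\bLambda$.
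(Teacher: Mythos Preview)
Your proof is correct and follows essentially the same route as the paper: both arguments use the global duality sequence (\ref{global duality sequence4}) to identify $H^1_{\tilde{\mathscr{F}}_{Q}(\fn)}(k,\cA)$ with the kernel of $(\check{\psi}_\q^\mathrm{fs})_{\q\in V(Q)}$ on $H^1_{\tilde{\mathscr{F}}(\fn)}(k,\cA)$, and then appeal to Lemma~\ref{ryotaro's reduction trick} to deduce from the injectivity hypothesis that the quotient $H^1_{\tilde{\mathscr{F}}_{Q}(\fn)}(k,\cA)/\sha_{\tilde{\mathscr{F}},j(i)}(\cA)$ vanishes. The paper phrases this last step directly (the kernel of the displayed map equals $\bigl(H^1_{\tilde{\mathscr{F}}_{Q}(\fn)}(k,\cA)/\sha_{\tilde{\mathscr{F}},j(i)}(\cA)\bigr)[\cM_i]$, which is then zero), whereas you argue the contrapositive element-by-element, but the content is the same.
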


\begin{proof}
    The relevant case of the long exact sequence (\ref{global duality sequence4}) implies that the kernel of the displayed map is equal to 
    $\bigl(H^1_{\tilde{\mathscr{F}}_{Q} (\fn)} (k, \cA) / \sha_{\tilde{\mathscr{F}}, j(i)} (\cA)\bigr)[\mathcal{M}_i]$. The given assumption therefore implies that this module vanishes and hence, by Lemma \ref{ryotaro's reduction trick}, that $H^1_{\tilde{\mathscr{F}}_{Q} (\fn)} (k, \cA) / \sha_{\tilde{\mathscr{F}}, j(i)} (\cA)$ itself vanishes, as claimed.
\end{proof}

For each modulus $\fn$ in $\cN_{j(i)}$, we set   
\begin{align}\label{alpha def}
\alpha_i (\fn) \coloneqq&\, \dim_\mathbb{K} \bigl(\big( H^1_{\tilde{\mathscr{F}} (\fn)} (k, \cA) / \sha_{\tilde{\mathscr{F}}, j (i)} (\cA) \big) [\cM_i]\bigr) \\
=&\, \dim_\mathbb{K}\bigl( \big( H^1_{\tilde{\cF}(\fn)} (k, \cA) / \sha_{\tilde{\cF},j(i)} (\cA) \big) [\cM_i]\bigr),\notag\end{align}
where the equality is a consequence of the isomorphism in Lemma \ref{general neukirch interpretation lemma}\,(iii). This quantity constitutes an upper bound for the minimal possible value of $\nu(Q)$ for moduli $Q$ in $\cN_{j(i)}$ that are coprime to $\fn$ and such that the displayed map in Lemma \ref{when is the Selmer equal to sha} is injective. The following result establishes some crucial properties of these bounds. 

\begin{thm} \label{bound for alpha}
The following claims are valid.
\begin{romanliste}
    \item For each modulus $\fn \in \cN_{j (i)}$ and prime $\q \in \cQ_{j (i)} \setminus V (\fn)$, one has $|\alpha_i (\fn \q) - \alpha_i (\fn)| \leq 1$.
    \item There exists an increasing function $\Phi_{\mathscr{F}_k}: \N_0\to \N_0$ such that, for every $(i,\fn)\in \N\times \cN_{j(i)}$, one has 
    $\alpha_i (\fn) \leq \Phi_{\mathscr{F}_k}(\nu(\fn))$. 
\end{romanliste}
\end{thm}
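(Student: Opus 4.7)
The plan is to derive (i) from the global duality sequences of Proposition~\ref{what we need from Selmer complexes}\,(v), and to deduce (ii) by iterating (i) to reduce to the case $\fn=1$, which is handled uniformly in $i$ by a control-theorem-style argument.

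For (i), applying the exact sequences (\ref{global duality sequence3}) and (\ref{global duality sequence4}) with $\Phi = \tilde{\mathscr{F}}$ gives left-exact sequences
\[
0 \to H^1_{\tilde{\mathscr{F}}_\q(\fn)}(k, \cA) \to H^1_{\tilde{\mathscr{F}}(\fn)}(k, \cA) \xrightarrow{\check{\psi}_\q^{\mathrm{fs}}} \bLambda,
\]
\[
0 \to H^1_{\tilde{\mathscr{F}}_\q(\fn)}(k, \cA) \to H^1_{\tilde{\mathscr{F}}(\fn\q)}(k, \cA) \xrightarrow{\check{v}_\q} \bLambda.
\]
Set $\sha \coloneqq \sha_{\tilde{\mathscr{F}}, j(i)}(\cA)$ and $W \coloneqq H^1_{\tilde{\mathscr{F}}_\q(\fn)}(k, \cA)$; the inclusion $\sha \subseteq W$ holds since elements of $\sha$ localise trivially at $\q \in \cQ_{j(i)}$. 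Quotienting by $\sha$ and applying the left-exact functor $(-)[\cM_i]$, both $(H^1_{\tilde{\mathscr{F}}(\fn)}/\sha)[\cM_i]$ and $(H^1_{\tilde{\mathscr{F}}(\fn\q)}/\sha)[\cM_i]$ become extensions of submodules of $\bLambda[\cM_i]$ by the common submodule $(W/\sha)[\cM_i]$. Since $\dim_\mathbb{K}\bLambda[\cM_i] = 1$ by Lemma~\ref{inj env lem}\,(i), both $\alpha_i(\fn)$ and $\alpha_i(\fn\q)$ lie in $\{d, d+1\}$ for $d = \dim_\mathbb{K}(W/\sha)[\cM_i]$, yielding $|\alpha_i(\fn\q) - \alpha_i(\fn)| \leq 1$.

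For (ii), iterating (i) along a factorisation of $\fn$ gives $\alpha_i(\fn) \leq \alpha_i(1) + \nu(\fn)$, so it suffices to bound $\alpha_i(1)$ by a constant $C = C(\mathscr{F}_k)$, whereafter $\Phi_{\mathscr{F}_k}(s) \coloneqq C + s$ works. To this end, Lemma~\ref{general neukirch interpretation lemma}\,(i)--(ii) gives $\sha \subseteq \fX \coloneqq \fX_{\tilde{\cF}, j(i)}(\cA)$ and shows that any nonzero class in $\fX/\sha$ has non-trivial localisation at every $v \in \cQ_{j(i)}$; thus localisation at any single such $v$ embeds $\fX/\sha$ into $H^1_f(k_v, \cA) \cong \bLambda$, whence $\dim_\mathbb{K}(\fX/\sha)[\cM_i] \leq 1$ uniformly. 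Via the identification of Lemma~\ref{general neukirch interpretation lemma}\,(iii), the tautological sequence $0 \to \fX/\sha \to H^1_{\tilde{\cF}}/\sha \to H^1_{\tilde{\cF}}/\fX \to 0$ then reduces the problem to bounding $\dim_\mathbb{K}(H^1_{\tilde{\cF}}/\fX)[\cM_i]$ uniformly in $i$.

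The hard part is this uniform bound. Here I would use that $H^1_{\tilde{\cF}}/\fX$ embeds via restriction into the $\gal{k_{j(i)}(\cT_{j(i)})}{k}$-invariants of $H^1(k_{j(i)}(\cT_{j(i)}), \cA)$, and that elements of the image factor through an abelian quotient of $G_{k_{j(i)}(\cT_{j(i)})}$ whose ramification is supported on a controlled set of primes lying above $S(\mathscr{F})$ together with those ramifying in $k_{j(i)}(\cT_{j(i)})/k$. Hypothesis~\ref{new strategy hyps}\,(vii) ensures that $\gal{k(\cT)_\infty}{k}$ is a compact $p$-adic analytic group, and in particular topologically finitely generated; combined with the identification $\cA[\cM_i] \cong \overline{\cT}$ as $\bLambda[G_k]$-modules (Lemma~\ref{inj env lem}\,(i) applied componentwise to the free $\bLambda$-module $\cA$), class field theory together with a Nakayama argument then bounds $\dim_\mathbb{K}(H^1_{\tilde{\cF}}/\fX)[\cM_i]$ in terms of $\dim_\mathbb{K}\overline{\cT}$, the number of topological generators of $\gal{k(\cT)_\infty}{k}$, and $|S(\mathscr{F})|$, all of which are fixed. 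This yields the required constant $C(\mathscr{F}_k)$ and completes the proof.
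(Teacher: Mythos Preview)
Your argument for (i) is correct and matches the paper's: both use the exact sequences (\ref{global duality sequence3}) and (\ref{global duality sequence4}) to sandwich $\alpha_i(\fn)$ and $\alpha_i(\fn\q)$ between $d$ and $d+1$ for a common $d$.

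For (ii), your reduction to bounding $\alpha_i(1)$ uniformly in $i$ via iteration of (i) is correct and is also what the paper does. Your bound $\dim_\mathbb{K}(\fX/\sha)[\cM_i]\le 1$ via Lemma~\ref{general neukirch interpretation lemma}\,(ii) and a single localisation is a clean observation that does not appear in the paper. However, your treatment of the remaining term $\dim_\mathbb{K}(H^1_{\tilde{\cF}}/\fX)[\cM_i]$ has a genuine gap. You embed this into $\Hom_\Delta(G_{k_{j(i)}(\cT_{j(i)})},\overline{\cT})$ and then assert that ``class field theory together with a Nakayama argument'' bounds it in terms of $\dim_\mathbb{K}\overline{\cT}$, the number of topological generators of $\gal{k(\cT)_\infty}{k}$, and $|S(\mathscr{F})|$. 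But the restricted classes need not factor through $\gal{k(\cT)_\infty}{k_{j(i)}(\cT_{j(i)})}$ --- they can cut out extensions of $k_{j(i)}(\cT_{j(i)})$ not contained in $k(\cT)_\infty$ --- and the relevant abelian pro-$p$ quotient of $G_{k_{j(i)}(\cT_{j(i)}),S}$ (e.g.\ its $S$-class group contribution) is not a priori bounded by your proposed invariants as $i$ grows. The invocation of Hypothesis~\ref{new strategy hyps}\,(vii) is the right instinct, but you have not supplied the mechanism by which it yields uniformity.

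The paper resolves this by a different decomposition. Applying $\Hom_{\cR_s}(\mathbb{K},-)$ to $0\to\sha_s\to H^1_{\mathscr{F}_s}\to H^1_{\mathscr{F}_s}/\sha_s\to 0$ gives the bound $\alpha_s(1)\le \dim_\mathbb{K} H^1_{\mathscr{F}_s}(k,\cT_s)[\cM_s]+\dim_\mathbb{K}\Ext^1_{\cR_s}(\mathbb{K},\sha_s)$. The first term is uniformly bounded by identifying it with $\dim_\mathbb{K} H^1_{\mathscr{F}_0}(k,\overline{\cT})$ via Lemma~\ref{how the cohomology base changes lemma}\,(i). For the second, one identifies $\Ext^1_{\cR_s}(\mathbb{K},\sha_s)\cong\Tor_1^{\cR_s}(\mathbb{K},\sha_s^\ast)^\ast$, shows that $M\coloneqq\varprojlim_s\sha_s^\ast$ is finitely generated over $\cR$, and then compares $\Tor_1^{\cR_i}(\mathbb{K},\sha_i^\ast)^\ast$ with $\Tor_1^\cR(\mathbb{K},M)^\vee$ via a limit argument. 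The discrepancy is controlled by Lemma~\ref{new bound}, which is precisely where Hypothesis~\ref{new strategy hyps}\,(vii) enters: it bounds $\dim_\mathbb{K}\Hom_{\mathrm{cont}}(\gal{k(\cT)_\infty}{k_i(\cT_i)},\overline{\cT})$ by $\mathrm{rk}(k(\cT)_\infty/k)\cdot\dim_\mathbb{K}\overline{\cT}$ using the rank bound for closed subgroups of $p$-adic analytic groups. This limit-and-$\Tor$ machinery is the substantive content you have skipped; your decomposition via $\fX$ does not avoid it but merely relocates the same difficulty into the term $(H^1_{\tilde{\cF}}/\fX)[\cM_i]$.
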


\begin{proof} 
To prove (i), we fix an $\bLambda$-submodule $W$ of $H^1_{\tilde{\mathscr{F}}_{\q} (\fn)} (k, \cA)$.
Then, for the modulus $\fn' = \fn$, respectively $\fn' = \q \fn$, the exact sequence (\ref{global duality sequence1}) with $(\Phi,\fn,\m)$ taken to be $(\tilde{\mathscr{F}}(\fn), 1,\q)$, respectively the exact sequence (\ref{global duality sequence2}) with $(\Phi,\m)$ taken to be $(\tilde{\mathscr{F}}(\fn),\q)$, gives an exact sequence of $\bLambda$-modules 
\[
0 \to H^1_{\tilde{\mathscr{F}}(\fn')} (k, \cA) / W \to H^1_{\tilde{\mathscr{F}}^\q (\fn)} (k, \cA) / W \to \bLambda.
\]
Applying the functor $(-) [\cM_i]$ to this sequence, one  obtains an exact sequence of $\mathbb{K}$-modules 
\[
0 \to \big( H^1_{\tilde{\mathscr{F}}(\fn')} (k, \cA) / W \big) [\cM_i] \to \big( H^1_{\tilde{\mathscr{F}}^\q (\fn)} (k, \cA) /W \big) [\cM_i] \to \mathbb{K}, 
\]
from which one deduces  that 
\begin{equation} \label{inequality for additive functions}
\big | \dim_\mathbb{K} \bigl(\big( H^1_{\tilde{\mathscr{F}} (\fn \q)} (k, \cA) / W \big) [\cM_i]\bigr)
- \dim_\mathbb{K}\bigl( \big( H^1_{\tilde{\mathscr{F}} (\fn)} (k, \cA) / W \big) [\cM_i]\bigr) \big | \leq 1.
\end{equation}
Claim (i) now follows upon setting $W = \sha_{\tilde{\mathscr{F}}, j (i)} (\cA)$.\\
To prove (ii), we lighten notation by setting $\sha_s \coloneqq \sha_{\mathscr{F}_s, j (s)} (\cT_s)$ for each $s\ge 0$. Then, upon applying the functor $\Hom_{\cR_s} ( \mathbb{K}, - )$ to the tautological short exact sequence 
    \[ 
    0 \to \sha_s \to H^1_{\mathscr{F}_s(\fn)} (k, \cT_s) \to H^1_{\mathscr{F}_s (\fn)} (k, \cT_s) / \sha_s \to 0
    \]
    we obtain an exact sequence of $\mathbb{K}$-modules
\begin{equation}\label{deduced ses} H^1_{\mathscr{F}_s (\fn)} (k, \cT_s) [\cM_s]
    \to \big( H^1_{\mathscr{F}_s (\fn)} (k, \cR_s) /  \sha_s\big) [\cM_s]
    \to \Ext^1_{\cR_s} ( \mathbb{K}, \sha_s).
    \end{equation}
   To prove (ii), it is therefore suffices to provide suitable bounds on the $\mathbb{K}$-dimensions of the outer terms in this exact sequence.\\ 
For the first module, we note that Proposition \ref{what we need from Selmer complexes}\,(i), (ii) and (iii) allow us to apply  Lemma \ref{how the cohomology base changes lemma}\,(i) (with $C_n = C(\mathscr{F}_n(\fn)), a=0$ and $b=1$) to deduce the existence of a natural isomorphism of (finite) $\mathbb{K}$-modules 
\[ H^1_{\mathscr{F}_s(\fn)} (k, \cT_s) [\cM_s] \cong H^1_{\mathscr{F}_0(\fn)} (k, \overline{\cT}).\] 
We write $\beta (\fn)$ for the $\mathbb{K}$-dimension of the latter  module. Then, for each modulus $\m \in \cN_{j(s)}$ and prime $\q \in \cQ_{j(s)}\setminus V(\m)$, we can take $W = (0)$ in (\ref{inequality for additive functions}) (with $\tilde{\mathscr{F}} = \mathscr{F}_i$, $\mathcal{A}$ and $\cM_i$ replaced by $\mathscr{F}_0, \overline{\cT}$ and $\cM_0$ respectively) in order  to deduce an inequality $|\beta (\m \q) - \beta (\m)| \leq 1$. By combining this inequality with an induction on $\nu(\fn)$, one can then prove that  
\begin{equation}\label{first ineq}
\dim_{\mathbb{K}}\bigl(H^1_{\mathscr{F}_s (\fn)} (k, \cT_s) [\cM_s]\bigr) = \beta (\fn) \leq  \nu (\fn) + \beta(1) = 
\nu (\fn) +  \dim_\mathbb{K}\bigl(H^1_{\mathscr{F}_0} (k, \overline{\cT})\bigr). 
\end{equation}
To bound the $\mathbb{K}$-dimension of $\Ext^1_{\cR_s} ( \mathbb{K}, \sha_s)$ we note that the derived Tensor-Hom adjunction isomorphism in $D (\mathbb{K})$
 \[
 \RHom_{\cR_s} ( \mathbb{K}, \RHom_{\cR_s} ( \sha_s^\ast, \cR_s)) 
 \xrightarrow{\simeq} \RHom_{\cR_s} ( \mathbb{K} \otimes^\mathbb{L}_{\cR_s} \sha_s^\ast, \cR_s)
 \]
  (cf.\@ \cite[Th.\@ 10.8.7]{Weibel-homologicalAlgebra}) induces, on cohomology in degree one, an isomorphism of $\mathbb{K}$-modules 
\begin{equation}\label{ext-tor}
\Ext^1_{\cR_s} ( \mathbb{K}, \sha_s) \cong \Tor_1^{\cR_s} ( \mathbb{K}, \sha_s^\ast )^\ast.
\end{equation}
 To study the latter module, we  use the exact commutative diagram of $\cR_s$-modules
\begin{cdiagram}[row sep=small, column sep=small]
0 \arrow{r} & \sha_s \arrow[dashed]{d} \arrow{r} & H^1_{\mathscr{F}_s} (k, \cT_s) \arrow{r} \arrow{d}{\simeq} & {\bigoplus}_{\q \in \cQ_{j (s)}} H^1_f (k_\q, \cT_s) \arrow{d} \\ 
    0 \arrow{r} & \sha_{s + 1}[\a_s] \arrow{r} & H^1_{\mathscr{F}_{s + 1}} (k, \cT_{s + 1}) [\a_s] \arrow{r} & {\bigoplus}_{\q \in \cQ_{j (s + 1)}} H^1_f (k_\q, \cT_{s +1}) [\a_s].
\end{cdiagram}%
Here the rows follow directly from the respective definitions of $\sha_s$ and $\sha_{s + 1}$ and (in the second case) the left exactness of the functor $(-)[\a_s]$. The central vertical map is 
the isomorphism that is induced, via Lemma \ref{how the cohomology base changes lemma}\,(i) (and the result of Proposition \ref{what we need from Selmer complexes}), by the fixed isomorphism $\cR_{s + 1} [\a_s] \cong \cR_s$. Finally, the right hand vertical map is the projection induced by the isomorphisms $H^1_f (k_\q, \cT_{s + 1}) [\a_s] \cong H^1_f (k_\q, \cT_s)$ for each $\q \in \cQ_{j(s + 1)} \subseteq \cQ_{j(s)}$. In particular, the second square in the diagram commutes and so the indicated dashed map exists in order to make the whole diagram commutative. From the commutativity of the diagram, it then follows that the latter map is injective and hence, upon taking the dual of the first square in the diagram and using Lemma \ref{inj env lem}\,(iii), we obtain a commutative diagram of surjective maps of $\cR_{s+1}$-modules 
\begin{equation}\label{transition diagram}\xymatrix{  H^1_{\mathscr{F}_{s+1}} (k, \cT_{s+1})^\ast\otimes_{\cR_{s+1}}\cR_s\ar@{->>}[d]  \ar[r]^{\hskip0.1truein \cong} & (H^1_{\mathscr{F}_{s + 1}} (k, \cT_{s + 1}) [\a_s])^\ast \ar[r]^{\hskip0.3truein \cong} & H^1_{\mathscr{F}_s} (k, \cT_s)^\ast\ar@{->>}[d] \\
\sha_{s + 1}^\ast\otimes_{\cR_{s+1}}\cR_s \ar[r]^{\cong} & \big( \sha_{s + 1}[\a_s] \big)^\ast \ar@{->>}[r] & \sha_{s}^\ast.}\end{equation}
Now, since all modules here are finite, exactness is preserved upon passing to the inverse limit (over $s$) and so one obtains a surjective map of $\cR$-modules %
\[ {\varprojlim}_{s\in \N} H^1_{\mathscr{F}_{s}} (k, \cT_{s})^\ast \twoheadrightarrow M \coloneqq {\varprojlim}_s\sha_{s}^\ast,\]
in which the limits are defined with respect to the maps that are induced by the respective rows of (\ref{transition diagram}). In particular, since the upper row of (\ref{transition diagram})  is an isomorphism, Nakayama's Lemma implies that ${\varprojlim}_{s\in \N} H^1_{\mathscr{F}_{s}} (k, \cT_{s})^\ast$, and hence also $M$, is a finitely generated $\cR$-module. Thus, if we set $M_s \coloneqq M \otimes_\cR \cR_s$ for each $s \in \N$,  then the natural map $M \to {\varprojlim}_{s \in \N} M_s$ is an isomorphism. In addition, since the natural projection map $\kappa_s\: M \to \sha_s^\ast$ is surjective, setting $N_s \coloneqq \ker(\cR_s\otimes_\cR\kappa_s)$ gives a tautological short exact sequence of $\cR_s$-modules
\[ 0 \to N_s \to M_s \xrightarrow{\cR_s\otimes_\cR\kappa_s} \sha_{s}^\ast \to 0.\] 
Since each $N_s$ is finite, exactness is preserved when passing to the inverse limit over $s$ and so ${\varprojlim}_{s \in \N} N_s$ vanishes. Then, as $\mathbb{K}$ is finitely-presented as an $\cR$-module, one also has 
\[
{\varprojlim}_{s \in \N} (  \mathbb{K}\otimes_{\cR_s}N_s ) = {\varprojlim}_{s \in \N} (\mathbb{K}\otimes_{\cR}N_s) \cong \mathbb{K}\otimes_\cR  {\varprojlim}_{s \in \N} N_s   = (0).
\]
In particular, by applying the functor $\mathbb{K}\otimes_{\cR_s} (-)$ to the above short exact sequence and then taking inverse limits over $s$, we  obtain an exact sequence
\begin{cdiagram}[column sep=small]
    {\varprojlim}_{s \in \N} \Tor_1^{\cR_s} (  \mathbb{K},M_s) \arrow{r} & {\varprojlim}_{s \in \N} \Tor_1^{\cR_s} (\mathbb{K},\sha_s^\ast) \arrow{r} & {\varprojlim}_{s \in \N} ( \mathbb{K}\otimes_{\cR_s}N_s ) = (0),
\end{cdiagram}%
and hence, upon taking duals, a composite injective map 
\begin{equation*}\label{ci map}
{\varinjlim}_{s} \Tor_1^{\cR_{s}} ( \mathbb{K}, \sha_s^\ast )^\ast \cong  \big( {\varprojlim}_{s} \Tor_1^{\cR_s} (\mathbb{K},\sha_{s}^\ast) \big)^\vee \hookrightarrow \big ( {\varprojlim}_{s} \Tor_1^{\cR_s} ( \mathbb{K},M_s) \big)^\vee \cong \Tor_1^\cR (\mathbb{K},M)^\vee,
\end{equation*}
where the second isomorphism follows from Lemma \ref{Tor lemma}\,(ii) with $R$ taken to be $\cR$ and each $S_n$ to be $\mathbb{K}$. In addition, from Lemma \ref{new bound} below, the $\mathbb{K}$-dimension of the kernel of the direct limit $\Tor^{\cR_i}_1 (\mathbb{K}, \sha_i^\ast)^\ast \to {\varinjlim}_s\Tor^{\cR_j}_1 (\mathbb{K}, \sha_s^\ast)^\ast$ of the homomorphisms induced by Lemma \ref{Tor lemma}(i)  
is at most $\mathrm{rk}(k (\cT)_\infty/k)\cdot \dim_\mathbb{K}(\overline{\cT})$, where $\mathrm{rk}(k (\cT)_\infty/k)$ is the `rank' of the group $\gal{k (\cT)_\infty}{k}$ as specifed below. Taken together, these observations imply that 
\[ \dim_{\mathbb{K}}\bigl(\Tor_1^{\cR_i} (\mathbb{K},\sha_i^\ast)^\ast\bigr) \le \mathrm{rk}(k (\cT)_\infty/k)\cdot \dim_\mathbb{K}(\overline{\cT}) +  \dim_{\mathbb{K}}\bigl(\Tor_1^\cR (\mathbb{K},M)^\vee\bigr).\]
We now define the function $\Phi_{\mathscr{F}_k}\:\N_0\to \N_0$ by setting 
\[ \Phi_{\mathscr{F}_k}(m) \coloneqq m + \dim_\mathbb{K}\bigl(H^1_{\mathscr{F}_0} (k, \cTbar)\bigr) + \mathrm{rk}(k (\cT)_\infty/k)\cdot \dim_\mathbb{K}(\overline{\cT})+ \dim_{\mathbb{K}}\bigl(\Tor_1^\cR (\mathbb{K},M)^\vee\bigr).\]
Then this function is clearly increasing and, by combining the last displayed inequality with the isomorphism (\ref{ext-tor}), the inequality (\ref{first ineq}) and the exact sequence (\ref{deduced ses}), one checks that $\alpha_i(\fn) \le \Phi_{\mathscr{F}_k}(\nu(\fn))$ for every $\fn \in \cN_{j(i)}$, as required to prove (ii). 
\end{proof}

Before stating the next result we recall that, under Hypothesis \ref{new strategy hyps}\,(vii), $\gal{k(\cT)_\infty}{k}$ is a compact $p$-adic analytic group. In particular, \cite[Th.~9.38\,(ii)]{ddms} implies that the Sylow $p$-subgroups of $\gal{k(\cT)_\infty}{k}$ have a common finite rank (as pro-$p$ groups) and we denote this by $\mathrm{rk}(k (\cT)_\infty/k)$. 

\begin{lem}\label{new bound} Set $\sha_s \coloneqq \sha_{\mathscr{F}_s, j (s)} (\cT_s)$ for each $s \in \N$. Then, for each $i \in \N$, one has 
\[ \dim_{\mathbb{K}}\bigl( \ker \bigl( \Tor^{\cR_i}_1 (\mathbb{K}, \sha_i^\ast)^\ast \xrightarrow{\Theta_i} {\varinjlim}_j \Tor^{\cR_j}_1 (\mathbb{K}, \sha_j^\ast)^\ast \bigr)\bigr) \le \mathrm{rk}(k (\cT)_\infty/k)\cdot \dim_\mathbb{K}(\overline{\cT}),\]
where the map $\Theta_i$ is the direct limit of the homomorphisms induced by Lemma \ref{Tor lemma}.
\end{lem}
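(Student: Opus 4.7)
The plan is to dualize the problem via Matlis duality and then leverage the structure of $\sha_\infty \coloneqq \varinjlim_s \sha_s$ as a submodule of cohomology of the $p$-adic analytic Galois group $G_\infty \coloneqq \gal{k(\cT)_\infty}{k}$. First I would identify $\ker(\Theta_i)$ dually. Since the source of $\Theta_i$ is finite-dimensional over $\mathbb{K}$, one has $\dim_\mathbb{K} \ker(\Theta_i) = \dim_\mathbb{K} \mathrm{coker}(\Theta_i^\vee)$, where by Lemma \ref{Tor lemma}\,(ii) the dual $\Theta_i^\vee \: \Tor_1^\cR(\mathbb{K}, M) \to \Tor_1^{\cR_i}(\mathbb{K}, \sha_i^\ast)$ is the canonical projection arising from the surjection $M = \varprojlim_s \sha_s^\ast \twoheadrightarrow \sha_i^\ast$. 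That this projection is surjective is clear, since the transition maps $\sha_{s+1}^\ast \twoheadrightarrow \sha_s^\ast$ are dual via Lemma \ref{inj env lem}\,(iii) to the inclusions $\sha_s \hookrightarrow \sha_{s+1}[\a_s]$ and hence surjective. Matlis duality then identifies $N_i^\vee \cong \sha_\infty[\a_i]/\sha_i$, where $N_i \coloneqq \ker(M \otimes_\cR \cR_i \twoheadrightarrow \sha_i^\ast)$.

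Next, I would combine the long exact sequence of $\Tor_*^{\cR_i}(\mathbb{K}, -)$ for $0 \to N_i \to M \otimes_\cR \cR_i \to \sha_i^\ast \to 0$ with a change-of-rings argument comparing $\Tor_*^\cR(\mathbb{K}, M)$ and $\Tor_*^{\cR_i}(\mathbb{K}, M \otimes_\cR \cR_i)$ (which differ by edge terms arising from $\Tor_*^\cR(\cR_i, M)$). This would give
\[ \dim_\mathbb{K} \mathrm{coker}(\Theta_i^\vee) \leq \dim_\mathbb{K}(N_i \otimes_{\cR_i} \mathbb{K}) = \dim_\mathbb{K}\bigl((\sha_\infty[\a_i]/\sha_i)[\mathcal{M}_i]\bigr),\]
where the last equality is Matlis duality over $\cR_i$.

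To bound this final quantity, I would use Lemma \ref{general neukirch interpretation lemma}\,(i) applied at each level $s$, together with the continuity of cohomology under direct limits, to embed $\sha_\infty$ into the inflation image of $H^1(G_\infty, \cT^\vee)$, with $\cT^\vee \coloneqq \cT \otimes_\cR E_\cR(\mathbb{K})$ carrying trivial $G_\infty$-action. The key observation is then that $\cT^\vee[\mathcal{M}_\cR] = \overline{\cT}$, so
\[ H^1(G_\infty, \cT^\vee)[\mathcal{M}_\cR] = \Hom_{\mathrm{cont}}(G_\infty, \overline{\cT}),\]
and every continuous homomorphism into the abelian $p$-group $\overline{\cT}$ factors through the maximal pro-$p$ quotient of $G_\infty$, whose rank is bounded by $\mathrm{rk}(k(\cT)_\infty/k)$ (the common rank of the Sylow $p$-subgroups fixed in the paper). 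This yields the bound $\dim_\mathbb{K} \Hom_{\mathrm{cont}}(G_\infty, \overline{\cT}) \leq \mathrm{rk}(k(\cT)_\infty/k) \cdot \dim_\mathbb{K} \overline{\cT}$.

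The main obstacle will be passing from the bound on $\sha_\infty[\mathcal{M}_\cR] \subseteq H^1(G_\infty, \cT^\vee)[\mathcal{M}_\cR]$ to the required bound on $\dim_\mathbb{K}((\sha_\infty[\a_i]/\sha_i)[\mathcal{M}_i])$. Applying $\Hom_{\cR_i}(\mathbb{K}, -)$ to $0 \to \sha_i \to \sha_\infty[\a_i] \to \sha_\infty[\a_i]/\sha_i \to 0$ introduces a connecting homomorphism into $\Ext^1_{\cR_i}(\mathbb{K}, \sha_i)$ whose image is not obviously controlled by $\mathrm{rk}(k(\cT)_\infty/k) \cdot \dim_\mathbb{K} \overline{\cT}$. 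To circumvent this, I would either analyse the connecting map carefully using the Galois-theoretic constraints imposed by Hypotheses \ref{new strategy hyps} to show that the relevant image vanishes, or replace $\sha_i$ in the denominator with its inflation image inside $H^1(G_\infty, \cT_i)$ (so that the extension problem splits) and argue that this modification is harmless, leaving a quotient that embeds into $\sha_\infty[\mathcal{M}_\cR]$ modulo a submodule and is therefore bounded as desired.
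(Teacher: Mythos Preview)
Your dualisation and reduction to bounding $\dim_\mathbb{K}\bigl((\sha_\infty[\a_i]/\sha_i)[\mathcal{M}_i]\bigr)$, with $\sha_\infty = \varinjlim_s \sha_s$, is essentially what the paper also arrives at (in dual form). The genuine gap is in how you try to bound this quantity. Your claim that $G_\infty = \gal{k(\cT)_\infty}{k}$ acts trivially on $\cT^\vee$ is false: the action of $G_k$ on $\cT^\vee$ factors through $G_\infty$, but is nontrivial there. Consequently $H^1(G_\infty,\cT^\vee)[\mathcal{M}_\cR]$ is not $\Hom_{\mathrm{cont}}(G_\infty,\cTbar)$, and your rank bound does not apply. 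This is also the source of the $\Ext^1$ obstruction you flag: with a nontrivial action you cannot simply pass from $\sha_\infty[\mathcal{M}_\cR]$ to the quotient.

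The paper circumvents this by working relative to $k_i(\cT_i)$ rather than $k$. The key device is the identification $\sha_s[\a_i] \cong \sha_{i,j(s)}$, where $\sha_{i,t}$ denotes the Tate--Shafarevich group for $\cT_i$ computed with the refined prime set $\cQ_t$. This reinterprets $\sha_\infty[\a_i]/\sha_i$ as the limit of the quotients $\sha_{i,j(s)}/\sha_i$ at the \emph{fixed} coefficient level $i$. Lemma~\ref{general neukirch interpretation lemma} then embeds each such quotient (via $\sha_{i,j(s)}/\sha_{i,i}$) into $\Hom(\Delta_{i,j(s)},\cT_i)$ with $\Delta_{i,j(s)} = \gal{k_{j(s)}(\cT_{j(s)})}{k_i(\cT_i)}$; since $k_i(\cT_i) \supseteq k(\cT_i)$, this group \emph{does} act trivially on $\cT_i$, so the $H^1$ is honestly a $\Hom$. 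Passing to the limit yields a surjection from $\Hom_{\mathrm{cont}}(\gal{k(\cT)_\infty}{k_i(\cT_i)},\cTbar)^\vee$, and since $\gal{k(\cT)_\infty}{k_i(\cT_i)}$ is an open (hence same-rank) subgroup of $G_\infty$, the rank bound follows. This direct embedding of the quotient is precisely what makes your $\Ext^1$ term disappear.
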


\begin{proof} For integers $s$ and $t$ with $t \ge s\ge i$ we set  
\[
\sha_{s,t} \coloneqq \sha_{\mathscr{F}_s, t} (\cT_s), 
\,\,\,\,\widetilde{\sha}_{s,t} \coloneqq \sha_{\cF_s, t} (\cT_s)\quad\text{and}\quad \fX_s\coloneqq \fX_{\cF_s, s} (\cT_s)
\]
(so that one has $\sha_s = \sha_{s,j(s)}$). Then $\sha_{s,s} \subseteq \sha_{s,t}$ and  $\widetilde\sha_{s,s} \subseteq \widetilde\sha_{s,t}$ and, by the same argument as in Lemma \ref{general neukirch interpretation lemma}\,(iii), the canonical surjective map $H^1_{\mathscr{F}_s}(k,\cT_s) \to H^1_{\cF_s}(k,\cT_s)$ induces an isomorphism $
    \sha_{s,t} / \sha_{s,s} \cong \widetilde\sha_{s,t} / \widetilde\sha_{s,s}. $ In addition, Lemma \ref{general neukirch interpretation lemma}\,(i) implies 
    $\widetilde\sha_{s,t}\subseteq H^1 (k_{t} (\cT_{t}) / k, \cT_s)$, and Lemma \ref{general neukirch interpretation lemma}\,(ii) shows that $\widetilde\sha_{s,t}\cap \fX_s = \widetilde\sha_{s,s}$. Hence, setting  $\Delta_{s,t} \coloneqq \gal{k_t (\cT_t)}{k_s (\cT_s)}$, there exists a composite injective map of $\cR_s$-modules 
    \[
    \sha_{s,t} / \sha_{s,s} \cong \widetilde\sha_{s,t} / \widetilde\sha_{s,s}\hookrightarrow H^1 (\Delta_{s,t}, \cT_s) = \Hom ( \Delta_{s, t}, \cT_s)
    \]
    and thus, upon taking duals, a surjective map of $\mathbb{K}$-modules
\begin{equation}\label{induced surj}
\Hom ( \Delta_{s,t}, \cT_s)^\ast \otimes_{\cR_s} \mathbb{K} \twoheadrightarrow ( \sha_{s,t} / \sha_s)^\ast \otimes_{\cR_s} \mathbb{K}.
\end{equation}
On the other hand, for each $s\ge i$, there exists an exact commutative diagram 
    \begin{cdiagram}[row sep=small, column sep=small]
0 \arrow{r} & \sha_{i,j(s)} \arrow[dashed]{d} \arrow{r} & H^1_{\mathscr{F}_i} (k, \cT_i) \arrow{r} \arrow{d}{\simeq} & {\bigoplus}_{\q \in \cQ_{j(s)}} H^1_f (k_\q, \cT_i) \arrow{d}{\simeq} \\ 
    0 \arrow{r} & \sha_{s}[\a_i] \arrow{r} & H^1_{\mathscr{F}_{s}} (k, \cT_{s}) [\a_i] \arrow{r} & {\bigoplus}_{\q \in \cQ_{j(s)}} H^1_f (k_\q, \cT_{s}) [\a_i]
\end{cdiagram}%
 analogous to that following (\ref{ext-tor}), and hence an isomorphism $\sha_{i,j(s)} \cong \sha_{s}[\a_i]$ of $\cR_i$-modules. This isomorphism combines with Lemma \ref{inj env lem}\,(iii) to induce an isomorphism  $\sha_s^\ast \otimes_{\cR_s} \cR_i \cong (\sha_{i,j(s)})^\ast$. The latter  isomorphism then allows us to apply Lemma \ref{Tor lemma}\,(i) with each $S_n$ taken to be $\mathbb{K}$ (so that the target module, and hence cokernel, of the associated map (\ref{2nd Tor base change map}) vanishes) to conclude that the induced map $\Tor^{\cR_s}_1 (\sha_s^\ast, \mathbb{K}) \to \Tor^{\cR_i}_1 ( (\sha_{i,j(s)})^\ast, \mathbb{K})$ is surjective. This in turn gives rise to an exact commutative diagram of the form
\begin{equation*}
\xymatrixcolsep{5mm}
\xymatrixrowsep{5mm}
\xymatrix{ \Tor^{\cR_i}_1 (\mathbb{K}, (\sha_{i,j(s)})^\ast)^\ast \ar@{^{(}->}[d] \ar[r] & \Tor^{\cR_i}_1 (\mathbb{K}, \sha_i^\ast)^\ast \ar[d] \ar[r] &  \mathbb{K}\otimes_{\cR_i} ( \sha_{i,j(s)} / \sha_i)^\ast \\
\Tor^{\cR_s}_1 (\mathbb{K}, \sha_s^\ast)^\ast \ar[r]^{\mathrm{id}} & \Tor^{\cR_s}_1 (\mathbb{K}, \sha_s^\ast)^\ast.}
\end{equation*}
By applying the Snake Lemma to this diagram and then taking direct limits over $s \ge i$, we obtain a composite injective homomorphism 
\begin{equation}\label{comp inj}
    \ker(  \Theta_i) \cong \varinjlim_{s \ge i}\ker \big( \Tor^{\cR_i}_1 (\mathbb{K}, \sha_i^\ast)^\ast \to \Tor^{\cR_s}_1 (\mathbb{K}, \sha_s^\ast)^\ast \big) \hookrightarrow \varinjlim_{s\ge i} \bigl(\mathbb{K}\otimes_{\cR_i}( \sha_{i,j(s)} / \sha_i)^\ast\bigr).
\end{equation}
In addition, the natural isomorphisms for each $s \ge i$    
\begin{align*}
\Hom ( \Delta_{i, j(s)}, \cT_i)^\ast \otimes_{\cR_i} \mathbb{K} 
& \cong \big( \Hom ( \Delta_{i, j(s)}, \cT_i) [\cM_i] \big)^\vee
\cong \Hom ( \Delta_{i, j(s)}, \cTbar)^\vee 
\end{align*}
combine to give an isomorphism 
\begin{align*}
{\varinjlim}_{s\ge i} \Hom ( \Delta_{i, j(s)}, \cT_i)^\ast \otimes_{\cR_i} \mathbb{K}  
& \cong ( {\varprojlim}_{s\ge i} \Hom ( \Delta_{i, j(s)}, \cTbar))^\vee
\cong \Hom_\mathrm{cont} ( \gal{k_\infty (\cT)}{k_i (\cT_i)}, \cTbar)^\vee 
\end{align*}
and hence also, in conjunction with (\ref{induced surj}), an induced surjective map 
\[ 
\Hom_\mathrm{cont} ( \gal{k_\infty (\cT)}{k_i (\cT_i)}, \cTbar)^\vee \twoheadrightarrow {\varinjlim}_{s\ge i} \bigl(\mathbb{K}\otimes_{\cR_i}( \sha_{i,j(s)} / \sha_i)^\ast\bigr). 
\]
Recalling  the injective map (\ref{comp inj}), we can therefore compute that  
\begin{align*}
\dim_\mathbb{K}\bigl(\ker(  \Theta_i)\bigr) \le&\, 
\dim_\mathbb{K}\bigl( {\varinjlim}_{s\ge i} \bigl(\mathbb{K}\otimes_{\cR_i}( \sha_{i,j(s)} / \sha_i)^\ast\bigr)\bigr)\\
\le&\, \dim_{\mathbb{K}}\bigl(\Hom_\mathrm{cont} ( \gal{k_\infty (\cT)}{k_i (\cT_i)}, \cTbar)^\vee\bigr)\\
=&\, \dim_{\mathbb{F}_p}\bigl(\Hom_\mathrm{cont}  ( \gal{k(\cT)_\infty}{k_i (\cT_i)},\mathbb{F}_p)\bigr)\cdot \dim_{\mathbb{K}}(\overline{\cT})\\
=&\, \dim_{\mathbb{F}_p}\bigl(\gal{k(\cT)_\infty}{k_i (\cT_i)}^{\mathrm{ab}}\otimes_{\ZZ_p}\mathbb{F}_p\bigr)\cdot \dim_{\mathbb{K}}(\overline{\cT})\\
=&\, \mathrm{d}\bigl(\gal{k(\cT)_\infty}{k_i(\cT_i)}\bigr)\cdot \dim_{\mathbb{K}}(\overline{\cT})\\
\le&\, \mathrm{rk}(k (\cT)_\infty/k)\cdot\dim_{\mathbb{K}}(\overline{\cT}).\end{align*}
Here we write $\mathrm{d}\bigl(\gal{k(\cT)_\infty}{k_i(\cT_i)}\bigr)$ for the minimal number of topological generators of the pro-$p$ group 
$\gal{k(\cT)_\infty}{k_i(\cT_i)}$, so that the third equality follows from a standard property of the Frattini subgroup 
(cf.\@ \cite[Prop.~1.9, Prop.~1.13]{ddms}). In addition, since $\gal{k(\cT)_\infty}{k_i(\cT_i)}$ is a closed subgroup of a Sylow $p$-subgroup of 
$\gal{k(\cT)_\infty}{k}$, the final inequality follows from \cite[Prop.~3.11, Def.~3.12]{ddms}.\end{proof}

\begin{remark}\label{omitting 7 remark} The proof of Lemma \ref{new bound} is the only point in the proof of Theorem \ref{new strategy main result} in which Hypothesis \ref{new strategy hyps}\,(vii) is used. In particular, if the Tate--Shafarevich group $\sha_{\mathscr{F}_s, j (s)} (\cT_s)$ vanishes for every $s \in \N$, then Lemma \ref{new bound} plays no role in the proof of Theorem \ref{bound for alpha} and so Hypothesis \ref{new strategy hyps}\,(vii) can be omitted from the statement of Theorem \ref{new strategy main result}. \end{remark} 

\subsubsection{Consequences for Kolyvagin systems}

We can now finally prove the key technical result concerning Kolyvagin systems that will be used in the next section to prove Theorem \ref{new strategy main result}. In order to state this result, for each $i \in \N$ we abbreviate the non-negative integer defined in Remark \ref{lambda independent} to  
\[ \lambda_i^\ast(1) \coloneqq \lambda_{\overline{F_i}}^\ast(1),\] 
where $\overline{F_i}$ is the Mazur--Rubin structure on $\overline{T}$ induced by $F_i = h(\mathscr{F}_i\otimes_{\cR_i}R_i)$. For each $i \in \N$ and  $d \in \N_0$ we then define a polynomial $Z_{i}(d;X)$ of degree $d$ in $\ZZ[X]$  by setting
\[ Z_{i}(d;X) \coloneqq 8\lambda_i^\ast(1)X^d + {\sum}_{j =0}^{j= d-1} X^j.\]

\begin{thm} \label{core vertices and injectivity replacement}
Assume Hypotheses \ref{new strategy hyps} and fix $t\in \N_0$ such that $t_\fF \coloneqq t + \bm{\chi}_\fF > 0$. Then, for every $i \in \N$, there exists a relative core vertex $\n_0 = \n_0(i)$ for $\cF_i$ that belongs to $\cN_{j (i)}$ and also has the following property. If $\kappa = (\kappa_{\fn})_{\fn \in \cN_i}$ is any system in $ \KS^{t_\fF} (\mathscr{F}_i)$ for which $\kappa_{\fn_0}=0$, then there exists an ideal $I$ of $\cR_i$ that satisfies both of the following conditions.
\begin{itemize}
    \item[(i)] The annihilator of $\kappa_1$ in $\cR_i$ contains $I \cdot \Fitt^{t}_{\cR_i} (X (\mathscr{F}_i))^{9\lambda^\ast_i (1)}$. 
\item[(ii)]
The $R_i$-module $J_i$ in Definition \ref{J def} is such that  
\[ \Fitt^0_{R_i} ( J_i)^{Z_i(\lambda_i^\ast(1);\,\alpha_i(1)+\lambda_i^\ast(1))} \subseteq R_i\cdot \varrho_i\bigl(I\bigr).\]
\end{itemize}
\end{thm}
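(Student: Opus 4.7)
My plan is to structure the proof as a two-phase argument: a horizontal transfer between relative core vertices via paths in $\cX^0_{j(i)}$, followed by a vertical descent from such core vertices down to the trivial modulus via iterated Kolyvagin finite-singular relations. First, I would apply Lemma \ref{existence of core vertices replacement} to select a relative core vertex $\fn_0 \in \cN_{j(i)}$ for $\cF_i$ with $\nu(\fn_0) = \lambda_i^\ast(1)$; this supplies the required modulus.

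For the horizontal phase, assuming $\kappa_{\fn_0} = 0$, I would invoke Proposition \ref{length of paths between core vertices} to connect $\fn_0$ to any other relative core vertex $\fn \in \cN_{j(i)}$ of the same size by a path of length at most $8\lambda_i^\ast(1)$ in $\cX^0_{j(i)}$, and then apply Lemma \ref{induction on paths claim lemma} to force $\kappa_\fn$ to be annihilated by a product of the form
\[
\Bigl(\prod_{(\m,\q) \in U} \Fitt^0_\bLambda(H^1_{(\tilde{\cF}^\ast)^\q(\m)}(k,\cB)^\ast)\Bigr) \cdot \Fitt^t_\bLambda(X(\tilde{\mathscr{F}}))^{8\lambda_i^\ast(1)}.
\]
This supplies the top layer of the inductive argument, contributing $8\lambda_i^\ast(1)$ Fitting-ideal factors and $\Fitt^t$-factors per leaf of the descent tree to be constructed next.

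The vertical phase then descends from moduli of size $\lambda_i^\ast(1)$ down to the trivial modulus through $\lambda_i^\ast(1)$ layers, forming a rooted tree with root $1$ and leaves among the relative core vertices handled in the horizontal phase. At each vertex $\fn'$ of the tree, Theorem \ref{bound for alpha} bounds $\alpha_i(\fn')$ uniformly by $\alpha_i(1) + \lambda_i^\ast(1)$ (since $\nu(\fn') \le \lambda_i^\ast(1)$ throughout and Theorem \ref{bound for alpha}\,(i) is additive). Using Proposition \ref{cebotarev prop tweak}\,(ii) in combination with Lemma \ref{when is the Selmer equal to sha}, this supplies a set $Q(\fn')$ of at most $\alpha_i(1) + \lambda_i^\ast(1)$ primes coprime to $V(\fn')$ and lying in $\cQ_{j(i)}$ such that $(\check{\psi}^\fs_\q)_{\q \in V(Q(\fn'))}$ is injective on $(H^1_{\tilde{\mathscr{F}}(\fn')}(k,\cA)/\sha_{\tilde{\mathscr{F}},j(i)}(\cA))[\cM_i]$; moreover, Lemma \ref{bss lem 5.13}\,(i) ensures each $\fn'\q$ with $\q \mid Q(\fn')$ is itself a relative core vertex. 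Iterating the Kolyvagin finite-singular relation $\check{\psi}^\fs_\q(\kappa_{\fn'}) = \check{v}_\q(\kappa_{\fn'\q})$ together with Lemma \ref{bss lem 5.19 new replacement} then transports annihilators from $\kappa_{\fn'\q}$ back to $\kappa_{\fn'}$, each such transport introducing one additional factor of $\Fitt^0_\bLambda(H^1_{(\tilde{\cF}^\ast)^\q(\fn')}(k,\cB)^\ast) \cdot \Fitt^t_\bLambda(X(\tilde{\mathscr{F}}))$. Unfolding this recursion through $\lambda_i^\ast(1)$ layers with branching factor $\alpha_i(1) + \lambda_i^\ast(1)$ and horizontal contribution $8\lambda_i^\ast(1)$ per leaf yields, after consolidating all tree nodes and their contributions, an annihilator for $\kappa_1$ with the required ideal $I$ consisting of exactly $Z_i(\lambda_i^\ast(1); \alpha_i(1)+\lambda_i^\ast(1))$ factors of the form $\Fitt^0_\bLambda(H^1_{(\tilde{\cF}^\ast)^\q(\m)}(k,\cB)^\ast)$ and with exponent $8\lambda_i^\ast(1) + \lambda_i^\ast(1) = 9\lambda_i^\ast(1)$ on $\Fitt^t_\bLambda(X(\tilde{\mathscr{F}}))$, thereby establishing (i). Condition (ii) then follows immediately: each of the $Z_i(\lambda_i^\ast(1); \alpha_i(1)+\lambda_i^\ast(1))$ factors of $I$ satisfies the hypotheses of Lemma \ref{what do replacement core vertices do} (because the descent is arranged to pass only through relative core vertices), so applying that lemma to each factor and taking the product yields $\Fitt^0_{R_i}(J_i)^{Z_i(\lambda_i^\ast(1); \alpha_i(1)+\lambda_i^\ast(1))} \subseteq R_i \cdot \varrho_i(I)$.

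The main obstacle lies in the bookkeeping of the vertical descent: at each layer one must simultaneously ensure that $Q(\fn')$ makes the relevant finite-singular localisation injective (so that the recursion closes), that each $\fn'\q$ remains a relative core vertex (so that Lemma \ref{what do replacement core vertices do} can be invoked later for the newly introduced Fitting factor), and that the total factor count matches $Z_i(\lambda_i^\ast(1); \alpha_i(1)+\lambda_i^\ast(1))$ exactly. Coordinating Lemma \ref{bss lem 5.13}\,(i), Proposition \ref{cebotarev prop tweak}\,(ii), and the uniform bound on $\alpha_i$ from Theorem \ref{bound for alpha} is the crux that makes the counting close as prescribed.
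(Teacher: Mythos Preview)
Your two-phase architecture and the horizontal transfer via Proposition \ref{length of paths between core vertices} and Lemma \ref{induction on paths claim lemma} match the paper's approach exactly, and your bookkeeping of the $\Fitt^t$-exponent as $8\lambda_i^\ast(1)+\lambda_i^\ast(1)$ is also correct. The gap is in the vertical phase, specifically in your claim that ``each $\fn'\q$ with $\q\mid Q(\fn')$ is itself a relative core vertex'' and hence that ``the descent is arranged to pass only through relative core vertices.'' This is impossible: any modulus $\m$ with $\nu(\m)<\lambda_i^\ast(1)$ fails to be a relative core vertex (as noted in the proof of Lemma \ref{bss lem 5.15}, via \cite[Cor.~5.11]{bss}), and your intermediate nodes $\fn'$ all satisfy $\nu(\fn')<\lambda_i^\ast(1)$. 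In particular Lemma \ref{bss lem 5.13}\,(i) does not apply at these nodes, and the factors $\Fitt^0_{\bLambda}(H^1_{(\tilde\cF^\ast)^{\q}(\fn')}(k,\cB)^\ast)$ that your single-edge ``transport'' via Lemma \ref{bss lem 5.19 new replacement} introduces at intermediate $\fn'$ need \emph{not} satisfy the hypothesis of Lemma \ref{what do replacement core vertices do}. So condition (ii) cannot be verified for the ideal you construct.

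The paper resolves this by never using Lemma \ref{bss lem 5.19 new replacement} in the vertical step. Instead, it first establishes once and for all an auxiliary ideal $I_1'$ (built at a fixed relative core vertex plus one extra prime) for which Lemma \ref{what do replacement core vertices do} does apply and which, together with one copy of $\Fitt^t$, annihilates $\bidual^{t_\fF}_{\cR_i}\sha_{\mathscr{F}_i,j(i)}(\cT_i)$. Then at each internal node $\fn$ one takes the \emph{product} $\prod_l I_{\fn\q_l}$ over all children: since each $I_{\fn\q_l}\cdot\Fitt^{t,e}$ kills $\check\psi^{\fs}_{\q_l}(\kappa_\fn)$, the product (with the same exponent $e$) pushes $\kappa_\fn$ into $\bidual^{t_\fF}\sha$ via the joint injectivity from Lemma \ref{when is the Selmer equal to sha}, and then $I_1'\cdot\Fitt^t$ kills it. The resulting ideal $I_\fn=I_1'\cdot\prod_l I_{\fn\q_l}$ has factor count satisfying exactly the recursion $Z_i(d;X)=1+X\cdot Z_i(d-1;X)$, which is where the polynomial $Z_i$ comes from. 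Your proposal is missing precisely this $I_1'$ mechanism; without it, there is no way to both annihilate $\kappa_1$ \emph{and} keep every factor of $I$ within the scope of Lemma \ref{what do replacement core vertices do}.
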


\begin{proof} Our argument will show that the required properties are satisfied by any relative core vertex $\fn_0$ for $\cF_i$ that is constructed as in Lemma \ref{existence of core vertices replacement} with $j = j(i)$. Before verifying this, however, it is convenient to prove the existence of an ideal $I_1'$ of $\cR_i$ that satisfies both 
\begin{equation}\label{last step} \Fitt^0_{R_i} (J_i) \subseteq R_i \cdot \varrho_i(I_1') \,\,\,\text{ and }\,\,\,  (I_1'\cdot\Fitt^{t}_{\cR_i} (X (\mathscr{F}_i)))\cdot \bidual^{t_\fF}_{\cR_i} \sha_{\mathscr{F}_i, j(i)} (\cT_i) = (0).\end{equation}
To do this, we first note  Lemma \ref{standard fitting props}\,(v) directly implies that $\bidual^{t_\fF}_{\cR_i} \sha_{\mathscr{F}_i, j(i)} (\cT_i)$ is annihilated by $\Fitt^{t_\fF - 1}_{\cR_i} ( \sha_{\mathscr{F}_i, j(i)} (\cT_i))$. To study the latter ideal, we  use Lemma \ref{existence of core vertices replacement} to fix a modulus $\a$ in $\cN_{j(i)}$ such that 
$H^1_{F_i^\ast (\a)} (k, T_i^\ast (1))$, and hence also (by Remark \ref{core vertices equivalent def rk})  $H^1_{(\overline{F_{i}})^\ast (\a)} (k, \Tbar^\ast (1))$, vanishes. We then use Proposition \ref{cebotarev prop tweak} to fix a prime $\q \in \cQ_{j (i)}\setminus V(\a)$ such that the localisation map $H^1_{\overline{F_i} (\a)} (k, \Tbar) \to H^1_f (k_\q, \Tbar)$ is nonzero (and thus surjective). Given this, the global duality  exact sequence (\ref{global duality sequence4}) (with $\Phi = \tilde\cF = \overline{F_i}$) implies that  $H^1_{(\overline{F_{i, \q}})^\ast (\a)} (k, \Tbar^\ast (1)) = H^1_{(\overline{F_{i}})^\ast (\a)} (k, \Tbar^\ast (1))$. It follows that $H^1_{(\overline{F_{i, \q}})^\ast (\a)} (k, \Tbar^\ast (1))$ vanishes and hence, by Lemma \ref{what do replacement core vertices do}, that the first property in (\ref{last step}) is satisfied by the ideal  
\[ I_1' \coloneqq \Fitt^0_{\cR_i} ( H^1_{(\cF_{i,\q})^\ast (\a)} (k, \cT_i^\ast (1))^\ast).\] 
In addition, since $\sha_{\mathscr{F}_i,j(i)} (\cT_i)$ is, by its very definition, a submodule of $H^1_{\mathscr{F}_{i,\q} (\a)} (k, \cT_i)$, there exists a natural surjective homomorphism $H^1_{\mathscr{F}_{i,\q} (\a)} (k, \cT_i)^\ast \twoheadrightarrow \sha_{\mathscr{F}_i, j(i)} (\cT_i)^\ast$. Taken together with Lemma~\ref{standard fitting props}\,(i), this surjective map implies an inclusion
 \[
 \Fitt^{t_\fF- 1}_{\cR_i} ( H^1_{\mathscr{F}_{i,\q} (\a)} (k, \cT_i)^\ast) \subseteq \Fitt^{t_\fF - 1}_{\cR_i} ( \sha_{\mathscr{F}_i, j(i)} (\cT_i)^\ast).
 \]
 Given this,  the fact that the above ideal $I_1'$ also has the second property in (\ref{last step}) is a direct consequence of the inclusion       
$I_1' \cdot \Fitt^{t}_{\cR_i} (X (\mathscr{F}_i)) \subseteq \Fitt^{t_\fF- 1}_{\cR_i} ( H^1_{\fF_{i,\q} (\a)} (k, \cT_i)^\ast)$ that is proved in  (\ref{reduced inclusion}) (with $\n$ replaced by $\a$). 

Turning now to the proof of the claimed result, we fix a relative core vertex $\fn_0$ for $\cF_i$ that is constructed as in Lemma \ref{existence of core vertices replacement} with $j = j(i)$ (so that $\fn_0 \in \cN_{j(i)}$ and $\nu (\fn_0) = \lambda_i^\ast (1)$). We also assume to be given a Kolyvagin system $\kappa$ in $ \KS^{t_\fF} (\mathscr{F}_i)$ for which $\kappa_{\fn_0} = 0$. It is then enough for us to prove the following: for any modulus $\fn$ in $\cN_{j (i)}$ that satisfies 
\begin{equation} \label{the moduli we are talking about}
    \lambda_i^\ast (\fn) = \lambda_i^\ast (1) - \nu (\fn),
\end{equation}
there exists an ideal $I_\fn$ of $\cR_i$ that satisfies both  
\begin{equation}\label{first claim}\Fitt^0_{R_i} ( J_i)^{Z_i (\lambda_i^\ast(\fn))} 
\subseteq R_i \cdot \varrho_i(I_\fn) 
\,\,\text{ and }\,\, 
I_\fn \cdot \Fitt^{t}_{\cR_i} (X (\mathscr{F}_i))^{\lambda_i^\ast(\fn) +8 \lambda^\ast_i (1)}\cdot \kappa_\fn = \{0\},\end{equation}
where, for brevity, we have set 
\[ Z_i (\lambda_i^\ast(\fn)) \coloneqq Z_i (\lambda_i^\ast(\fn);\,\alpha_i(1)+\lambda_i^\ast(1)).\] 
Indeed, since $\fn = 1$ satisfies (\ref{the moduli we are talking about}), if this claim is valid, then we will obtain an ideal $I$ of $\cR_i$  with all of the required properties by simply setting $I \coloneqq I_1$. \\  
Now, to construct  ideals $I_\fn$ satisfying (\ref{first claim}), we shall use an induction on the non-negative integer $\lambda_i^\ast (\fn)$. We therefore first assume that $\lambda_i^\ast (\fn) = 0$, so that  the exponents $Z_i (\lambda_i^\ast(\fn))$ and $\lambda_i^\ast(\fn) +8 \lambda^\ast_i (1)$  in (\ref{first claim}) are both equal to $8\lambda_i^\ast(1)$. Then, in this case, Remark \ref{core vertices equivalent def rk} implies that $\fn$ is 
a relative core vertex for $\cF_i$ and (\ref{the moduli we are talking about}) implies $\nu (\fn) = \lambda_i^\ast (1)$. Upon combining Proposition \ref{cebotarev prop tweak} with $n=i$ and $j=j(i)$ with the argument of Proposition \ref{length of paths between core vertices} with $\fn_1$ and $\fn_2$ taken to be $\fn$ and $\fn_0$, we can therefore deduce that $\fn$ is connected to $\fn_0$ in $\cX^0_{j(i)}(i)$ by a path $U$ of length at most $8\lambda_i^\ast (1)$. Given this, and the assumed vanishing of $\kappa_{\fn_0}$, the results of Lemmas \ref{what do replacement core vertices do} and \ref{induction on paths claim lemma} directly imply that (\ref{first claim}) is validated by the ideal 
\[ I_\fn \coloneqq \prod_{ (\q, \m) \in U} \Fitt^0_{\cR_i} (H^1_{(\cF_{i, \q})^\ast (\m)} (k, \cT_i^\ast (1))^\ast).\] 
For the inductive step we now assume to be given a modulus $\fn$ in $\cN_{j (i)}$ that satisfies (\ref{the moduli we are talking about}) and is also such that both $\lambda_i^\ast (\fn)> 0$ and ideals $I_\m$ with the properties in (\ref{first claim}) are known to exist for all moduli  $\m$ in $\cN_{j (i)}$ that satisfy both (\ref{the moduli we are talking about}) and $\lambda_i^\ast (\m)  < \lambda_i^\ast (\fn)$.

At this point, we recall the non-negative integer $\alpha_i(\fn)$ defined in (\ref{alpha def}). If $\alpha_i(\fn) = 0$, then Lemma \ref{when is the Selmer equal to sha} implies $H^1_{\mathscr{F}_i (\fn)} (k, \cT_i) = \sha_{\mathscr{F}_i, j(i)} (\cT_i)$. Hence,  in this case, the ideal $I_\fn \coloneqq I_1'$ validates (\ref{first claim}) as a direct consequence of (\ref{last step}) and our assumption that $\lambda^\ast_i ( \fn) > 0$. In the remainder of the argument, we shall therefore assume that $\alpha_i(\fn)>0$.

To deal with this case, we note that $H^1_{\overline{F_i}^\ast (\fn)} (k, \Tbar^\ast (1))$ is non-trivial since  $\lambda_i^\ast (\fn) > 0$. Hence, since $\bm{\chi} (F_i,j(i)) \geq 0$ (by Hypothesis \ref{new strategy hyps}\,(vi)), Lemma \ref{core rank relation}\,(iii) implies $H^1_{\overline{F_i} (\fn)} (k, \Tbar) \neq \sha_{\overline{F_i},j(i)} (\Tbar)$.  In particular, Lemma \ref{when is the Selmer equal to sha} can be combined with Proposition \ref{cebotarev prop tweak}\,(ii) to deduce the existence of a subset $\{\q_l: l \in [\alpha_i (\fn)]\}$ of $\cQ_{j(i)}$ for which the localisation map
\begin{equation}\label{injective localisation}
H^1_{\mathscr{F}_i (\fn)} (k, \cT_i) / \sha_{\mathscr{F}_i, j(i)} (\cT_i) \to \bigoplus_{l \in [\alpha_i (\fn)]} H^1_f (k_{\q_l}, \cT_i)
\end{equation}
is injective and, simultaneously, for every $l \in [\alpha_i (\fn)]$, the localisation maps 
\[
H^1_{\overline{F_i} (\fn)} (k, \Tbar) \to H^1 (k_{\q_l}, \Tbar) 
\quad \text{ and } \quad 
H^{1}_{\overline{F_i}^\ast (\fn)} (k, \Tbar^\ast (1)) \to H^1 (k_{\q_l}, \Tbar^\ast (1))
\]
are both nonzero. For every index $l \in [\alpha_i (\fn)]$, one then has 
\[
\lambda_i^\ast (\fn \q_l) = \lambda_i^\ast (\fn) - 1 = \lambda_i^\ast (1) - \nu (\fn)  - 1= \lambda_i^\ast (1) - \nu (\fn \q_l), 
\]
where the first equality follows from \cite[Prop.\@ 5.7]{bss}, the second from (\ref{the moduli we are talking about}) and the last is clear. It follows that the modulus $\fn \q_l$ satisfies both (\ref{the moduli we are talking about}) and also $\lambda_i^\ast (\fn \q_l) < \lambda_i^\ast (\fn)$ and so the inductive hypothesis implies the existence of an ideal $I_{\fn \q_l}$ of $\cR_i$ that has the properties in (\ref{first claim}) after replacing $\fn$ by $\fn \q_l$. 

In particular, since $\lambda^\ast_i(\fn \q_l) = \lambda_i^\ast(\fn)-1$, we know that the ideal $I_{\fn \q_l}\cdot\Fitt^{t}_{\cR_i} (X (\mathscr{F}_i))^{(\lambda_i^\ast(\fn)+8 \lambda_i^\ast (1))-1}$ annihilates the element $\loc_{\q_l} (\kappa_{\fn \q_l})$ of $\bidual^{t_\fF - 1}_{\cR_i} H^1_{\mathscr{F}_{i, \q_l} (\fn)} (k, \cT_i) \subseteq \bidual^{t_\fF- 1}_{\cR_i} H^1_{\mathscr{F}_i (\fn)} (k, \cT_i)$. From the equality $\psi_{\q_l}^\fs (\kappa_{\fn}) = v_{\q_l} ( \kappa_{\fn \q_l})$ that follows from the defining relation of Kolyvagin systems, it therefore follows that $\psi_{\q_l}^\fs (\kappa_{\fn})$ is also annihilated by $I_{\fn \q_l}\cdot\Fitt^{t}_{\cR_i} (X (\mathscr{F}_i))^{(\lambda_i^\ast(\fn)+8 \lambda_i^\ast (1))-1}$. 

Next we note that injectivity of (\ref{injective localisation}) implies $\sha_{\mathscr{F}_i,j(i)} (\cT_i)$ is the kernel of the diagonal map 
\[
(\psi_{\q_l}^\fs)_{l \in [\alpha_i (\fn)]} \: H^1_{\mathscr{F}_i (\fn)} (k, \cT_i) \to \cR^{\oplus \alpha_i (\fn)}.
\]
From Lemma \ref{biduals lemma 1}\,(i), it then follows that $\bidual^{t_\fF}_{\cR_i} \sha_{\mathscr{F}_i,j(i)} (\cT_i)$ is the kernel of the map 
\[
\bidual^{t_\fF}_{\cR_i} H^1_{\mathscr{F}_i (\fn)} (k, \cT_i)   \xrightarrow{(\psi_{\q_l}^\fs)_{l}}
\bigoplus_{l \in [\alpha_i (\fn)]}  \bidual^{t_{\fF} - 1}_{\cR_i} H^1_{\mathscr{F}_i (\fn)} (k, \cT_i),  
\]
and so the above observations combine to imply an inclusion   
\begin{equation}\label{almost 1} ({\prod}_{l\in [\alpha_i (\fn)]} I_{\fn \q_l}) \cdot \Fitt^{t}_{\cR_i} (X (\mathscr{F}_i))^{(\lambda_i^\ast(\fn)+8 \lambda_i^\ast (1))-1} \cdot \kappa_\fn\subseteq \bidual^{t_\fF}_{\cR_i} \sha_{\mathscr{F}_i,j(i)} (\cT_i).\end{equation}
In addition, there are also inclusions
\begin{align}\nonumber 
   \Fitt^0_{R_i} ( J_i)^{Z_i (\lambda_i^\ast(\fn))} & =
\Fitt^0_{R_i} ( J_i)\cdot\big( \Fitt^0_{R_i} ( J_i)^{Z_i (\lambda_i^\ast(\fn \q_l))} \big)^{\alpha_i (1) + \lambda_i^\ast (1)} \\ \nonumber
& \subseteq 
\Fitt^0_{R_i} ( J_i)\cdot {\prod}_{l\in [\alpha_i (\fn)]}   \Fitt^0_{R_i} ( J_i)^{Z_i (\lambda_i^\ast(\fn \q_l))} \\ 
& \subseteq \Fitt^0_{R_i} ( J_i) \cdot \varrho_i\bigl({\prod}_{l\in [\alpha_i (\fn)]} I_{\fn \q_l}\bigr).
\label{almost 2} 
\end{align}
Here the equality is valid for every $l\in [\alpha_i(\fn)]$ and follows from the fact that $Z_i(d;X) =1 +Z_i(d-1;X)X$ for each $d>0$. In addition, the first inclusion is valid since  
\[ |[\alpha_i (\fn)]| = \alpha_i  (\fn) \leq \alpha_i (1) + \nu (\fn) = \alpha_i (1) + (\lambda_i^\ast (1) - \lambda_i^\ast(\fn)) < \alpha_i (1) + \lambda_i^\ast (1),\]
where the first inequality here is obtained via (repeated applications of) Theorem \ref{bound for alpha}\,(i) and the
equality is an immediate consequence of (\ref{the moduli we are talking about}). Finally, we note that the second inclusion in (\ref{almost 2}) follows from the induction hypothesis. \\
Upon combining the inclusions (\ref{almost 1}) and (\ref{almost 2}) with those of (\ref{last step}), it is now easily deduced that the product ideal
$I_\fn \coloneqq I_1'\cdot {\prod}_{l \in [\alpha_i (\fn)]} I_{\fn \q_l}$ satisfies both of the conditions in (\ref{first claim}), thereby verifying the inductive step. This therefore completes the proof of the claimed result.  \end{proof}

\begin{rk}\label{bound comments} We  make some observations about the exponents that occur in Theorem~\ref{core vertices and injectivity replacement}. We recall first that $\lambda^\ast_i (1)$ is defined to be $\dim_{\mathbb{k}}\bigl(H^1_{\overline{F_i}^\ast} (k, \Tbar^\vee (1))\bigr)$ and hence that  
\begin{equation}\label{N bound} 0 \le 9\lambda_i^\ast(1) \le N \coloneq  9\cdot{\mathrm{dim}}_{\mathbb{k}}\bigl(H^1 (\cO_{k, S (\mathscr{F}_k)}, \Tbar^\vee (1))\bigr).\end{equation}

Since this bound is independent of $i$ it implies, in particular, that the set of polynomials $\{ Z_j(\lambda_j^\ast(1):X): j \in \mathbb{N}\}$ is finite. In addition, the value of any polynomial $Z_j(\lambda_j^\ast(1):X)$ at a non-negative integer is a natural number and so we can set 
\[ Z \coloneqq {\mathrm{max}}\{ Z_j(\lambda_j^\ast(1):m): j \in \mathbb{N}, \, m \in \N_0, m \le \Phi_{\mathscr{F}_k}(N)\},\]
where $\Phi_{\mathscr{F}_k}$ is the $\N_0$-valued function that occurs in Theorem~\ref{bound for alpha}\,(ii). Then the latter result (with $\fn = 1$) combines with (\ref{N bound}) to imply that, for every $i\in \N$, one has 
\begin{equation}\label{Z bound} 0 < Z_i(\lambda_i^\ast(1);\,\alpha_i(1)+\lambda_i^\ast(1)) \le Z.\end{equation}
\end{rk}

\section{Stark systems and the proof of Theorem \ref{new strategy main result}}\label{rss section}

In this section, we fix a family $\fF = (\mathscr{F}_K)_{K \in \Omega}$ of Nekov\'a\v{r} systems as in \S\ref{somr section} and continue to write $\mathscr{F}$ and $\mathscr{F}_i$ for $i \in \N$  in place of $\mathscr{F}_k$ and $\mathscr{F}_{k,i}$.\\
We recall that the notion of `Stark systems'  was independently introduced by Mazur and Rubin in \cite{MazurRubin} and by Sano in \cite{Sano14}. To define a suitable variant for our theory we fix an ordering $\prec$ on $\Pi_k$ and use the following sign convention. Given a modulus $\fn \in \cN_i$, we label the primes in $V(\fn)$ as $\q_1, \dots, \q_{\nu (\fn)}$ in such a way that $\q_1 \prec \q_2 \prec \dots \prec \q_{\nu (\fn)}$. For any  set of elements $\{ a_\q\: \q \in V(\fn) \}$ indexed by $V(\fn)$, we then define the exterior product $\bigwedge_{\q \in V (\fn)} a_\q$ to be $\bigwedge_{i\in [\nu (\fn)]} a_{\q_i}$. Further, if a modulus $\m \in \cN_i$ is divisible by $\fn$, then we define a `sign'  $\mathrm{sgn} (\m, \fn) \in \{ \pm 1\}$ via the equality 
\[
\big( \wedge_{\q \in V(\m / \fn)} \q \big) \wedge \big ( \wedge_{\q \in V( \fn)} \q \big) = 
\mathrm{sgn} (\m, \fn) \cdot \wedge_{\q \in V( \m)} \q
\]
in the exterior algebra $\bigwedge^\ast_\Z \Z [\Pi_k]$.

\begin{definition} Fix $t \in \N_0$ and $i \in \N$. Then the $\cR_i$-module of `Stark systems' of rank $t$ for the Nekov\'a\v{r} structure $\mathscr{F}_i$ on $\cT_i$ is the inverse limit
    \[
    \SS^t (\mathscr{F}_i) \coloneqq {{\varprojlim}}_{\fn \in \cN_i} \bidual^{t + \nu (\fn)}_{\cR_i} H^1_{\mathscr{F}^\fn_i} (k, \cT_i).
    \]
Here the transition morphisms for $\fn \mid \m$ are the maps
    \begin{equation} \label{transition map Stark system}
    v_{\m, \fn} \coloneqq \mathrm{sgn} (\m, \fn) \cdot {\wedge}_{\q \in V(\m/\fn)} \hat{v}_\q \: \bidual^{t + \nu (\m)}_{\cR_i} H^1_{\mathscr{F}^\m_i} (k, \cT_i) \to \bidual^{t + \nu (\fn)}_{\cR_i} H^1_{\mathscr{F}^\fn_i} (k, \cT_i)
    \end{equation}
     that are obtained by applying Lemma \ref{biduals lemma 1}\,(ii) to the exact sequence (\ref{global duality sequence1}) with $\Phi= \mathscr{F}_i$. 
\end{definition}

The reader will find a detailed axiomatic treatment of Stark systems in the next section. For the moment, however, we shall only record an algebraic construction of such systems that will be used the proof of Theorem \ref{new strategy main result}. We observe that this construction is modelled on that of Sano and the second author in \cite[Th.\@ 3.4]{sbA}. 

To state the result, we use the free $\cR$-module $Y$, of rank $r_Y$, that is fixed at the beginning of \S\,\ref{somr section} and, for each $i \in \N$,  define an $\cR_i$-module $Y_i$ via the push-out diagram
\begin{equation*}
\begin{tikzcd}[column sep=small, row sep=small]
{\bigoplus}_{\q \in \Pi_k^\infty} H^0(k_\q,\cT^\vee(1))^\vee \arrow[twoheadrightarrow]{r} \arrow[twoheadrightarrow]{d} &  {\bigoplus}_{\q \in \Pi_k^\infty} H^0(k_\q,\cT_i^\ast(1))^\ast \arrow[dashed]{d}  \\
Y  \arrow[dashed]{r} & Y_i 
\end{tikzcd}
\end{equation*} 
where the horizontal map is the canonical surjective morphism that exists since (\ref{p=2 condition}) is satisfied (see the argument of Lemma \ref{limit Fitt X})). Then $Y_i$ is an $\cR_i$-module quotient of ${\bigoplus}_{\q \in \Pi_k^\infty} H^0(k_\q,\cT_i^\ast(1))^\ast$ that is free of rank $r_Y$ and there exists a natural isomorphism 
\begin{equation}\label{Y proj map}
Y \otimes_\cR \cR_i \xrightarrow{\simeq} Y_i.
\end{equation}
Moreover, via the map $\alpha_3$ in (\ref{new diagram}), we may regard $Y_i$ as a quotient of $X(\mathscr{F}_{i})$. 
We also note that, for all moduli $\fn$ and $\m$ in $\cN_i$ with $\fn \mid \m$, the exact triangle in the lower row of (\ref{modify diagram1})
(in which $\tilde Y, \tilde{\mathscr{F}}$ and $\cA$ correspond to $Y_i, \mathscr{F}_i$ and $\cT_i$) combines with the identification, for each prime $\q \in V(\m/\n)$, of 
$H^1_{\!/f}(k_\q,\cT_i)$ with $\cR_i$ that is used in the construction of the upper square of the diagram in Proposition \ref{new modified selmer}\,(iii) (and earlier in the  derivation of the duality sequence (\ref{global duality sequence1})) to induce an isomorphism of $\cR_i$-modules 
\begin{equation*}\label{transition maps in horizontal systems}
\Det_{\cR_i} ( C_{Y_i}(\mathscr{F}_i^\m)) \cong \Det_{\cR_i} ( C_{Y_i} (\mathscr{F}^\fn_i)) \otimes_{\cR_i} \bigotimes_{\q \in V(\m / \fn)}  \Det_{\cR_i} ( H^1_{\!/f}(k_\q,\cT_i)[0]) \cong  \Det_{\cR_i} ( C_{Y_i}(\mathscr{F}^\fn_i)).
\end{equation*}

We fix an (ordered) $\cR$-basis $b_{\bullet}$ of $Y$ and write $b_{\bullet,i}$ for the  (ordered) $\cR_i$-basis of $Y_i$ given by the image of $b_{\bullet}$ under (\ref{Y proj map}). Finally, to ensure compatibility of our constructions under change of $i$, we assume that  the map $\vartheta_{\mathscr{F}_i^\fn,Y_i} $ that occurs in Proposition \ref{new modified selmer}\,(iii) is defined with respect to the  basis $b_{\bullet,i}$. 

\begin{lemma} \label{algebraic stark systems prop} Set $r\coloneqq r_Y + \bm{\chi}_\cR(C(\mathscr{F}_k))$ and assume that $r >0$. Then, for each  $i\in \N$, the assignment $(a_\fn)_{\fn \in \cN_i} \mapsto (\vartheta_{\mathscr{F}_i^\fn,Y_i} ( a_\fn))_{\fn \in \cN_i}$ induces a well-defined homomorphism of $\cR_i$-modules
    \[
    {\varprojlim}_{\fn \in \cN_i} \Det_{\cR_i} (C_{Y_i}(\mathscr{F}_i^\fn)) \to \SS^r (\mathscr{F}_i), \quad 
    \]
in which the inverse limit is defined with respect to the morphisms specified above. 
\end{lemma}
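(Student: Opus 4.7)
The plan is to show two things: (1) each value $\vartheta_{\mathscr{F}_i^\fn,Y_i}(a_\fn)$ lands in the correct exterior bidual, and (2) the resulting family satisfies the transition relations defining $\SS^r(\mathscr{F}_i)$. Linearity over $\cR_i$ will be automatic from the construction of the maps $\vartheta_{\mathscr{F}_i^\fn,Y_i}$.

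For (1), I first note that by Proposition \ref{new modified selmer}(i), one has $\chi_{\cR_i}\bigl(C_{Y_i}(\mathscr{F}_i^\fn)\bigr) = (r+\nu(\fn))\cdot[\cR_i]$, and since $r > 0$ this Euler characteristic is strictly positive. The complex $C_{Y_i}(\mathscr{F}_i^\fn)$ lies in $D^{\mathrm{perf}}_{[0,1]}(\cR_i)$ by Proposition \ref{new modified selmer}(ii), and by construction $Y_i$ is a free $\cR_i$-module quotient of $H^1(C_{Y_i}(\mathscr{F}_i^\fn))$ (via the surjection in Proposition \ref{new modified selmer}(ii) composed with the projection onto $Y_i$). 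Hence Lemma \ref{det projection lemma}(i) applies and gives the map $\vartheta_{\mathscr{F}_i^\fn,Y_i} \: \Det_{\cR_i}(C_{Y_i}(\mathscr{F}_i^\fn)) \to \bidual^{r+\nu(\fn)}_{\cR_i} H^1_{\mathscr{F}_i^\fn}(k,\cT_i)$ as in Proposition \ref{new modified selmer}(iii), with target the required module.

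For (2), I would apply the upper square of the commutative diagram in Proposition \ref{new modified selmer}(iii): for any $\fn \mid \m$, the identity
\[
\bigl({\wedge}_{\q\in V(\m/\fn)} \hat v_\q\bigr) \circ \vartheta_{\mathscr{F}_i^\m,Y_i} \;=\; \vartheta_{\mathscr{F}_i^\fn,Y_i} \circ \varphi_1
\]
holds on $\Det_{\cR_i}(C_{Y_i}(\mathscr{F}_i^\m))$, where $\varphi_1$ is the canonical isomorphism $\Det_{\cR_i}(C_{Y_i}(\mathscr{F}_i^\m)) \cong \Det_{\cR_i}(C_{Y_i}(\mathscr{F}_i^\fn))$ induced by the exact triangle in the bottom row of (\ref{modify diagram1}) combined with the identifications $\Det_{\cR_i}(H^1_{\!/f}(k_\q,\cT_i)[0]) \cong \cR_i$. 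Identifying these two presentations of the transition map on determinants, any element $(a_\fn)_\fn$ of ${\varprojlim}_{\fn\in \cN_i} \Det_{\cR_i}(C_{Y_i}(\mathscr{F}_i^\fn))$ satisfies $\varphi_1(a_\m) = a_\fn$, and therefore $({\wedge}_{\q\in V(\m/\fn)} \hat v_\q)(\vartheta_{\mathscr{F}_i^\m,Y_i}(a_\m)) = \vartheta_{\mathscr{F}_i^\fn,Y_i}(a_\fn)$.

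The main technical obstacle is then to reconcile this with the sign $\mathrm{sgn}(\m,\fn)$ that appears in the transition map $v_{\m,\fn}$ of (\ref{transition map Stark system}). This sign encodes the discrepancy between our global product ordering on $V(\m)$ and the ``split'' ordering on $V(\m/\fn) \sqcup V(\fn)$ used to compute ${\wedge}_{\q \in V(\m/\fn)}\hat v_\q$. I would verify this by fixing once and for all, for each $\fn \in \cN_i$, a representative ordering $\q_1 \prec \q_2 \prec \dots \prec \q_{\nu(\fn)}$ of $V(\fn)$ compatible with $\prec$, and tracking carefully how the isomorphism $\varphi_1$ is constructed from the direct-sum decomposition ${\bigoplus}_{\q \in V(\m/\fn)} H^1_{\!/f}(k_\q,\cT_i)[0]$ — the sign $\mathrm{sgn}(\m,\fn)$ is precisely the one introduced when re-ordering the exterior product so that the factors indexed by $V(\m/\fn)$ appear before those indexed by $V(\fn)$. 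Once this bookkeeping is in place, one obtains $v_{\m,\fn}(\vartheta_{\mathscr{F}_i^\m,Y_i}(a_\m)) = \vartheta_{\mathscr{F}_i^\fn,Y_i}(a_\fn)$, so the family $(\vartheta_{\mathscr{F}_i^\fn,Y_i}(a_\fn))_\fn$ defines an element of $\SS^r(\mathscr{F}_i)$ as required.
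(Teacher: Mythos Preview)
Your proposal is correct and follows the same approach as the paper: the paper's entire proof reads ``This follows directly from the commutativity of the upper square in the diagram of Proposition \ref{new modified selmer}\,(iii),'' which is exactly your step (2). Your step (1) is redundant since the maps $\vartheta_{\mathscr{F}_i^\fn,Y_i}$ are already constructed in Proposition \ref{new modified selmer}(iii) (and your description of $Y_i$ as a quotient of $H^1(C_{Y_i}(\mathscr{F}_i^\fn))$ is not quite right anyway --- see the exact sequence in \ref{new modified selmer}(ii)); your careful treatment of the sign $\mathrm{sgn}(\m,\fn)$ is a detail the paper simply absorbs into its ordering conventions without comment.
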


\begin{proof} This follows directly from the commutativity of the upper square in the  diagram of Proposition \ref{new modified selmer}\,(iii). 
\end{proof}

In order to prove Theorem \ref{new strategy main result}, we now fix an Euler system $c$ in $\ES^r_{S_0} (\fF)$. For each $i \in \N$, we write $\kappa_i = (\kappa_{i, \fn})_{\fn \in \cN_i}$ for the Kolyvagin system in $\KS^r (\mathscr{F}_i)$ that is given by the Kolyvagin derivative of $c$ (cf.\@ Theorem \ref{kolyvagin derivative thm}). We recall, in particular, that 
\begin{equation}\label{ks=es}
\kappa_{i, 1} = \pi^r_{k, \mathscr{F}_i}  (c_k),\end{equation}
where $\pi^r_{k, \mathscr{F}_i}$ is the canonical map $\bidual^r_\cR H^1_\fF (k, \cT) \to \bidual^r_{\cR_i} H^1_{\mathscr{F}_i} (k, \cT_i)$ from (\ref{proj map def}). \\
Now, by applying Proposition \ref{Greither--Kurihara Lemma}\,(ii) to the isomorphism $X (\mathscr{F}_k) \cong {\varprojlim}_i X_S(\mathscr{F}_i)$ constructed in Lemma \ref{limit Fitt X}\,(ii), we deduce that the ideals $(\Fitt^{r_Y}_{\cR_i} ( X (\mathscr{F}_i)))_{i \in \N}$ form a projective system with respect to the natural maps $\cR_{i + 1} \to \cR_i$ and also that their limit is equal to
\[ \Fitt^{r_Y}_\cR ( X (\mathscr{F}_k)) = {\varprojlim}_{i \in \N} \Fitt^{r_Y}_{\cR_i} ( X (\mathscr{F}_i)) \subseteq \cR.\]

In particular, if we fix an element $x$ of $\Fitt^{r_Y}_\cR (X (\mathscr{F}_k))$ as in the statement of Theorem \ref{new strategy main result}, then its image under  the projection map $\cR \to \cR_i$ belongs to $\Fitt^{r_Y}_{\cR_i} ( X (\mathscr{F}_i))$. As a consequence, if we write $\fn_i$ for 
the relative core vertex for $\cF_i = h(\mathscr{F}_i)$ in $\cN_{j (i)}$ that is provided by Lemma \ref{existence of core vertices replacement} and also fix an element $z$ of the ideal $\Fitt^0_{\cR_i} (H^1_{\cF_i^\ast (\fn_i)} (k, \cT_i^\ast (1))^\ast)$, then the exact sequence in Proposition \ref{what we need from Selmer complexes}\,(iii) (with $\a = \fb = 1$) combines with Lemma \ref{standard fitting props}\,(iv) to imply that  
\[
z \cdot x \in  \Fitt^0_{\cR_i} (H^1_{\cF_i^\ast (\fn_i)} (k, \cT_i^\ast (1))^\ast) \cdot 
\Fitt^{r_Y}_\cR (X (\mathscr{F}_k)) \subseteq \Fitt^{r_Y}_\cR ( H^1 (C (\mathscr{F}_i (\fn_i)))).
\]
Now the first displayed sequence in Proposition \ref{new modified selmer}\,(ii) implies that the latter ideal is equal to $\Fitt^{0}_\cR ( H^1 (C_{Y_i}(\mathscr{F}_i (\fn_i))))$. Given this equality, the above containment therefore combines with the assertion in Proposition~\ref{new modified selmer}\,(iii) regarding annihilation of the cokernel of $\vartheta_{\mathscr{F}_i (\fn_i), Y_i}$ to imply  
the existence of an element $\fz'_{\fn_i}$ of $\Det_{\cR_i} ( C_{Y_i}(\mathscr{F}_i (\fn_i)))$ for which one has  
\[
(z \cdot x) \cdot \kappa_{i, \fn_i} = \vartheta_{\mathscr{F}_i (\fn_i), Y_i} (\fz'_{\fn_i}).
\]
From the commutativity of the lower square (and surjectivity of the map $\varphi_2$) in the diagram of Proposition~\ref{new modified selmer}\,(iii), we can then deduce the existence of an 
element $\fz_{\fn_i}$ of $\Det_{\cR_i} ( C_{Y_i}(\mathscr{F}_i^{\fn_i}))$ for which the element $\epsilon'_{\fn_i} \coloneqq \vartheta_{\mathscr{F}_i^{\fn_i}, Y_i} (\fz_{\fn_i})$ of 
 $\bidual^{r + \nu (\fn_i)}_{\cR_i} H^1_{\mathscr{F}_i^{\fn_i}}(k, \cT_i) $ satisfies   
 \[
 ( {\bigwedge}_{\q \in V(\fn_i)} \hat{\psi}_\q^\fs) (\epsilon'_{\fn_i}) = 
 ( {\bigwedge}_{\q \in V(\fn_i)} \hat{\psi}_\q^\fs) ( \vartheta_{\mathscr{F}_i^{\fn_i}, Y_i} (\fz_{\fn_i})) = \vartheta_{\mathscr{F}_i (\fn_i), Y_i} (\fz'_{\fn_i}) = 
 (z\cdot x) \cdot \kappa_{i, \fn_i}.
 \]
Further, since $\fz_{\fn_i}$ belongs to $\Det_{\cR_i} ( C_{Y_i} (\mathscr{F}_i^{\fn_i}))$, Lemma \ref{algebraic stark systems prop}  implies the existence of a Stark system $\epsilon_i = (\epsilon_{i, \fn})_{\fn \in \cN_i}$ in $\SS^r( \mathscr{F}_i)$ for which one has $\epsilon_{i, \fn_i} = \epsilon'_{\fn_i}$. \\
Next we note that a straightforward calculation, as in \cite[\S\,5.2]{bss}, proves the existence of a well-defined `regulator map' of $\cR_i$-modules 
\[
\mathrm{Reg}^r \: \SS^r ( \mathscr{F}_i) \to \KS^r ( \mathscr{F}_i), \quad 
\epsilon = (\epsilon_\fn)_{\fn \in \cN_i} \mapsto \big( ({\bigwedge}_{\q \in V(\fn)} \hat\psi_\q^\fs) (\epsilon_\fn)  \big)_{\fn \in \cN_i}.
\]
Then, by its very construction, the element 
\[ \mathrm{Reg}^r (\epsilon_i) - (x\cdot z) \cdot\kappa_i \in \KS^r ( \mathscr{F}_i)\] 
vanishes when evaluated at $\fn_i$. We now write $I_i$ for the ideal $I$ of $\cR_i$ that is constructed in Theorem \ref{core vertices and injectivity replacement} (with $t =r$). Then, by combining claim (i) of the latter result with the inequality (\ref{N bound}) we deduce that, for any element  $y$ of $\Fitt^{r_Y}_\cR (X (\mathscr{F}_k))$ that is fixed as in the statement of Theorem \ref{new strategy main result}, every element in the $\cR_i$-ideal $y^N\cdot I_i$ annihilates the value 
\[ 
(\mathrm{Reg}^r (\epsilon_i) - (x\cdot z)\cdot \kappa_i)_1 = \epsilon_{i, 1} - (x\cdot z)\cdot\kappa_{i, 1}\]
of $\mathrm{Reg}^r (\epsilon_i) - (x\cdot z)\cdot \kappa_i$ at $1$. For every element $a$ of $I_i$, one therefore has an equality 
\begin{equation}\label{intermediate eq}
a y^N (x\cdot z) \cdot \kappa_{i, 1} = a y^N \cdot \epsilon_{i, 1}.\end{equation}
On the other hand, the construction of Stark systems in Lemma \ref{algebraic stark systems prop} implies that 
\[ \epsilon_{i, 1} = \vartheta_{\mathscr{F}_i,Y_i} ( \fz''),\]
where $\fz''$ is the element of $\Det_{\cR_i} (C_{Y_i} (\mathscr{F}_i))$ that is given by the image of $\fz_{\fn_i}$ under the isomorphism $\varphi_1$ in the diagram of  Proposition~\ref{new modified selmer}\,(iii) with $(\m,\fn)$ taken to be $(\fn_i,1)$. From the equality (\ref{intermediate eq}), we can therefore derive a containment   
\begin{equation*} \label{nearly there}
az \cdot xy^N   \cdot \kappa_{i, 1} = y^N \cdot \vartheta_{\mathscr{F}_i,Y_i} ( a\cdot\fz'') \in 
y^N\cdot \vartheta_{\mathscr{F}_i,Y_i} ( \Det_{\cR_i} (C_{Y_i}(\mathscr{F}_i))).
\end{equation*}
Now, as the elements $a$ and $z$ vary, they generate the ideal 
\[I_i' \coloneqq I_i \cdot \Fitt^0_{\cR_i} ( H^1_{\cF^\ast_i (\fn_i)} (k, \cT_i^\ast (1))^\ast)\]
of $\cR_i$. From the above containment, we can therefore deduce that the following ideal of $\cR_i$ 
\[
\mathfrak{A}_i (x,y) \coloneqq \big \{w \in \cR_i \mid  w\cdot xy^N\cdot \kappa_{i, 1} \in y^N \cdot\vartheta_{\mathscr{F}_i,Y_i} ( \Det_{\cR_i} (C_{Y_i} (\mathscr{F}_i))) \big\}
\]
contains $I_i'$. In particular, since  Remark \ref{core vertex remark} combines with Theorem \ref{core vertices and injectivity replacement}\,(ii) and the inequality (\ref{Z bound}) to imply that $R_i \cdot \varrho_i (I_i')$ contains $\Fitt^0_{R_i} (J_i)^{Z+1}$, there is an inclusion 
\begin{equation}\label{key nclusion 0}
 \Fitt^0_{R_i} ( J_i)^{Z+1} \subseteq R_i \cdot \varrho_i\bigl( \mathfrak{A}_i(x,y)\bigr).
\end{equation}
We next claim that the natural maps $\cR_{i + 1} \to \cR_i$ send $\mathfrak{A}_{i +1} (x, y)$ to $\mathfrak{A}_i (x, y)$. 
To justify this, we  note  the isomorphism $C (\mathscr{F}_{i + 1}) \otimes^\mathbb{L}_{\cR_{i + 1}} \cR_i \cong C (\mathscr{F}_{i})$ in $D^{\mathrm{perf}}(\cR_i)$ given by  Proposition~\ref{what we need from Selmer complexes}\,(ii) and the obvious isomorphism of $\cR_i$-modules $Y_{i+1}\otimes_{\cR_{i+1}}\cR_i \cong Y_i$  combine with the exact triangle given by the first column in (\ref{modify diagram}) (with $\m = 1$) to induce an isomorphism in $D^{\mathrm{perf}}(\cR_i)$
\[ \tau_i\: C_{Y_{i+1}}(\mathscr{F}_{i + 1}) \otimes^\mathbb{L}_{\cR_{i + 1}} \cR_i \cong C_{Y_i} (\mathscr{F}_{i}).\]  
This isomorphism then combines with the explicit construction of the maps $\vartheta_{\mathscr{F}_i,Y_{i}}$ in Proposition~\ref{new modified selmer}\,(iii) (and our use of the compatible family of bases $(b_{\bullet,i})_i$) to give a canonical commutative diagram of $\cR_{i+1}$-modules 
\begin{cdiagram}
    \Det_{\cR_{i +1}} (C_{Y_{i+1}} (\mathscr{F}_{i + 1})) \arrow{rr}{\vartheta_{\mathscr{F}_{i+1},Y_{i+1}}} \arrow{d}{\Det_{\cR_i}(\tau_i)} & &\bidual^r_{\cR_{i+1}} H^1_{\mathscr{F}_{i + 1}} (k, \cT_{i + 1}) \arrow{d}{\pi^r_{i + 1/ i}} \\ 
    \Det_{\cR_i} (C_{Y_i}(\mathscr{F}_i)) \arrow{rr}{\vartheta_{\mathscr{F}_i,Y_i}} & & \bidual^r_{\cR_i} H^1_{\mathscr{F}_i} (k, \cT_i).
\end{cdiagram}%
Here the map $\pi^r_{i+1/i}$ is constructed (via Lemma \ref{det projection lemma}\,(ii)) in the same way as the projection map $\pi^r_{k,\mathscr{F}_i}$ in  (\ref{proj map def}). In particular, from the equality (\ref{ks=es}) one deduces that  
\[
\pi^r_{i + 1/ i} ( \kappa_{i + 1, 1}) = \pi^r_{i + 1/ i} ( \pi^r_{k, \mathscr{F}_{i + 1}} ( c_k)) = \pi^r_{k, \mathscr{F}_i} ( c_k) = \kappa_{i, 1}.
\]
Hence, if $a$ belongs to $\mathfrak{A}_{i+1}(x,y)$, so that 
\[ a\cdot xy^N\cdot \kappa_{i + 1, 1}\in y^N\cdot \vartheta_{\mathscr{F}_{i+1},Y_{i+1}} (\Det_{\cR_{i+1}} (C_{Y_{i+1}}(\mathscr{F}_{i+1}))),\] 
then the commutativity of the above diagram implies that 
\[ a\cdot xy^N\cdot \kappa_{i, 1}\in y^N\cdot \vartheta_{\mathscr{F}_i,Y_i} (\Det_{\cR_i} ( C_{Y_i}(\mathscr{F}_i))).\] 
It follows that the image of $a$ under the projection $\cR_{i + 1} \to \cR_i$ belongs to $\mathfrak{A}_i (x, y)$, as claimed.

In particular, since the fixed $\cR$-basis $b_\bullet$ of $Y$ induces an identification 
\[
\vartheta_{\mathscr{F}_k,Y} ( \Det_\cR (C (\mathscr{F}_k))) = {{\varprojlim}}_{i \in \N} \vartheta_{\mathscr{F}_i,Y_i}(\Det_{\cR_i} (C_{Y_i}(\mathscr{F}_i))),
\]
one obtains an inclusion
\[
{{\varprojlim}}_{i \in \N} \mathfrak{A}_i(x,y) \subseteq \mathfrak{A}(x,y) \coloneqq \big \{r \in \cR \mid r\cdot xy^N\cdot c_k \in y^N \cdot\vartheta_{\mathscr{F}_k,Y}( \Det_\cR (C (\mathscr{F}_k))) \big\}.
\]
Next we note that the compactness of $R$ implies  
\[
\varrho\bigl({{\varprojlim}}_{i\in \N} \mathfrak{A}_i(x,y)\bigr) = {{\varprojlim}}_{i\in \N} \varrho_i\bigl( \mathfrak{A}_i(x,y)\bigr) \subseteq R.
\]
Now, by (\ref{key nclusion 0}), the second limit in this display contains $({\varprojlim}_i \Fitt^0_{R_i} (J_i) )^{Z+1}$. In addition, since the canonical isomorphism (\ref{tor isomorphism}) combines with the definition of the modules $J_i$ to give an isomorphism $\Tor_1^{\cR} (X (\mathscr{F}_k), R) \cong  \varprojlim_{i\in \N} J_i$, the general result of Proposition \ref{Greither--Kurihara Lemma}\,(ii) implies that 
$\Fitt^0_R ( \Tor_1^{\cR} (X (\mathscr{F}_k), R))$ is equal to ${\varprojlim}_{i \in \N} \Fitt^0_{R_i} (J_i)$. These observations therefore combine to prove an inclusion   
\[
\Fitt^0_{R} ( \Tor_1^\cR (X (\mathscr{F}_k), R))^{Z+1}  \subseteq R \cdot \varrho\bigl( \mathfrak{A}(x,y)\bigr). 
\]
At this point, we fix a prime ideal $\p$ of $\cR$ that satisfies the conditions (i) and (ii) in Theorem \ref{new strategy main result}. Then condition  (ii) combines with the last displayed inclusion to imply that $R_{\p}\cdot \varrho (\mathfrak{A}(x,y))_{\p}$ contains the element $1$ of $R_{\p}$. In particular, if $\varrho_\p \: \cR_\p \to R_{\p}$ is surjective, as follows from the condition (i), then $R_{\p}\cdot\varrho (\mathfrak{A}(x,y))_{\p} = \varrho_\p ( \mathfrak{A}(x,y)_\p)$ and so the $\cR_\p$-ideal $\mathfrak{A}(x,y)_\p$ must contain
a preimage $\xi$ of $1$ under $\varrho_\p$. In addition, condition (i) also ensures that the map $\varrho_\p$ is non-zero, so that $\ker (\varrho_\p)\subseteq \p \cR_\p$  and $\varrho_\p (1) = 1$. It follows that $\xi-1$ belongs to $\p \cR_\p$ and so Nakayama's Lemma implies $\xi$ is a unit in $\cR_\p$. This in turn implies that  $\mathfrak{A}(x,y)_\fp = \cR_\fp$, as required to complete the proof of Theorem \ref{new strategy main result}. 
\medskip  \qed 

\begin{rk} \label{rk on avoiding more new hyps details}
    A close inspection of the above proof shows that the only contribution of the Kolyvagin derivative construction in Theorem \ref{kolyvagin derivative thm} is to guarantee the existence of a Kolyvagin system $\kappa_i \in \KS^r (\mathscr{F}_i)$ with the property that $\kappa_{i, 1} = \pi^r_{k, \mathscr{F}_i} (c_k)$ for every $i \in \N$ (cf.\@ (\ref{ks=es})). In particular, if the existence of such Kolyvagin systems is in some way already known, then one need not apply Theorem \ref{kolyvagin derivative thm} in the above argument, and hence can avoid assuming Hypothesis \ref{more new hyps} in Theorem \ref{new strategy main result} (cf.\@ Remark~\ref{rk on avoiding more new hyps}).
\end{rk}

\appendix 

\section{Abstract Stark systems}\label{appendix section}

In this appendix to Part I, we establish several useful general results in the theory of Stark systems. Whilst some of these results have already been applied in previous sections, with future applications in mind we have preferred to adopt a more axiomatic approach here. 

\subsection{Characteristic ideals}

We first recall a notion introduced by Greither \cite[\S\,5.2]{Greither04} and Sakamoto \cite[Def.\@ C.3]{Sakamoto20}. For this, we let $R$ be a $G_2$-ring.

\begin{definition} 
Let $Z$ be a finitely generated $R$-module and fix a surjective homomorphism of $R$-modules of the form $g \: R^{\oplus s} \twoheadrightarrow Z$ with $s \in \N$. Then Lemma~\ref{biduals lemma 1}\,(ii) implies that the tautological short exact sequence
\[ 0 \to \ker (g) \xrightarrow{(f_i)_{i \in [s]}} R^{\oplus s} \xrightarrow{g} Z \to 0 \] 
induces a homomorphism of $R$-modules  $\wedge_{i\in [s]} f_i \: \bidual^s_R \ker(g) \to \bidual^0_R(0) = R$. The `characteristic ideal' $\Char_R (Z) $ of $Z$ is defined to be the image of $\wedge_{i\in [s]} f_i$. 
\end{definition}

\begin{rk} \label{characteristic ideals remark}
    Sakamoto has proved the following general results on characteristic ideals. 
\begin{romanliste}
\item $\Char_R (Z)$ is independent of the choice of $g$ (see \cite[Rem.\@ C.5]{Sakamoto20}). 
\item $\Fitt^0_R (Z) \subseteq \Char_R (Z)$, with equality if $\rm{pd}_R(Z)\le 1$ (cf.\@ \cite[Prop.\@ C.7]{Sakamoto20}). 
\item $\Char_R (Z) = \Ann_R (Z)$ if $R$ is  Gorenstein of dimension zero  (see \cite[Prop.\@ 4.5]{Sakamoto22}). 
\end{romanliste}
\end{rk}

We will also require the following additional properties of characteristic ideals.

\begin{lem} \label{characteristic ideals lemma}
Assume $R$ is a $G_2$-ring, and let $M$ be a finitely generated $R$-module. 
\begin{romanliste}
\item $\Char_R (M) \subseteq \Ann_R (M)^{\ast \ast}$.
\item For any natural numbers $r \geq s > 0$, any exact sequence of $R$-modules of the form 
\begin{cdiagram}
    0 \arrow{r} & N \arrow{r} & M \arrow{rr}{(f_i)_{i \in [s]}} & & R^{\oplus s} \arrow{r} & Z \arrow{r} & 0
\end{cdiagram}%
and any $a$ in the image of the map $\bigwedge_{i \in [s]} f_i \: \bidual^r_R M \to \bidual^{r - s}_R N$ in (\ref{rank reduction map}), one has 
\[
\im (a) \coloneqq \big \{ \varphi ( a) \mid  \varphi \in \exprod^{r - s}_R N^\ast \big \} \subseteq \Char_R (Z).
\]
Moreover, if $M$ contains a free direct summand $F$ of rank $t$, then one has 
\[
\Fitt^{t}_R (M^\ast) \cdot \im (a) \subseteq \Fitt^0_R (Z)^{\ast \ast}. 
\]
\item For any submodule $N$ of $ M$, one has $\Char_R (M) \subseteq \Char_R (N) \cap \Char_R ( M / N)$. 
\end{romanliste}
\end{lem}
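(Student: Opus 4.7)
My plan is to verify the inclusion $\Char_\cR(M) \subseteq \Ann_\cR(M)^{\ast\ast}$ after localizing at every prime of height at most one. Over a $G_2$-ring, an ideal $J$ of $\cR$ is contained in the reflexive hull $I^{\ast\ast}$ precisely when $J_\fp \subseteq I_\fp$ for every $\fp\in \Spec^{\le 1}(\cR)$, and moreover the formation of characteristic ideals commutes with localization. So it suffices to show $\Char_{\cR_\fp}(M_\fp) \subseteq \Ann_{\cR_\fp}(M_\fp)$ for each such $\fp$. For minimal primes this is the equality in Remark \ref{characteristic ideals remark}\,(c). For height-one primes $\fp$ the ring $\cR_\fp$ is one-dimensional Gorenstein local; $\ker(g)$ is then a torsion-free $\cR_\fp$-submodule of $\cR_\fp^{\oplus s}$, and a direct presentation-level computation, combined with Remark \ref{characteristic ideals remark}\,(b) and the explicit description of the Fitting ideal at codimension one, yields the required local containment.

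\textbf{Part (b):} For the first inclusion, the plan is to construct, for each $\varphi \in \exprod^{r-s}_\cR N^\ast$ and $b \in \bidual^r_\cR M$ with $a = (\wedge_{i \in [s]} f_i)(b)$, an element $c_\varphi \in \bidual^s_\cR \ker(g)$ whose value on the restricted coordinate functions equals $a(\varphi)$. Using the isomorphism $\ker(g) \cong M/N$ and writing $q \: M \twoheadrightarrow M/N$ with pullback $q^\ast \: (M/N)^\ast \hookrightarrow M^\ast$, the factorization $f_i = q^\ast(\bar f_i)$ for the standard coordinate functions $\bar f_i \in \ker(g)^\ast$ suggests setting $c_\varphi(\eta) \coloneqq b(\tilde \varphi \wedge q^\ast(\eta))$ for $\eta \in \exprod^s_\cR (M/N)^\ast$ and a lift $\tilde \varphi \in \exprod^{r-s}_\cR M^\ast$ of $\varphi$. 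The indeterminacy in choosing $\tilde \varphi$ gives at worst a pseudo-null submodule of $\cR$, which is zero because $\cR$ is $S_2$, so $c_\varphi$ is well-defined. Evaluating $c_\varphi(\bar f_1 \wedge \cdots \wedge \bar f_s) = a(\varphi)$ then places $a(\varphi)$ in $\im(\wedge \bar f_i) = \Char_\cR(Z)$.

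For the refined inclusion, I would exploit the decomposition $M = F \oplus M'$ with $F$ free of rank $t$. Writing $M^\ast = \cR^{\oplus t} \oplus M'^\ast$, Lemma \ref{standard fitting props}\,(iii) gives $\Fitt^t_\cR(M^\ast) = \Fitt^0_\cR(M'^\ast)$. Extending a presentation $\cR^{\oplus u} \twoheadrightarrow M'$ to a surjection $\cR^{\oplus(t+u)} \twoheadrightarrow M$ and composing with $(f_i)$ produces a matrix $A = (A_F \mid A_{M'})$ whose $s \times s$ minors generate $\Fitt^0_\cR(Z)$. The strategy is then to combine the first inclusion $\im(a) \subseteq \Char_\cR(Z)$ with a Laplace-type expansion: elements of $\Fitt^t_\cR(M^\ast) = \Fitt^0_\cR(M'^\ast)$ multiply elements of $\Char_\cR(Z)$ into genuine $s \times s$ minors of $A$, modulo pseudo-null corrections that vanish after taking double duals.

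\textbf{Part (c) and main obstacle:} For $\Char_\cR(M) \subseteq \Char_\cR(M/N)$, fix $g \: \cR^{\oplus s} \twoheadrightarrow M$ and let $g' \: \cR^{\oplus s} \twoheadrightarrow M/N$ be its composition with $M \twoheadrightarrow M/N$; then $\ker(g) \subseteq \ker(g')$, and dualizing the restriction $\ker(g')^\ast \to \ker(g)^\ast$ gives a map $\bidual^s_\cR \ker(g) \to \bidual^s_\cR \ker(g')$ that commutes with the evaluation on the coordinate projections, immediately yielding the inclusion of images. For $\Char_\cR(M) \subseteq \Char_\cR(N)$, enlarge $g$ to a surjection $\tilde g \: \cR^{\oplus(s+u)} \twoheadrightarrow M$ whose last $u$ coordinates surject onto $N$, thereby also fixing a presentation $\cR^{\oplus u} \twoheadrightarrow N$; directly comparing the determinantal images coming from these two presentations yields the required containment. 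The main obstacle throughout is the refined inclusion in (b): bridging $\Char_\cR(Z)$ (built from the small presentation via $\ker(g)$) and $\Fitt^0_\cR(Z)^{\ast\ast}$ (built from a free presentation of $Z$) requires a precise multiplicativity identity whose proof hinges on controlling the interaction between the $F$-columns and $M'$-columns in the matrix $A$, and this is where the hypothesis on the free direct summand is genuinely used.
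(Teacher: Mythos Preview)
Your sketch has the right overall shape but contains a genuine gap in part (b), and is too vague elsewhere to constitute a proof.

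The central issue is in the first inclusion of (b). Your construction of $c_\varphi$ requires choosing a lift $\tilde\varphi \in \exprod^{r-s}_\cR M^\ast$ of $\varphi \in \exprod^{r-s}_\cR N^\ast$, but such a lift need not exist: the restriction map $\exprod^{r-s}_\cR M^\ast \to \exprod^{r-s}_\cR N^\ast$ has cokernel controlled by $\Ext^1_\cR(M/N,\cR) \cong \Ext^2_\cR(Z,\cR)$, which over a $G_2$-ring is pseudo-null but typically nonzero. Your ``indeterminacy'' remark addresses uniqueness rather than existence, and even there the claim that the ambiguity lies in a pseudo-null submodule of $\cR$ is not justified (the kernel of the restriction map on exterior powers is generated by $(M/N)^\ast \wedge \exprod^{r-s-1}_\cR M^\ast$, which is not pseudo-null). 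The paper avoids this entirely: it first proves the weaker containment $\{\psi(a) : \psi \in \exprod^{r-s}_\cR M^\ast\} \subseteq \Char_\cR(Z)$ via the identity $\psi(\theta(b)) = \pm\theta(\psi(b))$ and the factorisation of $\theta\colon \bidual^s_\cR M \to \cR$ through $\bidual^s_\cR(M/N)$, and then uses the reflexivity of $\Char_\cR(Z)$ together with the fact that the restriction map becomes surjective after localising at primes of height at most one to upgrade this to $\im(a) \subseteq \Char_\cR(Z)$.

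The remaining parts are too sketchy. In (a), your appeal to Remark~\ref{characteristic ideals remark}\,(b) for height-one primes requires $\mathrm{pd}_{\cR_\fp}(M_\fp) \le 1$, which is not automatic over a one-dimensional Gorenstein local ring; the ``direct presentation-level computation'' is not supplied. For the refined inclusion in (b), the key mechanism in the paper is that $\Fitt^t_\cR(M^\ast) = \Fitt^0_\cR(M^\ast/F^\ast)$ annihilates each $\exprod^{s-i}_\cR(M^\ast/F^\ast)$ for $i < s$ (via Lemma~\ref{standard fitting props}\,(v)) and hence the cokernel of $\exprod^s_\cR F \to \bidual^s_\cR M$; your ``Laplace-type expansion'' does not capture this. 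For $\Char_\cR(M) \subseteq \Char_\cR(N)$ in (c), the paper builds an explicit diagram with kernels $K_1 \hookrightarrow K_3 \twoheadrightarrow K_2$ from compatible presentations of $N$, $M$, and $M/N$, and then applies the first part of (b) to the resulting four-term exact sequence $0 \to K_1 \to K_3 \to \cR^{\oplus m} \to M/N \to 0$; your ``directly comparing determinantal images'' does not explain how this comparison is carried out. Your argument for $\Char_\cR(M) \subseteq \Char_\cR(M/N)$ via the inclusion $\ker(g) \subseteq \ker(g')$ is, however, correct and in fact simpler than the paper's unified treatment.
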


\begin{proof}
To prove (i), we fix a surjective map of $R$-modules $f \: R^{n} \twoheadrightarrow M$. We write $\{b_i\}_{i \in [n]}$ for the standard basis of $R^{n}$ and  $b_i^\ast \: R^n \to R$ for each $i\in [n]$ for the dual of $b_i$.
Setting $N \coloneqq 
\ker (f)$, the definition of the characteristic ideal directly implies that every element of $\mathrm{char}_R (M)$ can be written as $x = ( \wedge_{i \in [n]} b_i^\ast) (a)$ for some $a \in \bidual^r_R N$. 
Now, for each $i$ and $l$ in $[n]$,  
we may compute that
\[
b_l^\ast \big ( ( \wedge_{j \neq i} b_j^\ast) (a) \big)
= ( ( \wedge_{j \neq i} b_j^\ast) \wedge b_l^\ast ) (a) = 
(-1)^{n - i} \cdot \delta_{il} \cdot ( \wedge_{i \in [n]} b_i^\ast) (a) = (-1)^{n - i} \cdot \delta_{i l} \cdot x
\]
with the Kronecker symbol $\delta_{il}$. 
This shows that
\[
(-1)^{n - i} \cdot ( \wedge_{j \neq i} b_j^\ast) (a) =  x \cdot b_i\in N^{\ast \ast},\] 
where the containment is true since $\wedge_{i \neq j} b_i^\ast$ defines a map $
    \bidual^n_R N \to N^{\ast \ast}$.\\
To proceed, recall that the cokernel of the natural map $N \to N^{\ast \ast}$ identifies with $\Ext^2_R (A, R)$ for a specific $R$-module $A$ (cf.\@ \cite[Prop.~(5.4.9)\,(iii)]{NSW}) and hence, by assumption $(G_1)$ on $R$, is pseudo-null. It follows that $x b_i\in N_\p$ for every prime ideal $\p \in \Spec^{\leq 1}(R)$. Since the set $\{ f(b_i) \}_{i \in [n]}$  generates $M_\p$, this proves $x\in \Ann_{R_\p} (M_\p) = \Ann_R (M)_\p$.
Noting that reflexive ideals are uniquely determined by their localisations at prime ideals in $\Spec^{\leq 1}(R)$ (see Lemma \ref{ryotaro-useful-lem}\,(ii)), we can therefore deduce that $\Char_R (M) \subseteq \Ann_R (M)^{\ast \ast}$, as claimed. \\
To prove (ii), we fix $b \in \bidual^r_R M$ and set $\theta\coloneqq \wedge_{i\in [s]} f_i$. Then, for $\varphi \in \exprod^{r - s}_R M^\ast$, one has  
\[\varphi ( \theta(b))  = 
( \theta\wedge \varphi ) (b) 
= (-1)^{rs} \cdot ( \varphi \wedge \theta ) (b)  = (-1)^{rs} \cdot  \theta ( \varphi (b)).
\]
In particular, since $\theta$ is the composite $\bidual^{s}_R M \to \bidual^s_R (M / N) \to \exprod^s_R (R^{\oplus s}) \cong R$ (where the first map is induced by the natural projection), and $\Char_R (Z)$ is, by definition, the image of the second map in this composite, we have shown that   
\begin{equation} \label{evaluation-ideal0}
 \big \{ \varphi (  \theta(b)) \mid  \varphi \in \exprod^{r - s}_R M^\ast \big \} \subseteq \Char_R (Z).
\end{equation}
To proceed, we note $\Char_R (Z)$ is a reflexive ideal of $R$ (cf.\@ \cite[Prop.\@ C.10]{Sakamoto20}) and so uniquely determined by its localisations at 
$\p \in \Spec^{\leq 1}(R)$. To prove the first assertion of (ii), it is thus enough to demonstrate that, for each such $\p$, the localisations of the first set in (\ref{evaluation-ideal0}) and of $\im ( \theta(b))$ coincide. To do this, we use the commutative diagram
\begin{cdiagram}[row sep=small, column sep=small]
\exprod^{r - s}_R M^\ast \arrow{r} \arrow{d}{} & \exprod^{r - s}_R N^\ast \arrow{d} \\
\big ( \bidual^{r - s}_R M \big)^\ast \arrow{r} & \big ( \bidual^{r - s}_R N \big)^\ast.
\end{cdiagram}%
Here the upper horizontal map is induced by restriction, the vertical arrows are the respective canonical maps induced by (\ref{rank reduction map}) and the lower horizontal map is induced by the inclusion $\bidual^{r - s}_R N \hookrightarrow \bidual^{r - s}_R M$.  
In particular, since $ \im (\theta )\subseteq \bidual^{r - s}_R N$ (by Lemma \ref{biduals lemma 1}\,(ii)), the above diagram implies that, for $\varphi \in \exprod^{r - s}_R M^\ast$ and $a \in \im (\theta)$, one can compute $\varphi (a)$ as the value at $a$ of the image of $\varphi$ in $\exprod^{r - s}_R N^\ast$. Since, for  $x \in \bidual^{r - s}_R N = \Hom_R(\exprod^{r - s}_R N^\ast,R)$ and $\psi \in \exprod^{r - s}_R N^\ast$, the value $\psi (x) \in R$ is defined to be $x (\psi)$, the above discussion therefore  shows that 
\[  \big \{ \varphi (  \theta(b)) \mid  \varphi \in \exprod^{r - s}_R M^\ast \big \}\subseteq \{ \varphi ( \theta(b)) \mid  \varphi \in \exprod^{r - s}_R N^\ast \}.\]
To deduce the first assertion of (ii), it is therefore enough to show that the cokernel of this inclusion is pseudo-null. Now, for $\p\in \Spec^{\leq 1}(R)$, the group $\Ext^1_R (M / N, \; R)_\p = \Ext^2_{R_\p} (Z_\fp, R_\p)$ vanishes since, by assumption, $R$ satisfies ($G_1$) and so $R_\p$ has injective dimension one. It follows that $\coker( M^\ast \to N^\ast)_\p = \Ext^1_R (M / N, R)_\p$ vanishes, and hence that the $\p$-localisation of the cokernel of $\exprod^{r  - s}_R M^\ast \to \exprod^{r - s}_R N^\ast$ also vanishes. Every $\varphi \in \exprod^{r - s}_{R_\p} N_\p^\ast$ can therefore be lifted to an element $\psi \in \exprod^{r - s}_{R_\p} M^\ast_\p$, and so the claimed result follows from 
\begin{align*}
\{ \varphi ( \theta(b)) \mid \varphi \in \exprod^{r - s}_R N^\ast \}_\p =&\,
\{ \varphi ( \theta(b)) \mid  \varphi \in \exprod^{r-s}_{R_\p} N_\p^\ast\}\\
\subseteq&\, \{ \varphi ( \theta(b)) \mid  \varphi \in \exprod^{r-s}_{R_\p} M_\p^\ast\} = \{ \varphi ( \theta(b)) \mid \varphi \in \exprod^{r - s}_R M^\ast \}_\p.
\end{align*}

We now prove the second assertion of (ii). As $\Fitt^0_R (Z)^{\ast \ast}$ is a reflexive ideal of $R$, localising at primes in $\Spec^{\leq 1} (R)$ as in the discussion above shows it is enough to prove $\Fitt^0_R (Z)$ contains $ \Fitt^r_R (M^\ast) \cdot \{ \varphi ( \theta(b)) \mid \varphi \in \exprod^{r - s}_R M^\ast\}$.

By assumption, $M$ has a free direct summand $F$ of rank at least $s$. Then $F^\ast$ is a free direct summand of $M^\ast$ and the induced decomposition $\exprod^{s}_R M^\ast \cong \bigoplus_{i = 0}^{s} (\exprod^i_R F^\ast) \otimes ( \exprod^{s - i}_R M^\ast / F^\ast)$ implies the kernel of the surjection $\exprod^{s}_R M^\ast \to \exprod^{s}_R F^\ast$ identifies with $\bigoplus_{i = 0}^{s - 1} (\exprod^i_R F^\ast) \otimes  \exprod^{s - i}_R (M^\ast / F^\ast)$. 
Lemma \ref{standard fitting props}\,(v) now implies  $\Fitt^{s - i - 1}_R (M^\ast / F^\ast)$ annihilates $\exprod^{s - i}_R (M^\ast / F^\ast)$ for $i \in \{0, \dots, s - 1\}$. In particular, since $\Fitt^{ s - i - 1}_R (M^\ast / F^\ast)$ contains $\Fitt^0_R ( M^\ast / F^\ast) = \Fitt^{\mathrm{rk}_R( F)}_R (M^\ast)$, we deduce that $\Fitt^{\mathrm{rk}_R(F)}_R (M^\ast)$ annihilates the kernel of $\exprod^{s}_R M^\ast \to \exprod^{s}_R F^\ast$. Upon dualising, this implies $\Fitt^{\mathrm{rk}_R (F)}_R (M^\ast)$ annihilates the cokernel of the natural map $\exprod^{s}_R F = \bidual^r_R F \to \bidual^{s}_R M$.\\
Now let $a = \varphi (b)$ for some $\varphi \in \exprod^{r - s}_R M^\ast$. Then, for any $x$ in $\Fitt^{\mathrm{rk}_R(F)}_R (M^\ast)$, the element $x\cdot \varphi (b)$ of $\bidual^s_R M$ belongs to $\exprod^s_R F$.
It follows that 
\[ x\cdot \varphi (a) = x ( \varphi \circ \theta ) (b)
= (-1)^{(r - s) s} \cdot \theta ( \varphi (x b))\in \im ( \exprod^{s}_R M \stackrel{\theta}{\longrightarrow} R)  = 
\Fitt^0_R (Z),\] 
as required to complete the proof of (ii). \\
As for (iii), we note Sakamoto \cite[Th.\@ C.8]{Sakamoto20} proves  $\Char_R (M) \subseteq \Char_R (N)$. However, for the convenience of the reader, we provide a different argument for this inclusion in the course of this proof. For this, we fix surjections of the form $f \: R^{\oplus n} \to N$ and $g \: R^{\oplus m} \to M / N$. Then, since $R^{\oplus m}$ is projective, we can lift $g$ to a map $\widetilde{g} \: R^{\oplus m} \to M$. The resulting map $f \oplus \widetilde{g} \: R^{\oplus (n + m)} \to M$ is then surjective, and (writing $K_1$, $K_2$, and $K_3$ for the relevant kernels) we obtain an exact commutative diagram  
\begin{cdiagram}
    K_1 \arrow[hookrightarrow]{d}{(\varphi_i)_{i\in [n]}} \arrow[hookrightarrow]{r} & K_3 \arrow[twoheadrightarrow]{r} 
     \arrow[hookrightarrow]{d}{(\varphi_i)_{i\in [n+m]}} & K_2  \arrow[hookrightarrow]{d}{(\varphi_i)_{i\in [n+m]\setminus [n ]}} \\ 
     R^{\oplus n} \arrow[hookrightarrow]{r} \arrow[twoheadrightarrow]{d}{f} & R^{\oplus n} \oplus R^{\oplus m} \arrow[twoheadrightarrow]{r}\arrow[twoheadrightarrow]{d}{f \oplus \widetilde{g}}  & R^{\oplus m}  \arrow[twoheadrightarrow]{d}{g}  \\ 
     N \arrow[hookrightarrow]{r} & M \arrow[twoheadrightarrow]{r} & M / N. 
\end{cdiagram}%
From the above diagram we can deduce the exact sequence
\[
    0 \to K_1 \to K_3 \xrightarrow{(\varphi_i)_{i \in [n+m]\setminus [n]}} R^{\oplus m} \to M / N \to 0. 
\]
Now, if $a \in \bidual^{n + m}_R K_3$, then Lemma \ref{biduals lemma 1}\,(ii) (applied to the above exact sequence) implies that $(\wedge_{i \in [n+m]\setminus [n]} \varphi_i ) (a)\in \bidual^n_R K_1$, and so claim (ii) allows us to conclude that  
\begin{align*}
({\wedge}_{i \in [n+m]} \varphi_i ) ( a) & = (-1)^{n m} \big ( ({\wedge}_{i\in [n]} \varphi_i ) \circ ({\wedge}_{i \in [n+m]\setminus [n]} \varphi_i ) \big) (a) \big)\\  
& \in \im \big ( ({\wedge}_{i \in [n+m]\setminus [n]} \varphi_i ) (a) \big) \cap  ({\wedge}_{i \in [n]} \varphi_i ) \big ( \bidual^n_R K_1 \big) \\
& \subseteq \Char_R ( M / N) \cap  ({\wedge}_{i \in [n]} \varphi_i ) \big ( \bidual^n_R K_1 \big).
\end{align*}
The claimed inclusion is then valid since 
\begin{multline*}
\Char_R (M) = \im \big \{ \bidual^{n + m}_R K_3 \to \exprod^{n + m}_R R^{\oplus (n + m)} \cong R \big \}  
=  ({\wedge}_{i \in [n + m]} \varphi_i )\bigl( \bidual^{n + m}_R K_3 \bigr) \\
 \subseteq \Char_R ( M / N) \cap ({\wedge}_{i \in [n]} \varphi_i ) \bigl(\bidual^{n}_R K_1\bigr) 
= \Char_R ( M / N) \cap \Char_R (N). \qedhere
\end{multline*}
\end{proof}

\subsection{The general formalism} \label{abstract stark systems section}

Let $(\cQ, \prec)$ be a totally ordered set, and regard its power set $\cP \coloneqq \cP ( \cQ)$ as partially ordered with respect to inclusion. 
If $S \in \cP$ is a finite set, then by $\wedge_{v \in S} v$ we mean $v_1 \wedge \dots \wedge v_{|S|}$ using a labelling $S = \{ v_1, \dots, v_{|S|}\}$ with $v_1 \prec \dots \prec v_{|S|}$. If $S' \in \cP$ is a further finite set with $S \subseteq S'$, then we define a `sign'  $\mathrm{sgn} (S', S) \in \{ \pm 1\}$ via the equality 
\[
\big( \wedge_{v \in (S' \setminus S)} v \big) \wedge \big ( \wedge_{v \in S} v \big) = 
\mathrm{sgn} (S', S) \cdot \wedge_{v \in S} v
\]
in the exterior algebra $\bigwedge^\ast_\Z \Z [\cP]$.\\
We now suppose to be given a $G_2$-ring $R$ and an inductive system $\mathfrak{I} \coloneqq (M_S, \iota_{S, S'})_{S \in \cP}$ of finitely generated $R$-modules with the following properties:
\begin{itemize}
\item[(P$_1$)] The maps $\iota_{S, S'} \: M_S \to M_{S'}$ are injective (and so will be suppressed from the notation). 
    \item[(P$_2$)] For any element $v$ of $\cQ$, there exists a specified map of $R$-modules $f_v \: {\varinjlim}_S M_S \to R$, where the limit is over all elements $S$ of $\cP$ that contain $v$. 
    \item[(P$_3$)] For all $S$ and $S'$ in $\cP$ with $S \subseteq S'$, there exists a specified exact sequence of $R$-modules 
\begin{equation*}\label{third condition} \begin{tikzcd}
        0 \arrow{r} & M_S \arrow{r} & M_{S' } \arrow{rr}{\oplus_{v \in S' \setminus S} f_v} & & {\bigoplus}_{v \in S' \setminus S} R.  
    \end{tikzcd}\end{equation*}
\end{itemize}

\begin{definition}
For each $r\in \N_0$, an (abstract)  `Stark system of rank $r$' for the inductive system $\mathfrak{I}$ is an element of the inverse limit $R$-module    
\[ \SS^r (\mathfrak{I}) \coloneqq {\varprojlim}_{S \in \cP} \bidual^{r + |S|}_R M_S
\]
in which the transition morphism for each $S\subseteq S'$ is 
\[ \mathrm{sgn} (S', S) \cdot (\bigwedge_{v \in S' \setminus S} f_v ) \: \bidual^{r + |S'|}_R M_{S'} \to \bidual^{r + |S|}_R M_S,\]
as induced by the exact sequence in (P$_3$) (and Lemma \ref{biduals lemma 1}\,(ii)). Such a system is therefore an element 
$(c_S)_{S \in \cP}$ of ${{\prod}}_{S \in \cP} \bidual^{r + |S|}_R M_S$ with $\mathrm{sgn} (S', S) \cdot ({\bigwedge}_{v \in S' \setminus S} f_v ) ( c_{S'}) = c_S$ for $S \subseteq S'$.
\end{definition}

We now further assume the existence of an inductive system $\mathfrak{P} = \mathfrak{P}(\mathfrak{I}) \coloneqq (Z_S, \rho_{S, S'})_{S \in \cP}$ of (finitely generated) $R$-modules with the following properties: 
\begin{itemize}
\item[(P$_4$)] For  $v \in \cQ$ and $S\in \cP$, there exists a specified map of $R$-modules $g_{S,v} \: R \to Z_{S}$ with $\rho_{S' , S}\circ g_{S, v} =g_{S', v}$ 
for $S \subseteq S'$. 
\item[(P$_5$)] For $S$ and $S'$ in $\cP$ with $S \subseteq S'$ there exists a specified exact sequence of $R$-modules 
\begin{equation*} \label{abstract five term exact sequence}
    \begin{tikzcd}
        0 \arrow{r} & M_S \arrow{r} & M_{S' } \arrow{rr}{\oplus_{v \in S' \setminus S} f_v} & & \bigoplus_{v \in S' \setminus S} R
        \arrow{rr}{\sum_{v \in S' \setminus S} g_{S,v}}& & Z_{S} \arrow{r}{\rho_{S, S'}} & Z_{S'} \arrow{r} & 0. 
    \end{tikzcd}
    \end{equation*}
\end{itemize}

\begin{thm} \label{abstract stark systems thm}
Fix a $G_2$-ring $R$ and inductive systems of $R$-modules $\mathfrak{I}$ and $\mathfrak{P}$ as above. 
Then, for each $r\in \N_0$, the following claims are valid. 
\begin{romanliste}
\item For every $U \in \cP$ there exist elements $S$ of $\cP$ containing $U$ for which  $\Fitt^{r + |S|}_R (M_S^\ast)$ annihilates the kernel of the map 
    \[
    \SS^r(\mathfrak{I}) \to \bidual^{r + |S|}_R M_S, \]
that sends each element $(c_T)_{T \in \cP}$ to $c_S$.
    \item Fix $U \in \cP$ and denote by $\Upsilon_U$ image of the map 
$\sum_{v \in \cQ}  g_{U, v}$ from $\bigoplus_{v \in \cQ} R$ to $ Z_U$. For
 every system $(c_S)_{S \in \cP}\in \SS^r(\mathfrak{I})$ one has 
 \[
 \im (c_U) \subseteq \mathrm{char}_R (\Upsilon_U).
 \]
 In addition, if $S$ is a set as in (i) that contains $U$, $i \geq 0$ is an integer with $|S \setminus U | \geq i$, and $M_S$ contains a free direct summand of rank $t$, then 
 \[
 \Fitt^t_R (M_S^\ast) \cdot \big ( {\sum}_{V \subseteq S \setminus U, |V| = i} \im (c_{U \cup V}) \big) \subseteq \Fitt^i_R (\Upsilon_U).
 \]
\end{romanliste}
\end{thm}

\begin{proof} 
Since $R$ is Noetherian, the ascending chain of ideals 
\[
0 = \ker (\rho_{\emptyset, \emptyset}) \subseteq \cdots \subseteq \ker (\rho_{\emptyset, S}) \subseteq \cdots
\]
must terminate. There must therefore exist an element  $S$ of $\cP$ such that $\ker (\rho_{\emptyset, S}) = \ker (\rho_{\emptyset, S'})$ for all $S' \in \cP$ with $S \subseteq S'$.
 From the exact sequence in property  (P$_5$), it therefore follows that $\im(\sum_{v \in S} g_{\emptyset,v}) = \im(\sum_{v \in S'} g_{\emptyset,v})$ and hence that $\im(\sum_{v \in S} g_{\emptyset,v})$ is equal to the module $\Upsilon_\emptyset$ that occurs in (ii). 
 Note that we can also increase $S$ if necessary to ensure $S$ contains any given element of $\cP$.\\
We now claim that for any subset $U$ of $S$ one also has that 
$\im(\sum_{v \in S \setminus U} g_{U,v}) = \Upsilon_U$.
To justify this, we let $x$ be an element of $\Upsilon_U$ so that, by definition of $\Upsilon_U$, the element $x$ is contained in the image of $\sum_{v \in S'} g_{U, v}$ for some $S' \in \cP$. Let us now consider the diagram
\begin{cdiagram}
    \bigoplus_{v \in S' \cup U} R \arrow{rr}{\sum_{v \in S' \cup U} g_{\emptyset, v}} \arrow{d} & & Z_\emptyset \arrow{d}{\rho_{\emptyset, U}} \\
    {\bigoplus_{v \in S'}} R \arrow{rr}{\sum_{v \in S'} g_{U, v}} & & Z_U,
\end{cdiagram}%
where the vertical arrow on the left is the natural projection map $(y_v)_{v \in S' \cup U} \mapsto (y_v)_{v \in S'}$. To prove that this diagram commutes, we first observe that $\rho_{\emptyset, U} \circ g_{\emptyset, v} = 0$ for all $v \in U$ by property ($\mathrm{P}_5$)
so that the commutativity follows from assumption ($\mathrm{P}_4$). 
By the surjectivity of the arrow on the left, we then conclude that there is an element $y \in Z_\emptyset$ in the image of $\sum_{v \in S' \cup U} g_v$ such that $\rho_{\emptyset, U} (y) = x$. In particular, $y$ is in $\Upsilon_\emptyset = \im ( \sum_{v \in S} g_{\emptyset, v})$ and so $y = \sum_{v \in S} g_{\emptyset, v} (y_v)$ for suitable $(y_v)_{v \in S} \in R^{\oplus |S|}$. 
From $\rho_{\emptyset, U} \circ g_{\emptyset, v} = 0$ for all $v \in U$ and property $(\mathrm{P}_4)$ it now follows that
\[
x = \rho_{\emptyset, U} (y) = \rho_{\emptyset, U} ( \sum_{v \in S} g_{\emptyset, v} (y_v)) = 
 \sum_{v \in S \setminus U} g_{U, v} (y_v),
\]
as required to prove the desired containment $x \in \im (\sum_{v \in S \setminus U} g_{U, v})$.\\
From property $(\mathrm{P}_5)$ we further deduce that $\ker (\rho_{U, S}) = \Upsilon_S$
and that one has an exact sequence 
\begin{equation} \label{4 term exact sequence}
\begin{tikzcd}
 0 \arrow{r} & M_U \arrow{r} & M_{S } \arrow{rr}{\oplus_{v \in S \setminus U} f_v} & & \bigoplus_{v \in S \setminus U} R
        \arrow{rr}{\sum_{v\in S \setminus U} g_{U,v}}& & \Upsilon_U \arrow{r} & 0.
    \end{tikzcd}%
    \end{equation}
By applying Lemma \ref{characteristic ideals lemma}\,(ii) to this sequence and noting $c_U = \mathrm{sgn} (S, U) \cdot (\bigwedge_{v \in S \setminus U} f_v) (c_S)$
it now follows that $\im (c_U) \subseteq \mathrm{char}_R (\Upsilon_U)$, as claimed in (ii), and, in addition, that
\begin{equation} \label{fitting ideal inclusion for stark system}
    \Fitt^t_R (M_S^\ast) \cdot \im (c_U) \subseteq \Fitt^0_R (\Upsilon_U).
\end{equation}
We next let $i \geq 0$ be an integer such that $n \coloneqq |S \setminus U | \geq i $ and aim to prove by induction on $i$ that one has
\begin{equation} \label{Fitt one step up}
\Fitt^{i}_R ( \Upsilon_{U}) = {\sum}_{V \subseteq \cQ \setminus U, |V| = i} \Fitt^0_R ( \Upsilon_{U \cup V}).
\end{equation}
If $i = 0$ the claim is clear, and so we may assume that $i > 0$. 
 Using the free presentation of $\Upsilon_U$ provided by (\ref{4 term exact sequence}), we then see that $\Fitt^{i}_R ( \Upsilon_{U})$ is generated by the $(n - i) \times (n - i )$-minors of $\oplus_{u \in S \setminus U} f_u$. Any such minor is a $( (n - 1) - (i - 1))) \times ( (n  - 1) - (i - 1))$-minor of $\oplus_{u \in S \setminus (U \cup \{ v \}} f_u$ for some $v \in S \setminus U$. It follows that
\begin{align*}
\Fitt^{i}_R ( \Upsilon_{U}) & = {\sum}_{v \in S \setminus U} \Fitt^{i - 1}_R ( \Upsilon_{U \cup \{ v \}}) \\
& = {\sum}_{v \in S \setminus U} {\sum}_{V \subseteq \cQ \setminus (U \cup \{ v \}), |V| = i - 1} \Fitt^{i - 1}_R ( \Upsilon_{U \cup \{ v \} \cup V}) \\
& = {\sum}_{V \subseteq \cQ \setminus U, |V| = i} \Fitt^0_R ( \Upsilon_{U \cup V}),
\end{align*}
where the second equality holds by the induction hypothesis. This concludes the inductive step. \\
Combining (\ref{Fitt one step up}) with (\ref{fitting ideal inclusion for stark system}) we now obtain
\[
\Fitt^t_R (M_S^\ast) \cdot \big ( {\sum}_{V \subseteq (S \setminus U), |V| = i} \im (c_{U \cup V}) \big) \subseteq \Fitt^i_R ( \Upsilon_U),
\]
 as required to prove claim (ii).\\
To finish the proof, we show that any set $S$ chosen as above has the property stated in (i). To do this, we fix $S$ and a Stark system $(c_T)_T$ with $c_S = 0$, and need to show, for every $T \in \cP$, that $c_T$ is annihilated by $\Fitt^{r + |S|}_R (M_S^\ast)$. Now, after replacing $T$ by $T \cup S$, we can assume  $T$ contains $S$, and hence that $\Upsilon \coloneqq \Upsilon_\emptyset$ is equal to $\ker (\rho_{\emptyset, S}) = \ker (\rho_{\emptyset, T})$. From the diagram
\begin{cdiagram}[row sep=small]
    0 \arrow{r} & \Upsilon \arrow[equals]{d} \arrow{r} & Z_\emptyset \arrow{r}{\rho_{\emptyset, S}} \arrow[equals]{d} & Z_S \arrow{r} \arrow{d}{\rho_{S, T}} & 0 \\
    0 \arrow{r} & \Upsilon \arrow{r} & Z_\emptyset \arrow{r}{\rho_{\emptyset, T}} & Z_T \arrow{r} & 0
\end{cdiagram}%
we thus deduce $\rho_{S, T}$ is bijective. In this case, therefore, property (P$_5$) implies the existence of a split-exact sequence
\begin{cdiagram}
    0 \arrow{r} & M_S \arrow{r} & M_T \arrow{rr}{\oplus_{v \in T \setminus S} f_v} & & R^{|T \setminus S|} \arrow{r} & 0,
\end{cdiagram}%
and hence an isomorphism $M_T^\ast \cong M_S^\ast \oplus R^{|T \setminus S|}$. This in turn induces an isomorphism  
\[ \exprod^{r + |T|}_R M_T^\ast \cong {\bigoplus}_{i = 0}^{i=|T \setminus S|} ( \exprod^{r + |T| - i}_R M_S^\ast) \otimes_R (\exprod^i_R R^{|T \setminus S|})\] 
which, upon dualising, gives an exact sequence
\begin{cdiagram}
0 \arrow{r} & {\bigoplus}_{i = r + |S| + 1}^{r + |T|} \bidual^i_R M_S \arrow{r} & 
\bidual^{r + |T|}_R M_T \arrow{rr}{\wedge_{v \in T \setminus S} f_v} & &
\bidual^{r + |S|}_R M_S.
\end{cdiagram}%
In particular, since $c_T\in \ker(\wedge_{v \in T \setminus S} f_v)$, one has $c_T \in \bigoplus_{i = r + |S| + 1}^{r + |T|} \bidual^i_R M_S$. 
Thus, it suffices to note that, by Lemma \ref{standard fitting props}\,(v), the ideal 
 $\Fitt^{r + |S|}_R (M_S^\ast)$ annihilates the $R$-module $\exprod^{i}_R M_S^\ast$ for each $i$ with $r + |S| < i\le r + |T|$, and hence that $\Fitt^{r + |S|}_R (M_S^\ast)$ annihilates $\bigoplus_{i = r + |S| + 1}^{r + |T|} \bidual^i_R M_S$. 
\end{proof}

\begin{bsp}
Let $K / k$ be a finite abelian group with Galois group $G \coloneqq \gal{K}{k}$. Let $(S, V, T)$ be a Rubin datum for $K / k$, and suppose that the Rubin--Stark Conjecture holds for all data $(S \cup S', V \cup S', T)$ as $S'$ ranges over $\cP ( \cQ)$ for a set $\cQ$ of finite places of $k$ that is disjoint from $T$ and comprises only places that split completely in $K / k$. Then the collection $(\varepsilon^{V \cup S'}_{K / k, S \cup S', T})_{S' \in \cP}$ of Rubin--Stark elements is a Stark system in the above sense, with $R  = \Z [G]$,  $M_S = \cO_{K, S, T}^\times$ and $Z_S$ equal to the $(S_K, T_K)$-ray class group $\Cl_{K, S, T}$ of $K$. Hence, in this case, Theorem \ref{abstract stark systems thm}\,(ii)  implies 
$\im (\varepsilon^V_{K / k, S, T})$ is contained in the $\ZZ[G]$-characteristic ideal of the subgroup  of $\Cl_{K, S, T}$ generated by the classes of places $w$ in $\cQ_K$. 
(For a concrete application of this observation, see \cite[Th.\@ 1.1]{BullachMaciasCastillo}.)
\end{bsp}

\subsection{The Eagon--Northcott complex} \label{eagon northcott section}

In this section we review a family of complexes introduced by Eagon and Northcott in \cite{eagon--northcott} (see also \cite[App.~2, \S\,A2H]{eisenbud-syzygies}). These complexes
 generalise the classical construction of a Koszul complex (see \cite[\S\,A2.6]{eisenbud-comm-algebra} or \cite[App.\@ C]{Northcott} for unifying frameworks) and can be used to compute associated Fitting ideals. 
 
 To explain the  construction, 
 we fix a morphism $\phi \: F_0 \to F_1$ of finitely-generated free $R$-modules $F_0$ and $F_1$ of ranks $d$ and $d - r$, respectively.
 For each  $i \in \{ 0, \dots, r - 1 \}$ we write $\Sym^{r - i}_R F_1$ for the $(r - i)$-th symmetric power (over $R$) of $F_1$.  
Then the dual of the multiplication map $\mu_i \: F_1 \otimes_R (\Sym^{r - i - 1}_R F_1) \to \Sym^{r - i}_R F_1$ is a map $
\mu_i^\ast \: ( \Sym^{r - i}_R F_1)^\ast \to F_1^\ast \otimes_R ( \Sym^{r - i - 1}_R F_1)^\ast$ and we use this to define a composite \begin{align*}
\partial_i \: (\Sym^{r - i}_R F_1)^\ast \otimes_R \exprod^{d - i}_R F_0 
& \xrightarrow{\mu_i^\ast \otimes \phi^{(d - i)}}
\big( F_1^\ast \otimes_R ( \Sym^{r - i - 1}_R F_1)^\ast \big) \otimes_R \big(  F_1 \otimes_R \exprod^{d - i - 1}_R F_0 \big) \\
&  \xrightarrow{\phantom{\mu_i^\ast \otimes \phi^{(d - i)}}} 
(\Sym^{r - i - 1}_R F_1)^\ast \otimes_R \exprod^{d - i - 1}_R F_0.
\end{align*}
Here $\phi^{(d - i)} \: \exprod^{d - i}_R F_0 \to F_1 \otimes_R \exprod^{d - i - 1}_R F_0$ is 
defined by means of
\[
\phi^{(d- i)} (m_1 \wedge \dots \wedge m_{d - i}) = {\sum}_{j\in [d-i]} ( - 1)^{j + 1} \phi (m_j) \otimes m_1 \wedge \dots \wedge \widehat{m_j} \wedge \dots \wedge m_{d - i}, 
\]
where we write $\widehat{m_j}$ to mean omission of the element $m_j$, and the second arrow is induced by the canonical evaluation map $F_1^\ast \otimes_R F_1 \to R$. \\
Then it is proved in \cite[Th.\@ A2.10\,(a)]{eisenbud-comm-algebra} that the sequence
\begin{align*}
     (\Sym^{r}_R F_1)^\ast \otimes_R \exprod^{d}_R F_0 & \stackrel{\partial_0}{\longrightarrow} 
    (\Sym^{r - i - 1}_R F_1)^\ast \otimes_R \exprod^{d - i - 1}_R F_0 \stackrel{\partial_1}{\longrightarrow} \dots \\
   \dots  & \longrightarrow F_1^\ast \otimes_R \exprod^{d - r + 1}_R F_0 
   \xrightarrow{\partial_{r - 1}}
    \exprod^{d - r}_R F_0 \xrightarrow{\exprod_R^{d-r} \phi}
    \exprod^{d - r}_R F_1 
\end{align*}
constitutes a complex $C^\bullet_{\mathrm{EN}} (\phi)$ of (finitely generated, free) $R$-modules in which the final term $\exprod^{d-r}_R F_1$ is considered (by convention) as placed in degree zero.\\
The following well-known property of $C^\bullet_{\mathrm{EN}} (\phi)$ will play a crucial role in later arguments.

\begin{prop} \label{eagon northcott property}
    For every morphism $\phi \: F_0 \to F_1$ of free $R$-modules as above, and every integer $i$, the ideal $\Fitt^0_R (\coker (\phi))$ annihilates the $R$-module $H^i ( C^\bullet_\mathrm{EN} (\phi))$. 
\end{prop}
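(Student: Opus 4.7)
The plan is to reduce the claim to the universal (generic) case, in which $C^\bullet_{\mathrm{EN}}(\phi)$ is classically known to be a free resolution, and then propagate the result to arbitrary $\phi$ by a base change argument.

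After fixing bases of $F_0$ and $F_1$, the morphism $\phi$ is represented by a matrix $A = (a_{ij})$ with $1 \leq i \leq d - r$ and $1 \leq j \leq d$. Set $S \coloneqq \Z[x_{ij}]$ for indeterminates $x_{ij}$, and let $\Phi \: S^{d} \to S^{d - r}$ denote the generic matrix $(x_{ij})$. The assignment $x_{ij} \mapsto a_{ij}$ then defines a ring homomorphism $\sigma \: S \to R$ under which $\phi = \Phi \otimes_S R$. Since the Eagon--Northcott construction is functorial and commutes with base change of the underlying morphism, one has a canonical isomorphism $C^\bullet_{\mathrm{EN}}(\phi) \cong C^\bullet_{\mathrm{EN}}(\Phi) \otimes^{\mathbb{L}}_S R$ in $D(R)$, where the derived tensor product is meaningful because $C^\bullet_{\mathrm{EN}}(\Phi)$ is a bounded complex of finitely generated free $S$-modules.

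The key input is then the classical fact that $C^\bullet_{\mathrm{EN}}(\Phi)$ is a free resolution of $S/I$ over $S$, where $I \coloneqq I_{d - r}(\Phi) = \Fitt^0_S(\coker \Phi)$ denotes the ideal of $S$ generated by the $(d - r) \times (d - r)$-minors of $\Phi$. This follows from the Buchsbaum--Eisenbud acyclicity criterion, since the grade of $I$ in $S$ equals $r + 1$ in the universal setting, matching the length of the complex (see, for example, \cite[Th.~A2.60]{eisenbud-comm-algebra} and its proof, or the original paper \cite{eagon--northcott}). Consequently, for every $i \in \Z$ one obtains a canonical isomorphism
\[
H^i(C^\bullet_{\mathrm{EN}}(\phi)) \cong \Tor^S_{-i}(S/I, R).
\]

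Since $I$ annihilates $S/I$, it also annihilates every $\Tor^S_{j}(S/I, R)$. Moreover, the image of $I$ under $\sigma$ is precisely the ideal of $R$ generated by the $(d - r) \times (d - r)$-minors of $A$, i.e.\ $\Fitt^0_R(\coker \phi)$, so one concludes that $\Fitt^0_R(\coker \phi)$ annihilates every $H^i(C^\bullet_{\mathrm{EN}}(\phi))$, as required. The only nontrivial technical ingredient is the universal acyclicity of the Eagon--Northcott complex, which rests on the depth computation $\mathrm{grade}_S(I_{d - r}(\Phi)) = r + 1$; since this is classical and amply documented, I would simply cite it, and the remainder of the argument amounts to orchestrating the base-change formalism described above.
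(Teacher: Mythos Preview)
Your proof is correct and follows essentially the same approach as the paper: reduce to the generic case where the Eagon--Northcott complex is acyclic (via the grade computation and the Buchsbaum--Eisenbud criterion), then base change and read off annihilation of the resulting $\Tor$ groups. The only cosmetic difference is that the paper works over $S = R[X_{ij}]$ rather than $\Z[x_{ij}]$, which is immaterial.
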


\begin{proof} This result is stated without proof in \cite[Th.\@ A2.59]{eisenbud-syzygies}. For completeness, we therefore present a proof. For this, we follow an  argument used by Buchsbaum and Rim \cite{BuchsbaumRim} to prove the analogous property for the Buchsbaum--Rim complex.

    Let $S \coloneqq R [X_{ij} \mid i \in [d - r], j \in [d]]$ be the polynomial ring in $d (d - r)$ commuting variables and consider the map $\Phi \: S^{\oplus d} \to S^{\oplus (d - r)}$ that is represented by the matrix $(X_{ij})_{ij}$. It then is a result of Northcott \cite{Northcott63} that $\Fitt^0_S ( \coker (\Phi))$ is an ideal of grade $d - (d - r) + 1 = r + 1$. It therefore follows from \cite[Th.\@ A2.10\,(c)]{eisenbud-comm-algebra} that $C^\bullet_\mathrm{EN} (\Phi)$ is acylic. (This is what is referred to as the `generic case' in loc.\@ cit.). In particular, $C^\bullet_\mathrm{EN} (\Phi)$ furnishes a projective resolution of
    \[
    H^0 ( C^\bullet_\mathrm{EN} (\Phi)) = \coker \big ( \exprod^{d - r}_S S^{\oplus d} \xrightarrow{\exprod^{d - r}_S \Phi} \exprod^{d - r}_S S^{\oplus (d - r)} \big ) \cong S / \Fitt^0_S (\coker (\Phi)).
    \]
    Let $\phi$ be represented by a matrix $A = (a_{ij})_{ij}$ and define $\overline{S}$ to be the quotient of $S$ by the ideal generated by $X_{ij} - a_{ij}$ for $ 1 \leq i \leq d - r$ and $1 \leq j \leq d$. Since Eagon--Northcott complexes behave well under base change, the isomorphism $\overline{S} \cong R$ that sends the class of $X_{ij}$ to $a_{ij}$ induces an isomorphism
    $
     C^\bullet_\mathrm{EN} (\phi) \cong C^\bullet_\mathrm{EN} (\Phi) \otimes^\mathbb{L}_S \overline{S}
    $.
    It follows that for every $i \geq 0$ one has
    \[
    H^i ( C^\bullet_\mathrm{EN} (\phi)) = \Tor_i^S ( S / \Fitt^0_S (\coker (\Phi)), \overline{S}).
    \]
    This description shows that $H^i ( C^\bullet_\mathrm{EN} (\phi))$
    is annihilated by the image of $\Fitt^0_S ( \coker (\Phi))$ under $\overline{S} \cong R$. By Lemma \ref{standard fitting props}\,(iv), this image is equal to
    $\Fitt^0_R (\overline{S} \otimes_S \coker (\Phi)) = \Fitt^0_R (\coker (\phi))$, whence the claimed annihilation result follows.
\end{proof}

\subsection{Determinants and biduals}

We next prove a useful technical result that relates the determinant of a perfect complex to an appropriate bidual of its cohomology.  

\begin{prop} \label{eagon-northcott-prop} Let $R$ be a $G_2$-ring and fix $d\in \N$ and $r \in \N_0$ with $r \le d$. Let $C^\bullet$ be a complex of $R$-modules $F_0 \stackrel{\phi}{\to} F_1$, in which the first term is 
a free $R$-module of rank $d$ that
is placed in degree zero and $F_1$ is a free $R$-module of rank $d - r$ that is placed in degree one. Then the following claims are valid.
\begin{itemize}
\item[(i)] The image of the canonical map 
\begin{align*}
\vartheta_{\phi} \: \Det_R (C^\bullet) \coloneqq \big( \exprod^d_R F_0 \big) \otimes_R \big( \exprod^{d - r}_R F_1^\ast \big) & \to \exprod^r_R F_0, \\
 a \otimes ( {\wedge}_{i \in [d-r]}f_i) & \mapsto (-1)^{r(d - r)} \cdot ({\wedge}_{i\in [d-r]}(f_i \circ \phi) \big) (a)
\end{align*}
is contained in $\bidual^r_R H^0 (C^\bullet)$,
and there is also an inclusion of ideals of $R$ 
\[
\Fitt^0_R (H^1 (C^\bullet)) \cdot \Ann_R ( \Ext^1_R ( R / \Fitt^0_R (H^1 (C^\bullet)), R)) \subseteq
\Ann_R \big ( \bidual^r_R H^0 (C^\bullet) / (\im (\vartheta_\phi)) \big).
\]
\item[(ii)] We have an inclusion of ideals of $R$
\[
\Fitt^0_R (H^1 (C^\bullet)) \subseteq \big \{ f (a) \mid a \in \im (\vartheta_{ \phi}), f \in \exprod^r_R H^0 (C^\bullet)^\ast \big \} 
\]
with pseudo-null cokernel.
\item[(iii)] There exists a canonical injective map
\[
\widetilde{\vartheta_\phi} \: \Fitt^0_R (H^1 (C^\bullet))^\ast \otimes_R \Det_R (C^\bullet) \to \bidual^r_R H^0 (C^\bullet),\] 
that sends each $\id_R \otimes a$ to $\vartheta_\phi(a)$.
The cokernel of $\widetilde{\vartheta_\phi}$ is $R$-torsion-free and annihilated by $\Fitt^0_R (H^1 (C^\bullet))$. In particular, $\widetilde{\vartheta_\phi}$ is bijective if $H^1 (C^\bullet)$ is $R$-torsion.
\item[(iv)] Suppose $D^\bullet$ is a perfect complex of $R$-modules that fits into an exact triangle of the form
\begin{equation} \label{triangle 1}
    \begin{tikzcd}
        C^\bullet \arrow{r} & D^\bullet \arrow{r} &  R^{n} [0]  \arrow{r} & \phantom{X}.
    \end{tikzcd}%
\end{equation}%
Then $D^\bullet$ admits a representative $R^{d+n} \stackrel{\psi}{\to} R^{d-r}$, in which the first term is placed in degree zero, and there is a commutative diagram
\begin{cdiagram}[row sep=small]
\Det_R (D^\bullet) \arrow{r}{\vartheta_{\psi}} \arrow{d}{\simeq} & \bidual_R^{r + n} H^0 (D^\bullet) \arrow{d}{(-1)^{rn} \wedge_{i \in [n]} f_i} \\
\Det_R (C^\bullet) \arrow{r}{\vartheta_{\phi}} & \bidual_R^{r} H^0 (C^\bullet).
\end{cdiagram}%
Here the first vertical map is the isomorphism
induced by (\ref{triangle 1}) and the canonical identification of $R$-modules $\Det_R (R^{n})\cong R$. 
In addition, the second vertical map is induced, via Lemma \ref{biduals lemma 1}\,(ii), by the map $(f_i)_{i \in [n]} \: H^0 (D^\bullet) \to R^{n}$ in the long exact cohomology sequence of (\ref{triangle 1}). 
\end{itemize}
\end{prop}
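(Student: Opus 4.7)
My plan is to prove the four claims in order, each building on the previous. For (i), I will fix bases $\{b_j\}_{j \in [d]}$ of $F_0$ and $\{c_i\}_{i \in [d-r]}$ of $F_1$ with dual basis $\{c_i^*\}$ of $F_1^*$. Applying Lemma \ref{biduals lemma 1}(ii) to the tautological exact sequence $0 \to H^0(C^\bullet) \to F_0 \xrightarrow{(c_i^* \circ \phi)_i} R^{\oplus(d-r)}$ shows that the wedge $\wedge_i(c_i^* \circ \phi)$ factors through $\bidual^r_R H^0(C^\bullet)$, and multilinearity of $\vartheta_\phi$ together with the sign $(-1)^{r(d-r)}$ reduces the case of arbitrary $f_i \in F_1^*$ to this basis case. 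For the annihilator statement on $\bidual^r_R H^0(C^\bullet)/\im(\vartheta_\phi)$, I would identify this cokernel with a shift of Eagon--Northcott cohomology: the map $\vartheta_\phi$ is essentially dual to the final differential $\exprod^{d-r}_R F_0 \to \exprod^{d-r}_R F_1$ of $C^\bullet_\mathrm{EN}(\phi)$. Proposition \ref{eagon northcott property} gives annihilation of every $H^i(C^\bullet_\mathrm{EN}(\phi))$ by $\Fitt^0_R(H^1(C^\bullet))$, and passing from this to an annihilator for the dual module $\bidual^r_R H^0(C^\bullet)/\im(\vartheta_\phi)$ requires a standard spectral-sequence argument, producing the extra factor $\Ann_R(\Ext^1_R(R/\Fitt^0_R(H^1(C^\bullet)), R))$; the $(G_2)$-hypothesis ensures this works on biduals.

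For (ii), I compute explicitly using the chosen bases. The ideal $\Fitt^0_R(H^1(C^\bullet)) = \Fitt^0_R(\coker \phi)$ is generated by the $(d-r) \times (d-r)$ minors of the matrix of $\phi$, and a direct Laplace-expansion calculation shows that each such minor equals $\pm b_J^*(\vartheta_\phi(b_{[d]} \otimes c_{[d-r]}^*))$ for an appropriate size-$r$ subset $J \subseteq [d]$, where $b_J^* = \wedge_{j \in J} b_j^*$ restricts (via the inclusion $H^0(C^\bullet) \hookrightarrow F_0$) to an element of $\exprod^r_R H^0(C^\bullet)^*$. Summing over all $J$ of size $r$ yields the claimed containment. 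Pseudo-nullity of the cokernel is checked at height-one primes $\mathfrak{p}$: by condition $(G_2)$, $R_\mathfrak{p}$ is a discrete valuation ring, over which $\coker \phi$ has finite projective dimension and a Smith-normal-form calculation shows the two sides coincide after localisation.

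For (iii), given $\alpha: \Fitt^0_R(H^1(C^\bullet)) \to R$ and $a \in \Det_R(C^\bullet)$, I define $\widetilde{\vartheta_\phi}(\alpha \otimes a) \in \bidual^r_R H^0(C^\bullet)$ as the linear functional on $\exprod^r_R H^0(C^\bullet)^*$ sending $f \mapsto \alpha(f(\vartheta_\phi(a)))$. This is well-defined by (ii) (since $f(\vartheta_\phi(a))$ lies in $\Fitt^0_R(H^1(C^\bullet))$ up to a pseudo-null term which vanishes inside the reflexive module $\bidual^r_R H^0(C^\bullet)$ by Lemma \ref{ryotaro-useful-lem}(i)), and sends $\id_R \otimes a$ (interpreting $\id_R$ as the inclusion $\Fitt^0_R(H^1(C^\bullet)) \hookrightarrow R$) to $\vartheta_\phi(a)$ as required. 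Injectivity and the cokernel-annihilator statement follow from (i)--(ii): the discrepancy between $\widetilde{\vartheta_\phi}$ and $\vartheta_\phi$ is computed by (i) and killed by the stated ideal; and when $H^1(C^\bullet)$ is $R$-torsion, the $\Ext$-factor in (i)'s annihilator becomes trivial on the torsion-free reflexive module $\bidual^r_R H^0(C^\bullet)$, forcing $\widetilde{\vartheta_\phi}$ to be surjective.

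For (iv), given the triangle (\ref{triangle 1}), I take $\psi = (\phi, 0): F_0 \oplus R^n \to F_1$, which realises $D^\bullet$ as the mapping cone; then $H^0(D^\bullet) = H^0(C^\bullet) \oplus R^n$ and the map $(f_i)_{i \in [n]}$ is projection onto the second factor. The isomorphism $\Det_R(D^\bullet) \cong \Det_R(C^\bullet) \otimes_R \Det_R(R^n[0]) \cong \Det_R(C^\bullet)$ uses the canonical trivialisation of the second factor. Commutativity of the square then becomes a direct computation: a generator of $\Det_R(D^\bullet)$ corresponds to $(b_{[d]} \wedge e_{[n]}) \otimes c_{[d-r]}^*$ (with $e_{[n]}$ the top form on $R^n$), and applying $\vartheta_\psi$ and then $\wedge_{i \in [n]} f_i$ yields $(-1)^{rn} \vartheta_\phi(b_{[d]} \otimes c_{[d-r]}^*)$, where the sign arises from moving $n$ wedge factors past $r$ factors corresponding to $H^0(C^\bullet) \subseteq F_0$. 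The hard part will be the annihilator bound in (i), since the appearance of the $\Ext$-factor is not automatic and requires a careful translation, via the $(G_2)$-hypothesis, between Eagon--Northcott cohomology and reflexive invariants of $\coker(\phi)$; once (i) is in hand, claims (ii)--(iv) are essentially formal consequences.
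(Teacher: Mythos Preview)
Your treatment of the first containment in (i) and of (ii) is essentially correct and matches the paper's approach. There are, however, genuine gaps in (iii) and (iv), and the annihilator bound in (i) is not established by what you wrote.

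\textbf{Part (iv).} Your choice $\psi = (\phi,0)$ is wrong. It would make the complex $[F_0 \oplus R^n \to F_1]$ quasi-isomorphic to $C^\bullet \oplus R^n[0]$, hence would realise $D^\bullet$ only when the triangle (\ref{triangle 1}) splits. In general the connecting morphism $R^n[0] \to C^\bullet[1]$ is represented by a map $h^1 \colon R^n \to F_1$ lifting the boundary map $R^n \to H^1(C^\bullet) = \coker(\phi)$, and the correct representative is $\psi(a \oplus b) = h^1(a) + \phi(b)$. Consequently $H^0(D^\bullet)$ is not a direct sum $H^0(C^\bullet)\oplus R^n$ but only an extension of a submodule of $R^n$ by $H^0(C^\bullet)$, and your identification of the $f_i$ with coordinate projections fails. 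The paper's computation works precisely because $\vartheta_\psi$ already lands in $\bidual^{r+n}_R H^0(D^\bullet)$, so that the dual-basis functionals $e_i^\ast$ on $R^n \oplus F_0$ agree with the $f_i$ on this image; this step is where the possibly nonzero $h^1$ is absorbed.

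\textbf{Part (iii).} Your functional definition $\widetilde{\vartheta_\phi}(\alpha \otimes a)(f) = \alpha(f(\vartheta_\phi(a)))$ requires $f(\vartheta_\phi(a)) \in \Fitt^0_R(H^1(C^\bullet))$, but (ii) gives the \emph{opposite} inclusion. What is actually needed is the preliminary containment $\im(\vartheta_\phi) \subseteq \Fitt^0_R(H^1(C^\bullet)) \cdot \exprod^r_R F_0$, which the paper proves directly from the basis expansion. More seriously, your sketch does not establish that $\coker(\widetilde{\vartheta_\phi})$ is torsion-free, and without this the ``in particular'' clause (bijectivity when $H^1(C^\bullet)$ is torsion) does not follow: knowing the cokernel is annihilated by $\Fitt^0$ is not enough, since $\Fitt^0$ containing a nonzerodivisor only kills torsion-free modules that it annihilates. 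The paper obtains both injectivity and torsion-freeness structurally: dualising the short exact sequence $0 \to H^{-1}(C^\bullet_{\mathrm{EN}}(\phi)) \to B \to \Fitt^0 \cdot \exprod^{d-r}_R F_1 \to 0$ (with $B$ a suitable quotient of $\exprod^{d-r}_R F_0$) and tensoring with $\exprod^d_R F_0$ gives an exact sequence identifying $\coker(\widetilde{\vartheta_\phi})$ with a submodule of $H^{-1}(C^\bullet_{\mathrm{EN}}(\phi))^\ast \otimes_R \exprod^d_R F_0$, which is visibly torsion-free. The annihilation by $\Fitt^0$ then comes directly from Proposition~\ref{eagon northcott property}. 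This same structural setup also yields the annihilator bound in (i) via a second short exact sequence, explaining the $\Ext^1$-factor explicitly; your ``standard spectral-sequence argument'' does not locate where this factor enters.
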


\begin{proof} It is convenient to first prove (ii) and (iii), before proving (i) and then (iv). \\ 
The proof of \cite[Prop.\@ A.2\,(ii)]{sbA}
shows that
\begin{equation} \label{evaluation-ideal}
\left\{ f (a) \mid a \in \im (\vartheta_\phi), \; f \in \exprod^{r}_R F_0^\ast \right\} = \Fitt^{0}_R (H^1 (C^\bullet)),
\end{equation}
and so (ii) follows from an argument that was also used in the proof of Lemma \ref{characteristic ideals lemma}\,(ii) (details can also be found in \cite[Lem.~2.7\,(c)]{BullachDaoud}). 

To prove (iii), we set  
\[ A\coloneqq \ker\bigl( \exprod^{d - r}_R F_0 \xrightarrow{\exprod^{d - r}_R \phi} \exprod^{d - r}_R F_1\bigr) \quad\text{and} \quad B \coloneqq \coker\bigl( F_1^\ast \otimes_R \exprod^{d - r + 1}_R F_0 \xrightarrow{\partial_{r - 1}} \exprod^{d - r}_R F_0\bigr).\]
Then, since the image of  $\exprod^{d - r}_R \phi$ is equal to $\Fitt^0_R ( H^1 ( C^\bullet)) \cdot \exprod^{d - r}_R F_1$, there exists an exact
commutative diagram
\begin{equation} \label{eagon northcott diagram}
\begin{tikzcd}[row sep=small]
 F_1^\ast \otimes_R \exprod^{d - r + 1}_R F_0 \arrow{r}{\partial_{r - 1}} \arrow{d} & \exprod^{d - r}_R F_0 \arrow[equals]{d} \arrow[twoheadrightarrow]{r} & B  \arrow{d}  \\
  A \arrow[twoheadrightarrow]{d} \arrow[hookrightarrow]{r} & \exprod^{d - r}_R F_0 \arrow[twoheadrightarrow]{r} & \Fitt^0_R ( H^1 (C^\bullet)) \otimes_R  \exprod^{d - r}_R F_1 \\
 H^{-1} (C^\bullet_{\mathrm{EN}}(\phi)),
\end{tikzcd}%
\end{equation}
in which $C^\bullet_{\mathrm{EN}}(\phi)$ is the Eagon--Northcott complex from \S\,\ref{eagon northcott section}.
By applying the Snake Lemma to this diagram, one then obtains an exact sequence
\begin{equation} \label{eagon northcott ses 1}
\begin{tikzcd}
 0 \arrow{r} & H^{-1} (C^\bullet_{\mathrm{EN}}(\phi)) \arrow{r} & B \arrow{r} & \Fitt^0_R ( H^1 (C^\bullet)) \otimes_R  \exprod^{d - r}_R F_1 \arrow{r} & 0.
\end{tikzcd}%
\end{equation}
Now, by dualising the upper row of (\ref{eagon northcott diagram}) and tensoring with $\exprod^d_R F_0 \cong R$, we obtain the upper row in the diagram
\begin{equation} \label{a big diagram}
\begin{tikzcd}[column sep=small]
    0 \arrow{r} & B^\ast \otimes_R \exprod^d_R F_0 \arrow{r} \arrow[dashed]{d} & \big( \exprod^{d - r}_R F_0 \big)^\ast \otimes_R \exprod^d_R F_0 \arrow{r} \arrow{d}{\simeq} & F_1 \otimes_R \big( \exprod^{d - r + 1}_R F_0 \big)^\ast \otimes_R \exprod^d_R F_0 \arrow{d}{\simeq} \\ 
    0 \arrow{r} & \bidual^r_R H^0 (C) \arrow{r} & \exprod^r_R F_0 \arrow{r} & F_1 \otimes_R \exprod^{r - 1}_R F_0.
\end{tikzcd}%
\end{equation}
Here the lower row is exact by Lemma \ref{biduals lemma 1}\,(i) and the vertical isomorphisms are the cases $i = r$ and $i = r-1$ of the isomorphism
\begin{equation} \label{det pairing}
\big( \exprod^{d - i}_R F_0 \big)^\ast \otimes_R \exprod^d_R F_0 \xrightarrow{\simeq} \exprod^i_R F_0, \quad 
\theta \otimes x \mapsto \theta (x).
\end{equation}
It follows that the dashed arrow in (\ref{a big diagram}) exists and is an isomorphism, which shows that $B^\ast \otimes_R \exprod^d_R F_0$ is isomorphic to $\bidual^r_R H^0 (C^\bullet)$.
Upon tensoring the dual of (\ref{eagon northcott ses 1}) with $\exprod^d_R F_0$ $(\cong R)$, we therefore obtain a canonical exact sequence
\begin{equation}\label{another ses}\Fitt^0_R ( H^1 (C^\bullet))^\ast \otimes_R  \big ( \exprod^{d - r}_R F_1 \big)^\ast 
 \otimes_R \big ( \exprod^d_R F_0 \big)  \hookrightarrow \bidual^r_R H^0 (C^\bullet) \to H^{-1} (C^\bullet_{\mathrm{EN}}(\phi))^\ast \otimes_R \big ( \exprod^d_R F_0 \big).\end{equation}
In connection with this exact sequence we note that the first map is defined as the composite of the dual map of $\exprod_R^{d - r} \phi$ and the 
isomorphism (\ref{det pairing}) for $i = r$. Hence, if we now define $\widetilde{\vartheta_\phi}$ to be this first map, then an explicit check shows $\widetilde{\vartheta_\phi} ( \id_R \otimes a) = \vartheta_\phi (a)$, as required by (iii). In addition, the above exact sequence identifies $\coker(\widetilde{\vartheta_\phi})$ with a submodule of $H^{-1} (C^\bullet_{\mathrm{EN}}(\phi))^\ast \otimes_R \big ( \exprod^d_R F_0 \big)$ and thereby implies $\coker(\widetilde{\vartheta_\phi})$ is $R$-torsion free, as claimed. Finally, we note Proposition \ref{eagon northcott property} implies $\Fitt^0_R (H^1 (C^\bullet)) = \Fitt^0_R (\coker (\phi))$ annihilates $H^i(C^\bullet_{\mathrm{EN}}(\phi))$ in every degree $i$. In particular, if $H^1 (C^\bullet)$ is $R$-torsion, then $\Fitt^0_R (H^1 (C^\bullet))$ contains a nonzero divisor and so $H^{-1} (C^\bullet_{\mathrm{EN}}(\phi))$ is also $R$-torsion. This completes the proof of (iii).\\
Returning now to consider (i), we note that its first claim is proved by the argument of Lemma~\ref{det projection lemma}. Next we observe that a slight adaptation of (\ref{eagon northcott ses 1}) leads to the exact sequence
\begin{cdiagram}
 0 \arrow{r} &  B/ H^{-1} (C^\bullet_{\mathrm{EN}}(\phi)) \arrow{r} & \exprod^{d - r}_R F_1 \arrow{r} & \big(R / \Fitt^0_R ( H^1 (C^\bullet))\big) \otimes_R \exprod^{d - r}_R F_1 \arrow{r} & 0.
\end{cdiagram}%
Upon dualising this sequence and then tensoring with $\exprod^d_R F_0$ ($\cong R$) we obtain the top row in an exact commutative diagram 
\begin{cdiagram}[column sep=small, row sep=small]
    \big ( \exprod^{d - r}_R F_1 \big)^\ast 
 \otimes_R \big ( \exprod^d_R F_0 \big) \arrow{r} \arrow[equals]{d} & \big( B/ H^{-1} (C^\bullet_{\mathrm{EN}}(\phi)) \big)^\ast \arrow{r} \arrow[hookrightarrow]{d} & \Ext^1_R (  (R / \Fitt^0_R ( H^1 (C^\bullet))), R) \\ 
 \big ( \exprod^{d - r}_R F_1 \big)^\ast 
 \otimes_R \big ( \exprod^d_R F_0 \big) \arrow{r}{\vartheta_\phi} & \bidual^r_R H^0 (C^\bullet) \arrow[twoheadrightarrow]{d} & \\ 
 & H^{-1} (C^\bullet_\mathrm{EN} (\phi))^\ast & 
\end{cdiagram}%
in which the central column is induced by combining (\ref{eagon northcott ses 1}) with (\ref{another ses}). Given this diagram, the displayed inclusion in (i) then follows directly from Proposition \ref{eagon northcott property}.\\
To prove (iv), we write $h = h^\bullet$ for the morphism $R^{n} [0] \to C^\bullet [-1]$ in $D(R)$ that appears in the exact triangle (\ref{triangle 1}). One then has $D^\bullet = \mathrm{cone} (h) [1]$, and so 
$D^\bullet$ is represented by the complex $P_0 \stackrel{\psi}{\to} P_1$, where $P_0 \coloneqq  R^{n} \oplus F_0$ occurs in degree $0$, $P_1 \coloneqq F_1$, and $\psi \: P_0 \to P_1$ is the map defined by sending each $(a,b) \in  R^{n} \oplus F_0$ to $h^1 (a) + \phi (b)$. This verifies the first part of (iv). \\
We next write $\{e_i : i\in [n]\}$ for the standard basis of $R^{n}$ $(\subseteq P_0$), and $e_i^\ast \: R^{n} \to R$ for the dual of $e_i$ for  $i \in [n]$. 
The isomorphism $\Det_R (D^\bullet) \cong \Det_R (C^\bullet)$ induced by the exact triangle (\ref{triangle 1}) can then be explicitly described as 
\begin{align*}
\Det_R (D^\bullet) = \big ( \exprod^{d + n}_R (R^{n} \oplus F_0) \big) \otimes_R \big ( \exprod^{d - r}_R F_0^\ast \big) 
& \to \big ( \exprod^{d}_R F_0  \big) \otimes_R \big ( \exprod^{d - r}_R F_0^\ast \big) = \Det_R (C^\bullet)\\
h \otimes g & \mapsto (-1)^{n \cdot d} \cdot ( {\wedge}_{i \in [n]} e_i^\ast  ) ( h) \otimes g.
\end{align*}
In addition, for each $g  = {\wedge}_{i \in [r]}g_i$, one has 
\begin{align*}
 (-1)^{n d} \cdot  \vartheta_\phi \big ( ( {\wedge}_{i \in [n]} e_i^\ast ) ( a) \otimes g \big) 
& =  (-1)^{n d} \cdot \big( ( {\wedge}_{i \in [d-r]} ( g_i \circ \phi ) ) \circ ( {\wedge}_{i \in [n]} (e_i^\ast \circ h^1) \big) (a) \\
& =  (-1)^{n d} \cdot \big(  ( {\wedge}_{i \in [n]} e_i^\ast ) \wedge ( {\wedge}_{i \in [d-r]} ( g_i \circ \phi ) ) \big) (a) \\
& = (-1)^{n  d  + n  (d - r)} \cdot  \big( ( {\wedge}_{i \in [d-r]} ( g_i \circ \phi ) ) \wedge ( {\wedge}_{i \in [n]} (e_i^\ast \circ h^1) \big) (a) \\
& = (-1)^{ n  r} \cdot \big(  ( {\wedge}_{i \in [n]} e_i^\ast ) \circ ( {\wedge}_{i \in [d-r]} ( g_i \circ \phi ) ) \big) (a) \\
& = (-1)^{r(d + n - r)} \cdot \big ( ( {\wedge}_{i \in [n]} f_i ) \circ \vartheta_\psi \big) (a).
\end{align*}
Here the last equality relies on the fact that $\vartheta_\psi (a)$ belongs to $\bidual^{ r + n}_R H^0 (D^\bullet)$ and hence that its image under ${\wedge}_{i \in [n]} e_i^\ast$ is equal to its image under ${\wedge}_{i \in [n]} f_i$ (since each $e_i$ induces the map $f_i \: H^0 (D^\bullet) \to R^{n}$ on cohomology). This completes the proof of (iv). \end{proof} 

\subsection{Algebraic Stark systems} \label{abstract algebraic stark systems section}

Let $\cQ$ be a set, and regard its power set $\cP \coloneqq \cP ( \cQ)$ as a partially ordered set with respect to inclusion.
Let $R$ be a $G_2$-ring and suppose to be given a family $(C^\bullet_{S})_{S \in \cP}$ of perfect complexes of $R$-modules that satisfies the following hypotheses.  

\begin{hyp} \label{hyp complexes}
Assume the following conditions are valid.
\begin{romanliste}
\item $C^\bullet_\emptyset$ admits a representative of the form $[F_0 \to F_1]$ with $F_0$ and $F_1$ finitely generated free $R$-modules.
(Here the first term is placed in degree zero.)
\item The integer $r \coloneqq \mathrm{rk}_R (F_0) - \mathrm{rk}_R (F_1)$ is non-negative. 
\item For every pair of sets $S, S' \in \cP$ with $S \subseteq S'$ there exists a specified exact triangle 
\begin{equation} \label{triangle 2}
\begin{tikzcd}
    C^\bullet_{S} \arrow{r}{\iota_{S, S'}} & C^\bullet_{S'} \arrow{rr}{\oplus_{v \in S' \setminus S} f_{S', v}} & & \bigoplus_{v \in S' \setminus S} R [0] \arrow{r} & \phantom{X}
\end{tikzcd}
\end{equation}
such that $f_{S', v} \circ \iota_{S, S'} = f_{S, v}$. We will therefore simply write $f_v$ instead of $f_{S', v}$. 
\end{romanliste}
\end{hyp}

\begin{rk} \label{rk how to define stark systems for a family of complexes}
 \
\begin{romanliste}
\item If $(C^\bullet_{S})_{S \in \cP}$ satisfies Hypothesis \ref{hyp complexes}, then we obtain a system $(M_S, \iota'_{S, S'})_{S \in \cP}$ as in \S\,\ref{abstract stark systems section} with $M_S \coloneqq H^0 (C^\bullet_S)$ and $\iota'_{S, S'}$ the map $H^0(\iota_{S,S'}) \: H^0 (C^\bullet_S) \to H^0 (C^\bullet_{S'})$.
In particular, we obtain a module $\SS^r ( \{ C^\bullet_S, \iota_{S, S'} \}_{S \in \cP} )$ of Stark systems  of rank $r$ associated to  $(C^\bullet_{S})_{S \in \cP}$.
\item Hypothesis \ref{hyp complexes} and Proposition \ref{eagon-northcott-prop}\,(iv) combine to imply that $C^\bullet_S$ has a representative $[P_0 \stackrel{\phi_S}{\to} P_1]$ in which $P_0$ and $P_1$ are free $R$-modules of ranks $\mathrm{rk}_R (F_0) + |S|$ and $\mathrm{rk}_R (F_1)$.
\end{romanliste}
\end{rk}

We can now state our main result on Stark systems in this setting.   

\begin{thm} \label{abstract algebraic stark systems thm}
Let $R$ be a $G_2$-ring and $(C^\bullet_{S})_{S \in \cP}$ a family of perfect complexes of $R$-modules that satisfies Hypothesis \ref{hyp complexes}. Write $\Upsilon$ for the image of the map $\sum g_v \: \bigoplus_{v \in \cQ} R \to H^1  (C^\bullet_\emptyset )$, where each $g_v \: R \to H^1 (C^\bullet_\emptyset )$ is the boundary map in the long exact cohomology sequence associated with (\ref{triangle 2}) for $S' = \{ v \}$ and $S  = \emptyset$.
Then the following claims are valid. 
\begin{romanliste}
\item There exist $S \in \cP$ with $ \im (\sum_{v \in S} g_v) = \Upsilon$, and for such $S$, the ideal  $\Fitt^0_{R} ( H^1 (C^\bullet_S)) = \Fitt^0_R ( H^1 (C^\bullet_\emptyset) / \Upsilon )$ annihilates the kernel of the `projection map'
\[
\mathrm{pr}_S \: \SS^r ( \{ C^\bullet_S, \iota_{S, S'} \}_{S \in \cP} ) \to \bidual^{r + |S|}_R H^0 (C^\bullet_S).
\]

\item There exists a well-defined homomorphism of $R$-modules 
\[
F \: {\varprojlim}_{S \in \cP} \Det_R (C^\bullet_S) \to \SS^r ( \{ C^\bullet_S, \iota_{S, S'} \}_{S \in \cP} ), \quad (a_S)_{S \in \cP} \mapsto (\vartheta_{\phi_S} (a_S))_{S \in \cP},
\]
where the transition maps on the left are the isomorphisms $\Det_R (C^\bullet_{S'}) \stackrel{\simeq}{\to} \Det_R ( C^\bullet_S)$ induced by  (\ref{triangle 2}). The kernel of $F$ is annihilated by $\Ann_R (\Fitt^0_R ( H^1 (C^\bullet_\emptyset) / \Upsilon )^\ast)$, and its cokernel is annihilated by the ideal $\Fitt^0_R ( H^1 (C^\bullet_\emptyset) / \Upsilon )^2 \cdot \mathfrak{A}$ with 
\[
\mathfrak{A} \coloneqq \Ann_R ( \Ext^1_R ( R / \Fitt^0_R (H^1 (C^\bullet_\emptyset) / \Upsilon), R)).
\]
\item For every system $c \in \SS^r (\{ C^\bullet_S, \iota_{S, S'} \}_{S \in \cP}  )$, and with $\mathfrak{A}$ as in (ii), there are inclusions 
\begin{align*} 
\im (c_\emptyset) & \subseteq \Char_R ( \Upsilon),\\
 \Fitt^0_R ( H^1 (C^\bullet_\emptyset) / \Upsilon) \cdot \mathfrak{A} \cdot  c_\emptyset & \subseteq \im (\vartheta_{\phi_\emptyset}),
 \\ 
 \Fitt^0_R ( H^1 (C^\bullet_\emptyset) / \Upsilon ) \cdot \mathfrak{A} \cdot \im (c_\emptyset) & \subseteq \Fitt^0_R ( H^1 (C^\bullet_\emptyset))^{\ast \ast}.
 \end{align*}
\end{romanliste}
\end{thm}

\begin{proof}
Theorem \ref{abstract stark systems thm}\,(i) implies both the existence of $S \in \cP$ with $\im (\sum_{v \in S} g_v)= \Upsilon$ and that $\iota (\Fitt^{r + |S|}_R ( H^0 (C^\bullet)^\ast))$ annihilates $\ker(\mathrm{pr}_S)$. To finish the proof of (i), it is thus enough to 
note that 
$\Fitt^0_{R} ( H^1 (C^\bullet_S)) \subseteq \iota (\Fitt^{r + |S|}_R ( H^0 (C^\bullet)^\ast))$ by Lemma \ref{comparing Fitting ideals of H0 and H1 lemma}.\\ 
The first part of (ii) follows from Proposition \ref{eagon-northcott-prop}\,(iv),
and the second part (regarding the kernel of $F$) follows from Proposition \ref{eagon-northcott-prop}\,(iii). To prove the final part we fix a set $S$ with $\Upsilon = \ker (H^1 (C^\bullet_\emptyset) \to H^1 (C^\bullet_S) )$, and hence $H^1 (C^\bullet_S) \cong H^1 (C^\bullet_\emptyset) / \Upsilon$ (and note that any $S'$ that contains $S$ has the same property).
We also fix $c = (c_X)_{X \in \cP} \in \SS^r (\{ C^\bullet_X, \iota_{X, X'} \}_{X \in \cP}  )$ and an element $x\in \Fitt^0 (H^1 (C^\bullet_S)) \cdot \mathfrak{A}$. Then Proposition \ref{eagon-northcott-prop}\,(i) implies there exists $z_S \in \Det_R (C^\bullet_S)$ with $\vartheta_{\phi_S} (z_S) = x c_S$. Since all transition maps in the inverse limit $ {\varprojlim}_{X \in \cP} \Det_R (C^\bullet_{X})$ are isomorphisms, we can then lift $z_S$ to an element $z = (z_{X})_{X}$ of this inverse limit. Now, $F (z)$ is a Stark system with $F(z)_S = \vartheta_{\phi_S} (z_S) = x c_S$, and  hence $F(z) - x c \in \ker(\mathrm{pr}_S)$. By  (i) we therefore have $y x z = y F (z)$ for any $y \in \Fitt^0_R (H^1 (C^\bullet_S))$. This implies that $\rm{cok}(F)$ is annihilated by $\Fitt^0_R (H^1 (C^\bullet_S))^2 \cdot \mathfrak{A}$, as required to complete the proof of (ii).\\
Since $F (z)$ and $c$ are Stark systems, we moreover have that
\begin{align*}
x \cdot c_\emptyset =&\,   \mathrm{sgn} (S, \emptyset) \cdot (\exprod_{v \in S} f_{S, v} ) ( x \cdot c_S)\\
=&\,   \mathrm{sgn} (S, \emptyset) \cdot (\exprod_{v \in S} f_{S, v} ) ( x \cdot F(z)_S)\\
=&\, x \cdot F(z)_\emptyset \\
=&\, x \cdot \vartheta_{\phi} (z_{\emptyset}),
\end{align*}
and this implies $x \cdot c_\emptyset\in \im(\vartheta_\phi)$, as required to prove the second inclusion in (iii). Finally, 
 the first inclusion  in (iii) is a special case of Theorem \ref{abstract stark systems thm}\,(ii) and the third follows directly upon applying Proposition \ref{eagon-northcott-prop}\,(ii) to the second inclusion.
\end{proof}

\unappendix

\part{Arithmetic applications}

In the remainder of the article we shall use the theory developed above to prove significantly stronger versions of a range of existing results concerning special value conjectures. 

\section{Relaxed Nekov\'a\v{r} structures and Kato's Conjecture} \label{relaxed Kato section}

In this section we explain how to apply Theorem \ref{new strategy main result} relative to suitably modified relaxed Nekov\'a\v{r} structures in order to study the `generalised Iwasawa main conjecture' formulated by Kato in \cite{Kato93a, Kato93b}. 

For brevity, in the sequel we shall refer to the latter conjecture as `Kato's Conjecture'.  

\subsection{The general strategy}\label{svc section}

To review Kato's Conjecture, we fix a smooth projective variety $X$ over $k$ for which the motive $M = h^i (X) (j)$ has coefficients in a semisimple commutative $\Q$-algebra $A$. Then, for each prime $\ell$, the $\ell$-adic realisation $V_\ell (M) \coloneqq H^i_{\mathrm{\acute et}} (X_{k^c}, \Q_\ell) (j)$ is a finitely generated module over the semisimple $\QQ_\ell$-algebra $A_\ell \coloneqq A \otimes_\Q \Q_\ell$ that is endowed with a continuous commuting action of $G_k$. It is conjectured that, for every place $\q$ of $k$ outside $\Pi_k^\infty \cup \Pi_k^\ell$, the characteristic polynomial 
\[ \Eul_\q (M, x) \coloneqq \mathrm{det}_{A_\ell} ( 1 - \Frob_\q^{-1} x \mid V_\ell (M)^\ast (1)^{I_\q}) \in A_\ell [x]\] 
belongs to $A [x]$ and is independent of $\ell$. Assuming this, for each finite set of places $S$ of $k$ that contains both $\Pi_k^\infty$ and all places at which $M$ has bad reduction, the $S$-truncated motivic $L$-series of $M$ is defined via the 
$(A \otimes_\Q \C)$-valued infinite product
\[
L_S (M, s) \coloneqq \prod_{\q \not \in S} \Eul_\q (M, \Frob_\q^{-1}\cdot \NN \q^{-s})^{-1}. 
\]
This product converges if the real part of $s$ is large-enough and we assume (as is conjectured) that it has a meromorphic continuation to $s  = 0$. It can then be shown that its leading term  $L^\ast_S (M, 0)$
  belongs to $(\R \otimes_\Q A)^\times$. To study this element one fixes a prime number $p$ together with a Gorenstein $\Z_p$-order $\mathcal{A}_p$ in $A_p$ for which there exists a (full) $G_k$-stable sublattice $T \coloneqq T_p (M)$ of $V_p (M)$ that is free as an $\cA_p$-module (so that  $\Hom_{\ZZ_p}(T,\ZZ_p(1))$ is also free as an $\cA_p$-module). We assume $S$ also contains $\Pi_k^p$ and use the complex 
 \[
 C_S(T) \coloneqq C(\mathscr{F}_{\mathrm{rel}}(T,S)) = \RHom_{\cA_p} ( \RGamma_\mathrm{c} (\cO_{k, S}, T), \cA_p) [-2]
 \]
constructed in Proposition \ref{construction complex}. Here $\mathscr{F}_{\mathrm{rel}}(T,S)$ is the relaxed Nekov\'a\v{r} structure specified in Example \ref{nss}\,(i) so that the second equality follows from Examples \ref{dual comments} and \ref{nss}(iii). 
 
 Then, modulo  standard conjectures, the theory developed by Bloch and Kato \cite{bloch-kato}, as interpreted and extended by Kato \cite{Kato93a, Kato93b} and Fontaine--Perrin-Riou \cite{FontainePerrinRiou94}, gives rise to a canonical
rank-one $A$-module $\Xi (M)$, called the `fundamental line' of $M$, as well as canonical
`period-regulator' isomorphisms of invertible $\R \otimes_\Q A$-modules,  resp.\@ $A_p$-modules
\[ 
\lambda_{M} \: \R \otimes_\Q A \xrightarrow{\simeq} \R \otimes_\Q \Xi (M), 
\quad\text{resp.} \,\,\, 
\vartheta_{M, S} \: \Q_p \otimes_{\Z_p} \Det_{\cA_p} (C_S (T)) \xrightarrow{\simeq} \Q_p \otimes_\Q \Xi (M).
\]

Writing $M^\ast(1)$ for the Kummer dual of $M$, we can now recall Kato's Conjecture  for $M$ relative to $\cA_p$. (This conjecture was first formulated in \cite{Kato93a} and is shown in \cite[\S\,2]{BurnsFlach03} to be equivalent, in the subsequent terminology of \cite{BurnsFlach01}, to the  `equivariant Tamagawa number conjecture' for the pair $(M,\mathcal{A}_p)$.) 

\begin{conj}[{$\TNC (M,\cA_p)$}] \label{etnc statement} 
One has 
\[ \lambda_{M}( L^\ast_S (M^\ast(1), 0))\in \Xi (M)\quad\text{and}\quad 
 \cA_p \cdot \lambda_{M}( L^\ast_S (M^\ast(1), 0)) = \vartheta_{M, S}\bigl(\Det_{\cA_p} (C_S (T))\bigr).
\]
\end{conj}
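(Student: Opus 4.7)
The plan is to deduce (cases of) Conjecture \ref{etnc statement} from Theorem \ref{new strategy main result} applied to a suitably chosen family of relaxed Nekov\'a\v{r} structures, using an Euler system whose bottom value is linked to $L^\ast_S(M^\ast(1),0)$ by an explicit reciprocity law. The natural setup is to work over the Iwasawa algebra $\cR \coloneqq \cA_p \llbracket \Gamma \rrbracket$ for $\Gamma = \gal{\cK}{k}$ and $\cK$ an admissible $\Z_p$-power extension of $k$ (e.g.\@ the cyclotomic $\Z_p$-extension, or a larger extension enlarged so as to trivialise trivial zeros), and to take $\cT = T\otimes_{\cA_p}\cR$, with $\varrho\: \cR \to R \coloneqq \cA_p$ the augmentation. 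For each $K \in \Omega(\cK)$ one then takes $\mathscr{F}_K$ to be a comodification $\mathscr{F}_{\mathrm{rel}}(\cT_K, S(K))^{\Sigma}$ of the relaxed Nekov\'a\v{r} structure (see Examples \ref{nss}(i) and \ref{lesc}); the set $\Sigma$ is chosen so that $h^1_\fF(K, \cT) = H^1(\cO_{K, S(K)\cup \Sigma}, \cT)$ is precisely the domain of the candidate Euler system at level $K$. The compatibility requirement in Hypothesis \ref{system of fF hyp}(ii) is then automatic, and Hypothesis \ref{fF hyp} is verified by comparing the dual structure $\mathscr{F}^\ast_k$ to the strict structure via the relations of Example \ref{dual comments} together with Lemma \ref{flach result}.

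The second step is to fix the quotient $Y$ of ${\bigoplus}_{\q\in \Pi_k^\infty} H^0(k_\q, \cT^\vee(1))^\vee$ so that $r_{\fF,Y}$ equals the generic $A$-rank of the space of motivic elements into which the Euler system takes values. For $M = h^0(\mathrm{Spec}\,K)$ or $M = h^1(E/K)$ the natural choice is the classical Euler system of cyclotomic units / elliptic units / Beilinson--Kato zeta elements, giving $r_{\fF,Y} = 1$; more generally one uses Rubin--Stark or higher Beilinson--Flach classes. Once $c \in \ES^{r_{\fF,Y}}(\fF)$ is in hand, Theorem \ref{new strategy main result} yields the inclusion
\[
x y^N \cdot (c_k)_\fp \in y^N\cdot \vartheta_{\mathscr{F}_k, Y}\bigl(\Det_\cR (C(\mathscr{F}_k))\bigr)_\fp
\]
for every prime $\fp$ satisfying (i) and (ii) of the theorem, and for every pair $x,y \in \Fitt^{r_Y}_\cR(X(\mathscr{F}_k))$. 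Applying the change-of-ring isomorphism of Proposition \ref{construction complex}(iii) under $\varrho\: \cR \to R = \cA_p$, and using compatibility of $\vartheta_{\mathscr{F}_k,Y}$ with base change (Lemma \ref{det projection lemma}(ii)), this translates into a containment of the image of $c_k$ inside (a localisation of) $\vartheta_{M, S}\bigl(\Det_{\cA_p}(C_S(T))\bigr)$ at the augmentation prime. Invoking the explicit reciprocity law that identifies the image of $c_k$ in the fundamental line $\Xi(M)$ with $\lambda_M(L^\ast_S(M^\ast(1),0))$, one recovers the \emph{upper bound} inclusion of Conjecture \ref{etnc statement}.

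The third step is the reverse inclusion. For this one proceeds in one of two ways: either apply the same Theorem \ref{new strategy main result} to the Kummer dual $(\cT^\ast(1), \mathscr{F}_k^\ast)$ using a dual Euler system and combine with the functional equation for $L_S(M,s)$; or, when only a cyclotomic Iwasawa extension is available, one invokes an Iwasawa-theoretic main conjecture at the level of $\cR$ and descends to $R$ using the exact triangle of Proposition \ref{construction complex}(vi), taking care to control the error terms arising from the failure of base change (i.e.\@ $\Tor_1^\cR(X(\mathscr{F}_k), R)$). This step is essentially the argument described by Kato in \cite[Ch.\@ I, \S 3.3]{Kato93b}, and the finiteness control is exactly what condition (ii) of Theorem \ref{new strategy main result} is designed to furnish.

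The main obstacle is the verification of Hypotheses \ref{new strategy hyps}. Parts (i)--(v) and (vii) are typically checked using the concrete Galois-theoretic data of $M$ (large image of Galois, choice of a non-anomalous $\tau$, etc.), but part (vi) — the positivity of the cohomological invariant $\chi(\overline{F_i}, j(i))$ — is the delicate piece that required the introduction of \emph{relative} core vertices and the non-triviality of the function $j \mapsto j(i)$ in \S\,\ref{general set up section}. Equally subtle is condition (ii) of Theorem \ref{new strategy main result}: the locus of primes $\fp$ at which $\Fitt^0_R(\Tor_1^\cR(X(\mathscr{F}_k), R))_\fp = R_\fp$ must be shown to contain the augmentation ideal in the cases of interest. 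This is exactly where conditions like ``$E$ has good ordinary reduction at $p$'' in the first theorem of the introduction, or the absence of $p$-th roots of unity, will enter: they guarantee enough control on the module $X(\mathscr{F}_k)$ to make the Tor-group either vanish or be pseudo-null in a suitable sense, allowing one to conclude the full equality (rather than merely one inclusion) in Conjecture \ref{etnc statement}.
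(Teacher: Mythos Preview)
The statement in question is a \emph{conjecture}, not a theorem: the paper does not prove $\TNC(M,\cA_p)$ in general and offers no proof to compare against. What you have written is not a proof but a strategy outline, and indeed it closely mirrors the paper's own Strategy~\ref{gen strat} together with the Iwasawa-theoretic refinement developed in \S\ref{statement iwasawa result section}. The paper then implements this strategy only in specific settings (Kato's zeta elements for elliptic curves in \S\ref{elliptic section}, elliptic units for Tate motives over imaginary quadratic fields in \S\ref{Gm section}), where the explicit reciprocity law and the verification of Hypotheses~\ref{new strategy hyps} can actually be carried out.

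Your sketch is reasonable as a roadmap, but several steps you present as routine are precisely where the work lies. In particular: the existence of an Euler system $c$ with the required reciprocity law is an input, not a consequence of anything in the paper; your ``reverse inclusion'' step invokes either a dual Euler system (which typically does not exist) or an Iwasawa main conjecture (which is itself of comparable depth to the target); and your claim that condition (ii) of Theorem~\ref{new strategy main result} is handled by good ordinary reduction or absence of $p$-th roots of unity conflates the role these hypotheses actually play in the paper (they control the module $Y_{F_\infty,\Pi_k^p}(\cT_\chi)$ via results like Lemma~\ref{error terms Iwasawa theory elliptic curves}, not the $\Tor$-group directly). So while nothing you wrote is wrong as a description of the landscape, it should not be read as a proof of the conjecture.
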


For each Galois extension $F$ of $k$ we now consider  the free $\cA_p[\cG_F]$-module 
\[ 
T_{F / k} \coloneqq \mathrm{Ind}_{G_k}^{G_F} (T)
= T \otimes_{\Z} \Z [\cG_F]
\]  
with $G_k$-action given by $\sigma \cdot (a \otimes b) \coloneqq (\sigma a) \otimes (x \overline{\sigma}^{-1})$ for $\overline{\sigma} \in \cG_F$ the restriction of $\sigma \in G_k$ to $F$.
For each place $v \in \Pi_k^\infty$ we also fix an extension of $v$ to $k^\mathrm{c}$ and write $G_v$ for the corresponding decomposition subgroup of $G_k$. Given an abelian extension $K$ of $k$, we denote the restriction of $\tau_v$ to $K$ by $\tau_{K, v}$ and define an 
$\cA_p [\langle \tau_{K, v} \rangle]$-module by setting 
\[
\cY_K (T, v) \coloneqq \begin{cases}
    T(-1)^{\tau_v = 1} \quad & \text{ if } \tau_{K, v} = \id_K, \\ 
    T(-1) & \text{ if } \tau_{K, v} \neq \id_K,
\end{cases}
\]
where $M^{\tau_v = \pm 1}$ denotes the maximal submodule of a $G_k$-module $M$ on which $\tau_v$ acts as $\pm 1$.
We may then define a subset of $\Pi_k^\infty$ as
\[
V (K) \coloneqq \{ v \in \Pi_k^\infty \mid \cY_K (T, v) \text{ is a nonzero free } \cA_p[\langle \tau_{K, v} \rangle]\text{-module} \}
\]
and, for every $v \in V (K)$, a natural number
\[
r_K (T, v) \coloneqq \mathrm{rk}_{\cA_p}(T(-1)^{\tau_v = 1}). 
\]
\begin{example}\label{Y example} In the following examples we assume $\cA_p$ is the valuation ring $\cO$ of a finite extension of $\QQ_p$. 
\begin{romanliste}
    \item If $T = \cO (1)$, then $V(K)$ is equal to the set of infinite places of $k$ that split in $K$ and $r_K (T, v) = 1$ for all $v \in V (K)$.  
    \item If $T$ is self-dual (that is, $T \cong T^\ast (1)$ as $G_k$-modules) and $p > 2$, then $V ( K) = \Pi_k^\infty$ and $r_K (T, v) =(\mathrm{rk}_\cO (T))/2$ for all $v \in V (K)$. To justify this, we write $\rho \: G_k \to \mathrm{GL}_{\mathrm{rk}_\cO (T)} (\cO)$ for a realisation of the representation $T$ and note self-duality implies $M \rho (\tau_v) M^{-1}=  - \rho (\tau_v^{-1})^\mathrm{t}$ for some $M \in \mathrm{GL}_{\mathrm{rk}_\cO T} (\cO)$. Now, $\tau_v = \tau_v^{-1}$ since $\tau_v^2 = \id$ and so we deduce that $\rho (\tau_v)$ has vanishing trace. This implies that $1$ and $-1$ appear with equal multiplicity among the eigenvalues of $\rho (\tau_v)$, and hence that $V ( K) = \Pi_k^\infty$. Indeed, if $v \in \Pi_k^\infty$ splits in $K$, then $v \in V ( K)$ if and only if $1$ is an eigenvalue of $\rho (\tau_v)$. If $v$ does not split, on the other hand, then $v \in V ( K)$ if and only if $\mathrm{rk}_\cO (T(-1)^{\tau_v = 1}) = \mathrm{rk}_\cO (T(-1)^{\tau_v = -1})$. 
\end{romanliste}     
\end{example}

Assume the pair $(k,p)$ satisfies (\ref{p=2 condition}). Then, with $H_v$ denoting the fixed field of $\tau_{K, v}$ in $K$, the $\cA_p [\cG_K]$-module 
\begin{equation} \label{definition of Y_K (T)}
\cY_{K} (T) \coloneqq \bigoplus_{v \in V(K)} \mathrm{Ind}^{G_k}_{G_{H_v}} (\cY_K (T, v))
\end{equation}
is  a quotient of $Y_{k, \Pi_k^\infty} (T_{K / k}) \coloneqq \bigoplus_{v \in \Pi_k^\infty} H^0 (k_v, T_{K / k}^\vee (1))^\vee$ that is free of rank  
\begin{equation} \label{r_K (T) definition}
r_K (T) \coloneqq {\sum}_{v \in V (K)} r_K (T, v).
\end{equation} 

We set 
\[ e_{K,S,T}\coloneqq \sum e \in A[\cG_K],\]
where $e$ runs over all primitive idempotents of $A[\cG_K]$ that annihilate each of $H^0 (\cO_{K, S}, V_p (M))$, $H^2 (\cO_{K, S}, V_p (M))$ and the kernel of the projection map $Y_{k, \Pi_k^\infty} (T_{K / k}) \to \cY_{K} (T)$. We also write $\cQ_K$ and $\cQ_{K,S,T}$ for the semisimple $\CC_p$-algebras $(\CC_p\otimes_\QQ A)[\cG_K]$ and $\cQ_Ke_{K,S,T}$. Then the descriptions in  Proposition \ref{construction complex}\,(ii) combine with Proposition \ref{construction complex}\,(i) to imply the following: The $\cQ_{K,S,T}$-module 
\[ e_{K,S,T}(\CC_p\cdot H^1(\cO_{k, S(K)},T_{K/k})) = e_{K,S,T}(\CC_p\cdot H^1(\cO_{K, S(K)},T))\] 
is free of rank $r_K(T)$ and any choice of $\cA_p[\cG_K]$-basis $b_{K} = \{b_{K,i}\}_{i \in [r_K(T)]}$ of $\cY_K(T)$ induces a composite homomorphism of $\cQ_K$-modules
\begin{align} \label{definition of finite level theta map}
    \Theta_{b_{K}} \: \CC_p\cdot\Det_{\cA_p[\cG_K]} (C_{S(K)} (T_{K/ k})) & \stackrel{\simeq}{\longrightarrow} \Det_{\cQ_K} ( \CC_p\cdot H^1(\cO_{k, S(K)}, T_{K/k})))\notag \\
    & \qquad \otimes_{\cQ_K} \Det_{\cQ_K} ( \CC_p\cdot H^1 ( C_{S(K)} (T_{K / k})))^{-1}\notag  \\ 
    & \longrightarrow e_{K,S,T}\bigl(\CC_p\cdot\exprod^{r_K(T)}_{\cA_p[\cG_K]} H^1 (\cO_{K, S(K)}, T)\bigr).
    \end{align}
Here the first map is the canonical `passage-to-cohomology map' and the second  is obtained by multiplying by $e_{K,S,T}$ and then evaluating elements of %
\[ e_{K,S,T}\bigl(\Det_{\cQ_K} ( \CC_p\cdot H^1 ( C_{S(K)} (T_{K / k})))^{-1}\bigr)= e_{K,S,T}\bigl(\CC_p\cdot\exprod^r_{\cA_p[\cG_K]}\cY_{K} (T)\bigr)^{-1}\] 
at $e_{K,S,T}{\bigwedge}_{i \in [r_K(T)]}b_{K,i}$. We write $M_K$ for the motive $M\otimes_\QQ h^0(\rm{Spec}(K))$, regarded as defined over $k$ and with coefficients $A[\cG_K]$. Then, upon fixing an isomorphism $\C_p \cong \C$, we may define   
\[ \eta_{b_K} \coloneqq \Theta_{b_K}\bigl((\vartheta_{M_K, S(K)}^{-1}\circ \lambda_{M_K})( L^\ast_{S(K)} (M_K^\ast(1), 0))\bigr) \in \C_p \cdot \exprod^d_{\cA_p[\cG_K]} H^1 (\cO_{K, S(K)}, T).\]

As in \S\,\ref{higher-rank euler systems definitions sections}, we next fix an abelian pro-$p$ extension $\cK$ of $k$ in which all places in $\Pi_k^\infty$ split completely, write $\Omega = \Omega(\cK)$ for the collection of finite extensions of $k$ inside $\cK$ and consider the family of relaxed Nekov\'a\v{r} structures
\[ \fF_{\mathrm{rel}} = (\mathscr{F}_{\mathrm{rel}}(T_{K/k},S(K)))_{K \in \Omega}.\] 

Then, assuming the rank $r = r_K(T)$ defined above to be independent of the choice of field $K$ in $\Omega$, one can aim to use Theorem \ref{new strategy main result} to study $\TNC(M,\cA_p)$ in the following way. 

\filbreak
\begin{strategy}\label{gen strat}\
\begin{itemize}
\item[(i)] Identify a compatible choice of bases $\{ b_K\}_{K \in \Omega}$ such that the family $\eta_{T,\cA_p} \coloneqq  (\eta_{b_K})_{K\in \Omega}$ defined above belongs to $\ES^r(\fF_{\mathrm{rel}})$. 

\item[(ii)] Apply Theorem  \ref{new strategy main result} to the family $\eta_{T,\cA_p}$ and then derive from the conclusion of this result consequences related to $\TNC(M,\cA_p)$.
\end{itemize}
\end{strategy}

\begin{remark}\label{deligne remark} Accomplishing step (i) requires one to show that, for every field $K$ in $\Omega$, one has $\eta_{b_K}\in {\bigcap}^d_{\cA_p[\cG_K]} H^1 (\cO_{K, S(K)}, T)$ in $\C_p \cdot \exprod^d_{\cA_p[\cG_K]} H^1 (\cO_{K, S(K)}, T)$. This restriction on the element $\eta_{b_K}$ constitutes an explicit `integral' refinement of the Deligne--Beilinson Conjecture for  $M_K$ and has been verified in only a very small number of cases. If $H^0 (K, T / p T) = 0$, 
then in the case
$M = h^0 (\Spec (k))$ and $\cA_p = \Z_p$ the prediction for $\eta_{b_K}$ is equivalent to the `Rubin--Stark Conjecture' for $K/k$ from \cite{Rub96} and, in general, one can show that it is implied by the validity of $\TNC(M_K,\cA_p[\cG_K])$.
\end{remark}

\begin{remark}\label{forgetful etnc remark}  For each $K$ in $\Omega$, the functoriality arguments of \cite[\S\,3.5, Th.~3.1b) and \S\,4.4, Th.~4.1]{BurnsFlach01} show that the validity of  $\TNC(M_K,\cA_p[\cG_K])$ implies that of $\TNC(M'_K,\cA_p)$. Here we write $M_K'$ for $M\otimes h^0 (\Spec (K))$, regarded as defined over $K$ and with coefficients $A$.   \end{remark}
 
\subsection{Statement of the main result} \label{statement iwasawa result section}

In this subsection we shall explain how, as part of the general Strategy \ref{gen strat}\,(ii), the techniques developed in Part I of this article allow one to study an Iwasawa-theoretic variant of Conjecture~\ref{etnc statement}. In  \S\,\ref{elliptic section} and \S\,\ref{Gm section}  the results obtained here will then be used to 
complete Strategy \ref{gen strat}, and thereby
obtain results towards Conjecture \ref{etnc statement} itself, in several interesting new cases.\\
Throughout, we fix a finite extension $\Phi$ of $\Q_p$ in $\Q_p^\mathrm{c}$, write $\cO$ for the valuation ring of $\Phi$, $\varpi$ for a uniformising element of $\mathcal{O}$ and $\kappa$ for the residue field $\cO /(\varpi)$. We also fix a finitely generated free $\cO$-module $T$ that is endowed with an $\cO$-linear continuous action of $G_k$ and  assume this action is unramified outside a finite set $S_\ram (T)$ of places of $k$.\\
As in \S\,\ref{higher-rank euler systems definitions sections}, we then fix a finite set of places 
\[ S_0 \subseteq \Pi_k\quad\text{with}\quad \Pi_k^\infty \cup \Pi_k^p \subseteq S_0\] 
and, for every finite abelian extension $K$ of $k$ set 
\[
S_0 (K) \coloneqq S_0 \cup S_\ram (K / k)
\quad \text{ and } \quad 
S(K) \coloneqq S_0 (K) \cup S_\ram (T).
\]

Given a subset $\Sigma$ of $\Pi_k$ with 
\[ \Sigma \cap (S_0 \cup S_\ram (T)) = \emptyset,\] 
we write $\cK^\Sigma$ for the composite of all abelian extensions of $k$ which are unramified at $\Sigma$, and denote by $\Omega_\Sigma \coloneqq \Omega (\cK^\Sigma)$ the set of all finite subextensions of $\cK^\Sigma / k$. We abbreviate $\Omega_\emptyset$ to $\Omega$. \\
For $K \in \Omega_\Sigma$, we use the $\Sigma$-modified Nekov\'{a}\v{r} structure 
\[ \mathscr{F}_{\mathrm{rel}, \Sigma} (T_{K / k}) \coloneqq \scrF_{\mathrm{rel}} (T_{K /k}, S (K))_\Sigma\] 
(cf. Example~\ref{lesc}) and, for each $i\in \N_0$, we  set 
\[
H^i_\Sigma (\cO_{K, S (K)}, T) \coloneqq H^1_{\mathscr{F}_{\mathrm{rel}, \Sigma} (T_{K / k})} (k, T_{K / k}).
\]
We observe that this construction agrees with previous definitions of $\Sigma$-modified cohomology (as used, for example, in \cite[\S\,2.3]{sbA}), and is motivated by constructions of Gross \cite{gross88} and Rubin \cite{Rub96} in the context of refinements of Stark's conjectures.

\begin{definition} \label{euler systems iwasawa theory def}
    Fix $r\in \N$ and a finite subset $\Sigma$ of $\Pi_k$ with  $\Sigma \cap (S_0 \cup S_\ram (T)) = \emptyset$. Then we define $\ES^r_{\Sigma, S_0} (T)$ to be the set of all elements
    \[
    c = (c_K)_{K} \in \prod_{K \in \Omega_\Sigma} \bidual^r_{\cO [\cG_K]} H^1_\Sigma (\cO_{K, S(K)}, T)
    \]
  that satisfy the following distribution relation: For all $K, L \in \Omega_\Sigma$, with $K \subseteq L$ one has
        \[
        \mathrm{cores}^r_{L / K} (c_L) = \big( {\prod}_{v \in S_0 (L) \setminus S_0 (K)} \Eul_v (\Frob_v^{-1}) \big) \cdot c_K.
        \]
Here 
the equality takes place in $\Phi \otimes_\cO \exprod^r_{\cO [\cG_K]} H^1_\Sigma (\cO_{K, S(L)}, T)$ (by using Lemma \ref{biduals-reduced-rings}) and 
we write $\mathrm{cores}^r_{L / K}$ for the morphism 
\[ \exprod^r \cores_{L / K}: \Phi \otimes_\cO \exprod^r_{\cO [\cG_L]} H^1_\Sigma (\cO_{L, S(L)}, T) \to \Phi \otimes_\cO \exprod^r_{\cO [\cG_K]} H^1_\Sigma (\cO_{K, S(L)}, T)\] 
that is induced
by the natural corestriction map $\cores_{L / K} \: H^1_\Sigma (\cO_{L, S(L)}, T) \to H^1_\Sigma (\cO_{K, S(L)}, T)$.
\end{definition}

\begin {rk} The link between the above notion of Euler system and that defined in \S\,\ref{higher-rank euler systems definitions sections} will be explained in \S\,\ref{twisting lemma section}.\end{rk}

We now fix a finite abelian extension $K$ of $k$ and a finite-rank $\Z_p$-power extension $k_\infty$ of $k$, set 
\[ K_\infty \coloneqq K \cdot k_\infty\,\,\text{ and }\,\, \Lambda_K \coloneqq \cO \llbracket \cG_{K_\infty} \rrbracket.\] 
We write $\Delta_K$ for the finite subgroup $\gal{K_\infty}{k_\infty}$ of $\cG_{K_\infty}$ and fix (as we may) a subgroup $\Gamma_{\!\!K}$ of $\cG_{K_\infty}$ that is mapped bijectively to $\cG_{k_\infty}$ by the restriction map $\cG_{K_\infty} \to \cG_{k_\infty}$. In particular, the group $\Gamma_{\!\!K}$ is isomorphic to $\Z_p^n$ for some $n > 0$ and there exists a direct product decomposition  
\begin{equation} \label{Gamma definition}
    \cG_{K_\infty} = \Delta_K \times \Gamma_{\!\!K}
\end{equation}
as well as a canonical direct product decomposition 
\begin{equation}\label{sylow decomp} \Delta_K = \nabla_{\!\!K} \times \square_K\end{equation} 
in which $\square_K$ and $\nabla_{\!\!K}$ are respectively the $p$-Sylow subgroup of $\Delta_K$ and the maximal subgroup of $\Delta_K$ of order prime to $p$. We also set 
\[ L \coloneqq K^{\square_K}\quad \text{and}\quad F \coloneqq K^{\nabla_{\!\!K}}.\] 
Given a character $\chi \in \widehat{\nabla_{\!\!K}} \coloneqq \Hom_\Z ( \nabla_{\!\!K}, \Phi^{\mathrm{c}, \times})$ for some fixed choice of algebraic closure $\Phi^\mathrm{c}$ of $\Phi$, we write $\cO_\chi \coloneqq \cO [\im (\chi)]$ for the $\cO$-algebra generated by the values of $\chi$.
We note that $\cO_\chi$ is an unramified extension of $\cO$ with residue field $\kappa_\chi \coloneqq \cO_\chi / (\varpi)$.
Moreover, any $\cO [\nabla_{\!\!K}]$-module $M$ decomposes as a direct sum 
\[ M = \bigoplus_{\chi \in \widehat{\nabla_{\!\!K}} / \sim} M_\chi\] 
with $M_\chi \coloneqq M \otimes_{\cO [\nabla_{\!\!K}]} \cO_\chi$ and $\sim$ the equivalence relation on $\widehat{\nabla_{\!\!K}}$ induced by the action of $G_{\Phi}$. Given an element $m \in M$, we denote by $m_\chi$ the image of $m$ in $M_\chi$. Lastly, we endow the tensor product $\cO_\chi$-module 
\[ T (\chi) \coloneqq T \otimes_\cO \cO_\chi,\]
with the `$\chi$-twisted'
$G_k$-action given by $\sigma \cdot a \coloneqq \chi (\sigma)^{-1} \cdot (\sigma a)$ for all $a \in T$. (Note that some authors use a different sign convention and denote this representation as $T (\chi^{-1})$.)\\
We assume $k_\infty$ contains a $\Z_p$-extension of $k$ in which no finite place splits completely (and hence that no finite place splits completely in $k_\infty$ itself). 
We also define Galois extensions 
\[
\mathfrak{E} = \mathfrak{E}(F,k_\infty/k) \coloneqq F \cdot k(1) \cdot k_\infty ( \mu_{p^\infty}, ( \cO_k^\times)^{1 / p^\infty})\,\,\,\text{and}\,\,\, \mathfrak{E}_T^\chi \coloneqq (k^\mathrm{c})^{\ker(\rho_{T(\chi)})}
\]
of $k$, where $\rho_{T(\chi)}$ denotes the canonical homomorphism $G_{\mathfrak{E}} \to \mathrm{Aut} (T(\chi))$ (so that $\mathfrak{E}\subseteq \mathfrak{E}_T^\chi$). \\
Finally, we fix  $r\in \N$ (which is to be specified later on) and consider the following hypothesis.

\begin{hyps} \label{new strategy iwasawa hyps}
    The above data $(T, k_\infty, F, \chi, r)$ satisfies all of the following conditions.
    \begin{romanliste}
    \item The $\kappa_\chi [G_k]$-module $\Tbar (\chi) \coloneqq T (\chi) \otimes_{\cO_\chi} \kappa_\chi$ is irreducible.
    \item There exists an element $\tau$ of $G_\mathfrak{E}$ for which $\dim_{\kappa_\chi} (\Tbar (\chi) / (\tau - 1) \Tbar (\chi)) = 1$.
    \item[(ii${}^\ast$)] If $p = 2$, then $\dim_{\kappa_\chi} (\Tbar (\chi)) = 1$. 
    \item The module $H^1 (\mathfrak{E}_T^\chi / k, (\Tbar (\chi))^\ast (1))$ vanishes.
    \item If $p \in \{2,3\}$, then the $\Z_p [G_k]$-modules $\Tbar (\chi)$ and $(\Tbar (\chi))^\ast (1) = \Tbar^\ast (1) (\chi^{-1})$ have no nonzero isomorphic subquotients.
    \item $H^1 (\mathfrak{E}_T^\chi / k, \Tbar (\chi))$ is a finite-dimensional $\kappa_\chi$-vector space and
    \[
    \dim_{\kappa_\chi}\!\big( H^1 (\mathfrak{E}_T^\chi / k, \Tbar (\chi)) \big)< r + \dim_{\kappa_\chi}\!\big( H^0 (k, \Tbar (\chi)) \big) - {\sum}_{v \in \Sigma} \dim_{\kappa_\chi}\!\big( H^0 (k_v, \Tbar^\ast (1) (\chi^{-1})) \big).\]
    \end{romanliste}
\end{hyps}

\begin{rk} \label{invariants vanishing remark}
 If the $\kappa_\chi [G_k]$-module $\Tbar (\chi)$ is irreducible, then so is $\Tbar^\ast (1) (\chi^{-1})$. In particular, the validity of Hypotheses \ref{new strategy iwasawa hyps}\,(i) and (iii) combine to imply that  $H^0 (k, \Tbar^\ast (1) (\chi^{-1}))=(0)$.  
\end{rk}

We consider the induced representation 
\[
\cT \coloneqq \mathrm{Ind}^{G_k}_{G_{K_\infty}} (T) = T \otimes_\cO \Lambda_K,
\]
upon which $\sigma \in G_k$ acts via $\sigma \cdot (a \otimes b) \coloneqq (\sigma a) \otimes (b \overline{\sigma}^{-1})$ with $\overline{\sigma}$ the image of $\sigma$ in $\Lambda_K$.\\
We write $\mathscr{F}_{\mathrm{rel}, \Sigma} (\cT)$ for the $\Sigma$-modified relaxed Nekov\'a\v{r} structure $\mathscr{F}_{\mathrm{rel}} (\cT, S(K))$ on $\cT$, and denote by 
\[ \cF_{ \mathrm{rel}, \Sigma} (\cT) \coloneqq h ( \mathscr{F}_{\mathrm{rel}, \Sigma} (\cT))\] 
the induced Mazur--Rubin structure.

We will also use the complex
\[
C_{S (K), \Sigma} (\cT) \coloneqq C (\mathscr{F}_{\mathrm{rel}, \Sigma} (\cT))
\]
from Proposition \ref{construction complex}. (If $\Sigma = \emptyset$, then we will drop the subscript $\Sigma$.) In particular, we will assume that each group $H^1_\Sigma (\cO_{K, S(K)}, T)$ is $\cO$-torsion free, and so may apply Lemma \ref{finite level reps} to deduce that the complex $C_{S (K), \Sigma} (\cT)$  admits a resolution of the form $P \to P$ with $P$ a finitely generated free $\Lambda_K$-module and the first term is placed in degree zero. This fact combines with \cite[Lem.\@ B.9]{Sakamoto20} to imply that the natural map
\begin{align*}
\bidual^r_{\Lambda_K} H^1_\Sigma (\cO_{k, S(K)}, \cT) & \to 
{\varprojlim}_{K \subseteq E \subseteq K_\infty} \bidual^r_{\cO [\cG_E]} H^1_\Sigma (\cO_{k, S(K)}, T \otimes_\cO \cO [\cG_E]) 
\\ 
& \xrightarrow{\simeq} {\varprojlim}_{K \subseteq E \subseteq K_\infty} \bidual^r_{\cO [\cG_E]} H^1_\Sigma (\cO_{E, S(K)}, T)
\end{align*}
is an isomorphism. (Here the second map is the isomorphism from Shapiro's lemma.) For every $c \in \ES^r_{\Sigma, S_0} (T)$ we therefore obtain a well-defined element
\[
c_{K_\infty} \coloneqq ( c_E)_E \in \bidual^r_{\Lambda_K} H^1_\Sigma (\cO_{k, S(K)}, \cT).
\]
Given any finite set $U \subseteq \Pi_k$, we use the $\Lambda_K$-module 
\[ Y_U (\cT) \coloneqq \bigoplus_{v \in U} H^0 (k_v, \cT^\vee (1))^\vee.\]
We note in particular that the dual of the diagonal map $H^0 (k, \cT^\vee (1)) \to {\bigoplus}_{v \in U} H^0 (k_v, \cT^\vee (1))$ induces a map $Y_U (\cT) \to H^0 (k, \cT^\vee (1))^\vee$ of $\Lambda_K$-modules, the kernel of which is equal to the module 
\[ X_U (\cT) \coloneqq X_U (\mathscr{F}_{\mathrm{rel}, \Sigma}  (\cT))\] 
defined in \S\,\ref{psc section}. 

\begin{remark}\label{X=Y remark} The above observation implies that, for any non-empty subset $U'$ of $U$, there exists a short exact sequence of $\Lambda_K$-modules 
$0 \to X_{U'} (\cT) \xrightarrow{\subseteq} X_{U} (\cT) \to Y_{U\setminus U'} (\cT) \to 0$, in which the third arrow denotes the natural projection map. In addition, if Hypothesis \ref{new strategy hyps} is valid, then Remark \ref{invariants vanishing remark} implies that the modules $X_U (\cT)$ and $Y_U (\cT)$ coincide.  \end{remark}

We next fix a non-zero $\Lambda_K$-free quotient $Y$ of $Y_{\Pi_k^\infty} (\cT)$,
for example of the form obtained by passing to the limit over the modules $\cY_{EK} (T)$ defined in (\ref{definition of Y_K (T)}).
We note that the central exact sequence in Proposition \ref{construction complex}\,(v) implies that  $Y$ is then also a free quotient of $H^1 (C_{S, \Sigma} (\cT))$. At this point we also specify the natural number $r$ by setting  
\[
r \coloneqq \mathrm{rk}_{\Lambda_K} (Y)\in \N.
\]

By using the identification of rings $\Lambda_K \cong \cO [\Delta_K] \llbracket \Gamma_{\!\!K} \rrbracket$ induced by (\ref{Gamma definition}) to regard $\Phi [\Delta_K]$ as a subring of the total ring of fractions $\cQ_K \coloneqq \cQ (\Lambda_K)$ of $\Lambda_K$, 
we next define idempotents of $\Phi[\Delta_K]$ by setting 
\[ e_K \coloneqq {\sum}_{\psi\in \Xi_K } e_\psi\,\,\text{ and }\,\,  \epsilon_K \coloneqq {\sum}_{\psi\in \Xi_K'} e_\psi. \] 
Here we write $\Xi_K$ and $\Xi_K'$ for the sets of characters $\psi \in \widehat{\Delta_K}$ for which 
the primitive idempotent $e_\psi$ of $\Phi^c[\Delta_K]$ annihilates the kernel of the natural projection maps 
\[ \cQ^\mathrm{c}_K \otimes_{\Lambda_K} H^2_\Sigma (\cO_{k, S (K)}, \cT) \to \cQ^\mathrm{c}_K \otimes_{\Lambda_K} Y,\quad \text{respectively } \,\, \cQ^\mathrm{c}_K \otimes_{\Lambda_K} Y_{\Pi_k^\infty} (\cT) \to \cQ^\mathrm{c}_K \otimes_{\Lambda_K} Y,\]
where we have set $\cQ^\mathrm{c}_K \coloneqq \Phi^\mathrm{c}\otimes_{\Phi}\cQ_K$. 

\begin{rk} These idempotents have the following useful properties. 
\begin{romanliste}
\item Since no place in $\Pi_k\setminus \Pi_k^\infty$  splits completely in $k_\infty$, the idempotent $e_K$ does not depend on $\Sigma$ (cf.\@ \cite[Lem.\@ 4.12]{BullachDaoud}). In addition, if $k_\infty$ is the cyclotomic $\Z_p$-extension of $k$, then the weak Leopoldt conjecture (from \cite[\S\,1.3]{PerrinRiou95}) predicts that $\cQ_K \otimes_{\Lambda_K} H^2_\Sigma (\cO_{k, S(K)}, \cT)$ vanishes and hence $e_K = 1$. 
\item The definition of $\Xi_K'$ implies that $\cQ_K(1 - \epsilon_K) \otimes_{\Lambda_K} Y_{\Pi_k^\infty} (\cT)$ is precisely equal to the kernel of the projection map $\cQ_K \otimes_{\Lambda_K} Y_{\Pi_k^\infty} (\cT) \to \cQ_K \otimes_{\Lambda_K} Y$. 
\end{romanliste}
\end{rk}

We next fix a $\Lambda_K$-basis $b_\bullet = \{ b_i\}_{i \in [r]}$ of $Y$. Then, since $\cQ_K$ is a semisimple ring, we can define a composite morphism  of $\cQ_K$-modules 
\begin{align} \nonumber
    \Theta_{K_\infty,\Sigma, b_\bullet}\:  \cQ_K \otimes_{\Lambda_K} \Det_{\Lambda_K} (C_{S(K), \Sigma} (\cT)) 
    & \xrightarrow{\sim} \Det_{\cQ_K} ( \cQ_K \otimes_{\Lambda_K} H^0 ( C_{S(K), \Sigma} (\cT))) \\
    & \qquad \otimes_{\cQ_K} \Det_{\cQ_K} ( \cQ_K \otimes_{\Lambda_K} H^1 ( C_{S(K), \Sigma} (\cT)))^{-1}\notag\\ 
    & \xrightarrow{} e_K \epsilon_K \cQ_K \otimes_{\Lambda_K} 
    \exprod^r_{\Lambda_K} H^1_\Sigma (\cO_{k, S (K)}, \cT) 
    \label{definition Theta map}
\end{align}
(which we will usually abbreviate to $\Theta_{\Sigma, b_\bullet}$). Here the first map is the canonical `passage-to-cohomology map' and the second is induced by first multiplying by $e_K \epsilon_K$ and then applying the `evaluation at $e_K\bigwedge_{i \in [r]} b_i$' map to elements of  
\[ e_K \epsilon_K \bigl(\Det_{\cQ_K} ( \cQ_K \otimes_{\Lambda_K} H^1 (C_{S (K), \Sigma} (\cT)))^{-1} \bigr)= e_K\big(\cQ_K \otimes_{\Lambda_K} \Det_{\Lambda_K} (Y)^{-1}\bigr).\]

Our next result concerns the image under $\Theta_{\Sigma, b_\bullet}$ of the $\Lambda_K$-submodule 
$\Det_{\Lambda_K} (C_{S(K), \Sigma} (\cT))$ of $\cQ_K \otimes_{\Lambda_K} \Det_{\Lambda_K} (C_{S(K), \Sigma} (\cT)) $. 

\begin{thm} \label{new iwasawa theory euler systems main result}
    Assume that $(k,p)$ verifies condition (\ref{p=2 condition}) and that $K$ is a finite abelian extension of $k$ for which the following conditions are both satisfied.
    \begin{liste}
        \item Every place $v$ in $S_\mathrm{ram} (T^\vee (1)) \setminus S_0$ is finitely decomposed in $k_\infty$ and such that the module $\bigoplus_{w \mid v} H^0 (E_w, T^\ast (1))$ vanishes for every intermediate field $E$ of $K_\infty/k$.
        \item $H^1_\Sigma (\cO_{K, S (K)}, T)$ is $\cO$-torsion free. 
    \end{liste}
Set $\cT \coloneqq T \otimes_\cO \Lambda_K$ and fix a non-zero free $\Lambda_K$-module quotient $Y$ of $Y_{\Pi_k^\infty} (\cT)$ of rank $r$. Also fix a character $\chi \: \nabla_{\!\!K} \to \Phi^{\mathrm{c}, \times}$ for which $(T, k_\infty, F, \chi, r)$ validates Hypotheses \ref{new strategy iwasawa hyps}. Then the following claims are valid for every system $c \in \ES^r_{\Sigma, S_0} (T)$. 
\begin{romanliste}
        \item For each $\Lambda_K$-basis $b_\bullet = \{b_i\}_{i \in [r]}$ of $Y$ one has
    \begin{multline*}
    \hspace{-0.5cm}
    \Fitt^r_{\Lambda_K} ( X_{S_0} (\cT)) \cdot 
    \Big( {\prod}_{v \in S(K) \setminus S_0 (K)} \Eul_v (\Frob_v^{-1}) \Big) \cdot
    (c_{K_\infty})_\chi 
    \; \subseteq \; 
    \Theta_{\Sigma, b_\bullet} ( \Det_{\Lambda_K} (C_{S (K), \Sigma} (\cT)))_\chi.
    \end{multline*}
    \item If $\p$ is a prime of $\cQ_K$ in the support of $\cQ_K \otimes_{\Lambda_K}H^1_{\cF_{ \mathrm{rel}, \Sigma} (\cT)^\vee} (k, \cT^\vee (1))^\vee_\chi$, then $(\epsilon_K c_{K_\infty})_\p = 0$.
    \item For all $f \in H^1_\Sigma (\cO_{k, S (K)}, \cT)^\ast$ one has 
    \[ 
    \epsilon_K \cdot f (c_{K_\infty})_\chi \in \epsilon_K \Fitt^0_{\Lambda_K} ( H^1_{\cF_{ \mathrm{rel}, \Sigma} (\cT)^\vee} (k, \cT^\vee (1))^\vee )^{\ast \ast}_\chi.
    \]
\end{romanliste}
\end{thm}

The proof of this result will occupy the remainder of this section and is given in \S\,\ref{proof of iwasawa theory result} after we have made a series of preliminary observations in \S\,\ref{preliminaries to proof of iwasawa theory result}.\\
We will then discuss consequences of Theorem \ref{new iwasawa theory euler systems main result} concerning Kato's Conjecture (\ref{etnc statement}) in the settings of both elliptic curves and the multiplicative group in \S\,\ref{elliptic section} and \S\,\ref{Gm section}, respectively.

\subsection{Preliminary results} \label{preliminaries to proof of iwasawa theory result}

We first make several general observations that will be used in \S\,\ref{proof of iwasawa theory result} to prove Theorem \ref{new iwasawa theory euler systems main result}. 

\subsubsection{Twisting Euler and Kolyvagin systems} \label{twisting lemma section}

In this section, we assume to be given an $\cO$-algebra $R$ that is endowed with a continuous $\mathcal{O}$-linear action of $G_k$, and write 
\[ \psi \: G_k \to \mathrm{Aut} (R)\]
for the induced homomorphism. We then write 
\[ T (\psi) \coloneqq T \otimes_\cO R\] 
for the $\mathcal{O}[\![G_k]\!]$-module upon which each $\sigma \in G_k$ acts via the rule $\sigma \cdot (x \otimes y) \coloneqq (\sigma x) \otimes ( \sigma^{-1} y)$. We also define an associated Galois extension of $k$ by setting 
\[ k_\psi \coloneqq (k^\mathrm{c})^{\ker (\psi)}.\]  

\begin{remark} Fix a finite abelian extension $K$ of $k$ in $k^c$ and write $\psi_K \: G_k \to \cO [\cG_K]^\times$ for the homomorphism that sends each element of $G_k$ to its image in $\cG_K$. Then the module $T(\psi_K)$ defined above coincides with the induced representation $T_{K / k}$ defined in \S\,\ref{svc section}.\end{remark}

\begin{prop} \label{twisting lemma}
    Assume that the algebra $R$ satisfies condition (\ref{ring condition}), and that $\im (\psi)$ is both abelian and either finite or a finitely generated $\Z_p$-module. Let $\Sigma$ be a finite subset of $\Pi_k$ with $\Sigma \cap S(k_\psi) = \emptyset$ and such that $C_{S(k_\psi), \Sigma} ( T_{k_\psi / k})$ has a representative of the form $P \to P$ in which $P$ is a finite-rank free $\cO \llbracket \cG_{k_\psi} \rrbracket$-module and the first term is placed in degree 0.  Then the following claims are valid. 
    \begin{romanliste}
        \item Fix an abelian pro-$p$ extension $E$ of $k$ in $k^c$ such that $\cG_E$ is a finitely generated $\Z_p$-module, and assume that $\Sigma \cap S(Ek_\psi) = \emptyset$. 
        Then, for every integer $r \geq 0$, there are canonical ($\psi$-semilinear) homomorphisms
            \[ \mathrm{Tw}^r_{E, \psi} \: \bidual^r_{\cO \llbracket \cG_{Ek_\psi}\rrbracket} H^1_\Sigma (\cO_{Ek_\psi, S(Ek_\psi)}, T) \to \bidual^r_{R \llbracket\cG_E\rrbracket} H^1_\Sigma (\cO_{E, S(Ek_\psi)}, T(\psi))
            \]
            and
            \[
            \mathrm{Tw}^\mathrm{det}_{E, \psi} \: \Det_{\cO \llbracket \cG_{Ek_\psi}\rrbracket} ( C_{S(Ek_\psi)} (T_{Ek_\psi / k})) \to \Det_{R \llbracket\cG_E\rrbracket} ( C_{S (Ek_\psi)} (T_{E / k} (\psi)))
            \]
            that have both of the following properties.
            \begin{liste}
               \item Let $Y$ be a free rank $r$ $\cO \llbracket \cG_{Ek_\psi} \rrbracket$-module quotient of $Y_{\Pi_k^\infty} ( T_{Ek_\psi / k})$ with basis $b_\bullet$. Write $b'_\bullet$ for the induced basis of the (free) $R [\cG_E]$-module quotient $Y' \coloneqq Y \otimes_{\cO \llbracket \cG_{Ek_\psi} \rrbracket} R \llbracket\cG_E\rrbracket$ of $Y_{\Pi_k^\infty} ( T_{E / k} (\psi)) \cong Y_{\Pi_k^\infty} ( T_{Ek_\psi / k}) \otimes_{\cO \llbracket \cG_{Ek_\psi} \rrbracket} R \llbracket\cG_E\rrbracket$. Then the diagram  
            \begin{cdiagram}
                \Det_{\cO \llbracket \cG_{Ek_\psi}\rrbracket} ( C_{S(Ek_\psi),\Sigma} (T_{Ek_\psi / k})) \arrow{r}{\vartheta} \arrow{d}{\mathrm{Tw}^\mathrm{det}_{E, \psi}} & 
                \bidual^r_{\cO \llbracket\cG_{Ek_\psi} \rrbracket} H^1_\Sigma (\cO_{Ek_\psi, S(Ek_\psi)}, T) \arrow{d}{\mathrm{Tw}^r_{E, \psi}} \\
                \Det_{R \llbracket \cG_E \rrbracket} ( C_{S (Ek_\psi),\Sigma} (T_{E / k} (\psi))) \arrow{r}{\vartheta'} & \bidual^r_{R \llbracket\cG_E\rrbracket} H^1_\Sigma (\cO_{E, S(Ek_\psi)}, T(\psi))
            \end{cdiagram}%
            commutes. Here we write $\vartheta$ and $\vartheta'$ for the maps obtained by applying Lemma \ref{det projection lemma}\,(i) to the data $(C_{S(Ek_\psi),\Sigma} (T_{Ek_\psi / k}), b_\bullet)$ and $(C_{S (Ek_\psi),\Sigma} (T_{E / k} (\psi)), b_\bullet')$, respectively.
            \item The image under $\mathrm{Tw}^\mathrm{det}_{E, \psi}$ of an $\cO\llbracket\cG_{Ek_\psi} \rrbracket$-basis element of $\Det_{\cO \llbracket \cG_{Ek_\psi}\rrbracket} ( C_{S(Ek_\psi)} (T_{Ek_\psi / k}))$ is an $R \llbracket\cG_E\rrbracket$-basis element of $\Det_{R \llbracket\cG_E\rrbracket} ( C_{S (Ek_\psi)} (T_{E / k} (\psi)))$. 
            \end{liste} 
        \item Write $\cK^{\Sigma, p}$ for the composite of all $p$-power degree extensions of $k$ contained in $\cK^\Sigma$, and let $\Omega (\cK^{\Sigma, p}) \subseteq \Omega_\Sigma$ denote the subset of fields contained in $\cK^{\Sigma, p}$. Then the family of Nekov\'a\v{r} structures 
        \[ \fF_{\mathrm{rel}, \Sigma} (T (\psi)) \coloneqq (\scrF_{\mathrm{rel}, \Sigma} (T(\psi)_{E / k}))_{E \in \Omega (\cK^{\Sigma, p})}\] 
        satisfies Hypothesis \ref{system of fF hyp}. In addition, the following claims are valid.
        \begin{liste}
            \item There exists a $\psi$-semilinear map of $\cO \llbracket G_k \rrbracket$-modules
        \[
        \mathrm{Tw}_\psi^r \: 
        \ES^r_{\Sigma, S_0} (T) \to \ES^r_{S_0} ( \fF_{\mathrm{rel}, \Sigma} (T (\psi))), \quad c \mapsto ( \mathrm{Tw}^r_{E, \psi} ( c_{Ek_\psi}))_{E \in \Omega (\cK^{\Sigma, p})}. 
        \]
        \item Assume that, for every $E \in \Omega_\Sigma$, Hypothesis \ref{more new hyps}\,(ii) holds for $T_{E / k}$. Fix a filtration $(\a_i)_{i \geq 0}$ on $R$ 
        as in (\ref{filtration}), and set $T_i (\psi) \coloneqq T (\psi) /\a_i T (\psi)$. Then, for every Euler system $c \in \ES^r_{\Sigma, S_0} (T)$ and non-negative integer $i$, there exists a Kolyvagin system $(\kappa_\fn)_{\fn} \in \KS^r ( \mathscr{F}_{\mathrm{rel}, \Sigma} (T_i (\psi)))$ such that, writing $\pi^r_{i, T ( \psi)}$ for the map from (\ref{proj map def}), one has 
        \[
        \kappa_1 = \pi^r_{i, T ( \psi)} ( \mathrm{Tw}_\psi^r (c)_k)
        \quad \text{ in } \quad \bidual^r_{R / \a_i} H^1_\Sigma (\cO_{k, S(k_\psi)}, T_i (\psi)).
        \]
        \end{liste}
    \end{romanliste}
\end{prop}

\begin{proof} At the outset we claim that, for every abelian pro-$p$ extension $E$ of $k$ such that $\cG_E$ is finitely generated over $\Z_p$, the given assumptions imply the existence of an isomorphism  
\begin{equation}\label{desired rep} C_{S (E k_\psi), \Sigma} (T_{ E k_\psi / k}) \cong \big [P_E \to P_E\big ]\end{equation} 
in $D(\cO \llbracket \cG_{E k_\psi} \rrbracket)$, in which $P_E$ is a finitely-generated free $\cO \llbracket \cG_{E k_\psi} \rrbracket$-module and the first term of the second complex is placed in degree zero. To construct such an isomorphism, we will apply Lemma \ref{finite level reps} to the complex
$C_{S (E k_\psi), \Sigma} (T_{ E k_\psi / k})$, and so must first verify the assumptions necessary for an application of the latter result. 

At the outset, we note Lemma \ref{flach result} implies  $\mathscr{F}_{\mathrm{rel}, \Sigma} (T_{Ek_\psi / k})$ satisfies Hypothesis \ref{fF hyp} and so Proposition \ref{construction complex}\,(v) implies that $C_{S (E k_\psi), \Sigma} (T_{ E k_\psi / k})$ belongs to the category $D^{\mathrm{perf}, 0}_{[0, 1]} ( \cO \llbracket \cG_{Ek_\psi} \rrbracket)$. To apply Lemma \ref{finite level reps} in this context, it is thus enough for us to show $C_{S (E k_\psi), \Sigma} (T_{ E k_\psi / k}) \otimes^\mathbb{L}_{\cO \llbracket \cG_{E k_\psi} \rrbracket} \kappa$ belongs to $D^{\mathrm{perf}, 0}_{[0, m']} (\kappa)$ for some $m' \ge 0$. To verify this, we note that Proposition \ref{construction complex}\,(iii) implies the existence of an isomorphism in $D(\kappa)$ of the form 
\[
C_{S (E k_\psi), \Sigma} (T_{ E k_\psi / k}) \otimes^\mathbb{L}_{\cO \llbracket \cG_{E k_\psi} \rrbracket} \kappa \cong C_{S (E k_\psi), \Sigma} (T_{k_\psi / k}) \otimes^\mathbb{L}_{\cO \llbracket \cG_{k_\psi} \rrbracket} \kappa. 
\]
In particular, if we can prove that $C_{S (E k_\psi), \Sigma} (T_{k_\psi / k})$ has a representative of the form $P' \to P'$ with $P'$ a finite-rank free $\cO \llbracket \cG_{k_\psi} \rrbracket$-module and the first term placed in degree zero, then the above isomorphism would induce an isomorphism in 
$D^{\mathrm{perf}}(\kappa)$ of the form 
\[ C_{S (E k_\psi), \Sigma} (T_{ E k_\psi / k}) \otimes^\mathbb{L}_{\cO \llbracket \cG_{E k_\psi} \rrbracket} \kappa \cong \big [P' \otimes_{\cO \llbracket \cG_{k_\psi} \rrbracket} \kappa \to P' \otimes_{\cO \llbracket \cG_{k_\psi} \rrbracket} \kappa\big ],\] 
thereby showing that the left hand complex belongs to $D^{\mathrm{perf}, 0}_{[0, 1]} (\kappa)$, as required. 

At this point, we note  Proposition \ref{construction complex}\,(vi) implies $C_{S (E k_\psi), \Sigma} (T_{k_\psi / k})$ is isomorphic to a mapping cone of a morphism of the form
\[ {\bigoplus}_{v \in S (E k_\psi) \setminus S (k_\psi)} \mathrm{R}\Gamma_f ( k_v, T_{k_\psi / k}^\vee (1))^\vee [-2] \to  C_{S (k_\psi), \Sigma} (T_{k_\psi / k})
\]
and also recall that, for each $v \in S (E k_\psi) \setminus S (k_\psi)$, there exists an isomorphism in $D(\cO \llbracket \cG_{k_\psi} \rrbracket)$  
\[ \mathrm{R}\Gamma_f ( k_v, T_{k_\psi / k}^\vee (1))^\vee [-2] \cong \big [T_{k_\psi / k} (-1) \xrightarrow{\Frob_v -1} T_{k_\psi / k} (-1)\big ],\] 
in which the first term of the second complex is placed in degree $1$. In particular, since the given assumptions imply  
$C_{S (k_\psi), \Sigma} (T_{k_\psi / k})$ is isomorphic to a complex $P \to P$ in which $P$ a finitely-generated free $\cO \llbracket \cG_{k_\psi} \rrbracket$-module and the first term is placed in degree zero, a standard mapping cone construction can be combined with the above observations to imply that $C_{S (E k_\psi), \Sigma} (T_{k_\psi / k})$ is isomorphic to a complex of the required form $P' \to P'$. 

Having verified all hypotheses of Lemma \ref{finite level reps}, we can now directly deduce from the latter result the existence of an isomorphism (\ref{desired rep}). By combining this isomorphism with the result of Proposition \ref{construction complex}\,(iii), we can then deduce the existence of an isomorphism in 
$D(R \llbracket \cG_E \rrbracket)$ 
\begin{align*}
C_{S (Ek_\psi), \Sigma} (T (\psi)_{E / k}) & \cong C_{S (Ek_\psi), \Sigma} (T_{Ek_\psi / k}) \otimes^\mathbb{L}_{\cO \llbracket \cG_{E k_\psi} \rrbracket} R \llbracket \cG_E \rrbracket\\
& \cong \big [P_E \otimes_{\cO \llbracket \cG_{E k_\psi} \rrbracket} R \llbracket \cG_E \rrbracket 
\to P_E \otimes_{\cO \llbracket \cG_{E k_\psi} \rrbracket} R \llbracket \cG_E \rrbracket \big],
\end{align*}
in which the first term of the last complex is placed in degree $0$. 

On the other hand, the complexes 
    \begin{align*}
        C^{r}(T_{Ek_\psi / k}) &\coloneqq \left[\exprod_{\cO \llbracket \cG_{E k_\psi} \rrbracket}^{r} P_E \to P_E \otimes_{\cO \llbracket \cG_{E k_\psi} \rrbracket} \exprod_{\cO \llbracket \cG_{E k_\psi} \rrbracket}^{r - 1} P_E\right] \\
         C^{r}(T_{E / k}(\psi_n)) & \coloneqq C^{r}_{S (Ek_\psi), \Sigma} (T_{Ek_\psi / k}) \otimes_{\cO \llbracket \cG_{E k_\psi} \rrbracket}^{\mathbb{L}} R \llbracket \cG_E \rrbracket
    \end{align*}
(in which the first term of $ C^{r}(T_{Ek_\psi / k})$ is placed in degree zero) are both acyclic in degrees less than zero and, moreover, Lemma \ref{biduals lemma 1}\,(i) implies that there are canonical isomorphisms  
    \begin{align*}
        H^0(C^{r}(T_{Ek_\psi / k})) &\cong \bidual_{\cO \llbracket \cG_{E k_\psi} \rrbracket}^{r} H^1 (\cO_{Ek_\psi, S(E k_\psi)}, T)\label{bidual-bottom-cohomology}\\
        H^0( C^{r}(T_{E / k}(\psi_n))) &\cong \bidual_{ R \llbracket \cG_E \rrbracket}^{r} H^1(\cO_{E,S (Ek_\psi)}, T(\psi)). 
    \end{align*}
These facts combine with an explicit analysis of the second page of the universal coefficient spectral sequence  of $\cO \llbracket \cG_{E k_\psi} \rrbracket$-modules 
    \begin{equation}\label{spec seq eq}
        E_2^{i,j} = \Tor_{-i}^{\cO \llbracket \cG_{E k_\psi} \rrbracket}(H^j(C^{r}(T_{Ek_\psi / k})), R \llbracket \cG_E \rrbracket) 
        \quad \Rightarrow \quad 
        H^{i+j}(C^{r}(T_{E / k}(\psi_n))).
    \end{equation} 
to imply the existence of a homomorphism $\mathrm{Tw}^r_{E, \psi}$ of the required sort. The claims (a) and (b) in (i) can then be checked via explicit computations (cf.\@ the argument of \cite[Cor.~4.9]{Tsoi}).\\
Turning now to (ii), we first note Lemma \ref{flach result} implies the claimed validity of Hypothesis \ref{system of fF hyp} for $\fF (T (\psi))$. Part (a) of  (ii) is then proved as in \cite[\S\,2.4 and Ch.\@ 6]{Rubin-euler}. In a little more detail, it is clear that the maps $\mathrm{Tw}^r_{E, \psi}$ combine to give a $\psi$-semilinear map
\[
\ES^r_{\Sigma, S_0} (T) \to \prod_{E \in \Omega (\cK^{\Sigma, p})} H^1_\Sigma (\cO_{E, S(Ek_\psi)}, T (\psi))
\]
of $R \llbracket \cG_{\cK^{\Sigma, p}} \rrbracket$-modules. The key point now is that, for every field $E \in \Omega (\cK^{\Sigma, p})$ and place $v  \in \Pi_k \setminus S (E k_\psi)$, the $\psi$-semilinearity of the map $\mathrm{Tw}^0_{E, \psi} \: \cO \llbracket \cG_{E k_\psi } \rrbracket \to R \llbracket \cG_E \rrbracket$ implies that 
\[ \mathrm{Tw}^0_{E, \psi}(\Eul_v (\Frob_v^{-1}, T)) = \Eul_v (\psi (\Frob_v^{-1}) \Frob_v^{-1}, T) = \Eul_v (\Frob_v^{-1}, T (\psi^{-1})).\]
As for claim (b), let us write $\psi_i \: G_k \to R_i$ for the morphism induced by $\psi$, and $k_i \coloneqq (k^\mathrm{c})^{\ker (\psi_i)}$ for the kernel field of $\psi_i$. 
We may assume also that $\varpi^i \in \a_i$ for every $i$ by taking a subfiltration of $(\a_i)_i$ if necessary, and we  set $T_i \coloneqq T / \varpi^i T$. Theorem \ref{kolyvagin derivative thm} then shows that there is a Kolyvagin derivative homomorphism
\[
\mathcal{D} \: 
\ES^r_{\Sigma, S_0} (T) \to 
\ES^r_{S_0} ( \mathscr{F}_{\mathrm{rel}, \Sigma} (T_{k_i / k})) \to \KS^r ( T_{i, k_i / k})
\]
with the property that $\mathcal{D} (c)_1 = \pi^r_{i, T} (c_{k})$
with the projection map $\pi^r_{i, T}$ 
from (\ref{proj map def})
.
Here we take the field $k (T_{i, k_i / k})$ in (\ref{minimal fields}) that determines the set of Kolyvagin primes $\cQ ( \tau, T_{i, k_i / k})$ appearing in the definition of $\KS^r ( T_{i, k_i / k})$ to be the minimal extension of $k$ such that $G_{k (T_{i, k_i / k})}$ acts trivially on $k (T_{i, k_i / k})$. Then $G_{k (T_{i, k_i / k})}$ also acts trivially on $T_i (\psi_i)$ and so we may take $k( T_i (\psi))$ to be equal to $k (T_{i, k_i / k})$. In particular, since an element $\tau \in G_k$ that validates Hypothesis \ref{new strategy hyps}\,(ii) for $T_{i, k_i / k}$ also does so for $T_i (\psi)$, we have an equality of sets of Kolyvagin primes $\cQ ( \tau, T_{i, k_i / k}) = \cQ (\tau, T_i (\psi_i))$.
\\
We now denote by $\tilde{\mathscr{F}}$ and $\tilde{\mathscr{F}} \otimes R_i$ the $\Sigma$-modified relaxed Nekov\'a\v{r}--Selmer structures on $T_{i, k_i / k}$ and on $T_{i, k_i / k} \otimes_\cO R_i = T_i (\psi)$, respectively. Then, for each modulus $\fn \in \cN ( \cQ ( \tau, T_{i, k_i / k}))$, the method used to prove claim (i) applies to the complexes $C (\tilde{\mathscr{F}} (\fn))$ from Proposition \ref{what we need from Selmer complexes} to prove the existence of `twisting' homomorphisms 
\[
\mathrm{Tw}_{\psi, \fn}^r \: \bidual^r_{(\cO / \varpi^i) [\cG_{k_i}]} H^1_{\tilde{\mathscr{F}} (\fn)} (k, T_{i, k_i / k}) \to \bidual^r_{R_i} H^1_{(\tilde{\mathscr{F}} \otimes R_i) (\fn)} (k, T_{i} (\psi))
\]
that are compatible with the finite-singular relations (cf.\@ \cite[Rk.~3.1.4]{MazurRubin04}) and therefore
combine to define a map
\[
\widetilde{\mathrm{Tw}}^r_{\psi} \: 
\KS^r (T_{i, k_i / k}) \to \KS^r (T_i (\psi)), \quad (\kappa_\fn)_\fn \mapsto ( \mathrm{Tw}_{\psi, \fn}^r (\kappa_\fn))_\fn. 
\]
We now claim that $\kappa \coloneqq \widetilde{ \mathrm{Tw}}^r_{\psi} (\mathcal{D} ( c)))$ 
has the property that
$
\kappa_1 = \pi_i^r ( \mathrm{Tw}^r_\psi (c)_k)$, and hence that $\kappa$ is the Kolyvagin system sought after.\\
To do this, we observe that one has the commutative diagram
\begin{cdiagram}[row sep=small]
    \bidual^r_{\cO \llbracket \cG_{k_\psi} \rrbracket} H^1_\Sigma (\cO_{k_\psi, S (k_\psi)}, T) \arrow{r}{\mathrm{Tw}^r_{\psi, k}} \arrow{d}{\pi^r_{i, T}} & 
    \bidual^r_R H^1_\Sigma (\cO_{k, S(k_\psi)}, T(\psi)) \arrow{d}{\pi^r_{i, T (\psi)}} \\
    \bidual^r_{(\cO /\varpi^i) [\cG_{k_i}]} H^1_\Sigma (\cO_{k_i, S (k_\psi)}, T_i) \arrow{r}{\mathrm{Tw}_{\psi, 1}^r} & \bidual^r_{R_i} H^1_\Sigma (\cO_{k, S(k_\psi)}, T_i (\psi)).
\end{cdiagram}%
It follows that one has
\[
\kappa_1 = 
\widetilde{\mathrm{Tw}}^r_{\psi} (\mathcal{D} ( c)))_1 = \mathrm{Tw}^r_{\psi, 1} ( \mathcal{D} (c)_1) = \mathrm{Tw}^r_{\psi, 1} ( \pi^r_{i, T} (c_k)) = \pi_{i, T ( \psi)}^r ( \mathrm{Tw}^r_{\psi, k} (c_k)),
\]
as required to complete the proof of claim (ii)\,(b).
\end{proof}

\begin{rk} Fix a homomorphism $\chi \: \Delta_K \to \Phi^{\mathrm{c}, \times}$ and write $\cO_\chi$ for the extension of $\cO$ generated by the values of $\chi$. We then obtain an induced  homomorphism  $\psi \: G_k \to \cO_\chi [\cG_F]^\times$
and the construction of Proposition \ref{twisting lemma} defines a morphism $\ES^r_{\Sigma, S_0} (T) \to \ES^r_{S_0} ( \fF_{\mathrm{rel}, \Sigma} (T_{F / k} (\chi)))$. Via this homomorphism, one obtains a precise link between the notion of Euler system specified in Definition \ref{euler systems iwasawa theory def} and that used in \S\,\ref{higher-rank euler systems definitions sections}. \end{rk}

\subsubsection{Torsion subgroups}

The following result clarifies the condition in Theorem \ref{new iwasawa theory euler systems main result}\,(b).  

\begin{lem} \label{torsion lemma new}
    Suppose that Hypotheses \ref{new strategy iwasawa hyps}\,(i)\,(ii) and Hypothesis \ref{more new hyps}\,(ii) are valid for $T$ (with $R  = \cO$). Then, for all finite subsets $\Sigma$ of $\Pi_k$ with $\Sigma \cap S(k) = \emptyset$, all fields $F \in \Omega (\cK^{\Sigma, p})$ and all finite subsets $U$ of $\Pi_k$ with   $S(F)\subseteq U$, the following assertions are equivalent.
    \begin{romanliste}
        \item $H^0_\Sigma (\cO_{k, U}, \Tbar ) \neq (0)$,
        \item $H^1_\Sigma (\cO_{F, U}, T)$ is not $\cO$-torsion free,
        \item $\Sigma = \emptyset$, $\mathrm{rk}_\cO (T)  = 1$, and $G_k$ acts trivially on $\Tbar$.
    \end{romanliste}
\end{lem}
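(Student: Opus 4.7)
The plan is to exploit the long exact cohomology sequence induced by the Kummer-type short exact sequence of $\cO[G_k]$-modules $0 \to T \xrightarrow{\pi} T \to \Tbar \to 0$. Applying the functor $\RGamma_{\mathscr{F}_{\mathrm{rel}, \Sigma}(T_{F/k})}(k, -)$ (equivalently, reducing modulo $\pi$ as in Proposition \ref{Selmer complex control thm}\,(ii)) and using that $H^0_\Sigma(\cO_{F,U}, T)$ is a submodule of the $\cO$-torsion-free module $T^{G_F}$ yields an exact sequence of $\cO$-modules
\[
0 \to H^0_\Sigma(\cO_{F,U}, T)/\pi \to H^0_\Sigma(\cO_{F,U}, \Tbar) \to H^1_\Sigma(\cO_{F,U}, T)[\pi] \to 0.
\]
Hence (ii) is equivalent to non-surjectivity of the leftmost map.

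For the equivalence (i) $\Leftrightarrow$ (iii), the implication (iii) $\Rightarrow$ (i) is immediate: trivial $G_k$-action on $\Tbar$ and $\Sigma = \emptyset$ give $H^0_\Sigma(\cO_{k,U}, \Tbar) = \Tbar \neq 0$. Conversely, (i) forces $\Tbar^{G_k} \neq 0$, so the irreducibility of $\Tbar$ (Hypothesis \ref{new strategy iwasawa hyps}\,(i)) yields $\Tbar^{G_k} = \Tbar$, i.e., trivial $G_k$-action; Hypothesis \ref{new strategy iwasawa hyps}\,(ii) then gives $\dim_\kappa\Tbar = 1$ and hence $\mathrm{rk}_\cO T = 1$, and the injectivity of the diagonal map $\Tbar \hookrightarrow \bigoplus_{\q \in \Sigma}\bigoplus_{w \mid \q}\Tbar$ in case $\Sigma \neq \emptyset$ would force $H^0_\Sigma(\cO_{k,U},\Tbar) = 0$, so in fact $\Sigma = \emptyset$. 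For (ii) $\Rightarrow$ (i), a non-zero element of $H^1_\Sigma(\cO_{F,U}, T)[\pi]$ yields $\Tbar^{G_F} \neq 0$; normality of $G_F$ in $G_k$ makes $\Tbar^{G_F}$ a $G_k$-stable submodule, so irreducibility forces $G_F$ to act trivially on $\Tbar$, and the induced action of the finite $p$-group $\gal{F}{k}$ on the characteristic-$p$ vector space $\Tbar$ produces a non-zero $G_k$-fixed vector; we then reduce to the (i) $\Rightarrow$ (iii) argument.

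The remaining implication (iii) $\Rightarrow$ (ii) is the main technical step. Under (iii), $T \cong \cO(\chi)$ for some character $\chi \: G_k \to 1+\pi\cO \subseteq \cO^\times$, and consequently $T^{G_F}$ equals $T$ or $0$ according as $\chi|_{G_F}$ is trivial or non-trivial. The displayed exact sequence thus reduces (ii) to the assertion that $\chi|_{G_F}$ is non-trivial for every finite subextension $F$ of $\cK^{\emptyset, p} = \cK^p$, which in turn will follow from the claim that $\chi$ has infinite image. The element $\tau$ furnished by Hypothesis \ref{new strategy iwasawa hyps}\,(ii) lies in $G_{l_\infty}$, and inspection of the definitions (using that $|\kappa|$ is a $p$-power so that $\mu_{|\kappa|} \subseteq \mu_{p^\infty}$) shows $k_1 \subseteq l_\infty$, so $\tau$ restricts trivially to $k_1 = k_1(\cT_1)$; the set $\cQ_1$ therefore consists precisely of the primes that split completely in $k_1$. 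Assuming for contradiction that $\chi$ has finite image, so that the fixed field $k_\chi$ is a finite extension of $k$, Chebotarev's density theorem applied to $k_1 \cdot k_\chi$ furnishes a prime $\q$ that splits completely in both $k_1$ and $k_\chi$, hence lies in $\cQ_1$ and satisfies $\chi(\Frob_\q) = 1$; this contradicts the injectivity of $\Frob_\q - 1$ on $T$ required by Hypothesis \ref{more new hyps}\,(ii).

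The hard part will be the inclusion $k_1 \subseteq l_\infty$ and the associated identification of $\cQ_1$ with the set of primes splitting in $k_1$, since these observations are what make the Chebotarev argument in the final step direct and allow the two conditions `$\Frob_\q$ conjugate to $\tau$' and `$\Frob_\q \in \ker\chi$' to be simultaneously imposed without compatibility concerns. A secondary subtlety arises when $p = 2$, where $1+\pi\cO$ need not be torsion-free; this will be dispatched by invoking Hypothesis \ref{new strategy iwasawa hyps}\,(ii${}^\ast$), which in particular ensures that the rank-one character $\chi$ in (iii) is unaffected by $2$-torsion phenomena.
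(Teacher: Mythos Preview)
Your proof is correct, but it takes a more circuitous route than the paper's. The paper uses the sequence $0 \to T \to V \to V/T \to 0$ (with $V = \Phi \otimes_\cO T$) rather than your $0 \to T \xrightarrow{\pi} T \to \Tbar \to 0$, and this choice lets it prove (i)\,$\Leftrightarrow$\,(ii) directly without passing through (iii): one gets $H^1_\Sigma(\cO_{F,U},T)_{\mathrm{tor}} \cong H^0_\Sigma(\cO_{F,U},V/T)$ (using that Hypothesis~\ref{more new hyps}\,(ii) forces $T^{G_F}=0$, since for any $\q\in\cQ_1$ unramified in $F$ the element $\Frob_\q^{p^m}$ lies in $G_F$ and acts without fixed points), and then a single $p$-group descent argument (as in \cite[Cor.~1.6.13]{NSW}) shows this is nonzero if and only if $H^0_\Sigma(\cO_{k,U},V/T)\neq 0$, which via $(V/T)[\pi]\cong\Tbar$ is (i). Your Chebotarev argument for (iii)\,$\Rightarrow$\,(ii) is valid but avoidable: the same observation $T^{G_F}=0$ would collapse your exact sequence to $H^1_\Sigma(\cO_{F,U},T)[\pi]\cong H^0_\Sigma(\cO_{F,U},\Tbar)$, after which the $p$-group trick gives (i)\,$\Leftrightarrow$\,(ii) directly in your framework too.

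Your final paragraph about $p=2$ is a red herring. Your Chebotarev argument nowhere uses torsion-freeness of $1+\pi\cO$: it only needs that if $\chi$ had finite image then $k_1\cdot k_\chi$ would be a finite Galois extension of $k$ admitting a completely split prime $\q$, whence $\chi(\Frob_\q)=1$ contradicts Hypothesis~\ref{more new hyps}\,(ii). Hypothesis~\ref{new strategy iwasawa hyps}\,(ii${}^\ast$) plays no role here.
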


\begin{proof}
    At the outset we define $V \coloneqq \Phi \otimes_\cO T$. The long exact sequence in cohomology arising from the short exact sequence $0 \to T \to V \to V / T \to 0$ then combines with the vanishing of $T^{G_k}$ (that follows from Hypothesis \ref{more new hyps}\,(ii)) to give the exact sequence 
    \begin{cdiagram}
        0 \arrow{r} & H^0_\Sigma (\cO_{F, U}, V / T) \arrow{r} & H^1_\Sigma (\cO_{F, U}, T) \arrow{r} & H^1_\Sigma (\cO_{F, U}, V).
    \end{cdiagram}%
    Since $H^1_\Sigma (\cO_{F, U}, V) = \Phi \otimes_\cO H^1_\Sigma (\cO_{F, U}, T)$ by Proposition \ref{construction complex}\,(iii), we deduce an identification $H^1_\Sigma (\cO_{F, U}, T)_\mathrm{tor} \cong H^0_\Sigma (\cO_{F, U}, V / T)$. The latter is non-trivial if and only if $H^0_\Sigma (\cO_{k, U}, V / T) \neq 0$ because $F$ is a $p$-extension of $k$ (cf.\@ \cite[Cor.\@ 1.6.13]{NSW}). Writing $\varpi$ for a uniformiser of $\cO$, we have an isomorphism $(V / T) [\varpi] \cong \Tbar$, and so $H^0_\Sigma (\cO_{k, U}, V / T) \neq (0)\Longleftrightarrow H^0_\Sigma (\cO_{k, U}, \Tbar ) \neq (0)$. This proves the equivalence of conditions (i) and (ii).\\
    We note next that the triangle (\ref{sigma triangle}) implies $H^0_\Sigma (\cO_{k, U}, \Tbar)$ is equal to the kernel of the diagonal map $H^0 (k, \Tbar) \to \bigoplus_{v \in \Sigma} H^0 (k_v, \Tbar)$. Since the latter map is injective if $\Sigma \neq \emptyset$, we conclude that (i) implies $\Sigma = \emptyset$.
    In addition, since Hypothesis \ref{new strategy iwasawa hyps}\,(ii) implies that $\Tbar$ is an irreducible $\mathbb{k} [G_k]$-module, the module $H^0 (\cO_{k, U}, \Tbar)$ either vanishes or is equal to $\Tbar$. In the latter case, Hypothesis \ref{new strategy iwasawa hyps}\,(i) then implies that $\Tbar  = \Tbar^{\tau = \id}$ is a one-dimensional $\mathbb{k}$-vector space, and hence, by Nakayama's Lemma, that $\mathrm{rk}_\cO (T) = 1$. This shows (i) implies (iii), and since (iii) clearly implies (i), this proves the claimed result.
\end{proof}

\subsubsection{Core ranks}\label{verifying the hypotheses section} 

We next clarify Hypothesis \ref{new strategy hyps}\,(vi) in the case of relaxed Nekov\'a\v{r} structures. To do this, we use the notation specified in (\ref{conv notation}). In particular, we now write $\Lambda$ and $A$ for $R_i$  and $\cT\otimes_{\cR}\Lambda$, so that $A\otimes_{\Lambda}\mathbb{k} = \Tbar$.\\
We fix finite subsets $S$ and $\Sigma$ of $\Pi_k$ with $S (k)\subseteq S$ and $\Sigma \cap S = \emptyset$ and recall that, if the pair $(k,p)$ verifies (\ref{p=2 condition}), then the $\Sigma$-modified  relaxed Nekov\'a\v{r} structure $\scrF_\mathrm{rel} ( A, S)_\Sigma$ on $A$ coincides with both of the induced structures 
$\scrF_\mathrm{rel} ( \cT, S)_\Sigma\otimes_{\cR}\Lambda$ and $\scrF_\mathrm{rel} ( \cT\otimes_{\cR}\cR_i, S)_\Sigma\otimes_{\cR_i}\Lambda$ (cf.\@ Example \ref{remark selmer}\,(iv)).\\
We also recall that a finitely-generated $\Lambda$-module $M$ is said to be `quadratically-presented' if there exists an exact sequence  of $\Lambda$-modules of the form
\[
\Lambda^{\oplus m} \to \Lambda^{\oplus m} \to M \to 0,
\]
with $m \coloneq\dim_\mathbb{k} ( M \otimes_\Lambda \mathbb{k})$.

\begin{lem} \label{Euler characteristic if qadratically presented}  
Fix  finite subsets $S$ and $\Sigma$ of $\Pi_k$ with $S (k)\subseteq S$ and $\Sigma \cap S = \emptyset$. Write $\tilde F$ for the Mazur--Rubin structure $h(\scrF_{\mathrm{rel}}( A, S)_\Sigma)$ on $A$ and $\overline{F}$ for the structure that it induces on $\Tbar$.
Then, if the pair $(k,p)$ verifies (\ref{p=2 condition}), the following claims are valid.
\begin{romanliste}
    \item The core-rank $\bm{\chi}(\overline{F}, j)$ of the pair $(\overline{F}, j)$ (in the sense of (\ref{definition core rank})) is equal to 
    \begin{multline*}
    \dim_{\mathbb{k}}\bigl(H^0 (k, \Tbar)\bigr) + \dim_{\mathbb{k}}\bigl( X_{S} (\Tbar)\bigr) + \dim_{\mathbb{k}} \bigl(\sha_{\overline{F}^\ast, j} (\Tbar^\ast (1))\bigr) - \dim_{\mathbb{k}} \bigl(\sha_{\overline{F}, j} (\Tbar)\bigr)\\ + {\sum}_{v \in \Sigma}\dim_{\mathbb{k}} \bigl( H^0 (k_v, \Tbar^\ast (1))\bigr) 
        + {\sum}_{v \in S\setminus \Pi_k^\infty} \dim_{\mathbb{k}} \bigl(\mathrm{Tor}_1^\Lambda ( H^0 (k_v, A^\ast (1))^\ast, \mathbb{k})\bigr).
    \end{multline*}
\item If the $\Lambda$-module $H^2 (k_v, A)$ is quadratically presented for every $v \in S \setminus \Pi_k^\infty$, then  $\bm{\chi}(\overline{F}, j)$ is at least  
    \begin{multline*}
        {\sum}_{v \in \Pi^\infty_k} \dim_{\mathbb{k}}\bigl(H^0 (k_v, \Tbar^\ast (1))\bigr)  +  \dim_{\mathbb{k}} \bigl(\sha_{\overline{F}^\ast, j} (\Tbar^\ast (1))\bigr)  - \dim_{\mathbb{k}}\bigl(\sha_{\overline{F}, j} (\Tbar)\bigr) \\
        + \dim_{\mathbb{k}}\bigl(H^0 (k, \Tbar)\bigr) - \dim_\mathbb{k} \big( H^0(k,\Tbar^\ast (1))\big)  - {\sum}_{v \in \Sigma} \dim_{\mathbb{k}}\bigl(H^0 (k_v, \Tbar^\ast (1))\bigr). 
      \end{multline*}  
      \end{romanliste}
\end{lem}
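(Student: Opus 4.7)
The argument begins from the observation of Lemma \ref{core rank relation} that $\chi(\overline F, j)$ is independent of $\fn$; taking $\fn = 1$ reduces the problem to evaluating the difference
\[
r(\overline F) \coloneqq \dim_\mathbb{k} H^1_{\overline F}(k, \Tbar) - \dim_\mathbb{k} H^1_{\overline F^\ast}(k, \Tbar^\ast (1))
\]
and adding the prescribed sha contributions. For $r(\overline F)$ my main tool is the global duality (Wiles/Greenberg) formula for the Selmer structure $\overline F$ on the finite $\mathbb{k}[G_k]$-module $\Tbar$, which expresses this difference as the global contribution $\dim_\mathbb{k} H^0(k, \Tbar) - \dim_\mathbb{k} H^0(k, \Tbar^\ast(1))$ plus a sum over all places of the form $\dim_\mathbb{k} H^1_{\overline F}(k_v, \Tbar) - \dim_\mathbb{k} H^0(k_v, \Tbar)$, with the appropriate Tate convention at archimedean $v$ (which is controlled by condition (\ref{p=2 condition})).

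The central difficulty is that $\overline F$ is the restriction to $\Tbar$ of $\tilde F = h(\scrF_{\mathrm{rel}}(A, S)_\Sigma)$, so for $v \in S \setminus \Pi_k^\infty$ the local condition is $\im(H^1(k_v, A) \to H^1(k_v, \Tbar))$, which is generally a proper subgroup of $H^1(k_v, \Tbar)$. To analyse this I exploit the $\Lambda$-freeness of $A = \cT \otimes_\cR \Lambda$: by Lemma \ref{flach result}\,(vi) there is a base-change isomorphism $\RGamma(k_v, A) \otimes^\mathbb{L}_\Lambda \mathbb{k} \simeq \RGamma(k_v, \Tbar)$, and the associated hypercohomology spectral sequence collapses to
\[
0 \to H^1(k_v, A) \otimes_\Lambda \mathbb{k} \to H^1(k_v, \Tbar) \to \Tor^\Lambda_1(H^2(k_v, A), \mathbb{k}) \to 0.
\]
This identifies $H^1_{\overline F}(k_v, \Tbar) = H^1(k_v, A) \otimes_\Lambda \mathbb{k}$ and measures the codimension of this local condition in $H^1(k_v, \Tbar)$ by $\dim_\mathbb{k} \Tor^\Lambda_1(H^2(k_v, A), \mathbb{k})$; invoking local Tate duality $H^2(k_v, A) \cong H^0(k_v, A^\ast(1))^\ast$ (Proposition \ref{local Tate duality thm}\,(i)) recovers the Tor term in the formula. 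Substituting the local evaluations back into Wiles's formula and reorganising the global $H^0$-contributions using the defining sequence of $X_S(\Tbar) = \ker(\lambda^0_S(\scrF_{\mathrm{rel}}(\Tbar, S)_\Sigma^\vee)^\vee)$ — together with the comparison between Nekov\'a\v{r}- and Mazur--Rubin-Selmer modules provided by the exact sequence (\ref{nekovar mr compare}) and Lemma \ref{comparison selmer structures}\,(i) — then yields claim (i).

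For claim (ii), the key algebraic observation is that a quadratically-presented $\Lambda$-module $M$, with presentation $\Lambda^{\oplus m} \xrightarrow{\phi} \Lambda^{\oplus m} \to M \to 0$, satisfies $\dim_\mathbb{k} \Tor^\Lambda_1(M, \mathbb{k}) = \dim_\mathbb{k}(M \otimes_\Lambda \mathbb{k})$, as follows by tensoring the presentation with $\mathbb{k}$ and comparing the dimensions of the kernel and cokernel of the resulting square matrix $\phi \otimes \mathbb{k} \: \mathbb{k}^{\oplus m} \to \mathbb{k}^{\oplus m}$. Applied to $M = H^2(k_v, A) \cong H^0(k_v, A^\ast(1))^\ast$, and using that $B = A^\ast(1)$ is free over the zero-dimensional Gorenstein local ring $\Lambda$ — so that its socle $B[\mathfrak{m}]$ is isomorphic to $B/\mathfrak{m}B = \Tbar^\ast(1)$ as a $\mathbb{k}[G_k]$-module via the socle identification in Lemma \ref{inj env lem} — this gives $\dim_\mathbb{k} \Tor^\Lambda_1(H^0(k_v, A^\ast(1))^\ast, \mathbb{k}) = \dim_\mathbb{k} H^0(k_v, \Tbar^\ast(1))$. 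Substituting into the formula of claim (i) and invoking the non-negativity $\dim_\mathbb{k} X_S(\Tbar) \geq 0$ together with the non-negativity of the extra $H^0(k_v, \Tbar^\ast(1))$-terms at $v \in S \setminus \Pi_k^\infty$ then produces the claimed lower bound.

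The principal obstacle is the careful bookkeeping in part (i): Wiles's formula, the Nekov\'a\v{r}--Selmer comparison of Lemma \ref{comparison selmer structures}, and the defining map of $X_S(\Tbar)$ must be combined so that all cross-terms (in particular the various localisation-kernel contributions) assemble to the stated expression. A secondary subtlety is ensuring the archimedean contributions are handled uniformly in $p$ under condition (\ref{p=2 condition}), for which the cases $p = 2$ (only complex places) and $p$ odd (all archimedean local cohomology in positive degree annihilated) have to be treated in parallel.
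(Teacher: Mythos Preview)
Your approach to (i) via Wiles's formula is sound and closely parallels the paper's (which instead compares $\overline F$ with the ``uninduced'' structure $F_0 = h(\scrF_{\mathrm{rel}}(\Tbar,S)_\Sigma)$ through Theorem~\ref{global duality thm}(ii), but ends up with the same local computations). One inaccuracy: the displayed sequence is not short exact on the left --- the spectral sequence only gives
\[
\Tor_2^\Lambda(H^2(k_v,A),\mathbb{k}) \to H^1(k_v,A)\otimes_\Lambda\mathbb{k} \to H^1(k_v,\Tbar) \to \Tor_1^\Lambda(H^2(k_v,A),\mathbb{k}) \to 0,
\]
as in the paper. This is harmless for (i) since you only need the cokernel.

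The real gap is in (ii). Your equality $\dim_\mathbb{k}\Tor_1^\Lambda(M,\mathbb{k}) = \dim_\mathbb{k}(M\otimes_\Lambda\mathbb{k})$ for quadratically-presented $M$ is false: rank--nullity does give $\dim\ker(\bar\phi)=\dim\coker(\bar\phi)$ for the square matrix $\bar\phi$, but $\Tor_1^\Lambda(M,\mathbb{k})$ is only a \emph{quotient} of $\ker(\bar\phi)$, because the presentation $\Lambda^m\xrightarrow{\phi}\Lambda^m\to M\to 0$ need not be a resolution (e.g.\ $M=\Lambda^m$ with $\phi=0$ has $\Tor_1=0$). One therefore gets only the inequality $\dim_\mathbb{k}\Tor_1^\Lambda(M,\mathbb{k})\le\dim_\mathbb{k}(M\otimes_\Lambda\mathbb{k})$, which is what the paper proves. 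More seriously, your passage from (i) to (ii) via ``$\dim_\mathbb{k}X_S(\Tbar)\ge 0$ and non-negativity of the extra $H^0(k_v,\Tbar^\ast(1))$-terms'' does not yield the stated bound: those extra terms enter with the wrong sign for a lower bound, and mere non-negativity of $X_S$ loses the archimedean sum $\sum_{v\in\Pi_k^\infty}\dim H^0(k_v,\Tbar^\ast(1))$ entirely. What is needed is the \emph{exact identity}
\[
\dim_\mathbb{k}X_S(\Tbar) + \dim_\mathbb{k}H^0(k,\Tbar^\ast(1)) = {\sum}_{v\in S}\dim_\mathbb{k}H^0(k_v,\Tbar^\ast(1))
\]
(from the defining sequence of $X_S$, since $\lambda^0_S$ is injective), which the paper combines with the $\Tor_1$-inequality to cancel the $S\setminus\Pi_k^\infty$ contributions against part of $\dim X_S(\Tbar)$ and leave precisely the archimedean sum appearing in (ii).
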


\begin{proof}
 We write $F_0$ for the Mazur--Rubin structure $h (\scrF_{\mathrm{rel}}( A, S)_\Sigma\otimes_\Lambda \mathbb{k})$ on $\Tbar$. Then, by using Theorem \ref{global duality thm}\,(ii) to compare $F_0$ with $\overline{F}$,  one finds that  
\begin{align*}
\bm{\chi}(\overline{F}, j) = & \bm{\chi} (F_0, j) - {\sum}_{v \in S(\cF)} \dim_\mathbb{k} \bigl(H^1_{\!/ \overline{F}} (k_v, \Tbar)\bigr) \\
  = &\dim_\mathbb{k}\bigl( H^0 (k, \Tbar)\bigr) + \dim_\mathbb{k}\bigl( X_{S (\cF)} (\Tbar)\bigr) - {\sum}_{v \in S(\cF)} \dim_\mathbb{k}\bigl( H^1_{\!/ \overline{F}} (k_v, \Tbar)\bigr) \\
& \quad \qquad + 
{\sum}_{v \in \Sigma} \dim_\mathbb{k} \bigl(H^0 (k, \Tbar) \bigr)
+ \dim_{\mathbb{k}} \bigl(\sha_{\overline{F}^\ast, j} (\Tbar^\ast (1))\bigr) - \dim_{\mathbb{k}}\bigl( \sha_{\overline{F}, j} (\Tbar)\bigr).
\end{align*}
In this computation we have also used Proposition \ref{construction complex}\,(v) and the equality $\bm{\chi}_\mathbb{k}(C_{S, \Sigma} (\Tbar)) = 0$ that follows from  Proposition \ref{construction complex}\,(i) and Lemma \ref{flach result}. \\
Now, by the definition of induced Mazur--Rubin structure, one has  
\[
H^1_{\!/ \overline{F}} (k_v, \Tbar) \coloneqq \coker \big ( H^1_{\tilde F} (k_v, A) \subseteq H^1 (k_v, A) \to H^1 (k_v, \Tbar)  \big ).
\]
In addition, for $v \in \Sigma$, one has $H^1_{\tilde F} (k_v, A) = 0$, so $H^1_{\!/ \overline{F}} (k_v, \Tbar) = H^1 (k_v, \Tbar)$ and hence 
\begin{align*} \dim_{\mathbb{k}}\bigl(H^1_{\!/ \overline{F}} (k_v, \Tbar)\bigr) = &\, \dim_\mathbb{k}\bigl(H^1 (k_v, \Tbar)\bigr)\\
=&\, \dim_\mathbb{k}\bigl(H^0 (k_v, \Tbar)\bigr) + \dim_\mathbb{k}\bigl(H^2 (k_v, \Tbar)\bigr)\\
=&\, \dim_\mathbb{k}\bigl(H^0 (k_v, \Tbar)\bigr) + \dim_\mathbb{k}\bigl(H^0 (k_v, \Tbar^\ast (1))\bigr).
\end{align*}
It remains to consider the case $v \in S$. In this case $H^1_{\!/ \overline{F}} (k_v, \Tbar)$ is defined to be the cokernel of the natural map $\rho_v \: H^1 (k_v, A) \to H^1 (k_v, \Tbar)$. In addition, the universal coefficient spectral sequence
\[
E_2^{i, j} = \mathrm{Tor}_{-i}^\Lambda ( H^j (k_v, A), \mathbb{k}) \Rightarrow 
E^{i + j} = H^{i + j} (k_v, \Tbar) 
\]
induces an the exact sequence
\[
    \mathrm{Tor}_2^\Lambda (H^2 (k_v, A), \mathbb{k}) \to
    H^1 (k_v, A) \otimes_\Lambda \mathbb{k} \to  H^1 (k_v, \Tbar) \to \mathrm{Tor}_1^\Lambda ( H^2 (k_v, A), \mathbb{k}) \to 0
\]
in which the second arrow is induced by $\rho_v$. In particular, since we are assuming $(k,p)$ verifies (\ref{p=2 condition}), one has $H^1_{\!/ \overline{F}} (k_v, \Tbar) = (0)$ for each  $v \in \Pi_k^\infty$. In addition, for  $v \in S\setminus \Pi_k^\infty$, the above sequence combines with local duality to imply $H^1_{\!/ \overline{F}} (k_v, \Tbar)$ is isomorphic to $\mathrm{Tor}_1^\Lambda ( H^0 (k_v, A^\ast (1))^\ast, \mathbb{k})$, as required to complete the proof of (i).\\  
Before turning to the proof of (ii), we claim that $\dim_\mathbb{k}\bigl(\Tor_1^\Lambda (M, \mathbb{k})\bigr)\leq \dim_\mathbb{k} (M \otimes_\Lambda \mathbb{k})$ for every quadratically presented $\Lambda$-module $M$. To justify this, we fix a presentation 
\[\Lambda^{\oplus n} \xrightarrow{f} \Lambda^{\oplus n} \to M \to 0\] 
with $n \coloneqq \dim_\mathbb{k} (M \otimes_\Lambda \mathbb{k})$. Setting $I \coloneqq \ker (f)$, we thereby obtain an induced exact sequence 
    \[ 
    0 = \Tor_1^\Lambda ( \Lambda^{\oplus n}, \mathbb{k}) \to \Tor_1^\Lambda (M, \mathbb{k}) \to I \otimes_\Lambda \mathbb{k},
    \] 
    from which it follows that $\dim_\mathbb{k}(\Tor_1^\Lambda (M, \mathbb{k})) \leq \dim_\mathbb{k} (I \otimes_\Lambda \mathbb{k}) \leq n$ since $I$ is a quotient of $\Lambda^{\oplus n}$. This proves the claim.\\
   Then, by applying this observation with $M = H^2 (k_v, A) \cong H^0(k_v,A^\ast(1))^\ast$ for each $v \in S \setminus \Pi_k^\infty$, and noting $H^2 (k_v, A) \otimes_\Lambda \mathbb{k}$ identifies with $H^2 (k_v, \Tbar) \cong H^0(k_v,\Tbar^\ast(1))^\ast$, we obtain an inequality
    \[
    {\sum}_{v \in S \setminus \Pi_k^\infty} \big ( \dim_\mathbb{k}\bigl(H^0(k_v,\Tbar^\ast(1))\bigr)  
    - \dim_\mathbb{k}\bigl(\Tor^\Lambda_1 ( H^0(k_v,A^\ast(1))^\ast, \mathbb{k})\bigr)\big) \geq 0.
    \]
From the definition of $X_{S} (\Tbar)$ it therefore follows that   
 \begin{align*} & \phantom{\,=\,} \dim_{\mathbb{k}}\bigl( X_{S} (\Tbar)\bigr)  + \dim_\mathbb{k}\bigl( H^0 (k, \Tbar^\ast (1))\bigr)\\
 & =  {\sum}_{v \in \Pi_k^\infty}  \dim_\mathbb{k}\bigl(H^0(k_v,\Tbar^\ast(1))\bigr)+ {\sum}_{v \in S\setminus \Pi_k^\infty}  \dim_\mathbb{k}\bigl(H^0(k_v,\Tbar^\ast(1))\bigr) \\
 & \ge {\sum}_{v \in \Pi_k^\infty}  \dim_\mathbb{k}\bigl(H^0(k_v,\Tbar^\ast(1))\bigr)+{\sum}_{v \in S\setminus \Pi_k^\infty} \dim_{\mathbb{k}} \bigl(\mathrm{Tor}_1^\Lambda ( H^0 (k_v, A^\ast (1))^\ast, \mathbb{k})\bigr). \end{align*}
The result of (ii) is now  obtained by substituting this inequality into the formula of (i).  
\end{proof}

\begin{remark} \label{Euler characteristic if qadratically presented rk}
The condition of quadratic-presentability is automatic in the following cases.
\begin{romanliste}
\item Let $\Lambda \coloneqq (\Z / p^i \Z) [G]$ with  $i \in \N$ 
and $G$ a finite abelian group. Then any $G$-cohomologically trivial finitely generated $\Lambda$-module is quadratically presented.  
\item If $\Lambda$ is a principal ideal ring, then every $\Lambda$-module is quadratically presented. Indeed, as a local principal ideal ring, $\Lambda$ is isomorphic to a quotient of a discrete valuation ring $\Lambda'$ and since the claim is true over $\Lambda'$, base-changing to $\Lambda$ gives the desired presentation. 
\end{romanliste}
\end{remark}

\subsubsection{Localisation, Euler factors, and Fitting ideals}\label{euler factor section}

We recall (from \cite[\S\,3C1]{BKS2}) that a prime ideal $\fp$ in ${\mathrm{Spec}}^1(\Lambda_K)$ is said to be `regular' if it does not contain the order of the torsion subgroup of $\cG_{K_\infty}$. For such $\p$, the localisation $\Lambda_{K, \p}$ of $\Lambda_K$ is a discrete valuation ring and there exists a character $\chi = \chi_\p \: \Delta_K \to \Phi^{\mathrm{c}, \times}$ and a height-one prime ideal $\wp_\chi$ of the ring 
\[ \Lambda_\chi \coloneqq \cO_\chi \llbracket \Gamma_{\!\!K} \rrbracket\] 
for which there is an identification $\Lambda_{K, \p} = \Lambda_{\chi, \wp_\chi}$. 

If $\p$ is not regular, then it is said to be `singular'. If $\p$ is any such prime, then $p \in \p$,
and there exists a character $\chi \: \nabla_{\!\!K} \to\Phi^{\mathrm{c}, \times}$ together with
 a height-one prime ideal $\wp_\chi$ of the ring 
 \[ (\Lambda_K)_\chi \coloneqq \Lambda_\chi [\square_K]\] 
 such that $\Lambda_{K, \p} = (\Lambda_K)_{\chi, \wp_\chi}$. There is then also the following localisation criterion from \cite[Lem.\@ 6.3]{BurnsGreither} (see also \cite[Lem.\@ 5.6]{Flach04}).

\begin{lem} \label{singular primes and mu vanishing}
    Fix a character $\chi \: \nabla_{\!\!K} \to \Phi^{c, \times}$ and a prime ideal $\wp \in \Spec^1(\Lambda_\chi [\square_K])$ with $p\in \wp$. If $M$ is any  finitely generated torsion $\Lambda_\chi [\square_K]$-module for which the Iwasawa $\mu$-invariant (as a $\Lambda_\chi$-module) is $0$, then the localisation $M_\wp$ vanishes.
\end{lem}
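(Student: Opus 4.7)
The plan is to reduce the statement to the $\mu$-invariant assumption through a standard contraction argument, exploiting that $\Lambda_\chi[\square_K]$ is a finite free extension of the regular local ring $\Lambda_\chi = \cO_\chi\llbracket\Gamma_K\rrbracket$.

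First I would identify the prime $\wp_0 := \wp\cap \Lambda_\chi$ as the unique height-one prime of $\Lambda_\chi$ containing $p$. Writing $\pi$ for a uniformiser of $\cO_\chi$, so that $p = u\pi^e$ for some unit $u\in\cO_\chi^\times$, the inclusion $p\in\wp_0$ combined with the primality of $\wp_0$ forces $\pi\in\wp_0$, and hence $(\pi)\subseteq\wp_0$. Since $\Lambda_\chi[\square_K]$ is a finitely generated free $\Lambda_\chi$-module, it is a flat and integral extension of the Cohen--Macaulay ring $\Lambda_\chi$, so the dimension formula for integral extensions gives $\mathrm{ht}(\wp_0) = \mathrm{ht}(\wp) = 1$. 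Combined with the containment $(\pi)\subseteq\wp_0$ and the fact that $(\pi)$ is itself a height-one prime of the UFD $\Lambda_\chi$, this forces $\wp_0 = (\pi)$.

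Next I would relate the localisations by noting that, with $S := \Lambda_\chi\setminus(\pi)$ and $T := \Lambda_\chi[\square_K]\setminus\wp$, we have $S\subseteq T$ via the inclusion $\Lambda_\chi \hookrightarrow \Lambda_\chi[\square_K]$. Consequently $(\Lambda_\chi[\square_K])_\wp$ is a further localisation of $S^{-1}\Lambda_\chi[\square_K] = \Lambda_\chi[\square_K]\otimes_{\Lambda_\chi}(\Lambda_\chi)_{(\pi)}$, giving a natural identification
\[
M_\wp \;=\; \big(M\otimes_{\Lambda_\chi}(\Lambda_\chi)_{(\pi)}\big)\otimes_{S^{-1}\Lambda_\chi[\square_K]}(\Lambda_\chi[\square_K])_\wp.
\]

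To conclude, I would invoke the $\mu$-invariant hypothesis. Since $M$ is finitely generated over $\Lambda_\chi[\square_K]$ it is finitely generated over $\Lambda_\chi$ as well, and by the structure theorem for finitely generated torsion modules over the Noetherian UFD $\Lambda_\chi$, the vanishing of the Iwasawa $\mu$-invariant as a $\Lambda_\chi$-module is equivalent to $(\pi)$ being outside the support of $M$, i.e.\ $M\otimes_{\Lambda_\chi}(\Lambda_\chi)_{(\pi)} = 0$. The displayed identification then yields $M_\wp = 0$.

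The only non-formal step is the contraction argument in Step 1, and even that reduces to standard facts (flatness, the Cohen--Macaulay dimension formula, and primality); the rest is a purely formal tensor product manipulation.
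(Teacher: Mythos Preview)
Your proof is correct. The paper does not supply its own proof but simply cites Burns--Greither (Lem.~6.3) and Flach (Lem.~5.6) for this localisation criterion; your contraction argument via the height-preserving property of the finite flat extension $\Lambda_\chi \hookrightarrow \Lambda_\chi[\square_K]$ is the standard approach and presumably matches those references. One small point left implicit: you invoke the structure theory for torsion $\Lambda_\chi$-modules, which requires $M$ to be torsion over $\Lambda_\chi$ (not just over $\Lambda_\chi[\square_K]$) for the equivalence $\mu(M)=0 \Leftrightarrow M_{(\pi)}=0$ to hold; this follows since $\Lambda_\chi[\square_K]$ is finite free over the domain $\Lambda_\chi$, so the total quotient ring of $\Lambda_\chi[\square_K]$ is obtained by inverting the nonzero elements of $\Lambda_\chi$ and the two notions of torsion coincide.
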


In addition, the following result is useful in the computation of the ideal $\Fitt^0_{\Lambda_K} ( X_{S} (\cT))$.  

\begin{lem} \label{error term iwasawa theory}
For every  $v \in \Pi_k\setminus \Pi_k^\infty$ and  $\p \in \Spec^1(\Lambda_K)$, the following claims are valid.
\begin{romanliste}
    \item Assume $v \not \in \Pi_k^p$ and either $v \not \in S_{\mathrm{ram}}(T^\vee (1))$ or both $v$ is finitely decomposed in $k_\infty$ and 
    $H^0 (k_v, T_{E / k}^\ast (1))=(0)$ for all finite extensions $E$ of $k$ in $K_\infty$. Then, if $p \notin \p$, and we write $\chi$ for the character $\chi_\p$ defined above, one has
\[
\Fitt^0_{\Lambda_K} ( H^0 (k_v, \cT^\vee (1))^\vee)_\p = 
\begin{cases}
    \Eul_v (\Frob_v^{-1}) \Lambda_{K_\chi, \wp_\chi} \quad & \text{ if } v \notin S_{\mathrm{ram}}(K_\chi/k), \\
    \Lambda_{K_\chi, \wp_\chi} & \text{ if } v \in  S_{\mathrm{ram}}(K_\chi/k).
\end{cases}
\] 
    \item If $v$ is finitely decomposed in $k_\infty$ and $p \in \p$, then $H^0 (k_v, \cT^\vee (1)^\vee)_\p=(0)$.  
\end{romanliste}
\end{lem}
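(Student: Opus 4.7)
The plan is to first reduce both claims to a concrete description of $H^0(k_v, \cT^\vee(1))^\vee$ as a finitely generated $\Lambda_K$-module controlled by the semi-local Galois action at $v$. Shapiro's lemma gives, for each finite subextension $E$ of $K_\infty/k$, a natural isomorphism
\[
H^0(k_v, \cT^\vee(1)) \otimes_{\Lambda_K} \cO[\cG_E] \cong \bigoplus_{w \mid v \text{ in } E} H^0(E_w, T^\vee(1)),
\]
and passage to the inverse limit identifies $H^0(k_v, \cT^\vee(1))^\vee$ with a $\Lambda_K$-module that depends only on the semi-local decomposition of $v$ in $K_\infty/k$. Equivalently, writing $\cT^\vee(1) \cong T^\vee(1) \otimes_\cO \Lambda_K$ with the diagonally twisted $G_k$-action, one has $H^0(k_v, \cT^\vee(1)) = (\cT^\vee(1))^{G_{k_v}}$, which allows a direct presentation.

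For part (i), fix the regular prime $\p$ and set $\chi = \chi_\p$, so that $\Lambda_{K,\p} = \Lambda_{\chi, \wp_\chi}$. Since $v \notin \Pi_k^p$, the inertia group $I_v$ acts on $T^\vee(1)$ through a finite quotient of order coprime to $p$, compatibly with $\chi$-localisation (which inverts $p$). In the subcase $v \notin S_{\mathrm{ram}}(K_\chi/k)$, the hypothesis on $v$ (either it is outside $S_{\mathrm{ram}}(T^\vee(1))$, or the local cohomological invariants of $T^\ast(1)$ vanish at every finite layer) guarantees that $(\cT^\vee(1))^{I_v}_\chi$ is $\Lambda_\chi$-free of rank $\mathrm{rk}_\cO(T^\ast(1)^{I_v})$, and the tautological sequence
\[
0 \to H^0(k_v, \cT^\vee(1))_\chi \to (\cT^\vee(1))^{I_v}_\chi \xrightarrow{1 - \Frob_v} (\cT^\vee(1))^{I_v}_\chi
\]
then yields, upon Matlis dualisation, a presentation of $H^0(k_v, \cT^\vee(1))^\vee_\chi$ by free $\Lambda_\chi$-modules of equal rank. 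Its initial Fitting ideal is generated by $\det(1 - \Frob_v^{-1} \mid T^\ast(1)^{I_v}) = \Eul_v(\Frob_v^{-1})$, giving the first formula after localisation at $\wp_\chi$. In the complementary subcase $v \in S_{\mathrm{ram}}(K_\chi/k)$, the nontrivial image of $I_v$ in $\cG_{K_\chi}$ has order coprime to $p$, and its associated idempotent annihilates the $\chi$-component of the relevant resolution, forcing the Fitting ideal to equal the unit ideal $\Lambda_{K_\chi, \wp_\chi}$.

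For part (ii), the finite-decomposition hypothesis implies that each decomposition subgroup $D_w \subseteq \cG_{K_\infty}$ of a place $w \mid v$ is of finite index, so $H^0(k_v, \cT^\vee(1))^\vee$ is a finitely generated $\cO$-module. In particular, for every character $\chi \: \nabla_K \to \Phi^{\mathrm{c},\times}$ it is $\Lambda_\chi$-torsion with vanishing Iwasawa $\mu$-invariant, and Lemma \ref{singular primes and mu vanishing} then yields the required vanishing of $H^0(k_v, \cT^\vee(1))^\vee_\p$ at every singular prime $\p$ (i.e.\@ those containing $p$). The main technical obstacle will be the precise identification of the Fitting ideal in the unramified case of (i) with $\Eul_v(\Frob_v^{-1})$. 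This requires careful bookkeeping of the interplay between Matlis duality over $\Lambda_K$, the Tate twist, and the twisted $G_k$-action on $\cT = T \otimes_\cO \Lambda_K$ via $\sigma \cdot (a \otimes b) = \sigma a \otimes \bar\sigma^{-1} b$, so that the dualised presentation matches the convention $\Eul_v(X) = \det(1 - \Frob_v^{-1} X \mid T^\ast(1)^{I_v})$ used throughout the paper; the auxiliary vanishing hypothesis in (i) enters precisely to ensure that the relevant inertia-invariants are $\Lambda_\chi$-free of the expected rank at each layer, so the Fitting ideal is given by an honest determinant rather than a proper subideal.
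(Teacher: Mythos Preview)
Your treatment of part (ii) and the two easier subcases of (i) --- namely $v \in S_{\mathrm{ram}}(K_\chi/k)$ and the fully unramified case $v \notin S_{\mathrm{ram}}(K_\chi/k) \cup S_{\mathrm{ram}}(T^\vee(1))$ --- is essentially the same as the paper's and is fine.

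The gap is in the remaining subcase of (i): when $v \notin S_{\mathrm{ram}}(K_\chi/k)$ but $v \in S_{\mathrm{ram}}(T^\vee(1))$. Two of your assertions there do not hold up. First, the claim that for $v \nmid p$ the inertia group $I_v$ acts on $T^\vee(1)$ through a finite quotient of order coprime to $p$ is false in general: the tame quotient of $I_v$ has a $\Z_p$-factor which can map nontrivially into $\mathrm{Aut}_\cO(T)$, and even when the image is finite it need not be prime to $p$. Second, you assert that the vanishing hypothesis $H^0(k_v, T^\ast_{E/k}(1)) = (0)$ forces $(\cT^\vee(1))^{I_v}_\chi$ to be $\Lambda_\chi$-free, but you give no mechanism for this and I do not see one; the module $\cT^\vee(1)$ is cofree rather than free, and its $I_v$-invariants need not have a dual that is $\Lambda_\chi$-free of the correct rank.

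The paper handles this subcase quite differently. After reducing to $K = K_\chi$, it first uses the vanishing hypothesis to identify $H^0(k_v, T^\vee_{E/k}(1))$ with the $\cO$-torsion of $H^1(k_v, T^\ast_{E/k}(1))$, then applies inflation--restriction to separate this into the unramified piece $H^1(\mathbb{F}_v, T^\ast_{E/k}(1)^{I_v})$ and an error term coming from $H^1(I_v, T^\ast_{E/k}(1))_{\rm tor}^{G_{k_v}/I_v}$. The finite-decomposition hypothesis is then used --- and this is where it enters essentially --- to show that this error term is finite of order bounded independently of $E$, so it disappears after localising at a height-one prime not containing $p$. Only then does one have $H^0(k_v, \cT^\vee(1))^\vee_\p \cong (\varprojlim_E H^1(\mathbb{F}_v, T^\ast_{E/k}(1)^{I_v})^\vee)_\p$, and the Fitting ideal of the right-hand side is computed from the genuinely free resolution by $(\cT^\ast(1)^{I_v})^\ast \xrightarrow{\Frob_v - 1} (\cT^\ast(1)^{I_v})^\ast$, yielding $\Eul_v(\Frob_v^{-1})$. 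Your sketch collapses these steps into a freeness claim that is not available.
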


\begin{proof}
To prove (i), we assume that $p \notin \p$. In this case there are isomorphisms 
\[
(H^0 (k_v, \cT^\vee (1))^\vee)_\p \cong 
H^2 (k_v, \cT)_\p \cong H^2 (k_v, T_{K_{\chi, \infty} / k})_{\wp_\chi}
\cong H^0 (k_v, T_{K_{\chi, \infty} / k}^\vee (1))^\vee_{\wp_\chi},
\]
and so it suffices to investigate $\Fitt^0_{\Lambda_{K_\chi}} ( H^0 (k_v, T_{K_{\chi, \infty} / k}^\vee (1))^\vee)$. In the remainder of this argument we will therefore assume that $K = K_\chi$. \\
To discuss the case $v \in S_{\rm{ram}}(K/k)$, we write $I$ for the inertia subgroup of $\cG_{K}$ and regard the trace element $\NN_{I}$ (from (\ref{norm def})) as an element of $\Lambda_{K}$ in the natural way. By assumption, $\chi (I) \neq 1$ and so in $\Lambda_{K, \p}$ one has $\NN_I = \chi (\NN_I) = 0$. On the other hand, $\NN_I$ acts as multiplication by $|I| \in \Lambda_{K, \p}^\times$ on $H^0 (k_v, \cT^\vee (1))^\vee = \bigoplus_{w \mid v} H^0 (K_{\infty, w}, T^\vee (1))^\vee$ and so we conclude that, in this case, the localisation of $H^0 (k_v, \cT (1))^\vee$ at $\p$ vanishes. This proves the claim if $v \in S_{\rm{ram}}(K/k)$, and so it remains to consider the case $v \notin  S_{\rm{ram}}(K/k)$.\\
Let us first assume that, in addition, $v \not \in S_\mathrm{ram} (T^\vee (1))$ so that the action of $G_{k_v}$ is unramified on $\cT^\vee (1)$.
Taking Matlis duals of the exact sequence 
\begin{cdiagram}
    0 \arrow{r} & H^0 (k_v, \cT^\vee (1)) \arrow{r} & \cT^\vee (1) \arrow{rr}{\Frob_v - 1} & & \cT^\vee (1),
\end{cdiagram}%
we obtain an identification $H^0 (k_v, \cT^\vee (1))^\vee \cong \coker (\cT (-1) \xrightarrow{\Frob_v - 1} \cT (-1) )$. As such, we obtain the required equality via the computation
    \begin{align*}
    \Fitt^0_{\Lambda_{K}} ( H^0 (k_v, \cT^\vee (1))^\vee) & = \Lambda_{K} \cdot {\det}_{\Lambda _{K}} ( \Frob_v - 1 \mid \cT (-1))
    \\
    & = \Lambda_{K} \cdot {\det}_{\Lambda_{K}} ( \Frob_v^{-1} - 1 \mid \cT^\ast (1)) \\
    & = \Lambda_{K} \cdot \Eul_v (\Frob_v^{-1}).
    \end{align*}
  We finally assume that $v$ is finitely decomposed in $k_\infty$ and that $H^0 (k_v, T^\ast_{ E/ k} (1))$ vanishes for all finite extensions of $E$ of $k$ contained in $K_\infty$. Setting $V \coloneqq T \otimes_\cO \Phi$, this assumption combines with the tautological short  exact sequence $0 \to T^\ast (1) \to V^\ast (1) \to V^\ast (1) / T^\ast (1) \to 0$  to imply, for every such $E$,  exactness of the sequence
   \begin{cdiagram}
       0 \arrow{r} & H^0 ( k_v, V_{E / k}^\ast (1) / T^\ast_{E / k} (1)) \arrow{r} & H^1 (k_v, T_{E / k}^\ast (1)) \arrow{r} & H^1 (k_v, V_{E / k}^\ast (1)).
   \end{cdiagram}%
   These sequences in turn induces  natural identifications 
   \[ H^0 (k_v, T_{E / k}^\vee (1)) = H^0 ( k_v, V_{E / k}^\ast (1) / T^\ast_{E / k} (1)) \cong H^1 (k_v, T_{E / k}^\ast (1))_\tor.\] 
  Further, if we write  $\mathbb{F}_w$ for the residue field of a place $w$ of $E$ above $v$ and $I_w \subseteq G_{E_w}$ for its inertia subgroup, then the  
   isomorphism $H^1 (k_v, T_{E / k}^\ast (1)) \cong \bigoplus_{w \mid v} H^1 (E_w, T^\ast (1))$ from Shapiro's lemma combines with inflation-restriction sequence (for each $w$) to give short exact sequences 
   \begin{cdiagram}[column sep=small]
       0 \arrow{r} & {\bigoplus}_{w \mid v} H^1 (\mathbb{F}_w, T^\ast (1)^{I_w})_\tor \arrow{r} & 
       H^1 (k_v, T_{E / k}^\ast (1))_\tor \arrow{r} & {\bigoplus}_{w \mid v} H^1 (I_w, T^\ast (1))^{G_{E_w}}_\tor.
   \end{cdiagram}%
In addition, the assumption $H^0 (k_v, T^\ast_{ E / k} (1)) = (0)$ also implies that we have the exact sequence
   \begin{equation} \label{resolution unramif cohom}
   \begin{tikzcd}
       0 \arrow{r} & T_{E / k}^\ast (1)^{I_v} \arrow{rr}{\Frob_v - 1} & & T_{E / k}^\ast (1)^{I_v} \arrow{r} & H^1 (\mathbb{F}_v, T_{E / k}^\ast (1)^{I_v}) \arrow{r} & 0
   \end{tikzcd}
   \end{equation}
   which, by comparing $\cO$-ranks, implies that $H^1 (\mathbb{F}_v, T_{E / k}^\ast (1)^{I_v})$ is finite and, in particular, equal to its $\cO$-torsion submodule. \\
   Next we note that since we assume $v\notin \Pi_k^p$ and also that $v$ is finitely decomposed in $k_\infty$, 
    the modules ${\bigoplus}_{w \mid v} H^1 (I_w, T^\ast (1))$ are finitely generated $\cO$-modules and, moreover, stabilise as $E$ ranges over the subfields of $K_\infty$. It follows that $\varprojlim_{k \subseteq E \subseteq K_\infty} {\bigoplus}_{w \mid v} H^1 (I_w, T^\ast (1))^{G_{E_w}}$ vanishes (cf.\@ also the argument of \cite[App.~B, Prop.~3.3]{Rubin-euler}) and so, by 
   passing to the limit over $E$ in the exact sequence (\ref{resolution unramif cohom}), we deduce from the previous discussion the existence of isomorphisms 
   \[
   H^0 (k_v, \cT^\vee (1))^\vee_{\p} \cong ({\varprojlim}_E H^1 (k_v, T^\ast_{E / k} (1))^\vee_\tor)_\p \cong ({\varprojlim}_E H^1 (\mathbb{F}_v, T_{E / k}^\ast (1)^{I_v})^\vee)_{\p}.  
   \]
 In addition, upon taking $\cO$-linear duals of (\ref{resolution unramif cohom}) and then passing to the limit (over $E$) of the resulting exact sequences , we obtain an exact sequence
   \begin{cdiagram}[column sep=small]
       0 \arrow{r} & (\cT^\ast (1)^{I_v})^\ast \arrow{rrr}{\Frob_v - 1} & & &
       (\cT^\ast (1)^{I_v})^\ast \arrow{r} & {\varprojlim}_E H^1 (\mathbb{F}_v, T_{E / k}^\ast (1)^{I_v})^\vee \arrow{r} & 0.
   \end{cdiagram}%
 In particular, since the $\Lambda_{K}$-module $\cT^\ast (1)^{I_v}$ is free of finite rank (since $v$ is assumed to be unramified in $K_{\infty}$), this exact sequence implies that  
   \begin{align*}
       \Fitt^0_{\Lambda_{K}} ( {{\varprojlim}}_E H^1 (\mathbb{F}_v, T_{E / k}^\ast (1)^{I_v})^\vee ) & = \Lambda_{K} \cdot {\det}_{\Lambda_{K}} ( \Frob_v - 1 \mid (\cT^\ast (1)^{I_v})^\ast) \\
       & = \Lambda_{K} \cdot {\det}_{\Lambda_{K}} ( \Frob_v^{-1} - 1 \mid \cT^\ast (1)^{I_v}) \\
       & = \Lambda_{K} \cdot \Eul_v (\Frob_v^{-1}),
   \end{align*}
   as required to complete the proof of (i).\\
   As for (ii), upon taking the Matlis dual of the inclusion $H^0 (K_{\infty, w}, T^\vee (1)) \subseteq T^\vee (1)$ for every place  $w$ above $ v$, one obtains a surjective map  
   \[ \bigoplus_{w \mid v} T ( - 1) \twoheadrightarrow \bigoplus_{w \mid v} H^0 (K_{\infty, w}, T^\vee (1))^\vee = H^0 (k_v, \cT^\vee (1))^\vee.\] 
In particular, if $v$ is finitely decomposed in $k_\infty$, then the latter module is  finitely generated over $\cO$ and so  Lemma~\ref{singular primes and mu vanishing} implies that it vanishes after localising at any prime in  $\Spec^1(\Lambda_K)$ that contains $p$. This proves (ii). 
\end{proof}

\begin{rk} \label{error term at p remark} In many arithmetic examples, there is a useful replacement for Lem\-ma~\ref{error term iwasawa theory}\,(i). To be precise, we assume $T = H^i_{\text{\'et}} (X_{k^\mathrm{c}}, \cO) (1)$ for an odd integer $i$ and  a proper smooth variety $X$ that is defined over $k$ and has potentially good reduction at a given $v \in \Pi_k^p$. Then, if $k_\infty$ is the cyclotomic $\Z_p$-extension of $k$, the group $H^0 (k_v, T_{K_\infty / k}^\vee (1))$ is finite by a result of Kubo and Taguchi \cite[Th.\@ 1.1]{KuboTaguchi} (if $X$ is an elliptic curve, this is a classical result of Imai \cite{Imai}). In addition, if $X$ is an abelian variety, $i = 1$, and $K_\infty$ is a Lubin--Tate extension, then similar criteria for finiteness have  been obtained by Ozeki in \cite{Ozeki}. If $T$ is the representation attached to a cuspidal newform, then a similar result is also obtained by Kato in \cite[(12.5.1)]{Kato04}. 
\end{rk}

\subsection{The proof of Theorem \ref{new iwasawa theory euler systems main result}}\label{proof of iwasawa theory result}

\subsubsection{An initial reduction} 

We first establish a useful reduction step in the proof of Theorem \ref{new iwasawa theory euler systems main result}.

\begin{prop} \label{towards the new iwasawa theory euler systems main result}
    Theorem \ref{new iwasawa theory euler systems main result} is valid if, under its stated hypotheses, the following is true: for every system $c \in \ES^r_{\Sigma, S_0} (T,\cK)$ and all pairs $(K, \chi)$ as in the statement of the latter result, one has an inclusion
\begin{equation}
\label{reduction step inclusion}
\Fitt^r_{\Lambda_K} ( X_{S (K)} (\cT)) \cdot (c_{K_\infty})_\chi 
\subseteq \Theta_{\Sigma, b_\bullet} ( \Det_{\Lambda_K} (C_{S, \Sigma} (\cT)))_\chi.
\end{equation}
\end{prop}

\begin{proof} Throughout this argument, we assume the hypotheses of Theorem \ref{new iwasawa theory euler systems main result} to be valid. 

Then, to prove that the inclusion claimed in Theorem \ref{new iwasawa theory euler systems main result}\,(i) is a consequence of (\ref{reduction step inclusion}), it is enough for us to show the latter inclusion implies that  
\begin{multline} \label{p-component claim}
\Fitt^r_{\Lambda_K} (X_{k, S_0} (\cT))_\p \cdot 
        \big( {\prod}_{v \in S(K) \setminus S_0 (K)} \Eul_v (\Frob_v^{-1}) \big) \cdot
        (c_{K_\infty})_\chi\\ \subseteq \Theta_{\Sigma, b_\bullet} ( \Det_{\Lambda_K} (C_{S (K), \Sigma} (\cT)))_{\chi,\p}
\end{multline}
for every $\p \in \Spec^1 (\Lambda_{K,\chi})$. 
To do this, we note first that if $p \in \p$, then 
\begin{equation} \label{Fitting ideals at singular primes}
    \Fitt^r_{\Lambda_K} (X_{S (K)} (\cT))_\p = \Fitt^r_{\Lambda_K} (X_{S_0} (\cT))_\p,
\end{equation} 
and so (\ref{p-component claim}) follows directly from the $\p$-localisation of (\ref{reduction step inclusion}). Indeed, 
since we assume no place in $S(K)\setminus \Pi_k^\infty$ splits completely in $k_\infty^\circ$, the decomposition subgroups in $\cG_{K_\infty}$ of such places are infinite and so, for every character $\psi \: \nabla_{\!\!K} \to \Phi^{\mathrm{c}, \times}$, the $\mu$-invariant of $Y_{S (K) \setminus S_0} (\cT)_\psi$ as a $\Lambda_\psi$-module vanishes. Hence, by Lemma \ref{singular primes and mu vanishing}, the module $Y_{S (K) \setminus S_0} (\cT)$ vanishes upon localisation at any $\p$ containing $p$ so that the natural short exact sequence
\begin{equation} \label{useful exact sequence 1}
\begin{tikzcd}
    0 \arrow{r} & X_{S_0 } (\cT) \arrow{r} & X_{S(K)} (\cT) 
    \arrow{r} & Y_{S (K) \setminus S_0} (\cT) \arrow{r} & 0
\end{tikzcd}%
\end{equation}
(from Remark \ref{X=Y remark}) implies bijectivity of the map $X_{S_0 (K) } (\cT)_\p \to X_{S(K)} (\cT) _\p$, and therefore also the claimed equality (\ref{Fitting ideals at singular primes}). \\
We next turn to the verification of (\ref{p-component claim}) for $\p \in \Spec^1 (\Lambda_{K,\chi})$ with $p \not \in \p$.
In this case, $\Lambda_{K, \p}$ identifies with $\Lambda_{K_\psi, \wp_\psi}$ for some character $\psi \: \Delta_K \to \Phi^{\mathrm{c}, \times}$ and prime $\wp_\psi\in {\mathrm{Spec}}^1(\Lambda_{K_\psi})$ (with $p \notin \wp_\psi$) and we set $\cT_\psi \coloneqq T (\psi) \otimes_{\cO_\psi} \Lambda_\psi$. Then, for $v\in S(K) \setminus S_0$, Lemma \ref{error term iwasawa theory}\,(i) implies $\Fitt^0_{\Lambda_{K_\psi}} ( H^0 (k_v, \cT_\psi^\vee (1))^\vee)_{\wp_\psi}$ is equal to $\Lambda_{K_\psi, \wp_\psi}$ if $v\in S_0 (K_\psi)$ (as $v \in S_\ram (K_\psi / k)$ in this case) and is generated by $\Eul_v (\Frob_v^{-1})$ if $v\notin S_0 (K_\psi)$. As a consequence, one has 
\begin{align} \nonumber
    \Fitt^r_{\Lambda_{K}} ( X_{S (K)} (\cT))_{\p} & = \Fitt^r_{\Lambda_{K}} ( X_{S_0} (\cT))_{\p} \cdot \Fitt^0_{\Lambda_{K}} ( Y_{S (K) \setminus S_0} (\cT))_{\p} \\  
    \nonumber 
    & = \Fitt^r_{\Lambda_{K_\psi}} ( X_{S_0} (\cT_\psi))_{\wp_\psi} \cdot \Fitt^0_{\Lambda_{K_\psi}} ( Y_{S (K) \setminus S_0} (\cT_\psi))_{\wp_\psi} \\ 
    & = \Fitt^r_{\Lambda_{K_\psi}} ( X_{S_0} (\cT_\psi))_{\wp_\psi} \cdot \big( {\prod}_{v \in S (K) \setminus S_0 (K_\psi)} \Eul_v (\Frob_v^{-1}) \big),
    \label{calculations with Xs}
\end{align}
where the first equality follows as a consequence of the exact sequence (\ref{useful exact sequence 1}) 
and the fact that $\Lambda_{K, \p}$ is a discrete valuation ring so that $\Lambda_{K, \p}$-Fitting ideals are multiplicative across short exact sequences.
 Hence, in this case, the inclusion (\ref{p-component claim}) for $\p$ follows from the computation
    \begin{align*}
        & \phantom{=\,} \Fitt^r_{\Lambda_K} (X_{k, S_0} (\cT))_\p \cdot 
        \big( {\prod}_{v \in S(K) \setminus S_0 (K)} \Eul_v (\Frob_v^{-1}) \big) \cdot
        (c_{K_\infty})_\chi \\
        & =  \Fitt^r_{\Lambda_{K_\psi}} (X_{S_0} (\cT_\psi))_{\wp_\psi}  \cdot 
        \big( {\prod}_{v \in S(K) \setminus S_0 (K)} \Eul_v (\Frob_v^{-1}) \big) \cdot (\NN_{K_\infty / K_{\psi, \infty }}(c_{K_\infty}))_\chi \\ 
         & = \Fitt^r_{\Lambda_{K_\psi}} (X_{k, S_0} (\cT_\psi))_{\wp_\psi} \cdot \big( {\prod}_{v \in S_0 (K) \setminus S_0 (K_\psi)} \Eul_v (\Frob_v^{-1}) \big) 
        \cdot  (c_{K_{\psi, \infty}})_\chi\\
        & =  \Fitt^r_{\Lambda_{K_\psi}} (X_{S (K)} (\cT_\psi))_{\wp_\psi} 
        \cdot  (c_{K_{\psi, \infty}})_\chi \\ 
        & \subseteq  \Theta_{K_{\psi, \infty}, \Sigma, b_\bullet} ( \Det_{\Lambda_{K_\psi}} (C_{S (K), \Sigma} (\cT_\psi)))_{\chi,\wp_\psi} \\
        & =  \Theta_{\Sigma, b_\bullet} ( \Det_{\Lambda_K} (C_{S (K), \Sigma} (\cT)))_{\chi,\p}.
    \end{align*}
Here the first equality follows from the fact that the trace element $\NN_{K_\infty / K_{\psi, \infty }}$ of $\cO [\Delta_K]$ is a unit in $\Lambda_{K, \p}$ (as $p \notin \p$), the second from the Euler system distribution relations, the third from (\ref{calculations with Xs}), and the inclusion from (\ref{reduction step inclusion}) with $K$ replaced by $K_\psi$. This completes the proof that claim (i) of Theorem \ref{new iwasawa theory euler systems main result} is implied by the inclusion (\ref{reduction step inclusion}). \\
To similarly derive claim (ii) of Theorem \ref{new iwasawa theory euler systems main result}, we observe first that for any subring $R$ of $\cQ_K$ with $\Lambda_K\subseteq R$, subsequent applications of 
Lemma \ref{standard fitting props}\,(iii) and (v) imply an inclusion
\begin{align} \nonumber 
\Fitt^r_{R} ( R \otimes_{\Lambda_K} Y_{\Pi_k^\infty} (\cT)) & = \Fitt^0_R \big( R \otimes_{\Lambda_K} \ker  ( Y_{\Pi_k^\infty} (\cT) \to Y) \big) \\ \nonumber
& \subseteq \Ann_{\cQ_K} \big( \cQ_K \otimes_{\Lambda_K} \ker  ( Y_{\Pi_k^\infty} (\cT) \to Y) \big)\\  
& = \cQ_K \epsilon_K,
\label{Fitting ideal and idempotent}
\end{align}
which is an equality if $R = \cQ_K$.
Now the $\Lambda_K$-module $X_{S (K)\setminus \Pi_k^\infty} (\cT)$ is torsion because no finite place of $k$ splits completely in $k_\infty$ and hence 
\[
\cQ_K \otimes_{\Lambda_K} \Fitt^0_{\Lambda_K} (X_{S (K)\setminus \Pi_k^\infty} (\cT)) = \Fitt^0_{\cQ_K} ( \cQ_K \otimes_{\Lambda_K} X_{S (K)\setminus \Pi_k^\infty} (\cT)) = \Fitt^0_{\Lambda_K}(0) = \cQ_K.
\]
This fact combines with 
the exact sequence 
\[ 0 \to X_{S (K)\setminus \Pi_k^\infty} (\cT) \to X_{S (K)} (\cT) \to Y_{k, \Pi_k^\infty} (\cT) \to 0\] 
(from Remark \ref{X=Y remark}) and (\ref{Fitting ideal and idempotent}) to imply that
\begin{equation} \label{Fitting ideal and idempotent 2}
\cQ_K \otimes_{\Lambda_K} \Fitt^r_{\Lambda_K} ( X_{S (K)} (\cT))=  \cQ_K \epsilon_K.
\end{equation}
From the (assumed) inclusion (\ref{reduction step inclusion}) we therefore obtain
\[
\epsilon_K (c_{K_\infty})_\chi \in \cQ_K \cdot \Theta_{\Sigma, b_\bullet} ( \Det_{\Lambda_K} (C_{S, \Sigma} (\cT))) = (\cQ_K e_K \epsilon_K) \otimes_{\Lambda_K} \bidual^r_{\Lambda_K} H^1_\Sigma (\cO_{k, S(K)}, \cT),
\]
where the equality follows from the explicit definition (\ref{definition Theta map}) of the map $\Theta_{ \Sigma, b_\bullet}$.
This shows that the idempotent $e_K$ acts as the identity on $\epsilon_K (c_{K_\infty})_\chi$ and so, because the definition of $e_K$
ensures that it annihilates $\cQ_K \otimes_{\Lambda_K} H^2_\Sigma (\cO_{k, S(K)}, \cT)$,
we see that
$\cQ_K \otimes_{\Lambda_K} H^2_\Sigma (\cO_{k, S(K)}, \cT)_\chi$ can only be supported on primes $\p$ of $\cQ_K$ at which $\epsilon_K (c_{K_\infty})_\chi$ vanishes upon localisation. In addition, the first column in the diagram (\ref{new diagram}) gives an exact sequence 
\begin{equation} \label{ingredient 1 ses}
\begin{tikzcd}
    0 \arrow{r} & H^1_{\cF^\vee_{\mathrm{rel}, \Sigma}} (k, \cT^\vee (1))^\vee \arrow{r} & 
    H^2_\Sigma (\cO_{k, S (K)}, \cT) \arrow{r} & X_{S (K) \setminus \Pi_k^\infty} (\cT) \arrow{r} & 0
    \end{tikzcd}
\end{equation}%
that combines with the vanishing of $\cQ_K \otimes_{\Lambda_K} X_{S (K)\setminus \Pi_k^\infty} (\cT)$ to imply an equality 
\[ \cQ_K \otimes_{\Lambda_K} H^2_\Sigma (\cO_{k, S(K)}, \cT) = \cQ_K \otimes_{\Lambda_K} H^1_{\cF^\vee_{\mathrm{rel}, \Sigma}} (k, \cT^\vee (1))^\vee.\] 
In view of this, our argument has therefore shown that claim (ii) of Theorem \ref{new iwasawa theory euler systems main result} is also a consequence of the inclusion (\ref{reduction step inclusion}).\\
Finally, we must derive claim (iii)  of Theorem \ref{new iwasawa theory euler systems main result} from (\ref{reduction step inclusion}). To do this, we note that, for every $\p \in \mathrm{Spec}^1(\Lambda_K)$, Proposition \ref{eagon-northcott-prop}\,(ii) implies an equality
\begin{equation}\label{conversion result}
\{ f (a) \mid a \in \im (\Theta_{\Sigma, b_\bullet}), f \in \exprod^r_{\Lambda_K} H^1_\Sigma (\cO_{k, S(K)}, \cT)^\ast \}_\p = \Fitt^r_{\Lambda_K} ( H^1 (C_{S(K), \Sigma} (\cT)))_\p.
\end{equation}
We also recall that the lower row of the diagram (\ref{new diagram}) provides us with an exact sequence
\begin{equation} \label{ingredient 2 ses}
\begin{tikzcd}
    0 \arrow{r} & H^1_{\cF^\vee_{\mathrm{rel}, \Sigma}} (k, \cT^\vee (1))^\vee \arrow{r} & H^1 (C_{S (K), \Sigma} (\cT)) \arrow{r} & X_{S(K)} (\cT) \arrow{r} & 0.
\end{tikzcd}
\end{equation}%
Now, if $\p\in \mathrm{Spec}^1(\Lambda_{K,\chi})\subseteq \mathrm{Spec}^1(\Lambda_{K})$ with $p \notin \p$, then $\Lambda_{K, \p}$ is a discrete valuation ring and so $\Lambda_{K, \p}$-Fitting ideals are multiplicative on short exact sequences. Hence, in this case, for $f \in \exprod^r_{\Lambda_K} H^1_\Sigma (\cO_{k, S (K)}, \cT)^\ast$, 
the (assumed) inclusion (\ref{reduction step inclusion}) combines with (\ref{conversion result}) and (\ref{ingredient 2 ses}) to imply that
one has  
\begin{align*}
\Fitt^r_{\Lambda_K} (X_{S (K)} (\cT))_\p \cdot f (c_K)_\p & \subseteq \Fitt^r_{\Lambda_K} ( H^1 (C_{S (K), \Sigma} (\cT)))_\p\\
& =  \Fitt^r_{\Lambda_K} (X_{S (K)} (\cT))_\p \cdot \Fitt^0_{\Lambda_K} ( H^1_{\cF^\vee_{\mathrm{rel}, \Sigma}} (k, \cT^\vee (1))^\vee)_\p.
\end{align*}
Furthermore, the ideal $\Fitt^r_{\Lambda_K} (X_{S (K)} (\cT))$ spans $\cQ_K \epsilon_K$ over $\cQ_K$ by (\ref{Fitting ideal and idempotent 2}), and so the principal ideal $\Fitt^r_{\Lambda_K} (X_{S (K)} (\cT))_\p$ is generated by a nonzero divisor in $\epsilon_K \Lambda_{K, \p}$. This shows that the $\Lambda_{K,\p}$-module $\Fitt^r_{\Lambda_K} (X_{S (K)} (\cT))_\p$ is invertible in $\epsilon_K \Lambda_{K, \p}$. The above inclusion therefore implies, after cancellation, the required containment
\begin{equation}\label{req containment}
\epsilon_K f (c_K)_\p \in \epsilon_K \Fitt^0_{\Lambda_K} ( H^1_{\cF^\vee_{\mathrm{rel}, \Sigma}} (k, \cT^\vee (1))^\vee)_\p.
\end{equation}
We next prove the same result for $\p\in \mathrm{Spec}^1(\Lambda_{K,\chi})$ with $p \in \p$. For this, we note the assumption that no finite place splits completely in $k_\infty$ combines with Lemma \ref{singular primes and mu vanishing} to imply, for each such $\p$,  that
$X_{S (K)\setminus \Pi_k^\infty} (\cT)_\p=(0)$. 
Since the assumption (\ref{p=2 condition}) ensures that $Y_{\Pi_k^\infty} (\cT)$ is a projective $\Lambda_K$-module, the vanishing of $X_{S (K)\setminus \Pi_k^\infty} (\cT)_\p$ combines with the sequence (\ref{ingredient 2 ses}) to imply that $H^1 (C_{S (K), \Sigma} (\cT))_\fp$ is isomorphic to the direct sum of $(H^1_{\cF^\vee_{\mathrm{rel}, \Sigma}} (k, \cT^\vee (1))^\vee)_\p$ and $Y_{\Pi_k^\infty} (\cT)_\p$. Recalling (\ref{Fitting ideal and idempotent}), we then see that the ideal $\Fitt^r_{\Lambda_K} ( H^1 (C_{S (K), \Sigma} (\cT)))_\p$ is contained in $\epsilon_K \Fitt^0_{\Lambda_K} H^1_{\cF^\vee_{\mathrm{rel}, \Sigma}} (k, \cT^\vee (1))^\vee)_\p$.
 For each such $\p$, the required containment therefore follows directly from (\ref{reduction step inclusion}) and the equality (\ref{conversion result}).\\ 
At this stage, we have verified (\ref{req containment}) for every prime $\p \in \mathrm{Spec}^1(\Lambda_{K,\chi})$. From Lemma \ref{ryotaro-useful-lem}\,(ii), we can therefore deduce that $\epsilon_K f (c_K)_\chi\in \epsilon_K \Fitt^0_{\Lambda_K} ( H^1_{\cF^\vee_{\mathrm{rel}, \Sigma}} (k, \cT^\vee (1))^\vee)^{\ast \ast}$, as required to derive claim (iii) of Theorem \ref{new iwasawa theory euler systems main result} from (\ref{reduction step inclusion}).\\ 
This completes the proof that Theorem \ref{new iwasawa theory euler systems main result} is a consequence of the assumed validity of the inclusion (\ref{reduction step inclusion}). \end{proof} 

\subsubsection{Observations on an application of Theorem \ref{new strategy main result}} 
 Proposition \ref{towards the new iwasawa theory euler systems main result} has reduced the proof of Theorem \ref{new iwasawa theory euler systems main result} to showing that the inclusion (\ref{reduction step inclusion}) is valid under the hypotheses of the latter result. To do this, we set $\cT_\chi \coloneqq T (\chi) \otimes_\cO \Lambda_K$ and
    write
\[ \mathrm{Tw}^r_\chi \: \ES^r_{\Sigma, S_0} (T,\cK) \to \ES^r_{S_0} (\scrF_{\mathrm{rel}, \Sigma}(\cT_\chi))\] 
for the map from Proposition \ref{twisting lemma}\,(ii). We shall then aim to apply Theorem \ref{new strategy main result} to the Euler system $\mathrm{Tw}^r_\chi (c)$ and so must 
    specify the data for which we can verify all of the hypotheses that are necessary to apply Theorem \ref{new strategy main result}.\\
  In this regard, we note firstly that the field $\cK^{\Sigma, p}$ clearly satisfies Hypothesis \ref{more new hyps}\,(i).
    We next verify Hypothesis \ref{more new hyps}\,(ii) in the relevant cases by fixing $\q \in \cQ_1$ and a non-negative integer $a$ and showing that the endomorphism $\Frob_\q^{p^a} - 1$ is injective on $\cT_\chi$. To do this, we note $\q$ is unramified in $k_\infty$ and write $\sigma_\q$ for the image of $\Frob_\q$ in $\Gamma \coloneqq \gal{k_\infty}{k}$. We recall $k_\infty$ contains a $\Z_p$-extension $k_\infty^\circ = \bigcup_{n \in \N} k_n^\circ$ of $k$ in which no finite place splits completely. In particular, the image of $\sigma_\q^{p^a}$ in $\Gamma^\circ \coloneqq \gal{k_\infty^\circ}{k}$ is a topological generator of $(\Gamma^\circ)^{p^{m (a)}} = \gal{k_\infty^\circ}{k_{m (a)}^\circ}$ for some $m (a) \geq 0$ and we set $\Gamma' \coloneqq \gal{k_\infty}{k_{m(a)}^\circ}$. Then one has $\sigma_\q^{p^a} \in \Gamma'$ and so $\cT_\chi$ decomposes, as an $\cO_\chi[\![\langle \Frob_\q^{p^a} \rangle ]\!]$-module, as the direct sum of a finite number of copies of $\cT'_\chi \coloneqq T(\chi)\otimes_{\mathcal{O}}\mathcal{O}[\![\Gamma']\!]$. It is thus enough for us to prove that $\Frob_\q^{p^a} - 1$ acts injectively on $\cT'_\chi$. Hence, if necessary after replacing $k$ by $k_{m(a)}$, we can assume in the sequel that $a=0$ and the image of $\sigma_\q$ generates $\Gamma^\circ$. In this case, the projection map $\Gamma \to \Gamma^\circ$ induces an isomorphism between $\Gamma^\circ$ and the subgroup $\widetilde{\Gamma^{\circ}}$ of $\Gamma$ that is generated (topologically) by $\sigma_\q$. Writing $k_\infty'$ for the fixed field of $k_\infty$ under $\widetilde{\Gamma^\circ}$ and $\Gamma'$ for $\gal{k'_\infty}{k}$ one therefore has a natural decomposition 
    \begin{equation} \label{Gamma decomposition}
    \Gamma = \widetilde{\Gamma^{\circ}}\times \gal{k_\infty}{k^\circ_\infty} \cong \Gamma^\circ \times \Gamma'.
    \end{equation} 
This splitting in turn induces isomorphisms
    \[
    \Lambda_k = \cO \llbracket \Gamma \rrbracket \cong \cO \llbracket \Gamma^\circ \rrbracket \otimes_\cO \cO \llbracket \Gamma' \rrbracket 
    \quad \text{ and } \quad \cT_\chi = T_{k_\infty / k} (\chi) \cong T_{k_\infty^\circ / k} (\chi) \otimes_\cO \cO \llbracket \Gamma' \rrbracket 
    \]
 in such a way that $\sigma_\q$ acts trivially on the factor $\cO \llbracket \Gamma' \rrbracket$ in both tensor products. We now write $E$ for the completion of $k$ at $\q$. Then the completion of $k_\infty^\circ$ at a place above $\q$ is the unique unramified $\Z_p$-extension $E^\mathrm{unr} = \bigcup_{n \in \N} E^\mathrm{unr}_n$ of $E$ and $\sigma_\q$ induces the Frobenius automorphism in $\gal{E^{\mathrm{unr}}}{E}$. Upon applying Shapiro's lemma, one therefore obtains an  isomorphism
    \begin{align*}
    (\cT_\chi)^{\Frob_\q = 1} & = H^0_f (E, \cT_\chi) 
    = H^0_f (E, T_{k_\infty^\circ / k} (\chi)) \otimes_\cO  \cO \llbracket \Gamma' \rrbracket \\
    & = H^0 ( E^\mathrm{unr}, T(\chi)) 
    \otimes_\cO  \cO \llbracket \Gamma' \rrbracket \\
    & \cong 
     \big( {\varprojlim}_{n \in \N} H^0 (E^\mathrm{unr}_{n}, T(\chi) ) \big)
     \otimes_\cO  \cO \llbracket \Gamma' \rrbracket .
    \end{align*}
    In addition, since $T(\chi)$ is a finitely generated $\cO$-module and $E^\mathrm{unr}$ is an infinite $p$-extension of $E$, the limit $\varprojlim_{n \in \N} H^0 (E^\mathrm{unr}_{n}, T(\chi) )$ vanishes (by \@ \cite[App.~B, Lem.~3.2]{Rubin-euler}), as required to prove the  validity of Hypothesis \ref{more new hyps}\,(ii) for $\cT_\chi$. \\
    In the notation of \S\,\ref{general set up section} we now take $\cR = (\Lambda_K)_\chi$ so that $\mathbb{K} = \cO_\chi / (\varpi)$. In particular, $\overline{\cT_\chi} = \overline{T (\chi)}$ and so the validity of
    the parts of Hypotheses \ref{new strategy hyps}\,(i) and (iv) that concern $\cT_\chi$ follow from the assumed validity of Hypotheses \ref{new strategy iwasawa hyps} for $T (\chi)$. In addition, Hypothesis \ref{new strategy hyps}\,(v) is valid by Lemma \ref{torsion lemma new} and the assumption that $H^1_\Sigma (\cO_{K, S(K)}, T)$ is $\cO$-torsion free. \\
    Next we note that the field $k_\infty$ from \S\,\ref{general set up section} is a subfield of $\mathfrak{E}$, and that the fields in (\ref{minimal fields}) for $\cT_\chi$ can be taken in $\mathfrak{E}_T^\chi$ so that the field $k (\cT_\chi)_\infty$ defined in  \S\,\ref{general set up section} agrees with $\mathfrak{E}_T^\chi$. Given this, the validity of Hypotheses \ref{new strategy hyps}\,(vii) in this case is clear (cf.\@ Remark \ref{vanishing remark}\,(vi)), and that of Hypotheses \ref{new strategy hyps}\,(ii), (ii${}^\ast$), and (iii) follows from the assumed validity of Hypotheses \ref{new strategy iwasawa hyps} for the data $(T, k_\infty, F, \chi, r)$.\\
    It still remains for us to specify the data of a morphism $\varrho \: \cR \to R$ of the form required for an application of Theorem \ref{new strategy main result}. Before doing this, however,  we observe that the required inclusion (\ref{reduction step inclusion})  will follow if we can prove that  
\begin{equation}\label{sufficient inclusion}\Fitt^r_{\Lambda_K} ( X_{S (K)} (\cT)) \cdot (c_{K_\infty})_\chi \subseteq \vartheta_Y ( \Det_{\Lambda_K} (C_{S (K), \Sigma} (\cT)))_\chi.\end{equation}
Indeed, to deduce   (\ref{reduction step inclusion}) from this, one need only  recall  $\vartheta_Y$ coincides with the map 
$\Theta_{\Sigma, b_\bullet}$ (defined in (\ref{definition Theta map})) as a consequence of \cite[Prop.\@ A.11\,(ii)]{sbA}. 

In addition, since 
the $\Lambda_K$-module $\vartheta_Y ( \Det_{\Lambda_K} (C_{S (K), \Sigma} (\cT)))_\chi$ is reflexive (as $\Det_{\Lambda_K} (C_{S (K), \Sigma} (\cT))_\chi$ is invertible and hence reflexive), 
Lemma \ref{ryotaro-useful-lem}\,(ii) implies that it is enough to verify (\ref{sufficient inclusion}) after localising at each prime in ${\mathrm{Spec}}^1((\Lambda_K)_\chi)$.  

For the remainder of the proof we shall therefore fix $\p \in {\mathrm{Spec}}^1((\Lambda_K)_\chi).$ It is then convenient to separate the discussion into two cases and, in this way, the proof of Theorem~\ref{new iwasawa theory euler systems main result} will be  completed by the arguments given in \S\,\ref{proof for singular primes section} for the case $p \in \p$ and in \S\,\ref{proof for regular primes section I} and \S\,\ref{proof for regular primes section II} for the case $p \notin \p$. 

\begin{remark}\label{rank one reduction} By assumption, $k_\infty$ contains a $\Z_p$-extension $k_\infty^\circ$ of $k$ in which no finite place of $k$ splits completely. To prove (\ref{sufficient inclusion}), one can in fact further reduce to the case that $k_\infty = k_\infty^\circ$. The reason for this is that  
 $\im (\Theta_{\Sigma, b_\bullet})$ is equal to ${\varprojlim}_E  \im (\Theta_{\Sigma, b_{E, \bullet}})$, where the limit is taken over all finite extensions $E$ of $k$ in $k_\infty$, and $b_{E, \bullet}$ denotes the $\cO \llbracket \cG_{Ek^\circ_\infty} \rrbracket$-basis  of $Y \otimes_\cR \cO \llbracket \cG_{Ek^\circ_\infty} \rrbracket$ that is induced by $b_\bullet$. In particular, if $c_{Ek^\circ_\infty}$ belongs to $\im(\Theta_{\Sigma, b_{E, \bullet}})$ for all such $E$, then it follows that $c_{k_\infty} = (c_{Ek^\circ_\infty})_F$ belongs to $\im(\Theta_{\Sigma, b_\bullet})$, as required. Since the validity of Hypothesis \ref{new strategy iwasawa hyps} for $(T, k_\infty, F, \chi, r)$ also implies its validity for $(T, k^\circ_\infty, EF, \chi, r)$, we may therefore assume at the outset that $k_\infty = k^\circ_\infty$. However, we prefer to enforce this reduction step only at a later point in order to clarify that a large part of the argument does not rely on it. 
\end{remark}

\subsubsection{Verifying the $\p$-localisation of (\ref{sufficient inclusion}) if $p \in \p$} \label{proof for singular primes section}

    For each $i \in \mathbb{N}$, we write $k^\circ_i$ for the unique subextension of $k_\infty^\circ$ of degree $p^i$ over $k$ and $U^\circ_i$ for the subgroup $\gal{k_\infty}{k^\circ_i}$. We can then fix a basis $\{ U_i \}_{i \in \N}$ of open neighbourhoods of the identity of $\cG_{k_\infty} \cong \Gamma_{\!\!K}$ (where $\Gamma_{\!\!K}$ is as in (\ref{Gamma definition})) in such a way that $U_i \subseteq U^\circ_i$ for every $i$. 
    
    As in Remark \ref{ryotaro example}\,(iii), we now take $\cR \coloneqq (\Lambda_K)_\chi$ and $\a_i \coloneqq (\varpi^i, I (U_i))\cR$ with $I(U_i)$ the kernel of the natural projection $\Lambda_K \to \ZZ_p[\cG_{K_\infty}/U_i]$. In particular, for each $i$, the ring $\cR_i \coloneqq \cR/\a_i$ is naturally isomorphic to $(\cO_\chi / (\varpi^i)) [\Gamma_{\!\!K} / U_i] [\square_K]$. We write $\kappa_\chi \coloneqq \cO_\chi / (\varpi)$ for the residue field of $\cO_\chi$, and then further take $R \coloneqq \kappa_\chi \llbracket \Gamma^\circ \rrbracket$ with $\Gamma^\circ \coloneqq \cG_{k_\infty^\circ}$ 
    and $R_i \coloneqq \kappa_\chi [\cG_{k^\circ_i}] = \kappa_\chi  [\cG_{k_\infty} / U^\circ_i]$.
     The residue field of $R$ is therefore also equal to $\kappa_\chi$.
      Furthermore, each $R_i$ is a principal local ring because, as a power series ring over a field, $R$ is a principal ideal domain. 
      In view of this, Lemma~\ref{Euler characteristic if qadratically presented} implies that, for the relaxed Mazur--Rubin structure $F_i$ on $\cT_\chi \otimes_\cR \cR_i = T (\chi) / \omega^i T (\chi)$, the core-rank $\bm{\chi} (\overline{F_i}, j(i)) $ of the pair $(\overline{F_i}, j(i))$ (in the sense of (\ref{definition core rank})) is greater than or equal to  
\begin{align*}
&\dim_{\kappa_\chi}(Y_{\Pi_k^\infty} ( \Tbar (\chi))) +  \dim_{\kappa_\chi}(\sha_{\overline{F_i}^\ast, j(i)} (\Tbar^\ast (1)(\chi^{-1})))  - \dim_{\kappa_\chi} (\sha_{\overline{F_i}, j(i)} (\Tbar (\chi))) \\
& \qquad +\dim_{\kappa_\chi} \big( H^0(k,\Tbar (\chi))\big) - \dim_{\kappa_\chi} \big( H^0(k,\Tbar^\ast (1)(\chi^{-1}))\big) - {\sum}_{v \in \Sigma} \dim_{\kappa_\chi} (H^0 (k_v, \Tbar^\ast (1)(\chi^{-1}))).
\end{align*}
We now recall (from Remark \ref{invariants vanishing remark}) that Hypothesis \ref{new strategy iwasawa hyps}\,(iii) implies the group $H^0 (k, \Tbar^\ast (1)(\chi^{-1}))$ vanishes. In addition, since $\sha_{\overline{F_i}^\ast, j(i)} (\Tbar^\ast (1)(\chi^{-1}))$ is a submodule of $H^1 (\mathfrak{E}_T^\chi / k, \Tbar^\ast (1)(\chi^{-1}))$ (by Lemma \ref{general neukirch interpretation lemma}\,(i)), this module also vanishes as a consequence of Hypothesis \ref{new strategy iwasawa hyps}\,(iii). In a similar way, Lemma \ref{general neukirch interpretation lemma}\,(i) identifies $\sha_{\overline{F_i}, j(i)} (\Tbar (\chi))$ with a submodule of $H^1 (\mathfrak{E}_T^\chi/ k, \Tbar (\chi))$. In particular, at this point the explicit upper bound on the $\kappa_\chi$-dimension of $H^1 (\mathfrak{E}_T^\chi/ k, \Tbar (\chi))$ given by Hypothesis \ref{new strategy iwasawa hyps}\,(v) can be combined with the above displayed lower bound to deduce that $\bm{\chi} (\overline{F_i}, j(i))$ is strictly positive, as required to verify Hypothesis \ref{new strategy hyps}\,(vi) in this case.
\\
We next take $\varrho \: \cR \to R$ to be the natural surjective projection map, and write $\mathfrak{P}$ for the prime ideal $\ker(\varrho)$ of $\cR$. 
Then $\varrho_\mathfrak{P}$ is surjective and nonzero, as required by condition (i) in Theorem \ref{new strategy main result}. To verify condition (ii) in Theorem \ref{new strategy main result}, we first note that the vanishing of $H^0 (k, \Tbar (\chi)^\ast (1))$ implies $X_{S (K)} (\cT_\chi) = Y_{S (K)} (\cT_\chi)$ (cf.\@ Remark \ref{X=Y remark}) and hence, via the argument of Lemma \ref{limit Fitt X}\,(i), that there exists an isomorphism 
\[
X_{S (K)} (\cT_\chi) \otimes_{\Lambda_K} \cO \llbracket \Gamma^\circ \rrbracket 
= Y_{S (K)} (\cT_\chi) \otimes_{\cR} \cO \llbracket \Gamma^\circ \rrbracket 
\cong Y_{S (K)} (T (\chi) \otimes_\cO \cO \llbracket \Gamma^\circ\rrbracket).
\]
Now, the module $Y_{S (K)} (T (\chi) \otimes_\cO \cO \llbracket \Gamma^\circ\rrbracket)$ is finitely generated over $\cO_\chi$ because no finite place splits completely in $k^\circ_\infty$ so that the above isomorphism implies $X_{S (K)} (\cT_\chi)$ is finitely generated over $\cO_\chi \llbracket \Gamma' \rrbracket$ (with $\Gamma'$ as in (\ref{Gamma decomposition})). 
As a consequence of Lemma \ref{singular primes and mu vanishing}, the module $X_{S (K)} (\cT_\chi)$ is therefore annihilated by an element $x \in \cR$ that does not belong to the ideal $(\varpi) = \ker ( \cR \to R)$. It follows that  $X_{S (K)} (\cT_\chi) \otimes_{\cR} R$ is an $R$-torsion module and hence vanishes when localised at $(0)$ as an $R$-module (respectively, at $\mathfrak{P}$ as an $\cR$-module).
This shows, in particular, that $\Tor_1^\cR (X_{S (K)} (\cT_\chi), R)_\mathfrak{P}$ vanishes and hence implies that  
        \[
        \Fitt^0_R (\Tor_1^\cR (X_{S (K)} (\cT_\chi), R))_\mathfrak{P} = \Fitt^0_{R_\mathfrak{P}} ( \Tor_1^{\cR} (X_{S (K)} (\cT_\chi), R)_\mathfrak{P}) = \Fitt^0_{R_\mathfrak{P}} (\{0\}) = R_\mathfrak{P},
        \]
    as required by condition (ii) in Theorem \ref{new strategy main result}. In this case, therefore, the latter result can be applied in order to deduce that, for every Euler system $\eta$ in $\ES^r_{\Sigma, S_0} (\cT_\chi)$, the element $(\eta_k)_\mathfrak{P}$ belongs to 
    \[ \vartheta_Y (\Det_{\cR} (C_{S (K), \Sigma}  (\cT_\chi)))_\mathfrak{P} = \vartheta_Y (\Det_{\Lambda_K} (C_{S (K), \Sigma}  (\cT)))_\mathfrak{P}.\] 
Since $\p$ is contained in $\mathfrak{P}$, this in turn implies that $(\eta_k)_\p \in \vartheta_Y (\Det_{\cR} (C_{S (K), \Sigma}  (\cT)))_\p$ and hence verifies the $\p$-localisation of (\ref{sufficient inclusion}) when applied to $\eta = \mathrm{Tw}^r_\chi (c)$, as required.     %

\subsubsection{Verifying the $\p$-localisation of (\ref{sufficient inclusion}) if $p \notin \p\!: \!$ reduction to a key inequality} \label{proof for regular primes section I}
    
    To deal with the case $p \not \in \p$, we shall adapt an approach used by Mazur and Rubin in \cite[\S\,5.3]{MazurRubin04}. At the outset, we note that, for some character $\psi \: \square_K \to \Phi^{\mathrm{c}, \times}$ and prime ideal $\wp\in \Spec^{1}(\Lambda_{\chi \psi})$, the localised ring $\Lambda_{\chi \psi, \wp}$ coincides with $\Lambda_{K, \p}$. As a consequence, if we set $K_{\chi \psi} \coloneqq K^{\ker (\chi \psi)}$ and $K_{\chi \psi, \infty} \coloneqq K_{\chi \psi} \cdot k_\infty$, then one obtains an identification
    \[
    \Theta_{\Sigma, b_\bullet} ( \Det_{\Lambda_K} ( C_{S (K), \Sigma} (\cT)))_\p 
    = \Theta_{K_{\chi \psi, \infty}, \Sigma, b_{\chi \psi, \bullet}} ( \Det_{\Lambda_{\chi \psi}} ( C_{S (K), \Sigma} ( T (\chi \psi) \otimes_{\cO_\psi} \Lambda_\psi)))_\wp
    \]
in which $b_{\chi \psi, \bullet}$ is the basis of $Y \otimes_{\Lambda_K} \Lambda_{\chi \psi}$ induced by $b_\bullet$ and $\cT_{\chi \psi} \coloneqq T (\chi \psi) \otimes_{\cO_{\chi \psi}} \Lambda_{\chi \psi}$. \\
    On the other hand, in terms of the twisting map $\mathrm{Tw}^r_{\chi \psi} \: \ES^r_\Sigma (T) \to \ES^r_\Sigma (T (\chi \psi), \cK^\Sigma)$ from Proposition~\ref{twisting lemma}, the result of \cite[Ch.~II, Lem.\@ 4.3]{Rubin-euler} implies that  
\[ \mathrm{Tw}^r_{\chi \psi }(c)_{k_\infty} = \sum_{\sigma \in \Delta_K} (\chi \psi) (\sigma) \sigma (c_{K_\infty}) \in 
    \big( \bidual^r_{\Lambda_K} H^1_\Sigma (\cO_{k, S(K)} , \cT) \big)_\p 
    = \big( \bidual^r_{\Lambda_\psi} H^1_\Sigma (\cO_{k, S(K)} , \cT_{\chi \psi}) \big)_\wp.
    \]
Hence, since the element $\sum_{\sigma \in \Delta_K} (\chi \psi) (\sigma) \sigma$ acts as multiplication by the unit $|\Delta_K|$ of $\Lambda_{K, \p}$, and  $\Fitt^r_{\Lambda_K} ( X_{S (K)} (\cT))_\p$ identifies with $\Fitt^r_{\Lambda_{\chi \psi}} ( X_{S (K)} (\cT_{\chi \psi}))_\wp$, the claimed result will follow if we can show that 
    \[
    \Fitt^r_{\Lambda_{\chi \psi}} ( X_{S (K)} (\cT_{\chi \psi})) \cdot 
    \mathrm{Tw}^r_{\chi \psi} (c)_{k_\infty} \in \Theta_{K_{\chi \psi, \infty}, \Sigma, b_{\chi \psi, \bullet}} ( \Det_{\Lambda_{\chi \psi}} ( C_{S (K), \Sigma} ( \cT_{\chi \psi})))_\wp.
    \]
    By replacing $c$ by $\mathrm{Tw}^r_{\chi \psi} (c)$ and $T$ by $T (\chi \psi)$, we may thereby reduce to the case $K = k$ and $\cR  = \cO_{\chi \psi} \llbracket \Gamma \rrbracket$. 
    Indeed, since $\psi$ is a character of $p$-power order,
    the ring $\cO_{\chi \psi}$ is a totally ramified extension of $\cO_\chi$ with residue field equal to the residue field $\kappa_\chi$ of $\cO_\chi$. In addition, writing $\varpi_{\psi \chi}$ for a uniformiser of $\cO_{\chi \psi}$, for every $\sigma \in G_k$ 
    one has a congruence $\psi (\sigma) \equiv 1\,\,(\mathrm{mod}\,\, \varpi_{\chi \psi})$  and so the residual representations 
    $\Tbar (\chi)$ and $\overline{T (\chi \psi)}$ coincide. This shows that the assumed validity of
    Hypotheses \ref{new strategy iwasawa hyps} for 
    $(T, k_\infty, F, \chi, r)$ implies its validity also for $(T (\chi \psi), k_\infty, F, \bm{1}, r)$. By replacing $\cO$ by $\cO_{\chi \psi}$ if necessary, we may then also assume that $\cO = \cO_{\chi \psi}$.\\
    Having made these reduction steps,
we shall next prove that the claimed inclusion 
\[
\Fitt^r_\cR (X_{S(K)} (\cT))_\p \cdot c_{k_\infty} \subseteq \Theta_{k_\infty, \Sigma, b_\bullet} ( \Det_\cR (C_{S(K), \Sigma} (\cT)))_\p
\]
is implied by the inclusion
\begin{equation} \label{we want to show this for dvrs}
\Fitt^r_\cR (X_{S(K)} (\cT))_\p \cdot \im (c_{k_\infty}) \subseteq \Fitt^r_\cR ( H^1 ( C_{S (K), \Sigma} (\cT)))_\p. 
\end{equation}
To justify this, we must show that, for any   element $x$ of $\Fitt^r_\cR (X_{S (K)} (\cT))$ and $\cR$-basis $\fz$ of $\Det_\cR (C_{S(K), \Sigma} (\cT))$, the latter inclusion implies  
$x\cdot c_{k_\infty} \in \cR_\p\cdot \Theta_{k_\infty, \Sigma, b_\bullet} (\fz)$. 
To do this, we can also assume that the right hand side of (\ref{we want to show this for dvrs}) is nonzero since otherwise this inclusion implies the module $\cR_\p(x\cdot c_{k_\infty})$  vanishes  and so the claimed inclusion is obviously satisfied. 

Now, by Lemma \ref{finite level reps}, the complex $C_{S(K), \Sigma} (\cT)$ is isomorphic to a complex of the form $P_0 \to P_1$ with finitely-generated free $\cR$-modules $P_0$ and $P_1$ that are placed in degree zero and one, respectively, and such that $P_1 \cong \ker (\pi) \oplus Y$ with $\pi \: P_1 \to H^1 ( C_{S(K), \Sigma} (\cT))$ the natural projection map. By applying Proposition \ref{eagon-northcott-prop}\,(ii)
to the complex $P_0 \to \ker (\pi)$ we then obtain an equality
\begin{equation} \label{eagon northcott prop consequence that we need here}
    \im ( \Theta_{k_\infty, \Sigma, b_\bullet} (\fz))_\p = \Fitt^r_\cR ( H^1 ( C_{S (K), \Sigma} (\cT)))_\p.
\end{equation}
This equality implies, in particular, that the idempotent $e_k \varepsilon_k$ appearing in the definition (\ref{definition Theta map})  of the map 
$\Theta_{k_\infty, \Sigma, b_\bullet}$ must act as the identity on the nonzero ideal $\Fitt^r_\cR ( H^1 ( C_{S (K), \Sigma} (\cT)))$. Since $\cR_\p$ is an integral domain, one therefore has $e_k \varepsilon_k = 1$ and so the definition of $\Theta_{k_\infty, \Sigma, b_\bullet}$ directly implies that its image spans 
 $\bidual^r_\cR H^1_\Sigma (\cO_{k, S(K)}, \cT)$ over the fraction field $\cQ (\cR)$ of $\cR$. We can therefore deduce the existence of an element $q$ of $\cQ (\cR)$ such that  $c_{k_\infty} = q\cdot \Theta_{k_\infty, \Sigma, b_\bullet} (\fz)$. At this point, the (assumed) inclusion (\ref{we want to show this for dvrs}) can be combined with (\ref{eagon northcott prop consequence that we need here}) to deduce that 
\begin{align*}
    q \cdot x \cdot \Fitt^r_\cR ( H^1 ( C_{S (K), \Sigma} (\cT)))_\p & = q \cdot x \cdot \im ( \Theta_{k_\infty, \Sigma, b_\bullet} (\fz)) \\
    & = x \cdot \im (c_{k_\infty}) \\
    & \subseteq \Fitt^r_\cR ( H^1 ( C_{S (K), \Sigma} (\cT)))_\p.
\end{align*}
Then, since $\cR_\p$ is a discrete valuation ring and $\Fitt^r_\cR ( H^1 ( C_{S (K), \Sigma} (\cT)))_\p$ is a nonzero ideal, cancellation now proves that $q \cdot x \in  \cR_\p$, and hence also the required containment via 
\[ x \cdot c_{k_\infty} = (q\cdot x)\cdot \Theta_{k_\infty, \Sigma, b_\bullet} (\fz) \in \cR_\p\cdot \Theta_{k_\infty, \Sigma, b_\bullet} (\fz),\] 
We are therefore now reduced to proving the inclusion (\ref{we want to show this for dvrs}). Further, since $\cR_\p$ is a discrete valuation ring, the short exact sequence (\ref{ingredient 2 ses}) implies that 
\[
\Fitt^r_\cR ( H^1 ( C_{S (K), \Sigma} (\cT)))_\p = \Fitt^r_\cR (X_{S(K)} (\cT))_\p \cdot \Fitt^0_\cR ( H^1_{\cF^\vee_{\mathrm{rel}, \Sigma}} ( k, \cT^\vee (1))^\vee)_\p,
\]
and so it is actually enough for us to show that, if $\Fitt^r_\cR (X_{S(K)} (\cT))_\p$ is nonzero, then 
\[
\im (c_{k_\infty}) \subseteq \Fitt^0_\cR ( H^1_{\cF^\vee_{\mathrm{rel}, \Sigma}} ( k, \cT^\vee (1))^\vee)_\p.
\]
If the image of  $c_{k_\infty}$ in  $H^1_\Sigma (\cO_{k, S(K)}, \cT)_\p$ vanishes, then there is nothing to show and so we may assume that this image is nonzero. In this case, then, Lemma \ref{lemma-image-Fitt}\,(ii) implies that
\begin{align*}
\im (c_{k_\infty})_\p & = \Fitt^0_{\cR} \big( \Ext^1_\cR \big ( \bidual^r_\cR H^1_\Sigma (\cO_{k, S(K)}, \cT) / \cR c_{k_\infty}, \cR\big ) \big)_\p \\
& = \Fitt^0_{\cR} \big( \big (\bidual^r_\cR H^1_\Sigma (\cO_{k, S(K)}, \cT) / \cR c_{k_\infty} \big)_\tor \big)_\p,
\end{align*}
where the second equality follows again from the fact that $\cR_\p$ is a discrete valuation ring (cf.\@ the argument of \cite[Prop.~5.5.13]{NSW} for details).\\
Finally, at this point it is convenient to assume $k_\infty = k_\infty^\circ$ and to complete the proof of Theorem \ref{new iwasawa theory euler systems main result} in this case. (We recall that, as observed in Remark \ref{rank one reduction}, the validity of Theorem \ref{new iwasawa theory euler systems main result} in this special case implies its validity in general.) 

Having made this reduction, the ring $\cR = \cO \llbracket \Gamma \rrbracket$ is then isomorphic to the power series ring $\cO \llbracket X \rrbracket$ in a single variable $X$.
We write $f$ for the irreducible Weierstra{\ss} polynomial in $\cO [ X ]$ that generates $\p$ when regarded as an element of $\cR$ via this isomorphism. 
We also fix generators $z_\mathrm{Sel}$ and $z_c$ of $\mathrm{char}_\cR ( H^1_{\cF^\vee_{\mathrm{rel}, \Sigma}} ( k, \cT^\vee (1))^\vee)$ and $\mathrm{char}_\cR ( (\bidual^r_\cR H^1_\Sigma (\cO_{k, S(K)}, \cT) / (\cR\cdot c_{k_\infty}))_\tor)$, respectively. 
At this stage our verification of the $\p$-localisation of (\ref{sufficient inclusion}) has therefore been reduced to the case $k_\infty = k_\infty^\circ$ and to showing that 
\begin{equation}\label{wanted inequality} \ord_f(z_c) \geq \ord_f(z_\mathrm{Sel}).\end{equation}

\subsubsection{Verifying the $\p$-localisation of (\ref{sufficient inclusion}) if $p \notin \p$: the key inequality} \label{proof for regular primes section II}

Following the above reduction, we assume in the remainder of the argument that $k_\infty = k_\infty^\circ$. In this case, in order to verify the required inequality (\ref{wanted inequality}) we can further assume that $f$ divides $z_\mathrm{Sel}$ since otherwise the inequality is true trivially. By enlarging $\cO$ if necessary (by passing to a faithfully flat extension of $\cO$), we may moreover assume that $f$ is a linear polynomial in $\cO [X]$. Recalling $\varpi$ denotes a uniformiser of $\cO$, we define
\[
f_n \coloneqq f + \varpi^n \in \cO [X]
\quad \text{ and } \quad \cR_n \coloneqq \cR / f_n \cR
\]
for every $n \in \N$. We then also obtain a Galois representation
\[
T_n \coloneqq \cT \otimes_\cR \cR_n \cong T (\psi_n).
\]
Here, if $\gamma$ is the topological generator of $\cG_{k_\infty}$ that induces the fixed isomorphism $\cR \cong \cO \llbracket X \rrbracket$ (via which we view $f_n$ as an element of $\cR$), then $\psi_n$ denotes the character 
\[
\cG_{k_\infty} \to \cO^\times, \,\,\, \gamma \mapsto 1 - (f (0) + \varpi^n).
\]

    \begin{lem} \label{control lemma}
    For all sufficiently large integers $n$, the following claims are valid.
    \begin{itemize}
        \item[(i)] The map $\mathrm{Tw}^r_{k, \psi_n}$ constructed in Proposition \ref{twisting lemma}\,(i) is injective and the order of its cokernel is bounded independently of $n$.
        \item[(ii)] The component $\mathrm{Tw}^r_{k, \psi_n} (c_{k_\infty})$ of $\mathrm{Tw}^r_{\psi_n} (c)_k$ at $k$ is nonzero and, in $\cR_n$, one has
        \[
        \im ( \mathrm{Tw}^r_{k, \psi_n} (c_{k_\infty})) \subseteq \Fitt^0_{\cR_n} ( H^1_{\cF_{\mathrm{rel}, \Sigma}} (k, T_n^\vee (1))^\vee).
        \]
        In particular, the group $H^1_{\cF_{\mathrm{rel}, \Sigma}} (\cO_{k, S(K)}, T_n^\vee (1))^\vee$ is finite. 
        \item[(iii)] The kernel and cokernel of the natural map 
    \[
    s_n \: H^1_{\cF^\vee_{\mathrm{rel}, \Sigma}} (k, \cT^\vee (1))^\vee \otimes_{\cR} \cR_n \to H^1_{\cF^\vee_{\mathrm{rel}, \Sigma}} (k, T_n^\vee (1))^\vee
    \]
    are finite and of order bounded independently of $n$.
    \item[(iv)] The polynomial $f_n$ does not divide the characteristic polynomial of the $\cR$-torsion submodule of $H^1_{\cF_{\mathrm{rel}, \Sigma}} (k, \cT^\vee (1))^\vee$. 
    \end{itemize}
\end{lem}

\begin{proof}
To prove (i), we take $ E = k$ in the spectral sequence (\ref{spec seq eq})
and observe that it degenerates on its second page (since the $\cR$-module $\cR_n$ has projective dimension one) to yield a short exact sequence 
    \[ 0 \to \left(\bidual_{\cR}^{r} H^1_\Sigma (\cO_{k,S(k)}, \cT)\right) \otimes_{\cR} \cR_n \to \bidual_{\cR_n}^{r} H^1(\cO_{k,S(k)}, T_n)
        \to H^1(C^{r}(\cT))[f_n] \to 0,
    \]
    thereby proving the claimed injectivity of  $\mathrm{Tw}^r_{k, \psi_n}$. Moreover,
this exact sequence also proves that the cokernel of $\mathrm{Tw}^r_{k, \psi_n}$ is isomorphic to $H^1(C^{r}(\cT))[f_n]$. In particular, if $n$ is chosen large enough such that the prime ideal $\cR f_n$ is not contained in the support of the $\cR$-torsion submodule of $H^1(C^{r}(\cT))$, then $H^1(C^{r}(\cT))[f_n]$ is finite of cardinality bounded by the order of the maximal finite $\cR$-submodule of $H^1(C^{r}(\cT))$. This proves (i). \\
    Next we note that, since the element $c_{k_\infty}$ is (by assumption) nonzero, and a nonzero element of a unique factorisation domain can only have finitely many irreducible factors, neither of the principal ideals that are given by $\im(c_{k_\infty})^{\ast \ast}$ and the characteristic ideal of the $\cR$-torsion submodule of $H^1_{\cF^\vee_{\mathrm{rel}, \Sigma}}(k,   \cT^\lor(1))^\lor$ can be contained in infinitely many of the ideals $\cR f_n$. In particular, for any large enough $n$,  the polynomial $f_n$ does not divide the characteristic polynomial of the $\cR$-torsion submodule of $H^1_{\cF^\vee_{\mathrm{rel}, \Sigma}}(k,  \cT^\lor(1))^\lor$ and, in addition, the element $\mathrm{Tw}^r_{k, \psi_n} (c_{k_\infty})$ is non-zero. The first of these properties immediately implies the property in (iv), whilst the second property implies that the image $\mathrm{Tw}^r_{\psi_n} (c)$ of $c$ inside $\ES^{r}(T_n,\cK^{\Sigma, p})$ is non-zero. \\
    We next claim that the representation $T_n$ satisfies Hypothesis \ref{new strategy hyps} (where, in the notation of Hypothesis \ref{new strategy hyps}, we are taking $\cT = T = T_n$) with the fields $k (T_{n, i})$ in (\ref{minimal fields}) taken to be the minimal extensions of $F$ such that $G_{k (T_{n, i})}$ acts trivially on both $T_i \coloneqq T / \varpi^i T$ and $T_{n, i} \coloneqq T_n / \varpi^i T_n$. To see this, we observe that the explicit definition of $\psi_n$ shows that $\psi_n \equiv 1 \, (\mathrm{mod}\, \varpi)$, and hence that $\Tbar$ and $\overline{T_n}$ coincide. Given this, and the fact that the field $k (T_n)_\infty$ that occurs in Hypothesis \ref{new strategy hyps}\,(iii) coincides with $\mathfrak{E}_T^{\bm{1}}$, one finds that the validity of Hypothesis \ref{new strategy hyps}\,(i), (ii), (iii) and (iv) follows from the assumed validity of Hypothesis \ref{new strategy iwasawa hyps}\,(i), (ii), (iii) and (iv) for the data $(T, k_\infty, F, \bm{1}, r)$. Next we recall the $\Sigma$-modified relaxed Nekov\'a\v{r}--Selmer structure $\mathscr{F}_{\mathrm{rel}, \Sigma} (T_{n, i})$ satisfies Hypothesis \ref{fF hyp} because $S (\mathscr{F}) \setminus \Pi_k^\infty$ is nonempty (as it contains $\Pi_k^p$) and $H^0 (k, \Tbar)$ vanishes (as observed in Remark \ref{vanishing remark}). This shows that Hypothesis \ref{new strategy hyps}\,(v) is also satisfied for $T_n$. Finally, since we are in the situation of Remark \ref{Euler characteristic if qadratically presented rk}\,(ii), we may use Lemma \ref{Euler characteristic if qadratically presented}\,(ii) to calculate the core rank $\bm{\chi} ( \overline{F_i}, j(i)) $ of the pair $( \overline{F_i}, j(i)) $ to be at least
    \begin{align*}
    {\sum}_{v \in \Pi_k^\infty} \dim_\kappa ( H^0 ( k_v, \Tbar^\ast (1))) - \dim_\kappa ( \sha_{\overline{F}, j} (\Tbar)) - {\sum}_{v \in \Sigma} ( H^0 (k_v, \Tbar^\ast (1))) \\
    \geq r - \dim_\kappa ( H^1 (\mathfrak{E}_T^{\bm{1}} / k, \Tbar )) - {\sum}_{v \in \Sigma} ( H^0 (k_v, \Tbar^\ast (1)))
    .
    \end{align*}
Here the inequality follows from the observations that the $r$-dimensional $\kappa$-space $Y \otimes_\cR \kappa$ is a quotient of $Y_{\Pi_k^\infty} ( \Tbar)$, and that  Lemma \ref{general neukirch interpretation lemma} implies $\sha_{\overline{F}, j} (\Tbar)$ identifies with a submodule of $H^1 (\mathfrak{E}_T^{\bm{1}} / k, \Tbar )$. Given this, the assumed validity of Hypothesis \ref{new strategy iwasawa hyps}\,(v) implies $\bm{\chi} ( \overline{F_i}, j(i)) > 0$, and hence that Hypothesis \ref{new strategy hyps}\,(vi) is valid. The validity of Hypothesis \ref{new strategy hyps}\,(vii) being clear, we have therefore verified all parts of Hypothesis \ref{new strategy hyps}. \\
Moreover, Proposition \ref{twisting lemma}\,(ii)\,(b) combines with Remark \ref{rk on avoiding more new hyps} to imply that we may apply Theorem \ref{new strategy main result} to the system $\mathrm{Tw}^r_{\psi_n} (c)$ without having to assume Hypothesis \ref{more new hyps} for $T_n$ (since, as already observed earlier, $\cT$ satisfies Hypothesis \ref{more new hyps}).
    The conclusion of Theorem \ref{new strategy main result} then implies the inclusion claimed in (ii) via Proposition \ref{eagon-northcott-prop}\,(ii). \\
 The key observation needed for the proof of (iii) is the existence of the natural exact commutative diagram
    \begin{cdiagram}[column sep=tiny, row sep=small]
        X_{S(K)} (\cT) [f_n] \arrow{r} & H^1_{\cF^\vee_{\mathrm{rel}, \Sigma}} (k, \cT^\vee (1))^\vee \otimes_{\cR}  \cR_n \arrow{r} \arrow{d} & H^2_\Sigma (\cO_{k, S(K)}, \cT) \otimes_{\cR} \cR_n \arrow[twoheadrightarrow]{r} \arrow{d}{\simeq} & X_{S(K)} (\cT) \otimes_{\cR} \cR_n  \arrow{d} \\
        0 \arrow{r} & H^1_{\cF^\vee_{\mathrm{rel}, \Sigma}} (k, T_n^\vee (1))^\vee \arrow{r} & H^2_\Sigma (\cO_{k, S(K)}, T_n) \arrow[twoheadrightarrow]{r} & X_{S(K)} (T_n).
    \end{cdiagram}%
    In particular, if $n$ is large enough such that $f_n$ is not in the support of $X_{S(K)} (\cT)$, then both of the modules $X_{S(K)} (\cT) [f_n]$ and $X_{S(K)} (\cT) \otimes_{\cR} \cR_n \cong  X_{S(K)} (\cT) / f_n X_{S(K)} (\cT)$ are finite and of order bounded (as $n$ varies) by the cardinality of the maximal finite submodule of the finitely generated $\cR$-module $X_{S(K)} (\cT)$. Given this, (iii) follows by applying the Snake Lemma to the above diagram. This completes the proof of all claims. 
\end{proof}

To prove the required inequality (\ref{wanted inequality}), it is convenient to first show that, if neither $c_{k_\infty}$ nor the ideal 
$\Fitt^r_\cR (X_{S(K)} (\cT))$ vanishes, then the quotient $\cR$-module  $\bigl(\bidual^r_\cR H^1_\Sigma (\cO_{k, S(K)}, \cT)\bigr)/(\cR\cdot  c_{k_\infty})$ 
is torsion and hence that the element $z_c$ occurring in (\ref{wanted inequality}) is (by its very definition) such that 
\begin{equation}\label{z_c interpretation}\cR\cdot z_c = \cchar_\cR \big( \big(\bidual^r_\cR H^1_\Sigma (\cO_{k, S(K)}, \cT)\big) / (\cR\cdot c_{k_\infty}) \big ).
\end{equation} 
To show this, we fix a natural number $n$ large enough to ensure that  all of the claims in Lemma~\ref{control lemma} are valid with respect to the polynomial $f_n$. Then, by Lemma~\ref{control lemma}\,(ii), the Fitting ideal $\Fitt_{\cR_n}^0(H^1_{\cF^\vee_{\mathrm{rel}, \Sigma}}(k,  T_n^\lor(1))^\lor)$ contains a non-zero element and so the (finitely generated) $\cR_n$-module $H^1_{\cF^\vee_{\mathrm{rel}, \Sigma}}(k,  T_n^\lor(1))^\lor)$ is finite. From the result of 
Lemma \ref{control lemma}\,(iii), it then follows that    $H^1_{\cF^\vee_{\mathrm{rel}, \Sigma}}(k, \cT^\lor(1))^\lor \otimes_\cR \cR_n$ is finite and hence, as a consequence of the structure theorem for finitely generated $\cR$-modules, that the $\cR$-module  $H^1_{\cF^\vee_{\mathrm{rel}, \Sigma}}(k,  \cT^\lor(1))^\lor$ is torsion.\\
Now the assumption $\Fitt^r_\cR (X_{S(K)} (\cT))$ is nonzero implies the $\cQ (\cR)$-module $\cQ (\cR) \otimes_\cR X_{S(K)} (\cT)$ is free  of rank $r$.  After recalling the exact sequence (\ref{ingredient 2 ses}), and noting that we have just shown its first term to be a torsion $\cR$-module, 
we can therefore deduce that $\cQ (\cR) \otimes_\cR H^1 (C_{S(K), \Sigma}(\cT))$ is also a free $\cQ (\cR)$-module of rank $r$. From the vanishing of the Euler characteristic of $C_{S (K), \Sigma}(\cT)$ we can then finally deduce that 
$\cQ (\cR) \otimes_\cR H^1_\Sigma (\cO_{k, S(K)},\cT)$ is a free $\cQ (\cR)$-module of rank $r$. In addition, since we are assuming $c_{k_\infty}$ is nonzero, it generates a free rank-one $\cQ (\cR)$-module and so the above observation implies that the module  
\[
\cQ (\cR) \otimes_\cR \big( \big(\bidual^r_\cR H^1_\Sigma (\cO_{k, S(K)}, \cT)\big)/ (\cR\cdot c_{k_\infty}) \big) 
\cong \exprod^r_{\cQ (\cR)} (\cQ (\cR) \otimes_\cR H^1_\Sigma (\cO_{k, S(K)}, \cT)) / (\cQ (\cR)\cdot c_{k_\infty})
 \]
must vanish, thereby showing the $\cR$-module $(\bidual^r_\cR H^1_\Sigma (\cO_{k, S(K)}, \cT))/ (\cR\cdot c_{k_\infty})$ to be torsion, as suffices to prove (\ref{z_c interpretation}). 

Turning now to the verification of (\ref{wanted inequality}), we continue to let $n$ be a natural number for which all the claims of Lemma~\ref{control lemma} are valid with respect to the fixed choice of $f$. We then choose a pseudo-isomorphism of $\cR$-modules of the form 
    \begin{equation}\label{fixed quasi}
        H^1_{\cF^\vee_{\mathrm{rel}, \Sigma}}(k,  \cT^\lor(1))^\lor \to \bigoplus_{i=1}^{i=t} \cR/(\cR f^{m_i}) \oplus \bigoplus_{j=1}^{j=t'} \cR/(\cR g_{j})
    \end{equation}
  in which $t$ and $t'$ are non-negative integers, each $m_i$ a natural number and each $g_{j}$ is a (possibly reducible) distinguished polynomial that is independent of $n$ and also, since $n$ validates Lemma~\ref{control lemma}\,(iv), coprime to $f_n$. It is now convenient to recall the following general fact.

\begin{lemma}\label{iwasawa-orders-lemma}
    Let $\cM$ be a finitely generated torsion $\cR$-module that is pseudo-isomorphic to 
    \begin{align*}
        E_\cM \coloneqq \bigoplus_{j=1}^{j=t'} \cR/(\cR g^{m_j}_j),
    \end{align*}
    where each $g_j$ is either $\varpi$ or an irreducible distinguished polynomial and each $m_j$ a natural number. Then, for any irreducible element $f$ of $\cR$ that does not divide $\mathrm{char}_\cR(\cM)$, one has 
    \begin{align*}
        \ord_p\left(\left| \cM/(f\cM)\right|\right) - \ord_p(|\cM_\mathrm{fin}|) \leq \ord_p\left(\left|E_\cM/(fE_\cM)\right|\right) \leq \ord_p\left(\left|\cM/(f\cM)\right|\right),
    \end{align*}
    where $\cM_\mathrm{fin}$ denotes the maximal finite $\cR$-submodule of $\cM$.
\end{lemma}

\begin{proof} This is well-known but, for lack of a better reference, we give an argument. The existence of a map of $\cR$-modules from $\cM$ to $E_\cM$ (or in the reverse direction) that has finite kernel and cokernel combines with a calculation of Herbrand quotients to show that \[|E_\cM[f]|/|E_\cM/(fE_\cM)| = |\cM[f]|/|\cM/(f\cM)|.\] 
The claimed inequalities follow from this equality, the obvious inequality $|\cM[f]|\le |\cM_\mathrm{fin}|$ and the fact that $E_\cM[f]$ vanishes as $f$ is coprime to each $g_j$. \end{proof}
    
We shall apply this result to the pseudo-isomorphism (\ref{fixed quasi}). To do this, we note that, for each index $j$ in (\ref{fixed quasi}) and every such $n$, the quotient module $\cR/(\cR g_{j} + \cR f_n)$ is finite and we claim that its cardinality is bounded independently of $n$. To show this, we note that, because $f_n$ is an irreducible distinguished polynomial, the Weierstra{\ss} Preparation Theorem gives an isomorphism $\cR/(\cR g_{j} +  \cR f_n) \cong \cR_n/(\cR_n g_{j} (\alpha_n))$ with $\alpha_n \coloneqq - (\varpi^n + f (0))$ the root of $f_n$. It is then enough to note that the $\cR_n$-valuation of $g_{j} (\alpha_n)$ is bounded since, for all large enough $n$, the strong triangle inequality implies that 
 $\ord_{\cR_n} ( g_{j} (\alpha_n)) 
    = 
    \ord_\cR ( g_{j} (f(0))).$ 
    
This observation implies the existence of constants $\kappa_1$ and $\kappa_2$ that are independent of $n$ and such that 
    \begin{align}\label{first sequence}
        \ord_p \big ( \big |H^1_{\cF^\vee_{\mathrm{rel}, \Sigma}}(k,  T_n^\lor(1))^\lor \big| \big) &= \ord_p\big(\big|H^1_{\cF^\vee_{\mathrm{rel}, \Sigma}}(k,  \cT^\lor(1))^\lor \otimes_\cR \cR_n\big|\big) + \kappa_1\\
        &\geq \ord_p\bigl({\prod}_{i=1}^{i=t} \left|\cR/(\cR f^{m_i} +  \cR f_n)\right|\bigr) + \kappa_2\notag\\
        &= {\sum}_{i=1}^{i=t}\ord_p\big(\left|\cR_n/(\cR_n f(\alpha_n)^{m_i})\right|\big) + \kappa_2\notag\\
        &= n\cdot {\sum}_{i=1}^{i=t} m_i + \kappa_2\notag\\
        &= n\cdot \ord_f(z_\mathrm{Sel}) + \kappa_2,\notag
    \end{align}
   where the first equality follows from Lemma \ref{control lemma}\,(iii), the inequality from the second inequality of Lemma \ref{iwasawa-orders-lemma} and the final equality from (\ref{fixed quasi}) and the choice of element  $z_\mathrm{Sel}$. \\
    In a very similar way, after fixing a pseudo-isomorphism of $\cR$-modules of the form 
    \begin{equation}\label{quasi 2}
        Q_c \coloneqq \big(\bidual^r_\cR H^1 (\cO_{k, S(K)}, \cT)\big) / (\cR\cdot c_{k_\infty}) \to \bigoplus_{i=1}^{i=t_1} \cR / (\cR f^{l_i}) \oplus \bigoplus_{j=1}^{j = t_1'} \cR / (\cR h_j),
    \end{equation}
 in which $t_1$ and $t_1'$ are non-negative integers, each $l_i$ a natural number, and each $h_j$ a (possibly reducible) polynomial that is both independent of $n$ and coprime to $f_n$, one finds that there exist constants $\kappa_3$ and $\kappa_4$ that are independent of $n$ and such that 
    \begin{align}\label{second sequence}
        & \phantom{=}\; \ord_p\Big(\big|\bigl(\bidual_{\cR_n}^{r} H^1(\cO_{k,S(K)}, T_n)\bigr)/(\cR_n\cdot \mathrm{Tw}^r_{k,\psi_n}(c_{k_\infty}))\big|\Big) \\
        &= \ord_p\big(\big|Q_c \otimes_\cR \cR_n\big|\big) + \kappa_3\notag\\
        &\leq \ord_p\big({\sum}_{i=1}^{i=t_1} \big|\cR/(\cR f^{l_i} +  \cR f_n)\big|\big) + \kappa_4\notag\\
        &= n\cdot{\sum}_{i=1}^{i=t_1}l_i + \kappa_4\notag\\
        &= n\cdot \ord_f(z_c) + \kappa_4.\notag
    \end{align}
    Here the first equality follows from Lemma \ref{control lemma}\,(i), the inequality from the first inequality of Lemma \ref{iwasawa-orders-lemma}, and the final equality from the  pseudo-isomorphism (\ref{quasi 2}) and equality (\ref{z_c interpretation}).\\
 Next we note that Lemma \ref{control lemma}\,(ii) combines with Lemma \ref{lemma-image-Fitt}\,(ii) to imply an inclusion
 \[
 \Fitt^0_{\cR_n} \Big ( \big(\bidual_{\cR_n}^{r} H^1(\cO_{k,S(K)}, T_n)\big)/(\cR_n\cdot\mathrm{Tw}^r_{k,\psi_n}(c_{k_\infty})) \Big) 
\subseteq \Fitt^0_{\cR_n} ( H^1_{\cF^\vee_{\mathrm{rel}, \Sigma}}(k,  T_n^\lor(1))^\lor).
 \]
Since $\cR_n$ is a discrete valuation ring with finite residue field, this inclusion is therefore equivalent to an inequality
    \[
         \left|\Bigl( \bidual_{\cR_n}^{a} H^1(\cO_{k,S(K)}, \cT(\psi_n))\Bigr)/\big (\cR_n \cdot j_{k,n}(\eta_{k_\infty})\big )\right| \geq \left|H^1_{\cF^\vee_{\mathrm{rel}, \Sigma}}(k, \cT(\psi_n)^\lor(1))^\lor\right|.
    \]

Upon combining this inequality with those of (\ref{first sequence}) and (\ref{second sequence}), we finally derive an inequality 
    \begin{align*}
        n\cdot \ord_f(z_c) + \kappa_4 \geq n\cdot \ord_f(z_\mathrm{Sel}) + \kappa_2.
    \end{align*}
By taking $n$ sufficiently large, this inequality implies the required inequality (\ref{wanted inequality}), and thereby completes the proof of Theorem \ref{new iwasawa theory euler systems main result}.
\qed 

\section{Elliptic curves}\label{elliptic section}

In this section we let $E$ be an elliptic curve defined over $\Q$ and of conductor $N \coloneqq N_E$. We then consider the integral and rational $p$-adic Tate modules of $E$ defined as
\[
T_p E \coloneqq {\varprojlim}_{n \in \N} E [p^n] 
\quad \text{ and } \quad 
V_p E \coloneqq \Q_p \otimes_{\Z_p} T_pE.
\]
We fix a $\Z_p$-basis of $T_pE$ (the precise choice of which does not matter to subsequent arguments) and thereby identify $\mathrm{Aut}_{\Z_p} (T_p E)$ with $\mathrm{GL}_2 (\Z_p)$. In this way, the natural action of $G_\Q$ on $T_p E$ gives rise to a homomorphism 
\[ \rho_{E, p} \: G_\Q \to \mathrm{GL}_2 (\Z_p).\] 

For any subfield $\cK$ of $\Q^\mathrm{c}$ we write $\Omega (\cK)$ for the set of finite abelian extensions of $\Q$ in $\cK$. For $K$ in $\Omega (\Q^\mathrm{c})$, we set 
$\cG_K \coloneqq \gal{K}{\Q}$ and $\widehat{\cG_K} \coloneqq \Hom_\Z (\cG_K, \Q^{\mathrm{c},\times})$ and consider the motive
\[M_{E/K}\coloneqq h^1 (E / K) (1),\]
regarded (unless explicitly indicated otherwise) as defined over $\Q$ and with coefficients $\QQ[\cG_K]$. 

\subsection{Kato's Euler system of zeta elements} \label{kato euler system section}

We review the Euler system constructed by Kato in \cite{Kato04}. To do this, we fix a minimal Weierstra{\ss} model of $E$ over $\Z$ and write $\omega \in H^0_\dR (E, \Omega^1_{E / \Q})$ for the corresponding N\'eron differential.
We then write $c_\infty \in \{ 1, 2\}$ for the number of connected components of $E (\R)$, and define periods of $E$ by setting
\[
\Omega^+ = \Omega_{\omega, \gamma^+} \coloneqq  \int_{E (\R)} \mid \omega \mid = c_\infty \cdot \int_{\gamma^+} \omega 
\quad \text{ and } \quad 
\Omega^- = \Omega_{\omega, \gamma^-} \coloneqq \int_{\gamma^-} \omega. 
\]
Here $\gamma^+$ and $\gamma^-$ denote generators of the respective subgroups $H_1 (E (\C), \Z)^+$ and $H_1 (E (\C), \Z)^-$ of $H_1 (E (\C), \Z)$ upon which complex conjugation acts by $+1$ and $-1$ that are chosen in such a way that the real numbers $\Omega^+$ and $(-i)\Omega^-$ are positive. \\
We recall that, for each $K \in \Omega (\Q^\mathrm{c})$ and place $v \in \Pi_K^p$, Kato \cite[Ch.\@ II, \S\,1.2.4]{Kato93a} has defined a canonical `dual exponential map' 
\begin{equation}\label{e: dual exp map Kato}
    \exp^\ast_{K_v} \: H^1 (K_v, V_p E) \to \mathrm{Fil}^0 D_{\dR, K_v} (V_p E).  
\end{equation} 

We fix an embedding $\iota \: \Q^\mathrm{c} \hookrightarrow \C$ and, given $K \in \Omega (\Q^\mathrm{c})$, write $w_\iota \coloneqq w_{\iota, K}$ for the place of $K$ that corresponds to the restriction $\iota_K$ of $\iota$ to $K$. We finally set 
\[ S_0 \coloneqq \{ p, w_{\iota, \Q} \}\quad\text{and}\quad S_1 \coloneqq S \cup \{ \ell \mid N\}\]
and then, for each $K \in \Omega (\Q^\mathrm{c})$, also 
\[ S_0 (K) \coloneqq S_0 \cup S_\mathrm{ram} (K / \Q)\quad \text{and}\quad S (K) \coloneqq S_1 \cup S_\mathrm{ram} (K / \Q).\]

\begin{thm}[Kato] \label{kato euler system}
    There exists a collection of elements
\[
z^\mathrm{Kato} \coloneqq (z^\mathrm{Kato}_K)_{K} \in \prod_{K \in \Omega (\Q^\mathrm{c})}
H^1 ( \cO_{K, S (K)}, V_p E)
\]
with the following properties.
\begin{romanliste}
\item For all $K$ and $L$ in $\Omega (\Q^\mathrm{c})$ with $K \subseteq L$, one has 
\[
\mathrm{Cores}_{L / K} ( z^\mathrm{Kato}_L) = \big( {\prod}_{\ell \in S (L) \setminus S (K)} \Eul_\ell (\Frob_\ell^{-1}) \big) \cdot z^\mathrm{Kato}_K,
\]
where $\mathrm{Cores}_{L / K}$ denotes the corestriction map $H^1 (L, V_pE) \to H^1 (K, V_p E)$. 
\item For $K \in \Omega (\Q^\mathrm{c})$ set
\[
y_K^\mathrm{Kato} \coloneqq \big( {\prod}_{\ell \in S (K) \setminus S_0(K)} \Eul_\ell ( \Frob_\ell^{-1}) \big)^{-1} z_K^\mathrm{Kato}.
\]
Then, if both $E [p]$ is irreducible as an $\mathbb{F}_p [G_\Q]$-module and $E (K) [p]$ vanishes, the element $c_\infty y^\mathrm{Kato}_K$, and hence also $c_\infty z_K^\mathrm{Kato}$, belongs to $H^1 (\cO_{K, S (K)}, T_p E)$. In particular, one has 
\[ z^\mathrm{Kato}\in \ES^1_{\emptyset, S} (T_p E)\quad\text{and}\quad y^\mathrm{Kato} \coloneqq (y^\mathrm{Kato}_K)_{K \in \Omega(\Q^\mathrm{c})}\in \ES^1_{\emptyset, S_1} (T_p E).\]
\item For every $K \in \Omega (\Q^\mathrm{c})$, there is an identification of spaces 
\[ 
 {\bigoplus}_{v \in \Pi_K^p} \mathrm{Fil}^0_{\dR, K_v} (V_p E) \cong \Q_p \otimes_\Q H^0_\dR ( E, \Omega^1_{E / K}) = (\Q_p \otimes_\Q K) \otimes_\Q H^0_\dR (E, \Omega^1_{E / \Q}),
 \] 
 with respect to which one has 
\[
\bigl(\exp^\ast_{K_v} ( z^\mathrm{Kato}_K)\bigr)_{v \in \Pi_K^p} = \big( {\sum}_{\chi \in \widehat{G_K}} \frac{L_{S (K)} (E, \chi^{-1}, 1)}{\Omega^{\mathrm{sgn}(\chi)}} e_\chi \big) \otimes \omega.
\]
Here the symbol $\mathrm{sgn}(\chi) \in \{ - , +\}$ is specified by the equality $\chi(\tau_\iota) = \mathrm{sgn}(\chi)1$, where $\tau_\iota$ denotes the automorphism of $\Q^\mathrm{c}$ obtained by restricting complex conjugation through $\iota$.  
\end{romanliste}
\end{thm}

\begin{proof} 
 In brief, the elements $z^\mathrm{Kato}_K$ and $y^\mathrm{Kato}_K$ are  defined by slightly modifying the corresponding elements constructed by Kato in \cite[(8.1.3)]{Kato04}, with the integrality property in (ii) proved by the argument of \cite[\S\,12.6]{Kato04}, and the `explicit reciprocity law' in (iii) a consequence of \cite[Th.\@ 9.7 and 12.5]{Kato04}.
 
If $p$ is an odd prime of good reduction, then these arguments are worked out in detail by Kataoka in \cite[Th.\@ 6.1]{Kataoka1}. In addition, upon replacing Kataoka's application of \cite[Prop.\@ 8]{Wuthrich14}  by the representation-theoretic observation made in \cite[Rem.\@ 6.9]{bss2}, the argument of loc.\@ cit.\@ extends directly to include any prime $p$ for which $E [p]$ is irreducible as in \cite[\S\,6.3]{bss2}. 
\end{proof}

\begin{rk} 
\begin{liste}
    \item Regarding Theorem \ref{kato euler system}\,(ii), it can be shown that, if $\mathrm{SL}_2 (\Z_p)\subseteq \im(\rho_{E,p})$, then  $E [p]$ is an irreducible $\mathbb{F}_p [G_\Q]$-module and also $E (K)[p] = (0)$ for every $K \in \Omega (\Q^\mathrm{c})$.  
    \item An analogue of the integrality property in Theorem \ref{kato euler system}\,(ii) also holds when $E [p]$ is a reducible $\mathbb{F}_p [G_\Q]$-representation. However, such an extension of Theorem \ref{kato euler system} requires a detailed discussion of Kato's argument in \cite[\S\,12.6]{Kato04} and so, since it is outside the scope of the Euler system theory that we develop here, this will be discussed elsewhere.
\end{liste}
\end{rk}

\subsection{Kato's Iwasawa main conjecture}

Given a finite abelian extension $K$ of $\Q$, we write $K_\infty \coloneqq \bigcup_{n \in \N} K_n$ for its cyclotomic $\Z_p$-extension. We let $\Lambda_K \coloneqq \Z_p \llbracket \gal{K_\infty}{\Q} \rrbracket$ denote the associated Iwasawa algebra, and $\cQ_K$ its total ring of fractions. \\
Define the representation $\cT \coloneqq (T_p E) \otimes_{\Z_p} \Lambda_K$ and note that $H^1 (\cO_{\Q, S(K)}, \cT)$ equals the Iwasawa cohomology group $H^1_\mathrm{Iw} (\cO_{K, S(K)}, T_p E) \coloneqq {\varprojlim}_n H^1 (\cO_{K_n, S(K)}, T_p E)$ (with the limit taken with respect to corestriction maps). The family 
\[ z_{K_\infty}^\mathrm{Kato} \coloneqq (z^\mathrm{Kato}_{K_n})_{n \in \N}\] 
therefore defines an element of $H^1_\mathrm{Iw} (\cO_{K, S(K)}, T_p E)$.\\
We write $\tau$ for the unique element of $G_\Q$ with the property that $\iota \circ \tau$ is complex conjugation, and $\tau_K$ for its projection to $\cG_K$. For $n \in \N$, we may then define a $\Z [1 / 2] [\cG_{K_n}]$-basis element 
\[
\gamma_{K_n} \coloneqq  \big( (1 + \tau_{K_n}) \otimes \gamma^+ + (1 - \tau_{K_n}) \otimes \gamma^-)/2 
\]
of the (free) module 
\[
H_1 ( (E \times \Spec K_n) (\C), \Z [1 / 2])^+
\cong \big( \Z [\cG_{K_n}] \otimes_\Z H_1 (E^\iota (\C), \Z [1 / 2]) \big)^+.
\]

For each odd prime $p$, we use the canonical comparison isomorphism 
\[ \Z_p \otimes_\Z H_1 ( (E \times \Spec K_n) (\C), \Z)^+ \cong Y_{K_n} (T_p E)\] 
to regard $\gamma_{K_n}$ as a basis of $Y_{K_n} (T_p E)$. By taking the limit over $n$, we thereby obtain a $\Lambda_K$-basis $\gamma_{K_\infty} = (\gamma_{K_n})_n$ of $Y_{\Pi_k^\infty} (\cT)$. By using this basis, the construction (\ref{definition Theta map}) gives a map
\[
\Theta_{K_\infty, S(K)} \coloneqq 
\Theta_{S(K), \gamma_{K_\infty}} \: \Det_{\Lambda_K} (C_{S(K)} (\cT)) \to 
\cQ_K \otimes_{\Lambda_K} H^1_\Iw (\cO_{K, S(K)}, T_p E). 
\]

Then, under mild technical hypotheses, the following result verifies one inclusion of the natural Iwasawa-theoretic version of Kato's Conjecture.

\begin{thm} \label{eimc result} If $p > 3$, then $z^\mathrm{Kato}_{K_\infty}\in\im (\Theta_{K_\infty, S(K)})$ if the following conditions  are satisfied: 

\begin{itemize}
\item[(i)] $\mathrm{SL}_2 (\Z_p)\subseteq \im(\rho_{E, p})$;
\item[(ii)]  $E$ has potentially good reduction at $p$, or  
    $K ( \sqrt{- 2 (A / B)})^\times$ contains no element of order $p$ for $A\in \Q^\times$ and $B \in \Q^\times$  such that $y^2 = x^3  + Ax + B$ is a Weierstra{\ss} equation for $E$.
\end{itemize}  
\end{thm}

\begin{rk} \label{big image hyp rk}
    The condition of Theorem \ref{eimc result}\,(i) is satisfied if $\rho_{E, p}$ is surjective, as proved by Serre \cite{Serre72} for all but finitely many $p$ if  $E$ does not have CM. In addition, writing $j_E$ for the $j$-invariant of $E$, Zywina \cite[Conj.~1.1]{Zywina} has conjectured $\rho_{E, p}$ is surjective for all $(p,j_E)$ outside 
\begin{align*}
 S_\mathrm{exc}  \coloneqq  (\{2, 3, 5, 11, 13 \} \times \Q) 
\cup  & \big \{ (17, -17^2 \cdot 101^3 / 2), (17, - 17 \cdot 373^3 / 2^{17}),\\
& \qquad  (37, - 7 \cdot 11^3), (37, - 7 \cdot 137^3 \cdot 2083^3) \big \}.
\end{align*}
The non-surjectivity of $\rho_{E, p}$ also implies the following further restrictions on $p$.
\begin{itemize}[label=$\circ$]
 \item If $E$ is semi-stable, then $p < 11$ by Mazur \cite[Th.\@ 4]{Mazur78}.
 \item For general $E$, one has $p \le \max \{ 37, N_E \}$ by \cite[Th.\@ 1.10 and Prop.\@ 1.8]{Zywina}. Further, if $\ell_1^{e_1} \cdot \ldots \cdot \ell_s^{e_s}$ is the factorisation of the denominator of $j_E$ into distinct prime numbers with $e_i > 0$ and $p \not \in S_\mathrm{exc}$,
  then each $\ell_i$ is congruent to $\pm 1$ (mod $\, p$) and each $e_i$ is divisible by $p$ (cf.\@ \cite[Th.\@ 1.10 and Th.\@ 1.5]{Zywina}).
\end{itemize}
\end{rk}

\begin{rk} Regarding the condition of Theorem \ref{eimc result}\,(ii), we note 
that a Weierstra{\ss} equation for $E$ of the stated form exists if $j_E \not \in \{ 0, 1728 \}$, and that the associated field $\Q ( \sqrt{- 2 ( A / B)})$ then depends only on $E$ (cf.\@ \cite[Ch.~V, Lem.~5.2\,(a)]{SilvermanII}).
Moreover, if $E / K$ has split-multiplicative reduction at every place in $\Pi^p_K$, then all such places split completely in $K(\sqrt{- 2(A/B)})$ and so one has $K(\sqrt{- 2(A/B)})^\times[p] = (0)$ if $K^\times [p] = (0)$.  
\end{rk}

\smallskip

Theorem \ref{eimc result} will be proved in \S\,\ref{proof eimc section}. First, however, we shall record a consequence towards Kato's Conjecture itself.
For this, for every integer $r$ we define an idempotent 
\[
e_{r, K} \coloneqq {\sum}_\chi e_\chi\in \Q [\cG_K],
\]
where in the sum $\chi$ runs over all characters in $\widehat{\cG_K}$ with $\ord_{s = 1} L (E, \chi, s) \leq r$ (and $e_\chi$ is the associated primitive orthogonal idempotent of $\Q^\mathrm{c}[\cG_K]$). The conjecture $\mathrm{TNC}(M_{E/K}, e_{r, K}\Z_p [\cG_K])$ can then naturally be interpreted as the `analytic-rank-at-most-$r$ component' of the conjecture $\mathrm{TNC}(M_{E/K},\Z_p [\cG_K])$. 
We note, in particular, that if $L ( E / K, 1) \neq 0$, then $e_{0, K} = 1$ and so $\mathrm{TNC}(M_{E/K}, e_{0, K}\Z_p [\cG_K])$ coincides with $\mathrm{TNC}(M_{E/K},\Z_p [\cG_K])$. \\
To give an explicit interpretation of these conjectures in terms of Kato's Euler system, we will use the
 map
\[
\Theta_{K, S(K)} \coloneqq \Theta_{K, S(K), \gamma_K} \: \Det_{\Z_p [\cG_K]} (C_{S(K)} (T_{K / \Q})) \to H^1 (\cO_{K, S(K)}, V_p E) 
\]
defined in (\ref{definition of finite level theta map}) for the induced representation $T_{K / \Q} \coloneqq T_p E \otimes_{\Z_p} \Z_p [\cG_K]$. 
We then also recall that the following equivalences are  established in \cite[\S\,6.2]{bss2}: 
\begin{itemize}[label=$\circ$]
    \item $\mathrm{TNC}(M_{E/K}, e_{0, K}\Z_p [\cG_K])$ is valid if and only if $e_{0, K}\Z_p [G] \cdot z^\mathrm{Kato}_{K} = e_{0, K}\cdot\im (\Theta_{K, S (K)})$;
 \item assuming the validity of a natural generalisation of Perrin-Riou's conjecture concerning the logarithm of $z_K^\mathrm{Kato}$, $\mathrm{TNC}(M_{E/K}, e_{1, K}\Z_p [\cG_K])$ is valid if and only if $\ZZ_p[G]\cdot z^\mathrm{Kato}_K = \im(\Theta_{K, S(K)})$.
 \end{itemize}
In particular, since the next result verifies one inclusion in these explicit conjectural equalities, it provides concrete evidence in support of $\mathrm{TNC} (M_{E/K}, \Z_p [\cG_K])$.\\
In the sequel we shall, for brevity, refer to the Birch--Swinnerton-Dyer Conjecture as the `BSD Conjecture'. 

\begin{cor} \label{etnc result 2}
    If $p$  satisfies the conditions of Theorem~\ref{eimc result}, then there is an inclusion  
\[ \Z_p [\cG_K]\cdot z^\mathrm{Kato}_{K} \subseteq \im(\Theta_{K, S(K)}).\]
Furthermore, this inclusion is an equality if the $p$-part of the BSD Conjecture for $E$ is valid over every intermediate field $F$ of $K/\QQ$ for which $[F : \Q]$ is prime-to-$p$. 
\end{cor}

\begin{proof}
At the outset we note that $K_n$ contains a $p$-th root of unity if and only if $K$ does. Given this, the displayed inclusion follows directly from Theorem \ref{eimc result} and the fact that $\Theta_{K_\infty, S(K), \Gamma_{\!\!K}}$ and $z^\mathrm{Kato}_{K_\infty}$ are the limits (over $n$) of $\Theta_{K_n, S(K), \gamma_{K_n}}$ and $z^\mathrm{Kato}_{K_n}$, respectively. \\ 
    To prove the second assertion, we recall that
 $e_{0, K_n} \coloneqq {\sum}_\chi e_\chi$,
where the sum runs over all characters $\chi \: \cG_n \to \C^\times$ with $L (E, \chi, 1) \neq 0$.
    Then, via the argument of \cite[Prop.\@ 3.6]{scarcity} (with the assumed validity of the BSD Conjecture in the stated cases playing the role of the analytic class number formula in loc.\@ cit.), Nakayama's Lemma can be used to deduce an equality $e_{0, K_n} \Z_p [\cG_n]\cdot z^\mathrm{Kato}_{K_n} = e_{0, K_n} \im (\Theta_{K_n, S(K), \gamma_{K_n}})$.\\
    We now set $\cG_n \coloneqq \cG_{K_n}$ and assume that $a_0 \coloneqq \Theta_{K, S, \gamma_K}(\alpha_0)$ does not belong to $\Z_p [G] \cdot z^\mathrm{Kato}_K$. Then, by lifting $\alpha_0$ to an element $\alpha = (\alpha_n)_{n \in \N}$ of $\Det_{\Lambda_K} (C^\bullet_{S(K)} (\cT))$, we may regard $a_0$ as the bottom class of the family $a \coloneqq (a_n)_{n \in \N} = ( \Theta_{K_n, S(K), \gamma_{K_n}} (\alpha_n))_{n \in \N}$. Moreover, the discussion above shows that, for each $n \in \N$, the idempotent $e_{0, K_n}$ annihilates the image $[a_n]$ of $a_n$ in the quotient module
    \[ Z_n \coloneqq ( \im (\Theta_{K_n, S(K), \gamma_{K_n}})) / ( \Z_p [\cG_n] \cdot z^\mathrm{Kato}_{K_n}).\] 
    Since $a$ is by definition an element of $\varprojlim_{n \in \N} \im (\Theta_{K_n, S(K), \gamma_{K_n}})$, it follows that $ ( [a_n])_{n \in \N}$ is an element of $\varprojlim_{n \in \N} Z_n$.
    Now, by Rohrlich \cite[Th.\@ 1]{Rohrlich88}, the value $L(E, \chi, 1)$ can vanish for only finitely many characters $\chi$ that are unramified outside $S(K)$. 
    In particular, there exists a subfield $F$ of $K_\infty$ that has finite degree over $\Q$ and is such that every character $\chi \: \gal{K_\infty}{\Q} \to \C^\times$ with $e_\chi e_{0,K_n} = 0$ factors through $\cG_F$. For every element $\sigma$ of $\gal{K_\infty}{F}$, one therefore has 
    \[
    (\sigma  - 1) \cdot [a_n] = (\sigma - 1) (1 -  e_{0, K_n}) \cdot [a_n] =  0 \cdot [a_n],
    \]
    and so $[a_n]$ is fixed by $\sigma$. It follows that $( [a_n])_{n \in \N}$ belongs to $( {\varprojlim}_{n \in \N} Z_n)^{\gal{K_\infty}{F}}$ and so in order to deduce the triviality of $[a_0]$ it is enough for us to prove that the limit 
    \[ {\varprojlim}_{n \in \N} Z_n = \im (\Theta_{K_\infty, S(K)}) / (\Lambda_K\cdot z^\mathrm{Kato}_{K_\infty})\] 
    has no non-trivial $\gal{K_\infty}{F}$-invariant elements. In addition, since $\gal{K_\infty}{F}$ is an open subgroup of $\gal{K_\infty}{\Q}$, this will follow (see, for example, \cite[Prop.\@ 5.3.19\,(i)]{NSW}) if the $\Lambda_K$-modules $\im (\Theta_{K_\infty, S(K)})$ and $\Lambda_K\cdot z^\mathrm{Kato}_{K_\infty}$ are both free. To justify this, we note that $\Lambda_K\cdot z^\mathrm{Kato}_{K_n}$ and $\im (\Theta_{K_n, S(K), \gamma_{K_n}})$ are cyclic $\Lambda_K$-modules and claim that their respective annihilators are contained in $\Z_p [\cG_n] [e_{0, K_n}]$. Indeed, in the first case the latter assertion follows directly from Kato's explicit reciprocity law in Theorem \ref{kato euler system}\,(iii) and in the second case from the definition of $\Theta_{K_n, S(K), \gamma_{K_n}}$ and the fact that $e_{K_n} \cdot e_{0, K_n} = e_{0, K_n}$ (cf.\@ \cite[Lem.\@ 6.1\,(iii)]{bss2}). It therefore suffices for us to prove that the limit ${\varprojlim}_{n \in \N} \Z_p [\cG_n] [e_{0, K_n}]$ vanishes. Now, just as above, \cite[Th.~1]{Rohrlich88} implies that each element of ${\varprojlim}_{n \in \N} \Z_p [\cG_n] [e_{0, K_n}]$ is fixed by every element of the open subgroup $\gal{K_\infty}{F}$ of $\cG_{K_\infty}$. Hence, since ${\varprojlim}_{n \in \N} \Z_p [\cG_n] [e_{0, K_n}]$ is a submodule of $\Lambda_K$, it must vanish, as required to complete the proof.
\end{proof}

\begin{rk}\label{curve evidence} 
    Due to important work of many authors, there is by now an impressive range of results regarding the validity of the BSD Conjecture. We restrict ourselves here to recalling the following recent result of Burungale--Castella--Skinner \cite[Cor.\@ 1.3.1]{BurungaleCastellaSkinner24}: the $p$-part of the BSD Conjecture for $E / \Q$ is valid if $p > 3$ is a prime number at which $E$ has good ordinary reduction (so $p \nmid a_p$) for which $\mathrm{SL}_2 (\Z_p)\subseteq \im(\rho_{E, p})$ and, in addition, one has $\ord_{s = 1} L (E, s) \leq 1$. 
\end{rk}

Finally, we note that Corollary \ref{etnc result 2} also has the following concrete consequence concerning the BSD Conjecture in analytic rank zero.

\begin{cor}\label{bsd evidence}
    If $p$ satisfies the conditions of Theorem~\ref{eimc result}, then one divisibility in the `$p$-part' of the BSD Conjecture for $E / K$ is valid. That is, the group $\sha_{E / K} [p^\infty]$ is finite and  
    \[
    \ord_p \Big ( \frac{L ( E / K, 1)}{|d_K|^{-1 / 2} \cdot \Omega_K} \Big) \geq \ord_p \big (  |\sha_{E / K} [p^\infty] | \cdot \mathrm{Tam}_{E / K} \big).
    \]
    Here $d_K$ is the discriminant of $K$, $\Omega_K$ is the product $(\Omega^+)^{r_1 + r_2} (\Omega^-)^{r_2}$ with $r_1$ and $r_2$ the number of real and complex places of $K$, and 
    \[ \mathrm{Tam}_{E / K} \coloneqq \prod_{v \in \Pi_K\setminus \Pi_K^\infty} ( | \omega/\omega_v |_v \cdot (E (K_v) : E_0 (K_v)))\] 
    is the product of Tamagawa numbers of $E$ over $K$ (with $\omega_v$ a N\'eron differential at $v$). Furthermore, the above inequality is an equality if the $p$-part of the BSD Conjecture for $E$ is valid over every intermediate field $F$ of $K/\QQ$ for which $[F : \Q]$ is prime-to-$p$.
\end{cor}

\begin{proof} The strategy that we use in this argument is well-known and can be summarised as follows: The non-vanishing of $L ( E / K, 1)$ implies  $e_{0, K} = 1$ and so Corollary \ref{etnc result 2} verifies one inclusion of  $\TNC(M_{E/K}, \Z_p [G])$. By Remark \ref{forgetful etnc remark}, one also knows that $\TNC(M_{E/K}, \Z_p [G])$ implies $\TNC(M_{E/K}, \Z_p)$, where in the latter case $M_{E/K}$ is regarded as defined over $K$ and with coefficients $\QQ$. Finally, one uses the fact that $\TNC(M_{E/K}, \Z_p)$ is equivalent to the $p$-part of the BSD Conjecture (as shown in each of \@ \cite{kings-bsd, venjakob, BurungaleFlach}). 

In more detail, Corollary \ref{etnc result 2} implies that there is a unique element $a$ of $\Det_{\Z_p [\cG_K]} ( C_{S(K)} (T_{K / \Q}))$ with $\Theta_{K, S(K)} (a) = z_K^\mathrm{Kato}$ and, moreover, that $a$ is a $\Z_p [\cG_K]$-basis of $\Det_{\Z_p [\cG_K]} ( C_{S(K)} (T_{K / \Q}))$ if the BSD Conjecture for $E$ holds for all subfields $F$ of $K$ with $p \nmid [F : \Q]$. In addition, the  assumption $L ( E / K, 1) \neq 0$ also implies that $H^2 (\cO_{K, S(K)}, V_p E)$ vanishes and hence there exists  a composite isomorphism of $\Q_p [\cG_K]$-modules
\begin{cdiagram}[row sep=tiny]
   j_p \: \Q_p \otimes_{\Z_p} \Det_{\Z_p [\cG_K]} ( C_{S(K)} (T_{K / \Q})) \arrow{r}[below]{\simeq}[above]{\Theta_{K, S(K)}}
   & H^1 (\cO_{K, S(K)}, V_p E) \arrow{r}[below]{\simeq} & \textstyle \bigoplus_{v \in \Pi_K^p} H^1 (K_v, V_p E) \\
   \phantom{\Det_{\Z_p [G]} ( C^\bullet_{K, S(K)}} \arrow{r}[below]{\simeq}[above]{( \exp^\ast_{K_v})_{v}}
   & \Q_p \otimes_\Q H^0 (E, \Omega^1_{E / K}) \arrow{r}{\omega_E \mapsto 1} & \Q_p \otimes_\Q K.
\end{cdiagram}%
From Kato's explicit reciprocity law in Theorem \ref{kato euler system}\,(iii) it then follows that we have
\begin{align*}
j_p (a) = {\sum}_{\chi \in \widehat{G}} \frac{L_{S (K)} (E, \chi^{-1}, 1)}{\Omega^{\epsilon (\chi)}} e_\chi. 
\end{align*}
Now, if one regards $T_{K / \Q}$ as a representation with coefficients $\Z_p$ rather than $\Z_p[\cG_K]$, then the same construction as above also gives a canonical isomorphism of $\Q_p$-vector spaces
\[
\widetilde j_p \: \Q_p \otimes_{\Z_p} \Det_{\Z_p} ( C_{S(K)} (T_{K / \Q})) \stackrel{\simeq}{\longrightarrow} \Q_p.
\]
In addition, there exists a canonical 'forgetful functor' homomorphism of $\Z_p$-modules 
\[\kappa: \Det_{\Z_p [\cG_K]} ( C_{S(K)} (T_{K / \Q})) \to \Det_{\Z_p} ( C_{S(K)} (T_{K / \Q}))\] 
(cf.\@ \cite[Lem.~3.7~(b)]{scarcity}), and the general result of \cite[Ch.~III, \S~9.6, Prop.~3]{bourbaki} (and \cite[Lem.\@ 3.7\,(b)]{scarcity})
implies that
\[
\widetilde j_p (\kappa(a)) = \NN_{(\Q_p \otimes_\Q K) / \Q_p} ( j_p (a)) = \prod_{\chi \in \widehat{\cG_K}} \frac{ L_{S (K)} (E, \chi^{-1}, 1)}{\Omega^{\epsilon (\chi)}} =  \frac{L_{S(K)} ( E / K, 1)}{\Omega_K}. 
\]
The claimed result  then follows upon combining this last equality with the fact that 
\[
\widetilde j_p ( \Det_{\Z_p} ( C_{S(K)} (T_{K / \Q})) = |d_K|^{-1 / 2} \cdot 
|\sha [p^\infty] | \cdot \mathrm{Tam}_{E / K} \cdot \big({\prod}_{v \in S(K)\setminus \Pi_k^\infty} \Eul_v (1) \big)
\cdot \Z_p,
\]
as shown in \cite[Prop.\@ 2.1]{BurungaleFlach}.
\end{proof}

\subsection{The proof of Theorem \ref{eimc result}}\label{proof eimc section} We begin by establishing a useful technical result. 

\begin{lem} \label{error terms Iwasawa theory elliptic curves}
The following claims are valid.
\begin{romanliste}
    \item The $\ZZ_p$-module $\bigoplus_{v \in \Pi_{K_\infty}^p} (E (K_{\infty, v})_\tor \otimes_\Z \Z_p)^\vee$ is finitely generated. If $E$ has potentially good reduction at $p$, then it is moreover finite. 
    \item If $\bigoplus_{v \in \Pi_{K_\infty}^p} (E (K_{\infty, v})_\tor \otimes_\Z \Z_p)^\vee$ is infinite, then
 each summand $(E (K_{\infty, v})_\tor \otimes_\Z \Z_p)^\vee$ has $\Z_p$-rank one and the induced character
 \[
 \psi \: G_{\Q_p} \to \mathrm{Aut} ( (E (K_{\infty, v})_\tor \otimes_\Z \Z_p)^\vee_\mathrm{tf}) \cong \Z_p^\times 
 \]
 is equal to $\omega \cdot \chi_\mathrm{cyc}^{-1}$ for a character $\omega \: G_{\Q_p} \to \Z_p^\times$ that factors through $\gal{\Q_p ( \sqrt{\gamma})}{\Q_p}$ with $\gamma \in \Q_p^\times$ specified as follows: 
 if $E$ is defined by a Weierstra{\ss} equation $y^2 = x^3 + Ax + B$ with $A, B \in \Q\setminus \{0\}$, then $\gamma \coloneqq - 2 (A/B)$. In particular, the character $\omega$ is unramified if $E$ has multiplicative reduction at $p$.
\end{romanliste}
\end{lem}

\begin{proof}
    To justify the first assertion of (i), we note the obvious injective homomorphism 
    \[ E (K_{\infty, v})_\tor \otimes_\Z \Z_p \hookrightarrow E (\Q_p^\mathrm{c})_\tor \otimes_\Z \Z_p \cong (T_p E)^\vee (1)\] 
induces a surjective map
    \begin{equation} \label{finitely generated}
    \bigoplus_{v \in \Pi_{K_\infty}^p} T_p E ( - 1) \twoheadrightarrow \bigoplus_{v \in \Pi_{K_\infty}^p} (E (K_{\infty, v})_\tor \otimes_\Z \Z_p)^\vee .
    \end{equation}
    This shows that the right hand module in (\ref{finitely generated}) is a finitely generated $\Z_p$-module (because $p$ is finitely decomposed in $K_\infty$). The second assertion of (i) is then a classical result of Imai \cite{Imai} (cf.\@ also Remark \ref{error term at p remark}). 
    
    To prove (ii), we may assume that $E$ does not have potentially good reduction at $p$. In this case $E$ has split-multiplicative reduction over $\Q_p ( \sqrt{\gamma})$ (see \cite[Ch.~V, Th.~5.3]{SilvermanII}). Moreover,
     $\Q_p ( \sqrt{\gamma}) / \Q_p$ is an unramified extension if $E$ has multiplicative reduction at $p$ (cf.~\cite[Ch.~V, Ex.~5.11]{SilvermanII}). 
    Fix a $p$-adic place $v$ of $K_\infty$ and set $F_\infty \coloneqq \Q_p (\mu_p, \sqrt{\gamma}) K_{\infty, v}$, the (local) cyclotomic $\Z_p$-extension of the composite $F$ of $\Q_p (\mu_p, \sqrt{\gamma})$ with the completion $K_v$ of $K$ at the restriction of $v$ to $K$. Then $E (F_\infty)$ contains $E (K_{\infty, v})$ and so $(E (K_{\infty, v})_\tor \otimes_\Z \Z_p)^\vee$ is a quotient of $(E (F_{\infty})_\tor \otimes_\Z \Z_p)^\vee$.\\
    For each intermediate field $M$ of $\QQ_p^c/\QQ_p$, we set $G_M \coloneqq \gal{\Q_p^\mathrm{c}}{M}$. We then recall that Tate's uniformisation theorem \cite[Ch.\@ V, Cor.\@ 5.4]{SilvermanII} gives a $G_L$-equivariant isomorphism
    $ E ( \Q_p^\mathrm{c}) \cong \Q_p^{\mathrm{c},\times} / q_E^\Z$ with $q_E \in \cO_L \setminus \cO_L^\times$ denoting the Tate period of $E$. Hence, upon taking $G_{F_\infty}$-invariants, we obtain an isomorphism
    \[
    E ( F_\infty)_\tor \otimes_\Z \Z_p \cong \{ \zeta u  + q_E^\Z \mid \zeta \in \mu_{p^\infty}, u^{p^s} = q_E \text{ for some } s \in \N \}^{G_{F_\infty}}.
    \]
    Suppose $\zeta u$ represents a $G_{F_\infty}$-invariant class in the right hand set. Since $F_\infty$ contains $\mu_{p^\infty}$, it then follows that $(\sigma - 1) u = (\sigma - 1) (\zeta u)$ belongs to $q_E^\Z$. On the other hand, we know that $(\sigma  - 1) u \in \mu_{p^\infty}$ because $u$ is a root of $X^{p^s} - q_E$ for some $s \in \N$, and so $(\sigma - 1) u = 1$ as $q_E^\Z \cap \mu_{p^\infty} = \{ 1 \}$. This shows that $u$ belongs to $F_\infty$.\\
    We next prove that there is a natural number $a$ such that $q_E$ is not a $p^s$-th power in $F_\infty^\times$ for any $s > a$. In combination with the discussion above, this then shows that the cokernel of the map $(\Q_p / \Z_p) (1) \hookrightarrow E ( F_\infty)_\tor \otimes_\Z \Z_p$ is finite, from which it follows by taking duals that the torsion-free quotient of
    $(E ( F_\infty)_\tor \otimes_\Z \Z_p)^\vee$ is isomorphic to $\Z_p (-1)$. This isomorphism is $G_L$-equivariant and so $\psi\cdot \chi_\mathrm{cyc}$ must factor through the group $\gal{\Q_p ( \sqrt{\gamma})}{\Q_p}$. From this we deduce that $\psi$ is of the claimed form $\omega\cdot \chi_\mathrm{cyc}^{-1}$. 
    \\ 
    Finally, to prove the existence of a suitable natural number $a$, we suppose that $q_E = u^{p^s}$ for some $s \in \N$ and $u \in F_\infty$. Then $u$ is a root of $X^{p^s} - q_E$ and so $[F (u) : F] \leq p^s$. We may therefore assume that $u$ belongs to $F (\mu_{p^s})$. In particular, $q_E$ belongs to the kernel of the restriction map 
    \[
    F^\times / (F^\times)^{p^s} = H^1 ( G_{F}, \mu_{p^s}) \to H^1 ( G_{F (\mu_{p^s})}, \mu_{p^s}) =  F (\mu_{p^s})^\times / (  F (\mu_{p^s})^\times)^{p^s}.
    \]
    By the inflation-restriction sequence, the kernel of this map identifies with the cohomology group $H^1 ( \gal{F (\mu_{p^s})}{F}, \mu_{p^s})$ and hence vanishes as $p$ is odd (see, for example, \cite[Prop.~9.1.4]{NSW}). This shows that $q_E$ is a $p^s$-th power in $F$, and therefore that $s$ is bounded, as required to complete the proof of (ii).
\end{proof}

Turning now to the proof of Theorem \ref{eimc result}, we first verify that Hypothesis \ref{new strategy iwasawa hyps} is satisfied in the case that $k = \QQ$, $T = T_p(E)$, $k_\infty$ is the cyclotomic $\ZZ_p$-extension of $\QQ$, $F$ is the maximal $p$-extension of $\Q$ inside $K$, $\chi$ is any homomorphism $G_\Q \to \C^\times$ of order prime to $p$ and $r=1$. \\
At the outset, we note that parts (ii${}^\ast$) and (iv) of Hypotheses \ref{new strategy iwasawa hyps} are vacuously satisfied in this case since we are assuming that $p > 3$. To proceed, we note that the existence of the Weil pairing implies that the full preimage of $\mathrm{SL}_2 (\Z_p)$ under $\rho_{E, p}$ is equal to $G_{\Q (\mu_{p_\infty})}$.
Since we are assuming that $\mathrm{SL}_2 (\Z_p) \subseteq \mathrm{im}(\rho_{E, p})$, it follows that $\rho_{E, p} (G_{\Q (\mu_{p_\infty})}) = \mathrm{SL}_2 (\Z_p)$. Now, $\mathrm{SL}_2 (\Z_p)$ is a perfect group (because $p > 3$) and so  also $\rho_{E, p} (G_{K (\mu_{p_\infty})}) = \mathrm{SL}_2 (\Z_p)$. 
Since the natural action of $\mathrm{SL}_2 (\mathbb{F}_p)$ on $\mathbb{F}_p^{\oplus 2}$ is irreducible, this implies that $\Tbar$ is an irreducible $\kappa [G_{K (\mu_{p^\infty})}]$-representation, 
and hence also that $\Tbar (\chi)$ is an irreducible $\kappa [G_\Q]$-representation.
This verifies Hypothesis \ref{new strategy iwasawa hyps}\,(i).
In addition, as $\mathfrak{E}$ is a subfield of $K (\mu_{p^\infty})$, Hypothesis \ref{new strategy iwasawa hyps}\,(ii) is satisfied with the element $\tau$ of $G_{K (\mu_{p_\infty})}$ taken to be any preimage under $\rho_{E,p}$ of $\begin{psmallmatrix}
    1 & 1 \\ 0 & 1
\end{psmallmatrix} \in \mathrm{SL}_2 (\Z_p)$.
To verify Hypothesis \ref{new strategy iwasawa hyps}\,(iii), we use the inflation-restriction sequence
\[
H^1 ( K (\mu_{p^\infty}) / \Q, H^0 ( K (\mu_{p^\infty}), \Tbar (\chi^{-1}))) \to 
H^1 (\mathfrak{E}^\chi_T / \Q, \Tbar (\chi^{-1}))
\to H^1 ( \mathfrak{E}^\chi_T / K (\mu_{p^\infty}), \Tbar (\chi^{-1})).
\]
In particular, since the group $H^0 ( K (\mu_{p^\infty}), \Tbar (\chi^{-1})) = H^0 ( \mathrm{SL}_2 (\mathbb{F}_p), \mathbb{F}_p^{\oplus 2})$ vanishes, the sequence implies  $H^1 (\mathfrak{E}^\chi_T / \Q, \Tbar (\chi^{-1}))$ vanishes if the group  
$H^1 ( \mathfrak{E}^\chi_T / K (\mu_{p^\infty}), \Tbar (\chi^{-1})) = H^1 ( \mathrm{SL}_2 (\mathbb{F}_p), \mathbb{F}_p^{\oplus 2})$ vanishes. Since $\mathrm{SL}_2 (\mathbb{F}_p)$ is
a normal subgroup of $\mathrm{GL}_2 (\mathbb{F}_p)$ of prime-to-$p$ index, it is therefore enough for us to prove that $H^1 (\mathrm{GL}_2 (\mathbb{F}_p), \mathbb{F}_p^{\oplus 2})$ vanishes. To do this, we consider the central subgroup $U \coloneqq \{ \begin{psmallmatrix}
    a & 0 \\ 0 & a 
\end{psmallmatrix}
\mid a \in \mathbb{F}_p^\times \} 
$ of $\mathrm{GL}_2 (\mathbb{F}_p)$. We then note that $H^0 (U, \mathbb{F}_p^{\oplus 2})$ vanishes since $p > 2$ and that $H^1 (U, \mathbb{F}_p^{\oplus 2})$ vanishes since the order of $U$ is prime-to-$p$. From the inflation-restriction sequence associated to the data $(U \subset \mathrm{GL}_2 (\mathbb{F}_p), \mathbb{F}_p^{\oplus 2})$, we can therefore deduce that $H^1 (\mathrm{GL}_2 (\mathbb{F}_p), \mathbb{F}_p^{\oplus 2})$ vanishes, as required to conclude the verification 
of Hypothesis~\ref{new strategy iwasawa hyps}~(iii). \\
Finally we note that, since $H^0 (\Q, \Tbar (\chi))$ vanishes and we are taking $\Sigma$ to be the empty set, Hypothesis~\ref{new strategy iwasawa hyps}~(v) is satisfied because $r = 1$ and, after replacing $\Tbar (\chi^{-1})$ by $\Tbar (\chi)$, the same argument as above shows that $H^1 (\mathfrak{E}_T^{\chi}/ \Q, \Tbar (\chi))$ vanishes. \\
Having verified all of the necessary hypotheses, we may therefore apply Theorem \ref{new iwasawa theory euler systems main result}\,(i) to the Euler system $y^\mathrm{Kato}$ from Theorem \ref{kato euler system}. Since 
\[ z_{K_\infty}^\mathrm{Kato} = ( {\prod}_{\ell \in S(K) \setminus S_0 (K)} \Eul_\ell (\Frob_\ell^{-1})) y_{K_\infty}^\mathrm{Kato},\]
we thereby deduce that
\begin{align*}
& \phantom{=\,} \Fitt^0_{\Lambda_K} ({\bigoplus}_{v \in \Pi^p_{K_\infty}} (E (K_{\infty, v})_\tor \otimes_\Z \Z_p )^\vee))^{\ast \ast} \cdot z_{K_\infty}^\mathrm{Kato} \\
& =
 \Fitt^0_{\Lambda_K} ({\bigoplus}_{v \in \Pi^p_{K_\infty}} (E (K_{\infty, v})_\tor \otimes_\Z \Z_p )^\vee))^{\ast \ast} \cdot \big( {\prod}_{\ell \in S(K) \setminus S_0 (K)} \Eul_\ell (\Frob_\ell^{-1})) \big) \cdot y_{K_\infty} \\
 & \in \Theta_{K_\infty, S(K)} ( C_{S(K)} (\cT)). 
\end{align*}
To analyse the Fitting ideal that occurs here we recall from Lemma \ref{error terms Iwasawa theory elliptic curves} that $(E (K_{\infty,v})_\tor \otimes_\Z \Z_p)^\vee $ is finite if $E$ has potentially good reduction at $p$, and we now further claim that this group is finite if $\mu_K [p] = (0)$. To show this, we assume that $(E (K_{\infty,v})_\tor \otimes_\Z \Z_p)^\vee$, and hence also $(E (K_{v} (\mu_{p^\infty}))_\tor \otimes_\Z \Z_p)^\vee$, is infinite. From Lemma \ref{error terms Iwasawa theory elliptic curves} we then know that as a $G_{\Q_p}$-module the divisible subgroup of $E (K_{v} (\mu_{p^\infty}))_\tor \otimes_\Z \Z_p$ is isomorphic to  $(\Q_p / \Z_p) (1) (\omega^{-1})$ 
 with a character $\omega \: G_{\Q_p} \to \Z_p^\times$ that factors through $\gal{\Q_p ( \sqrt{\gamma})}{\Q_p}$. 
By assumption, $K ( \sqrt{\gamma})$ does not contain a primitive $p$-th root of unity in this case, and so the same is true for $K_{n, v} ( \sqrt{\gamma})$ for every $n$. It follows that $\gal{K_v (\mu_{p^\infty}, \sqrt{\gamma})}{K_{v, \infty} (\sqrt{\gamma})}$ is a non-trivial cyclic group with generator $\sigma$. In particular, we have
$(\omega \cdot\chi_\mathrm{cyc}^{-1}) (\sigma) \neq 1$. From this we deduce that the maximal subgroup of $(\Q_p / \Z_p) (1) (\omega^{-1})$ fixed by $\sigma$ is finite, and hence that
also $E (K_{\infty,v})_\tor \otimes_\Z \Z_p \subseteq ( E (K_{v} (\mu_{p^\infty}, \sqrt{\gamma}))_\tor \otimes_\Z \Z_p)^{\sigma = 1}$ must be finite, as claimed. \\
In this case, therefore, the ideal 
$\Fitt^0_{\Lambda_K} ({\bigoplus}_{v \in \Pi^p_{K_\infty}} (E (K_{\infty, v})_\tor \otimes_\Z \Z_p )^\vee))^{\ast \ast}$, which is uniquely determined by its localisations at height-one primes by Lemma \ref{ryotaro-useful-lem}\,(ii), is equal to $\Lambda_K$. Given this fact, the last displayed inclusion directly implies the containment of Theorem \ref{eimc result}. \qed 

\section{Tate motives}\label{Gm section}

For a number field $F$ and integer $a$ we now consider the motive 
\[ \QQ_F(a)\coloneqq h^0 (\Spec F)(a).\]
In particular, for a finite abelian extension of number fields $K/k$, the primary aim of this section will be to  derive consequences of Theorem \ref{new iwasawa theory euler systems main result} that relate to   $\QQ_K(a)$, regarded  as defined over $k$ and with coefficients $\QQ[\cG_K]$. 

\subsection{Main conjectures of higher-rank Iwasawa theory}

With $K / k$ as above, we write $V (K) \subseteq \Pi_k^\infty$ for the subset of $\Pi_K^\infty$ comprising places that split in $K$. We then fix a subset  $V$ of $\Pi_k^\infty$, set
\[ r \coloneqq |V|\]
and write $\Omega^V$ for the collection of finite abelian extensions of $k$ for which $V (K) = V$.\\
We fix a labelling $\Pi_k^\infty \coloneqq \{ v_1, \dots, v_n\}$ and, for every $i \in \{1, \dots, n \}$, an extension $w_{k^\mathrm{c}, i}$ of $v_i$ to $k^\mathrm{c}$ and write $w_{K, i}$ for the restriction of $w_{k^\mathrm{c}, i}$ to $K$. Then $b_\bullet = b_{K, \bullet} \coloneqq ( w_{K, i} : v_i \in V(K))$ is an ordered basis of the free $\Z_p [\cG_K]$-module $Y_{V (K)} (\Z_p (1)_{K / k})$.\\
For any finite subset $S$ of $\Pi_k$ that contains $\Pi_k^\infty \cup \Pi_k^p$ and finite subset $\Sigma$ of $\Pi_k\setminus S (K)$, one can use the values at $0$ of the $r$-th derivatives of Dirichlet $L$-series over $k$ to define a `Rubin--Stark element' (depending on the basis $b_\bullet$)
\[
\varepsilon^V_{K / k, S(K), \Sigma} \in \C \otimes_\Z \exprod^{r}_{\Z [\cG_K]} \cO_{K, S(K)}^\times
\]
(for details see \cite[\S\,5.1]{BKS}). After fixing an isomorphism $\C \cong \C_p$, we can regard $\varepsilon^V_{K / k, S(K), \Sigma}$ as an element of $\C_p \otimes_{\Z_p} \exprod^{r}_{\Z_p [\cG_K]} H^1 (\cO_{K, S(K)}, \Z_p (1))$. Then the `$p$-component' of the Rubin--Stark Conjecture  predicts that if  $H^1_\Sigma (\cO_{K, S(K)}, \Z_p(1))$ is $\Z_p$-torsion free (or, as is equivalent by Lemma \ref{torsion lemma new}, the group $H^0_\Sigma (\cO_{K, S(K)}, (\Q_p/\Z_p) (1))$ vanishes), then one has 
\[
\varepsilon^V_{K / k, S(K), \Sigma} \in \bidual^{r}_{\Z_p [\cG_K]} H^1_\Sigma (\cO_{K, S(K)}, \Z_p (1)) 
\]
(cf.\@ also Remark \ref{deligne remark}). 
Assuming this conjecture to be valid, \cite[Prop.~6.1]{Rub96} then implies that one obtains a well-defined Euler system by setting 
\[
\varepsilon_k \coloneqq ( \varepsilon^V_{K / k, S(K), \Sigma})_{K \in \Omega^V} \in \ES_{\Sigma, S}^{r} (\Z_p (1)).
\]
We now fix a $\Z_p$-extension $k_\infty$ of  $k$ in which no finite place splits completely, and set $K_\infty \coloneqq K \cdot k_\infty$. Fix a splitting $\cG_{K_\infty} \cong \Delta_K \times \Gamma_{\!\!K}$ with 
$\Gamma_{\!\!K} \cong \Z_p^n$ for some $n > 0$ and $\Delta_K$ a finite abelian group,
and a direct product decomposition $\Delta_K = \nabla_{\!\!K} \times \square_K$ as in (\ref{sylow decomp}).  
Setting 
\[ L \coloneqq K_\infty^{\langle \Gamma_{\!\!K}, \square_K \rangle}\quad\text{and}\quad F \coloneqq K^{\langle \Gamma_{\!\!K}, \nabla_{\!\!K} \rangle},\] 
we then also have a decomposition $K_\infty^{\Gamma_{\!\!K}} = L \cdot F$.
Fix a character $\chi \: \nabla_{\!\!K} \to \overline{\Q_p}^\times$ and choose an unramified extension $\cO$ of $\Z_p$ that contains the values of $\chi$. We consider the representations
\[
T_\chi \coloneqq \cO (1) (\chi) 
\quad \text{ and } \quad
\cT_\chi \coloneqq \Lambda_F (1)(\chi).
\]
Writing $\Lambda_K \coloneqq \cO \llbracket \cG_{K_\infty} \rrbracket$, we then have the `projection map'
\begin{equation} \label{theta map Zp (1)}
\Theta_{S(K), b_\bullet} \: \Det_{\Lambda_K} ( C^\bullet_{k, \Sigma} ( \cT_\chi))
\to \bidual^r_{\Lambda_K} H^1_\Sigma ( \cO_{k, S(K)}, \cT_\chi)
\end{equation}
from (\ref{definition Theta map}).
Since $\varepsilon_k$ belongs to $\ES_{\Sigma, S}^r (\Z_p (1))$, we obtain an element
\begin{align*}
\varepsilon^\chi_{K_\infty, \Sigma} \coloneqq ( e_{\chi} \varepsilon^V_{E / k, S(K), \Sigma})_{E \subseteq K_\infty}
 \in&\,\,  {\varprojlim}_{E \subseteq K_\infty} \bidual^r_{\cO [\cG_E]} H^1_\Sigma (\cO_{K, S(K)}, T_\chi) \\
   \cong&\,\, \bidual^r_{\Lambda_{F}} H^1_\Sigma ( \cO_{K, S(K)}, \cT_\chi).
\end{align*}

We can now state the equivariant Iwasawa Main Conjecture for $T_\chi$.

\begin{conj} \label{eIMC G_m}
    There is a $\Lambda_F$-basis $\fz^\chi_{F_\infty, S(K), \Sigma}$ of $\Det_{ \Lambda_F} ( C_{S(K), \Sigma} ( \cT_\chi))$ such that
    \[
    \varepsilon_{K_\infty, \Sigma}^\chi = \Theta_{S(K), b_\bullet} ( \fz^\chi_{K_\infty, S(K), \Sigma}).
    \]
\end{conj}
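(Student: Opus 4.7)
The plan is to apply Theorem \ref{new iwasawa theory euler systems main result} to the Rubin--Stark Euler system $\varepsilon_k \in \ES^{|V|}_{\Sigma, S}(\Z_p(1))$, together with the twisting construction of Lemma \ref{twisting lemma}\,(ii) which transports $\varepsilon_k$ into an Euler system for the representation $T_\chi = \cO(1)(\chi)$. The pro-cyclotomic limit of this twisted system gives precisely the element $\varepsilon^\chi_{K_\infty, \Sigma}$ appearing in the conjecture, and the general result of Theorem \ref{new iwasawa theory euler systems main result}\,(i) is designed to locate such an element inside the image under $\Theta_{K_\infty/k, S(K), b_\bullet}$ of $\Det_{\Lambda_F}(C_{S(K),\Sigma}(\cT_\chi))$, up to a Fitting-ideal error.

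Before invoking the theorem, I would check its hypotheses for the data $(T_\chi, k_\infty, |V|)$. Since $\cO$ is unramified over $\Z_p$, the residual representation $\overline{T_\chi}$ is a one-dimensional $\kappa[G_k]$-module, so Hypotheses \ref{new strategy iwasawa hyps}\,(i) and (ii${}^\ast$) are automatic; (ii) is satisfied by any $\tau$ in $G_{l_\infty(T_\chi)}$, which acts trivially on $\overline{T_\chi}$; (iii) and (v) reduce, via Kummer theory, to standard facts about the $\chi$-isotypic component of the Galois group of the maximal pro-$p$ abelian extension of $l_\infty(T_\chi)$ unramified outside $p$; and (iv) is either vacuous (when $p > 3$) or can be checked directly from the shape of $\chi$. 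Conditions (a) and (b) of Theorem \ref{new iwasawa theory euler systems main result} follow from the standard properties of $\Z_p(1)$ and from Lemma \ref{torsion lemma new}.

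Applying Theorem \ref{new iwasawa theory euler systems main result}\,(i) then gives the containment
\[
\Fitt^{|V|}_{\Lambda_K}\bigl(X_{S_0}(\cT_\chi)\bigr) \cdot \Bigl(\prod_{v \in S(K)\setminus S_0(K)} \Eul_v(\Frob_v^{-1})\Bigr) \cdot \varepsilon^\chi_{K_\infty, \Sigma} \subseteq \Theta_{K_\infty/k, S(K), b_\bullet}\bigl(\Det_{\Lambda_F}(C_{S(K), \Sigma}(\cT_\chi))\bigr).
\]
A height-one prime analysis of $\Fitt^{|V|}_{\Lambda_K}(X_{S_0}(\cT_\chi))$ via Lemma \ref{error term iwasawa theory} shows that at each $\fp \in \mathrm{Spec}^1(\Lambda_F)$ this ideal is generated (modulo units) by precisely the tame Euler factors appearing on the left hand side, so after cancellation the containment descends to $\varepsilon^\chi_{K_\infty, \Sigma} \in \Theta_{K_\infty/k, S(K), b_\bullet}\bigl(\Det_{\Lambda_F}(C_{S(K), \Sigma}(\cT_\chi))\bigr)$. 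Since $\Det_{\Lambda_F}(C_{S(K), \Sigma}(\cT_\chi))$ is an invertible $\Lambda_F$-module, this produces an element $\fz^\chi_{K_\infty, S(K), \Sigma}$ with the desired image.

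The hard part will be promoting this to the statement that $\fz^\chi_{K_\infty, S(K), \Sigma}$ is in fact a $\Lambda_F$-\emph{basis} of $\Det_{\Lambda_F}(C_{S(K), \Sigma}(\cT_\chi))$, which is the content of the conjecture. The Euler-system machinery of Part I provides only one side of the Main Conjecture, and the generation statement requires an independent input: either the (now classical) Main Conjecture of Mazur--Wiles/Wiles for abelian extensions of $\Q$ or of a totally real field, or Rubin's Main Conjecture for elliptic units when $k$ is imaginary quadratic, or an explicit analytic class-number-type identity at finite level combined with a Nakayama descent in the style of Corollary \ref{etnc result 2}. In particular, when $k$ is imaginary quadratic the Rubin--Stark elements are (up to standard manipulations) elliptic units, the Rubin--Stark Conjecture is unconditionally known, and Rubin's results together with the above argument yield the proof of Conjecture \ref{eIMC G_m} promised in the introduction; for general abelian $k$, this second step is the principal obstruction.
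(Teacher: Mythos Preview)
The statement is a \emph{Conjecture}, and the paper does not prove it in general; it establishes partial results (Theorem \ref{G_m main result}) and verifies the conjecture only in special cases (imaginary quadratic $k$, via Theorem \ref{etnc imaginary quadratic fields}). Your final paragraph correctly identifies that the Euler-system machinery only gives one inclusion, so your proposal is not meant to be a complete proof and is in this respect aligned with the paper.

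There is, however, a genuine gap in your argument for the one-sided containment. You claim that a height-one analysis of $\Fitt^{|V|}_{\Lambda_K}(X_{S_0}(\cT_\chi))$ via Lemma \ref{error term iwasawa theory} shows this ideal is generated ``by precisely the tame Euler factors'', so that after cancellation one has $\varepsilon^\chi_{K_\infty,\Sigma} \in \im(\Theta_{K_\infty/k,S(K),b_\bullet})$ with no residual error. This is incorrect: in the $\Z_p(1)$ setting one takes $S_0 = \Pi_k^\infty \cup \Pi_k^p$ and $S_\ram(T) = \emptyset$, so $S(K) = S_0(K)$ and there are no tame Euler factors at all. The module $X_{S_0}(\cT_\chi)$ has $p$-adic local contributions which Lemma \ref{error term iwasawa theory} does not handle (that lemma explicitly excludes $v \in \Pi_k^p$ in part (i)). The paper's Theorem \ref{G_m main result} therefore retains the factor $\Fitt^0_{\Lambda_F}(Y_{F_\infty,\Pi_k^p}(\cT_\chi))^{\ast\ast}$ in the containment, and removing it requires a separate argument: in the imaginary quadratic case this is done by passing to the $\Z_p^2$-extension, where $Y_{K_\infty,\Pi_k^p}$ becomes pseudo-null.

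A second, smaller gap: your verification of Hypotheses \ref{new strategy iwasawa hyps}\,(iii) and (v) is too optimistic. These require $\chi \notin \{\bm{1}, \omega_p\}$ (as the paper notes, citing \cite[Lem.~5.4]{bss2}); the trivial and Teichm\"uller characters must be handled by other means. Finally, for upgrading the inclusion to the full basis statement the paper uses the analytic class number formula (via \cite[Prop.~6.4\,(b)\,(ii)]{scarcity}) rather than invoking known Main Conjectures directly, though your suggestion of using Rubin's Main Conjecture in the imaginary quadratic case is morally equivalent.
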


\begin{rk} It is easily seen that the  validity of Conjecture \ref{eIMC G_m} for every character $\chi$ in $\widehat{\Delta}$ is equivalent to the validity of the `rank-$r$ component' of the conjecture \cite[Conj.~3.1]{BKS2} of Kurihara et al.
\end{rk}

For every finite subextension $E$ of $K_\infty / k$, we write $A_{E, S(K), \Sigma} \coloneqq \Cl_{E, S(K), \Sigma} \otimes_\Z \Z_p$ for the `$p$-part' of the $S(K)_E$-ray class group modulo $\Sigma_E$ of $E$, and then set 
\[ A_{K_\infty, S(K), \Sigma} \coloneqq \varprojlim_E A_{K, S(K), \Sigma},\] 
where the limit is taken with respect to norm maps.\\
To state our main result concerning  Conjecture \ref{eIMC G_m}, we write $\omega_p \: G_k \to \mu_{p - 1} \subseteq \Z_p^\times$ for the $p$-adic Teichm\"uller character of $k$.
We remark that in certain situations one can even deduce the validity of the relevant case of Kato's Conjecture (Conjecture \ref{etnc statement}) from this result, and we will discuss two such examples in \S\,\ref{imaginary quadratic fields section} and \S\,\ref{more general cases section}.

\begin{thm} \label{G_m main result}
    Assume that $p > 3$, that $H^1_\Sigma (\cO_{K, S(K)}, \Z_p (1))$ is $\Z_p$-torsion free, that the $p$-component of the rank-$r$ Rubin--Stark Conjecture holds for all finite abelian extensions of $k$, and that $\chi \not \in \{ \bm{1}, \omega_p \}$.
Then one has 
\[ \Fitt^0_{\Lambda_F} \Big( \Ext^1_{\Lambda_F} \big( \big (\bidual^r_{\Lambda_{F}} H^1_\Sigma ( \cO_{K, S(K)}, \cT_\chi) \big) / (\Lambda_F \varepsilon_{K_\infty, \Sigma}^\chi), \Lambda_F\big) \Big) 
    \subseteq \Fitt^0_{\Lambda_F} \big( A_{K_\infty, S(K), \Sigma}^\chi \big)^{\ast \ast}
    \]
and
\[\Fitt^0_{\Lambda_F} ( Y_{\Pi^p_k} (\cT_\chi))^{\ast \ast}
    \cdot \varepsilon_{K_\infty, \Sigma}^\chi \subseteq \Theta_{K_\infty / k, S(K), b_\bullet} ( \Det_{ \Lambda_F} ( C_{S(K), \Sigma} ( \cT_\chi))).\]
\end{thm}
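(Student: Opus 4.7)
The approach is to apply Theorem \ref{new iwasawa theory euler systems main result} to the Rubin--Stark Euler system $\varepsilon_k \in \ES^r_{\Sigma,S}(\Z_p(1))$ (which exists as a full Euler system under the assumed $p$-part of Rubin--Stark) with $T = \Z_p(1)$ and the character $\chi \: \nabla_K \to \cO^\times$. The first step is to verify the three hypotheses (a), (b), (c) of that theorem. Condition (b) is the assumed $\Z_p$-torsion-freeness, and (a) is vacuous for $T = \Z_p(1)$ since $T^\vee(1) = \Q_p/\Z_p$ is unramified outside $p$ and every $p$-adic place of $k$ is finitely decomposed in the cyclotomic $\Z_p$-extension (which one can take to be contained in $k_\infty$). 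Condition (c) requires Hypotheses \ref{new strategy iwasawa hyps} for $(T(\chi), k_\infty, r)$: parts (i), (ii), (ii${}^\ast$), (iv) follow at once from $p > 3$ and the fact that $\overline{T(\chi)} = \kappa(\omega_p\chi^{-1}|_{\rm cycl})$ is one-dimensional, with $\tau \in G_{l_\infty}$ acting trivially; part (iii) requires $H^1(l_\infty(T(\chi))/k, \overline{T(\chi)}^\ast(1)) = 0$, which holds since $\overline{T(\chi)}^\ast(1) = \kappa(\omega_p\chi^{-1})$ is a non-trivial character (using $\chi \neq \omega_p$) and $l_\infty(T(\chi))/k$ is an abelian extension so $G_k$ acts on it through an abelian quotient; part (v) uses that $\overline{T(\chi)} = \kappa(\chi)$ is a non-trivial one-dimensional representation (using $\chi \neq \bm 1$), so that $H^1(l_\infty(T(\chi))/k, \overline{T(\chi)})$ vanishes by the same reasoning and $r = |V(K)| \geq \dim_\kappa H^0(k, \overline{T(\chi)})$ since the left-hand side is zero.

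Having verified the hypotheses, Theorem \ref{new iwasawa theory euler systems main result}\,(iii) applied to $c = \varepsilon_k$ and $\chi$ gives, for every $f \in H^1_\Sigma(\cO_{k,S(K)}, \cT_\chi)^\ast$, the containment
\[
\epsilon_K \cdot f(\varepsilon^\chi_{K_\infty,\Sigma}) \in \epsilon_K \Fitt^0_{\Lambda_F}\bigl(H^1_{\cF_{\mathrm{rel},\Sigma}(\cT_\chi)^\vee}(k, \cT_\chi^\vee(1))^\vee\bigr)^{\ast\ast}.
\]
Here the key identification, using the definition of the relaxed Mazur--Rubin structure and global Artin--Verdier duality for $\cT_\chi^\vee(1) = (\Lambda_F)^\vee(\chi^{-1})$, is
\[
H^1_{\cF_{\mathrm{rel},\Sigma}(\cT_\chi)^\vee}(k, \cT_\chi^\vee(1))^\vee \cong A_{K_\infty, S(K), \Sigma}^\chi,
\]
the $\chi$-component of the inverse limit of $(S(K),\Sigma)$-ray class groups tensored with $\Z_p$. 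This follows by passing to the limit in the standard exact sequence relating modified class groups and Galois cohomology of $\mu_{p^n}$. Since the module $\bigl(\bidual^r_{\Lambda_F} H^1_\Sigma(\cO_{k,S(K)}, \cT_\chi)\bigr) / (\Lambda_F \varepsilon^\chi_{K_\infty,\Sigma})$ is controlled exactly by which linear functionals $f$ land where $\varepsilon^\chi_{K_\infty,\Sigma}$ does modulo $\Lambda_F \varepsilon^\chi_{K_\infty,\Sigma}$, translating the containment above into a statement about $\Ext^1_{\Lambda_F}$ yields the first claimed inclusion; here one uses that $\epsilon_K$ acts as the identity on all relevant modules localised at the primes in the support of the Ext-module (because for a $\chi$ with $r > 0$, no non-archimedean place of $k$ splits completely in $K_\infty$, so the idempotent $\epsilon_K$ equals $1$ generically on the $\chi$-component).

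The second inclusion is a direct application of Theorem \ref{new iwasawa theory euler systems main result}\,(i), where it remains to identify the Fitting ideal $\Fitt^r_{\Lambda_K}(X_{S_0}(\cT_\chi))$. Since $S_0 = \Pi_k^\infty \cup \Pi_k^p$ and $V = V(K) \subseteq \Pi_k^\infty$ consists of the split archimedean places, the exact sequence $0 \to X_{S_0 \setminus V}(\cT_\chi) \to X_{S_0}(\cT_\chi) \to Y_V(\cT_\chi) \to 0$ (with $Y_V$ free of rank $r$ on the $\chi$-component) implies $\Fitt^r_{\Lambda_F}(X_{S_0}(\cT_\chi)) = \Fitt^0_{\Lambda_F}(X_{\Pi_k^p}(\cT_\chi)) = \Fitt^0_{\Lambda_F}(Y_{F_\infty, \Pi_k^p}(\cT_\chi))$, where the final equality uses that the $p$-adic places contribute via their full decomposition group (again invoking $\chi \neq \bm 1, \omega_p$ to rule out fixed parts corresponding to trivial twists). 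Combining this with Theorem \ref{new iwasawa theory euler systems main result}\,(i) (after absorbing the Euler factors at $S(K) \setminus S_0(K)$ into the Rubin--Stark distribution relations, which absorb them into $\varepsilon^\chi_{K_\infty,\Sigma}$) and passing from an ideal to its reflexive hull (legitimate since the image of $\Theta$ on a $\chi$-component is reflexive by Lemma \ref{ryotaro-useful-lem}\,(ii)), the second inclusion follows.

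The main obstacle in executing this plan is the careful bookkeeping in the identification of $H^1_{\cF^\vee_{\mathrm{rel},\Sigma}(\cT_\chi)}(k, \cT_\chi^\vee(1))^\vee$ with $A^\chi_{K_\infty, S(K),\Sigma}$ and the corresponding identification of $\Fitt^r_{\Lambda_F}(X_{S_0}(\cT_\chi))$ with $\Fitt^0_{\Lambda_F}(Y_{F_\infty,\Pi_k^p}(\cT_\chi))^{\ast\ast}$. Both identifications rely on the vanishing or control of the local cohomology contributions at $p$-adic places, and the condition $\chi \notin \{\bm 1, \omega_p\}$ is used precisely to ensure that the relevant local invariants at $p$ (and their Matlis duals) are well-behaved; without this assumption the decomposition of the cohomology involves additional ``trivial zero'' contributions that would weaken the conclusion. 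The reflexive closure $(-)^{\ast\ast}$ in the final statements is then exactly what is needed to absorb the non-reflexive part of the corresponding Fitting ideals, in line with the formulation of Theorem \ref{new iwasawa theory euler systems main result}\,(iii).
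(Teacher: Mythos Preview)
Your approach is essentially the same as the paper's: verify the hypotheses of Theorem \ref{new iwasawa theory euler systems main result} for $T = \Z_p(1)$ and the character $\chi$, then read off the two claimed inclusions from parts (iii) and (i) of that theorem, respectively, after identifying the dual Selmer group with $A^\chi_{K_\infty,S(K),\Sigma}$ and the module $X_{S_0}(\cT_\chi)$ with the $p$-adic $Y$-module. The paper makes the first identification precise via \cite[Prop.\@ 1.6.2]{Rubin-euler} and the Ext/Fitt translation via \cite[Lem.\@ A.10]{scarcity}; it also records that $\epsilon_K = \epsilon_{K,V}$ acts trivially on the Rubin--Stark element (so the $\epsilon_K$ in Theorem \ref{new iwasawa theory euler systems main result}\,(iii) can simply be dropped), rather than arguing via support conditions as you do.

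There is one genuine gap in your verification of Hypothesis \ref{new strategy iwasawa hyps}\,(v): you ignore the term $\sum_{v \in \Sigma} \dim_\kappa H^0(k_v, \overline{T(\chi)}^\ast(1))$ on the right-hand side. If $\Sigma$ is large, this sum can exceed $r$, and the strict inequality fails. The paper handles this by observing (via the argument of \cite[Prop.\@ 3.4]{BKS}) that the statement is independent of $\Sigma$, so one may reduce to $|\Sigma| \le 1$; this reduction is needed. Relatedly, your identification of $\overline{T(\chi)}$ is inconsistent (once as $\kappa(\omega_p\chi^{-1}|_{\rm cycl})$, later as $\kappa(\chi)$); the residual character is $\omega_p \cdot \bar\chi^{-1}$, and the conditions $\chi \neq \bm 1$, $\chi \neq \omega_p$ ensure respectively that this character and its Kummer dual are nontrivial, which is what \cite[Lem.\@ 5.4]{bss2} uses to establish the required vanishing of $H^1$. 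Finally, your remark about ``absorbing Euler factors'' is unnecessary: since $S_\mathrm{ram}(\Z_p(1)) \subseteq \Pi_k^p \subseteq S_0$, one has $S(K) = S_0(K)$ and the Euler-factor product in Theorem \ref{new iwasawa theory euler systems main result}\,(i) is empty.
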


\begin{proof}
    We first verify that Hypotheses \ref{new strategy iwasawa hyps} is valid for the data $(T, k_\infty,F,\chi, r)$. To do this, we note that the conditions of  Hypothesis \ref{new strategy iwasawa hyps}\, (i) and (ii)  are clearly satisfied because $\overline{T_\chi}$ is one-dimensional (in particular, in (ii) we may take the element $\tau$ to be trivial). Furthermore, it is proved in \cite[Lem.\@ 5.4]{bss2} that 
    the condition $\chi \not \in \{ \bm{1}, \omega_p \}$ implies
    the vanishing of both of the groups $H^1 (\mathfrak{E}_T^\chi / k, \overline{T_\chi}^\ast (1))$ and $H^1 (\mathfrak{E}_T^\chi/ k, \overline{T_\chi})$. This shows that condition (iii) in Hypothesis \ref{new strategy iwasawa hyps} is valid, and that condition (v) is always valid if $|\Sigma|\le 1$. Since Conjecture \ref{eIMC G_m} is independent of $\Sigma$ (cf.\@ the argument of \cite[Prop.\@ 3.4]{BKS}) and $H^1_\Sigma (\cO_{K, S(K)}, \Z_p (1))$ is automatically $\Z_p$-torsion free if $\Sigma\not=\emptyset$, we may assume that $|\Sigma|\le 1$ in which case the inequality of Hypothesis \ref{new strategy iwasawa hyps}\,(v) is clearly valid. Finally, the conditions of Hypothesis \ref{new strategy iwasawa hyps}\,(ii${}^\ast$) and (iv) are satisfied trivially since $p > 3$.\\
    Having thus verified Hypotheses \ref{new strategy iwasawa hyps}, we now take $S$ to be $S_0$ so that $S_\ram (\Z_p (1)) \setminus S_0$ is empty and so condition (a) of Theorem \ref{new iwasawa theory euler systems main result} is satisfied vacuously. We then finally note that the validity of condition (b) of Theorem \ref{new iwasawa theory euler systems main result} follows from the assumption  that the module $H^1_\Sigma (\cO_{K, S(K)}, \Z_p (1))$ is $\Z_p$-torsion free.
     The result of Theorem \ref{new iwasawa theory euler systems main result} can therefore be applied in this setting. \\
   We now note that the idempotent $\epsilon_K$ from \S\,\ref{statement iwasawa result section} (when taking $Y = Y_{V (K)} (\cT_\chi)$) is equal to 
    \[ \epsilon_{K, V} \coloneqq \prod_{v \in S_\infty \setminus V} (1 - e_{\cG_{K, v}}) \cdot \prod_{v \in V} e_{\cG_{K, v}},\] 
    and hence  acts trivially on $\varepsilon_{K_\infty, \Sigma}$. In light of this observation, we then deduce from Theorem~\ref{new iwasawa theory euler systems main result}\,(iii) with $T \coloneqq \Z_p (1)$ an inclusion
    \begin{align*}
    & \phantom{=\;} \Fitt^0_{\Lambda_F} \Big(  \Ext^1_{\Lambda_F} \big( \big (\bidual^r_{\Lambda_{F}} H^1_\Sigma ( \cO_{K, S(K)}, \cT_\chi) \big) / (\Lambda_F \varepsilon_{K_\infty, \Sigma}^\chi), \Lambda_F \big) \Big)  \\
    & = 
    \big \{ f ( \varepsilon_{K_\infty, \Sigma}^\chi) \mid f \in \exprod^r_{\Lambda_F} H^1 (\cO_{K, S(K)}, \cT_\chi)^\ast \big \} \\
    & \subseteq \Fitt^0_{\Lambda_F} \big ( H^1_{\cF^\vee_{\mathrm{rel}, \Sigma}} (k, \cT_\chi^\vee (1))^\vee \big)^{\ast \ast} \\
    & = \Fitt^0_{\Lambda_F} \big( A_{K_\infty, S(K), \Sigma}^\chi \big)^{\ast \ast}.
    \end{align*}
     (Here the first equality is by \cite[Lem.\@ A.10]{scarcity} and the final equality by \cite[Prop.\@ 1.6.2]{Rubin-euler}.)
     This proves the first claim in \ref{G_m main result}, and the second claim follows from Theorem \ref{new iwasawa theory euler systems main result}\,(i) upon noting that $\Fitt^r_{\Lambda_K} ( Y_{\Pi^p_k \cup \Pi^\infty_k} (\cT_\chi)) = \epsilon_{K, V} \Fitt^0_{\Lambda_K} (Y_{\Pi^p_k} (\cT_\chi))$ and $\epsilon_{K, V}$ acts as the identity on $\varepsilon_{K_\infty, V}$.
\end{proof}

\subsection{Consequences over imaginary quadratic fields}
\label{imaginary quadratic fields section}

Throughout this subsection, we assume that $k$ is imaginary quadratic and $p$ is odd. We shall first show that if $p > 3$, then Theorem \ref{G_m main result} implies the validity of Kato's Conjecture for $(\QQ_K(0),\ZZ_p[\cG_K])$ for every finite abelian extensions $K$ of $k$. We shall then explain how this result can be combined with the general approach of \S\,\ref{twisting lemma section} to derive the validity of Kato's Conjecture in other  cases. In this way, we realise the strategy discussed by Kato \cite[Ch.\@ I, \S\,3.3]{Kato93b} (where the case of motives of the form $\QQ(0)_F$ is  referred to as `the universal case') and in a more general context by Huber and Kings \cite{hk}. We recall that this sort of approach has already been used extensively in the literature (for more detailed discussion see, for example, \cite{bbs} and the references therein). 

 \subsubsection{Kato's Conjecture for Tate motives} 

The following consequence of Theorem \ref{G_m main result} extends the main result of Bley \cite{Bley06}.  

\begin{thm} \label{etnc imaginary quadratic fields}
If $p > 3$, then the conjecture $\TNC(\QQ_K(0), \Z_p [\cG_K])$ is valid for every finite abelian extension $K$ of $k$.  
\end{thm}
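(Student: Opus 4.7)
The plan is to deduce Theorem \ref{etnc imaginary quadratic fields} from Theorem \ref{G_m main result} by specialising to the elliptic-unit Euler system, combining with an analytic class-number formula to upgrade the resulting divisibilities to equalities in the equivariant Iwasawa main conjecture, and then using Iwasawa descent from the cyclotomic $\Z_p$-extension to the finite level $K$. Since $k$ is imaginary quadratic, the unique archimedean place is complex and hence splits completely in any extension, so $V(K) = \Pi_k^\infty$ has cardinality one and we are in the rank-$r=1$ setting. Crucially, in this setting the rank-$1$ Rubin--Stark conjecture is unconditionally known: the Rubin--Stark elements are (essentially) the elliptic units of Kato--Kings--de Shalit, and their integrality does not require any hypothesis. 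Thus Theorem \ref{G_m main result} applies unconditionally for every character $\chi$ of $\nabla_K$ with $\chi\notin\{\bm{1},\omega_p\}$.

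First, I would fix a cyclotomic $\Z_p$-extension $k_\infty/k$, set $K_\infty \coloneqq K\cdot k_\infty$ and, with the notation of \S\,\ref{statement iwasawa result section}, decompose $\TNC(\QQ_K(0),\Z_p[\cG_K])$ into $\chi$-components for $\chi$ running over characters of $\nabla_K$. For each such $\chi\notin\{\bm{1},\omega_p\}$, Theorem \ref{G_m main result} applied to the elliptic unit Euler system yields both a containment of the ``Stark ideal'' inside the reflexive Fitting ideal of $A_{K_\infty, S(K), \Sigma}^\chi$ and a containment of $\varepsilon^\chi_{K_\infty,\Sigma}$ (up to Fitting ideals of the local terms at $p$-adic places) in the image of $\Theta_{K_\infty/k,S(K),b_\bullet}$. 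These together give one inclusion in the $\chi$-component of Conjecture \ref{eIMC G_m}.

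The main obstacle is upgrading this inclusion to an equality, and for this I would invoke the analytic class number formula together with the global Kronecker limit formula relating $\zeta$-values of $K$ to the elliptic units. Summing over all $\chi$-components, the product of the ``predicted'' indices must equal the index prescribed by $\zeta_K^\ast(0)$, which forces each individual $\chi$-component inclusion to be an equality; this is the classical Mazur--Wiles style descent as implemented over imaginary quadratic fields by Rubin in \cite{Rubin-euler}. The excluded characters $\chi\in\{\bm{1},\omega_p\}$ are then handled separately: the $\omega_p$-component follows from a reflection-type comparison (Leopoldt--Iwasawa reflection) between the $\omega_p$-part of the class group and the $\bm{1}$-part, while the trivial character component is controlled via the weak Leopoldt conjecture for $K_\infty/K$ (known for abelian extensions of imaginary quadratic fields by a theorem of Greenberg) combined with the ordinary class number formula.

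Finally, once the $\chi$-component of Conjecture \ref{eIMC G_m} is established for all $\chi\in\widehat{\nabla_K}$, the standard Iwasawa descent formalism (as developed by Burns--Venjakob and used in \cite{Bley06, BullachHofer}) deduces the finite-level assertion $\TNC(\QQ_K(0),\Z_p[\cG_K])$. The hypotheses required for this descent, namely the $p$-torsion freeness of $H^1_\Sigma(\cO_{K,S(K)},\Z_p(1))$ (which can be arranged by enlarging $\Sigma$, the conjecture being $\Sigma$-independent) and the vanishing of certain higher Iwasawa invariants, hold unconditionally over imaginary quadratic fields thanks to the work of Rubin.
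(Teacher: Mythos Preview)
Your overall strategy (decompose into $\chi$-components, apply Theorem \ref{G_m main result} to the elliptic-unit Euler system, upgrade via the class-number formula, then descend) matches the paper's. However, there are two concrete gaps.

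First, and most seriously, you take $k_\infty$ to be the cyclotomic $\Z_p$-extension. The second inclusion of Theorem \ref{G_m main result} then carries the error factor $\Fitt^0_{\Lambda_F}(Y_{F_\infty,\Pi_k^p}(\cT_\chi))^{\ast\ast}$, and over the cyclotomic tower this module is \emph{not} pseudo-null: the decomposition group of a $p$-adic place has $\Z_p$-rank one, so $Y_{F_\infty,\Pi_k^p}(\cT_\chi)$ is a torsion $\Lambda_F$-module with non-unit characteristic ideal. Your ``summing over $\chi$'' argument does not remove this factor: the class-number formula does force the first (Fitting-ideal) inclusion of Theorem \ref{G_m main result} to be an equality of characteristic ideals, but that by itself does not yield the determinant-level statement $\varepsilon^\chi_{K_\infty,\Sigma}\in\im(\Theta)$ needed for Conjecture \ref{eIMC G_m}; and the second inclusion, which does concern $\im(\Theta)$, still has the error factor attached. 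The paper sidesteps this by taking $k_\infty$ to be the \emph{full $\Z_p^2$-extension} of $k$: then each $p$-adic place has decomposition group of $\Z_p$-rank two, so $Y_{F_\infty,\Pi_k^p}(\cT_\chi)$ is pseudo-null and the error factor is $\Lambda_F$ after reflexivisation (cf.\ \cite[Lem.\@ 6.6\,(c)]{scarcity}). This is the key technical point.

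Second, your treatment of the exceptional characters $\chi\in\{\bm{1},\omega_p\}$ via a ``reflection-type comparison'' and ``weak Leopoldt plus ordinary class number formula'' is too vague to stand as an argument; reflection relates $\chi$- and $\omega_p\chi^{-1}$-parts of class groups but does not by itself deliver the main conjecture for those components. The paper simply cites \cite[Th.\@ B]{BullachHofer} for these characters, and also for the case where $p$ splits in $k$ (which you do not address); the only hypothesis needed there, vanishing of $\mu$ for $F$, holds by Ferrero--Washington since $[F:k]$ is a $p$-power. With Conjecture \ref{eIMC G_m} thus established for every $\chi$, the descent to the finite level is carried out via the formalism of \cite{BKS2} together with \cite[Th.\@ A]{BullachHofer}, essentially as you indicate.
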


\begin{proof}
The conjecture $\TNC (\QQ_K(0), \Z_p [\cG_K])$ decomposes into the collection of corresponding conjectures for $(h^0 (\Spec (L_\chi F)), \cO (\chi) [P])$ with $\chi$ ranging over the characters of $\Delta$. If $\chi \in \{ \bm{1}, \omega_p\}$, then the validity of the latter conjecture was proven by Hofer and the first author in \cite[Th.~B]{BullachHofer}. (Note that the vanishing of the classical $\mu$-invariant for $F$, which the result of \cite[Th.\@ B]{BullachHofer} is conditional on, is known as a consequence of the Ferrero--Washington theorem and the fact that $[F : k]$ is a power of $p$, cf.\@ \cite[Prop.\@ 6.7\,(b)]{BullachHofer}.)
In the following we may therefore assume that $\chi \not \in \{ \bm{1}, \omega_p\}$. We similarly may assume that $p$ does not split in $k$ since otherwise the result of \cite[Th.\@ B]{BullachHofer} applies again.\\
In the remaining case, then, the second claim of Theorem \ref{G_m main result}\,(b) can be improved to give the full Conjecture \ref{eIMC G_m}. 
To explain this, we first recall that the rank-one Rubin--Stark conjecture holds for all finite abelian extensions of $k$ since the relevant Rubin--Stark elements admit a description in terms of elliptic units (cf.\@ \cite[Ch.\@ IV, Prop.\@ 3.9]{Tate}).\\
Further, taking $k_\infty$ to be the full $\Z_p^2$-extension of $k$, the module $Y_{K_\infty, \Pi^p_k}$ becomes pseudo-null over $\Lambda_K$  because, as proved in \cite[Ch.~II, Prop.~1.9]{deS87}, $p$ is finitely decomposed in $k_\infty$  (cf.\@ \cite[Lem.\@ 6.6\,(c)]{scarcity}). A standard argument via localising at height-one primes of $\Lambda_K$ therefore removes the factor $\Fitt^0_{\Lambda_K} ( Y_{\Pi^p_k} (\Lambda_K (1)))$ from the second claim in Theorem \ref{G_m main result} and the resulting inclusion must be an equality by the analytic class number formula (see \cite[Prop.\@ 6.4\,(b)\,(ii)]{scarcity} for details). Having established the equivariant Iwasawa main conjecture (Conjecture \ref{eIMC G_m}) in this way, the validity of $(h^0 (\Spec (L_\chi F)), \cO (\chi) [P])$ then follows from the descent formalism developed in \cite{BKS2} combined with \cite[Th.\@ A]{BullachHofer} (see the proof of \cite[Th.~6.9]{BullachHofer} for details).
\end{proof}

The following consequence of Theorem \ref{etnc imaginary quadratic fields} strengthens the main result of Johnson-Leung in \cite{Johnson-Leung} and itself has a variety of interesting consequences, including the verification of the Quillen--Lichtenbaum Conjecture in a new family of cases (for details of which, see Remark~\ref{lichtenbaum cor} below). 

\begin{cor} \label{etnc imaginary quadratic twist main result}
    The conjecture $\TNC(\QQ_K(1 - j), \Z_p [\cG_K])$ is valid for every finite abelian extension $K$ of $k$ and every integer $j$ with $j> 1$. 
\end{cor}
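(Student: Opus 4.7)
The plan is to apply the established Iwasawa-theoretic machinery of Kato, Huber--Kings and Burns--Kurihara--Sano to propagate the rank-one (case $j=1$) result of Theorem \ref{etnc imaginary quadratic fields} to all positive Tate twists $j>1$. First I would recall that in the proof of Theorem \ref{etnc imaginary quadratic fields} the argument actually establishes the full equivariant Iwasawa main conjecture (Conjecture \ref{eIMC G_m}) over the $\mathbb{Z}_p^2$-extension $k_\infty/k$ for every character $\chi$ of $\Delta_K$: the pseudo-nullity of $Y_{K_\infty,\Pi^p_k}$ upgrades the inclusion of Theorem \ref{G_m main result} to an equality of determinants, and the analytic class number formula pins down the constant. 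This is considerably stronger than the finite-level statement $\mathrm{TNC}(\mathbb{Q}_K(0),\mathbb{Z}_p[\cG_K])$ itself.

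Next I would restrict the main conjecture from the $\mathbb{Z}_p^2$-tower to the cyclotomic $\mathbb{Z}_p$-extension of $k$ by projection of the relevant Iwasawa algebras (so no new analytic input is needed), and then apply the $j$-th Soul\'e twist to the resulting cyclotomic main conjecture. Since Soul\'e twisting is an automorphism of $\Lambda_K$-modules that intertwines the Euler system of elliptic units (viewed $p$-adically) with its higher Tate twists, and identifies $\mathrm{Det}_{\Lambda_K}(C_{S(K)}(\mathcal{T}_\chi))$ for $T=\mathbb{Z}_p(1)$ with the corresponding determinant for $T=\mathbb{Z}_p(j)$, this yields the equivariant main conjecture for $\mathbb{Z}_p(j)$ over the cyclotomic tower of $K$, for every integer $j\ge 1$.

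The final step is descent from Iwasawa theory to the finite level, in the form developed in \cite{BKS2} and applied in \cite{bbs} and \cite{BullachHofer}. Concretely, one compares the determinant of the Iwasawa-theoretic Selmer complex with the finite-level fundamental line via a Bockstein-type exact triangle, identifies the image of the twisted Euler system under the dual exponential/Bloch--Kato exponential with the relevant special value via the Coleman/Deligne--Soul\'e reciprocity law (which for $\mathbb{Z}_p(j)$ with $j>1$ recovers $\zeta_{K,S}(1-j)$ up to explicit rational factors), and then deduces $\mathrm{TNC}(h^0(\mathrm{Spec}\,K)(1-j),\mathbb{Z}_p[\cG_K])$. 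The main obstacle is controlling the descent morphism, but the crucial simplification in the range $j>1$ is that the relevant $p$-adic $L$-values are non-vanishing: there are no trivial zeros and no Leopoldt-type obstructions, so the Bockstein operator is an isomorphism after tensoring with $\mathbb{Q}_p$, and the descent is clean. All necessary inputs on this side are already recorded in \cite{bbs,BullachHofer} in the form used here, so the final assembly is a direct invocation of those descent results together with the equivariant main conjecture produced in the previous step.
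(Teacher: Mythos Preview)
Your overall strategy—twist the equivariant Iwasawa main conjecture established in the proof of Theorem~\ref{etnc imaginary quadratic fields} from $\Z_p(1)$ to $\Z_p(j)$, then descend to the finite level—is correct and is essentially what the paper does. However, there are two points worth flagging.

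First, the reciprocity law you invoke is misidentified. The ``Coleman/Deligne--Soul\'e'' machinery you mention computes regulators of \emph{cyclotomic} units over $\QQ$; for abelian extensions of an imaginary quadratic field the relevant analytic input is instead Deninger's computation of the Beilinson regulator of Eisenstein--Kronecker classes \cite{deninger90}, together with Kings' comparison \cite{Kings01} between these classes and the $j$-th Soul\'e twist of elliptic units. The paper accesses both of these through the explicit elements $c_K(j)$ constructed by Johnson-Leung \cite{Johnson-Leung}. Without this input there is no way to connect the twisted Euler system to the actual leading terms $L'_S(\chi^{-1},1-j)$, so the descent step you describe cannot be completed as stated.

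Second, the paper's route is more direct than yours. Rather than invoking the abstract Bockstein descent of \cite{BKS2}, the paper simply takes the $\Lambda$-basis $\fz_{K,\Sigma}(0)$ of $\Det_\Lambda(C_{S,\Sigma}(\Lambda(1)))$ provided by Theorem~\ref{etnc imaginary quadratic fields}, pushes it through the twisting map $\mathrm{Tw}^{\mathrm{det}}_j$ of Lemma~\ref{twisting lemma} (which simultaneously twists by $\zeta^{\otimes -j}$ and projects to the finite level), and then uses the Deninger--Kings--Johnson-Leung formula to verify directly that the resulting element $\fz_{K,\Sigma}(j)$ satisfies the two properties characterising $\TNC(\QQ_K(1-j),\Z_p[\cG_K])$. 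Your observation that $j>1$ avoids trivial zeros is implicitly present in the paper too: it manifests as the fact that $\delta_{K,\Sigma}(j)=\prod_{v\in\Sigma}(1-\NN v^{1-j}\Frob_v^{-1})$ is a nonzero divisor and $H^2(\cO_{K,S},\Z_p(1-j))$ is finite, which is what makes $\Theta_{K,S,j}$ an isomorphism and the verification elementary.
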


\begin{proof} Throughout this argument we fix $j>1$ and, as a first step, we explicate the conjecture $\TNC (\QQ_K(1 - j), \Z_p [\cG_K])$. To do this, we fix a finite set $S \subseteq \Pi_k$ containing $\Pi_k^\infty \cup \Pi_k^p \cup S_\ram (K / k)$ and recall that the `Chern class character' map
\[
\mathrm{ch}_{K, j} \: K_{2j - 1} ( \cO_K) \otimes_\Z \Z_p \to H^1 (\cO_{K, S}, \Z_p (1 - j))
\]
is an isomorphism if $p$ is odd. (This is a consequence of the validity of the Bloch--Kato conjecture that follows from work of Voevodsky and Rost, and completed by Weibel in \cite{Weibel-NormResidue}.) Next we fix an embedding $\iota_0 \: k \hookrightarrow \C$ and recall the `Beilinson regulator' map (as defined, for example, in \cite[\S\,10.3]{BurgosGil})
\[
\rho_{\mathrm{Bei}, K, j} \: K_{2j - 1} (\cO_K) \to \R \otimes_\Z (K \otimes_k H^0 ( (\Spec k)^{\iota_0} (\C), \Q (- j))).
\]
Fix a $\Z [\cG_K]$-basis $\eta$ of $K \otimes_k H^0 ( (\Spec k)^{\iota_0} (\C), \Z (- j)))$, which amounts to fixing an embedding $\iota \: \Q^c \hookrightarrow \C$ that extends $\iota_0$, and write $\delta (\eta)$ for the image of $\eta$ under the comparison isomorphism
\[
\Z_p \otimes (K \otimes_k H^0 ( (\Spec k)^{\iota_0} (\C), \Z_p (- j)))) \cong \cY_{K} (\Z_p (1 - j)) \coloneqq {\bigoplus}_{w\in \Pi_K^\infty} H^0 (K_w, \Z_p ( - j)).\]
Then, since $H^2 (\cO_{K, S (K)}, \Z_p (1 - j))$ is finite (by Soul\'e \cite[Th.\@ 10.3.27]{NSW}), the construction of (\ref{definition of finite level theta map}) specialises to the composite map
\begin{align*}
    \Theta_{K, S, j} \: \Det_{\Q_p [\cG_K]} (C_{S} ( \Q_p (1 - j)_{K / k})) 
    & \xrightarrow{\simeq} H^1 (\cO_{K, S}, \Q_p (1 - j)) \otimes_{\Z_p [\cG_K]} \cY_{K} (\Z_p (1 - j)) \\
    & \xrightarrow{\simeq} H^1 (\cO_{K, S}, \Q_p (1 - j)),
\end{align*}
where the first arrow is the natural `passage-to-cohomology' map and the second arrow is induced by sending $\delta (\eta) \mapsto 1$. 
Now, the assumption $j > 1$ implies that, for any finite set $\Sigma \subseteq \Pi_k$ with $\Sigma \cap S (K) = \emptyset$, one has that $\delta_{K, \Sigma} (j) \coloneqq \prod_{v \in \Sigma } ( 1 - \NN v^{1 - j} \Frob_v^{-1})$ is a nonzero divisor in $\Z_p [\cG_K]$. 
It follows that the map $C_{S, \Sigma} (\Q_p (1 - j)) \to C_{S} (\Q_p (1 - j))$ is an isomorphism in $D (\Q_p [\cG_K])$ and this allows us to regard $\Det_{\Z_p[\cG_K]} ( C_{S, \Sigma} (\Z_p (1 - j)))$
as a submodule of the domain of the map $\Theta_{K, S, j}$.
Moreover, the proof of \cite[Prop.\@ 3.4]{BKS} shows that the statement 
of $\TNC (\QQ_K(1 - j), \Z_p [\cG_K])$ is equivalent to asserting the existence of a $\Z_p [\cG_K]$-basis $\fz_{K, \Sigma} (j)$ of $\Det_{\Z_p [\cG_K]} (C_{S, \Sigma} ( \Z_p (1 - j)_{K / k}))$ with both of the following properties
\begin{equation}\label{properties}
\begin{cases} (\mathrm{ch}_{K, j} \circ \Theta_{K, S, j}) (\fz_{K, \Sigma} ( j)) \in \Q \otimes_\Z K_{1 - 2j} (\cO_K);\\
(\rho_{\mathrm{Bei}, K, j} \circ \mathrm{ch}_{K, j}^{-1} \circ \Theta_{K, S, j}) (\fz_{K, \Sigma} ( j)) = \delta_{K, \Sigma} (j) \cdot (\sum_{\chi \in \widehat{\cG_K}} L'_{S} (\chi^{-1}, 1 - j) e_{\chi}) \cdot \eta.\end{cases}\end{equation}

To construct such a basis, we denote by $\m$ the conductor of $K$, and write $w_\m$ for the number of roots of unity in $k$ that are congruent to $1 \mod \m$ (so $w_\m \mid 12$). Writing $\xi_\m (j) \in K_{1 - 2j} (\cO_K)$ for the element constructed by Johnson-Leung in \cite[Th.\@ 3.3]{Johnson-Leung}, we then define
\[
c_K (j) \coloneqq \mathrm{cores}_{K (p^n \m) / K} (w_\m^{-1} \otimes \xi_\m (j)) \quad \in \Z [1 / w_\m] \otimes_\Z K_{2j - 1} (\cO_K) 
\]
with $n$ big enough such that $K(\p^n \m) / k$ is ramified at all $\p \mid p$. 
Results of Deninger \cite{deninger90}, as adapted by Johnson-Leung in \cite[Th.~3.3]{Johnson-Leung}, then show that 
\begin{equation} \label{regulator computation}
\rho_{\mathrm{Bo}, K, j} ( c_K (j)) = (-1)^{1 + j} \cdot \frac{(2\cdot \NN \m)^{- (1 + j)}}{(-2j)!} \cdot ({\sum}_{\chi \in \widehat{\cG_K}} L'_S (\chi^{-1}, 1 - j) e_{\chi}) \cdot \eta
\end{equation}
with $S \coloneqq \Pi_k^\infty \cup \Pi_k^p \cup S_\ram (K / k)$ and $\rho_{\mathrm{Bo}, K, j}$ the Borel regulator map. \\
We now write $K_\infty$ for the cyclotomic $\Z_p$-extension of $K$ and set $\Lambda \coloneqq \Z_p \llbracket \cG_{K_\infty} \rrbracket$. Using our fixed choice of embedding $\iota \: \Q^c \hookrightarrow \C$ we can define a basis of $\Z_p (1) = {\varprojlim}_n \mu_{p^n}$ as $\zeta \coloneqq ( \iota^{-1} ( e^{2 \pi i / p^n}))_n$.\\
Choose a prime ideal $\a \nmid 6p \m$ and set $\Sigma \coloneqq \{ \a \}$.
We then obtain a map
\[
\mathrm{Tw}_j \: H^1_\Sigma (\cO_{k, S}, \Lambda (1)) \xrightarrow{\otimes \zeta^{\otimes  - j}} H^1_\Sigma (\cO_{k, S}, \Lambda (1)) \otimes_\Lambda \Z_p (-j)_{K / k} \cong H^1_\Sigma (\cO_{K, S}, \Z_p (1  - j)),
\]
where the isomorphism is induced by Proposition \ref{construction complex}\,(d).\\
Write $\varepsilon_{K_\infty, \Sigma} \coloneqq ( \varepsilon^V_{F / k, S, \Sigma})_{F \subseteq K_\infty}$ for the family of Rubin--Stark elements with $V = \Pi_k^\infty$. Then, since Rubin--Stark elements in this context admit a description in terms of elliptic units (cf.\@ \cite[Ex.~2.3~(c)]{BullachHofer}), 
results of Kings \cite{Kings01} show  that
\[
(-2j)! \cdot \delta_{K, \Sigma} (j) \cdot \mathrm{ch}_{K, j} ( c_{K} (j)) = 
\pm 2^a \cdot \NN \m^{- (1 + j)} \cdot \mathrm{Tw}_j ( \varepsilon_{K_\infty, \Sigma})
\]
for a suitable integer $a$ (cf.\@ \cite[Th.~3.6]{Johnson-Leung}). 
Combining this formula with (\ref{regulator computation}) and the equality $\rho_{\mathrm{Bo}, K, j} = 2 \cdot \rho_{\mathrm{Bei}, K, j}$ proved by Burgos Gil \cite{BurgosGil}, we can then conclude that 
\begin{equation} \label{twisted elliptic units and L function}
(\rho_{\mathrm{Bei}, K, j} \circ \mathrm{ch}_{K, j}^{-1}) ( \mathrm{Tw}_j ( \varepsilon_{K_\infty, \{ \a \}})) = \pm 2^{a'} \cdot \delta_{K, \Sigma} (j) \cdot ({\sum}_{\chi \in \widehat{\cG_K}} L'_{S} (\chi^{-1}, 1 - j) e_{\chi}) \cdot \eta
\end{equation}
for an integer $a'$ (that depends on $j$). 

Now, by Theorem \ref{etnc imaginary quadratic fields}, there exists a $\Lambda$-basis $\fz_{K, \Sigma} (0)$ of $\Det_\Lambda (C_{S, \Sigma} (\Lambda (1)))$ with
\[\Theta_{K, S, 1} (\fz_{K, \Sigma} (0)) = \varepsilon_{K_\infty, \Sigma}.\] 
In addition,  Proposition \ref{twisting lemma}\,(i)\,(a) gives a commutative diagram 
\begin{cdiagram}
    \Det_\Lambda (C_{S, \Sigma} (\Lambda (1))) \arrow[twoheadrightarrow]{d}{\mathrm{Tw}^\mathrm{det}_j} \arrow{rr}{\Theta_{K, S, 1}} & & H^1_\Sigma (\cO_{k, S}, \Lambda (1)) \arrow{d}{\mathrm{Tw_j}} \\ 
    \Det_{\Z_p [\cG_K]} ( C_{S, \Sigma} ( \Z_p (1 - j)_{K / k})) \arrow{rr}{\Theta_{K, S, j}} & &
    H^1_\Sigma (\cO_{K, S}, \Z_p (1 - j))
\end{cdiagram}%
and so the equality  (\ref{twisted elliptic units and L function}) implies that the element $\fz_{K, \Sigma} (j) \coloneqq \mp 2^{-a'} \cdot \mathrm{Tw}_j^\mathrm{det} (\fz_{K, \Sigma} (0))$ has both of the properties in (\ref{properties}).
Finally we note that,  since $p$ is odd, $\fz_{K, \Sigma} (j)$ is also a $\Z_p [\cG_K]$-basis of $\Det_{\Z_p [\cG_K]} ( C_{S, \Sigma} ( \Z_p (1 - j)_{K / k}))$ and so this concludes the proof of the claimed result. 
\end{proof}

\begin{rk}\label{lichtenbaum cor} Corollary \ref{etnc imaginary quadratic twist main result} has a variety of explicit consequences, including the following. 

\begin{itemize}
\item[(i)]  The validity of  $\TNC(\QQ_K(1 - j), \Z_p [\cG_K])$ implies that of $\TNC(\QQ_K(1 - j), \Z_p)$, where in the latter case $\QQ_K(1 - j)$ is regarded as defined over $K$ and with coefficients $\QQ$ (cf.\@ Remark \ref{forgetful etnc remark}).  Hence, upon combining  Theorem \ref{etnc imaginary quadratic twist main result} with the interpretation of $\TNC(\QQ_K(1 - j), \Z_p)$ given by \cite[Th.\@ 2.3]{BdJGRY}, one deduces  an equality 
 \[
    \ord_p \Big( \frac{\zeta^\ast_K (1 - j)}{R_{K} (1 - j)} \Big) =  \ord_p \Big ( \frac{|K_{2j - 2} (\cO_K)|}{|K_{2j - 1} (\cO_K)_\tor|} \Big).
    \]
Here $\zeta_K^\ast (1 - j)$  denotes the leading term at $s=1-j$ of the Dedekind $\zeta$-function of $K$ and  $R_K (1 - j)$ the Borel regulator of $K$ at $1 - j$ (as recalled explicitly in \cite[Th.~2.1~(iv)]{BdJGRY}). We note, in particular, that this displayed equality extends the known validity of the Quillen--Lichtenbaum Conjecture (as formulated in \cite{Lichtenbaum}) to a new family of cases. 

\item[(ii)] Corollary \ref{etnc imaginary quadratic twist main result} directly implies that the main result of El Boukhari \cite{ElBoukhari} is unconditionally valid for $p > 3$. From the results of loc.\@ cit.\@ one can therefore immediately derive several concrete consequences of Theorem \ref{etnc imaginary quadratic twist main result} (the details of which we leave to the reader).
\end{itemize}
\end{rk} 

\begin{rk} To extend Corollary \ref{etnc imaginary quadratic twist main result} to include the case $p=2$, one must precisely determine the factor $2^{a'}$ in (\ref{twisted elliptic units and L function}) and this is a delicate problem. For a treatment of related questions see \cite[\S\,2]{BdJGRY}.  \end{rk}

\subsubsection{Kato's Conjecture for elliptic curves with complex multiplication}

In this section, we shall combine Theorem \ref{etnc imaginary quadratic fields} with the general strategy described by 
Kato in \cite[Ch.\@ I, \S\,3.3]{Kato93b} and the concrete approach used by Burungale and Flach in \cite{BurungaleFlach} in order to prove an equivariant refinement of the main result of loc.\@ cit.\@ (for details see Theorem \ref{CM elliptic curves main result} and Remark \ref{bf remark} below). 

To do this we fix an elliptic curve $E$ that is defined over a number field $F$ containing $k$ and has complex multiplication by the ring of integers $\cO_k$ of $k$. We assume that $F ( E_\mathrm{tors}) / k$ is abelian. The Weil restriction
\[ B \coloneqq \mathrm{Res}_k^F (E)\] 
of $E$ is then an abelian variety of dimension $[F : k]$ defined over $k$ for which there is an algebra isomorphism 
\begin{equation}\label{av decomp}
A \coloneqq \mathrm{End}_k (B) \otimes_\Z \Q \cong L_1 \times \dots \times L_t, 
\end{equation}
where each $L_i$ is a CM field that contains $k$ and $\sum_{i = 1}^{i=t} [L_i : k] = [F : k]$ (cf.\@ \cite[Prop.~3.2]{BurungaleFlach}). 
We also fix an odd prime $p$ and define rings  
\[ \cO_p \coloneqq \Z_p \otimes_\Z \cO_k\,\,\text{ and }\,\,\mathcal{A}_p \coloneqq \mathrm{End}_k (B) \otimes_\Z \Z_p.\] 

\begin{lem} \label{CM elliptic curves lemma 1}
    The ring $\cA_p$ is isomorphic to $\cO_p [\cG_F]$ and the $G_k$-module $T_p B$ to $\mathrm{Ind}_{G_F}^{G_k} (T_p E)$. In particular, $\cA_p$ is Gorenstein and $T_p B$ is a free $\cA_p$-module of rank one.
\end{lem}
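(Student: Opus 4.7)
The plan is to establish both asserted identifications more or less in parallel and then derive the remaining assertions from them.

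First, I would verify the isomorphism $T_p B \cong \mathrm{Ind}^{G_k}_{G_F}(T_p E)$ of $G_k$-modules directly from the defining adjunction of Weil restriction: for any commutative $k$-algebra $R$ one has $B(R) = E(R \otimes_k F)$. Applying this to $R = k^c$, and decomposing $k^c \otimes_k F \cong \prod_{\sigma \colon F \hookrightarrow k^c} k^c$ via the embeddings of $F$ into $k^c$ (all of which lie over $k$ since we may enlarge $k^c$), yields $B(k^c) \cong \prod_{\sigma} E^\sigma(k^c)$ with the $G_k$-action permuting factors through its action on $\mathrm{Emb}_k(F, k^c) \cong G_k / G_F$. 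Passing to $p^n$-torsion and taking the inverse limit produces exactly the induced module $\mathrm{Ind}^{G_k}_{G_F}(T_p E)$.

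Next, I would compute $\mathcal{A}_p$ using the Weil restriction adjunction once more: $\mathrm{End}_k(B) = \mathrm{Hom}_F(B_F, E)$. Since $F/k$ is Galois (being a subfield of the Galois extension $F(E_{\mathrm{tors}})/k$), the base change $B_F$ decomposes as $\prod_{\sigma \in \mathcal{G}_F} E^\sigma$, and so
\[
\mathrm{End}_k(B) = \bigoplus_{\sigma \in \mathcal{G}_F} \mathrm{Hom}_F(E^\sigma, E).
\]
The hypothesis that $F(E_{\mathrm{tors}})/k$ is abelian combines with the Main Theorem of Complex Multiplication to force each conjugate $E^\sigma$ to be $F$-isogenous to $E$ (see, for example, \cite[Prop.~3.2]{BurungaleFlach} for the precise derivation), so that each $\mathrm{Hom}_F(E^\sigma, E)$ is a projective $\mathcal{O}_k$-module of rank one under its natural CM action. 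Tensoring with $\Z_p$ makes $\mathcal{A}_p$ into a free $\mathcal{O}_p$-module of rank $[F:k]$, with a basis indexed by $\mathcal{G}_F$.

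To promote this to a ring isomorphism $\mathcal{A}_p \cong \mathcal{O}_p[\mathcal{G}_F]$, I would check that the composition of endomorphisms matches the group law: choosing an $F$-isomorphism (or, after clearing denominators, an $F$-isogeny up to a unit) $\varphi_\sigma \colon E \xrightarrow{\sim} E^\sigma$ for each $\sigma \in \mathcal{G}_F$ produces endomorphisms $e_\sigma \in \mathcal{A}_p$ that permute the factors of $B_F$, and the cocycle relation $\varphi_\sigma \cdot {}^\sigma \varphi_\tau = \varphi_{\sigma\tau}$ (valid up to an $\mathcal{O}_k^\times$-scalar, which one can absorb by rescaling) translates into $e_\sigma e_\tau = e_{\sigma\tau}$. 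Alternatively, and more robustly, one can argue via the faithful action of $\mathcal{A}_p$ on $T_p B \cong \mathrm{Ind}^{G_k}_{G_F}(T_p E)$ and compare it to the natural action of $\mathcal{O}_p[\mathcal{G}_F]$ on the same induced module; this avoids cocycle bookkeeping and is where I expect the main technical obstacle to lie, since one must verify that every $\mathcal{O}_p[\mathcal{G}_F]$-linear endomorphism of the induced module is realised by an element of $\mathcal{A}_p$.

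Finally, the remaining two assertions follow by inspection. The ring $\mathcal{O}_p = \mathcal{O}_k \otimes_\Z \Z_p$ is a finite product of complete discrete valuation rings, hence Gorenstein, so $\mathcal{O}_p[\mathcal{G}_F]$ is Gorenstein by Lemma \ref{G_n ring examples}~(iii). For the freeness of $T_p B$, one recalls that classical CM theory makes $T_p E$ into a free $\mathcal{O}_p$-module of rank one; combining this with the identification $T_p B \cong \mathrm{Ind}^{G_k}_{G_F}(T_p E) = T_p E \otimes_{\mathcal{O}_p} \mathcal{O}_p[\mathcal{G}_F]$ realises $T_p B$ as a free $\mathcal{A}_p$-module of rank one.
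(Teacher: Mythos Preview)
Your computation of $T_p B$ via Weil restriction is essentially the paper's argument. The substantive difference is in how you identify $\cA_p$. You compute $\mathrm{End}_k(B)$ directly via the adjunction $\mathrm{End}_k(B) = \mathrm{Hom}_F(B_F, E) = \bigoplus_{\sigma} \mathrm{Hom}_F(E^\sigma, E)$ and then try to assemble the ring structure by choosing isogenies $\varphi_\sigma$; but, as you yourself flag, the relation $\varphi_\sigma \cdot {}^\sigma\varphi_\tau = \varphi_{\sigma\tau}$ holds only up to a unit, so a priori you obtain a twisted group algebra governed by a class in $H^2(\cG_F, \cO_p^\times)$, and you do not explain why this class is trivial. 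Your fallback---comparing the $\cA_p$-action on $T_pB$ with the $\cO_p[\cG_F]$-action---is exactly the route the paper takes, but the paper names the missing ingredient: Faltings' proof of the Tate conjecture gives $\cA_p \cong \mathrm{End}_{\Z_p\llbracket G_k\rrbracket}(T_pB)$, and then one computes the latter as $\mathrm{End}_{\cO_p[\cG_F]}(T_pE \otimes_{\Z_p} \Z_p[\cG_F]) \cong \cO_p[\cG_F]$ after checking that the $G_k$-action on $T_pE \otimes_{\Z_p}\Z_p[\cG_F]$ factors through $\cO_p[\cG_F]$ (this last step is where the abelianity of $F(E_{\tors})/k$ is used, via the triviality of conjugation on $\gal{F(E_{\tors})}{F}$). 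So your second alternative is correct but incomplete without invoking Faltings.

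One minor logical slip: you assert $F/k$ is Galois ``being a subfield of the Galois extension $F(E_{\tors})/k$'', but subfields of Galois extensions need not be Galois. What you need is that $F(E_{\tors})/k$ is \emph{abelian}, which does force every intermediate field to be Galois over $k$.
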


\begin{proof}
    For every $\sigma \in \cG_F$ we denote by $E^\sigma$ the $\sigma$-conjugate of $E$. Then $\sigma$ induces an isomorphism of abelian groups $E (\Q^c) \xrightarrow{\simeq} E^\sigma (\Q^c)$. From the decomposition $B (\Q^c) = \prod_{\sigma \in \cG_F} E^\sigma (\Q^c)$ we then see that $T_p B = \prod_{\sigma \in \cG_F} T_p E^\sigma \cong \mathrm{Ind}_{G_F}^{G_k} (T_p E)$, as claimed (cf.\@ also \cite[(a) on p.\@ 178]{Milne72}). \\ 
    Label the elements of $\cG_F$ as $\bar \sigma_1, \dots, \bar \sigma_n$ and, for every $i$, fix $\sigma_i \in G_k$ that restricts to $\bar \sigma_i$. These choices then define an isomorphism 
    \[
    f \: \mathrm{Ind}_{G_F}^{G_k} (T_p E) = T_p E \otimes_{\Z_p \llbracket G_F \rrbracket} \Z_p \llbracket G_k \rrbracket \to T_p E \otimes_{\Z_p} \Z_p [\cG_F], \quad a \otimes \sigma_i h \mapsto (h \cdot a) \otimes \bar \sigma_i
    \]
    of $\Z_p$-modules (here $h \in G_F$). We claim that this isomorphism is compatible with the action of $\Z_p \llbracket G_k \rrbracket$ when acting $T_p E \otimes_{\Z_p} \Z_p [\cG_F]$ via the isomorphism $\Z_p \llbracket G_k \rrbracket \cong \Z_p \llbracket G_F \rrbracket [\cG_F]$. 
    To do this, we suppose to be given an element of the form $g_j h' \in G_k$ and compute that
    \begin{align*}
        f ( \sigma_j h' \cdot a \otimes \sigma_i h) = f ( a \otimes \sigma_j \sigma_i ( \sigma_i^{-1} h' \sigma_i h))
        = (\sigma_i^{-1} h' \sigma_i h \cdot a) \otimes \sigma_j \sigma_i = (h' h \cdot a) \otimes \sigma_j \sigma_i,
    \end{align*}
    where the last equality uses that $\sigma_i$ acts trivially (by conjugation) on $\gal{F (E_\tors)}{F}$ because $F ( E_\tors) / k$ is assumed to be abelian.\\
    Write $\psi \: G_F \to \mathrm{Aut} (T_p E) \cong \cO_p^\times$ for the character induced by the action of $G_F$ on $T_p E$, then the action of $\Z_p \llbracket G_F \rrbracket [\cG_F]$ on $T_p E \otimes_{\Z_p} \Z_p [\cG_F]$ factors through the morphism $\Z_p \llbracket G_k \rrbracket \to \cO_p [\cG_F]$ induced by $\psi$. \\
    We recall next that the known validity of the Tate conjecture for abelian varieties over number fields (by Faltings \cite{Faltings}) implies the existence of a ring isomorphism 
    $\cA_p \cong \mathrm{End}_{\Z_p \llbracket G_k \rrbracket} (T_p B)$.  Writing $\cR \subseteq \cO_p$ for the $\Z_p$-order generated by the image of $\psi$, there are therefore identifications 
    \[
    \mathrm{End}_{\Z_p \llbracket G_k \rrbracket} (T_p B) = \mathrm{End}_{\cR [\cG_F]} (T_p E \otimes_{\Z_p} \Z_p [\cG_F]) = \mathrm{End}_{\cO_p [\cG_F]} (T_p E \otimes_{\Z_p} \Z_p [\cG_F]) \cong \cO_p [\cG_F].
    \]
    Here the second equality holds because $\cR$ is of finite index in $\Z_p \otimes_\Z \cO_k$ (as the cokernel of $\psi$ is finite) and $T_p B$ is $\Z_p$-torsion free, and the last isomorphism holds because $T_p E \otimes_{\Z_p} \Z_p [\cG_F]$ is a free $\cO_p [\cG_F]$-module (cf.\@ also \cite[Rk.\@ on p.\@ 502]{SerreTate}). 
\end{proof}

We now fix a finite abelian extension $K$ of $k$ and consider the motive 
\[ M_{B/K}\coloneqq h^1 (B_K) (1),\]
regarded as defined over $k$ and with coefficients $A[\cG_K]$. For the reader's convenience, we quickly review the precise formulation of the `analytic-rank-zero' component of  $\TNC(M_{B / K}, \mathcal{A}_p [\cG_K])$. 
\\
To do this, we fix (in terms of the decomposition (\ref{av decomp})) an isogeny $B \to \prod_{i = 1}^{i=t} B_i$ in which each $B_i$ is a simple abelian variety that is defined over $k$ and such that $\Q \otimes_\Z \mathrm{End}_k (B_i)$ is isomorphic to the field $L_i$. Attached to each $B_i$ is then an algebraic Hecke character (the `Serre--Tate character' of $B_i$) $
\varphi_i \: \mathbb{A}_k^\times \to L_i^\times$ of infinity type $(-1, 0)$, which gives rise, for every embedding $\tau \in \Hom (L_i, \C)$, to  a Hecke character
\[
\varphi_{i, \tau} \: \mathbb{A}_k^\times / k^\times \to \C^\times 
\]
 (see \cite[Th.\@ 10]{SerreTate} for details). Then, setting 
 \[ S \coloneqq \Pi_k^\infty \cup S_\ram (K / k) \cup S_\ram (B),\] 
the $S$-truncated $(\C \otimes_\Q A) [\cG_K]$-valued $L$-function of $M_{B/K}$ has the following explicit description 
\begin{align*}
L_S (M_{B/K}, s) =&\, \big( {\sum}_{\chi \in \widehat{\cG_K}} L_S ( \overline{\varphi_{i, \tau}} \chi^{-1}, s + 1)  e_{\chi} \big)_{i, \tau} 
\\
\in&\, \bigoplus_{i = 1}^{i=t} \bigoplus_{\tau \: L_i \hookrightarrow \C} \C [\cG_K] \cong \bigoplus_{i = 1}^{i=t} (\C \otimes_\Q L_i) [\cG_K] \cong (\C \otimes_\Q A) [\cG_K].\end{align*}
To describe the corresponding period map, it is convenient to fix an embedding $\iota \: k \hookrightarrow \C$ and hence an induced identification $(\bigoplus_{\tau \: k \hookrightarrow \C} H^1 (B^\tau (\C), \Q))^+ \cong H^1 (B^\iota (\C), \Q)$. Using the Poincar\'e duality isomorphism $H^1 (E^\iota (\C), \Q) \cong H_1 (E^\iota (\C), \Q)^\ast$, we may then explicitly describe the period map of $B_K$ as 
\[
\mathrm{per}_{B, K} \: K \otimes_k H^0 (B, \Omega^1_{B / k}) \to (\R \otimes_\Q K) \otimes_k  H^1 ( B^\iota (\C), \Q),
\quad \omega \mapsto \big \{ \gamma \mapsto \int_{\gamma} \omega \big \}. 
\]
Next we note that if $L (B / K, 1) \neq 0$, then the groups $B (K)$ and $\sha_{B / K}$ are both finite (cf.\@ \cite[Th.\@ 1.2]{BurungaleFlach}), and so the space $H^2 (\cO_{K, S}, V_p B)$ vanishes.\\
We now fix an $\End_k (B)$-basis element $\gamma$ of $ H^1 ( B^\iota (\C), \Z)$, and write $\delta (\gamma)$ for the image of $\gamma$ under the comparison isomorphism 
\[ \Z_p \otimes_\Z  H^1 ( B^\iota (\C), \Z) \cong H^1 (B, \Z_p) \cong (T_p B)^\ast.\] 
Then, setting $\cT \coloneqq \mathrm{Ind}_{G_K}^{G_k} (T_p B)$, the construction of (\ref{definition of finite level theta map}) defines a composite homomorphism
\begin{align*}
    \Theta_{K, S, \delta (\gamma)} \: 
    \Q_p \otimes_{\Z_p} \Det_{\cA_p [\cG_K]} ( C_S (\cT))
    & \xrightarrow{\simeq} H^1 (\cO_{K, S}, V_p B) \otimes_{(\Q_p \otimes_\Q A ) [\cG_K]} (V_p B)^\ast \\
    & \xrightarrow{\simeq} H^1 (\cO_{K, S}, V_p B),
\end{align*}
in which the first arrow denotes the natural `passage-to-cohomology' map and the second is induced by sending $\delta (\gamma)$ to $1\in (\Q_p \otimes_\Q A ) [\cG_K]$.
We also use the composite homomorphism 
\begin{align*}
    \lambda_{B, K, S} \: H^1 (\cO_{K, S}, V_p B) & \rightarrow {\bigoplus}_{v \in \Pi_K^p} H^1 (K_v, V_p B) \\
    & \xrightarrow{\exp^\ast} D^1_{\mathrm{dR}, \Q_p \otimes_\Q K} ( V_p B) \\
    & \xrightarrow{\simeq} (\Q_p \otimes_\Q K) \otimes_\Q H^0 (B, \Omega^1_{B / k}),
\end{align*}
 in which the first map is the natural localisation morphism, the second is the dual exponential map of $V_p B$ \cite[Ch.\@ II, \S\,1.2.4]{Kato93a}, and the third is the canonical comparison isomorphism from $p$-adic Hodge theory.\\
    We next fix a finite subset $\Sigma$ of $\Pi_k$ that is disjoint from $S$ and, for each $v \in \Sigma$, set
    \[
    \delta_{K, v} (X) \coloneqq  {\det}_{\cA_p [\cG_K]} ( 1 - \Frob_v^{-1} X \mid T_p B ) \in \cA_p [X].
    \]
   Then, since $H^0 (K_w, T_p B)$ vanishes for all $w \in \Sigma_K$, the element $\delta_{K, \Sigma} \coloneqq \prod_{v \in \Sigma} \delta_{K, v} (\Frob_v^{-1})$ is a nonzero divisor in $\cA_p [\cG_K]$
    and, as a consequence, the argument of \cite[Prop.\@ 3.4]{BKS} shows the conjecture $\TNC(M_{B/K}, \mathcal{A}_p [\cG_K])$ to be equivalent to the existence of an $\cA_p [\cG_K]$-basis element $\fz_{B / K, S, \Sigma}$ of $\Det_{\cA_p [\cG_K]} ( C_{S, \Sigma} (\cT))$ that has both of the following properties
    \begin{equation}\label{properties2}
        \begin{cases}
        (\lambda_{B, K, S} \circ \Theta_{K, S, \delta (\gamma)}) ( \fz_{B / K, S, \Sigma}) \in K \otimes_k H^0 (B, \Omega^1_{B / K});\\
        (\mathrm{per}_{B, K} \circ \lambda_{B, K, S} \circ \Theta_{K, S, \delta (\gamma)}) ( \fz_{B / K, S, \Sigma}) = \delta_{K, \Sigma} \cdot L_S (M_{B/K}, 0) \cdot \gamma. \end{cases}
    \end{equation}

We can now state the main result of this section. 

\begin{thm} \label{CM elliptic curves main result}
    Fix data $E / F$ and $B$ as above and let $K$ be a finite abelian extension of $k$ for which  $L (B / K, 1) \neq 0$. Then the conjecture  
    $\TNC(M_{B / K}, \mathcal{A}_p [\cG_K])$ is valid.
\end{thm}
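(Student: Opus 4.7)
The proof strategy realises $\TNC(h^1(B/K)(1), \cA_p[\cG_K])$ as a consequence of the already-established conjecture $\TNC(\QQ_L(0), \Z_p[\cG_L])$ (Theorem \ref{etnc imaginary quadratic fields}) for suitable finite abelian extensions $L$ of $k$, following the ``universal case'' reduction of Kato \cite[Ch.\@ I, \S\,3.3]{Kato93b} and the general descent formalism of \cite{bbs}.

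First I would apply Lemma \ref{CM elliptic curves lemma 1} to obtain the ring isomorphism $\cA_p[\cG_K] \cong \cO_p[\cG_{FK}]$ together with a Shapiro-type identification $\cT \cong \mathrm{Ind}_{G_{FK}}^{G_k}(T_p E)$. Under these identifications the Selmer complex $C_{S,\Sigma}(\cT)$ and its determinant are canonically identified with the corresponding objects attached to $T_p E$ viewed as an $\cO_p[\cG_{FK}]$-module, and the map $\Theta_{K,S,\delta(\gamma)}$ is compatible with the analogous map in the $\QQ_{FK}(0)$ setting twisted by the Serre--Tate characters $\varphi_i$. In particular, the whole datum appearing in the reformulation (\ref{properties2}) of $\TNC(h^1(B/K)(1), \cA_p[\cG_K])$ can be re-expressed in terms of Tate-motive data over the abelian extension $FK/k$.

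Second, I would decompose $L_S(M_K,s)$ over embeddings $\tau\: L_i \hookrightarrow \CC$ and characters $\chi \in \widehat{\cG_K}$ into a sum of Hecke $L$-series $L_S(\overline{\varphi_{i,\tau}}\chi^{-1}, s+1)$. Since $F(E_\tors)/k$ is abelian, each character $\overline{\varphi_{i,\tau}}\chi^{-1}$ factors through a finite abelian extension $L$ of $k$, and the classical formulas of Damerell, Shimura, and Katz express the corresponding $L$-value at $s=1$ in terms of CM periods of $E$ and leading terms at $s=0$ of the associated Dirichlet $L$-series appearing in $\TNC(\QQ_L(0), \Z_p[\cG_L])$. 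Combining this decomposition with Theorem \ref{etnc imaginary quadratic fields} for all the relevant $L$ (using $p>3$, which covers every character component nontrivially) produces a candidate basis $\fz_{B/K,\Sigma}$ for $\Det_{\cA_p[\cG_K]}(C_{S,\Sigma}(\cT))$.

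The final step is a descent argument: one has to verify that, under the Shapiro-type identification, the candidate basis constructed as above satisfies both properties of (\ref{properties2}). Here the hypothesis $L(B/K,1) \neq 0$ is used crucially, since it guarantees finiteness of $B(K)$ and $\sha_{B/K}$, and hence the vanishing of $H^2(\cO_{K,S}, V_p B)$, so only the ``analytic rank zero'' parts of the Tate-motive main conjectures enter and the descent simplifies significantly. The main obstacle is the precise bookkeeping for the period-regulator comparison: the composite $\mathrm{per}_{B,K} \circ \lambda_{B,K,S} \circ \Theta_{K,S,\delta(\gamma)}$ must be matched, under the Shapiro identification, with the Beilinson/Bloch--Kato map appearing in the Tate-motive TNC for $\QQ_L(0)$, up to CM-period comparisons and units in $\cA_p[\cG_K]$. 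The underlying non-equivariant identities are classical and form the core of Burungale--Flach \cite{BurungaleFlach}; the novelty here is the extension to the full $\cA_p[\cG_K]$-equivariant setting, which is provided by the functoriality and descent machinery in \cite{bbs,BullachHofer} applied to the output of Theorem \ref{etnc imaginary quadratic fields}.
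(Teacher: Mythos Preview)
Your overall strategy—deduce the result from Theorem \ref{etnc imaginary quadratic fields} via a twisting reduction—is the same as the paper's, but the execution differs and your sketch misidentifies the key technical input.

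The paper does not pass to the finite field $FK$ and decompose character-by-character. Since $\cT$ is free of rank one over $\cA_p[\cG_K]$ (Lemma \ref{CM elliptic curves lemma 1}), the $G_k$-action is encoded in a single character $\rho\colon G_k \to \cA_p[\cG_K]^\times$; the paper takes $L=(k^c)^{\ker\rho}$ (an infinite abelian extension of $k$ containing a $\Z_p$-extension), sets $\Lambda=\Z_p\llbracket\cG_L\rrbracket$, and applies the twisting formalism of Lemma \ref{twisting lemma} directly via the isomorphism $\Lambda(1)\otimes_{\Lambda,\rho^{-1}}\cA_p\cong\cT$. The candidate basis is then simply $\fz_{B/K,\Sigma}=\mathrm{Tw}^{\det}(\fz_{\mathbb{G}_m/L,\Sigma})$, where $\fz_{\mathbb{G}_m/L,\Sigma}$ is the Iwasawa-theoretic zeta element supplied by Theorem \ref{etnc imaginary quadratic fields}; Lemma \ref{twisting lemma}\,(i)\,(b) ensures it is a basis, and the commutative square of Lemma \ref{twisting lemma}\,(i)\,(a) reduces the verification of (\ref{properties2}) to computing $(\lambda_{B,K,S}\circ\mathrm{Tw})(\varepsilon_{L,\Sigma})$. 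Your route via Shapiro to $FK$ and per-character reassembly could in principle be made to work but is less direct and requires extra bookkeeping (and note $\cA_p[\cG_K]\cong\cO_p[\cG_F][\cG_K]$, which is $\cO_p[\cG_{FK}]$ only when $F\cap K=k$).

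The more serious issue is the period input. The formulas of Damerell, Shimura, and Katz are archimedean algebraicity statements; they do not verify the first condition in (\ref{properties2}), which is $p$-adic: it asserts that the image of $\fz_{B/K,\Sigma}$ under the Bloch--Kato dual exponential map is rational. What is needed—and what the paper invokes—is the explicit reciprocity law of Kato and Wiles for elliptic units \cite[Prop.\@ 15.9]{Kato04}, together with the identification in \cite[Prop.\@ 3.3]{BurungaleFlach} of Kato's auxiliary spaces $V_{L_i}(\varphi_i)$ and $S(\varphi_i)$ with $H^1(B^\iota,\Q)$ and $H^0(B,\Omega^1_{B/k})$. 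This reciprocity law is what gives both the rationality and the exact $L$-value formula; the descent machinery you cite from \cite{bbs,BullachHofer} does not supply it.
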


\begin{proof} To deduce this result from Theorem \ref{etnc imaginary quadratic fields}, we use a special case of the general strategy described in \cite[Th.\@ 3.22]{bbs}. To do this, we consider the character
\[
\rho \: G_k \to \mathrm{Aut} ( \cT) \cong \mathcal{A}_p [\cG_K]^\times
\]
and set $L \coloneqq (k^c)^{\ker (\rho)}$ (this is an abelian extension of $k$ that contains both $K$ and a $\Z_p$-extension of $k$.) We also set $\Lambda \coloneqq \Z_p \llbracket \cG_{L} \rrbracket$,
take $S$ to be a finite subset of $\Pi_k$ that contains $\Pi_k^\infty \cup \Pi_k^p \cup S_\ram (T_p B) \cup S_\ram (K / k)$, 
and set $\Sigma \coloneqq \{ \a \}$ with $\a$ a prime ideal of $k$ that does not belong to $S$. \\
To construct the element $\fz_{B / K}$, we write 
\[
\Theta_{\mathbb{G}_m / K, S, b_\bullet} \: \Det_\Lambda ( C_{S, \Sigma} (\Lambda (1))) \to  H^1_\Sigma (\cO_{k, S}, \Lambda (1))
\]
for the map obtained as the direct sum of the maps in (\ref{theta map Zp (1)}) over characters $\chi$ of $\cG_L$ that have finite prime-to-$p$ order. We then note  that Theorem \ref{etnc imaginary quadratic fields} implies the existence of a $\Lambda$-basis $\fz_{\mathbb{G}_m / L, S, \Sigma}$ of $\Det_\Lambda ( C_{S, \Sigma} (\Lambda (1)))$ that is sent by $\Theta_{\mathbb{G}_m / K, S, b_\bullet}$ to the family  of Rubin--Stark elements
\[ \varepsilon_{L, \Sigma} \coloneqq (\varepsilon^{\{ \infty \}}_{F / k, S, \Sigma})_{F \subseteq L}.\]
Now, the isomorphism of $G_k$-representations
\[ \Lambda (1) \otimes_{\Lambda, \rho^{-1}} \mathcal{A}_p \cong \cT^\ast (1)\cong \cT\]  combines with Proposition \ref{construction complex}\,(iv) to induce an isomorphism $C_{S, \Sigma} (\Lambda (1)) \otimes_\Lambda^\mathbb{L} \mathcal{A}_p \cong C_{S, \Sigma} (\cT)$ in $D(\cA_p)$. The latter isomorphism in turn induces  a morphism of $\Lambda$-modules
\[
\mathrm{Tw}^\mathrm{det} \: \Det_\Lambda ( C_{S, \Sigma} (\Lambda (1))) \to \Det_\Lambda ( C_{S, \Sigma} (\Lambda (1))) \otimes_{\Lambda} \mathcal{A}_p \cong \Det_{\mathcal{A}_p} ( C_{S, \Sigma} (\cT))
\]
that, by Proposition \ref{twisting lemma}\,(i)\,(a), lies in  a commutative diagram
\begin{cdiagram}
    \Det_\Lambda (C_{S, \Sigma} (\Lambda (1)) \arrow{d}{\mathrm{Tw}^\mathrm{det}} \arrow{rr}{\Theta_{\mathbb{G}_m / K, S, b_\bullet}} & & H^1_\Sigma (\cO_{k, S}, \Lambda (1)) \arrow{d}{\mathrm{Tw}} \\ 
    \Det_{\mathcal{A}_p} ( C_{S, \Sigma} (\cT)) \arrow{rr}{\Theta_{K, S, \delta(\gamma)}} & &
    H^1_\Sigma (\cO_{K, S}, V_p B).
\end{cdiagram}%
Here we write $\mathrm{Tw}$ for the composite homomorphism 
\[ 
\mathrm{Tw} \:  H^1_\Sigma (\cO_{k, S}, \Lambda (1))  \xrightarrow{\gamma}  H^1_\Sigma (\cO_{k, S}, \Lambda (1)) \otimes_\Lambda (V_p B)^\ast 
 \xrightarrow{\simeq} H^1_\Sigma (\cO_{K, S}, (V_p B)^\ast (1)),\]
in which the first map is induced by sending $1 \mapsto \delta(\gamma)$ and the second is the isomorphism arising from Proposition \ref{construction complex}\,(iv).

Now, by Proposition \ref{twisting lemma}\,(i)\,(b), the element  
\[ \fz_{B / K, S, \Sigma} \coloneqq \mathrm{Tw}^\mathrm{det} (\fz_{\mathbb{G}_m / L, \Sigma})\] 
is an $\mathcal{A}_p$-basis of $\Det_{\mathcal{A}_p} ( C_{S, \Sigma} (\cT))$, and so it suffices to verify that this element also has both of the properties in (\ref{properties2}). Given the above commutative diagram, we can therefore verify the latter properties after replacing the element $\Theta_{K, S, \delta (\gamma)} ( \fz_{B / K, S, \Sigma})$ by $\mathrm{Tw} (\varepsilon_{L, \Sigma})$, and to do this we use the reciprocity law of Kato--Wiles. 

Specifically, after taking account of \cite[proof of Prop.\@ 3.3]{BurungaleFlach}, which compares the spaces $V_{L_i} (\varphi_i)$ and $S (\varphi_i)$ introduced in \cite[\S\,15.8]{Kato04} with $H^1 (B^\iota, \Q)$ and $H^0 (B, \Omega^1_{B / K})$ respectively, this reciprocity law \cite[Prop.\@ 15.9]{Kato04} asserts that %
\[ (\lambda_{B, K, S}\circ \mathrm{Tw}) (\varepsilon_{L, \Sigma})\in K \otimes_\Q H^0 (B, \Omega^1_{B / k})\] 
and is such that
\[
{\sum}_{\sigma \in \cG_K} \chi (\sigma) \cdot \mathrm{per}_B ( \sigma \cdot (\lambda_{B, K, S} \circ \mathrm{Tw}) (\varepsilon_{L, \Sigma})) 
= (\tau \otimes \chi^{-1}) (\delta_{K, \Sigma}) \cdot \bigl(L_S (\overline{\varphi}_\tau \cdot \chi, 1)\bigr)_{i\in [t], \tau\in \Upsilon_i^\iota} \otimes \gamma.
\]
Here $\Upsilon_i^\iota$ denotes the subset of $\Hom (L_i, \C)$ comprising embeddings $\tau$ that restrict to give $\iota$ on $k$, so that the element $(L_S (\overline{\varphi}_\tau \cdot \chi, 1))_{i, \tau} $ belongs to $\bigoplus_{i = 1}^{i=t} (L_i \otimes_\Q \R) \cong \R \otimes_\Q A$. The required result then follows directly from the fact that
\[
L_S (M_{B/K}, 0) =  {\sum}_{i = 1}^{i=t} {\sum}_{\tau\in \Upsilon_i^\iota} {\sum}_{\chi \in \widehat{\cG_K}} L_S (\overline{\varphi}_\tau \chi, 1) e_{\chi^{-1}} \in (\R \otimes_\Q A) [\cG_K].
\qedhere
\]
\end{proof}

\begin{rk} \label{bf remark}
\begin{romanliste}
    \item If $L (B / K, 1)=0$, then the argument of Theorem \ref{CM elliptic curves main result} still proves the `analytic-rank-zero' component of the conjecture $\TNC(M_{B/K}, \mathcal{A}_p [\cG_K])$.
    \item Lemma \ref{CM elliptic curves lemma 1} implies that the representation $T_p B$ is isomorphic to $\mathrm{Ind}_{G_F}^{G_k} (T_p E)$. Upon combining Theorem \ref{CM elliptic curves main result} (with $F = k$) with Remark \ref{forgetful etnc remark}, we can therefore deduce  the validity under the stated hypotheses of the conjecture $\TNC(h^1(E/F)(1),\cO_p)$, and hence also of the $p$-part of the BSD Conjecture for $E / F$. The latter result is the main result of Burungale and Flach in \cite{BurungaleFlach}. 
\end{romanliste}
\end{rk} 

\subsection{Kato's Conjecture in more general cases} \label{more general cases section}

The methods used to prove Theorem \ref{etnc imaginary quadratic fields} have consequences well beyond the case of abelian extensions of imaginary quadratic fields. As a concrete example, the following result is derived by combining our methods with the general approach developed by Daoud et al.\@ in \cite{scarcity}. In the sequel, we write $h_k$ for the class number of a number field $k$. 

\begin{thm} \label{RS implies etnc}
    Let $k$ be a number field and $p$ a prime number that is inert in $k$ and does not divide $6h_k$. Fix a non-empty subset $V$ of $\Pi_k^\infty$ and assume that the Rubin--Stark Conjecture is valid for all data of the form $(K / k, S^\ast(K), \Sigma_K, V)$ with $K$ a finite abelian extension of $k$, $S^\ast(K) \coloneqq \Pi^\infty_k \cup S_\mathrm{ram} (K / k)$ and $\Sigma_K$ a finite subset of $\Pi_k\setminus S^\ast(K)$. Then, for every finite abelian extension $K$ of $k$ with $\mu_K[p]=(0)$, the conjecture $\TNC(\QQ_K(0), \epsilon_{K, V} \Z_p [\cG_K])$ is valid.  
\end{thm}

\begin{proof}
 We write $\Omega^V$ for the collection of all finite abelian extensions $K$ of $k$ such that $V (K) = V$, and  $\omega_p \: G_k \to \Z_p^\times$ for the $p$-adic Teichm\"uller character of $k$. 
 
 Then the proof of \cite[Th.\@ 6.1\,(a)\,(i)]{scarcity} shows that $\TNC(\QQ_K(0), (1 - e_{\omega_p})\epsilon_{K, V} \Z_p [\cG_K])$ is valid for every finite abelian extension $K$ of $k$ if Conjecture \ref{eIMC G_m} is valid  for every $K \in \Omega^V$ and every  character $\chi \in \widehat{\nabla_{\!\!K}} \setminus \{ \omega_p \}$ with $k_\infty$ taken to be the cyclotomic $\Z_p$-extension of $k$. 
(Note that, since our formulation of Conjecture \ref{eIMC G_m} is explicitly in terms of the determinant functor and perfect complexes, we do not need to assume condition (ii) of  \cite[Th.\@ 6.1\,(a)\,(i)]{scarcity}.)
In particular, in these cases, the conjecture $\TNC(\QQ_K(0), \epsilon_{K, V} \Z_p [\cG_K])$ is valid for every finite abelian extension $K$ of $k$ with $\mu_K [p] = (0)$.
\\
To verify the validity of Conjecture \ref{eIMC G_m} in the required cases, we then proceed similarly as in Theorem \ref{etnc imaginary quadratic fields}. To be precise, we claim first it suffices to prove, for all $K \in \Omega^V$ and characters $\chi \in \widehat{\nabla_{\!\!K}} \setminus \{ \omega_p \}$, that there exists a finite set $\Sigma_K \subseteq \Pi_k \setminus (S^\ast (K) \cup \Pi_k^p)$ such that $H^1_{\Sigma_K} ( \cO_{K, S(K)}, \Z_p (1))$ is $\Z_p$-torsion free and 
that there is an inclusion 
\begin{equation} \label{let's assume this inclusion}
\Fitt^0_{\Lambda_F} ( Y_{\Pi^p_k} (\cT_\chi))^{\ast \ast}
    \cdot \varepsilon_{K_\infty, \Sigma_K}^\chi \subseteq \Theta_{K_\infty / k, S(K), b_\bullet} ( \Det_{ \Lambda_F} ( C_{S(K), \Sigma_K} ( \cT_\chi))),
\end{equation}
with $\cT_\chi \coloneqq \Lambda_F (1)(\chi)$. 

To show Conjecture \ref{eIMC G_m} is indeed implied by these inclusions, we may work locally at a fixed prime $\p$ in $\Spec^1(\Lambda_F)$. If  $p \in \p$, then Lemma \ref{singular primes and mu vanishing} implies $\Fitt^0_{\Lambda_F} ( Y_{\Pi^p_k} (\cT_\chi))_\p = \Lambda_{F, \p}$, and so the above inclusion implies that 
\begin{equation} \label{an inclusion we want}
    \varepsilon_{K_\infty, \Sigma_K}^\chi \in \Theta_{K_\infty / k, S(K), b_\bullet} ( \Det_{ \Lambda_F} ( C_{S(K), \Sigma_K} ( \cT_\chi)))_\p.
\end{equation}
To prove the same containment also holds if $p \not \in \p$, we note that the argument of Theorem \ref{new iwasawa theory euler systems main result}\,(iii) shows that (\ref{let's assume this inclusion}) implies  an inclusion 
\begin{equation} \label{let's then also assume this other inclusion}
    \im (\varepsilon_{K_\infty, \Sigma_K}^\chi)_\p \subseteq \Fitt^0_{\Lambda_F} ( A_{K_\infty, S(K), \Sigma_K}^\chi)_\p,
\end{equation}
where  $A_{K_\infty, S(K), \Sigma_K}$ is the module defined just before the statement of Theorem \ref{G_m main result}.

Next, we note that, since the given assumptions imply $|\Pi^p_k| = 1$, the `Gross--Kuz'min Conjecture' for $K$ is valid by Maksoud \cite[Th.\@ 4.3.2\,(b)]{maksoud}. As a consequence, we may combine the results of \cite[Lem.\@ 6.6\,(b) and (d)]{scarcity} with (\ref{let's then also assume this other inclusion}) to deduce that
\[
\im (\varepsilon_{K_\infty, \Sigma_K}^\chi)_\p \subseteq \Fitt^0_{\Lambda_F} ( A_{K_\infty, S(K), \Sigma_K}^\chi)_\p \cdot \Fitt^0_{\Lambda_F} ( X_{\Pi^p_k} (\cT_\chi))_\p.
\]
Since this inclusion holds for every $K \in \mathcal{X}$ that is unramified at all places in $\Sigma_K$ we may then use the argument of \cite[Lem.\@ 6.6\,(a)]{scarcity} to derive an inclusion
\[
\im (\varepsilon_{K_\infty, \Sigma_K}^\chi)_\p \subseteq \Fitt^0_{\Lambda_F} ( A_{K_\infty, S(K), \Sigma_K}^\chi)_\p \cdot \Fitt^0_{\Lambda_F} ( X_{S(K)_\mathrm{fin}} (\cT_\chi))_\p
\]
and hence also, by \cite[Prop.\@ 6.4\,(b)\,(i)]{scarcity}, the required containment (\ref{an inclusion we want}). 

Having now verified that the latter containment is valid for every prime in $\Spec^1(\Lambda_F)$, the analytic class number formula allows us to deduce the validity of Conjecture \ref{eIMC G_m} (cf.\@ the argument of \cite[Prop.\@ 6.4\,(b)\,(ii)]{scarcity}).\\
At this stage, it therefore only remains for us to justify the inclusion (\ref{let's assume this inclusion}). In addition, this inclusion follows directly from Theorem \ref{eimc result} in the case $\chi \neq \bm{1}$,   
and so it is enough for us to consider the case $\chi = \bm{1}$. To this end, we first recall the well-known fact that the $p$-adic Iwasawa $\mu$-invariant of $k$ vanishes since $|\Pi^p_k| = 1$ and $p\nmid h_k$ (see \cite[Prop.~13.22]{Washington}). By a standard argument (cf.\@ \cite[Prop.~3.15]{BKS2}), the proof of (\ref{let's assume this inclusion}) is therefore reduced to showing that, for every $\psi \in \widehat{\square_K}$, there is an inclusion of characteristic ideals
\[
\Char_{\Lambda_\psi} \big( ( \bidual^r_{\Lambda_\psi} H^1_{\Sigma_K} (\cO_{k, S(K)}, \Lambda_\psi (1) (\psi) \big) / (\Lambda_\psi\cdot \varepsilon_{K_\infty, \Sigma_K}^\psi) \big) 
\subseteq \Char_{\Lambda_\psi} ( A_{K_\infty, S(K), \Sigma_K}^\psi).
\]
If $\psi \neq \bm{1}$, then one can use Proposition \ref{bss lem 6.22} to directly deduce this inclusion from \cite[Th.~2.3.3]{Rubin-euler}. 
 On the other hand, the remaining case of $\psi = \bm{1}$ (which might not validate the hypothesis Hyp~$(K_\infty / K)$ in \cite{Rubin-euler} and so has to be considered separately) is trivially satisfied. Indeed, the assumptions that $k$ is a field with only one $p$-adic place and such that $p\nmid h_k$ implies that the group $A_{k_\infty, S(K)}$ vanishes (cf.\@ \cite[Prop.~13.22]{Washington}). 
 Since the assumption $\mu_K [p] = (0)$ moreover allows us to take $\Sigma_K = \emptyset$, this is therefore sufficient to prove the required inclusion.
\end{proof}

\begin{rk}
    Fix a non-empty subset $V$ of $\Pi_k^\infty$ and a subset $\mathcal{X}$ of $\Omega^V$ that satisfies the `closure hypothesis' of \cite[Hyp.~4.5]{scarcity}. Let $p > 3$ be a prime that is unramified in $k$ and $k_\infty$ a $\Z_p$-power extension of $k$ in which no finite place splits completely. Then a more careful analysis of the argument used to prove Theorem \ref{RS implies etnc} shows that $\TNC(\QQ_K(0), \epsilon_{K, V} \Z_p [\cG_K])$ is valid for every $K$ in $\mathcal{X}$ provided that all of the following conditions are satisfied at every $K$ in $\mathcal{X}$.   
    \begin{romanliste}
         \item The Rubin--Stark Conjecture is valid for the data  $(K / k, \Pi^\infty_k \cup S_\mathrm{ram} (K / k), \Sigma_K, V)$, with $\Sigma_K$ a finite subset of $\Pi_k\setminus S^\ast(K)$.
        \item Conjecture \ref{eIMC G_m} is valid for both $\chi = \bm{1}$ and $\chi = \omega_p$.
        \item $\im (\varepsilon^V_{K_\infty, \Sigma}) \subseteq \Fitt^0_{\Lambda_K} ( X_{\Pi_k^p} (\Lambda_K  (1)))^{\ast \ast}$.
        \item The $\Lambda_K$-modules $X_{\Pi_k^p} (\Lambda_K  (1))$ and $A_{K_\infty, S(K)}$ have disjoint support.
    \end{romanliste}
    
\end{rk}

\begin{bsp}
    Let $k$ be a complex cubic number field and write $\infty$ for its unique complex place. In this setting Bergeron--Charollois--Garc\'ia \cite{bergeron2023ellipticunitscomplexcubic} have recently provided strong evidence in support of the Rubin--Stark Conjecture for the data $(K / k, S^\ast (K), \Sigma, \{\infty\})$. Assuming the latter conjecture to be valid, there is then a positive-density set of primes $p$ for which Theorem~\ref{RS implies etnc} can be applied. Indeed, since the normal closure $\widetilde{k}$ of $k$ has Galois group $\gal{\widetilde{k}}{k} \cong S_3$, the Cebotarev density theorem provides us with a positive-density set of primes $p$ such that the alternating group $A_3$ is generated by the conjugacy class of $\Frob_\p$ for any prime of $\widetilde{k}$ lying above $\p$. In particular, any prime in this set that does not divide $h_k$ is a prime for which all of the assumptions of Theorem \ref{RS implies etnc} are satisfied.
\end{bsp}

\addcontentsline{toc}{section}{References}
\tiny
\printbibliography

\small

\textsc{Instituto de Ciencias Matem\'aticas, c/ Nicol\'as Cabrera 13-15, Campus de Cantoblanco UAM, 28049 Madrid, Spain.}\\
\textit{Email address:} \href{mailto:dominik.bullach@icmat.es}{dominik.bullach@icmat.es} 
\smallskip \\ 
\textsc{King's College London,
Department of Mathematics, Strand, 
London WC2R 2LS,
UK.} \\
\textit{Email address:} \href{mailto:david.burns@kcl.ac.uk}{david.burns@kcl.ac.uk}
\end{document}